
\documentclass{amsart}
\usepackage{amssymb}
\usepackage{amsmath}
\usepackage{amsfonts}
\usepackage{graphicx}
\usepackage{epsfig,euscript,enumerate,cancel}

\setcounter{MaxMatrixCols}{10}

\newtheorem{theorem}{Theorem}[section]

\newtheorem{axiom}[theorem]{Axiom}

\newtheorem{conjecture}[theorem]{Conjecture}
\newtheorem{corollary}[theorem]{Corollary}

\newtheorem{definition}[theorem]{Definition}
\newtheorem{example}[theorem]{Example}
\newtheorem{exercise}[theorem]{Exercise}
\newtheorem{lemma}[theorem]{Lemma}

\newtheorem{proposition}[theorem]{Proposition}
\newtheorem{remark}[theorem]{Remark}

\numberwithin{equation}{section}
\numberwithin{figure}{section}

\typeout{TCILATEX Macros for Scientific Word 5.0 <13 Feb 2003>.}
\typeout{NOTICE:  This macro file is NOT proprietary and may be 
freely copied and distributed.}
\makeatletter

\ifx\pdfoutput\relax\let\pdfoutput=\undefined\fi
\newcount\msipdfoutput
\ifx\pdfoutput\undefined
\else
 \ifcase\pdfoutput
 \else 
    \msipdfoutput=1
    \ifx\paperwidth\undefined
    \else
      \ifdim\paperheight=0pt\relax
      \else
        \pdfpageheight\paperheight
      \fi
      \ifdim\paperwidth=0pt\relax
      \else
        \pdfpagewidth\paperwidth
      \fi
    \fi
  \fi  
\fi

%

%
\newcount\@hour\newcount\@minute\chardef\@x10\chardef\@xv60
\def\tcitime{
\def\@time{%
  \@minute\time\@hour\@minute\divide\@hour\@xv
  \ifnum\@hour<\@x 0\fi\the\@hour:%
  \multiply\@hour\@xv\advance\@minute-\@hour
  \ifnum\@minute<\@x 0\fi\the\@minute
  }}%


\def\x@hyperref#1#2#3{%
   \catcode`\~ = 12
   \catcode`\$ = 12
   \catcode`\_ = 12
   \catcode`\# = 12
   \catcode`\& = 12
   \y@hyperref{#1}{#2}{#3}%
}

\def\y@hyperref#1#2#3#4{%
   #2\ref{#4}#3
   \catcode`\~ = 13
   \catcode`\$ = 3
   \catcode`\_ = 8
   \catcode`\# = 6
   \catcode`\& = 4
}

\@ifundefined{hyperref}{\let\hyperref\x@hyperref}{}
\@ifundefined{msihyperref}{\let\msihyperref\x@hyperref}{}

\@ifundefined{qExtProgCall}{\def\qExtProgCall#1#2#3#4#5#6{\relax}}{}
%
%
%
%
\def\QCTOpt[#1]#2{%
  \def\QCTOptB{#1}
  \def\QCTOptA{#2}
}
\def\QCTNOpt#1{%
  \def\QCTOptA{#1}
  \let\QCTOptB\empty
}
\def\Qct{%
  \@ifnextchar[{%
    \QCTOpt}{\QCTNOpt}
}
\def\QCBOpt[#1]#2{%
  \def\QCBOptB{#1}%
  \def\QCBOptA{#2}%
}
\def\QCBNOpt#1{%
  \def\QCBOptA{#1}%
  \let\QCBOptB\empty
}
\def\Qcb{%
  \@ifnextchar[{%
    \QCBOpt}{\QCBNOpt}%
}
\def\PrepCapArgs{%
  \ifx\QCBOptA\empty
    \ifx\QCTOptA\empty
      {}%
    \else
      \ifx\QCTOptB\empty
        {\QCTOptA}%
      \else
        [\QCTOptB]{\QCTOptA}%
      \fi
    \fi
  \else
    \ifx\QCBOptA\empty
      {}%
    \else
      \ifx\QCBOptB\empty
        {\QCBOptA}%
      \else
        [\QCBOptB]{\QCBOptA}%
      \fi
    \fi
  \fi
}
\newcount\GRAPHICSTYPE
\GRAPHICSTYPE=\z@
\def\GRAPHICSPS#1{%
 \ifcase\GRAPHICSTYPE
   \special{ps: #1}%
 \or
   \special{language "PS", include "#1"}%
 \fi
}%
%
%
%

\def\graffile#1#2#3#4{%
    \bgroup
	   \@inlabelfalse
       \leavevmode
       \@ifundefined{bbl@deactivate}{\def~{\string~}}{\activesoff}%
        \raise -#4 \BOXTHEFRAME{%
           \hbox to #2{\raise #3\hbox to #2{\null #1\hfil}}}%
    \egroup
}%
%
\def\draftbox#1#2#3#4{%
 \leavevmode\raise -#4 \hbox{%
  \frame{\rlap{\protect\tiny #1}\hbox to #2%
   {\vrule height#3 width\z@ depth\z@\hfil}%
  }%
 }%
}%
\newcount\@msidraft
\@msidraft=\z@
\let\nographics=\@msidraft
\newif\ifwasdraft
\wasdraftfalse

\def\GRAPHIC#1#2#3#4#5{%
   \ifnum\@msidraft=\@ne\draftbox{#2}{#3}{#4}{#5}%
   \else\graffile{#1}{#3}{#4}{#5}%
   \fi
}
\def\addtoLaTeXparams#1{%
    \edef\LaTeXparams{\LaTeXparams #1}}%
%

\newif\ifBoxFrame \BoxFramefalse
\newif\ifOverFrame \OverFramefalse
\newif\ifUnderFrame \UnderFramefalse

\def\BOXTHEFRAME#1{%
   \hbox{%
      \ifBoxFrame
         \frame{#1}%
      \else
         {#1}%
      \fi
   }%
}

\def\doFRAMEparams#1{\BoxFramefalse\OverFramefalse\UnderFramefalse\readFRAMEparams#1\end}%
\def\readFRAMEparams#1{%
 \ifx#1\end%
  \let\next=\relax
  \else
  \ifx#1i\dispkind=\z@\fi
  \ifx#1d\dispkind=\@ne\fi
  \ifx#1f\dispkind=\tw@\fi
  \ifx#1t\addtoLaTeXparams{t}\fi
  \ifx#1b\addtoLaTeXparams{b}\fi
  \ifx#1p\addtoLaTeXparams{p}\fi
  \ifx#1h\addtoLaTeXparams{h}\fi
  \ifx#1X\BoxFrametrue\fi
  \ifx#1O\OverFrametrue\fi
  \ifx#1U\UnderFrametrue\fi
  \ifx#1w
    \ifnum\@msidraft=1\wasdrafttrue\else\wasdraftfalse\fi
    \@msidraft=\@ne
  \fi
  \let\next=\readFRAMEparams
  \fi
 \next
 }%
%

\def\IFRAME#1#2#3#4#5#6{%
      \bgroup
      \let\QCTOptA\empty
      \let\QCTOptB\empty
      \let\QCBOptA\empty
      \let\QCBOptB\empty
      #6%
      \parindent=0pt
      \leftskip=0pt
      \rightskip=0pt
      \setbox0=\hbox{\QCBOptA}%
      \@tempdima=#1\relax
      \ifOverFrame
          \typeout{This is not implemented yet}%
          \show\HELP
      \else
         \ifdim\wd0>\@tempdima
            \advance\@tempdima by \@tempdima
            \ifdim\wd0 >\@tempdima
               \setbox1 =\vbox{%
                  \unskip\hbox to \@tempdima{\hfill\GRAPHIC{#5}{#4}{#1}{#2}{#3}\hfill}%
                  \unskip\hbox to \@tempdima{\parbox[b]{\@tempdima}{\QCBOptA}}%
               }%
               \wd1=\@tempdima
            \else
               \textwidth=\wd0
               \setbox1 =\vbox{%
                 \noindent\hbox to \wd0{\hfill\GRAPHIC{#5}{#4}{#1}{#2}{#3}\hfill}\\%
                 \noindent\hbox{\QCBOptA}%
               }%
               \wd1=\wd0
            \fi
         \else
            \ifdim\wd0>0pt
              \hsize=\@tempdima
              \setbox1=\vbox{%
                \unskip\GRAPHIC{#5}{#4}{#1}{#2}{0pt}%
                \break
                \unskip\hbox to \@tempdima{\hfill \QCBOptA\hfill}%
              }%
              \wd1=\@tempdima
           \else
              \hsize=\@tempdima
              \setbox1=\vbox{%
                \unskip\GRAPHIC{#5}{#4}{#1}{#2}{0pt}%
              }%
              \wd1=\@tempdima
           \fi
         \fi
         \@tempdimb=\ht1
         \advance\@tempdimb by -#2
         \advance\@tempdimb by #3
         \leavevmode
         \raise -\@tempdimb \hbox{\box1}%
      \fi
      \egroup%
}%
%
\def\DFRAME#1#2#3#4#5{%
  \vspace\topsep
  \hfil\break
  \bgroup
     \leftskip\@flushglue
	 \rightskip\@flushglue
	 \parindent\z@
	 \parfillskip\z@skip
     \let\QCTOptA\empty
     \let\QCTOptB\empty
     \let\QCBOptA\empty
     \let\QCBOptB\empty
	 \vbox\bgroup
        \ifOverFrame 
           #5\QCTOptA\par
        \fi
        \GRAPHIC{#4}{#3}{#1}{#2}{\z@}%
        \ifUnderFrame 
           \break#5\QCBOptA
        \fi
	 \egroup
  \egroup
  \vspace\topsep
  \break
}%
%
\def\FFRAME#1#2#3#4#5#6#7{%
  \@ifundefined{floatstyle}
    {
     \begin{figure}[#1]%
    }
    {
	 \ifx#1h
      \begin{figure}[H]%
	 \else
      \begin{figure}[#1]%
	 \fi
	}
  \let\QCTOptA\empty
  \let\QCTOptB\empty
  \let\QCBOptA\empty
  \let\QCBOptB\empty
  \ifOverFrame
    #4
    \ifx\QCTOptA\empty
    \else
      \ifx\QCTOptB\empty
        \caption{\QCTOptA}%
      \else
        \caption[\QCTOptB]{\QCTOptA}%
      \fi
    \fi
    \ifUnderFrame\else
      \label{#5}%
    \fi
  \else
    \UnderFrametrue%
  \fi
  \begin{center}\GRAPHIC{#7}{#6}{#2}{#3}{\z@}\end{center}%
  \ifUnderFrame
    #4
    \ifx\QCBOptA\empty
      \caption{}%
    \else
      \ifx\QCBOptB\empty
        \caption{\QCBOptA}%
      \else
        \caption[\QCBOptB]{\QCBOptA}%
      \fi
    \fi
    \label{#5}%
  \fi
  \end{figure}%
 }%
%
%
%
%
%
\newcount\dispkind%

\def\makeactives{
  \catcode`\"=\active
  \catcode`\;=\active
  \catcode`\:=\active
  \catcode`\'=\active
  \catcode`\~=\active
}
\bgroup
   \makeactives
   \gdef\activesoff{%
      \def"{\string"}%
      \def;{\string;}%
      \def:{\string:}%
      \def'{\string'}%
      \def~{\string~}%
    }
\egroup

\def\FRAME#1#2#3#4#5#6#7#8{%
 \bgroup
 \ifnum\@msidraft=\@ne
   \wasdrafttrue
 \else
   \wasdraftfalse%
 \fi
 \def\LaTeXparams{}%
 \dispkind=\z@
 \def\LaTeXparams{}%
 \doFRAMEparams{#1}%
 \ifnum\dispkind=\z@\IFRAME{#2}{#3}{#4}{#7}{#8}{#5}\else
  \ifnum\dispkind=\@ne\DFRAME{#2}{#3}{#7}{#8}{#5}\else
   \ifnum\dispkind=\tw@
    \edef\@tempa{\noexpand\FFRAME{\LaTeXparams}}%
    \@tempa{#2}{#3}{#5}{#6}{#7}{#8}%
    \fi
   \fi
  \fi
  \ifwasdraft\@msidraft=1\else\@msidraft=0\fi{}%
  \egroup
 }%
%

\def\TEXUX#1{"texux"}

%
%
%
%
%
%
%
\def\limfunc#1{\mathop{\rm #1}}%
\def\func#1{\mathop{\rm #1}\nolimits}%
%

%
\long\def\QQQ#1#2{%
     \long\expandafter\def\csname#1\endcsname{#2}}%
\@ifundefined{QTP}{\def\QTP#1{}}{}
\@ifundefined{QEXCLUDE}{\def\QEXCLUDE#1{}}{}
\@ifundefined{Qlb}{}{}
\@ifundefined{Qlt}{}{}
\long\def\QQA#1#2{}%
\def\QTR#1#2{{\csname#1\endcsname {#2}}}%
\def\EXPAND#1[#2]#3{}%
\def\NOEXPAND#1[#2]#3{}%
\def\LaTeXparent#1{}%
\def\ChildStyles#1{}%
\def\ChildDefaults#1{}%
\def\QTagDef#1#2#3{}%

\@ifundefined{correctchoice}{}{}
\@ifundefined{HTML}{\def\HTML#1{\relax}}{}
\@ifundefined{TCIIcon}{\def\TCIIcon#1#2#3#4{\relax}}{}
\if@compatibility
  \typeout{Not defining UNICODE  U or CustomNote commands for LaTeX 2.09.}
\else
  \providecommand{\UNICODE}[2][]{\protect\rule{.1in}{.1in}}
  \providecommand{\U}[1]{\protect\rule{.1in}{.1in}}
  
\fi

\@ifundefined{lambdabar}{
      
   }{}

%
\@ifundefined{StyleEditBeginDoc}{}{}
%
\def\QQfnmark#1{\footnotemark}

%
%
\@ifundefined{TCIMAKEINDEX}{}{\makeindex}%
%
\@ifundefined{abstract}{%
 \def\abstract{%
  \if@twocolumn
   \section*{Abstract (Not appropriate in this style!)}%
   \else \small 
   \begin{center}{\bf Abstract\vspace{-.5em}\vspace{\z@}}\end{center}%
   \quotation 
   \fi
  }%
 }{%
 }%
\@ifundefined{endabstract}{\def\endabstract
  {\if@twocolumn\else\endquotation\fi}}{}%
\@ifundefined{maketitle}{\def\maketitle#1{}}{}%
\@ifundefined{affiliation}{\def\affiliation#1{}}{}%
\@ifundefined{proof}{}{}%
\@ifundefined{endproof}{}{}%
\@ifundefined{newfield}{\def\newfield#1#2{}}{}%
\@ifundefined{chapter}{\def\chapter#1{\par(Chapter head:)#1\par }%
 \newcount\c@chapter}{}%
\@ifundefined{part}{\def\part#1{\par(Part head:)#1\par }}{}%
\@ifundefined{section}{\def\section#1{\par(Section head:)#1\par }}{}%
\@ifundefined{subsection}{\def\subsection#1%
 {\par(Subsection head:)#1\par }}{}%
\@ifundefined{subsubsection}{\def\subsubsection#1%
 {\par(Subsubsection head:)#1\par }}{}%
\@ifundefined{paragraph}{\def\paragraph#1%
 {\par(Subsubsubsection head:)#1\par }}{}%
\@ifundefined{subparagraph}{\def\subparagraph#1%
 {\par(Subsubsubsubsection head:)#1\par }}{}%
\@ifundefined{therefore}{}{}%
\@ifundefined{backepsilon}{}{}%
\@ifundefined{yen}{}{}%
\@ifundefined{registered}{%
   \def\registered{\relax\ifmmode{}\r@gistered
                    \else$\m@th\r@gistered$\fi}%
 \def\r@gistered{^{\ooalign
  {\hfil\raise.07ex\hbox{$\scriptstyle\rm\text{R}$}\hfil\crcr
  \mathhexbox20D}}}}{}%
\@ifundefined{Eth}{}{}%
\@ifundefined{eth}{}{}%
\@ifundefined{Thorn}{}{}%
\@ifundefined{thorn}{}{}%
%
\@ifundefined{degree}{}{}%
%
\newdimen\theight
\@ifundefined{Column}{\def\Column{%
 \vadjust{\setbox\z@=\hbox{\scriptsize\quad\quad tcol}%
  \theight=\ht\z@\advance\theight by \dp\z@\advance\theight by \lineskip
  \kern -\theight \vbox to \theight{%
   \rightline{\rlap{\box\z@}}%
   \vss
   }%
  }%
 }}{}%
\@ifundefined{qed}{\def\qed{%
 \ifhmode\unskip\nobreak\fi\ifmmode\ifinner\else\hskip5\p@\fi\fi
 \hbox{\hskip5\p@\vrule width4\p@ height6\p@ depth1.5\p@\hskip\p@}%
 }}{}%
\@ifundefined{cents}{}{}%
\@ifundefined{tciLaplace}{}{}%
\@ifundefined{tciFourier}{}{}%
\@ifundefined{textcurrency}{}{}%
\@ifundefined{texteuro}{}{}%
\@ifundefined{euro}{}{}%
\@ifundefined{textfranc}{}{}%
\@ifundefined{textlira}{}{}%
\@ifundefined{textpeseta}{}{}%
\@ifundefined{miss}{\def\miss{\hbox{\vrule height2\p@ width 2\p@ depth\z@}}}{}%
\@ifundefined{vvert}{}{}
\@ifundefined{tcol}{\def\tcol#1{{\baselineskip=6\p@ \vcenter{#1}} \Column}}{}%
\@ifundefined{dB}{}{}
\@ifundefined{mB}{}{}
\@ifundefined{nB}{}{}
\@ifundefined{note}{}{}%
\def\newfmtname{LaTeX2e}
%
\ifx\fmtname\newfmtname
  \DeclareOldFontCommand{\rm}{\normalfont\rmfamily}{\mathrm}
  \DeclareOldFontCommand{\sf}{\normalfont\sffamily}{\mathsf}
  \DeclareOldFontCommand{\tt}{\normalfont\ttfamily}{\mathtt}
  \DeclareOldFontCommand{\bf}{\normalfont\bfseries}{\mathbf}
  \DeclareOldFontCommand{\it}{\normalfont\itshape}{\mathit}
  \DeclareOldFontCommand{\sl}{\normalfont\slshape}{\@nomath\sl}
  \DeclareOldFontCommand{\sc}{\normalfont\scshape}{\@nomath\sc}
\fi

%

\def\alpha{{\Greekmath 010B}}%
\def\beta{{\Greekmath 010C}}%
\def\gamma{{\Greekmath 010D}}%
\def\delta{{\Greekmath 010E}}%
\def\epsilon{{\Greekmath 010F}}%
\def\zeta{{\Greekmath 0110}}%
\def\eta{{\Greekmath 0111}}%
\def\theta{{\Greekmath 0112}}%
\def\iota{{\Greekmath 0113}}%
\def\kappa{{\Greekmath 0114}}%
\def\lambda{{\Greekmath 0115}}%
\def\mu{{\Greekmath 0116}}%
\def\nu{{\Greekmath 0117}}%
\def\xi{{\Greekmath 0118}}%
\def\pi{{\Greekmath 0119}}%
\def\rho{{\Greekmath 011A}}%
\def\sigma{{\Greekmath 011B}}%
\def\tau{{\Greekmath 011C}}%
\def\upsilon{{\Greekmath 011D}}%
\def\phi{{\Greekmath 011E}}%
\def\chi{{\Greekmath 011F}}%
\def\psi{{\Greekmath 0120}}%
\def\omega{{\Greekmath 0121}}%
\def\varepsilon{{\Greekmath 0122}}%
\def\vartheta{{\Greekmath 0123}}%
\def\varpi{{\Greekmath 0124}}%
\def\varrho{{\Greekmath 0125}}%
\def\varsigma{{\Greekmath 0126}}%
\def\varphi{{\Greekmath 0127}}%

\def\nabla{{\Greekmath 0272}}
\def\FindBoldGroup{%
   {\setbox0=\hbox{$\mathbf{x\global\edef\theboldgroup{\the\mathgroup}}$}}%
}

\def\Greekmath#1#2#3#4{%
    \if@compatibility
        \ifnum\mathgroup=\symbold
           \mathchoice{\mbox{\boldmath$\displaystyle\mathchar"#1#2#3#4$}}%
                      {\mbox{\boldmath$\textstyle\mathchar"#1#2#3#4$}}%
                      {\mbox{\boldmath$\scriptstyle\mathchar"#1#2#3#4$}}%
                      {\mbox{\boldmath$\scriptscriptstyle\mathchar"#1#2#3#4$}}%
        \else
           \mathchar"#1#2#3#4%
        \fi 
    \else 
        \FindBoldGroup
        \ifnum\mathgroup=\theboldgroup 
           \mathchoice{\mbox{\boldmath$\displaystyle\mathchar"#1#2#3#4$}}%
                      {\mbox{\boldmath$\textstyle\mathchar"#1#2#3#4$}}%
                      {\mbox{\boldmath$\scriptstyle\mathchar"#1#2#3#4$}}%
                      {\mbox{\boldmath$\scriptscriptstyle\mathchar"#1#2#3#4$}}%
        \else
           \mathchar"#1#2#3#4%
        \fi     	    
	  \fi}

\newif\ifGreekBold  \GreekBoldfalse
\let\SAVEPBF=\pbf
\def\pbf{\GreekBoldtrue\SAVEPBF}%

\@ifundefined{theorem}{\newtheorem{theorem}{Theorem}}{}
\@ifundefined{lemma}{\newtheorem{lemma}[theorem]{Lemma}}{}
\@ifundefined{corollary}{\newtheorem{corollary}[theorem]{Corollary}}{}
\@ifundefined{conjecture}{}{}
\@ifundefined{proposition}{\newtheorem{proposition}[theorem]{Proposition}}{}
\@ifundefined{axiom}{}{}
\@ifundefined{remark}{\newtheorem{remark}{Remark}}{}
\@ifundefined{example}{\newtheorem{example}{Example}}{}
\@ifundefined{exercise}{}{}
\@ifundefined{definition}{\newtheorem{definition}{Definition}}{}

\@ifundefined{mathletters}{%
  \newcounter{equationnumber}  
  \def\mathletters{%
     \addtocounter{equation}{1}
     \edef\@currentlabel{\theequation}%
     \setcounter{equationnumber}{\c@equation}
     \setcounter{equation}{0}%
     \edef\theequation{\@currentlabel\noexpand\alph{equation}}%
  }
  
}{}

\@ifundefined{BibTeX}{%
    \def\BibTeX{{\rm B\kern-.05em{\sc i\kern-.025em b}\kern-.08em
                 T\kern-.1667em\lower.7ex\hbox{E}\kern-.125emX}}}{}%
\@ifundefined{AmS}%
    {\def\AmS{{\protect\usefont{OMS}{cmsy}{m}{n}%
                A\kern-.1667em\lower.5ex\hbox{M}\kern-.125emS}}}{}%
\@ifundefined{AmSTeX}{}{}%
%

\def\@@eqncr{\let\@tempa\relax
    \ifcase\@eqcnt \def\@tempa{& & &}\or \def\@tempa{& &}%
      \else \def\@tempa{&}\fi
     \@tempa
     \if@eqnsw
        \iftag@
           \@taggnum
        \else
           \@eqnnum\stepcounter{equation}%
        \fi
     \fi
     \global\tag@false
     \global\@eqnswtrue
     \global\@eqcnt\z@\cr}

\def\TCItag{\@ifnextchar*{\@TCItagstar}{\@TCItag}}
\def\@TCItag#1{%
    \global\tag@true
    \global\def\@taggnum{(#1)}}
\def\@TCItagstar*#1{%
    \global\tag@true
    \global\def\@taggnum{#1}}
%
%
%
%
%
%
%
%
%
%
%
%
%
%
%
%
%
%
%
%
%
%
%
%
%
\def\tsum{\mathop{\textstyle \sum }}%
\def\tbigoplus{\mathop{\textstyle \bigoplus }}%
\def\tbigotimes{\mathop{\textstyle \bigotimes }}%
%
%
%
%
%
%
%
%
%
%
%
%
%
%
%
%
%
%
%
%
%
%

\if@compatibility\else
  \RequirePackage{amsmath}
\fi

\def\ExitTCILatex{\makeatother }

\bgroup
\ifx\ds@amstex\relax
   \message{amstex already loaded}\aftergroup\ExitTCILatex
\else
   \@ifpackageloaded{amsmath}%
      {\if@compatibility\message{amsmath already loaded}\fi\aftergroup\ExitTCILatex}
      {}
   \@ifpackageloaded{amstex}%
      {\if@compatibility\message{amstex already loaded}\fi\aftergroup\ExitTCILatex}
      {}
   \@ifpackageloaded{amsgen}%
      {\if@compatibility\message{amsgen already loaded}\fi\aftergroup\ExitTCILatex}
      {}
\fi
\egroup


\typeout{TCILATEX defining AMS-like constructs in LaTeX 2.09 COMPATIBILITY MODE}
%
%
\let\DOTSI\relax
\def\RIfM@{\relax\ifmmode}%
\def\FN@{\futurelet\next}%
\newcount\intno@
\def\iint{\DOTSI\intno@\tw@\FN@\ints@}%
\def\iiint{\DOTSI\intno@\thr@@\FN@\ints@}%
\def\iiiint{\DOTSI\intno@4 \FN@\ints@}%
\def\idotsint{\DOTSI\intno@\z@\FN@\ints@}%
\def\ints@{\findlimits@\ints@@}%
\newif\iflimtoken@
\newif\iflimits@
\def\findlimits@{\limtoken@true\ifx\next\limits\limits@true
 \else\ifx\next\nolimits\limits@false\else
 \limtoken@false\ifx\ilimits@\nolimits\limits@false\else
 \ifinner\limits@false\else\limits@true\fi\fi\fi\fi}%
\def\multint@{\int\ifnum\intno@=\z@\intdots@                          
 \else\intkern@\fi                                                    
 \ifnum\intno@>\tw@\int\intkern@\fi                                   
 \ifnum\intno@>\thr@@\int\intkern@\fi                                 
 \int}
\def\multintlimits@{\intop\ifnum\intno@=\z@\intdots@\else\intkern@\fi
 \ifnum\intno@>\tw@\intop\intkern@\fi
 \ifnum\intno@>\thr@@\intop\intkern@\fi\intop}%
\def\intic@{%
    \mathchoice{\hskip.5em}{\hskip.4em}{\hskip.4em}{\hskip.4em}}%
\def\negintic@{\mathchoice
 {\hskip-.5em}{\hskip-.4em}{\hskip-.4em}{\hskip-.4em}}%
\def\ints@@{\iflimtoken@                                              
 \def\ints@@@{\iflimits@\negintic@
   \mathop{\intic@\multintlimits@}\limits                             
  \else\multint@\nolimits\fi                                          
  \eat@}
 \else                                                                
 \def\ints@@@{\iflimits@\negintic@
  \mathop{\intic@\multintlimits@}\limits\else
  \multint@\nolimits\fi}\fi\ints@@@}%
\def\intkern@{\mathchoice{\!\!\!}{\!\!}{\!\!}{\!\!}}%
\def\plaincdots@{\mathinner{\cdotp\cdotp\cdotp}}%
\def\intdots@{\mathchoice{\plaincdots@}%
 {{\cdotp}\mkern1.5mu{\cdotp}\mkern1.5mu{\cdotp}}%
 {{\cdotp}\mkern1mu{\cdotp}\mkern1mu{\cdotp}}%
 {{\cdotp}\mkern1mu{\cdotp}\mkern1mu{\cdotp}}}%
%
%
%
\def\RIfM@{\relax\protect\ifmmode}
\def\text{\RIfM@\expandafter\text@\else\expandafter\mbox\fi}
\let\nfss@text\text
\def\text@#1{\mathchoice
   {\textdef@\displaystyle\f@size{#1}}%
   {\textdef@\textstyle\tf@size{\firstchoice@false #1}}%
   {\textdef@\textstyle\sf@size{\firstchoice@false #1}}%
   {\textdef@\textstyle \ssf@size{\firstchoice@false #1}}%
   \glb@settings}

\def\textdef@#1#2#3{\hbox{{%
                    \everymath{#1}%
                    \let\f@size#2\selectfont
                    #3}}}
\newif\iffirstchoice@
\firstchoice@true
%
%
\def\Let@{\relax\iffalse{\fi\let\\=\cr\iffalse}\fi}%
\def\vspace@{\def\vspace##1{\crcr\noalign{\vskip##1\relax}}}%
\def\multilimits@{\bgroup\vspace@\Let@
 \baselineskip\fontdimen10 \scriptfont\tw@
 \advance\baselineskip\fontdimen12 \scriptfont\tw@
 \lineskip\thr@@\fontdimen8 \scriptfont\thr@@
 \lineskiplimit\lineskip
 \vbox\bgroup\ialign\bgroup\hfil$\m@th\scriptstyle{##}$\hfil\crcr}%
\def\Sb{_\multilimits@}%
\def\endSb{\crcr\egroup\egroup\egroup}%
\def\Sp{^\multilimits@}%

%
%
%
\newdimen\ex@
\ex@.2326ex
\def\rightarrowfill@#1{$#1\m@th\mathord-\mkern-6mu\cleaders
 \hbox{$#1\mkern-2mu\mathord-\mkern-2mu$}\hfill
 \mkern-6mu\mathord\rightarrow$}%
\def\leftarrowfill@#1{$#1\m@th\mathord\leftarrow\mkern-6mu\cleaders
 \hbox{$#1\mkern-2mu\mathord-\mkern-2mu$}\hfill\mkern-6mu\mathord-$}%
\def\leftrightarrowfill@#1{$#1\m@th\mathord\leftarrow
\mkern-6mu\cleaders
 \hbox{$#1\mkern-2mu\mathord-\mkern-2mu$}\hfill
 \mkern-6mu\mathord\rightarrow$}%
\def\overrightarrow{\mathpalette\overrightarrow@}%
\def\overrightarrow@#1#2{\vbox{\ialign{##\crcr\rightarrowfill@#1\crcr
 \noalign{\kern-\ex@\nointerlineskip}$\m@th\hfil#1#2\hfil$\crcr}}}%

\def\overleftarrow{\mathpalette\overleftarrow@}%
\def\overleftarrow@#1#2{\vbox{\ialign{##\crcr\leftarrowfill@#1\crcr
 \noalign{\kern-\ex@\nointerlineskip}$\m@th\hfil#1#2\hfil$\crcr}}}%
\def\overleftrightarrow{\mathpalette\overleftrightarrow@}%
\def\overleftrightarrow@#1#2{\vbox{\ialign{##\crcr
   \leftrightarrowfill@#1\crcr
 \noalign{\kern-\ex@\nointerlineskip}$\m@th\hfil#1#2\hfil$\crcr}}}%
\def\underrightarrow{\mathpalette\underrightarrow@}%
\def\underrightarrow@#1#2{\vtop{\ialign{##\crcr$\m@th\hfil#1#2\hfil
  $\crcr\noalign{\nointerlineskip}\rightarrowfill@#1\crcr}}}%

\def\underleftarrow{\mathpalette\underleftarrow@}%
\def\underleftarrow@#1#2{\vtop{\ialign{##\crcr$\m@th\hfil#1#2\hfil
  $\crcr\noalign{\nointerlineskip}\leftarrowfill@#1\crcr}}}%
\def\underleftrightarrow{\mathpalette\underleftrightarrow@}%
\def\underleftrightarrow@#1#2{\vtop{\ialign{##\crcr$\m@th
  \hfil#1#2\hfil$\crcr
 \noalign{\nointerlineskip}\leftrightarrowfill@#1\crcr}}}%

\def\qopnamewl@#1{\mathop{\operator@font#1}\nlimits@}
\let\nlimits@\displaylimits
\def\setboxz@h{\setbox\z@\hbox}

\def\varlim@#1#2{\mathop{\vtop{\ialign{##\crcr
 \hfil$#1\m@th\operator@font lim$\hfil\crcr
 \noalign{\nointerlineskip}#2#1\crcr
 \noalign{\nointerlineskip\kern-\ex@}\crcr}}}}

 \def\rightarrowfill@#1{\m@th\setboxz@h{$#1-$}\ht\z@\z@
  $#1\copy\z@\mkern-6mu\cleaders
  \hbox{$#1\mkern-2mu\box\z@\mkern-2mu$}\hfill
  \mkern-6mu\mathord\rightarrow$}
\def\leftarrowfill@#1{\m@th\setboxz@h{$#1-$}\ht\z@\z@
  $#1\mathord\leftarrow\mkern-6mu\cleaders
  \hbox{$#1\mkern-2mu\copy\z@\mkern-2mu$}\hfill
  \mkern-6mu\box\z@$}

\def\projlim{\qopnamewl@{proj\,lim}}
\def\injlim{\qopnamewl@{inj\,lim}}
\def\varinjlim{\mathpalette\varlim@\rightarrowfill@}
\def\varprojlim{\mathpalette\varlim@\leftarrowfill@}
\def\varliminf{\mathpalette\varliminf@{}}
\def\varliminf@#1{\mathop{\underline{\vrule\@depth.2\ex@\@width\z@
   \hbox{$#1\m@th\operator@font lim$}}}}
\def\varlimsup{\mathpalette\varlimsup@{}}
\def\varlimsup@#1{\mathop{\overline
  {\hbox{$#1\m@th\operator@font lim$}}}}

%
%
%
%
%
%
\begingroup \catcode `|=0 \catcode `[= 1
\catcode`]=2 \catcode `\{=12 \catcode `\}=12
\catcode`\\=12 
|gdef|@alignverbatim#1\end{align}[#1|end[align]]
|gdef|@salignverbatim#1\end{align*}[#1|end[align*]]

|gdef|@alignatverbatim#1\end{alignat}[#1|end[alignat]]
|gdef|@salignatverbatim#1\end{alignat*}[#1|end[alignat*]]

|gdef|@xalignatverbatim#1\end{xalignat}[#1|end[xalignat]]
|gdef|@sxalignatverbatim#1\end{xalignat*}[#1|end[xalignat*]]

|gdef|@gatherverbatim#1\end{gather}[#1|end[gather]]
|gdef|@sgatherverbatim#1\end{gather*}[#1|end[gather*]]

|gdef|@gatherverbatim#1\end{gather}[#1|end[gather]]
|gdef|@sgatherverbatim#1\end{gather*}[#1|end[gather*]]

|gdef|@multilineverbatim#1\end{multiline}[#1|end[multiline]]
|gdef|@smultilineverbatim#1\end{multiline*}[#1|end[multiline*]]

|gdef|@arraxverbatim#1\end{arrax}[#1|end[arrax]]
|gdef|@sarraxverbatim#1\end{arrax*}[#1|end[arrax*]]

|gdef|@tabulaxverbatim#1\end{tabulax}[#1|end[tabulax]]
|gdef|@stabulaxverbatim#1\end{tabulax*}[#1|end[tabulax*]]

|endgroup

\def\align{\@verbatim \frenchspacing\@vobeyspaces \@alignverbatim
You are using the "align" environment in a style in which it is not defined.}

\@namedef{align*}{\@verbatim\@salignverbatim
You are using the "align*" environment in a style in which it is not defined.}
\expandafter\let\csname endalign*\endcsname =\endtrivlist

\def\alignat{\@verbatim \frenchspacing\@vobeyspaces \@alignatverbatim
You are using the "alignat" environment in a style in which it is not defined.}

\@namedef{alignat*}{\@verbatim\@salignatverbatim
You are using the "alignat*" environment in a style in which it is not defined.}
\expandafter\let\csname endalignat*\endcsname =\endtrivlist

\def\xalignat{\@verbatim \frenchspacing\@vobeyspaces \@xalignatverbatim
You are using the "xalignat" environment in a style in which it is not defined.}

\@namedef{xalignat*}{\@verbatim\@sxalignatverbatim
You are using the "xalignat*" environment in a style in which it is not defined.}
\expandafter\let\csname endxalignat*\endcsname =\endtrivlist

\def\gather{\@verbatim \frenchspacing\@vobeyspaces \@gatherverbatim
You are using the "gather" environment in a style in which it is not defined.}

\@namedef{gather*}{\@verbatim\@sgatherverbatim
You are using the "gather*" environment in a style in which it is not defined.}
\expandafter\let\csname endgather*\endcsname =\endtrivlist

\def\multiline{\@verbatim \frenchspacing\@vobeyspaces \@multilineverbatim
You are using the "multiline" environment in a style in which it is not defined.}

\@namedef{multiline*}{\@verbatim\@smultilineverbatim
You are using the "multiline*" environment in a style in which it is not defined.}
\expandafter\let\csname endmultiline*\endcsname =\endtrivlist

\def\arrax{\@verbatim \frenchspacing\@vobeyspaces \@arraxverbatim
You are using a type of "array" construct that is only allowed in AmS-LaTeX.}

\def\tabulax{\@verbatim \frenchspacing\@vobeyspaces \@tabulaxverbatim
You are using a type of "tabular" construct that is only allowed in AmS-LaTeX.}

\@namedef{arrax*}{\@verbatim\@sarraxverbatim
You are using a type of "array*" construct that is only allowed in AmS-LaTeX.}
\expandafter\let\csname endarrax*\endcsname =\endtrivlist

\@namedef{tabulax*}{\@verbatim\@stabulaxverbatim
You are using a type of "tabular*" construct that is only allowed in AmS-LaTeX.}
\expandafter\let\csname endtabulax*\endcsname =\endtrivlist


 \def\endequation{%
     \ifmmode\ifinner 
      \iftag@
        \addtocounter{equation}{-1} 
        $\hfil
           \displaywidth\linewidth\@taggnum\egroup \endtrivlist
        \global\tag@false
        \global\@ignoretrue   
      \else
        $\hfil
           \displaywidth\linewidth\@eqnnum\egroup \endtrivlist
        \global\tag@false
        \global\@ignoretrue 
      \fi
     \else   
      \iftag@
        \addtocounter{equation}{-1} 
        \eqno \hbox{\@taggnum}
        \global\tag@false%
        $$\global\@ignoretrue
      \else
        \eqno \hbox{\@eqnnum}
        $$\global\@ignoretrue
      \fi
     \fi\fi
 } 

 \newif\iftag@ \tag@false
 
 \def\TCItag{\@ifnextchar*{\@TCItagstar}{\@TCItag}}
 \def\@TCItag#1{%
     \global\tag@true
     \global\def\@taggnum{(#1)}}
 \def\@TCItagstar*#1{%
     \global\tag@true
     \global\def\@taggnum{#1}}

  \@ifundefined{tag}{
     \def\tag{\@ifnextchar*{\@tagstar}{\@tag}}
     \def\@tag#1{%
         \global\tag@true
         \global\def\@taggnum{(#1)}}
     \def\@tagstar*#1{%
         \global\tag@true
         \global\def\@taggnum{#1}}
  }{}

\def\tfrac#1#2{{\textstyle {#1 \over #2}}}%
\def\binom#1#2{{#1 \choose #2}}%
%
%

\makeatother

\begin{document}
\title[Equivariant quantum cohomology]{Equivariant quantum cohomology and
Yang-Baxter algebras}
\author{Vassily Gorbounov}
\address[V. G.]{Institute of Mathematics, University of Aberdeen, Aberdeen
AB24 3UE, UK}
\email[V.G.]{v.gorbounov@abdn.ac.uk}
\author{Christian Korff}
\curraddr[C. K.]{School of Mathematics and Statistics, University of
Glasgow, 15 University Gardens, Glasgow G12 8QW, UK}
\email[C. K.]{christian.korff@glasgow.ac.uk}
\urladdr{http://www.maths.gla.ac.uk/~ck/}
\date{12 February, 2014}
\subjclass{14N35, 05E05, 05A15, 05A19, 82B23}
\keywords{Gromov-Witten invariants, quantum cohomology, enumerative
combinatorics, exactly solvable models, Bethe ansatz}

\begin{abstract}
There are two intriguing statements regarding the quantum cohomology of
partial flag varieties. The first one relates quantum cohomology to the
affinisation of Lie algebras and the homology of the affine Grassmannian,
the second one connects it with the geometry of quiver varieties. The
connection with the affine Grassmannian was first discussed in unpublished
work of Peterson and subsequently proved by Lam and Shimozono. The second
development is based on recent works of Nekrasov, Shatashvili and of Maulik,
Okounkov relating the quantum cohomology of Nakajima varieties with
integrable systems and quantum groups. In this article we explore for the
simplest case, the Grassmannian, the relation between the two approaches. We
extend the definition of the integrable systems called vicious and
osculating walkers to the equivariant setting and show that these models
have simple expressions in a particular representation of the affine
nil-Hecke ring. We compare this representation with the one introduced by
Kostant and Kumar and later used by Peterson in his approach to Schubert
calculus. We reveal an underlying quantum group structure in terms of
Yang-Baxter algebras and relate them to Schur-Weyl duality. We also derive
new combinatorial results for equivariant Gromov-Witten invariants such as
an explicit determinant formula.
\end{abstract}

\maketitle




\section{Introduction}

Quantum cohomology was introduced in the 90'ies of the last century as a
deformation of the usual multiplication in the cohomology of a manifold and
has been in the centre of the interaction between modern mathematics and
physics ever since; see e.g. \cite%
{Bertram,Buch,BKT,Gepner,Intriligator,Vafa,Witten}. Despite a massive volume
of spectacular results obtained in the past 20 years, the theory of quantum
cohomology is far from its final form and new unexpected connections between
related mathematics and physics continue to appear.\smallskip

In this paper we will discuss the relation between two such surprising
results regarding quantum cohomology, one was discovered some time ago, the
other is relatively new.\smallskip

In unpublished but highly influential work \cite{Peterson} Peterson related
the topology of the affine Grassmannian, and the quantum cohomology of
finite-dimensional flag varieties, both associated with the same algebraic
group $G$. Many of his results have been further explored and proved by
several authors; we in particular refer to the discussions in \cite{LamShim}
and \cite{Rietsch} among others. Peterson's key tool is the affine nil-Hecke
algebra introduced earlier by Kostant and Kumar \cite{KostantKumar} and its
important commutative subalgebra, called nowadays the \emph{Peterson algebra}%
. In our discussion we shall make contact with the following result
originally put forward in \cite{Peterson}: the quantum equivariant
cohomology of the partial flag variety is a module over the affine nil-Hecke
algebra.

The second development in the theory of quantum cohomology is rather recent.
It was initiated in the work of Nekrasov, Shatashvili \cite%
{NekrasovShatashvili} and later developed mathematically by Braverman,
Maulik and Okounkov \cite{BMO,MaulikOkounkov}. These authors work with a
large class of algebraic varieties, the Nakajima varieties, of which the
cotangent space of a partial flag variety is a particular example. Their
result can be described as follows. Consider the disjoint union of the
cotangent spaces $T^{\ast }\limfunc{Gr}_{n,N}$ to the Grassmannians $%
\limfunc{Gr}_{n,N}$ of $n$-dimensional subspaces in the same ambient space
of fixed dimension $N$. It has been known for some time that the equivariant
cohomology of $T^{\ast }\limfunc{Gr}_{n,N}$ can be identified with the space
of states of an integrable system, the Heisenberg spin-chain \cite%
{GV,Ginzburg,Vasserot,Rimanyietal}. The algebra of \textquotedblleft quantum
symmetries\textquotedblright\ of this spin-chain is a quantum group, the
Yangian of $sl_{2}$, making the direct sum of the equivariant cohomologies $%
H_{T}^{\ast }(T^{\ast }\limfunc{Gr}_{n,N})$ with respect to the diemsnion $n$
a module over the Yangian. The Yangian contains a commutative subalgebra,
depending on a parameter, named \emph{Bethe algebra}, see e.g. \cite%
{Rimanyietal, Gorbetal}. It turns out that this subalgebra can be identified
with the equivariant quantum cohomology $QH_{T}^{\ast }(T^{\ast }\limfunc{Gr}%
_{n,N})$ of each individual summand; see \cite{Gorbetal} for a proof. The
nil-Hecke algebra is a part of this construction \cite{BMO}, appearing
presumably due to Schur-Weyl duality for the Yangian.\smallskip

In this article we shall instead consider the union of the quantum
cohomologies of the Grassmannians $\limfunc{Gr}_{n,N}$ themselves rather
than those of their cotangent spaces. While it is currently not known how to
endow the set of quantum cohomologies of all partial flag varieties with the
structure of a quantum group module similar to the work of Nekrasov,
Shatashvili \cite{NekrasovShatashvili} and Maulik, Okounkov \cite%
{MaulikOkounkov}, we link the two mentioned developments for the simplest
case, the Grassmannian and, thus, make first steps towards filling this gap.
Building on the earlier works \cite{KorffStroppel} and \cite{VicOsc}, we
introduce a quantum integrable lattice model whose space of states $\mathcal{%
V}=\tbigoplus_{n=0}^{N}\mathcal{V}_{n}$ is the direct sum of the quantum
equivariant cohomologies, $\mathcal{V}_{n}\cong QH_{T}^{\ast }(\limfunc{Gr}%
_{n,N})$. The quantum integrable system is formulated in terms of certain
solutions of the Yang-Baxter equation and using the latter, one can define
the so-called \emph{Yang-Baxter algebra}, which acts on $\mathcal{V}$ and
plays a role analogous to that of the Yangian in the setting of Maulik and
Okounkov. As in the case of the Yangian also the Yang-Baxter algebra
contains a commutative subalgebra, generated by the set of commuting
transfer matrices, which we explicitly describe in terms of a particular
representation of the affine nil-Hecke algebra. This representation is
different from the one considered by Kostant and Kumar \cite{KostantKumar}\
and Peterson \cite{Peterson}. We discuss the affine Hecke algebra action on
each $\mathcal{V}_{n}$ in the basis of Schubert classes and clarify its
relation with the action defined in \cite{Peterson} by changing to the basis
of idempotents in $QH_{T}^{\ast }(\limfunc{Gr}_{n,N})\otimes \mathbb{F}_{q}$%
, where $\mathbb{F}_{q}$ is the completed tensor product $\mathbb{F}_{q}=%
\mathbb{C}[q^{\pm 1/N}]\widehat{\otimes }\mathbb{F}$ with $\mathbb{F}$ being
the algebraically closed field of Puiseux series in the equivariant
parameters, $\mathbb{F}:=\mathbb{C}\{\!\{T_{1},\ldots ,T_{N}\}\!\}$. In
particular, we identify the counterparts of Peterson's basis elements in the
affine Hecke algebra in our setting and derive explicit formulae for them.

The second main result of our article is that we show that the appearance of
the Yang-Baxter algebra is a special case of Schur-Weyl duality. As
discussed in \cite{Rimanyietal} the equivariant cohomologies $H_{T}^{\ast }(%
\limfunc{Fl}_{\ell })$ of flag varieties - of which the Grassmannian is the
simplest example with $\ell =2$ - naturally allow for an action of the
current algebra $\mathfrak{gl}_{\ell }[z]=\mathfrak{gl}_{\ell }\otimes 
\mathbb{C}[z]$ in the basis of idempotents. Similar to the case of the
cotangent space discussed above, the current algebra contains a large
commutative subalgebra which has been indentified in \emph{loc. cit.} with
the Bethe algebra or integrals of motion of the Gaudin model. This $%
\mathfrak{gl}_{\ell }[z]$-module structure is in Schur-Weyl duality with the
natural action of the symmetric group $\mathbb{S}_{N}$ on the idempotents
which are labelled in terms of $\mathbb{S}_{N}$-cosets. Using our setup we
are able to explicitly describe this symmetric group action in the basis of
Schubert classes and show that it is directly connected to our integrable
model by braiding two lattice columns. In particular, we prove that the
action of the Yang-Baxter algebra commutes with this $\mathbb{S}_{N}$-action
and, thus, must be contained in the current algebra according to Schur-Weyl
duality. Our results extend to the quantum case.\smallskip 

We expect a number of consequences and generalisations from our
construction. The immediate task is to include all partial flag varieties
into the framework set out in this article, as well as to work with quantum
K-theory instead of quantum cohomology. Another task we plan to address in
future work is to describe our result as an appropriate limit of the
construction of Maulik and Okounkov and to investigate the geometric origin
of our action of the Yang-Baxter and nil-Hecke algebra.\medskip

\noindent \textbf{Acknowledgment}. The authors would like to thank the
organisers Laurent Manivel and \'{E}ric Vasserot for their kind invitation
to the ANR workshop \emph{Quantum cohomology and quantum K-theory}, Paris
Diderot, 15-17 January 2014, where the results of this article were
presented. C. K. gratefully acknowledges discussions with Gwyn Bellamy and
V. G. would like to thank the Max Planck Institute for Mathematics Bonn
where part of this work was carried out. Further thanks are due to Catharina
Stroppel for her detailed and helpful comments on a draft manuscript,
Jessica Striker for alerting us to the work \cite{BBT} and Kaisa Taipale for
making an extended abstract available.

\subsection{Overview of the main results}

We give additional details of our construction to help the reader find its
way through what are sometimes lengthy and technical calculations. Starting
point for our discussion is the following presentation of the $T$%
-equivariant quantum cohomology $QH_{T}^{\ast }(\limfunc{Gr}_{n,N})$ of the
Grassmannian due to Givental and Kim \cite{GK}. Let $\limfunc{Gr}_{n,N}$ be
the Grassmannian of subspaces of dimension $n$ in $\mathbb{C}^{N}$ and set $%
k=N-n$. The group $\limfunc{GL}(N)$ induces an action of the torus $T=(%
\mathbb{C}^{\ast })^{N}$. Set $\Lambda =\mathbb{Z}[T_{1},\ldots ,T_{N}]$,
which can be identified with the $T$-equivariant cohomology of a point $%
H_{T}^{\ast }($pt$)$, where the $T_{i}$'s are called the equivariant
parameters. Often it will be more convenient to work with the
\textquotedblleft reversed\textquotedblright\ parameters $T_{N+1-i}$ instead
and throughout this article we will use the notation $t_{i}=T_{N+1-i}$.

\begin{theorem}[Givental-Kim]
There exists an isomorphism of $\Lambda \lbrack q]$-algebras 
\begin{equation}
QH_{T}^{\ast }(\limfunc{Gr}\nolimits_{n,N})\cong \Lambda \lbrack
q][a_{1},\ldots ,a_{n},b_{1},\ldots ,b_{k}]/I_{n}\;,  \label{GK}
\end{equation}%
where $I_{n}$ is the ideal generated by the relations 
\begin{equation}
\sum_{i+j=r}a_{i}b_{j}=e_{r}(T_{1},\ldots ,T_{N})\qquad \text{and\qquad }%
a_{n}b_{k}=T_{1}\cdots T_{N}+(-1)^{n}q  \label{Idef}
\end{equation}%
with $0\leq i\leq n$, $0\leq j\leq k$, $a_{0}=b_{0}=1$, $0\leq r\leq N-1$
and $e_{r}$ is the $r^{\text{th}}$ elementary symmetric polynomial.
\end{theorem}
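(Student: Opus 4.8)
The plan is to realise both sides as free $\Lambda[q]$-modules of rank $\binom{N}{n}$ and to produce a $\Lambda[q]$-algebra surjection from the right-hand side onto $QH_T^{\ast}(\limfunc{Gr}_{n,N})$ that reduces at $q=0$ to the classical Borel--Whitney presentation. First I would fix the identification of generators, sending $a_i$ and $b_j$ to the equivariant special Schubert classes realised as the $T$-equivariant Chern classes of the tautological sub- and quotient bundles $S,Q$ in $0\to S\to \underline{\mathbb{C}^N}\to Q\to 0$ (with the signs and the reversed parameters $t_i=T_{N+1-i}$ chosen to remove the Whitney signs). Since $QH_T^{\ast}(\limfunc{Gr}_{n,N})\cong H_T^{\ast}(\limfunc{Gr}_{n,N})\otimes_{\Lambda}\Lambda[q]$ as a $\Lambda[q]$-module -- the quantum product deforms only the ring structure, not the underlying module -- the left-hand side is free of rank $\binom{N}{n}$, with basis the Schubert classes $\sigma_\lambda$ for $\lambda$ inside the $n\times k$ box.

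The core of the argument is a \emph{degree} count. I assign $a_i$ degree $i$, $b_j$ degree $j$, each parameter $T_\ell$ degree $1$, and give $q$ its Gromov--Witten degree $N$. The classical equivariant Whitney identity $c^T(S)\,c^T(Q)=\prod_{\ell=1}^N(1+T_\ell)$ produces the homogeneous relations $\sum_{i+j=r}a_ib_j=e_r(T_1,\dots,T_N)$ for $0\le r\le N$. In the quantum ring, $a_i\ast_q b_j$ differs from the cup product by a sum of terms $q^{d}\cdot(\text{class of degree }i+j-dN)$. Since $i\le n$, $j\le k$ force $i+j\le N$, such a correction can be nonzero only if $d=1$ and $i+j=N$, i.e. only for the single pair $(i,j)=(n,k)$, and then it is $q$ times a class of degree $0$. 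Hence the relations for $0\le r\le N-1$ survive \emph{verbatim}, while the top relation reads $a_n\ast_q b_k=T_1\cdots T_N+c\,q$ for some constant $c\in\mathbb{Z}$.

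The hard part is showing $c=(-1)^n$, which is the only genuinely geometric input. The cleanest route is to note that $c$, being the coefficient of a degree-$0$ class, is independent of the $T_\ell$, so it equals the corresponding coefficient of the well-known non-equivariant quantum cohomology of $\limfunc{Gr}_{n,N}$, where the Siebert--Tian/Witten presentation gives exactly $a_n\ast_q b_k=(-1)^n q$. Intrinsically, $c$ is the genus-zero, degree-one, three-point Gromov--Witten invariant pairing the top Chern classes of $S$ and $Q$ against the point class, and I would pin it down directly via Bertram's quantum Pieri rule, which governs precisely the degree-one corrections to products of special Schubert classes. (Givental and Kim's original route instead extracts this relation from the equivariant quantum differential equation and its identification with the Toda lattice; either suffices.)

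Finally I would assemble the pieces. The previous two paragraphs show that the map $\phi:\Lambda[q][a_1,\dots,a_n,b_1,\dots,b_k]\to QH_T^{\ast}(\limfunc{Gr}_{n,N})$ kills the ideal $I_n$, hence descends to $\bar\phi$ on the quotient, and $\bar\phi$ is surjective because the special Schubert classes generate $QH_T^{\ast}$ as a $\Lambda[q]$-algebra (they do so classically, and generation is unaffected by the deformation). A standard straightening argument using the relations shows the quotient on the right is spanned over $\Lambda[q]$ by the monomials indexed by partitions in the $n\times k$ box; since its reduction modulo $q$ is the classical cohomology, which is free of rank $\binom{N}{n}$, this spanning set is in fact a basis and the right-hand side is free of rank $\binom{N}{n}$. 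Thus $\bar\phi$ is a surjection of free $\Lambda[q]$-modules of equal finite rank, hence an isomorphism; equivalently, $\bar\phi$ is an isomorphism modulo $q$ and, both sides being graded with $q$ in positive degree so that $\bigcap_m q^m=0$, it is injective as well. This completes the identification, the only nonformal ingredient being the Gromov--Witten computation of the constant $(-1)^n$.
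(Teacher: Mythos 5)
Your proposal is essentially correct, but be aware that the paper itself does not prove this theorem: it is quoted as background with a citation to Givental and Kim \cite{GK}, whose original argument runs through the equivariant quantum differential equation and its identification with the Toda lattice. The paper only recovers the presentation (\ref{GK}) indirectly, much later, from the functional relation (\ref{QQ}) of its transfer matrices together with the Bethe ansatz and the Jacobi-algebra presentation of \cite{Gepner}, \cite{GP}. Your route is different from both and is the standard geometric one: send $a_i,b_j$ to the equivariant Chern classes of the tautological bundles, use the equivariant Whitney identity for the classical relations, observe by the degree count (each $T_\ell$ of degree $1$, $\deg q=N$, equivariant Gromov--Witten coefficients homogeneous of degree $i+j-|\nu|-dN\geq 0$) that only the single product $a_n\ast_q b_k$ can acquire a quantum correction, which is then a constant times $q$, pin down that constant at $T=0$ against the non-equivariant Siebert--Tian/Witten presentation, and bootstrap the resulting surjection to an isomorphism. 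What this buys is a short, self-contained proof using only the classical equivariant Borel presentation and the non-equivariant quantum presentation as inputs; what it does not give is any of the structural information (idempotents, residue formulae for structure constants) that the paper's integrable-systems derivation is designed to produce.

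One caution on the final assembly. The sentence asserting that the quotient $\Lambda[q][a_1,\ldots,a_n,b_1,\ldots,b_k]/I_n$ is \emph{free} because a spanning set of $\binom{N}{n}$ monomials reduces to a basis modulo $q$ is not a valid inference on its own: a graded module can be spanned by $m$ homogeneous elements and have free reduction of rank $m$ modulo $q$ without being free (e.g. $\Lambda[q]\oplus\Lambda[q]/(q)$), unless one already knows that $q$ is a non-zero-divisor on the quotient. So the version of the conclusion phrased as ``a surjection of free modules of equal rank'' rests on an unproved freeness claim. Fortunately your alternative closing argument is complete and does not use it: $\bar\phi$ is surjective, it is an isomorphism modulo $q$ by the classical equivariant presentation, the target is $q$-torsion-free because it is free over $\Lambda[q]$, and the source is non-negatively graded with $q$ in positive degree, so every homogeneous element of $\ker\bar\phi$ lies in $\bigcap_m q^m$ and hence vanishes. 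With the freeness sentence deleted (freeness of the quotient then follows a posteriori from the isomorphism), the proof stands.
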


Note that the defining relations (\ref{Idef}) of $I$ are obtained by
expanding the polynomial identity%
\begin{equation}
\left( \tsum_{i=0}^{n}u^{i}a_{i}\right) \left(
\tsum_{j=0}^{k}u^{j}b_{j}\right) =(-1)^{n}qu^{N}+\prod_{r=1}^{N}(1-uT_{r})
\label{Ipoly}
\end{equation}%
in powers of the indeterminate $u$ and comparing coefficients on both sides.

\subsubsection{Combinatorial construction: non-intersecting lattice paths}

We will define two different types of non-intersecting lattice paths on a
cylinder of circumference $N$. The first type, so-called vicious walkers $%
\gamma $, come in $n$-tuples and the second type, so-called osculating
walkers $\gamma ^{\prime }$, come in $k$-tuples. These two types of lattice
paths are linked via level-rank duality $QH_{T}^{\ast }(\limfunc{Gr}%
_{n,N})\cong QH_{T}^{\ast }(\limfunc{Gr}_{k,N})$. Choosing a particular set
of weights $\limfunc{wt}(\gamma )\in \mathbb{Z}[x_{1},\ldots ,x_{n}]\otimes
\Lambda \lbrack q]$ - here the $x_{i}$'s denote some commuting
indeterminates called \emph{spectral parameters }- we consider the problem
of computing their weighted sums, so-called partition functions denoted by $%
\langle \lambda |Z_{n}(x|T)|\mu \rangle $. The latter depend on the start
and end points of the paths on the cylinder which are fixed in terms of two
partitions $\mu ,\lambda $ which label Schubert classes in $QH_{T}^{\ast }(%
\limfunc{Gr}_{n,N})$. We will prove the following expansion%
\begin{equation}
\langle \lambda |Z_{n}(x|T)|\mu \rangle :=\sum_{\gamma \in \Gamma _{\lambda
,\mu }}\limfunc{wt}(\gamma )=\sum_{\nu \in (n,k)}q^{d}C_{\mu \nu }^{\lambda
,d}(T)s_{\nu ^{\vee }}(x_{1},\ldots ,x_{n}|T)  \label{partition_func}
\end{equation}%
into factorial Schur functions, where $\nu ^{\vee }$ is the partition
obtained by taking the complement of the Young diagram of $\nu $ in the $%
n\times k$ bounding box. The coefficients $C_{\mu \nu }^{\lambda ,d}(T)\in
\Lambda $ are the \emph{equivariant 3-point genus zero Gromov-Witten
invariants} and our partition functions are therefore generating functions
of the latter. Specialising the spectral parameters to be equivariant
parameters, $x=T_{\nu },$ we obtain an explicit determinant formula (\ref%
{detGW}) which expresses Gromov-Witten invariants in terms of $\langle
\lambda |Z_{n}(T_{\nu }|T)|\mu \rangle $.

\subsubsection{Quantum group structures}

Let $V_{n}$ be the $\mathbb{Z}$-linear span of the partitions which label
the start or end positions of the vicious walkers on the lattice. To compute
the partition functions it turns out to be useful to define an operator $%
Z_{n}\in \mathbb{Z}[x_{1},\ldots ,x_{n}]\otimes \limfunc{End}(\mathcal{V}%
_{n})$ with $\mathcal{V}_{n}=\Lambda \lbrack q]\otimes V_{n}$ whose matrix
elements give the partition functions above. We will identify $\mathcal{V}%
_{n}$ with $QH_{T}^{\ast }(\limfunc{Gr}_{n,N})$ and the direct sum $%
\tbigoplus_{n=0}^{N}\mathcal{V}_{n}$ with $\mathcal{V}=\Lambda \lbrack
q]\otimes V^{\otimes N}$ where $V\cong \mathbb{C}^{2}$; this is simply the
parametrisation of Schubert classes in terms of binary strings or 01-words,
see e.g. \cite{KnutsonTao}. It turns out that one can write each $Z_{n}$ as
partial trace over an operator product, $Z_{n}=\limfunc{Tr}_{V^{\otimes
n}}M_{n}\cdots M_{2}M_{1}$, where $M_{i}=M(x_{i})\in \Lambda \lbrack
x_{i},q]\otimes \limfunc{End}(V\otimes \mathcal{V})$. The $M$'s are
solutions to the Yang-Baxter equation, $R_{ii^{\prime }}(x_{i}/x_{i^{\prime
}})M(x_{i})M(x_{i^{\prime }})=M(x_{i^{\prime }})M(x_{i})R_{ii^{\prime
}}(x_{i}/x_{i^{\prime }}),$ and we use them to define the Yang-Baxter
algebra $\subset \Lambda \lbrack q]\otimes \limfunc{End}\mathcal{V}$
mentioned previously. In particular, the operator coefficients for the
partitions functions of vicious and osculating walkers when $n=1,$ 
\begin{equation}
Z_{1}=H(x_{1})=\sum_{r\geq 0}H_{r}x_{1}^{r}\qquad \text{and}\qquad
Z_{1}^{\prime }=E(x_{1})=\sum_{r\geq 0}E_{r}x_{1}^{r}
\end{equation}%
generate a commutative subalgebra in the Yang-Baxter algebra. Here $%
Z_{n}^{\prime }$ denotes the partition function related to osculating
walkers.

\begin{theorem}
Let $\mathbb{P}_{n}\subset \limfunc{End}\mathcal{V}_{n}$ be the $\Lambda
\lbrack q]$-subalgebra generated by $\{E_{i}\}_{i=1}^{n}$ and $%
\{H_{j}\}_{j=1}^{k}$. Mapping $E_{i}\mapsto (-1)^{i}a_{i}$ and $H_{j}\mapsto
b_{j}$ in the notation of (\ref{GK}) provides a canonical isomorphism $%
\mathbb{P}_{n}~\widetilde{\rightarrow }~QH_{T}^{\ast }(\limfunc{Gr}_{n,N})$.
The pre-image of a Schubert class is given by%
\begin{equation}
\tilde{S}_{\lambda }=S_{\lambda }+\sum_{\mu \subset \lambda }(-1)^{|\lambda
/\mu |}\det (e_{\lambda _{i}-\mu _{j}-i+j}(T_{k+1+i-\lambda _{i}},\ldots
,T_{N}))_{1\leq i,j\leq n}~S_{\mu },  \label{Schubert}
\end{equation}%
where $S_{\lambda }=\det (E_{\lambda _{i}^{\prime }-i+j})_{1\leq i,j\leq k}$
and $\lambda ^{\prime }$ is the partition conjugate to $\lambda $.
\end{theorem}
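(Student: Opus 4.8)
The plan is to reduce everything to the Givental--Kim presentation (\ref{GK})--(\ref{Ipoly}): I will build the inverse map $\Psi\colon QH_T^\ast(\limfunc{Gr}_{n,N})\to\mathbb P_n$, $a_i\mapsto(-1)^iE_i$, $b_j\mapsto H_j$, show it is a well-defined isomorphism, and then obtain (\ref{Schubert}) by transporting the factorial Schur representative of a Schubert class through $\Psi$. The first, cheap, observation is that $\mathbb P_n$ is commutative: since $H(x)=Z_1$ and $E(x)=Z_1^{\prime}$ are the $n=1$ transfer matrices, the Yang--Baxter relation $R_{ii^{\prime}}(x_i/x_{i^{\prime}})M(x_i)M(x_{i^{\prime}})=M(x_{i^{\prime}})M(x_i)R_{ii^{\prime}}(x_i/x_{i^{\prime}})$ forces $[H(x),H(y)]=[E(x),E(y)]=[E(x),H(y)]=0$, so all $E_i,H_j$ commute and $\mathbb P_n\subset\limfunc{End}\mathcal V_n$ is a commutative $\Lambda[q]$-algebra.

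The structural core is the single operator identity
\begin{equation}
E(-u)\,H(u)=(-1)^{n}q\,u^{N}+\prod_{r=1}^{N}(1-uT_r),\label{master}
\end{equation}
which is exactly (\ref{Ipoly}) under $\sum_i u^ia_i\mapsto E(-u)$ and $\sum_j u^jb_j\mapsto H(u)$. I would prove (\ref{master}) directly in the lattice model: $H(u)$ stacks one vicious row and $E(-u)$ one osculating row on the cylinder of circumference $N$, and on the basis of $01$-words the alternating sum telescopes via a sign-reversing involution on configurations, so that only two families survive --- the non-wrapping ones, whose weights multiply out to $\prod_{r}(1-uT_r)$, and the single maximally wrapping configuration, contributing the quantum term $(-1)^{n}q\,u^{N}$. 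Extracting the coefficient of $u^{r}$ in (\ref{master}) returns precisely $\sum_{i+j=r}(-1)^iE_iH_j=e_r(T_1,\dots,T_N)$ for $0\le r\le N-1$ together with $(-1)^nE_nH_k=T_1\cdots T_N+(-1)^nq$, i.e. the generators (\ref{Idef}) of $I_n$.

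Granting (\ref{master}), the relations of $I_n$ hold among the operators, so $a_i\mapsto(-1)^iE_i$, $b_j\mapsto H_j$ descends to a surjective $\Lambda[q]$-algebra homomorphism $\Psi$ (surjective since the $E_i,H_j$ generate $\mathbb P_n$ by definition). For injectivity I would count ranks: the source is free of rank $\binom{N}{n}$ over $\Lambda[q]$, so it suffices that $\limfunc{rank}_{\Lambda[q]}\mathbb P_n\ge\binom{N}{n}$. This I get from the action on the empty-partition vector $|\varnothing\rangle$: the $\binom{N}{n}$ elements $S_\lambda=\det(E_{\lambda_i^{\prime}-i+j})_{1\le i,j\le k}$, $\lambda\subset(n,k)$, send $|\varnothing\rangle$ to vectors that are unitriangular with respect to the inclusion order (leading term $|\lambda\rangle$), hence are $\Lambda[q]$-linearly independent in $\limfunc{End}\mathcal V_n$. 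Thus $\Psi$ is bijective and the theorem's map is $\Psi^{-1}$.

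Finally, the preimage of the Schubert class $\sigma_\lambda$ is $\Psi(\sigma_\lambda)=\tilde S_\lambda$, where $\sigma_\lambda$ is represented in $QH_T^\ast(\limfunc{Gr}_{n,N})$ by the factorial Schur function $s_\lambda(\,\cdot\,|T)$. Writing this factorial Schur function through its dual Jacobi--Trudi determinant in the $a_i$ and applying $\Psi$ turns the ``leading'' determinant into $S_\lambda$, while the factorial shifts produce the lower-order corrections; the classical expansion of a factorial Schur function into ordinary Schur functions then yields exactly the sum over $\mu\subset\lambda$ with coefficient $(-1)^{|\lambda/\mu|}\det(e_{\lambda_i-\mu_j-i+j}(T_{k+1+i-\lambda_i},\dots,T_N))_{1\le i,j\le n}$. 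I expect the main obstacle to sit in this last step: pinning down the precise shift pattern $T_{k+1+i-\lambda_i},\dots,T_N$ in the entries and verifying the determinantal transition identity that rewrites the single $n\times n$ factorial minor as the $\mu$-indexed combination of the $S_\mu$ --- the algebraic reduction to Givental--Kim via (\ref{master}) is comparatively formal once the lattice computation is in place.
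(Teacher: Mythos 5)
Your architecture is sound, and its skeleton actually coincides with the paper's own proof more than you might expect: the paper also obtains the Givental--Kim relations (\ref{Idef}) by restricting the functional identity (\ref{QQ}) to $\mathcal{V}_{n}$ (your ``master identity'' is literally (\ref{QQ}), whose proof the paper likewise only sketches as a direct computation), and it also concludes by a rank count, exhibiting $\binom{N}{n}$ linearly independent elements of $\mathbb{P}_{n}$ to force the surjection from the rank-$\binom{N}{n}$ ring (\ref{GK}) to be bijective. Two of your steps, however, are asserted rather than proved. First, the commutation $[E(x),H(y)]=0$ does \emph{not} follow from the $R$-matrix you quote; it needs the additional (singular) solutions $R^{\prime\prime},R^{\prime\prime\prime}$ of the mixed Yang--Baxter equations (\ref{ybe2}). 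Second, and more importantly, your claim that the vectors $S_{\lambda}|\emptyset\rangle$ are unitriangular with leading term $|\lambda\rangle$ is exactly the point where the paper invokes the Bethe ansatz (Lemma \ref{lem:facS}(ii), $\tilde{S}_{\lambda}|\emptyset\rangle=|\lambda\rangle$, proved via the eigenbasis of Bethe vectors). Your claim is true and can be established without the Bethe ansatz --- e.g. via the grading $\deg T_{i}=1$, $\deg q=N$, $\deg|\mu\rangle=|\mu|$, under which $E_{r}$ is homogeneous of degree $r$ by (\ref{combE}), combined with the specialisation $T_{i}=q=0$ where the statement reduces to the known non-equivariant identity $S_{\lambda}|\emptyset\rangle=|\lambda\rangle$ of \cite{KorffStroppel,VicOsc} --- but some such argument must be supplied for your injectivity step to stand.

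The genuine divergence is the Schubert-class identification. You import Mihalcea's equivariant quantum Giambelli theorem \cite{Mihalcea} ($\sigma_{\lambda}$ is represented by a factorial Schur polynomial in the Chern roots) and push it through $\Psi$ using the classical expansion (\ref{facs2s}); granting that citation, the convention matching you worry about does close up: with parameters $a=t$ one has $e_{r}(t_{1},\ldots,t_{n+\lambda_{i}-i})=e_{r}(T_{k+1+i-\lambda_{i}},\ldots,T_{N})$, which is precisely the shift pattern in (\ref{Schubert}), and the determinantal transition is (\ref{facs2s}) itself, not something new. The paper goes the opposite way: it explicitly refrains from using Mihalcea's results (see the remark after the Pieri--Chevalley rule) and instead identifies $\varphi(\tilde{S}_{\lambda})$ with $\sigma_{\lambda}$ by showing, via the Bethe vectors, the Jacobi-algebra isomorphism and the residue formula, that the matrix elements $\langle\nu|\tilde{S}_{\lambda}|\mu\rangle$ are the equivariant Gromov--Witten invariants. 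So your route buys brevity at the price of a deep external input, and it proves strictly less: the paper's argument is self-contained and yields an independent derivation of Mihalcea's Giambelli formula (together with the residue formula and the idempotents) as output, which a proof that cites that formula cannot, by construction, recover.
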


Expanding the transfer matrices into factorial powers $(x_{1}|T)^{r}=%
\prod_{j=1}^{r}(x_{1}-T_{j})$ instead, we will obtain Mihalcea's
presentations \cite{Mihalcea} of $QH_{T}^{\ast }(\limfunc{Gr}_{n,N})$. We
will state explicit formulae relating both sets of generators in the text;
see (\ref{H2facH}), (\ref{facH2H}), (\ref{E2facE}) and (\ref{facE2E}).

\subsubsection{Bethe ansatz and idempotents}

The eigenvalue problem of the transfer matrices $E$ and $H$ can be solved
using the \emph{algebraic Bethe ansatz} or \emph{quantum inverse scattering
method} from exactly solvable lattice models. The latter leads to an
explicit algebraic construction of a common set of eigenvectors $\{Y_{w}\}$,
called \emph{Bethe vectors}, which depend on the solutions of a set of
polynomial equations, called \emph{Bethe ansatz equations}. For the
non-equivariant quantum cohomology of the Grassmannian this has been
discussed in \cite{KorffStroppel,VicOsc}. In order to solve the Bethe ansatz
equations in the equivariant case we need to extend the base field to the
previously mentioned completed tensor product $\mathbb{F}_{q}=\mathbb{C}%
[q^{\pm 1/N}]\widehat{\otimes }\mathbb{F}$ where $\mathbb{F}:=\mathbb{C}%
\{\!\{T_{1},\ldots ,T_{N}\}\!\}$. The Bethe vectors turn out to be the
idempotents of $QH_{T}^{\ast }(\limfunc{Gr}_{n,N})\otimes \mathbb{F}_{q}$
and we show that the latter algebra is semisimple, $QH_{T}^{\ast }(\limfunc{%
Gr}_{n,N})\otimes \mathbb{F}_{q}\cong \tbigoplus_{w}\mathbb{F}_{q}Y_{w}$,
where the summands are labelled by the minimal coset representatives $w$
with respect to $\mathbb{S}_{N}/\mathbb{S}_{n}\times \mathbb{S}_{k}$ and $%
\mathbb{S}_{r}$ denotes the symmetric group in $r$ letters.

Consider the braid matrices$\{\hat{r}_{j}\}_{j=1}^{N}$ which naturally arise
from the mentioned solutions $M_{i}=M(x_{i}|T_{1},\ldots ,T_{N})$ of the
Yang-Baxter equation by braiding two lattice columns in the integrable model,%
\begin{equation}
\hat{r}_{j}M(x|T_{1},\ldots ,T_{N})=M(x_{i}|T_{1},\ldots
,T_{j+1},T_{j},\ldots ,T_{N})\hat{r}_{j},  \label{braidmom}
\end{equation}%
and define operators $\boldsymbol{s}_{j}=(s_{j}\otimes 1)\hat{r}_{j}\in 
\limfunc{End}\mathcal{V}_{n}$ where $\mathcal{V}_{n}=\Lambda \lbrack
q]\otimes V_{n}$ and $s_{j}$ permutes the equivariant parameter $%
T_{j},T_{j+1}$ for $j=1,2,\ldots ,N$. Then we have the following important
results linking the Yang-Baxter algebra with Schur-Weyl duality.

\begin{theorem}
\label{SWduality}\quad \vspace{-0.5cm}\newline

\begin{itemize}
\item[(i)] The operators $\{\boldsymbol{s}_{j}\}_{j=1}^{N}$ define a level-0
action of the affine symmetric group $\mathbb{\hat{S}}_{N}$ on $\mathcal{V}%
_{n}$ with the action of $\boldsymbol{s}_{N}$ depending on $q$.

\item[(ii)] The corresponding $\mathbb{S}_{N}$-action commutes with the
Yang-Baxter algebra, that is $\boldsymbol{s}_{j}M=M\boldsymbol{s}_{j}$ for
all $j=1,2,\ldots ,N-1,$ and the $\mathbb{\hat{S}}_{N}$-action with the
transfer matrices $E$ and $H$.

\item[(iii)] The corresponding $\mathbb{S}_{N}$-action permutes the Bethe
vectors according to the natural $\mathbb{S}_{N}$-action on the cosets $[w]$%
, i.e. $\boldsymbol{s}_{j}Y_{[w]}=Y_{[s_{j}w]}$.
\end{itemize}
\end{theorem}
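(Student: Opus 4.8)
The plan is to bootstrap all three parts from the single intertwining relation (\ref{braidmom}), and I would establish (ii) first since it feeds the other two. Arguing purely algebraically, I take the partial trace of (\ref{braidmom}) over the auxiliary space $V$, which gives $\hat{r}_j\,H(x)=H(x|\ldots ,T_{j+1},T_{j},\ldots )\,\hat{r}_j$ and the analogous identity for $E$. Inserting the parameter transposition $s_j$ on the left then moves the swapped parameters back through $M$, so that
\[
\begin{aligned}
\boldsymbol{s}_j M
&=(s_j\otimes 1)\hat{r}_j\,M(x|T)
=(s_j\otimes 1)\,M(x|s_jT)\,\hat{r}_j\\
&=M(x|T)\,(s_j\otimes 1)\,\hat{r}_j
=M\,\boldsymbol{s}_j
\end{aligned}
\]
for $j=1,\ldots ,N-1$, with the same manipulation giving commutation with the transfer matrices $E$ and $H$. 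The one new feature is the affine generator $\boldsymbol{s}_N$: since $M$ is not invariant under the braiding across the seam of the cylinder, $\boldsymbol{s}_N$ cannot commute with $M$ itself, and I would recover its commutation with $E$ and $H$ only after using the cyclicity of the auxiliary trace, which is precisely what turns the seam braiding into a symmetry of the transfer matrices.

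For (i) I would verify the Coxeter relations by a uniqueness-of-intertwiner argument rather than by expanding matrix identities. Relation (\ref{braidmom}) characterises $\hat{r}_j$ as the normalised intertwiner implementing the transposition $T_j\leftrightarrow T_{j+1}$ on the family $M(x|T)$; granting that this intertwiner is unique up to a scalar, both $\hat{r}_j\hat{r}_{j+1}\hat{r}_j$ and $\hat{r}_{j+1}\hat{r}_j\hat{r}_{j+1}$ implement the same longest-element permutation of $(T_j,T_{j+1},T_{j+2})$ and so agree once normalisations are fixed, yielding the braid relation; $\hat{r}_j^2$ implements the identity permutation and is therefore scalar, giving involutivity after a unitarity normalisation; and disjointness of supports gives $\hat{r}_i\hat{r}_j=\hat{r}_j\hat{r}_i$ for $|i-j|\geq 2$. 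Conjugating by the transpositions $s_j$ promotes these to the corresponding relations for $\boldsymbol{s}_j=(s_j\otimes 1)\hat{r}_j$, the semidirect structure being exactly the statement that the family $\{\hat{r}_j\}$ is a $1$-cocycle for the $\mathbb{S}_N$-action on $\Lambda [q]$. The affine extension is where the real effort goes: I would construct $\boldsymbol{s}_N$ from the braiding across the cylinder seam, track how $q$ enters the wrap-around, and then check $\boldsymbol{s}_N^2=1$ together with the affine braid relations $\boldsymbol{s}_{N-1}\boldsymbol{s}_N\boldsymbol{s}_{N-1}=\boldsymbol{s}_N\boldsymbol{s}_{N-1}\boldsymbol{s}_N$ and $\boldsymbol{s}_N\boldsymbol{s}_1\boldsymbol{s}_N=\boldsymbol{s}_1\boldsymbol{s}_N\boldsymbol{s}_1$; the level $0$ assertion then amounts to checking that the cyclic word $\boldsymbol{s}_1\cdots\boldsymbol{s}_N$ closes up without a central extension.

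For (iii) I would first upgrade (ii) to the statement that each $\boldsymbol{s}_j$ is an $s_j$-semilinear algebra automorphism of $QH_T^{\ast }(\limfunc{Gr}_{n,N})\otimes \mathbb{F}_q$. Commutation with multiplication by the generators $E_i,H_j$ shows that $\boldsymbol{s}_j$ is determined by its value on the unit, so the only thing to verify is that it fixes the unit $\tilde{S}_\varnothing$ (the empty-partition Schubert class of (\ref{Schubert})); this I expect to follow from the explicit action of $\hat{r}_j$ on the ferromagnetic reference state. Being a semilinear automorphism of the semisimple algebra $\bigoplus_w \mathbb{F}_q Y_{[w]}$, $\boldsymbol{s}_j$ must permute the primitive idempotents, with no scalar ambiguity since idempotents are rigid. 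To identify the permutation as $[w]\mapsto [s_jw]$ I would match eigenvalues: from $E\,\boldsymbol{s}_jY_{[w]}=\boldsymbol{s}_j\,E\,Y_{[w]}=s_j(\varepsilon^E_{[w]})\,\boldsymbol{s}_jY_{[w]}$ and the Bethe-ansatz formula $s_j(\varepsilon^E_{[w]})=\varepsilon^E_{[s_jw]}$ for the transfer-matrix eigenvalues, it follows that $\boldsymbol{s}_jY_{[w]}=Y_{[s_jw]}$.

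The hard part will be (i), and within it the affine generator $\boldsymbol{s}_N$: pinning down the cylinder-seam braiding, identifying exactly how $q$ must enter so that involutivity and the affine braid relations hold, and confirming that the resulting $\hat{\mathbb{S}}_N$-action is genuinely of level $0$. By contrast (ii) and (iii) are essentially formal once (\ref{braidmom}), the semisimplicity of $QH_T^{\ast }\otimes \mathbb{F}_q$, and the Bethe-ansatz eigenvalues are available; there the only delicate input is the normalisation $\boldsymbol{s}_j\tilde{S}_\varnothing=\tilde{S}_\varnothing$ that converts the module-level commutation of (ii) into a genuine algebra automorphism.
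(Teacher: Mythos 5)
Your treatment of part (ii) is correct and is essentially the paper's own argument: pushing $\hat{r}_j$ through the product of $L$-operators gives (\ref{braidmom}), conjugation by $s_j$ gives $\boldsymbol{s}_jM=M\boldsymbol{s}_j$, and for the transfer matrices (including the affine generator) one invokes the $q$-deformed relation (\ref{qybe}) and the cyclicity of the trace in (\ref{Zdef}). The gaps are in (i) and (iii). In (i), your uniqueness-of-intertwiner argument breaks down because you do not track the parameter dependence of $\hat{r}_j=\hat{r}_j(t_j,t_{j+1})=1-(t_j-t_{j+1})\delta_j^{\vee}$. The operator that intertwines the family $M(x|T)$ with itself is not $\hat{r}_j^{\,2}$ but the composition $\hat{r}_j(s_jT)\hat{r}_j(T)$; in fact Lemma \ref{prop:braidaction}(i) gives $(\hat{r}_j-1)^2=0$, so $\hat{r}_j^{\,2}=2\hat{r}_j-1$ is a nontrivial unipotent and is \emph{not} a scalar: taken literally, your argument proves a false statement. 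The same bookkeeping error affects the braid relation: the valid identity, Lemma \ref{prop:braidaction}(ii), has each factor evaluated at permuted parameter values, and the fixed-parameter words $\hat{r}_j\hat{r}_{j+1}\hat{r}_j$ and $\hat{r}_{j+1}\hat{r}_j\hat{r}_{j+1}$ do not intertwine the same pair of monodromy families. (Your cocycle remark indicates you are aware of this, but the relations you actually verify are the wrong ones.) Beyond that, \textquotedblleft granting that this intertwiner is unique up to a scalar\textquotedblright\ is a substantive unproved claim --- it amounts to triviality of the commutant of the Yang-Baxter algebra on $\mathcal{V}_n$, which in this paper only becomes accessible through the Bethe-ansatz spectral analysis of Section 5 --- and the genuinely affine content of (i) (construction of $\boldsymbol{s}_N$, its $q$-dependence, the affine braid relations, level $0$) is left as a list of things to check, which is precisely what the theorem asserts. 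The paper's proof of Prop \ref{prop:symmgroupaction} avoids all of this by direct computation from the explicit action (\ref{braidaction}): $s_j\hat{r}_js_j=\hat{r}_j^{-1}$ yields $\boldsymbol{s}_j^2=1$, and the braid relation is checked by expanding products of the $1-(t_j-t_{j+1})\delta_j^{\vee}$, the parameter shifts being produced automatically by commuting the interleaved $s_j$'s to the left.

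For (iii), your idempotent-rigidity framing is attractive and even improves one step of the paper's proof: the paper must argue separately that the proportionality factor $\eta_j$ between $s_j\hat{r}_j|y_\alpha\rangle$ and the target Bethe vector equals $1$, whereas an algebra automorphism maps primitive idempotents to primitive idempotents with no scalar ambiguity, and your normalisation $\boldsymbol{s}_j|\emptyset\rangle=|\emptyset\rangle$ does hold because $\delta_j^{\vee}|\emptyset\rangle=0$. But the step carrying all the content is the one you cite as \textquotedblleft the Bethe-ansatz formula $s_j(\varepsilon^{E}_{[w]})=\varepsilon^{E}_{[s_jw]}$\textquotedblright . No such formula is available a priori: the labelling of Bethe-root tuples by cosets is \emph{defined} through the $q=0$ specialisation $y_j(0)=t_j$, so the claim that permuting the equivariant parameters carries the solution labelled $[w]$ to the solution labelled $[s_jw]$ is exactly the nontrivial assertion of (iii); your argument as written shows only that $\boldsymbol{s}_j$ permutes the idempotents in \emph{some} way. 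The paper closes this by deformation: at $q=0$ one has $y_\alpha=t_\alpha$ and the claim is immediate, and expanding the Bethe roots of (\ref{BAE}) around $q=0$ shows the labelling stays equivariant for generic $q$. Some such continuity argument (or an independent characterisation of the labelling) must be supplied before your proof of (iii) is complete.
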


Expressing an element in the Schubert basis $\{v_{\lambda }\}\subset 
\mathcal{V}_{n}$ in the basis of idempotents, $v_{\lambda }=\sum_{w}\xi
_{\lambda }(w)Y_{w},$ we find that (iii) implies that the coefficients $\xi
_{\lambda }(w)$ obey the Goresky-Kottwitz-MacPherson (GKM) conditions \cite%
{GKM}. Thus, we can identify the coefficients with localised Schubert
classes $\xi _{\lambda }:\mathbb{S}_{N}/\mathbb{S}_{n}\times \mathbb{S}%
_{k}\rightarrow \mathbb{F}_{q}$. For $q=0$ a localised Schubert class takes
values in $\Lambda $, but in the quantum case $q\neq 0$ we need to use $%
\mathbb{F}_{q}$ instead. In the special case when $j$ labels a simple
reflection in $\mathbb{S}_{n}\times \mathbb{S}_{k}$ for which $[s_{j}w]=[w]$%
, (iii) becomes an identity of quantum Knizhnik-Zamolodchikov type,%
\begin{equation}
\hat{r}_{j}Y_{w}(T_{1},\ldots ,T_{N})=Y_{w}(T_{1},\ldots
,T_{j+1},T_{j},\ldots ,T_{N})\;.  \label{qKZintro}
\end{equation}
We will explain this in more detail after (\ref{GKM}) in the text.

\subsubsection{The affine nil-Hecke algebra}

Let $\mathbb{A}_{N}$ denote the affine nil-Coxeter algebra which acts on $%
\Lambda $ via divided difference operators $\partial _{j}$. Peterson used
the affine nil-Hecke ring $\mathbb{\tilde{H}}_{N}=\Lambda \rtimes \mathbb{A}%
_{N}$ of Kostant and Kumar \cite{KostantKumar} to show that there exists a
special basis $\{\jmath _{w}\}_{w}$ indexed by Grassmannian affine
permutations $w\in \mathbb{\hat{S}}_{N}$ of the centraliser subalgebra $%
\mathbb{P}_{N}=\mathcal{Z}_{\mathbb{\tilde{H}}_{N}}(\Lambda )$ which can be
identified with the Schubert classes in $H_{T}(\limfunc{Gr}_{SL_{N}})$.
Furthermore, Peterson states an explicit formula for the action of the
nil-Coxeter algebra $\mathbb{A}_{N}$ on localised Schubert classes $\{\xi
_{w}:\,w\in \mathbb{\hat{S}}_{N}\}$; see also \cite{Lam08} \cite[Eqn (5),
Sec. 6.2]{LamShim}.

An explicit description of the basis elements $\{\jmath _{w}\}_{w}$ is not
known in general, even for type $A$. Instead the basis is characterised
implicitly in terms of two properties, one of which states that the basis
elements act on localised Schubert classes via $\jmath _{w}\xi _{v}=\xi
_{w}\cdot \xi _{v}$ where the right hand side is defined by pointwise
multiplication; see the comments after (\ref{Pet_basis}) in the text for
further details.

In our description of $QH_{T}^{\ast }(\limfunc{Gr}_{n,N})$ we recover the
action of Peterson's basis via the operators (\ref{Schubert}) when working
in the basis of idempotents or Bethe vectors. Starting point is Peterson's
action of the affine\emph{\ }nil-Coxeter algebra on $QH_{T}^{\ast }(\limfunc{%
Gr}_{n,N})$ which in our setting is introduced by defining generalised
difference operators in terms of the braid matrix $\hat{r}_{j}$ appearing in
(\ref{qKZ}). Using the explicit form of the solutions of the Yang-Baxter
equation we then define an action of the (non-extended) affine Hecke algebra 
$\mathbb{H}_{N}\subset \mathbb{\tilde{H}}_{N}$ $\rho :\mathbb{H}%
_{N}\rightarrow \mathbb{Z}[T_{1}^{\pm 1},\ldots ,T_{N}^{\pm 1},q]\otimes 
\limfunc{End}(V^{\otimes N})$ which is different from Peterson's $\mathbb{H}%
_{N}$-action as it commutes with the multiplicative action of $\Lambda $ on $%
\mathcal{V}_{n}$. Furthermore, it takes a simple combinatorial form in the
basis of Schubert classes.

Let $\{\pi _{i}\}_{i=1}^{N}$ denote the generators of $\mathbb{H}_{N}$ and
given a reduced word $w=i_{1}\cdots i_{r}\in \lbrack N]^{r}$ set $\pi
_{w}=\pi _{i_{1}}\cdots \pi _{i_{r}},\;\bar{\pi}_{i}=1-\pi _{i}$ and $%
T_{w}=T_{N+1-i_{1}}\cdots T_{N+1-i_{r}}$.

\begin{proposition}
The transfer matrices can be expressed as cyclic words in the generators of
the affine nil-Hecke algebra. That is, 
\begin{equation}
E_{i}=\sum_{|w|=i}^{\circlearrowright }T_{w}\rho _{n}(\bar{\pi}_{w})\qquad 
\text{and\qquad }H_{j}=\sum_{|w|=j}^{\circlearrowleft }T_{w}\rho _{n}(\pi
_{w})  \label{cyclicHE}
\end{equation}%
with the sums respectively running over all reduced clockwise and
anti-clockwise ordered words $w$ of length $i$ and $j$.
\end{proposition}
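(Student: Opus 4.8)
The plan is to read off both identities from the explicit solution $M(x)=M(x|T_1,\ldots,T_N)$ of the Yang--Baxter equation. Recall that $H(x)=Z_1$ and $E(x)=Z_1'$ are the partial traces over the auxiliary space $V\cong\mathbb{C}^2$ of the monodromy matrix $M(x)$, which is itself an ordered product of $N$ local $L$-operators, one for each lattice column carrying the reversed equivariant parameter $t_j=T_{N+1-j}$. The first step is therefore to write $M(x)$ in components on the auxiliary space and to expand it in powers of the spectral parameter $x$; by construction the coefficient of $x^j$ in $H(x)$ is $H_j$, and $E_i$ is the coefficient of $x^i$ in $E(x)$.

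The second step is to establish a local dictionary between the $L$-operator and the affine Hecke generators. Since the representation $\rho_n$ is defined through generalised difference operators built from the braid matrices $\hat r_j$ of (\ref{braidmom}), I would use this explicit form to show that the off-diagonal (``hopping'') entry of $M(x)$ at column $j$ implements, after the partial trace, precisely $\rho_n(\pi_j)$ for the vicious walkers and $\rho_n(\bar\pi_j)=1-\rho_n(\pi_j)$ for the osculating walkers, the accompanying equivariant weight being $t_j=T_{N+1-j}$. This is the point at which the duality between complete and elementary symmetric functions --- reflected in the exchange $\pi_j\leftrightarrow\bar\pi_j$ and in the opposite (clockwise versus anti-clockwise) orientation of the two sums --- enters the argument.

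With the dictionary in hand, the third step is purely combinatorial. Tracing the monodromy matrix around the cylinder closes a single walker's trajectory into a loop, so that a configuration contributing $j$ hops is encoded by an ordered word $w=i_1\cdots i_j$ whose letters are visited in the anti-clockwise (respectively clockwise) cyclic order dictated by the walker's direction of motion. The non-intersecting condition forbids backtracking and hence forces $w$ to be reduced; summing the weights $t_{i_1}\cdots t_{i_j}=T_w$ times the operators $\rho_n(\pi_{i_1}\cdots\pi_{i_j})=\rho_n(\pi_w)$ over all such configurations then reproduces exactly the right-hand side of (\ref{cyclicHE}), and collecting by total degree in $x$ yields the two stated identities.

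I expect the main obstacle to lie in the local dictionary of the second step, together with the bookkeeping at the seam of the cylinder. The identification of the traced hopping action with $\rho_n(\pi_j)$ must be carried out from the explicit matrix entries of $M(x)$, and one has to verify that the cyclic trace closes the trajectories so that the resulting operator products respect the prescribed cyclic ordering of the letters. The non-intersecting condition guarantees that only reduced words occur, so no cancellation of non-reduced words is required; the delicate point is rather that the wrap-around crossing between columns $N$ and $1$ must contribute the quantum parameter $q$ in a manner consistent with the $q$-dependence of $\boldsymbol{s}_N$ recorded in Theorem~\ref{SWduality}(i). Once these points are settled, the two identities follow by comparing coefficients of $x^j$ and $x^i$.
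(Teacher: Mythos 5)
Your skeleton---expand the monodromy matrix in powers of $x$, translate local lattice contributions into nil-Hecke generators, let the trace close linear words into cyclic ones---is indeed the skeleton of the paper's argument, but the two steps where you place all the content are, respectively, based on a wrong identification and left unproved. The dictionary in your second step starts from a false premise: the representation $\rho_{n}$ (the paper's $\rho _{t}$) is \emph{not} built from the braid matrices $\hat{r}_{j}$ of (\ref{braidmom}). Those give $\hat{r}_{j}=1-(t_{j}-t_{j+1})\delta _{j}^{\vee }$ with $\delta _{j}^{\vee }=\sigma _{j}^{+}\sigma _{j+1}^{-}$, i.e.\ the affine \emph{nil-Coxeter} action (\ref{rhat}); the nil-Hecke action is the separate explicit formula (\ref{pi}), $\rho _{t}(\pi _{j})=t_{j}^{-1}\sigma _{j}^{-}\sigma _{j+1}^{+}-\sigma _{j}^{-}\sigma _{j}^{+}$. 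Moreover a letter $j$ of the word does not correspond to ``the off-diagonal hopping entry at column $j$'': the correct local identification is $t_{j}\rho _{t}(\pi _{j})=\delta _{j}-\hat{t}_{j}$, where the hop $\delta _{j}=\sigma _{j}^{-}\sigma _{j+1}^{+}$ arises from off-diagonal entries of \emph{two adjacent} $L$-operators (a $\sigma _{j}^{-}$ pick-up at column $j$ contracted with an $x\sigma _{j+1}^{+}$ deposit at column $j+1$), while $-\hat{t}_{j}=-t_{j}\sigma _{j}^{-}\sigma _{j}^{+}$ is the diagonal $-xt_{j}$ piece of the $(0,0)$ entry of (\ref{L}). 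Without that diagonal piece, the $-1$ action of $\pi _{j}$ on configurations with $\varepsilon _{j}=0$ in (\ref{piaction}) has no lattice counterpart, so a dictionary ``hopping entry $\mapsto \pi _{j}$'' cannot reproduce $\rho _{n}(\pi _{w})$. (Also $\bar{\pi}_{j}=\pi _{j}+1$, not $1-\pi _{j}$; what one needs is $t_{j}\rho _{t}(\bar{\pi}_{j})=\delta _{j}+\check{t}_{j}$.)

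Your third step then asserts, but does not establish, precisely what has to be proved: that after tracing, the operator products come out in the prescribed cyclic order, that only words with distinct letters occur, and that the seam between columns $N$ and $1$ produces the affine letter with the correct $q$-weight. In the paper this is the content of the proposition giving (\ref{YBA2Hecke}), proved by induction on the number of columns via the coproduct (\ref{cops}): one computes \emph{all four} monodromy entries in closed form, in particular $A_{r}=\Sigma _{r}^{N,>}-\hat{t}_{N}\Sigma _{r-1}^{N,>}$ and $D_{r}=\sigma _{N}^{-}A_{r-1}\sigma _{1}^{+}$, and only then does $H_{r}=A_{r}+qD_{r}$ reassemble into the sum over cyclically decreasing words, the affine letter being generated by the $D$-term (which is where the quantum parameter enters, consistent with $\delta _{N}(q)=q\delta _{N}$). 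Your plan never computes $B_{r},C_{r},D_{r}$, yet it is exactly these entries that create the wrap-around letter under the trace. Finally, reducedness has nothing to do with the non-intersecting condition: letters are distinct simply because each $L$-operator is linear in $x$, so each column contributes at most one power of $x$, and cyclically ordered words with distinct letters are automatically reduced. To make your proposal into a proof you would need the corrected local dictionary above plus an actual computation (inductive, or an equivalent telescoping argument) of the monodromy entries before the coefficient comparison of your first step can be performed.
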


Despite the fact that the representation $\rho $ is defined over the Laurent
polynomials in the equivariant parameters, the product $T_{w}\rho _{n}(\pi
_{w})$ gives a well-defined map over the ring of polynomials $\Lambda $.

\begin{theorem}
The action of the $\tilde{S}_{\lambda }$'s defined in (\ref{Schubert})
coincides with the action of the (projected) basis elements of Peterson in
the basis of idempotents,%
\begin{equation}
v_{\lambda }\circledast v_{\mu }:=\tilde{S}_{\lambda }v_{\mu }=\sum_{\alpha
}\xi _{\mu }(\alpha )\tilde{S}_{\lambda }Y_{\alpha }=\sum_{\alpha }(\jmath
_{w^{\lambda }}\xi _{\mu }(\alpha ))Y_{\alpha },\quad \forall \lambda ,\mu ,
\label{facS2PetBasis}
\end{equation}%
where $v_{\lambda },v_{\mu }\in \mathcal{V}_{n}$ correspond to Schubert
classes and $w^{\lambda }$ is the minimal length representative of the coset
in $\mathbb{S}_{N}/\mathbb{S}_{n}\times \mathbb{S}_{k}$ fixed by $\lambda $.
\end{theorem}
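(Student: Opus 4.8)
The plan is to reduce everything to the statement that $\tilde{S}_{\lambda }$ becomes \emph{diagonal} in the idempotent basis, and then to read off the eigenvalue. Since $\tilde{S}_{\lambda }\in \mathbb{P}_{n}$ and, by the isomorphism $\mathbb{P}_{n}\cong QH_{T}^{\ast }(\limfunc{Gr}_{n,N})$, the space $\mathcal{V}_{n}\otimes \mathbb{F}_{q}$ is the regular representation of the semisimple algebra $\mathbb{P}_{n}\otimes \mathbb{F}_{q}\cong \bigoplus_{\alpha }\mathbb{F}_{q}Y_{\alpha }$, every element of $\mathbb{P}_{n}$ acts by multiplication and the primitive idempotents $Y_{\alpha }$ are simultaneous eigenvectors. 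Hence
\begin{equation}
\tilde{S}_{\lambda }Y_{\alpha }=c_{\lambda }(\alpha )\,Y_{\alpha },\qquad c_{\lambda }(\alpha )\in \mathbb{F}_{q}.
\end{equation}
Granting this, the first equality of (\ref{facS2PetBasis}) is just $\mathbb{F}_{q}$-linearity applied to $v_{\mu }=\sum_{\alpha }\xi _{\mu }(\alpha )Y_{\alpha }$, so the entire task is to identify the scalar $c_{\lambda }(\alpha )$ with $\xi _{w^{\lambda }}(\alpha )$ and to recognise the ensuing pointwise multiplication as Peterson's operator $\jmath _{w^{\lambda }}$.

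The second step is to compute $c_{\lambda }(\alpha )$. Because $\tilde{S}_{\emptyset }=S_{\emptyset }$ is the identity operator and the fundamental class $v_{\emptyset }$ is the cyclic generator of the regular module, one has $v_{\emptyset }=\mathbf{1}=\sum_{\alpha }Y_{\alpha }$ and $v_{\lambda }=\tilde{S}_{\lambda }v_{\emptyset }$. Applying $\tilde{S}_{\lambda }$ to $\mathbf{1}$ and comparing with the idempotent expansion $v_{\lambda }=\sum_{\alpha }\xi _{\lambda }(\alpha )Y_{\alpha }$ yields $c_{\lambda }(\alpha )=\xi _{\lambda }(\alpha )$; that is, the eigenvalue of $\tilde{S}_{\lambda }$ on $Y_{\alpha }$ is exactly the $\alpha$-th coefficient of the Schubert class $v_{\lambda }$ in the basis of idempotents. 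As an independent check one can instead insert the explicit Bethe-ansatz eigenvalues of the $E_{i}$ and $H_{j}$ into the determinantal formula (\ref{Schubert}) and verify that the result is the localisation of $v_{\lambda }$ at the fixed point labelled by $\alpha$.

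It remains to match $\xi _{\lambda }$ with Peterson's data. By the GKM identification recorded after Theorem~\ref{SWduality}(iii), the coefficient function $\xi _{\lambda }:\mathbb{S}_{N}/\mathbb{S}_{n}\times \mathbb{S}_{k}\rightarrow \mathbb{F}_{q}$ is precisely the localised Schubert class attached to the minimal coset representative $w^{\lambda }$ fixed by $\lambda$, so $\xi _{\lambda }=\xi _{w^{\lambda }}$. Peterson's basis element acts on localised classes by pointwise multiplication, $\jmath _{w^{\lambda }}\xi _{\mu }=\xi _{w^{\lambda }}\cdot \xi _{\mu }$, whence $(\jmath _{w^{\lambda }}\xi _{\mu })(\alpha )=\xi _{w^{\lambda }}(\alpha )\,\xi _{\mu }(\alpha )=\xi _{\lambda }(\alpha )\,\xi _{\mu }(\alpha )$. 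Substituting the eigenvalue into the expansion of $v_{\mu }$ then gives
\begin{equation}
\tilde{S}_{\lambda }v_{\mu }=\sum_{\alpha }\xi _{\mu }(\alpha )\,\xi _{\lambda }(\alpha )\,Y_{\alpha }=\sum_{\alpha }\big(\jmath _{w^{\lambda }}\xi _{\mu }\big)(\alpha )\,Y_{\alpha },
\end{equation}
which is (\ref{facS2PetBasis}).

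The main obstacle is the identification $\xi _{\lambda }=\xi _{w^{\lambda }}$ with the correct dictionary between indices: the idempotents $Y_{\alpha }$ are labelled by finite cosets in $\mathbb{S}_{N}/\mathbb{S}_{n}\times \mathbb{S}_{k}$, whereas Peterson's $\jmath _{w}$ are indexed by Grassmannian affine permutations in $\mathbb{\hat{S}}_{N}$, and one must show that $\lambda \mapsto w^{\lambda }$ is the compatible correspondence. I would settle this by verifying that the function $\xi _{\lambda }$ satisfies the same characterising recursion as Peterson's localised classes: the $\mathbb{\hat{S}}_{N}$-action of Theorem~\ref{SWduality}(iii), sending $Y_{[w]}\mapsto Y_{[s_{j}w]}$, induces on coefficients the permutation/divided-difference action, and a direct comparison shows it reproduces the nil-Coxeter operators $\partial _{j}$ used in Peterson's recursion. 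Together with the normalisation $\xi _{\emptyset }\equiv 1$ and the boundary values fixed by torus localisation, the two families obey the same recursion with the same initial data and therefore coincide.
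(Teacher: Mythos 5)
Your proposal is correct, and its overall skeleton coincides with the paper's: expand $v_{\mu}$ in the idempotent basis, show $\tilde{S}_{\lambda}$ acts diagonally on the $Y_{\alpha}$, identify the eigenvalue with the localised Schubert class $\xi_{\lambda}$ via the GKM conditions (\ref{GKM}), and invoke Peterson's pointwise-multiplication property (\ref{Pet_basis}). The one genuine difference is how you obtain the diagonal action and its eigenvalue. The paper gets both at once from Lemma \ref{lem:facS}(i): the Bethe-ansatz eigenvalue equations (\ref{specE}) together with the expansion defining (\ref{facS}) give explicitly $\tilde{S}_{\lambda}|y_{\alpha}\rangle = s_{\lambda}(y_{\alpha}|t)\,|y_{\alpha}\rangle$, and $s_{\lambda}(y_{\alpha}|t)$ \emph{is} the definition of $\xi_{\lambda}(\alpha)$. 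You instead derive diagonality abstractly --- $\tilde{S}_{\lambda}$ lies in the commutative semisimple algebra $\mathbb{P}_{n}\otimes\mathbb{F}_{q}$ acting in its regular representation, so the primitive idempotents are eigenvectors --- and then pin down the eigenvalue by the unit trick: $v_{\emptyset}=\sum_{\alpha}Y_{\alpha}$ and $\tilde{S}_{\lambda}v_{\emptyset}=v_{\lambda}$ (Lemma \ref{lem:facS}(ii)) force $c_{\lambda}(\alpha)=\xi_{\lambda}(\alpha)$. Your version is more conceptual and would survive in any setting where one has semisimplicity, a cyclic unit, and the Schubert-to-idempotent transition matrix; the paper's version buys the explicit formula $\xi_{\lambda}(\alpha)=s_{\lambda}(y_{\alpha}|t)$, which is in any case what powers the verification of (\ref{GKM}) and hence the GKM identification you both rely on. One caveat: your closing paragraph, which re-derives the dictionary $\xi_{\lambda}=\xi_{w^{\lambda}}$ by matching recursions, should be anchored in the uniqueness statement of GKM theory (GKM conditions plus degree and support/normalisation data characterise a localised Schubert class); as sketched, ``same recursion, same initial data, therefore coincide'' is the step that needs this citation --- though the paper itself is equally terse on exactly this point.
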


As mentioned previously, Peterson gave an explicit combinatorial description
for the action of the nil-Coxeter algebra on a Schubert class in \cite%
{Peterson}; see also \cite{Lam08} \cite[Eqn (5), Sec. 6.2]{LamShim}\ which
in our setting corresponds to (\ref{delta}) in the text. Here we extend the
combinatorial action to the entire affine nil-Hecke algebra and, in
particular, to Peterson's basis elements via (\ref{facS2PetBasis}), which
allows us to give a direct combinatorial definition of the quantum product $%
\circledast $ in $QH_{T}^{\ast }(\limfunc{Gr}_{n,N})$ in the basis of
Schubert classes using the operators (\ref{Schubert}) instead.

\subsection{Structure of the article}

\begin{description}
\item[Section 2] We introduce the solutions of the Yang-Baxter equation and
define the Yang-Baxter algebra.

\item[Section 3] We construct representations of the symmetric group, affine
nil-Coxeter and nil-Hecke algebra and relate the latter to Peterson's action
on Schubert classes. We also derive the expressions of the transfer matrices
as cyclic words in the generators of the affine nil-Hecke algebra and relate
them to affine stable Grothendieck polynomials. This section contains the
proofs of statements (i) and (ii) in Theorem \ref{SWduality}.

\item[Section 4] We explain the combinatorial interpretation of the
Yang-Baxter algebra in terms of non-intersecting lattice paths and derive
the explicit action of the transfer matrices in terms of toric tableaux. We
expand the transfer matrices into factorial powers and show that the
resulting coefficients satisfy Mihalcea's equivariant quantum Pieri rule.

\item[Section 5] We present the algebraic Bethe ansatz and compute the
spectral decomposition of the transfer matrices. This is the central step in
proving our various results. We derive statement (iii) in Theorem \ref%
{SWduality} and show the GKM conditions for the basis transformation into
Bethe vectors. We also explain how the qKZ equations arise.

\item[Section 6] We give a purely combinatorial definition of the
equivariant quantum cohomology ring in terms of the operators (\ref{Schubert}%
). Mapping operators onto their eigenvalues we obtain the various
presentations of $QH_{T}^{\ast }(\limfunc{Gr}_{n,N})$ as coordinate ring. We
state in more detail the relation between our construction and Peterson's
basis.\smallskip 
\end{description}

The table below lists the various action of algebras and groups considered
in this article and might provide a helpful reference point for the reader. 
\begin{equation*}
\begin{tabular}{|c|c|c|c|}
\hline
module & algebra/group & generators & action \\ \hline\hline
$\Lambda =\mathbb{Z}[T_{1},\ldots ,T_{N}]$ & affine symmetric group $\mathbb{%
\hat{S}_{N}}$ & $\{s_{j}\}_{j=1}^{N}$ & (\ref{affsymgroup}) \\ \cline{2-4}
& affine nil-Coxeter algebra $\mathbb{A_{N}}$ & $\{\partial _{j}\}_{j=1}^{N}$
& (\ref{DBGG}) \\ \hline
$\mathcal{V}=\Lambda \otimes V^{\otimes N}$ & affine symmetric group $%
\mathbb{\hat{S}_{N}}$ & $\{\boldsymbol{s}_{j}\}_{j=1}^{N}$ & Prop \ref%
{prop:symmgroupaction} \\ \cline{2-4}
& affine nil-Coxeter algebra $\mathbb{A_{N}}$ & $\{\delta _{j}^{\vee
}\}_{j=1}^{N}$ & (\ref{delta}) \\ \cline{2-4}
& affine nil-Hecke algebra $\mathbb{H_{N}}$ & $\{\pi _{j}\}_{j=1}^{N}$ & (%
\ref{piaction}) \\ \cline{2-4}
& Yang-Baxter & $A,B,C,D$ & (\ref{YBA2Hecke}) \\ \hline
\end{tabular}%
\ 
\end{equation*}%
\smallskip N.B. that when setting $T_{i}=0$ we recover the combinatorial
description of the non-equivariant quantum cohomology $QH^{\ast }(\limfunc{Gr%
}_{n,N})$ of the Grassmannian; see \cite{KorffStroppel} and \cite{VicOsc}.
Under this specialisation the above action of $\mathbb{\hat{S}_{N}}$ on the
space $\mathcal{V}$ becomes trivial and the action of $\mathbb{H_{N}}$ on $%
\mathcal{V}$ reduces to the action of $\mathbb{A_{N}}$ on the tensor factor $%
V^{\otimes N}$. The action of the Yang-Baxter algebra is modified
accordingly but stays well-defined and, more importantly, its commutation
relations remain unchanged; compare with \cite[Eqns (3.9-10)]{VicOsc}. The
above table shows that the equivariant case has a much richer algebraic
structure which allows us to make the connection with the nil-Hecke ring of
Kostant and Kumar and Peterson's basis. The findings in this article are
therefore a non-trivial extension of the previous works \cite{KorffStroppel}
and \cite{VicOsc}. In particular, the connection of the Yang-Baxter algebra
via Schur-Weyl duality to the current algebra action becomes only apparent 
in the equivariant setting.

\section{Yang-Baxter Algebras}

In this section we introduce the underlying algebraic structure of the
lattice models which we will then connect to the quantum cohomology and the
affine nil Hecke ring.

\subsection{Solutions to the Yang-Baxter equation}

Let $V=\mathbb{C}v_{0}\oplus \mathbb{C}v_{1}$ and let $\sigma ^{-}=\left( 
\begin{smallmatrix}
0 & 1 \\ 
0 & 0%
\end{smallmatrix}%
\right) ,\;\sigma ^{+}=\left( 
\begin{smallmatrix}
0 & 0 \\ 
1 & 0%
\end{smallmatrix}%
\right) ,\;\sigma ^{z}=\left( 
\begin{smallmatrix}
1 & 0 \\ 
0 & -1%
\end{smallmatrix}%
\right) $ be the natural representation of $sl(2)$ in terms of Pauli
matrices. The latter act on $V\cong \mathbb{C}^{2}$ via $\sigma
^{-}v_{1}=v_{0}$, $\sigma ^{+}v_{0}=v_{1}$ and $\sigma ^{z}v_{\alpha
}=(-1)^{\alpha }v_{\alpha },~\alpha =0,1$. We introduce the abbreviations $%
V[x_{i}]:=\mathbb{Z}[x_{i}]\otimes V,$ $V[t_{j}]:=\mathbb{Z}[t_{j}]\otimes V$
etc. Define the following $L$-operators $V[x_{i}]\otimes V[t_{j}]\rightarrow
V[x_{i}]\otimes V[t_{j}]$ by setting%
\begin{equation}
L_{ij}=\left( 
\begin{array}{cc}
1_{j}-x_{i}t_{j}\sigma _{j}^{-}\sigma _{j}^{+} & x_{i}\sigma _{j}^{+} \\ 
\sigma _{j}^{-} & x_{i}\sigma _{j}^{-}\sigma _{j}^{+}%
\end{array}%
\right) _{i}  \label{L}
\end{equation}%
and%
\begin{equation}
L_{ij}^{\prime }=\left( 
\begin{array}{cc}
1_{j}+x_{i}t_{j}\sigma _{j}^{+}\sigma _{j}^{-} & x_{i}\sigma _{j}^{+} \\ 
\sigma _{j}^{-} & x_{i}\sigma _{j}^{+}\sigma _{j}^{-}%
\end{array}%
\right) _{i}  \label{L'}
\end{equation}%
where the indices indicate that we have written the operator in matrix form
with respect to the first factor in $V[x_{i}]\otimes V[t_{j}]$.

The following proposition states the key property of the $L$-operators which
underlies the solvability of the statistical lattice models which we will
discuss below.

\begin{proposition}
\label{prop:ybe}The $L,L^{\prime }$-operators satisfy Yang-Baxter equations
of the type%
\begin{equation}
R_{ii^{\prime }}L_{ij}L_{i^{\prime }j}=L_{i^{\prime }j}L_{ij}R_{ii^{\prime
}}\qquad \text{and\qquad }r_{jj^{\prime }}L_{ij}L_{ij^{\prime
}}=L_{ij^{\prime }}L_{ij}r_{jj^{\prime }}  \label{ybe}
\end{equation}%
where $R,r$ can be identified with $4\times 4$ matrices of the form%
\begin{equation}
\left( 
\begin{array}{cccc}
a & 0 & 0 & 0 \\ 
0 & b & c & 0 \\ 
0 & d & e & 0 \\ 
0 & 0 & 0 & f%
\end{array}%
\right) \;,  \label{R}
\end{equation}%
where the matrix entries are given in the following table for each of the
respective cases%
\begin{equation}
\begin{tabular}{|l||c|c|c|c|c|c|}
\hline
& $a$ & $b$ & $c$ & $d$ & $e$ & $f$ \\ \hline\hline
\multicolumn{1}{|c||}{$R_{ii^{\prime }}$} & $1$ & $0$ & $1$ & $x_{i^{\prime
}}/x_{i}$ & $1-x_{i^{\prime }}/x_{i}$ & $x_{i^{\prime }}/x_{i}$ \\ \hline
\multicolumn{1}{|c||}{$R_{ii^{\prime }}^{\prime }$} & $1$ & $%
1-x_{i}/x_{i^{\prime }}$ & $x_{i}/x_{i^{\prime }}$ & $1$ & $0$ & $%
x_{i}/x_{i^{\prime }}$ \\ \hline
\multicolumn{1}{|c||}{$r_{jj^{\prime }}=r_{jj^{\prime }}^{\prime }$} & $1$ & 
$0$ & $1$ & $1$ & $t_{j}-t_{j^{\prime }}$ & $1$ \\ \hline
\end{tabular}
\label{ybesolns}
\end{equation}
\end{proposition}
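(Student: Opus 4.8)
The plan is to verify each relation by direct computation, using the weight-preserving ($U(1)$) structure of all the operators involved to cut the work down to a short list of scalar identities. I regard the first relation as an identity of operators on $V[x_i]\otimes V[x_{i'}]\otimes V[t_j]$ and the second as one on $V[x_i]\otimes V[t_j]\otimes V[t_{j'}]$. Writing $L_{ij}$ in matrix form with respect to its auxiliary factor $V[x_i]$, its entries are the operators $1-x_it_j\sigma^-\sigma^+$, $x_i\sigma^+$, $\sigma^-$, $x_i\sigma^-\sigma^+$ acting on the quantum factor $V[t_j]$; the products $L_{ij}L_{i'j}$ and $L_{i'j}L_{ij}$ are then $4\times 4$ matrices in the doubled auxiliary space $V[x_i]\otimes V[x_{i'}]$ whose entries are products of such operators on the single remaining space $V[t_j]$. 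Multiplying on the appropriate side by the $4\times 4$ matrix $R_{ii'}$ of (\ref{R}) reduces the first Yang-Baxter equation to a finite list of operator identities on $V[t_j]$, each of which becomes a polynomial identity in $x_i,x_{i'},t_j$ upon evaluation on the basis $\{v_0,v_1\}$.

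The first simplification is that every operator here preserves the $\sigma^z$-grading, i.e. the total number of $v_1$'s across the three tensor factors: the Pauli operators raise or lower this count in compensating pairs, and each row of the matrix (\ref{R}) connects only states of equal weight. Consequently both sides of each equation block-diagonalise into four weight sectors of dimensions $1,3,3,1$. In the two extreme sectors (all $v_0$ or all $v_1$) the relation is one-dimensional and holds trivially, so the entire content sits in the two mixed sectors, where one compares $3\times 3$ blocks. Here the projector identities $\sigma^-\sigma^+=\mathrm{diag}(1,0)$ and $\sigma^+\sigma^-=\mathrm{diag}(0,1)$, together with $(\sigma^\pm)^2=0$ and the collapses $\sigma^+\sigma^-\sigma^+=\sigma^+$, $\sigma^-\sigma^+\sigma^-=\sigma^-$, make the operator entries of $L_{ij}L_{i'j}$ very sparse.

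Carrying out the comparison, the off-diagonal entries of the two mixed-sector blocks force precisely the values of $b,c,d,e$ recorded in (\ref{ybesolns}): matching the term that redistributes the hopping $x_i\sigma^+$ sitting in one auxiliary slot against $\sigma^-$ in the other produces the combination $1-x_{i'}/x_i$, which is the entry $e$, while the pure hopping terms fix $c=1$, $d=x_{i'}/x_i$ and force $b=0$. The computation for $L'$ against $R'_{ii'}$ is structurally identical but with the roles of the projectors $\sigma^-\sigma^+$ and $\sigma^+\sigma^-$ interchanged (and the sign of the $t_j$-term flipped), which is exactly why the table lists $R'_{ii'}$ with the entries $(b,e)$ and $(c,d)$ swapped relative to $R_{ii'}$. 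For the second relation the inhomogeneities $t_j,t_{j'}$ play the role that $x_i,x_{i'}$ played before; the same weight-sector analysis fixes the single nontrivial entry $e=t_j-t_{j'}$ of $r_{jj'}$, and since $L$ and $L'$ differ only in the diagonal projector and a sign, the identical braiding matrix works in both cases, accounting for $r_{jj'}=r'_{jj'}$.

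The only genuine difficulty is bookkeeping rather than mathematics: one must keep rigorous track of which tensor slot each operator occupies and, crucially, of the order in which the two $L$-operators are multiplied on the shared space, since $L_{ij}$ and $L_{i'j}$ do not commute there. I expect the second equation $r_{jj'}L_{ij}L_{ij'}=L_{ij'}L_{ij}r_{jj'}$ to be the more error-prone of the two, because there the shared space is the auxiliary factor $V[x_i]$ while the braiding matrix $r$ acts on the two quantum factors, so the matrix-element formalism must be set up transposed to the first case. A final point worth making explicit is that the degenerate entries (such as $b=0$ for $R_{ii'}$) correspond to genuinely vanishing operator combinations, so that the $R$- and $r$-matrices of (\ref{ybesolns}) are the forced solutions and not merely one admissible choice.
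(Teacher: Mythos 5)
Your proposal is correct, and it is essentially the paper's own (omitted) argument: the paper disposes of this proposition with "a straightforward but rather tedious and lengthy computation which we omit," i.e.\ direct verification, which is exactly what you carry out, organized by the conserved number of $v_1$'s so that everything reduces to the two mixed-weight $3\times 3$ blocks while the extreme sectors are trivial. Your structural claims check out on inspection — e.g.\ with the normalisation $a=1$, comparing coefficients on the weight-one sector does force $c=1$ and $e=1-x_{i'}/x_{i}$, and the interchange of the projectors $\sigma^{-}\sigma^{+}$ and $\sigma^{+}\sigma^{-}$ does account for the swapped entries of $R'_{ii'}$ (up to the accompanying inversion $x_{i'}/x_{i}\mapsto x_{i}/x_{i'}$) — so your write-up in fact supplies more detail than the paper does.
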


\begin{proof}
A straightforward but rather tedious and lengthy computation which we omit.
\end{proof}

\subsection{Monodromy matrices \& Yang-Baxter algebras}

It is a known fact that the solutions of the Yang-Baxter equation carry a
natural bi-algebra structure. Here we are only interested in the coproduct.
Define coproducts $\Delta ^{\func{col}}:\limfunc{End}V_{j}\rightarrow 
\limfunc{End}V_{j}\otimes \limfunc{End}V_{j+1}$ and $\Delta ^{\func{row}}:%
\limfunc{End}V_{i}\rightarrow \limfunc{End}V_{i}\otimes \limfunc{End}V_{i+1}$
by setting%
\begin{equation}
\Delta ^{\func{col}}L_{ij}=L_{ij+1}L_{ij}\qquad \text{and\qquad }\Delta ^{%
\func{row}}L_{ij}=L_{i+1j}L_{ij}\;.  \label{cops}
\end{equation}%
One easily verifies that both coproducts are well-defined, i.e. they are
algebra homomorphisms and are coassociative $(\Delta \otimes 1)\Delta
=(1\otimes \Delta )\Delta $. Alternatively, one can define the
\textquotedblleft opposite\textquotedblright\ coproducts,%
\begin{equation}
\dot{\Delta}^{\func{col}}L_{ij}=L_{ij}L_{ij+1}\qquad \text{and\qquad }\dot{%
\Delta}^{\func{row}}L_{ij}=L_{ij}L_{i+1j}\;.
\end{equation}%
We shall denote by $\Delta ^{p}=(\Delta \otimes 1\cdots \otimes 1)\cdots
(\Delta \otimes 1)\Delta $ the $p$-fold application of the various
coproducts in the first factor.

\begin{proposition}
$\Delta ^{\func{col}}L_{ij}$ and $\Delta ^{\func{row}}L_{ij}$ are solutions
to the first and second Yang-Baxter equation in (\ref{ybe}), respectively.
Moreover, the coproduct structures \textquotedblleft
commute\textquotedblright\ 
\begin{equation}
\left( \Delta ^{\func{col}}\otimes 1\right) \Delta ^{\func{row}}=\left(
\Delta ^{\func{row}}\otimes 1\right) \Delta ^{\func{col}}\;.
\end{equation}%
The same statements hold true for the opposite coproducts $\dot{\Delta}$.
\end{proposition}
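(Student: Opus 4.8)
The plan is to derive all three assertions from \emph{Proposition \ref{prop:ybe}} together with the locality of the $L$-operators: two operators that act on disjoint tensor legs commute, so $L_{ij}$ and $L_{i'j'}$ commute whenever $i\neq i'$ and $j\neq j'$. Throughout I view $L_{ij}$ as a matrix in one factor with entries acting on the other, and I use that $\Delta^{\func{col}}$ and $\Delta^{\func{row}}$ are algebra homomorphisms, as already recorded above.

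First I would show that $\Delta^{\func{col}}L_{ij}=L_{ij+1}L_{ij}$ solves the first equation of (\ref{ybe}) by the standard \textbf{train argument}: starting from $R_{ii'}(\Delta^{\func{col}}L_{ij})(\Delta^{\func{col}}L_{i'j})$ I commute the disjoint pair $L_{ij}$ and $L_{i'j+1}$, then push $R_{ii'}$ through column $j+1$ and then through column $j$ using the first Yang--Baxter equation on each column, and finally undo the remaining transposition:
\begin{align*}
R_{ii'}L_{ij+1}L_{ij}L_{i'j+1}L_{i'j}
&=R_{ii'}L_{ij+1}L_{i'j+1}L_{ij}L_{i'j}\\
&=L_{i'j+1}L_{ij+1}R_{ii'}L_{ij}L_{i'j}\\
&=L_{i'j+1}L_{ij+1}L_{i'j}L_{ij}R_{ii'}\\
&=L_{i'j+1}L_{i'j}L_{ij+1}L_{ij}R_{ii'},
\end{align*}
which is exactly $(\Delta^{\func{col}}L_{i'j})(\Delta^{\func{col}}L_{ij})R_{ii'}$. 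The identical computation with the roles of rows and columns interchanged, using the second equation of (\ref{ybe}) on rows $i$ and $i+1$ together with the matrix $r_{jj'}$, shows that $\Delta^{\func{row}}L_{ij}=L_{i+1j}L_{ij}$ solves the second Yang--Baxter equation.

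For the commuting-coproducts identity I would simply evaluate both sides on $L_{ij}$, distributing each coproduct over the product by (\ref{cops}) and the homomorphism property. This gives $(\Delta^{\func{col}}\otimes 1)\Delta^{\func{row}}L_{ij}=\Delta^{\func{col}}(L_{i+1j}L_{ij})=L_{i+1j+1}L_{i+1j}L_{ij+1}L_{ij}$ on one side and $(\Delta^{\func{row}}\otimes 1)\Delta^{\func{col}}L_{ij}=\Delta^{\func{row}}(L_{ij+1}L_{ij})=L_{i+1j+1}L_{ij+1}L_{i+1j}L_{ij}$ on the other. These two products differ only in the order of the middle factors $L_{i+1j}$ and $L_{ij+1}$, which act on the disjoint legs $V_{i+1}\otimes V_j$ and $V_i\otimes V_{j+1}$ and hence commute, so the two expressions agree.

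Finally, the statements for the opposite coproducts $\dot{\Delta}$ follow by repeating both arguments with every operator product written in reversed order: the train argument again proceeds line by line through (\ref{ybe}), and the commuting-coproducts identity again reduces to the commutativity of the same two crossing operators. The only genuine difficulty is bookkeeping the tensor legs occupied by each $L$-operator so that the locality commutations are applied to the correct pairs; once the leg labels are fixed, every step is either an instance of (\ref{ybe}) or a disjoint-support commutation, and nothing deeper is needed.
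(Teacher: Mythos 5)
Your proof is correct. Note that the paper states this proposition without any proof at all — it is treated as a known fact about the bialgebra structure carried by solutions of the Yang--Baxter equation — so your write-up simply supplies the details the authors leave to the reader, and it does so by exactly the mechanism the paper itself invokes later when it derives (\ref{mom_ybe}) as a ``direct consequence of (\ref{ybe}) using the coproduct structures'': the standard train argument (two applications of Proposition \ref{prop:ybe}, one per column or per row) combined with disjoint-leg commutations, plus the homomorphism property of $\Delta^{\func{col}},\Delta^{\func{row}}$ recorded after (\ref{cops}) for the mixed identity.
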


The coproduct structures enables us to consider tensor products of the
original spaces $V[x_{i}],V[t_{j}]$ which we interpret as rows and columns
of a square lattice where $i$ labels the rows and $j$ the columns; see
Figure \ref{fig:eqc_moms} for an illustration.


\begin{figure}[tbp]
\begin{equation*}
\includegraphics[scale=0.5]{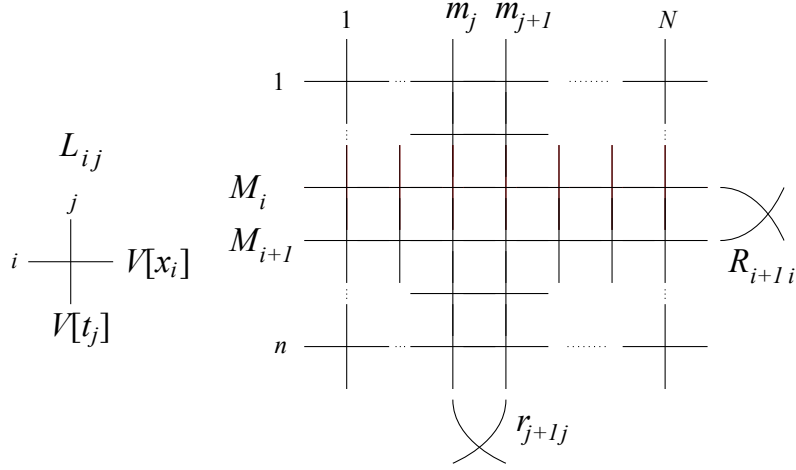}
\end{equation*}%
\caption{Graphical depiction of the $L$-operators and the monodromy
matrices. Each operator $L_{ij}$ is represented by a vertex in the $i$th row
and $j$th column. The square lattice on the right then represents the
operator (\protect\ref{rowmom}) over the tensor product $W_{n}\otimes 
\mathcal{H}$ obtained by either reading out the lattice rows right to left, $%
M_{n}\cdots M_{2}M_{1}$ or the lattice columns bottom to top, $m_{N}\cdots
m_{1}$. Both expressions are equal as $L$-operators in different rows and
columns commute. Braiding two lattice rows or two lattice columns then leads
to the matrices $R_{i+1,i}$ and $r_{j+1,j}$, respectively.}
\label{fig:eqc_moms}
\end{figure}

We consider square lattices of different dimensions for the $L$ and $%
L^{\prime }$-operators which are linked to the dimension of the ambient
space $N=n+k,$ the dimension $n$ of the hyperplanes and their co-dimension $%
k $. Namely, we set 
\begin{equation}
\mathcal{V}=\tbigotimes_{j=1}^{N}V[t_{j}]\cong \mathbb{Z}[t_{1},\ldots
,t_{N}]\otimes V^{\otimes N}  \label{quantum_space}
\end{equation}%
which is called the \emph{quantum space}, and consider the so-called \emph{%
auxiliary spaces }%
\begin{equation}
W_{r}=\tbigotimes_{i=1}^{r}V[x_{i}]\cong \mathbb{Z}[x_{1},\ldots
,x_{r}]\otimes V^{\otimes r},\quad \quad r=n,k\;.  \label{aux_space}
\end{equation}%
We associate the tensor product $W_{n}\otimes \mathcal{V}$ with an $n\times
N $ square lattice on which we will define the vicious walker model and $%
W_{k}\otimes \mathcal{V}$ with an $k\times N$ square lattice for the
osculating walker model. The partition functions of these models are
weighted sums over certain non-intersecting lattice paths and can be
expressed as matrix elements of the following operator $\mathcal{Z}%
:W_{n}\otimes \mathcal{V}\rightarrow W_{n}\otimes \mathcal{V}$ 
\begin{equation}
\mathcal{Z}=(\Delta ^{\func{row}})^{n-1}M_{1}=M_{n}\cdots M_{2}M_{1},\qquad
M_{i}=L_{iN}\cdots L_{i2}L_{i1}\;,  \label{rowmom}
\end{equation}%
where we have defined the following \emph{row-monodromy matrix} $%
M_{i}=(\Delta ^{\func{col}})^{N-1}L_{i1}:V[x_{i}]\otimes \mathcal{V}%
\rightarrow V[x_{i}]\otimes \mathcal{V}$.

To motivate the latter identify the vertex in the $i$th row and $j$th column
of the square lattice in Figure \ref{fig:eqc_moms} with $L_{ij}$, then the
operator (\ref{rowmom}) is obtained by reading out the rows of the $n\times
N $ square lattice right to left starting from the bottom. We define the
row-monodromy matrices $M_{i}^{\prime }$ for the $k\times N$ lattice in
analogous fashion using the $L^{\prime }$-operator instead.

The $L_{ij}$ operators in (\ref{rowmom}) only act non-trivially in the $i$th
row and $j$th column of the lattice, i.e. the $i$th factor in the tensor
product $W_{n}$ and the $j$th factor in $\mathcal{V}$, everywhere else they
act as the identity operator. Thus, we can trivially re-arrange the operator
(\ref{rowmom}) in terms of the \emph{column-monodromy matrices }$%
m_{j}=(\Delta ^{\func{row}})^{n-1}L_{1j}:W_{n}\otimes V[t_{j}]\rightarrow
W_{n}\otimes V[t_{j}],$ 
\begin{equation}
\mathcal{Z}=(\Delta ^{\func{col}})^{N-1}m_{1}=m_{N}\ldots m_{2}m_{1},\qquad
m_{j}=L_{nj}\cdots L_{2j}L_{1j},  \label{colmom}
\end{equation}%
by reading out the lattice columns bottom to top starting at the right.
Again, we define the column-monodromy matrices $m_{j}^{\prime }$ for the $%
L^{\prime }$-operators in an analogous manner.

\begin{corollary}
We have the following identities for respectively the row and column
monodromy matrices,%
\begin{equation}
R_{ii^{\prime }}M_{i}M_{i^{\prime }}=M_{i^{\prime }}M_{i}R_{ii^{\prime
}}\qquad \text{and\qquad }r_{jj^{\prime }}m_{j}m_{j^{\prime }}=m_{j^{\prime
}}m_{j}r_{jj^{\prime }},  \label{mom_ybe}
\end{equation}%
where $i,i^{\prime }=1,\ldots ,n$ and $j,j^{\prime }=1,\ldots ,N$. The
analogous identities hold for $M^{\prime }$ and $m^{\prime }$.
\end{corollary}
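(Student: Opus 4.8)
The plan is to recognise the Corollary as the standard passage from the ``$RLL$'' relations of Proposition~\ref{prop:ybe} to the ``$RMM$'' relations for monodromy matrices, and to obtain it by induction on the number of lattice sites using the coproduct machinery already assembled in (\ref{cops}). Because $M_i=(\Delta^{\func{col}})^{N-1}L_{i1}$ and $m_j=(\Delta^{\func{row}})^{n-1}L_{1j}$, and because the preceding proposition guarantees that $\Delta^{\func{col}}L_{ij}$ and $\Delta^{\func{row}}L_{ij}$ again solve the first and second equation of (\ref{ybe}) respectively, the whole statement should follow by iterating that proposition. I would treat the row relation for $M$ in detail and then observe that the column relation for $m$, as well as both primed versions, are obtained by the identical argument with $L,R$ replaced by $r$ and by $L',R',r'$.

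For the inductive argument I would fix $i\neq i'$ and prove, by induction on $p$, that every $p$-fold column coproduct $(\Delta^{\func{col}})^{p}L_{i1}$ satisfies a relation of the form $R_{ii'}\,X_iX_{i'}=X_{i'}X_i\,R_{ii'}$, where the subscripts record the auxiliary space on which $R$ acts. The base case $p=0$ is exactly the first identity of Proposition~\ref{prop:ybe}. For the inductive step one applies $\Delta^{\func{col}}$ to the relation at level $p$; since $\Delta^{\func{col}}$ is an algebra homomorphism that acts only on the quantum-space factors and leaves the auxiliary factors (hence $R_{ii'}$) untouched, it carries a valid $RMM$ relation to the next one, and coassociativity $(\Delta\otimes1)\Delta=(1\otimes\Delta)\Delta$ makes the iteration unambiguous. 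Taking $p=N-1$ gives the relation for $M_i$.

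Equivalently I would give the direct ``train'' argument, which makes the mechanism transparent. Using that $L_{ij}$ and $L_{i'j'}$ act on disjoint lattice sites when $j\neq j'$ and therefore commute, I would regroup
\[
M_iM_{i'}=(L_{iN}\cdots L_{i1})(L_{i'N}\cdots L_{i'1})
=(L_{iN}L_{i'N})(L_{i,N-1}L_{i',N-1})\cdots(L_{i1}L_{i'1}),
\]
then transport $R_{ii'}$ from the far left rightwards through each paired factor by a single application of the first equation of (\ref{ybe}), namely $R_{ii'}L_{ij}L_{i'j}=L_{i'j}L_{ij}R_{ii'}$. After all $N$ steps $R_{ii'}$ emerges on the right and the surviving product regroups back into $M_{i'}M_i$, giving $R_{ii'}M_iM_{i'}=M_{i'}M_iR_{ii'}$. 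The column relation is the same computation with the roles of rows and columns interchanged, using instead the second equation of (\ref{ybe}) and the commutativity of $L_{ij}$ and $L_{i'j'}$ for $i\neq i'$.

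I do not expect a genuine obstacle here: all the analytic content sits in Proposition~\ref{prop:ybe}. The only point requiring care is the bookkeeping of tensor factors --- verifying that $R_{ii'}$ really acts as the identity on every quantum-space factor and on every auxiliary factor other than $i,i'$, so that it commutes with all the $L$'s except the adjacent pair it is being pushed through, and checking that the ``disjoint sites commute'' claim is applied only to genuinely distinct factors of $W_n$ (resp.\ $\mathcal{V}$). Once this is set up, the row and column cases, primed and unprimed, are literally the same argument carried out four times.
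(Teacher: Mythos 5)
Your proposal is correct and matches the paper's own proof, which simply states that the monodromy Yang--Baxter relations follow directly from (\ref{ybe}) together with the coproduct structures (\ref{cops}) --- exactly your first (inductive/coproduct) argument, of which your explicit ``train'' computation is just the unpacked version. Nothing further is needed.
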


\begin{proof}
The Yang-Baxter equations for the monodromy matrices are a direct
consequence of (\ref{ybe}) using the coproduct structures (\ref{cops}).
\end{proof}

The equations in (\ref{mom_ybe}) can be seen as definitions of algebras in $%
\limfunc{End}\mathcal{V}$ and $\limfunc{End}W_{n}$, respectively. Namely,
for any $i$ we can decompose the row monodromy matrix $M=M_{i}$ defined in (%
\ref{rowmom}) over the auxiliary space $V=V[x_{i}]$ as follows,%
\begin{equation}
M=\sum_{a,b=0,1}e^{ab}\otimes M_{ab},\qquad (M_{ab})=\left( 
\begin{array}{cc}
A(x_{i}|t) & B(x_{i}|t) \\ 
C(x_{i}|t) & D(x_{i}|t)%
\end{array}%
\right) _{i}  \label{row_yba}
\end{equation}%
where $e^{ab}$ are the $2\times 2$ unit matrices and the matrix entries $%
A,B,C,D$ are elements in $\mathbb{Z}[x_{i}]\otimes \limfunc{End}\mathcal{V}$%
. Expanding the latter as in $x_{i}$ their coefficients generate the
so-called \emph{row Yang-Baxter algebra }$\subset \limfunc{End}\mathcal{V}$
with the commutation relations of $A,B,C,D$ given in terms of the matrix
elements (\ref{ybesolns}) of $R$ via (\ref{mom_ybe}). The row Yang-Baxter
algebra for the monodromy matrix $M^{\prime }$ associated with $L^{\prime }$
is defined analogously.

Similarly, we decompose the column monodromy matrix (\ref{colmom}) over the $%
j$th column $V[t_{j}]$ setting%
\begin{equation}
m_{j}=\left( 
\begin{array}{cc}
a(x|t_{j}) & b(x|t_{j}) \\ 
c(x|t_{j}) & d(x|t_{j})%
\end{array}%
\right) _{j}  \label{col_yba}
\end{equation}%
where the matrix entries $a,b,c,d$ are now elements in $\mathbb{Z}%
[t_{j}]\otimes \limfunc{End}W_{n}$. Similar to the case of the row
Yang-Baxter algebra the entries $a,b,c,d$ generate the \emph{column
Yang-Baxter algebra} $\subset \limfunc{End}W_{n}$ and their commutation
relations are fixed via the second equality in (\ref{mom_ybe}) with the
matrix elements of $r$ given in (\ref{ybesolns}).

\subsection{Quantum deformation}

We discuss a slight generalisation of the previous results which will allow
us to introduce additional \textquotedblleft quantum
parameters\textquotedblright\ $q_{1},\ldots ,q_{N}$ in the monodromy
matrices.

\begin{lemma}
We have the following simple identity for the $L$-operators%
\begin{equation}
L_{ij}(x_{i};t_{j})\left( 
\begin{smallmatrix}
1 & 0 \\ 
0 & q%
\end{smallmatrix}%
\right) _{i}=L(x_{i}q;t_{j}q^{-1})\;.  \label{qL}
\end{equation}%
The analogous statement holds true for $L_{ij}^{\prime }$.
\end{lemma}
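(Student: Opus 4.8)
The plan is to verify (\ref{qL}) by a direct $2\times 2$ matrix multiplication in the auxiliary space labelled by $i$, since both sides are written in matrix form with respect to the factor $V[x_{i}]$. First I would note that right multiplication by $\left( \begin{smallmatrix} 1 & 0 \\ 0 & q \end{smallmatrix}\right)_{i}$ acts only on the auxiliary index, and hence simply rescales the second column of $L_{ij}$ by $q$ while fixing the first column. Carrying out the product from (\ref{L}) gives
\begin{equation*}
L_{ij}(x_{i};t_{j})\left( \begin{smallmatrix} 1 & 0 \\ 0 & q \end{smallmatrix}\right)_{i}=\left(
\begin{array}{cc}
1_{j}-x_{i}t_{j}\sigma _{j}^{-}\sigma _{j}^{+} & q\,x_{i}\sigma _{j}^{+} \\
\sigma _{j}^{-} & q\,x_{i}\sigma _{j}^{-}\sigma _{j}^{+}
\end{array}
\right)_{i}.
\end{equation*}

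Next I would compute the right-hand side of (\ref{qL}) by substituting $x_{i}\mapsto x_{i}q$ and $t_{j}\mapsto t_{j}q^{-1}$ directly into (\ref{L}). The one point worth isolating is that the two parameters enter the top-left entry only through the product $x_{i}t_{j}$, which is invariant under this rescaling because $(x_{i}q)(t_{j}q^{-1})=x_{i}t_{j}$; meanwhile each standalone factor of $x_{i}$ in the $(1,2)$ and $(2,2)$ entries picks up exactly one factor of $q$. A termwise comparison then shows that the substituted matrix agrees with the product computed above.

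Finally, for $L_{ij}^{\prime }$ I would run the identical computation starting from (\ref{L'}). The argument is insensitive to whether the diagonal entries carry $\sigma _{j}^{-}\sigma _{j}^{+}$ or $\sigma _{j}^{+}\sigma _{j}^{-}$, so the same cancellation $(x_{i}q)(t_{j}q^{-1})=x_{i}t_{j}$ again makes the top-left entry invariant and scales the remaining factors of $x_{i}$ by $q$, delivering the claimed identity.

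I do not expect any genuine obstacle here: the statement is a bookkeeping identity whose entire content is the invariance of the bilinear combination $x_{i}t_{j}$ under the simultaneous rescaling of $x_{i}$ and $t_{j}$. The only thing to watch is that the diagonal factor acts on the auxiliary space $i$ rather than the quantum space $j$, so that it commutes past the Pauli matrices $\sigma _{j}^{\pm }$ and behaves purely as a rescaling of the second column.
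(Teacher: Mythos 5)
Your proof is correct and is essentially the paper's own argument: the paper simply declares the identity ``immediate from the definitions (\ref{L}) and (\ref{L'})'', and your computation — right multiplication by $\left(\begin{smallmatrix}1&0\\0&q\end{smallmatrix}\right)_{i}$ rescaling the second column by $q$, combined with the invariance of $x_{i}t_{j}$ under the simultaneous rescaling — is exactly the verification being alluded to, spelled out explicitly.
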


\begin{proof}
This is immediate from the definitions (\ref{L}) and (\ref{L'}).
\end{proof}

Using the last result one proves via a similar computation as before the
following identity:

\begin{lemma}
We have the $q$-deformed Yang-Baxter equation%
\begin{equation}
r_{jj^{\prime }}(q)L_{ij}(x_{i};t_{j})\left( 
\begin{smallmatrix}
1 & 0 \\ 
0 & q%
\end{smallmatrix}%
\right) _{i}L_{ij^{\prime }}(x_{i};t_{j^{\prime }})=L_{ij^{\prime
}}(x_{i};t_{j^{\prime }})\left( 
\begin{smallmatrix}
1 & 0 \\ 
0 & q%
\end{smallmatrix}%
\right) _{i}L_{ij}(x_{i};t_{j})r_{jj^{\prime }}(q),  \label{qybe}
\end{equation}%
where%
\begin{equation}
r_{jj^{\prime }}(q)=\left( 
\begin{array}{cccc}
1 & 0 & 0 & 0 \\ 
0 & 0 & 1 & 0 \\ 
0 & 1 & q^{-1}(t_{j}-t_{j^{\prime }}) & 0 \\ 
0 & 0 & 0 & 1%
\end{array}%
\right) \;.  \label{qr}
\end{equation}
\end{lemma}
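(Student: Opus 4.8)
The plan is to reduce the $q$-deformed Yang-Baxter equation~(\ref{qybe}) to the undeformed second relation of Proposition~\ref{prop:ybe} by absorbing the diagonal matrices $\left(\begin{smallmatrix}1&0\\0&q\end{smallmatrix}\right)_i$ into the neighbouring $L$-operators. On each side of~(\ref{qybe}) the factor $\left(\begin{smallmatrix}1&0\\0&q\end{smallmatrix}\right)_i$ sits immediately to the right of the first $L$-operator, so the previous Lemma~(\ref{qL}) applies directly: on the left-hand side $L_{ij}(x_i;t_j)\left(\begin{smallmatrix}1&0\\0&q\end{smallmatrix}\right)_i=L_{ij}(x_iq;t_jq^{-1})$, and on the right-hand side $L_{ij'}(x_i;t_{j'})\left(\begin{smallmatrix}1&0\\0&q\end{smallmatrix}\right)_i=L_{ij'}(x_iq;t_{j'}q^{-1})$. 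After this substitution~(\ref{qybe}) becomes the intertwining relation
\[
r_{jj'}(q)\,L_{ij}(x_iq;t_jq^{-1})\,L_{ij'}(x_i;t_{j'})=L_{ij'}(x_iq;t_{j'}q^{-1})\,L_{ij}(x_i;t_j)\,r_{jj'}(q),
\]
which I would then establish by the same direct computation as in Proposition~\ref{prop:ybe}. Note that, unlike the undeformed case, the two $L$-operators now carry the \emph{unequal} spectral arguments $x_iq$ and $x_i$, so this is genuinely a new relation rather than a special case of~(\ref{ybe}).

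The mechanism that makes it go through is the scaling invariance of the combination $x_it_j$ (and likewise $x_it_{j'}$) under $(x_i,t_j)\mapsto(x_iq,t_jq^{-1})$. Every matrix entry of the $L$-operators in which $x_i$ and $t_j$ enter only through this product is therefore insensitive to the deformation, while a single uncompensated power of $q$ survives exactly in the entries where $x_i$ appears alone, namely the off-diagonal \textquotedblleft hopping\textquotedblright\ terms such as $x_iq\,\sigma_j^{+}\sigma_{j'}^{-}$. Matching the two sides then reduces to checking that these lone powers of $q$ cancel against $r_{jj'}(q)$: such a term always appears paired with the entry $e$ of the $r$-matrix, and replacing $e=t_j-t_{j'}$ by $e=q^{-1}(t_j-t_{j'})$ in~(\ref{qr}) gives $q\,e=t_j-t_{j'}$, reinstating precisely the balance already verified at $q=1$. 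For instance, one of the off-diagonal matching conditions requires $x_i(1-x_it_j)+x_i^2q\,e=x_i(1-x_it_{j'})$, which holds exactly because $q\,e=t_j-t_{j'}$.

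The main obstacle I anticipate is not any single computation but the \emph{simultaneous} consistency of this one modification: one must confirm that the single replacement $t_j-t_{j'}\mapsto q^{-1}(t_j-t_{j'})$ repairs every off-diagonal component at once, rather than forcing mutually incompatible corrections in different blocks. This is what pins down the factor $q^{-1}$ uniquely and is the only genuinely new content beyond Proposition~\ref{prop:ybe}. Should the reduction via~(\ref{qL}) obscure the bookkeeping, an equivalent route is to verify~(\ref{qybe}) directly from the definitions~(\ref{L}) and~(\ref{qr}); the computation is of the same length as the one omitted for Proposition~\ref{prop:ybe}, with~(\ref{qL}) used only to keep track of the $q$-shifts transparently. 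The statement for $L'_{ij}$, when needed, follows by the identical argument.
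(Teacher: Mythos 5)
Your proposal is correct and follows essentially the same route as the paper: the paper's proof likewise invokes the rescaling identity (\ref{qL}) to absorb the diagonal matrices $\left(\begin{smallmatrix}1&0\\0&q\end{smallmatrix}\right)_i$ and then establishes the resulting intertwining relation ``via a similar computation as before,'' i.e.\ the same direct check as in Proposition \ref{prop:ybe}. Your identification of the mechanism — the invariance of the products $x_it_j$ under $(x_i,t_j)\mapsto(x_iq,t_jq^{-1})$, with the surviving lone powers of $q$ from the entries linear in $x_i$ cancelling against the $q^{-1}$ in the entry $e$ of (\ref{qr}) — is accurate (your sample condition is precisely one of the off-diagonal matching conditions) and in fact supplies detail that the paper omits.
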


Using this simple observation we generalise our previous formulae for the
monodromy matrices by setting%
\begin{equation}
M_{i}(q_{1},\ldots ,q_{N}):=L_{iN}\left( 
\begin{smallmatrix}
1 & 0 \\ 
0 & q_{N}%
\end{smallmatrix}%
\right) _{i}\cdots L_{i2}\left( 
\begin{smallmatrix}
1 & 0 \\ 
0 & q_{2}%
\end{smallmatrix}%
\right) _{i}L_{i1}\left( 
\begin{smallmatrix}
1 & 0 \\ 
0 & q_{1}%
\end{smallmatrix}%
\right) _{i}  \label{qMom}
\end{equation}%
and%
\begin{equation}
m_{j}(q_{j}):=L_{nj}\left( 
\begin{smallmatrix}
1 & 0 \\ 
0 & q_{j}%
\end{smallmatrix}%
\right) _{n}\cdots L_{2j}\left( 
\begin{smallmatrix}
1 & 0 \\ 
0 & q_{j}%
\end{smallmatrix}%
\right) _{2}L_{1j}\left( 
\begin{smallmatrix}
1 & 0 \\ 
0 & q_{j}%
\end{smallmatrix}%
\right) _{1}\;.  \label{qmom}
\end{equation}%
Employing the same type of arguments as in our previous discussion, one
shows that these deformed monodromy matrices satisfy the same type of
Yang-Baxter relations (\ref{qybe}) as the non-deformed ones, the only
difference lies in the braid matrix $r_{jj^{\prime }}$ which is now replaced
by $r_{jj^{\prime }}(q_{j})$. Since according to (\ref{qL}) we can
re-introduce the quantum parameters easily be a simultaneous rescaling of
the equivariant and spectral parameters we shall for simplicity set $%
q_{1}=\cdots =q_{N}=1$ for now. However, below when we discuss the $q\,$%
-deformation of the cohomology ring we will choose $q_{1}=q$ and $%
q_{2}=\cdots =q_{N}=1$.

\subsection{Row-to-row transfer matrices}

We now introduce periodic boundary conditions in the horizontal direction of
the lattice by taking the partial trace of the operator (\ref{rowmom}) over
the auxiliary space $V^{\otimes n}$. We obtain the following operator $Z_{n}:%
\mathbb{Z}[x_{1},\ldots ,x_{n}]\otimes \mathcal{V}\rightarrow \mathbb{Z}%
[x_{1},\ldots ,x_{n}]\otimes \mathcal{V}$,%
\begin{equation}
Z_{n}(x|t)=\limfunc{Tr}_{V^{\otimes n}}M_{n}\cdots M_{2}M_{1}=\limfunc{Tr}%
_{V^{\otimes n}}m_{N}\cdots m_{2}m_{1}\;.  \label{Zdef}
\end{equation}%
The matrix elements of the latter are now partition functions of our lattice
models on a cylinder and we will show that these are generating functions
for Gromov-Witten invariants. We also define an operator $Z_{k}^{\prime }$
using instead the $L^{\prime }$-operators and replacing $n\rightarrow k$
everywhere.

\begin{lemma}
Denote by $H=Z_{1}=A+D$ and $E=Z_{1}^{\prime }=A^{\prime }+D^{\prime }$. We
have the relations 
\begin{equation}
Z_{n}(x|t)=H(x_{n}|t)\cdots H(x_{2}|t)H(x_{1}|t)  \label{Z2H}
\end{equation}%
and%
\begin{equation}
Z_{k}^{\prime }(x|t)=E(x_{k}|t)\cdots E(x_{2}|t)E(x_{1}|t)\;.  \label{Z'2E}
\end{equation}%
The operators $H,E$ are called the \emph{row-to-row transfer matrices}.
\end{lemma}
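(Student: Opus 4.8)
The plan is to derive both relations directly from the definition $Z_n(x|t)=\limfunc{Tr}_{V^{\otimes n}}M_n\cdots M_2M_1$ in (\ref{Zdef}), exploiting the single fact that each row-monodromy matrix $M_i$ acts non-trivially only in its own auxiliary factor $V[x_i]$ while all of them share the common quantum space $\mathcal{V}$. This is the standard factorisation of a multi-row transfer matrix into a product of single-row transfer matrices familiar from the quantum inverse scattering method. The computation is routine; the only point demanding care is that the auxiliary-space entries $A,D\in\mathbb{Z}[x_i]\otimes\limfunc{End}\mathcal{V}$ from (\ref{row_yba}) belonging to different rows need not commute, so the operator ordering on $\mathcal{V}$ must be preserved at every step.

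First I would expand each $M_i$ in auxiliary matrix-unit form. Fixing the basis $\{v_0,v_1\}$ of $V$, write $M_i=\sum_{a,b}e^{ab}_{(i)}\otimes (M_i)_{ab}$, where $e^{ab}_{(i)}$ is the elementary matrix acting in the $i$th tensor slot of $W_n=V^{\otimes n}$ and $(M_i)_{ab}$ runs over the entries $A,B,C,D$ of (\ref{row_yba}). Since matrix units placed in distinct auxiliary slots occupy independent tensor factors, multiplying out the product yields
\[
M_n\cdots M_1=\sum_{a_n,b_n,\ldots,a_1,b_1}\left(e^{a_nb_n}_{(n)}\cdots e^{a_1b_1}_{(1)}\right)\otimes (M_n)_{a_nb_n}\cdots (M_1)_{a_1b_1},
\]
in which the $\mathcal{V}$-valued entries are composed in precisely the order dictated by the product, because $(X\otimes P)(Y\otimes Q)=XY\otimes PQ$ keeps the $\mathcal{V}$-factors in their written order.

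Second I would take the partial trace over $V^{\otimes n}$. Because $\limfunc{Tr}_{V^{\otimes n}}(e^{a_nb_n}_{(n)}\cdots e^{a_1b_1}_{(1)})=\delta_{a_nb_n}\cdots\delta_{a_1b_1}$, only the diagonal auxiliary terms survive and
\[
Z_n(x|t)=\sum_{a_n,\ldots,a_1\in\{0,1\}}(M_n)_{a_na_n}\cdots (M_1)_{a_1a_1}.
\]
The crucial observation is that each summation index $a_i$ occurs in exactly one factor of this ordered operator product, so the nested sum distributes over the factors without any reordering,
\[
Z_n(x|t)=\left(\sum_{a_n}(M_n)_{a_na_n}\right)\cdots\left(\sum_{a_1}(M_1)_{a_1a_1}\right)=\big(A(x_n|t)+D(x_n|t)\big)\cdots\big(A(x_1|t)+D(x_1|t)\big),
\]
which is exactly $H(x_n|t)\cdots H(x_1|t)$ once we recall $H=Z_1=A+D$. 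This establishes (\ref{Z2H}).

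Finally, the relation (\ref{Z'2E}) for $Z_k'$ follows by repeating the same argument verbatim with the primed row-monodromy matrices $M_i'$ built from the $L'$-operators, with $n$ replaced by $k$ and with $E=Z_1'=A'+D'$. An alternative to the direct computation is an induction on the number of rows using the iterated coproduct $\mathcal{Z}=(\Delta^{\func{row}})^{n-1}M_1$ from (\ref{rowmom}), but the trace factorisation above is cleaner. I do not anticipate a genuine obstacle: the entire content is that the trace over $V^{\otimes n}$ splits into a product of single-row traces because the auxiliary factors $V[x_i]$ are mutually independent while $\mathcal{V}$ is shared. The one subtlety, as flagged, is to keep the non-commuting $\limfunc{End}\mathcal{V}$-valued entries in their correct order while distributing the index sums.
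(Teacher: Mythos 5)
Your proposal is correct and is essentially the paper's own argument: the paper simply declares the factorisation ``immediate'' from the definitions (\ref{rowmom}), (\ref{Zdef}) and the commutativity of $L$-operators in distinct rows and columns, and your matrix-unit expansion plus trace factorisation is precisely the computation that one-liner compresses. Your flagged subtlety --- preserving the order of the non-commuting $\limfunc{End}\mathcal{V}$-valued entries while distributing the index sums --- is handled correctly, since each summation index occurs in exactly one factor of the ordered product.
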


\begin{proof}
This is immediate from the definitions (\ref{rowmom}), (\ref{Zdef}) and the
fact that the $L$-operators $L_{ij},L_{i^{\prime }j^{\prime }}$ commute if $%
i\neq i^{\prime }$ and $j\neq j^{\prime }$.
\end{proof}

The following statement is the cornerstone for the computation of the
partition functions of our lattice models.

\begin{proposition}[Integrability]
\label{integrability} \emph{All} the row-to-row transfer matrices commute,
that is we have that 
\begin{equation}
H(x_{i})H(x_{i^{\prime }})=H(x_{i^{\prime }})H(x_{i}),\qquad
E(x_{i})E(x_{i^{\prime }})=E(x_{i^{\prime }})E(x_{i})  \label{integrability1}
\end{equation}%
as well as%
\begin{equation}
H(x_{i})E(x_{i^{\prime }})=E(x_{i^{\prime }})H(x_{i})\;.
\label{integrability2}
\end{equation}%
In particular ,the operators $Z_{n}$, $Z_{k}^{\prime }$ are symmetric in the 
$x$-variables.
\end{proposition}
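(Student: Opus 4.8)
The plan is to run the standard quantum-inverse-scattering ``railroad'' argument. Recall that each row-to-row transfer matrix is a partial trace of a single-row monodromy matrix over its auxiliary space, $H(x_i)=\limfunc{Tr}_{V[x_i]}M_i$ and $E(x_i)=\limfunc{Tr}_{V[x_i]}M_i'$, and that both are operators on the common quantum space $\mathcal{V}$. The monodromy Yang-Baxter relations (\ref{mom_ybe}) of the Corollary will let me reverse the order of two monodromy matrices at the price of conjugating by an $R$-matrix, which then drops out under the trace by cyclicity.

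First I would prove (\ref{integrability1}). Regard $M_i$ and $M_{i'}$ as operators on $V[x_i]\otimes V[x_{i'}]\otimes\mathcal{V}$, each acting as the identity on the other's auxiliary factor. The matrix $R_{ii'}$ of (\ref{ybesolns}) is invertible for generic spectral parameters (its determinant is $-(x_{i'}/x_i)^2$), so (\ref{mom_ybe}) rearranges to $M_iM_{i'}=R_{ii'}^{-1}M_{i'}M_iR_{ii'}$. Applying $\limfunc{Tr}_{V[x_i]\otimes V[x_{i'}]}$, the left-hand side factorises as $(\limfunc{Tr}_{V[x_i]}M_i)(\limfunc{Tr}_{V[x_{i'}]}M_{i'})=H(x_i)H(x_{i'})$, because $M_i$ and $M_{i'}$ occupy disjoint auxiliary tensor slots and so their diagonal matrix elements multiply independently; on the right-hand side the conjugating $R_{ii'}$ lives only in the auxiliary spaces and is cancelled by cyclicity, leaving $\limfunc{Tr}(M_{i'}M_i)=H(x_{i'})H(x_i)$. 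The relation for $E$ in (\ref{integrability1}) follows verbatim, using instead the likewise invertible matrix $R_{ii'}'$ of (\ref{ybesolns}) together with the monodromy $M'$, for which the analogue of (\ref{mom_ybe}) holds as noted in the Corollary.

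The hard part will be the mixed relation (\ref{integrability2}), since Proposition~\ref{prop:ybe} provides intertwiners only for $L$ with $L$ and for $L'$ with $L'$, but none relating $L$ to $L'$. The plan is to supply this missing ingredient: by a direct computation on $V\otimes V\otimes V[t_j]$, entirely analogous to the proof of Proposition~\ref{prop:ybe}, I would establish a mixed Yang-Baxter equation $\bar{R}_{ii'}L_{ij}L_{i'j}'=L_{i'j}'L_{ij}\bar{R}_{ii'}$ for a suitable invertible matrix $\bar{R}_{ii'}$, then propagate it to the monodromies $M_i$ and $M_{i'}'$ through the coproducts (\ref{cops}) exactly as in the Corollary, and finally rerun the trace-and-cyclicity step to obtain $\limfunc{Tr}(M_iM_{i'}')=\limfunc{Tr}(M_{i'}'M_i)$, that is $H(x_i)E(x_{i'})=E(x_{i'})H(x_i)$. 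Once all three commutation relations are in hand, the symmetry of $Z_n$ and $Z_k'$ in the $x$-variables is immediate: by (\ref{Z2H}) and (\ref{Z'2E}) these operators are ordered products of the $H(x_i)$'s and of the $E(x_i)$'s respectively, and since adjacent factors commute by (\ref{integrability1}), any permutation of the spectral parameters leaves the product unchanged.
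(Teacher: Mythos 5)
Your argument for (\ref{integrability1}) and for the symmetry of $Z_{n}$, $Z_{k}^{\prime }$ is sound and is essentially the paper's own argument run in the opposite order: the paper inserts $R_{i,i+1}R_{i,i+1}^{-1}$ into the trace defining $Z_{n}$, pulls $R_{i,i+1}$ through the product of monodromy matrices using (\ref{mom_ybe}) and cancels it by cyclicity, thereby obtaining the symmetry of $Z_{n}$ in the $x$-variables directly, and then reads off $H(x_{i})H(x_{i^{\prime }})=H(x_{i^{\prime }})H(x_{i})$ as the special case $n=2$; you prove the $n=2$ case first and deduce the symmetry afterwards. Both variants rest on the same ingredients (invertibility of $R$, $R^{\prime }$ for generic spectral parameters and cyclicity of the trace), so this half of your proposal is correct.

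The genuine gap is in your plan for the mixed relation (\ref{integrability2}). Your argument hinges on producing an \emph{invertible} intertwiner $\bar{R}_{ii^{\prime }}$ with $\bar{R}_{ii^{\prime }}L_{ij}L_{i^{\prime }j}^{\prime }=L_{i^{\prime }j}^{\prime }L_{ij}\bar{R}_{ii^{\prime }}$, and that is exactly what fails here. The mixed intertwiners that actually exist, denoted $R^{\prime \prime }$ and $R^{\prime \prime \prime }$ in the paper's proof (see (\ref{ybe2})), are both \emph{singular}: in the notation of (\ref{R}) one has $f=0$ for $R^{\prime \prime }$, $a=f=0$ for $R^{\prime \prime \prime }$, and in both cases the central $2\times 2$ block also has vanishing determinant $be-cd=0$. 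Since the determinant of a matrix of the form (\ref{R}) is $af(be-cd)$, there is no inverse, so you can never rewrite $M_{i}M_{i^{\prime }}^{\prime }$ as $\bar{R}^{-1}M_{i^{\prime }}^{\prime }M_{i}\bar{R}$, and the trace-and-cyclicity step cannot be run. This is not a computational accident you can fix by searching harder for a better $\bar{R}$; it is the structural reason the paper's proof of (\ref{integrability2}) takes a different route.

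That route is worth knowing: a singular solution of the Yang-Baxter equation still imposes nontrivial linear relations among the entries of the monodromy matrices, it just does not furnish a conjugation. Using \emph{both} singular relations in (\ref{ybe2}) one extracts the entry-wise commutation relations
\begin{equation*}
A(x|t)A^{\prime }(y|t)=A^{\prime }(y|t)A(x|t),\qquad D(x|t)D^{\prime }(y|t)=0,
\end{equation*}
together with
\begin{equation*}
A(x|t)D^{\prime }(y|t)-A^{\prime }(y|t)D(x|t)=D^{\prime }(y|t)A(x|t)-D(x|t)A^{\prime }(y|t)\;.
\end{equation*}
Expanding $H(x)=A(x)+qD(x)$ and $E(y)=A^{\prime }(y)+qD^{\prime }(y)$, the $q^{0}$ terms of $H(x)E(y)-E(y)H(x)$ cancel by the first relation, the $q^{2}$ terms vanish by the second, and the order-$q$ cross terms cancel precisely by the third. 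This is the missing idea your proposal needs to complete the proof of (\ref{integrability2}).
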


\begin{proof}
The last assertion is a direct consequence of the first Yang-Baxter equation
in (\ref{mom_ybe}):%
\begin{eqnarray*}
Z_{n}(x_{1},\ldots ,x_{n}|t) &=&\limfunc{Tr}_{V^{\otimes
n}}(R_{i,i+1}M_{n}\cdots M_{1}R_{i,i+1}^{-1}) \\
&=&\limfunc{Tr}_{V^{\otimes n}}(M_{n}\cdots R_{i,i+1}M_{i}M_{i+1}\cdots
M_{1}R_{i,i+1}^{-1}) \\
&=&\limfunc{Tr}_{V^{\otimes n}}(M_{n}\cdots M_{i+1}M_{i}R_{i,i+1}\cdots
M_{1}R_{i,i+1}^{-1}) \\
&=&\limfunc{Tr}_{V^{\otimes n}}(M_{n}\cdots M_{i+1}M_{i}\cdots M_{1}) \\
&=&Z_{n}(x_{1},\ldots ,x_{i+1},x_{i},\ldots ,x_{n}|t)
\end{eqnarray*}%
The proof for $Z_{k}^{\prime }$ follows along the same lines. Setting $n=k=2$
we obtain (\ref{integrability1}).

To prove (\ref{integrability2}) one establishes the existence of additional
solutions of the Yang-Baxter equation, 
\begin{equation}
R_{ii^{\prime }}^{\prime \prime }M_{i}M_{i^{\prime }}^{\prime }=M_{i^{\prime
}}^{\prime }M_{i}R_{ii^{\prime }}^{\prime \prime }\qquad \text{and\qquad }%
R_{ii^{\prime }}^{\prime \prime \prime }M_{i}^{\prime }M_{i^{\prime
}}=M_{i^{\prime }}M_{i}^{\prime }R_{ii^{\prime }}^{\prime \prime \prime }\;,
\label{ybe2}
\end{equation}%
where $R^{\prime \prime },R^{\prime \prime \prime }$ are again of the form (%
\ref{R}) with 
\begin{equation}
\begin{tabular}{|l||c|c|c|c|c|c|}
\hline
& $a$ & $b$ & $c$ & $d$ & $e$ & $f$ \\ \hline\hline
\multicolumn{1}{|c||}{$R_{ii^{\prime }}^{\prime \prime }$} & $%
1+x_{i}/x_{i^{\prime }}$ & $1$ & $x_{i}/x_{i^{\prime }}$ & $1$ & $%
x_{i}/x_{i^{\prime }}$ & $0$ \\ \hline
$R_{ii^{\prime }}^{\prime \prime \prime }$ & $0$ & $-1$ & $1$ & $%
x_{i}/x_{i^{\prime }}$ & $-x_{i}/x_{i^{\prime }}$ & $0$ \\ \hline
\end{tabular}%
\end{equation}%
Note that $R^{\prime \prime },R^{\prime \prime \prime }$ are both singular.
However, from the combined Yang-Baxter equations (\ref{ybe2}) one derives
the commutation relations%
\begin{eqnarray}
A(x|t)A^{\prime }(y|t) &=&A^{\prime }(y|t)A(x|t)  \notag \\
A(x|t)D^{\prime }(y|t)-A^{\prime }(y|t)D(x|t) &=&D^{\prime
}(y|t)A(x|t)-D(x|t)A^{\prime }(y|t)  \notag \\
D(x|t)D^{\prime }(y|t) &=&0
\end{eqnarray}%
for the row Yang-Baxter algebras. From the latter we then easily deduce that 
$H(x|t)E(y|t)=E(y|t)H(x|t)$ and the assertion now follows.
\end{proof}

We now state a functional identity that plays an essential role in our
discussion as it directly relates the two row-to-row transfer matrices (\ref%
{H}), (\ref{E}) to the Givental-Kim presentation of $QH_{T}^{\ast }(\limfunc{%
Gr}_{n,N})$.

\begin{proposition}
The transfer matrices obey the following functional operator identity%
\begin{equation}
H(x|t)E(-x|t)=\prod_{j=1}^{N}(1-xt_{j})+qx^{N}\prod_{j=1}^{N}\sigma
_{j}^{z}\;.  \label{QQ}
\end{equation}
\end{proposition}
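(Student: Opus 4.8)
The plan is to realise the product $H(x)E(-x)$ as a single partial trace over a \emph{doubled} auxiliary space and then exploit a fusion/triangularisation phenomenon that collapses that trace. Recall that $H(x)=\limfunc{Tr}_{V_a}M(x)$ with $M(x)=L_{aN}(x)\cdots L_{a1}(x)$ and $E(-x)=\limfunc{Tr}_{V_b}M'(-x)$ with $M'(-x)=L'_{bN}(-x)\cdots L'_{b1}(-x)$, where $V_a,V_b\cong\mathbb{C}^2$ are two independent auxiliary spaces and the $q$-deformation inserts the factors $\left(\begin{smallmatrix}1&0\\0&q\end{smallmatrix}\right)$ at site $1$ as in (\ref{qMom}). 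Since $M(x)$ and $M'(-x)$ act on disjoint auxiliary factors, the product of the two transfer matrices is a single trace over $V_a\otimes V_b$,
\begin{equation*}
H(x)E(-x)=\limfunc{Tr}_{V_a\otimes V_b}\big(M(x)\,M'(-x)\big).
\end{equation*}
Because $L$- and $L'$-operators in different lattice columns commute, I would reorder the product column by column, collecting the two operators at site $j$ into one local operator $\mathcal{L}_j=L_{aj}(x)L'_{bj}(-x)\in\limfunc{End}(V_a\otimes V_b\otimes V[t_j])$ and gathering both diagonal insertions into a single $D_q=\mathrm{diag}(1,q,q,q^2)$ acting on $V_a\otimes V_b$ at site $1$.

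The heart of the argument is the explicit form of $\mathcal{L}_j$ as a $4\times4$ matrix over $V_a\otimes V_b$. Writing $P=\sigma_j^-\sigma_j^+$ and $Q=\sigma_j^+\sigma_j^-$ for the complementary projectors (so $P+Q=1$ and $\sigma_j^z=P-Q$) and repeatedly using $P\sigma_j^+=\sigma_j^+Q=0$, $(\sigma_j^\pm)^2=0$ and $\sigma_j^-Q=\sigma_j^-=P\sigma_j^-$, the entries simplify dramatically. I expect to find that $\mathcal{L}_j$ annihilates the two-dimensional subspace spanned by $e_{01}+e_{10}$ and $e_{11}$ (in the basis $e_{\alpha\gamma}=v_\alpha^a\otimes v_\gamma^b$), and that on the complementary space $\langle e_{00},\,e_{01}-e_{10}\rangle$ it acts by the upper-triangular operator
\begin{equation*}
\ell_j=\begin{pmatrix} 1-xt_j & -2x\sigma_j^+ \\ 0 & x\sigma_j^z \end{pmatrix}.
\end{equation*}
Equivalently, in the basis ordered as $(e_{00},\,e_{01}-e_{10},\,e_{01}+e_{10},\,e_{11})$ each $\mathcal{L}_j$ is block lower triangular with vanishing right half, so the ordered product $\mathcal{L}_N\cdots\mathcal{L}_1$ inherits that shape with upper-left block $\ell_N\cdots\ell_1$.

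It then remains to take the trace. Since $D_q$ is diagonal in the same basis and the lower-right block vanishes, the trace over $V_a\otimes V_b$ reduces to a trace over the surviving two-dimensional channel, $H(x)E(-x)=\limfunc{Tr}\big(\ell_N\cdots\ell_1\,\mathrm{diag}(1,q)\big)$. The $\ell_j$ are upper triangular and their diagonal entries commute across sites, so the product is upper triangular with diagonal $\big(\prod_{j=1}^N(1-xt_j),\ x^N\prod_{j=1}^N\sigma_j^z\big)$; weighting the second channel by $q$ and summing the diagonal gives exactly $\prod_{j=1}^N(1-xt_j)+qx^N\prod_{j=1}^N\sigma_j^z$, which is (\ref{QQ}). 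I would sanity-check the scheme on $N=1$, where $H(x)=1+x(q-t_1)P$ and $E(-x)=1-x(q+t_1)Q$ multiply to $1-xt_1+qx\sigma_1^z$.

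The main obstacle is purely the bookkeeping in the middle step: one must verify that the doubled $L$-operator genuinely degenerates to rank two and triangularises in the stated basis, since this reducibility --- rather than any feature of the individual transfer matrices --- is what forces the answer to contain only the two surviving terms. The two facts I would check most carefully are (i) that the columns of $\mathcal{L}_j$ labelled by $e_{01}$ and $e_{10}$ are negatives of one another (killing $e_{01}+e_{10}$) while the $e_{11}$-column vanishes, and (ii) that the off-diagonal entry $-2x\sigma_j^+$ never contributes to the diagonal of the ordered product, so that no corrections of lower order in $x$ survive.
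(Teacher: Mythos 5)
Your proposal is correct, and I verified the computation on which it hinges: writing $P=\sigma _{j}^{-}\sigma _{j}^{+}$, $Q=\sigma _{j}^{+}\sigma _{j}^{-}$, the doubled operator $\mathcal{L}_{j}=L_{aj}(x)L_{bj}^{\prime }(-x)$ really does have vanishing $e_{11}$-column, its $e_{01}$- and $e_{10}$-columns are negatives of one another, and in the ordered basis $(e_{00},\,e_{01}-e_{10},\,e_{01}+e_{10},\,e_{11})$ it takes the block form
\begin{equation*}
\mathcal{L}_{j}=\begin{pmatrix} \ell _{j} & 0 \\ \ast & 0 \end{pmatrix},\qquad \ell _{j}=\begin{pmatrix} 1-xt_{j} & -2x\sigma _{j}^{+} \\ 0 & x\sigma _{j}^{z} \end{pmatrix},
\end{equation*}
while the combined insertion $\limfunc{diag}(1,q,q,q^{2})$ remains diagonal under this change of basis because its middle block is $q$ times the identity; the trace then collapses exactly as you say, yielding $\prod_{j}(1-xt_{j})+qx^{N}\prod_{j}\sigma _{j}^{z}$.

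Regarding the comparison: the paper offers no actual argument for (\ref{QQ}) beyond deferring to ``a direct computation along the same lines as in \cite{VicOsc}'', i.e.\ an entry-by-entry verification with the explicit $L$-operators and transfer matrices. Your fusion argument is therefore a genuinely different, and in my view more illuminating, route: it isolates the structural reason why only two terms can survive (the doubled auxiliary space degenerates to a rank-two channel on which the problem triangularises), it treats the equivariant parameters and the $q$-insertions uniformly, and it reduces the whole identity to multiplying $2\times 2$ upper-triangular matrices. One point of phrasing to tighten: $\mathcal{L}_{j}$ does \emph{not} preserve the plane $\langle e_{00},\,e_{01}-e_{10}\rangle $ --- for instance $\mathcal{L}_{j}e_{00}$ has a component $(1-xt_{j})\sigma _{j}^{-}$ along $e_{01}+e_{10}$ --- so ``acts by $\ell _{j}$ on the complementary space'' is only true modulo the kernel directions. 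Your block-triangular formulation (vanishing right half, upper-left block $\ell _{j}$, so that products inherit this shape with upper-left block $\ell _{N}\cdots \ell _{1}$) is the precise statement, and it is the one that makes both the product and the trace step airtight; I would phrase the write-up entirely in those terms.
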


\begin{proof}
A direct computation along the same lines as in \cite{VicOsc}.
\end{proof}

\subsection{Level-Rank and Poincar\'{e} Duality}

We have the following relation between the two solutions $L,L^{\prime }$ of
the Yang-Baxter equations (\ref{ybe}).

\begin{lemma}
Let $L_{ij}v_{\varepsilon _{1}}\otimes v_{\varepsilon
_{2}}=\sum_{\varepsilon _{1}^{\prime },\varepsilon _{2}^{\prime
}=0,1}(L_{ij})_{\varepsilon _{1}\varepsilon _{2}}^{\varepsilon _{1}^{\prime
}\varepsilon _{2}^{\prime }}v_{\varepsilon _{1}^{\prime }}\otimes
v_{\varepsilon _{2}^{\prime }}$ and similarly define $(L_{ij}^{\prime
})_{\varepsilon _{1}\varepsilon _{2}}^{\varepsilon _{1}^{\prime }\varepsilon
_{2}^{\prime }}$. We have that%
\begin{equation}
(L_{ij}(t))_{\varepsilon _{1}\varepsilon _{2}}^{\varepsilon _{1}^{\prime
}\varepsilon _{2}^{\prime }}=(L_{ij}^{\prime }(-t))_{\varepsilon
_{1}^{\prime }1-\varepsilon _{2}}^{\varepsilon _{1}1-\varepsilon
_{2}^{\prime }}
\end{equation}%
for all $\varepsilon _{i},\varepsilon _{i}^{\prime }=0,1$ with $i=1,2$.
\end{lemma}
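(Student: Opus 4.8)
The statement is an entry-by-entry identity between two explicit finite operators, so the plan is to reduce it to a short finite check and to isolate the discrete symmetry that makes that check transparent. First I would read off the non-vanishing matrix elements of $L_{ij}$ and $L_{ij}^{\prime}$ straight from the definitions (\ref{L}) and (\ref{L'}), using $\sigma_j^-v_1=v_0$, $\sigma_j^+v_0=v_1$, together with the fact that $\sigma_j^-\sigma_j^+$ and $\sigma_j^+\sigma_j^-$ are the projectors onto $v_0$ and $v_1$ respectively. Both $L$ and $L^{\prime}$ are of ``ice type'': they preserve the total number of $v_1$'s, so $(L_{ij})_{\varepsilon_1\varepsilon_2}^{\varepsilon_1'\varepsilon_2'}$ and $(L_{ij}^{\prime})_{\varepsilon_1\varepsilon_2}^{\varepsilon_1'\varepsilon_2'}$ vanish unless $\varepsilon_1+\varepsilon_2=\varepsilon_1'+\varepsilon_2'$. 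Since the index substitution appearing on the right-hand side sends the conserving sector to itself, the two sides automatically vanish together off that sector, and only a handful of weights remain to be compared.

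Next I would recognise the substitution on the right-hand side — leaving the auxiliary (first) line untouched and flipping the quantum (second) line, $\varepsilon_2\mapsto 1-\varepsilon_2$, $\varepsilon_2'\mapsto 1-\varepsilon_2'$ — as the composition $N$ of a spin reversal with a transposition in the quantum factor $V[t_j]$. Writing $F=\sigma^++\sigma^-$ for the spin-flip on $V[t_j]$ and ${}^{t}$ for transposition in that same slot, the mechanism underlying the asserted identity is the operator relation
\[
L_{ij}^{\prime}(-t)=F_j\,{}^{t}\!\big(L_{ij}(t)\big)\,F_j .
\]
The two elementary facts driving this are that $N=F(\cdot)^{t}F$ \emph{fixes} the off-diagonal operators $\sigma^+$ and $\sigma^-$ (since $F\sigma^\pm F=\sigma^\mp$ and $({}^{t}\sigma^\pm)=\sigma^\mp$), while it \emph{swaps} the diagonal projectors $\sigma^-\sigma^+\leftrightarrow\sigma^+\sigma^-$.

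With this in hand the verification splits cleanly into two families. For the auxiliary-diagonal weights ($\varepsilon_1=\varepsilon_1'$) the projector swap turns the $v_0$-projector entries of $L$ into the $v_1$-projector entries of $L^{\prime}$; here the sign change $t\mapsto-t$ is exactly what converts $1-x_it_j\,\sigma^-\sigma^+$ into the $N$-image of $1+x_it_j\,\sigma^+\sigma^-$ evaluated at $-t$, so the two diagonal families match (and the purely $\sigma^-\sigma^+$/$\sigma^+\sigma^-$ entries, carrying no $t$, match directly). For the auxiliary-off-diagonal weights ($\varepsilon_1\neq\varepsilon_1'$) the operators $x_i\sigma^+$ and $\sigma^-$ are shared verbatim by $L$ and $L^{\prime}$ and are fixed by $N$, so these weights too are carried into one another. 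Confirming the short list of surviving cases then finishes the proof.

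The only real obstacle is bookkeeping rather than analysis: one must keep straight which of the two index pairs is auxiliary and which is quantum, and that subscripts and superscripts mark incoming and outgoing states, so that the flip-and-transpose $N$ is applied to the correct tensor slot; in particular it is essential that the transposition acts in the quantum space $V[t_j]$ (the factor on which the Pauli operators act), not in the auxiliary space carrying the $2\times2$ matrix structure. Once the symmetry $L^{\prime}(-t)=F\,{}^{t}L(t)\,F$ is identified, there is no further content, and the cleanest write-up is to note the common $v_1$-number conservation (disposing of all vanishing entries at once) and then match the few non-zero weights.
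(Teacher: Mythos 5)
Your strategy (cut down to the particle-conserving sector, then exhibit the discrete symmetry relating $L$ and $L^{\prime}$) is a sharpened version of the paper's proof, which simply computes the five nonzero weights of each operator and compares them against Figure \ref{fig:5vmodels}. The operator relation you isolate is indeed true: with $F=\sigma^{+}+\sigma^{-}$ one checks entry by entry that $L_{ij}^{\prime}(-t)=F_{j}\,{}^{t}\!\bigl(L_{ij}(t)\bigr)F_{j}$, with both the flip and the transposition taken in the quantum factor. However, this relation does not encode the identity as printed, and your write-up glosses over the mismatch. The printed identity $(L_{ij}(t))_{\varepsilon_{1}\varepsilon_{2}}^{\varepsilon_{1}^{\prime}\varepsilon_{2}^{\prime}}=(L_{ij}^{\prime}(-t))_{\varepsilon_{1}^{\prime}\,1-\varepsilon_{2}}^{\varepsilon_{1}\,1-\varepsilon_{2}^{\prime}}$ transposes the \emph{auxiliary} indices ($\varepsilon_{1}\leftrightarrow\varepsilon_{1}^{\prime}$) and flips the quantum indices in place, whereas you describe the substitution as ``leaving the auxiliary line untouched''; your relation instead encodes $(L_{ij}(t))_{\varepsilon_{1}\varepsilon_{2}}^{\varepsilon_{1}^{\prime}\varepsilon_{2}^{\prime}}=(L_{ij}^{\prime}(-t))_{\varepsilon_{1}\,1-\varepsilon_{2}^{\prime}}^{\varepsilon_{1}^{\prime}\,1-\varepsilon_{2}}$. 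The two maps differ by a full transposition, and since $L$ is not symmetric this is a genuine difference, not a cosmetic one.

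Concretely, from (\ref{L}) one gets the five nonzero weights $(L)_{00}^{00}=1-x_{i}t_{j}$, $(L)_{01}^{01}=(L)_{01}^{10}=1$, $(L)_{10}^{01}=(L)_{10}^{10}=x_{i}$, and from (\ref{L'}) the weights $(L^{\prime})_{00}^{00}=(L^{\prime})_{01}^{10}=1$, $(L^{\prime})_{01}^{01}=1+x_{i}t_{j}$, $(L^{\prime})_{10}^{01}=(L^{\prime})_{11}^{11}=x_{i}$. On the corner vertex $(\varepsilon_{1},\varepsilon_{2},\varepsilon_{1}^{\prime},\varepsilon_{2}^{\prime})=(1,0,0,1)$ the printed right-hand side is $(L^{\prime}(-t))_{01}^{10}=1$ while the left-hand side is $x_{i}$, so the lemma with the printed index placement is actually \emph{false} for the operators (\ref{L}), (\ref{L'}); your variant matches on all five weights (and on the vanishing entries, by your conservation argument). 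In other words, your mechanism is the correct one, and the printed indices are a typo --- or reflect a different distribution of the $x_{i}$-weights over the two corner vertices in the figure, a gauge ambiguity that is invisible downstream because each walker uses exactly one corner of each type per row, so only the product of the two corner weights survives in the transfer matrices. The gap in your proposal is thus one of execution: you assert that your symmetry ``underlies the asserted identity'' without ever carrying out the five-case check you advertise against the printed indices; doing so would have revealed that you are proving a corrected variant rather than the statement as written, and a complete write-up must either flag and fix the index placement or verify the claim against the figure's weights, which is all the paper's own proof does.
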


\begin{proof}
The matrix elements $(L_{ij})_{\varepsilon _{1}\varepsilon
_{2}}^{\varepsilon _{1}^{\prime }\varepsilon _{2}^{\prime }}$ and $%
(L_{ij}^{\prime })_{\varepsilon _{1}\varepsilon _{2}}^{\varepsilon
_{1}^{\prime }\varepsilon _{2}^{\prime }}$ can be explicitly computed from (%
\ref{L}), (\ref{L'}). They are the weights of the vertex configurations
given in Figure \ref{fig:5vmodels} in the next section where $\varepsilon
_{1},\varepsilon _{2},\varepsilon _{1}^{\prime },\varepsilon _{2}^{\prime }$
are the values of the W, N, E and S edge of the vertex. The assertion is
then easily verified from the weights displayed in Figure \ref{fig:5vmodels}.
\end{proof}

We now translate the relation between $L$ and $L^{\prime }$ into a relation
between the monodromy matrices $M$ and $M^{\prime }$ and, thus, the
associated Yang-Baxter algebras. As before we identify a basis vector $%
v_{\varepsilon _{1}}\otimes \cdots \otimes v_{\varepsilon _{N}}\in
V^{\otimes N}$ in quantum space with its 01-word $\boldsymbol{\varepsilon }%
=\varepsilon _{1}\varepsilon _{2}\cdots \varepsilon _{N}$ and introduce the
following involutions,%
\begin{gather}
\text{(reversal)\qquad }\mathcal{P}:\varepsilon _{1}\varepsilon _{2}\cdots
\varepsilon _{N}\mapsto \varepsilon _{N}\cdots \varepsilon _{2}\varepsilon
_{1}  \label{parity} \\
\text{(inversion)\qquad }\mathcal{C}:\varepsilon _{1}\varepsilon _{2}\cdots
\varepsilon _{N}\mapsto (1-\varepsilon _{1})(1-\varepsilon _{2})\cdots
(1-\varepsilon _{N})  \label{charge}
\end{gather}%
Note that by linear extension these maps become operators $\mathcal{P},%
\mathcal{C}:\mathcal{V}\rightarrow \mathcal{V}$. The following statement is
now an immediate consequence of the last lemma and the definition of the
monodromy matrices and we therefore omit its proof.

\begin{corollary}[level-rank duality]
We have the following transformation laws of the monodromy matrices $%
M,M^{\prime }$ under the joint map $\Theta =\mathcal{PC}:\mathcal{V}%
\rightarrow \mathcal{V}$,%
\begin{equation}
\Theta M(x|t)\Theta =M^{\prime }(x|-T)^{t\otimes 1}  \label{levelrankmom}
\end{equation}%
where $T_{i}=t_{N+1-i}$ and the upper index $t\otimes 1$ means transposition
in the auxiliary space. This in particular implies the following identity
for the row-to-row transfer matrices, $\Theta H(x|t)\Theta =E(x|-T)$.
\end{corollary}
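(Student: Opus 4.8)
The plan is to reduce the statement to the preceding lemma by writing the row-monodromy matrix $M(x|t)=M_{i}$ of (\ref{rowmom}) as an ordered product of $L$-operators along the auxiliary line, and then to track how the three ingredients of $\Theta =\mathcal{PC}$ --- the column reversal $\mathcal{P}$, the index inversion $\mathcal{C}$, and the parameter reflection $t_{j}\mapsto -T_{j}$ --- interact with that lemma vertex by vertex. The transfer-matrix identity will then drop out by taking the auxiliary trace.

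First I would record the factorised form of the matrix elements. Decomposing $M=\sum_{a,b}e^{ab}\otimes M_{ab}$ as in (\ref{row_yba}) and writing $b$ for the incoming and $a$ for the outgoing auxiliary index, the fact that $L_{ij}$ acts nontrivially only in the $j$th column and along the auxiliary factor gives
\begin{equation*}
\langle \boldsymbol{\varepsilon}'|(M_{i})_{ab}|\boldsymbol{\varepsilon}\rangle =\sum_{c_{1},\ldots ,c_{N-1}}\prod_{j=1}^{N}(L_{ij}(t_{j}))_{c_{j-1}\varepsilon _{j}}^{c_{j}\varepsilon _{j}'},\qquad c_{0}=b,\ c_{N}=a.
\end{equation*}
Conjugating by $\mathbf{1}\otimes \Theta $ amounts to the substitution $\varepsilon _{j}\mapsto 1-\varepsilon _{N+1-j}$ and $\varepsilon _{j}'\mapsto 1-\varepsilon _{N+1-j}'$ in every factor, which is precisely the pattern of indices occurring on the left-hand side of the lemma.

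Next I would apply the lemma at each vertex. With $t=t_{j}$ it converts the factor $(L_{ij}(t_{j}))_{c_{j-1},\,1-\varepsilon _{N+1-j}}^{c_{j},\,1-\varepsilon _{N+1-j}'}$ into $(L_{ij}'(-t_{j}))_{c_{j},\,\varepsilon _{N+1-j}}^{c_{j-1},\,\varepsilon _{N+1-j}'}$, so that $\mathcal{C}$ and the sign are absorbed into $L'(-t)$ while the two auxiliary indices are interchanged. Re-indexing the columns by $j'=N+1-j$ undoes the reversal $\mathcal{P}$, returning the quantum index $\varepsilon _{N+1-j}=\varepsilon _{j'}$ to its own column $j'$; since $T_{j'}=t_{N+1-j'}=t_{j}$ the parameter of column $j'$ becomes $-T_{j'}$, and relabelling the internal summation indices $c_{j}$ into a chain running the other way recognises the product as $\langle \boldsymbol{\varepsilon}'|M'(x|-T)_{ba}|\boldsymbol{\varepsilon}\rangle $. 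The net interchange $a\leftrightarrow b$ of the outer auxiliary labels is exactly the transposition in the auxiliary space, which yields (\ref{levelrankmom}).

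Finally the transfer-matrix statement follows by tracing over the auxiliary factor: since $H=A+D=\limfunc{Tr}_{V}M$ and $E=A'+D'=\limfunc{Tr}_{V}M'$, and since $\limfunc{Tr}_{V}$ is invariant under transposition in $V$, we obtain $\Theta H(x|t)\Theta =\limfunc{Tr}_{V}(\Theta M\Theta )=\limfunc{Tr}_{V}(M'(x|-T)^{t\otimes 1})=E(x|-T)$. I expect the only real difficulty to be the combinatorial bookkeeping in the third step: one has to check that the column reversal in $\mathcal{P}$, the swap of auxiliary indices produced by the lemma, and the chaining of the auxiliary summation variables conspire into a single global transposition in the auxiliary space with no residual permutation left over in the quantum space. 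Once the indices are arranged as above this is a routine verification, which is presumably why the authors omit it.
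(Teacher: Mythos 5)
Your proposal is correct and is essentially the paper's own argument: the paper explicitly omits the proof, stating that the corollary is ``an immediate consequence of the last lemma and the definition of the monodromy matrices,'' and your write-up is exactly that verification --- chaining the lemma $(L_{ij}(t))_{\varepsilon _{1}\varepsilon _{2}}^{\varepsilon _{1}^{\prime }\varepsilon _{2}^{\prime }}=(L_{ij}^{\prime }(-t))_{\varepsilon _{1}^{\prime }\,1-\varepsilon _{2}}^{\varepsilon _{1}\,1-\varepsilon _{2}^{\prime }}$ through the factorised matrix elements of $M_{i}=L_{iN}\cdots L_{i1}$, observing that the vertex-wise swap of auxiliary indices reverses the auxiliary chain, which together with the column reversal from $\mathcal{P}$ produces the global transposition $t\otimes 1$, and then taking $\limfunc{Tr}_{V}$ for the transfer-matrix identity. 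The index bookkeeping (the auxiliary flow $b=c_{0}\rightarrow \cdots \rightarrow c_{N}=a$ reversing to $a\rightarrow \cdots \rightarrow b$ after the lemma, and $-t_{N+1-j^{\prime }}=-T_{j^{\prime }}$ after relabelling) is carried out correctly.
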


Obviously $\mathcal{P}m_{j}(t_{j})\mathcal{P}=m_{N+1-j}(T_{N+1-j})$ and we
define the dual row monodromy matrices as%
\begin{equation}
M_{i}^{\vee }(T):=(\dot{\Delta}^{\func{col}})^{N-1}L_{i1}(T_{1})=\mathcal{P}%
M_{i}(t)\mathcal{P}=L_{i1}(T_{1})L_{i2}(T_{2})\cdots L_{iN}(T_{N})\;.
\label{dual_mom}
\end{equation}%
The partition functions with respect to the opposite coproduct are then
given by computing matrix element of the operator%
\begin{equation}
Z_{n}^{\vee }=\mathcal{P}Z_{n}\mathcal{P}=\limfunc{Tr}_{V^{\otimes
n}}M_{n}^{\vee }\cdots M_{1}^{\vee }=\limfunc{Tr}_{V^{\otimes
n}}m_{1}(T_{1})m_{2}(T_{2})\cdots m_{N}(T_{N})  \label{dual_Z}
\end{equation}%
and $Z_{k}^{\ast }=\mathcal{P}Z_{k}^{\prime }\mathcal{P}$ for the $L^{\prime
}$-operators. Note that since the dual monodromy matrices are defined in
terms of the opposite coproduct they also obey the Yang-Baxter relation (\ref%
{ybe}), $R_{ii^{\prime }}M_{i}^{\vee }M_{i^{\prime }}^{\vee }=M_{i^{\prime
}}^{\vee }M_{i}^{\vee }R_{ii^{\prime }}$, where the $R$-matrix elements are
given in (\ref{ybesolns}).

\begin{lemma}
The dual transfer matrices $H^{\vee }=Z_{1}^{\vee }$, $E^{\vee }=Z_{1}^{\ast
}$ are the transpose of the transfer matrices $E,H$ with respect to the
standard basis $\{v_{\varepsilon _{1}}\otimes \cdots \otimes v_{\varepsilon
_{N}}\}$ in quantum space.
\end{lemma}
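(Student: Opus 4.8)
The plan is to reduce the operator identity to a single-vertex (single-$L$) identity and then propagate it through the monodromy by trace-cyclicity. First I would write both sides entirely in terms of $L$- and $L'$-operators. By definition $H^{\vee}=\limfunc{Tr}_{V}M_{1}^{\vee}$ with $M_{1}^{\vee}=L_{11}(T_{1})\cdots L_{1N}(T_{N})$ (the opposite-coproduct ordering together with the reversed parameters $T_{j}=t_{N+1-j}$), while the quantum-space transpose of $E=\limfunc{Tr}_{V}M_{1}'$ is $E^{t}=\limfunc{Tr}_{V}(M_{1}')^{t}$. Here I would use two elementary facts: the quantum transpose commutes with the trace over the auxiliary space $V$ (they act on different tensor factors), and, since the distinct factors $L_{1j}'$ act on distinct quantum sites, the quantum transpose distributes over the auxiliary product \emph{without} reversing its order, so $E^{t}=\limfunc{Tr}_{V}[L_{1N}'(t_{N})^{t}\cdots L_{11}'(t_{1})^{t}]$.

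Next I would feed in the level-rank Lemma relating the two families of weights. Read vertex by vertex, it identifies each $L'$-weight with an $L$-weight under the simultaneous operations (i) interchange of the two auxiliary edges W$\leftrightarrow$E, (ii) reflection of the quantum edges $\varepsilon\mapsto 1-\varepsilon$, and (iii) negation $t_{j}\mapsto -t_{j}$. Translating this into an operator identity for each transposed factor $L_{1j}'(t_{j})^{t}$ and substituting, three things happen at once: the site-local charge conjugations coming from (ii) collect into the global involution $\mathcal{C}$, which I then pull outside the auxiliary trace; the auxiliary interchange (i), applied in every factor, reverses the order of the auxiliary-space product and so matches the reversed ordering already built into the opposite-coproduct monodromy $M_{1}^{\vee}$; and the negations (iii), once combined with the site reversal, reproduce exactly the reversed parameters $T_{j}=t_{N+1-j}$. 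After using cyclicity of $\limfunc{Tr}_{V}$ to close the auxiliary line, the right-hand side becomes $\limfunc{Tr}_{V}M_{1}^{\vee}=H^{\vee}$, giving $H^{\vee}=E^{t}$; the identical argument with the roles of $L$ and $L'$ exchanged yields $E^{\vee}=H^{t}$.

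The step I expect to be the real obstacle is precisely the simultaneous bookkeeping of these three operations, i.e.\ checking that no spurious twist survives: that the order-reversal coming from (i) cancels against the opposite-coproduct reversal, so that the surviving $L'$-monodromy is in \emph{standard} order (and hence gives $E$, not a reflected variant); that the accumulated $\mathcal{C}$'s recombine into a single conjugation that is absorbed rather than left dangling; and that (iii) lands on the correct sign/parameter convention. A clean cross-check I would carry alongside is the already-established level-rank duality Corollary $\Theta H(x|t)\Theta=E(x|-T)$ with $\Theta=\mathcal{PC}$: combined with $H^{\vee}=\mathcal{P}H\mathcal{P}$ and the relations $\mathcal{PC}=\mathcal{CP}$, $\mathcal{P}^{2}=1$, it gives at once $H^{\vee}=\mathcal{C}\,E(x|-T)\,\mathcal{C}$, reducing the whole Lemma to the single assertion that conjugation by $\mathcal{C}$ together with the parameter reversal $-T$ implements the quantum-space transpose of a transfer matrix. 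That last assertion is exactly the vertex-level reflection identity above (charge conjugation swaps $\sigma^{+}\leftrightarrow\sigma^{-}$, hence the two single-arrow weights, while $t\mapsto -t$ repairs the diagonal weight), so verifying it directly from the weights displayed in Figure~\ref{fig:5vmodels} is the one genuine computation the proof requires.
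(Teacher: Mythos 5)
Your opening reduction is sound and is the right skeleton for any proof of this lemma: the quantum-space transpose commutes with $\limfunc{Tr}_{V}$ and distributes over the auxiliary-ordered product of $L$-operators \emph{without} reversing that order, so everything comes down to a single-vertex identity. (The paper prints no proof at all here; the intended content of the lemma is pinned down by its later use in the proof of Poincar\'{e} duality, namely $\boldsymbol{H}(x|t)^{T}=(\langle \lambda |\mathcal{P}H(x|T)\mathcal{P}|\mu \rangle )$ and $\boldsymbol{E}(x|t)^{T}=(\langle \lambda |\mathcal{P}E(x|T)\mathcal{P}|\mu \rangle )$.) But your final pairing is wrong: you conclude $H^{\vee }=E^{t}$ and $E^{\vee }=H^{t}$, whereas the true identity is the \emph{straight} pairing $H(x|t)^{t}=\mathcal{P}H(x|T)\mathcal{P}=H^{\vee }$ and $E(x|t)^{t}=\mathcal{P}E(x|T)\mathcal{P}=E^{\vee }$, with the $t\leftrightarrow T$ reversal built into the definition (\ref{dual_mom}). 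A one-line refutation of the crossed version: transposition preserves diagonal matrix elements, and on the all-ones vector one reads off from (\ref{L}), (\ref{L'}) that $\langle 1\cdots 1|H^{\vee }|1\cdots 1\rangle =1$ while $\langle 1\cdots 1|E^{t}|1\cdots 1\rangle =\langle 1\cdots 1|E|1\cdots 1\rangle =\prod_{j}(1+xt_{j})+qx^{N}$; no substitution or reversal of the parameters reconciles these, so $H^{\vee }\neq E^{t}$ as operators. However one parses the lemma's listing ``$E,H$'', the statement the paper actually needs is the straight one.

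The failure sits exactly at the step you flagged as the obstacle. The level-rank Lemma carries an \emph{auxiliary} transposition (the W$\leftrightarrow $E swap); inserting it into factors you have already quantum-transposed compounds the two into a \emph{full} transpose of each $L$-factor, and after reassembling under $\limfunc{Tr}_{V}$ the auxiliary order-reversal does reproduce the opposite-coproduct ordering of $M^{\vee }$, but a global quantum transpose survives: your chain closes on the circular identity $E^{t}=\mathcal{C}\,\bigl[H^{\vee }(x|-t)\bigr]^{t}\,\mathcal{C}$, not on $E^{t}=H^{\vee }$ --- neither the residual transpose nor the $\mathcal{C}$'s cancel as you claim. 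The repair is to use the \emph{quantum-transpose} form of the vertex identity, which involves no auxiliary transposition at all: since $(\sigma ^{\pm })^{t}=\sigma ^{\mp }=\mathcal{C}\sigma ^{\pm }\mathcal{C}$ and $\mathcal{C}\sigma ^{+}\sigma ^{-}\mathcal{C}=\sigma ^{-}\sigma ^{+}$, one checks directly from (\ref{L}), (\ref{L'}) that $L(x;u)^{t}=\mathcal{C}L^{\prime }(x;-u)\mathcal{C}$ and $L^{\prime }(x;u)^{t}=\mathcal{C}L(x;-u)\mathcal{C}$ (the sign flip repairs the diagonal weight; the diagonal $q$-twist is inert). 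Your distribution argument then yields $H(x|t)^{t}=\mathcal{C}E(x|-t)\mathcal{C}$ and $E(x|t)^{t}=\mathcal{C}H(x|-t)\mathcal{C}$: a single transposition step genuinely \emph{swaps} the two models. The level-rank corollary to (\ref{levelrankmom}), applied with $t$ and $T$ interchanged, $\Theta H(x|T)\Theta =E(x|-t)$, together with $\mathcal{C}\Theta =\Theta \mathcal{C}=\mathcal{P}$, swaps them back, giving $H(x|t)^{t}=\mathcal{P}H(x|T)\mathcal{P}=H^{\vee }$ and likewise $E(x|t)^{t}=E^{\vee }$. Ironically, your own cross-check already contained the refutation: the identity $H^{\vee }=\mathcal{C}E(x|-T)\mathcal{C}$ is correct, but by the vertex identity above its right-hand side equals $H(x|T)^{t}$, not $E^{t}$.
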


\begin{remark}
Recall that given three Schubert varieties $s_{\lambda },s_{\mu },s_{\nu }$
one has in ordinary ($q=t_{j}=0,~j=1,\ldots ,N$) cohomology that%
\begin{equation*}
\int_{\limfunc{Gr}_{n,n+k}}s_{\lambda }s_{\mu }s_{\nu }=\int_{\limfunc{Gr}%
_{n,n+k}}s_{\lambda ^{\vee }}s_{\mu ^{\vee }}s_{\nu ^{\vee }}=\int_{\limfunc{%
Gr}_{k,n+k}}s_{\lambda ^{\ast }}s_{\mu ^{\ast }}s_{\nu ^{\ast }},
\end{equation*}%
where $\lambda ^{\vee }$ is the partition obtained from $\lambda $ by taking
the complement of its Young diagram in the $n\times k$ bounding box and $%
\lambda ^{\ast }=(\lambda ^{\vee })^{^{\prime }}$ is the partition obtained
by taking in addition the transpose. These first of these two identities is
known as Poincar\'{e} duality, the second as level-rank duality. Employing
the standard bijection between 01-words and partitions, the transformations $%
\lambda \mapsto \lambda ^{\vee }$ and $\lambda \mapsto \lambda ^{\ast }$ are
given by the involutions $\mathcal{P}$ and $\Theta $.
\end{remark}

\section{Representations of the affine nil Hecke algebra}

Schubert calculus of flag varieties can be formulated in terms of divided
difference operators known as Bernstein-Gelfand-Gelfand (BGG) or Demazure
operators. These extend to the equivariant setting in terms of Kostant and
Kumar's nil Hecke ring \cite{KostantKumar} and the latter plays a central
role in Peterson's \cite{Peterson} (unpublished) lecture notes which have
been highly influential in the development of the subject. In this article
we link the affine nil-Hecke ring to the quantum group structures which
underlie the quantum integrable models mentioned in the introduction,
vicious and osculating walkers. Namely the $R,r$-matrices which respectively
braid lattice rows and columns induce representations of the affine
nil-Hecke and nil-Coxeter algebra. The braiding of lattice columns turns out
to be Peterson's action of the affine nil-Coxeter algebra on Schubert
classes. To keep the article self-contained we briefly review the necessary
definitions.

\subsection{Divided difference operators}

Our quantum space (\ref{quantum_space}) consists of two tensor factors. The
polynomial factor $\Lambda $ is naturally endowed with several well-studied
actions. Consider first the ring $\tilde{\Lambda}=\mathbb{Z}[t_{1}^{\pm
1},\ldots ,t_{N}^{\pm 1}]$ of Laurent polynomials in the equivariant
parameters $t_{i}=T_{N+1-i}$ and let $\Lambda \subset \tilde{\Lambda}$ be
the subring of polynomials. The latter forms a left module for the \emph{%
extended affine symmetric group} $\mathbb{\tilde{S}}_{N} $. Namely, $\mathbb{%
\tilde{S}}_{N}$ is generated by the simple reflections $\{s_{1},\ldots
,s_{N-1}\}$ and $\varpi $ which act naturally on $\tilde{\Lambda}$ via the
formulae%
\begin{eqnarray}
s_{j}f(t_{1},\ldots ,t_{N}) &=&f(t_{1},\ldots ,t_{j+1},t_{j},\ldots
,t_{N}),\qquad j=1,\ldots ,N-1  \notag \\
\varpi f(t_{1},\ldots ,t_{N}) &=&f(t_{N}+\ell ,t_{1},t_{2},\ldots ,t_{N-1})
\label{affsymgroup}
\end{eqnarray}%
where $\ell \in \mathbb{Z}$ is called the \emph{level}. Here we choose $\ell
=0$. The subgroup $\mathbb{\hat{S}}_{N}\subset \mathbb{\tilde{S}}_{N}$
generated by $\{s_{1},\ldots ,s_{N}\}$ with $s_{N}=\varpi s_{1}\varpi ^{-1}$
is the \emph{affine symmetric group}. While we have defined this action on $%
\tilde{\Lambda}$ it obviously restricts to $\Lambda $.

\begin{definition}[\protect\cite{FominStanley}]
The affine nil-Coxeter algebra $\mathbb{A}_{N}$ is the unital, associative $%
\mathbb{Z}$-algebra generated by $\{a_{j}\}_{j=1}^{N}$ and relations 
\begin{equation*}
\left\{ 
\begin{array}{l}
a_{j}^{2}=0 \\ 
a_{j}a_{j+1}a_{j}=a_{j+1}a_{j}a_{j+1}%
\end{array}%
\right. ~,
\end{equation*}%
where all indices are understood modulo $N$. The (finite) nil-Coxeter
algebra $\mathbb{A}_{N}^{\limfunc{fin}}$ is the subalgebra generated by $%
\{a_{j}\}_{j=1}^{N-1}$.
\end{definition}

Using the level-0 action (\ref{affsymgroup}) one defines the following
representation of $\mathbb{A}_{N}^{\limfunc{fin}}$ in terms of divided
difference operators $\{\partial _{j}\}_{j=1}^{N-1}$ which are known as
Bernstein-Gelfand-Gelfand (BGG) or Demazure operators, 
\begin{eqnarray}
\partial _{j}f(t_{1},\ldots ,t_{N})
&=&(t_{j}-t_{j+1})^{-1}(1-s_{j})f(t_{1},\ldots ,t_{N})  \label{DBGG} \\
&=&\frac{f(t_{1},\ldots ,t_{N})-f(\ldots ,t_{j+1},t_{i},\ldots )}{%
t_{j}-t_{j+1}}\;.  \notag
\end{eqnarray}%
If we set in addition $\partial _{N}=\varpi ^{-1}\partial _{1}\varpi
=(t_{N}-t_{1})^{-1}(1-s_{N})$ with $\ell =0$, then this representation
extends to the affine nil-Coxeter algebra $\mathbb{A}_{N}$.

It is not obvious from their definition, but the difference operators $%
\partial _{i}$ map the subring of polynomials $\Lambda =\mathbb{Z}%
[t_{1},\ldots ,t_{N}]$ into itself and we will identify the latter with the
equivariant cohomology of a point.

\begin{proposition}[\protect\cite{FominStanley}]
The map $a_{j}\mapsto \partial _{j}$ is a representation of the affine
nil-Coxeter algebra $\mathbb{A}_{N}$.
\end{proposition}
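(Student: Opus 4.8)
The plan is to verify that the operators $\partial_j$ of \eqref{DBGG} satisfy the defining relations of $\mathbb{A}_N$ on the module $\Lambda$, namely $\partial_j^2=0$ and the braid relation $\partial_j\partial_{j+1}\partial_j=\partial_{j+1}\partial_j\partial_{j+1}$ (together with the far-commutation $\partial_i\partial_j=\partial_j\partial_i$ for non-adjacent indices), all indices read modulo $N$. Throughout I would work in the localisation $K=\mathbb{Q}(t_1,\dots,t_N)$, on which every $s_j$ and hence every $\partial_j=\alpha_j^{-1}(1-s_j)$ acts, where $\alpha_j=t_j-t_{j+1}$ and $\alpha_N=t_N-t_1$; only at the end would I restrict to $\Lambda$, using the fact already recorded in the text that $f-s_jf$ is antisymmetric in $t_j,t_{j+1}$ and hence divisible by $\alpha_j$, so each $\partial_j$ preserves $\Lambda$.

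The nilpotency is immediate. The operator identity $s_j\,\alpha_j^{-1}=-\alpha_j^{-1}\,s_j$ gives $s_j\partial_j=\partial_j$, so the image of $\partial_j$ is $s_j$-invariant; on the other hand $\partial_j$ annihilates every $s_j$-invariant function because $1-s_j$ does. Composing yields $\partial_j^2=0$. The far-commutation is equally quick: when $\{i,i+1\}\cap\{j,j+1\}=\varnothing$ modulo $N$, the reflections $s_i,s_j$ commute and each fixes the other's root, so the factors $\alpha_i^{\pm1},\alpha_j^{\pm1},s_i,s_j$ commute in the relevant pairs and $\partial_i\partial_j=\partial_j\partial_i$ follows formally.

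The substantive step—and the one I expect to be the main obstacle—is the braid relation, which I would first establish for an adjacent finite pair, say $(j,j+1)=(1,2)$, where it involves only $t_1,t_2,t_3$ and the parabolic subgroup $\langle s_1,s_2\rangle\cong\mathbb{S}_3$. The cleanest route is to expand both triple products inside the twisted group algebra $K\rtimes\mathbb{S}_3$, collecting each as $\sum_{\sigma\in\mathbb{S}_3}c_\sigma(t)\,\sigma$ with $c_\sigma\in K$, and to check that the six coefficient functions agree on the two sides. This reduces to a handful of partial-fraction identities among the positive roots $t_1-t_2,\ t_2-t_3,\ t_1-t_3$; the only genuine content is the shuffling of $s_j$ past the $\alpha^{-1}$ factors, which I would organise once and for all via $s_j\alpha^{-1}=(s_j\alpha)^{-1}s_j$. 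Equivalently, one may invoke the classical Bernstein--Gelfand--Gelfand/Demazure theorem that $\partial_w:=\partial_{i_1}\cdots\partial_{i_r}$ depends only on $w$ and not on the reduced word $s_{i_1}\cdots s_{i_r}$; applied to the two reduced expressions $s_1s_2s_1=s_2s_1s_2$ of the longest element of $\mathbb{S}_3$, this delivers the braid relation outright.

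Finally I would bootstrap from the finite relations to the full affine ones using the rotation $\varpi$. Because $\partial_N=\varpi^{-1}\partial_1\varpi$ by definition and conjugation by $\varpi$ realises the cyclic diagram automorphism $\partial_j\mapsto\partial_{j+1}$ on the level-0 action \eqref{affsymgroup}, every relation involving the index $N$ is the $\varpi$-conjugate of an already-established finite relation; in particular $\partial_N^2=\varpi^{-1}\partial_1^2\varpi=0$ and the wrap-around braid relations $\partial_N\partial_1\partial_N=\partial_1\partial_N\partial_1$ and $\partial_{N-1}\partial_N\partial_{N-1}=\partial_N\partial_{N-1}\partial_N$ follow. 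Since the images of the generators satisfy all defining relations of $\mathbb{A}_N$ and preserve $\Lambda$, the assignment $a_j\mapsto\partial_j$ extends to an algebra homomorphism, proving the proposition.
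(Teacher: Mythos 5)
Your proposal is correct, but be aware that the paper itself contains no proof of this proposition: it is quoted verbatim from Fomin--Stanley \cite{FominStanley}, so there is no in-text argument to compare yours against. Your verification is the standard self-contained one and it is sound: the nilpotency argument via $s_j\partial_j=\partial_j$ combined with $\partial_j$ annihilating $s_j$-invariants is clean; the rank-two braid relation is indeed a finite check of coefficient identities inside $K\rtimes\mathbb{S}_3$; and conjugation by the rotation $\varpi$, which under the level-$0$ action (\ref{affsymgroup}) permutes the reflections $s_j$ (hence the roots $\alpha_j$ and the operators $\partial_j$) cyclically, correctly propagates the finite relations to the wrap-around ones, since conjugation by an algebra automorphism preserves relations. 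Two small remarks. First, your fallback of invoking the Bernstein--Gelfand--Gelfand/Demazure word-independence theorem is mildly circular as a self-contained argument, because that theorem is classically established precisely by verifying these braid relations; keep the explicit $\mathbb{S}_3$ computation as the primary route. Second, the paper's definition of $\mathbb{A}_N$ lists only $a_j^2=0$ and $a_ja_{j+1}a_j=a_{j+1}a_ja_{j+1}$ with indices modulo $N$ (no far-commutation relation), so your check that $\partial_i\partial_j=\partial_j\partial_i$ for non-adjacent $i,j$, while true and harmless, is not required by the presentation used here; conversely it costs nothing and makes your argument valid for the more common presentation of the affine nil-Coxeter algebra that does include it.
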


Note that the ring $\tilde{\Lambda}$ acts on itself via multiplication. One
has the following cross-relation or \textquotedblleft Leibniz
rule\textquotedblright\ in the endomorphism ring $\limfunc{End}\tilde{\Lambda%
}$,%
\begin{equation}
\partial _{j}f=(s_{j}f)\partial _{j}+(\partial _{j}f)\;,\qquad f\in \tilde{%
\Lambda}\;.  \label{LeibnizBGG}
\end{equation}%
The semidirect or cross product $\mathbb{A}_{N}\ltimes \tilde{\Lambda}$ then
yields the affine nil-Hecke ring of Kostant and Kumar \cite{KostantKumar}.
The latter is a representation of what is often called the \emph{extended} 
\emph{affine nil-Hecke algebra} $\mathbb{\tilde{H}}_{N}=\mathbb{\tilde{H}}%
_{N}(0)$ of type $\mathfrak{gl}_{N}$ in the literature.

For our discussion we do not require the full extended affine nil-Hecke
algebra but instead only the \emph{affine nil-Hecke algebra} $\mathbb{H}_{N}=%
\mathbb{H}_{N}(0)$ which is contained as a subalgebra, $\mathbb{H}%
_{N}\subset \mathbb{\tilde{H}}_{N}$. The latter is finitely generated and
can be defined as follows:

\begin{definition}[affine nil-Hecke algebra]
The affine nil-Hecke algebra $\mathbb{H}_{N}$ is the unital, associative
algebra generated by $\{\pi _{1},\pi _{2},\ldots ,\pi _{N}\}$ subject to the
relations%
\begin{equation}
\left\{ 
\begin{array}{cc}
\pi _{i}^{2}=-\pi _{i}, &  \\ 
\pi _{i}\pi _{i+1}\pi _{i}=\pi _{i+1}\pi _{i}\pi _{i+1}, &  \\ 
\pi _{i}\pi _{j}=\pi _{j}\pi _{i}, & \quad |i-j|>1%
\end{array}%
\right. \;,  \label{nilHecke}
\end{equation}%
where all indices are understood modulo $N$. The finite nil-Hecke algebra $%
\mathbb{H}_{N}^{\limfunc{fin}}$ is the subalgebra generated by $\{\pi
_{1},\pi _{2},\ldots ,\pi _{N-1}\}$.
\end{definition}

It is convenient to also introduce the alternative set of generators $\bar{%
\pi}_{j}:=\pi _{j}+1$ which obey the same braid relations, but satisfy $\bar{%
\pi}_{j}^{2}=\bar{\pi}_{j}$\ instead. The images of the generators $\pi _{j}$
and $\bar{\pi}_{j}$ in the representation of divided difference operators
are $s_{j}-\partial _{j}t_{j}$ and $s_{j}-t_{j+1}\partial _{j}$,
respectively.\medskip

The actions of $\mathbb{\tilde{S}}_{N}$, $\mathbb{A}_{N}$ and $\mathbb{%
\tilde{H}}_{N}$ on $\Lambda $ can be extended in the obvious manner to the
quantum space (\ref{quantum_space}) by acting trivially on the second tensor
factor $V^{\otimes N}$. By abuse of notation we will use the same symbols
for the respective operators in $\limfunc{End}\mathcal{V}$.

\subsection{Representations of the nil-Coxeter algebra in quantum space}

We now show that the solutions of the Yang-Baxter equation (\ref{mom_ybe})
from the previous section also define actions of the affine nil-Hecke
algebra $\mathbb{H}_{N}$ on the quantum space $\mathcal{V}$ but in terms of
generalised divided difference operators which involve the braid matrix $r$
from (\ref{mom_ybe}).

Let $p_{jj^{\prime }}:V_{j}\otimes V_{j^{\prime }}\rightarrow V_{j}\otimes
V_{j^{\prime }}$ be the flip operator, i.e. $p(v_{\varepsilon _{1}}\otimes
v_{\varepsilon _{2}})=v_{\varepsilon _{2}}\otimes v_{\varepsilon _{1}}$ and
set $\hat{r}_{j}=p_{j+1,j}r_{j+1,j}$, $\check{r}_{j}=p_{j,j+1}r_{j,j+1}$
where $r$ is the matrix given in (\ref{ybesolns}) and all indices are
understood modulo $N$. Define the cyclic shift operator $\Omega :\mathcal{V}%
\rightarrow \mathcal{V}$ by%
\begin{equation}
\Omega :v_{\varepsilon _{1}}\otimes v_{\varepsilon _{2}}\cdots \otimes
v_{\varepsilon _{N}}\mapsto v_{\varepsilon _{N}}\otimes v_{\varepsilon
_{1}}\otimes v_{\varepsilon _{2}}\cdots \otimes v_{\varepsilon _{N-1}},
\label{Omega}
\end{equation}%
then a simple but somewhat tedious computation shows that the following
relations hold true.

\begin{lemma}
\label{prop:braidaction}The matrices $\hat{r}_{j}$ (and $\check{r}_{j}$)
obey the following identities:

\begin{itemize}
\item[(i)] $\hat{r}_{j}^{2}-2\hat{r}_{j}+1=0$

\item[(ii)] $\hat{r}_{j}(t_{j+1},t_{j+2})\hat{r}_{j+1}(t_{j},t_{j+2})\hat{r}%
_{j}(t_{j},t_{j+1})=\hat{r}_{j+1}(t_{j},t_{j+1})\hat{r}_{j}(t_{j},t_{j+2})%
\hat{r}_{j+1}(t_{j+1},t_{j+2})$

\item[(iii)] For $|i-j|\neq 1,N-1$ we have $s_{i}\hat{r}_{j}=\hat{r}_{j}s_{i}
$ and otherwise%
\begin{equation*}
s_{j}\hat{r}_{j}=\hat{r}_{j}^{-1}s_{j},\qquad s_{j\pm 1}\hat{r}_{j}=\hat{r}%
_{j}s_{j\pm 1}+\Omega ^{\mp 1}(\hat{r}_{j\pm 1}-1)\Omega ^{\pm 1}s_{j\pm 1}~.
\end{equation*}
\end{itemize}

In all these identities the indices are understood modulo $N$.
\end{lemma}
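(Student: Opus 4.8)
The plan is to reduce $\hat r_j$ to a transparent normal form and then read off all three identities from the algebra of a single square-zero operator. Multiplying the flip $p_{j+1,j}$ into the matrix $r_{j+1,j}$ obtained from the table (\ref{ybesolns}), a direct computation gives
\[
\hat r_j=1+(t_{j+1}-t_j)\,\sigma_j^-\sigma_{j+1}^+,
\]
so that $\hat r_j$ is the identity plus a $\Lambda$-valued multiple of the local operator $X_j:=\sigma_j^-\sigma_{j+1}^+$ supported on the two factors $V_j\otimes V_{j+1}$. The single structural fact I will use throughout is $X_j^2=(\sigma_j^-)^2(\sigma_{j+1}^+)^2=0$.

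Given this, (i) is immediate: $(\hat r_j-1)^2=(t_{j+1}-t_j)^2X_j^2=0$, which is exactly $\hat r_j^2-2\hat r_j+1=0$; the same nilpotency yields the inverse $\hat r_j^{-1}=1-(t_{j+1}-t_j)X_j$, which I will reuse in (iii).

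For (ii) I would set $X=X_j$, $Y=X_{j+1}$ and read $\hat r_j(a,b)=1+(b-a)X$, $\hat r_{j+1}(a,b)=1+(b-a)Y$. Besides $X^2=Y^2=0$, the only extra relations needed are $XYX=0$ and $YXY=0$: the first repeats the factor $\sigma_j^-$ and the second the factor $\sigma_{j+2}^+$, and $(\sigma^{\pm})^2=0$. Writing $\alpha=t_{j+1}-t_j$ and $\beta=t_{j+2}-t_{j+1}$, I would expand the left-hand product $(1+\beta X)(1+(\alpha+\beta)Y)(1+\alpha X)$ and the right-hand product $(1+\alpha Y)(1+(\alpha+\beta)X)(1+\beta Y)$, discarding every monomial containing $X^2$, $Y^2$, $XYX$ or $YXY$. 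Both sides then collapse to the same expression $1+(\alpha+\beta)(X+Y)+\beta(\alpha+\beta)XY+\alpha(\alpha+\beta)YX$, which proves the braid relation.

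For (iii) I would split into the disjoint and the overlapping cases. If $\{i,i+1\}$ and $\{j,j+1\}$ are disjoint modulo $N$, then $s_i$ fixes the parameters $t_j,t_{j+1}$ and acts on sites away from $X_j$, so $s_i\hat r_j=\hat r_j s_i$. For $i=j$ the reflection $s_j$ acts only on $\Lambda$ and sends $t_{j+1}-t_j$ to $t_j-t_{j+1}$, whence $s_j\hat r_j=\hat r_j^{-1}s_j$ using the inverse found above. The genuinely delicate case is $i=j\pm1$, and it is the main obstacle: $s_{j+1}$ rewrites only the scalar coefficient, replacing $t_{j+1}-t_j$ by $t_{j+2}-t_j$, so that $s_{j+1}\hat r_j-\hat r_j s_{j+1}=(t_{j+2}-t_{j+1})X_j\,s_{j+1}$. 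One then checks that $\Omega^{-1}(\hat r_{j+1}-1)\Omega=(t_{j+2}-t_{j+1})X_j$, because conjugation by the cyclic shift (\ref{Omega}) lowers the site labels of $\sigma_{j+1}^-\sigma_{j+2}^+$ to $\sigma_j^-\sigma_{j+1}^+=X_j$ while leaving the $\Lambda$-coefficient $t_{j+2}-t_{j+1}$ untouched, since $\Omega$ acts trivially on $\Lambda$. This deliberate mismatch between the shifted sites and the unshifted parameters is precisely why $\Omega$ must appear, and it is the only point where a naive identification would fail; the case $i=j-1$ is identical with the roles of $\Omega$ and $\Omega^{-1}$ exchanged. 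I would keep all indices modulo $N$ so that the same argument covers the affine generators $\hat r_N,s_N$, with $\Omega$ absorbing the wrap-around.
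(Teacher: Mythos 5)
Your proof is correct and follows essentially the same route as the paper: the paper's own proof likewise reduces $\hat{r}_{j}$ to its explicit unipotent form (stated there as the basis action (\ref{braidaction}), equivalently $\hat{r}_{j}=1-(t_{j}-t_{j+1})\delta _{j}^{\vee }$) and then verifies (i)--(iii) by direct computation, exactly as you do. One slip to correct: with the paper's conventions $\sigma ^{-}=\left( \begin{smallmatrix} 0 & 1 \\ 0 & 0 \end{smallmatrix}\right) $, $\sigma ^{+}=\left( \begin{smallmatrix} 0 & 0 \\ 1 & 0 \end{smallmatrix}\right) $, the nilpotent part of $\hat{r}_{j}$ is $\delta _{j}^{\vee }=\sigma _{j}^{+}\sigma _{j+1}^{-}$ (the operator that swaps when $\varepsilon _{j}<\varepsilon _{j+1}$ and kills everything else), not your $X_{j}=\sigma _{j}^{-}\sigma _{j+1}^{+}$, which is the operator $\delta _{j}$ attached to $\check{r}_{j}$. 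This mix-up is immaterial to the argument, since every structural fact you use --- $X_{j}^{2}=0$, the nil--Temperley--Lieb relations $X_{j}X_{j\pm 1}X_{j}=0$, the shift property $\Omega ^{-1}X_{j+1}\Omega =X_{j}$, and the fact that $s_{i}$ acts only on the polynomial factor $\Lambda $ --- holds verbatim for $\delta _{j}^{\vee }$, so all three verifications go through unchanged after substituting $\sigma _{j}^{+}\sigma _{j+1}^{-}$ for $\sigma _{j}^{-}\sigma _{j+1}^{+}$.
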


\begin{proof}
A direct computation using the explicit form (\ref{ybesolns}) of the
solution $r$. From the latter we derive the following action of $\hat{r}_{j}$
on a basis vector,%
\begin{multline}
\hat{r}_{j}v_{\varepsilon _{1}}\otimes \cdots \otimes v_{\varepsilon _{N}}=
\label{braidaction} \\
\left\{ 
\begin{array}{cc}
v_{\varepsilon _{1}}\otimes \cdots \otimes v_{\varepsilon
_{N}}-(t_{j}-t_{j+1})v_{\varepsilon _{1}}\otimes \cdots v_{\varepsilon
_{j+1}}\otimes v_{\varepsilon _{j}}\cdots \otimes v_{\varepsilon _{N}}, & 
\varepsilon _{j}<\varepsilon _{j+1} \\ 
v_{\varepsilon _{1}}\otimes \cdots \otimes v_{\varepsilon _{N}}, & \text{else%
}%
\end{array}%
\right. \;.
\end{multline}%
Making repeatedly use of the above formula one now easily verfies the
assertions.
\end{proof}

Employing the above lemma we obtain (i) in Theorem \ref{SWduality}.

\begin{proposition}
\label{prop:symmgroupaction}The operators $\boldsymbol{s}_{j}=s_{j}\hat{r}%
_{j}$ define a representation of the affine symmetric group $\mathbb{\hat{S}}%
_{N}$ on the quantum space $\mathcal{V}$.
\end{proposition}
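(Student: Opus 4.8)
The plan is to verify that the ``dressed'' generators $\boldsymbol{s}_j=s_j\hat r_j$ satisfy the three defining families of relations of $\mathbb{\hat S}_N$: the involutions $\boldsymbol{s}_j^2=1$, the commutations $\boldsymbol{s}_i\boldsymbol{s}_j=\boldsymbol{s}_j\boldsymbol{s}_i$ for $|i-j|\neq 1\bmod N$, and the braid relations $\boldsymbol{s}_j\boldsymbol{s}_{j+1}\boldsymbol{s}_j=\boldsymbol{s}_{j+1}\boldsymbol{s}_j\boldsymbol{s}_{j+1}$, all indices read modulo $N$. The single observation that drives everything is that $s_a$ acts only on the polynomial factor $\Lambda$ and commutes with every fixed operator on $V^{\otimes N}$; since by (\ref{braidaction}) the entries of $\hat r_b$ depend on the equivariant parameters only through the difference $t_b-t_{b+1}$, conjugating by $s_a$ merely permutes the arguments of $\hat r_b$. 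I would first record this \emph{parameter-permutation rule}
\[
s_a\,\hat r_b(t)=\hat r_b(s_a\cdot t)\,s_a ,
\]
where $s_a\cdot t$ denotes the transposition of $t_a,t_{a+1}$ inside the arguments. This is exactly the content of Lemma \ref{prop:braidaction}(iii): for $b=a$ it reads $s_j\hat r_j=\hat r_j^{-1}s_j$ once one uses Lemma \ref{prop:braidaction}(i) and (\ref{braidaction}) to rewrite $\hat r_j(t_{j+1},t_j)=\hat r_j(t_j,t_{j+1})^{-1}$, while for $b=a\pm 1$ the two terms $\hat r_j s_{j\pm1}$ and $\Omega^{\mp1}(\hat r_{j\pm1}-1)\Omega^{\pm1}s_{j\pm1}$ telescope into the single shifted factor $\hat r_j(s_{j\pm1}\cdot t)$; the $\Omega$-terms are precisely the bookkeeping for the affine wrap-around, so the rule is uniform in the indices mod $N$.

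Granting this rule, the involution is immediate, $\boldsymbol{s}_j^2=s_j\hat r_j(t_j,t_{j+1})\,s_j\hat r_j(t_j,t_{j+1})=\hat r_j(t_{j+1},t_j)\,\hat r_j(t_j,t_{j+1})=1$, the last equality being the inverse identity just noted. For $|i-j|\neq 1\bmod N$ the operators $\hat r_i$ and $\hat r_j$ act on disjoint pairs of tensor legs and hence commute, while $s_i,s_j$ commute as automorphisms of $\Lambda$; transporting the reflections to the right with the parameter rule gives $\boldsymbol{s}_i\boldsymbol{s}_j=\hat r_i(t_{i+1},t_i)\hat r_j(t_{j+1},t_j)\,s_is_j=\boldsymbol{s}_j\boldsymbol{s}_i$.

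The braid relation is the substantive step. I would push all three reflections in each word to the far right, so the permutation parts collect into the common element $w=s_js_{j+1}s_j=s_{j+1}s_js_{j+1}$, while each $\hat r$-factor acquires the parameter permutation dictated by the prefix of reflections standing to its left. A direct application of the parameter rule reduces the two sides to
\begin{align*}
\boldsymbol{s}_j\boldsymbol{s}_{j+1}\boldsymbol{s}_j&=\hat r_j(t_{j+1},t_j)\,\hat r_{j+1}(t_{j+2},t_j)\,\hat r_j(t_{j+2},t_{j+1})\,w,\\
\boldsymbol{s}_{j+1}\boldsymbol{s}_j\boldsymbol{s}_{j+1}&=\hat r_{j+1}(t_{j+2},t_{j+1})\,\hat r_j(t_{j+2},t_j)\,\hat r_{j+1}(t_{j+1},t_j)\,w.
\end{align*}
It then remains to equate the two triple products of $\hat r$'s, and I claim these are precisely the two sides of the parametrised braid relation in Lemma \ref{prop:braidaction}(ii) after the substitution $t_j\leftrightarrow t_{j+2}$ with $t_{j+1}$ fixed: under this relabelling the left side of (ii) becomes $\hat r_j(t_{j+1},t_j)\hat r_{j+1}(t_{j+2},t_j)\hat r_j(t_{j+2},t_{j+1})$ and its right side becomes $\hat r_{j+1}(t_{j+2},t_{j+1})\hat r_j(t_{j+2},t_j)\hat r_{j+1}(t_{j+1},t_j)$. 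Since (ii) is a polynomial identity in $t_j,t_{j+1},t_{j+2}$ (verified by direct computation, with $\hat r$ depending only on differences), it is stable under any substitution of parameters, in particular under $t_j\leftrightarrow t_{j+2}$; hence the two triple products agree and the braid relation follows. The same computation applies verbatim to the affine braid relation involving $s_N$, the indices being read mod $N$ and the cyclic shift $\Omega$ entering through Lemma \ref{prop:braidaction}(ii)--(iii).

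The main obstacle I anticipate is the careful bookkeeping of the parameter permutations when transporting the reflections past the $\hat r$-factors, and then recognising the resulting triple products as the two sides of (ii) under the reflection $t_j\leftrightarrow t_{j+2}$. The affine case demands the additional check that the $\Omega$-dressed commutators of Lemma \ref{prop:braidaction}(iii) really do collapse to the clean parameter-permutation rule for the wrap-around generator $s_N$ as well, which is where the cyclic structure of $\mathcal{V}$ encoded by $\Omega$ is indispensable.
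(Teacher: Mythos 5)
Your proposal is correct and follows essentially the paper's own route: both proofs commute the simple reflections through the braid matrices (leaving only parameter permutations behind) and thereby reduce the braid relation for the $\boldsymbol{s}_{j}$ to the parametrised braid identity for the $\hat{r}_{j}$'s stated in Lemma \ref{prop:braidaction}. The only cosmetic difference is that you collect the reflections on the right and invoke Lemma \ref{prop:braidaction}(ii) under the substitution $t_{j}\leftrightarrow t_{j+2}$, whereas the paper collects them on the left and verifies the resulting identity by expanding the factors $1-t_{jj+1}\delta _{j}^{\vee }$ directly, using $(\delta _{j}^{\vee })^{2}=0$ and the nil--Temperley--Lieb relations.
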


\begin{proof}
Consider a basis vector $f\otimes v=f\otimes v_{\varepsilon _{1}}\otimes
v_{\varepsilon _{2}}\cdots \otimes v_{\varepsilon _{N}}$ in $\mathcal{V}$.
Then it follows from the preceding lemma that%
\begin{equation*}
\boldsymbol{s}_{j}^{2}=s_{j}\hat{r}_{j}s_{j}\hat{r}_{j}=\hat{r}%
_{j}^{-1}s_{j}^{2}\hat{r}_{j}=\hat{r}_{j}^{-1}\hat{r}_{j}=1_{\mathcal{V}}\;.
\end{equation*}%
To prove the braid relation verify from (\ref{braidaction}) that $\hat{r}%
_{j}=p_{j+1,j}r_{j+1,j}=1-t_{jj+1}\delta _{j}^{\vee }$ with $%
t_{ij}=t_{i}-t_{j}$ and $\delta _{j}^{\vee }=\sigma _{j}^{+}\sigma
_{j+1}^{-} $. Then 
\begin{eqnarray*}
\boldsymbol{s}_{j}\boldsymbol{s}_{j+1}\boldsymbol{s}_{j} &=&s_{j}\hat{r}%
_{j}s_{j+1}\hat{r}_{j+1}s_{j}\hat{r}_{j} \\
&=&s_{j}s_{j+1}s_{j}(1-t_{j+1j+2}\delta _{j}^{\vee })(1-t_{jj+2}\delta
_{j+1}^{\vee })(1-t_{jj+1}\delta _{j}^{\vee })
\end{eqnarray*}%
and%
\begin{eqnarray*}
\boldsymbol{s}_{j+1}\boldsymbol{s}_{j}\boldsymbol{s}_{j+1} &=&s_{j+1}\hat{r}%
_{j+1}s_{j}\hat{r}_{j}s_{j+1}\hat{r}_{j+1} \\
&=&s_{j+1}s_{j}s_{j+1}(1-t_{jj+1}\delta _{j+1}^{\vee })(1-t_{jj+2}\delta
_{j}^{\vee })(1-t_{j+1j+2}\delta _{j+1}^{\vee })\;.
\end{eqnarray*}%
Multiplying out the brackets in each of the expressions and using that $%
\delta _{j}^{\vee }\delta _{j}^{\vee }=\delta _{j+1}^{\vee }\delta
_{j}^{\vee }=0$, $s_{j+1}s_{j}s_{j+1}=s_{j}s_{j+1}s_{j}$ one finds that both
expressions are equal and, thus, the assertion follows.
\end{proof}

We now prove statement (ii) from Theorem \ref{SWduality}, namely that the
corresponding $\mathbb{S}_{N}$-action commutes with the action of the row
Yang-Baxter algebra (\ref{row_yba}) and the $\mathbb{\hat{S}}_{N}$-action
(dependent on $q$ via (\ref{qr})) with the transfer matrices.

\begin{proposition}
The matrix elements (\ref{row_yba}) of the row monodromy matrix (\ref{rowmom}%
) commute with the above $\mathbb{S}_{N}$-action, i.e.%
\begin{equation}
\boldsymbol{s}_{j}M_{ab}=M_{ab}\boldsymbol{s}_{j},\qquad j=1,\ldots ,N-1\;.
\label{SactionM}
\end{equation}%
In contrast, the row-to-row transfer matrices $H=H(t),~E=E(t)$ obey%
\begin{equation}
\boldsymbol{s}_{j}H=H\boldsymbol{s}_{j}\qquad \text{and}\qquad \boldsymbol{s}%
_{j}E=E\boldsymbol{s}_{j},\qquad j=1,\ldots ,N\;.  \label{SactionHE}
\end{equation}%
These relation continue to hold true for the $q$-deformed transfer matrices.
However, only when setting $q=1$, we have in addition%
\begin{equation}
\Omega ~H(t)~\Omega ^{-1}=H(\varpi t)\qquad \text{and}\qquad \Omega
~E(t)~\Omega ^{-1}=E(\varpi t)\;.  \label{RotHE}
\end{equation}
\end{proposition}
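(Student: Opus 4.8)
The plan is to reduce the commutation statement \eqref{SactionM} to the single intertwining identity \eqref{braidmom}, namely $\hat{r}_{j}M_{i}(x_{i}|t)=M_{i}(x_{i}|s_{j}t)\,\hat{r}_{j}$, where $s_{j}t=(t_{1},\ldots ,t_{j+1},t_{j},\ldots ,t_{N})$, and then feed in the conjugation property of $s_{j}$. First I would establish \eqref{braidmom} for $j=1,\ldots ,N-1$. In the product $M_{i}=L_{iN}\cdots L_{i1}$ of \eqref{rowmom} only the two factors $L_{i,j+1}(t_{j+1})$ and $L_{ij}(t_{j})$ act nontrivially on the quantum slots $j,j+1$ on which $\hat r_j$ acts; every other $L_{ij'}$ commutes with $\hat{r}_{j}$ because it acts on a different slot. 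Hence it suffices to check the local relation $\hat{r}_{j}\,L_{i,j+1}(t_{j+1})L_{ij}(t_{j})=L_{i,j+1}(t_{j})L_{ij}(t_{j+1})\,\hat{r}_{j}$, which is exactly the second Yang--Baxter equation in \eqref{ybe} rewritten through the flip $p_{j,j+1}$ that enters $\hat{r}_{j}=p_{j+1,j}r_{j+1,j}$; the parameter swap on the right is produced by the flip transporting the two slots past one another. I expect the bookkeeping of the flip together with the sign of the entry $e=t_{j}-t_{j'}$ in \eqref{ybesolns} to be the one genuinely delicate point, best handled by the explicit action \eqref{braidaction} of Lemma \ref{prop:braidaction}.

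Granting \eqref{braidmom}, the commutation with $\boldsymbol{s}_{j}=s_{j}\hat{r}_{j}$ is a two-line manipulation. Since $s_{j}$ permutes the equivariant parameters it satisfies $s_{j}\,O(s_{j}t)=O(t)\,s_{j}$ for any operator $O(t)$ whose parameter dependence is polynomial, so
\begin{equation*}
\boldsymbol{s}_{j}M_{i}=s_{j}\hat{r}_{j}M_{i}(t)=s_{j}M_{i}(s_{j}t)\hat{r}_{j}=M_{i}(t)\,s_{j}\hat{r}_{j}=M_{i}\boldsymbol{s}_{j}.
\end{equation*}
Because $\boldsymbol{s}_{j}$ acts only on the quantum space $\mathcal{V}$ and not on the auxiliary factor $V[x_{i}]$, reading off the matrix entries in \eqref{row_yba} gives $\boldsymbol{s}_{j}M_{ab}=M_{ab}\boldsymbol{s}_{j}$, which is \eqref{SactionM}. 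The same argument applies verbatim to the primed monodromy matrix $M'_{i}$ built from the $L'$-operators, since the braiding matrix is unchanged ($r_{jj'}=r_{jj'}'$ in \eqref{ybesolns}); this will be needed for the $E$ statement.

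For \eqref{SactionHE} with $1\le j\le N-1$ nothing more is required: $H=A+D=\limfunc{Tr}_{V}M_{i}$ and $E=A'+D'=\limfunc{Tr}_{V}M'_{i}$ are linear combinations of the $M_{ab}$ (resp.\ $M'_{ab}$), so they inherit the commutation with $\boldsymbol{s}_{j}$. The remaining generator $j=N$ is exactly where the periodic boundary condition of the trace \eqref{Zdef} enters, and is the reason \eqref{SactionM} stops at $N-1$ while \eqref{SactionHE} reaches $N$: the wrap-around braiding $\hat{r}_{N}$ acts on the slots $N$ and $1$, whose $L$-operators $L_{iN}$ and $L_{i1}$ sit at opposite ends of the product $M_{i}$ and are therefore not adjacent. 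I would use cyclicity of the partial trace over $V[x_{i}]$ --- legitimate here because $L_{i1}$ acts on a quantum slot disjoint from the remaining factors, so the quantum parts commute --- to rewrite $H=\limfunc{Tr}_{V}(L_{i1}L_{iN}\cdots L_{i2})$, bringing $L_{i1}$ and $L_{iN}$ adjacent. The local braiding relation for the pair $(N,1)$ then applies as before and yields $\boldsymbol{s}_{N}H=H\boldsymbol{s}_{N}$, and likewise for $E$. Carrying the auxiliary-space factor $\bigl(\begin{smallmatrix}1&0\\0&q\end{smallmatrix}\bigr)_{i}$ of \eqref{qMom} through this cyclic move is precisely what makes the resulting $\boldsymbol{s}_{N}$ depend on $q$, and it uses the $q$-deformed braid matrix \eqref{qr}; with that replacement the argument goes through for the $q$-deformed transfer matrices as well.

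Finally, for \eqref{RotHE} I would compute $\Omega M_{i}\Omega^{-1}$ directly from \eqref{Omega}: since $\Omega$ shifts quantum slot $j$ to $j+1$ while leaving the scalar parameter attached to each $L$-operator, one finds $\Omega L_{ij}(t_{j})\Omega^{-1}=L_{i,j+1}(t_{j})$ with indices mod $N$, so that $\Omega M_{i}(t)\Omega^{-1}=L_{i1}(t_{N})\,L_{iN}(t_{N-1})\cdots L_{i2}(t_{1})$. This is a cyclic reordering of $M_{i}(\varpi t)=L_{iN}(t_{N-1})\cdots L_{i2}(t_{1})\,L_{i1}(t_{N})$, differing only in whether $L_{i1}(t_{N})$ stands first or last. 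Taking $\limfunc{Tr}_{V}$ and using the same partial-trace cyclicity gives $\Omega H(t)\Omega^{-1}=\limfunc{Tr}_{V}M_{i}(\varpi t)=H(\varpi t)$, and similarly for $E$. The hypothesis $q=1$ is essential at this last step: for $q\ne 1$ the factor $\bigl(\begin{smallmatrix}1&0\\0&q\end{smallmatrix}\bigr)_{i}$ sits between $L_{i1}(t_{N})$ and the rest and, acting on the auxiliary space, does not commute with $L_{i1}(t_{N})$, so the final cyclic move is obstructed. I expect the verification of the local form of \eqref{braidmom} and the careful tracking of the auxiliary-space $q$-factor through the two cyclic-trace arguments to be the main obstacles; the rest is formal.
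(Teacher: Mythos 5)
Your proposal is correct and follows essentially the same route as the paper: the intertwining relation $\hat{r}_{j}M(x|t)=M(x|s_{j}t)\hat{r}_{j}$ obtained from the local Yang--Baxter equation (\ref{ybe}) applied to the two adjacent factors of (\ref{rowmom}), followed by conjugation with $s_{j}$ to get (\ref{SactionM}), and then cyclicity of the partial trace in (\ref{Zdef}) together with the $q$-deformed relation (\ref{qybe}) for the statements about $H$, $E$, the affine generator $j=N$, and (\ref{RotHE}). The paper compresses the latter steps into a single sentence; your write-up supplies exactly the details it leaves implicit, including the correct explanation of why $q=1$ is needed for (\ref{RotHE}).
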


\begin{proof}
From (\ref{ybe}) and (\ref{rowmom}) it follows that 
\begin{eqnarray*}
\hat{r}_{j}M(x|t_{1},\ldots ,t_{N}) &=&L_{N}(x;t_{N})\cdots \hat{r}%
_{j}L_{j+1}(x;t_{j+1})L_{j}(x;t_{j})\cdots L_{1}(x;t_{1}) \\
&=&L_{N}(x;t_{N})\cdots L_{j+1}(x;t_{j})L_{j}(x;t_{j+1})\hat{r}_{j}\cdots
L_{1}(x;t_{1}) \\
&=&M(x|t_{1},\ldots ,t_{j+1},t_{j},\ldots ,t_{N})\hat{r}_{j}
\end{eqnarray*}%
This proves the first assertion. Exploiting (\ref{qybe}) and the cyclicity
of the trace in (\ref{Zdef}) with $n=1$ one obtains the remaining assertions
for $H,E$.
\end{proof}

Replacing the simple Weyl reflection $s_{j}$ in the divided difference
operators (\ref{DBGG}) with the braid matrix $\hat{r}_{j}$ we now attain two
other representations of the affine nil-Coxeter algebra $\mathbb{A}_{N}$
over $\mathcal{V}_{n}$ which restrict to $V^{\otimes N}$ and, hence, commute
with the actions of $\mathbb{\tilde{S}}_{N},\mathbb{A}_{N},\mathbb{H}_{N}$
on $\Lambda $.

\begin{proposition}
\label{prop:nilcoxrep}The maps%
\begin{equation}
a_{j}\mapsto \delta _{j}^{\vee }=\frac{1_{j+1,j}-\hat{r}_{j}}{t_{j}-t_{j+1}}%
=\sigma _{j}^{+}\sigma _{j+1}^{-},\qquad j=1,2,\ldots ,N  \label{rhat}
\end{equation}%
and%
\begin{equation}
a_{j}\mapsto \delta _{j}=\frac{1_{j+1,j}-\check{r}_{j}}{t_{j+1}-t_{j}}%
=\sigma _{j}^{-}\sigma _{j+1}^{+},\qquad j=1,2,\ldots ,N  \label{rcheck}
\end{equation}%
are representations $\mathbb{A}_{N}\rightarrow \limfunc{End}V^{\otimes N}$
with $\mathcal{P}\delta _{N-j}\mathcal{P}=\delta _{j}^{\vee }$. Here all
indices are understood modulo $N$. We have the identities%
\begin{eqnarray}
\delta _{j}^{\vee }v_{\varepsilon _{1}}\otimes \cdots \otimes v_{\varepsilon
_{N}} &=&\left\{ 
\begin{array}{cc}
v_{\varepsilon _{1}}\otimes \cdots v_{\varepsilon _{j+1}}\otimes
v_{\varepsilon _{j}}\cdots \otimes v_{\varepsilon _{N}}, & \varepsilon
_{j}<\varepsilon _{j+1} \\ 
0, & \text{else}%
\end{array}%
\right.  \label{delta} \\
\delta _{j}v_{\varepsilon _{1}}\otimes \cdots \otimes v_{\varepsilon _{N}}
&=&\left\{ 
\begin{array}{cc}
v_{\varepsilon _{1}}\otimes \cdots v_{\varepsilon _{j+1}}\otimes
v_{\varepsilon _{j}}\cdots \otimes v_{\varepsilon _{N}}, & \varepsilon
_{j}>\varepsilon _{j+1} \\ 
0, & \text{else}%
\end{array}%
\right. \;.  \label{dual_delta}
\end{eqnarray}%
Setting $q_{1}=q$ and $q_{2}=\cdots =q_{N}=1$ we obtain $\delta _{N}^{\vee
}(q)=q^{-1}\delta _{N}^{\vee }$ and $\delta _{N}(q)=q\delta _{N}$ instead
and both representations extend to $\mathbb{Z}[q^{\pm 1}]\otimes V^{\otimes
N}$.
\end{proposition}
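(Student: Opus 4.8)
The plan is to work throughout with the Pauli form $\delta_j^\vee=\sigma_j^+\sigma_{j+1}^-$ and $\delta_j=\sigma_j^-\sigma_{j+1}^+$ (indices modulo $N$), because in this form every defining relation of $\mathbb{A}_N$ collapses to the single nilpotency $(\sigma^\pm)^2=0$ together with the fact that operators on distinct tensor factors commute. First I would record the explicit action (\ref{delta}) and (\ref{dual_delta}). That $\delta_j^\vee$ equals $\sigma_j^+\sigma_{j+1}^-$ is already contained in the proof of Proposition~\ref{prop:symmgroupaction}, where the identity $\hat r_j=1-(t_j-t_{j+1})\delta_j^\vee$ was established; solving this for $\delta_j^\vee$ gives (\ref{rhat}), and feeding in the explicit braid action (\ref{braidaction}) yields (\ref{delta}) at once (on $\varepsilon_j<\varepsilon_{j+1}$ the numerator $1-\hat r_j$ returns $(t_j-t_{j+1})$ times the transposed vector, and it annihilates everything else). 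The companion operator $\delta_j=(1_{j+1,j}-\check r_j)/(t_{j+1}-t_j)$ of (\ref{rcheck}) is handled by the identical computation of $\check r_j=p_{j,j+1}r_{j,j+1}$, giving $\delta_j=\sigma_j^-\sigma_{j+1}^+$ and the action (\ref{dual_delta}).

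Next I would verify the relations of $\mathbb{A}_N$ on the $\delta_j^\vee$. The relation $a_j^2=0$ is immediate since $(\delta_j^\vee)^2=(\sigma_j^+)^2(\sigma_{j+1}^-)^2=0$. For $|i-j|>1$ modulo $N$ the supports $\{i,i+1\}$ and $\{j,j+1\}$ are disjoint, so $\delta_i^\vee$ and $\delta_j^\vee$ act on different tensor factors and commute. The braid relation $\delta_j^\vee\delta_{j+1}^\vee\delta_j^\vee=\delta_{j+1}^\vee\delta_j^\vee\delta_{j+1}^\vee$ is the one genuinely interesting case, and I would prove it by showing that both sides vanish identically: on the left the two factors $\delta_j^\vee$ each contribute a $\sigma^+$ on site $j$ while the middle factor acts trivially there, so the site-$j$ component is $(\sigma^+)^2=0$; on the right the same phenomenon produces $(\sigma^-)^2=0$ on site $j+2$. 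The representation on the $\delta_j$ then follows either by the same argument or from the duality $\mathcal{P}\delta_{N-j}\mathcal{P}=\delta_j^\vee$, which I would obtain by observing that conjugation by the reversal $\mathcal{P}$ of (\ref{parity}) relocates site $m$ to site $N+1-m$ without altering the Pauli type, whence $\mathcal{P}\,\sigma_{N-j}^-\sigma_{N-j+1}^+\,\mathcal{P}=\sigma_{j+1}^-\sigma_j^+=\delta_j^\vee$; since $j\mapsto N-j$ is an automorphism of the cyclic Dynkin diagram, it carries the nil-Coxeter relations to themselves.

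The step needing the most care is this braid computation. The vanishing of both sides hinges on the two outer sites $j$ and $j+2$ being distinct modulo $N$, which is exactly the condition $N\geq 3$; for $N=2$ the algebra $\mathbb{A}_2$ is of type $\tilde A_1$ and carries no braid relation, so nothing is required there. The cyclic seam $j=N$ (with $j+1\equiv1$, $j+2\equiv2$) must be checked on the same footing, and here one has to keep the bookkeeping of indices taken modulo $N$ scrupulously consistent, in particular where the support of a generator wraps past site $N$.

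Finally, for the $q$-deformation I would invoke the deformed braid matrix (\ref{qr}), which differs from the undeformed one only in that its entry $t_j-t_{j'}$ becomes $q^{-1}(t_j-t_{j'})$. Repeating the computation of the previous paragraph at the seam then gives $\delta_N^\vee(q)=q^{-1}\delta_N^\vee$, and dually $\delta_N(q)=q\,\delta_N$, with all other generators unchanged. Because $\delta_N^\vee(q)$ is only a unit multiple of $\delta_N^\vee$, one has $(\delta_N^\vee(q))^2=q^{-2}(\delta_N^\vee)^2=0$, commutativity with the non-adjacent generators is untouched, and the braid relations involving $a_N$ persist for the trivial reason that both of their sides were already zero, so the apparent mismatch of $q$-powers between the two sides is harmless. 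Hence both assignments extend to representations $\mathbb{A}_N\rightarrow\limfunc{End}(\mathbb{Z}[q^{\pm1}]\otimes V^{\otimes N})$, as claimed.
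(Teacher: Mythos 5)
Your proof is correct and follows essentially the same route as the paper: identify $\delta_j^\vee$ and $\delta_j$ from the relations $\hat r_j=1-(t_j-t_{j+1})\delta_j^\vee$ and $\check r_j=1+(t_j-t_{j+1})\delta_j$, then verify the action and the $\mathbb{A}_N$-relations by direct computation with the Pauli operators. Your observation that both sides of the braid relation vanish identically is precisely the content of the paper's subsequent remark that these representations factor through the affine nil Temperley-Lieb algebra, and your handling of the seam case $j=N$ and of the $q$-deformation (where the mismatched powers of $q$ are harmless because both sides are zero) supplies exactly the detail the paper leaves as ``a straightforward computation.''
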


\begin{remark}
We will see in Section 5 how the operators (\ref{delta}) are related to the
generalised divided difference operators first introduced by Peterson \cite[%
Lectures 6, 13, 14]{Peterson}. The latter have also been discussed in \cite%
{KnutsonTao} and \cite{Tymoczko} in the setting of GKM theory.
\end{remark}

\begin{remark}
The representations (\ref{delta}), (\ref{dual_delta}) factor through the 
\emph{affine nil Temperley-Lieb algebra}, i.e. one has the additional
relation%
\begin{equation}
\delta _{j}\delta _{j+1}\delta _{j}=\delta _{j+1}\delta _{j}\delta
_{j+1}=0\;.
\end{equation}%
The affine nil Temperley-Lieb algebra has been used previously in \cite%
{Postnikov,KorffStroppel,VicOsc} to describe the non-equivariant quantum
cohomology ring.
\end{remark}

\begin{proof}
As already mentioned previously in the proof of Prop \ref%
{prop:symmgroupaction} one verifies from (\ref{ybesolns}) that $\hat{r}%
_{j}=p_{j+1,j}r_{j+1,j}=1-(t_{j}-t_{j+1})\delta _{j}^{\vee }$ and $\check{r}%
_{j}=p_{j,j+1}r_{j,j+1}=1+(t_{j}-t_{j+1})\delta _{j}$ with 
\begin{equation*}
\delta _{j}^{\vee }=\left( 
\begin{array}{cccc}
0 & 0 & 0 & 0 \\ 
0 & 0 & 1 & 0 \\ 
0 & 0 & 0 & 0 \\ 
0 & 0 & 0 & 0%
\end{array}%
\right) _{j+1,j}\qquad \text{and\qquad }\delta _{j}=\left( 
\begin{array}{cccc}
0 & 0 & 0 & 0 \\ 
0 & 0 & 1 & 0 \\ 
0 & 0 & 0 & 0 \\ 
0 & 0 & 0 & 0%
\end{array}%
\right) _{j,j+1}
\end{equation*}%
The assertion about the action (\ref{delta}) and the $\mathbb{A}_{N}$%
-relations are then verified by a straightforward computation.
\end{proof}

\begin{corollary}[Leibniz rules]
The row monodromy matrix (\ref{rowmom}) and its dual (\ref{dual_mom})
satisfy the following cross-relations,%
\begin{eqnarray}
\delta _{j}^{\vee }M &=&(s_{j}M)\delta _{j}^{\vee }+(\partial _{j}M)
\label{Leibniz} \\
\delta _{j}M^{\vee } &=&(s_{j}^{\vee }M^{\vee })\delta _{j}-(\partial
_{j}^{\vee }M^{\vee })  \notag
\end{eqnarray}%
for $j=1,\ldots ,N-1$. Here $s_{j}^{\vee }=s_{N+1-j},\partial _{j}^{\vee
}=-\partial _{N+1-j}$ denote the simple Weyl reflection and divided
difference operator with respect to the $T_{i}=t_{N+1-i}$ parameters.

Replacing $M$ with $H=\limfunc{Tr}_{V}M$ the relation (\ref{Leibniz}) holds
also true for $j=N$, i.e. the affine nil-Coxeter algebra $\mathbb{A}_{N}$.
The identical equations apply to $M^{\prime }$ and $E$. Moreover, we have
the explicit formulae,%
\begin{eqnarray}
\partial _{j}H(x) &=&\frac{\sigma _{j}^{+}\sigma _{j}^{-}H(x)\sigma
_{j+1}^{-}\sigma _{j+1}^{+}}{1-xt_{j+1}}-\frac{\sigma _{j}^{-}\sigma
_{j}^{+}H(x)\sigma _{j+1}^{+}\sigma _{j+1}^{-}}{1-xt_{j}}  \notag \\
\partial _{j}E(x) &=&\frac{\sigma _{j+1}^{-}\sigma _{j+1}^{+}E(x)\sigma
_{j}^{+}\sigma _{j}^{-}}{1+xt_{j}}-\frac{\sigma _{j+1}^{+}\sigma
_{j+1}^{-}E(x)\sigma _{j}^{-}\sigma _{j}^{+}}{1+xt_{j+1}}\;.  \label{dellHE}
\end{eqnarray}
\end{corollary}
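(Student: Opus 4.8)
The plan is to extract everything from the single braiding identity $\hat r_j M = (s_j M)\hat r_j$ --- where $s_j M$ abbreviates the row monodromy matrix (\ref{rowmom}) with the equivariant parameters $t_j,t_{j+1}$ interchanged --- which was obtained in the proof of the proposition preceding Proposition \ref{prop:nilcoxrep}, together with the factorisation $\hat r_j = 1-(t_j-t_{j+1})\delta_j^\vee$ recorded in Proposition \ref{prop:nilcoxrep}. Substituting the latter into the former and expanding the products gives
\begin{equation*}
M-(t_j-t_{j+1})\delta_j^\vee M=(s_j M)-(t_j-t_{j+1})(s_j M)\delta_j^\vee .
\end{equation*}
Isolating $\delta_j^\vee M$ on the left and dividing by $t_j-t_{j+1}$ turns the right-hand side into $(s_j M)\delta_j^\vee+(t_j-t_{j+1})^{-1}(1-s_j)M$; since $(t_j-t_{j+1})^{-1}(1-s_j)$ is exactly the BGG operator $\partial_j$ of (\ref{DBGG}) acting on the $\Lambda$-coefficients, this is precisely the first line of (\ref{Leibniz}) for $j=1,\dots,N-1$.

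For the dual relation I would conjugate the identity just proved by the reversal involution $\mathcal{P}$. Using $\mathcal{P}^2=1$, the definition $\mathcal{P}M\mathcal{P}=M^\vee$ from (\ref{dual_mom}), and $\mathcal{P}\delta_j^\vee\mathcal{P}=\delta_{N-j}$ (read off from the identity $\mathcal{P}\delta_{N-j}\mathcal{P}=\delta_j^\vee$ in Proposition \ref{prop:nilcoxrep}), the conjugated equation becomes a Leibniz rule for $M^\vee$ in which the parameter swap and the divided difference are taken in the reversed parameters $T_i=t_{N+1-i}$. The minus sign in the second line of (\ref{Leibniz}) is produced automatically by $\partial_j^\vee=-\partial_{N+1-j}$ --- equivalently, it reflects the sign flip $\check r_j=1+(t_j-t_{j+1})\delta_j$ versus $\hat r_j=1-(t_j-t_{j+1})\delta_j^\vee$, so the same two-line computation applied directly to $M^\vee$ with $\check r_j$ gives the minus. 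I would fix the precise index translation $s_j^\vee=s_{N+1-j}$, $\partial_j^\vee=-\partial_{N+1-j}$ by matching against the explicit actions (\ref{delta}) and (\ref{dual_delta}).

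The extension to $j=N$, i.e. to the full affine nil-Coxeter algebra $\mathbb{A}_N$, only works after taking the partial trace: in the ordered product $M=L_{iN}\cdots L_{i1}$ the columns $1$ and $N$ sit at opposite ends, so the local braiding does not apply to $M$ itself at $j=N$, which is exactly why (\ref{SactionM}) is stated only for $j\le N-1$. For the transfer matrix $H=\limfunc{Tr}_V M$, however, cyclicity of the trace restores the wrap-around: relation (\ref{SactionHE}) gives $\boldsymbol{s}_N H=H\boldsymbol{s}_N$, and unwinding $\boldsymbol{s}_N=s_N\hat r_N$ (left-multiplying by $s_N$ and using $s_N H s_N=s_N\!\cdot\! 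H$) yields $\hat r_N H=(s_N H)\hat r_N$, after which the identical substitution produces (\ref{Leibniz}) for $j=N$. The case of $E$ and $M'$ is identical.

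The remaining and genuinely computational part is the closed formula (\ref{dellHE}) for $\partial_j H$. Here I would compute $\partial_j H$ directly from the local structure of the $L$-operators: each $L_{ij}$ of (\ref{L}) is affine-linear in $t_j$, the dependence sitting only in the $(1,1)$ auxiliary entry proportional to $\sigma_j^-\sigma_j^+$, so writing $L_{ij}(t_j)=P_j-x_it_jQ_j$ the difference localises to the two neighbouring columns,
\begin{equation*}
L_{i,j+1}(t_{j+1})L_{ij}(t_j)-L_{i,j+1}(t_j)L_{ij}(t_{j+1})=x_i(t_j-t_{j+1})\bigl(Q_{j+1}P_j-P_{j+1}Q_j\bigr).
\end{equation*}
Dividing by $t_j-t_{j+1}$ and tracing over the auxiliary space leaves a two-column insertion in $H$, and the task is to rewrite this insertion in the form (\ref{dellHE}). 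I would do so by inserting the completeness relation $\sigma_j^+\sigma_j^-+\sigma_j^-\sigma_j^+=1$ at sites $j$ and $j+1$, evaluating the surviving channels with the explicit matrix elements (\ref{braidaction}), and recognising that sandwiching $H$ between the projectors $\sigma_j^{\pm}\sigma_j^{\mp}$ and $\sigma_{j+1}^{\mp}\sigma_{j+1}^{\pm}$ reproduces $Q_{j+1}P_j-P_{j+1}Q_j$ up to the scalar factors $1-xt_j$ and $1-xt_{j+1}$, which thereby appear as the denominators. The $E$-formula then follows from the analogous $L'$-computation, or equivalently via the level-rank relation $\Theta H(x|t)\Theta=E(x|-T)$ of (\ref{levelrankmom}). \textbf{The main obstacle} is precisely this last matching --- tracking which projector channels survive and how the non-polynomial denominators $1-xt_j,\,1-xt_{j+1}$ are generated while the overall operator stays polynomial --- whereas the Leibniz rules themselves are a short algebraic consequence of the braiding identity.
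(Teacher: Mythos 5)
Your proposal is correct and follows essentially the same route as the paper: the Leibniz rules are read off from the column braiding identity coming from (\ref{ybe}) combined with the factorisations $\hat{r}_{j}=1-(t_{j}-t_{j+1})\delta _{j}^{\vee }$ and $\check{r}_{j}=1+(t_{j}-t_{j+1})\delta _{j}$ of (\ref{rhat}), (\ref{rcheck}) (your $\mathcal{P}$-conjugation for the dual rule is an equivalent packaging of the paper's remark that $M^{\vee }$ is the ordered product in the $T$-parameters), and (\ref{dellHE}) is obtained by a local case-by-case analysis of the vertex weights, which is exactly what the paper does. Your handling of the affine case $j=N$ via (\ref{SactionHE}) and cyclicity of the trace is more explicit than anything the paper records, and it is sound.

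One concrete warning about the last step, where you should trust your own computation over the printed formula. Both of your intermediate claims check out: $\partial _{j}H$ is the trace with the two-column insertion $x(Q_{j+1}P_{j}-P_{j+1}Q_{j})$, and the projector sandwiches of $H$ divided by $1-xt_{j+1}$ and $1-xt_{j}$ reproduce exactly $Q_{j+1}P_{j}-P_{j+1}Q_{j}$ (the only nontrivial matching uses $\sigma _{j+1}^{-}\sigma _{j+1}^{+}-\sigma _{j}^{-}\sigma _{j}^{+}=\sigma _{j+1}^{-}\sigma _{j+1}^{+}\sigma _{j}^{+}\sigma _{j}^{-}-\sigma _{j+1}^{+}\sigma _{j+1}^{-}\sigma _{j}^{-}\sigma _{j}^{+}$, which follows from completeness at each site). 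Assembling the two, your method terminates at
\begin{equation*}
\partial _{j}H(x)=x\left[ \frac{\sigma _{j}^{+}\sigma _{j}^{-}H(x)\sigma
_{j+1}^{-}\sigma _{j+1}^{+}}{1-xt_{j+1}}-\frac{\sigma _{j}^{-}\sigma
_{j}^{+}H(x)\sigma _{j+1}^{+}\sigma _{j+1}^{-}}{1-xt_{j}}\right] ,
\end{equation*}
which differs from the printed (\ref{dellHE}) by the overall factor $x$. The discrepancy lies in the statement, not in your plan: for $N=2$ one computes directly $\partial _{1}H=x(\sigma _{2}^{-}\sigma _{2}^{+}-\sigma _{1}^{-}\sigma _{1}^{+})$, while the bracket above evaluates to $\sigma _{1}^{+}\sigma _{1}^{-}\sigma _{2}^{-}\sigma _{2}^{+}-\sigma _{1}^{-}\sigma _{1}^{+}\sigma _{2}^{+}\sigma _{2}^{-}=\sigma _{2}^{-}\sigma _{2}^{+}-\sigma _{1}^{-}\sigma _{1}^{+}$, with no factor of $x$. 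A similar bookkeeping correction appears in the dual Leibniz rule: when you do the matching you propose, conjugation by $\mathcal{P}$ yields $s_{j}^{\vee }=s_{N-j}$ and $\partial _{j}^{\vee }=-\partial _{N-j}$ (the transposition of $T_{j},T_{j+1}$ is the transposition of $t_{N-j},t_{N-j+1}$), not the printed $s_{N+1-j}$ and $-\partial _{N+1-j}$. So your argument proves (\ref{Leibniz}) as intended and a corrected version of (\ref{dellHE}); do not expect the uncorrected formula to emerge.
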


\begin{proof}
The cross-relation (\ref{Leibniz}) follows from (\ref{ybe}) and (\ref{rhat}%
), (\ref{rcheck}). Note that 
\begin{equation*}
M_{i}^{\vee }=L_{i1}(x_{i};T_{1})L_{i2}(x_{i};T_{2})\cdots
L_{iN}(x_{i};T_{N})
\end{equation*}%
whence we expressed the second relation in the divided difference operators
for the $T_{i}$'s instead of the $t_{i}$'s.

To derive (\ref{dellHE}) first note that both operators act by
multiplication on $\Lambda $ and the operators $\partial _{j}M,\partial
_{j}^{\vee }M^{\vee },\partial _{j}H,\partial _{j}E$ are therefore
well-defined. Consider the matrix elements of these operators in quantum
space, then one finds from the explicitly given weights in Figure \ref%
{fig:5vmodels} by a case-by-case discussion the given formulae. Note that
despite first appearances $\partial _{j}H,\partial _{j}E$ map polynomial
factors to polynomial factors because the projection operators $\sigma
_{j}^{+}\sigma _{j}^{-},\sigma _{j}^{-}\sigma _{j}^{+}$ ensure that either
the result is zero or the matrix elements of $\sigma _{j}^{+}\sigma
_{j}^{-}H(x)\sigma _{j+1}^{-}\sigma _{j+1}^{+}$ etc. contain $(1-xt_{j+1})$
etc. as a factor.
\end{proof}

\subsection{Quantum Knizhnik-Zamolodchikov equations}

Another way to express the relation between the representation of the affine
braid group on $\mathcal{V}$ and the representations of the affine
nil-Coxeter algebra on $\Lambda $ and $V^{\otimes N}$ is the following
observation.

\begin{lemma}[quantum KZ equations]
\label{lem:qKZ}Let $\Psi =\Psi (t_{1},\ldots ,t_{N})\in \mathcal{V}$ and
consider the actions of the braid matrices $\hat{r}_{j}$ from \ref%
{braidaction}, the actions of $\mathbb{\hat{S}}_{N},~\mathbb{A}_{N}$ on $%
\Lambda $ and the action from Prop \ref{prop:nilcoxrep}. Then the following
identities are equivalent:%
\begin{equation}
\hat{r}_{j}\Psi =s_{j}\Psi \qquad \Leftrightarrow \qquad \delta _{j}^{\vee
}\Psi =\partial _{j}\Psi ,\qquad j=1,2,\ldots ,N\;.  \label{qKZ}
\end{equation}
\end{lemma}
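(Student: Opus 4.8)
The plan is to reduce both sides of the claimed equivalence to a single $\Lambda$-linear identity in $\mathcal{V}$ by writing the operators $\hat{r}_j$ and $s_j$ in a common, parallel form. From the proof of Proposition \ref{prop:symmgroupaction}, equivalently from (\ref{rhat}), we already have the factorisation $\hat{r}_j = 1 - (t_j - t_{j+1})\,\delta_j^\vee$, in which $(t_j - t_{j+1})$ acts by multiplication on the polynomial factor $\Lambda$ while $\delta_j^\vee = \sigma_j^+\sigma_{j+1}^-$ acts on $V^{\otimes N}$. On the other hand, the defining formula (\ref{DBGG}) for the divided difference operator rearranges into the operator identity $s_j = 1 - (t_j - t_{j+1})\,\partial_j$ on $\Lambda$, hence on all of $\mathcal{V}$ once $s_j$ and $\partial_j$ are extended to act trivially on $V^{\otimes N}$ (and recalling from the text that $\partial_j$ does preserve $\Lambda$, so $\partial_j\Psi \in \mathcal{V}$). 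Thus both $\hat{r}_j$ and $s_j$ have the shape $1 - (t_j - t_{j+1})\,X$, differing only in whether $X = \delta_j^\vee$ or $X = \partial_j$.

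First I would subtract the two expressions applied to $\Psi$. Using the two factorisations above,
\[
\hat{r}_j \Psi - s_j \Psi = -(t_j - t_{j+1})\bigl(\delta_j^\vee \Psi - \partial_j \Psi\bigr),
\]
so that $\hat{r}_j\Psi = s_j\Psi$ holds if and only if $(t_j - t_{j+1})\bigl(\delta_j^\vee\Psi - \partial_j\Psi\bigr) = 0$ in $\mathcal{V}$. Since $\mathcal{V} = \Lambda\otimes V^{\otimes N}$ is a free module over the integral domain $\Lambda = \mathbb{Z}[t_1,\ldots,t_N]$ and $t_j - t_{j+1}$ is a nonzero element of $\Lambda$, multiplication by $t_j - t_{j+1}$ is injective on $\mathcal{V}$. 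Cancelling this factor yields $\delta_j^\vee\Psi = \partial_j\Psi$, and the converse follows at once by multiplying back through by $t_j - t_{j+1}$. This establishes the equivalence for each $j$.

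The one point that needs care is the affine index $j = N$ together with the bookkeeping of which tensor factor each operator acts on. For $j = N$ one reads all indices modulo $N$, so that $\hat{r}_N = 1 - (t_N - t_1)\,\delta_N^\vee$, while $\partial_N = \varpi^{-1}\partial_1\varpi = (t_N - t_1)^{-1}(1 - s_N)$ at level $\ell = 0$ gives exactly $s_N = 1 - (t_N - t_1)\,\partial_N$; the same cancellation argument then applies verbatim. I would also emphasise at the outset that $\delta_j^\vee$ acts only on $V^{\otimes N}$ whereas $\partial_j$ and $s_j$ act only on $\Lambda$, so the two factorisations are comparing the same multiplication operator $t_j - t_{j+1}$ against two operators living on different tensor factors; this is precisely what makes $\delta_j^\vee\Psi = \partial_j\Psi$ a genuine coupling condition on $\Psi$ rather than an operator identity. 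I do not expect any real obstacle beyond this observation: once both sides are brought to the common form, the argument is pure cancellation in a free module over an integral domain.
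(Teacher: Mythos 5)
Your proof is correct and is essentially the paper's own argument: the paper likewise starts from $\hat{r}_j = 1-(t_j-t_{j+1})\delta_j^\vee$, forms $[\hat{r}_j - s_j]\Psi = [1-s_j-(t_j-t_{j+1})\delta_j^\vee]\Psi$, and "divides by $(t_j-t_{j+1})$", which is exactly your cancellation step in the rearranged form $s_j = 1-(t_j-t_{j+1})\partial_j$. Your extra justification of the division (torsion-freeness of $\mathcal{V}$ over $\Lambda$, using that $\partial_j\Psi$ stays in $\mathcal{V}$) and the explicit check of the affine case $j=N$ only make precise what the paper leaves implicit.
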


\begin{proof}
Recall the formula $\hat{r}_{j}:=p_{j+1,j}r_{j+1,j}=1-(t_{j}-t_{j+1})\delta
_{j}^{\vee }$. Then%
\begin{equation*}
0=[\hat{r}_{j}-s_{j}]\Psi =[1-s_{j}-(t_{j}-t_{j+1})\delta _{j}^{\vee }]\Psi
\;.
\end{equation*}%
Dividing by $(t_{j}-t_{j+1})$ yields the asserted equivalence.
\end{proof}

\begin{remark}
We shall refer to (\ref{qKZ}) as \emph{quantum Knizhnik-Zamolodchikov} (qKZ)
equations and the space of $\Psi $'s satisfying (\ref{qKZ}) as solution
space. N.B. (\ref{qKZ}) is similar to the equation satisfied by certain
sine-Gordon QFT correlation functions as first pointed out by Smirnov \cite%
{Smirnov}. This is a simpler version of the now more common difference
version due to Frenkel and Reshetikhin \cite{FR} which is often referred to
as qKZ equation in the literature.
\end{remark}

\begin{corollary}
The transfer matrices $H,E$ preserve the solution space of the qKZ equation (%
\ref{qKZ}).
\end{corollary}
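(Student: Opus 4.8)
The plan is to identify the solution space of the qKZ equations with the $\hat{S}_N$-invariants of Proposition \ref{prop:symmgroupaction}, and then read off the claim from the commutation relations (\ref{SactionHE}).

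First I would recast the qKZ condition group-theoretically. Since $\boldsymbol{s}_j = s_j \hat{r}_j$ and each $s_j$ acts as an involution on $\mathcal{V}$ (it merely transposes $t_j,t_{j+1}$ in $\Lambda$ and fixes $V^{\otimes N}$), the equation $\hat{r}_j\Psi = s_j\Psi$ is equivalent to $\boldsymbol{s}_j\Psi = s_j\hat{r}_j\Psi = s_j^2\Psi = \Psi$, and conversely $\boldsymbol{s}_j\Psi=\Psi$ forces $\hat{r}_j\Psi = s_j^{-1}\Psi = s_j\Psi$. Running this over $j=1,\ldots,N$ shows that the solution space of (\ref{qKZ}) is exactly the fixed-point set $\{\Psi : \boldsymbol{s}_j\Psi = \Psi,\ j=1,\ldots,N\}$ of the affine symmetric group action constructed in Proposition \ref{prop:symmgroupaction}.

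Given this reformulation, the corollary is forced by (\ref{SactionHE}). For $\Psi$ in the solution space one computes, for each $j$,
\[
\boldsymbol{s}_j(H\Psi) = (H\boldsymbol{s}_j)\Psi = H(\boldsymbol{s}_j\Psi) = H\Psi,
\]
so $H\Psi$ is again $\hat{S}_N$-invariant and hence solves (\ref{qKZ}); the argument for $E$ is identical using $\boldsymbol{s}_jE=E\boldsymbol{s}_j$. Because (\ref{SactionHE}) is valid for the full range $j=1,\ldots,N$ and persists for the $q$-deformed transfer matrices, this covers both the classical and the quantum settings and applies for every value of the spectral parameter.

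I do not anticipate a genuine obstacle here: the entire content lies in the reformulation of the previous paragraph, and the one point to watch is simply that the index range $j=1,\ldots,N$ of (\ref{SactionHE}) --- including the affine generator $\boldsymbol{s}_N$, whose action carries the $q$-dependence --- matches the range appearing in the qKZ equations. Once the solution space is recognised as the fixed locus of a group that commutes with $H$ and $E$, preservation of that locus is automatic.
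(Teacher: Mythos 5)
Your proof is correct and is essentially the paper's own argument in different clothing: the paper computes $\hat{r}_{j}(H\Psi )=(s_{j}H)\hat{r}_{j}\Psi =s_{j}(H\Psi )$ directly from the braiding relation, which is exactly what your identity $\boldsymbol{s}_{j}(H\Psi )=H(\boldsymbol{s}_{j}\Psi )=H\Psi$ unpacks to once $\boldsymbol{s}_{j}=s_{j}\hat{r}_{j}$ is expanded. Your reformulation of the solution space as the fixed locus of the $\mathbb{\hat{S}}_{N}$-action is a clean way to package the same use of (\ref{SactionHE}), but it introduces no new ingredient.
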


\begin{proof}
Let $\Psi $ be a solution of (\ref{qKZ}). Then $\hat{r}_{j}(H\Psi )=(s_{j}H)%
\hat{r}_{j}\Psi =s_{j}(H\Psi )$. The same computation applies to $E$.
\end{proof}

\subsection{Commutation relations of the Yang-Baxter algebras}

The nil-Coxeter algebra allows us to express the commutation relations of
the column Yang-Baxter algebra (\ref{col_yba}) in a simplified form.

\begin{lemma}
\label{lem:cyba_BGG}We have the following commutation relations of the
column Yang-Baxter algebra in terms of divided difference operators%
\begin{equation}
\left\{ 
\begin{array}{l}
a_{j+1}b_{j}=-\partial _{j}b_{j+1}a_{j}, \\ 
c_{j+1}a_{j}=\partial _{j}a_{j+1}c_{j}, \\ 
d_{j+1}b_{j}=\partial _{j}b_{j+1}d_{j}, \\ 
c_{j+1}d_{j}=-\partial _{j}d_{j+1}c_{j}, \\ 
c_{j+1}b_{j}=\partial _{j}a_{j+1}d_{j}=-\partial _{j}d_{j+1}a_{j}%
\end{array}%
\right.
\end{equation}%
and%
\begin{equation}
\mathfrak{x}_{j+1}\mathfrak{x}_{j}=\mathfrak{x}_{j}\mathfrak{x}_{j+1},\qquad 
\mathfrak{x}=a,b,c,d\;.
\end{equation}
\end{lemma}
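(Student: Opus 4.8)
The plan is to read off the quadratic (RTT) relations of the column Yang-Baxter algebra directly from the column Yang-Baxter equation and then repackage them using the divided difference operator $\partial_j$.

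First I would specialise the second identity in (\ref{mom_ybe}), namely $r_{jj'}m_j m_{j'}=m_{j'}m_j r_{jj'}$, to the neighbouring case $j'=j+1$, where $r_{j,j+1}$ is the explicit $4\times 4$ matrix from the last row of (\ref{ybesolns}) with $e=t_j-t_{j+1}$. Writing both sides as operators on $V[t_j]\otimes V[t_{j+1}]\otimes W_n$ and using the decomposition (\ref{col_yba}) of each $m_j$ into entries $a_j,b_j,c_j,d_j\in\limfunc{End}W_n$, I would compute the matrix element of each side between the auxiliary basis vectors $v_\gamma\otimes v_\delta$ and $v_\alpha\otimes v_\beta$. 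Since $r_{j,j+1}$ has only five nonzero entries, each of the sixteen resulting identities is a short quadratic relation; the four ``diagonal'' choices with $(\alpha\beta)=(\gamma\delta)\in\{00,11\}$ immediately give the commutativity $\mathfrak{x}_{j+1}\mathfrak{x}_j=\mathfrak{x}_j\mathfrak{x}_{j+1}$ for $\mathfrak{x}=a,b,c,d$, which is the second displayed line of the lemma.

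The remaining off-diagonal matrix elements produce relations of two shapes: ``straightening'' relations carrying no coefficient, such as $c_j a_{j+1}=c_{j+1}a_j$, $c_j b_{j+1}=c_{j+1}b_j$ and $d_j b_{j+1}=d_{j+1}b_j$; and genuinely noncommutative relations carrying a factor $t_j-t_{j+1}$, such as $b_j a_{j+1}=b_{j+1}a_j+(t_j-t_{j+1})\,a_{j+1}b_j$. The observation that converts these into the stated form is that each entry depends on a single equivariant parameter ($a_j$ on $t_j$, and so on), so the simple reflection $s_j$ merely interchanges the subscripts $j\leftrightarrow j+1$ while leaving the $\limfunc{End}W_n$ structure intact; consequently $\partial_j=(t_j-t_{j+1})^{-1}(1-s_j)$ from (\ref{DBGG}) acts on an operator-valued product simply by relabelling. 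For instance $-\partial_j(b_{j+1}a_j)=(t_j-t_{j+1})^{-1}(b_j a_{j+1}-b_{j+1}a_j)$, and the coefficient relation above yields exactly $a_{j+1}b_j$, the first line of the lemma. The three middle lines and the two expressions for $c_{j+1}b_j$ follow identically, each time combining one coefficient-carrying relation (to evaluate $\pm\partial_j$ of the product) with one straightening relation (to identify the answer with the stated left-hand side); e.g. $\partial_j(a_{j+1}c_j)=c_j a_{j+1}=c_{j+1}a_j$.

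The computation is almost entirely bookkeeping, and the one point that must be handled with care is the conceptual remark just made: $\partial_j$ is applied to the parameter dependence of the $\limfunc{End}W_n$-valued coefficients, so it distributes over their product as the swap $j\leftrightarrow j+1$ rather than through the operator Leibniz rule (\ref{LeibnizBGG}). Once this is fixed, each of the five stated identities is matched to one or two of the sixteen component relations, and nothing beyond the elementary extraction of those components from the explicit $r$-matrix is required.
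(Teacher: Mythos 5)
Your proposal is correct and follows essentially the same route as the paper: both read off the component (RTT) relations of the column Yang--Baxter equation $r_{j,j+1}m_{j}m_{j+1}=m_{j+1}m_{j}r_{j,j+1}$ and then rewrite the resulting differences as divided differences, using precisely the key observation you flag -- that each entry $a_{j},b_{j},c_{j},d_{j}$ carries only the parameter $t_{j}$, so $\partial _{j}$ acts on products of entries by the swap $j\leftrightarrow j+1$ rather than through the operator Leibniz rule. The paper merely packages the identical computation via the braided matrix $\hat{r}_{j}=1-(t_{j}-t_{j+1})\delta _{j}^{\vee }$ applied to constant vectors $F=1\otimes v$ before taking components, which is a cosmetic difference.
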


\begin{proof}
By making use of (\ref{ybe}) we find that 
\begin{equation*}
\hat{r}_{j}m_{j+1}(x|t_{j+1})m_{j}(x|t_{j})=m_{j+1}(x|t_{j})m_{j}(x|t_{j+1})%
\hat{r}_{j}\;.
\end{equation*}%
Suppose $F=1\otimes v\in \mathcal{V}$ with $v\in V^{\otimes N}$. Exploiting (%
\ref{rhat}) we deduce%
\begin{multline*}
\frac{1-s_{j}}{t_{j}-t_{j+1}}m_{j+1}(x|t_{j+1})m_{j}(x|t_{j})F= \\
\delta^\vee
_{j}m_{j+1}(x|t_{j+1})m_{j}(x|t_{j})F-m_{j+1}(x|t_{j})m_{j}(x|t_{j+1})%
\delta^\vee _{j}F
\end{multline*}%
Choosing $v=v_{\varepsilon _{1}}\otimes \cdots \otimes v_{\varepsilon
_{j}}\otimes v_{\varepsilon _{j+1}}\cdots \otimes v_{\varepsilon _{N}}$ with 
$\varepsilon _{j},\varepsilon _{j+1}=0,1$ we obtain the asserted commutation
relations.
\end{proof}

In contrast the $R$-matrices from (\ref{ybe}) for the row monodromy matrix (%
\ref{rowmom}) induce representations of the nil-Hecke algebra.

\begin{proposition}
Set $\hat{R}_{i}=p_{i+1,i}R_{i+1,i}$. Then the map 
\begin{equation}
\pi _{i}\mapsto \frac{1_{i+1,i}-\hat{R}_{i}}{1-x_{i}/x_{i+1}}=\sigma
_{i}^{+}\sigma _{i}^{-}-\sigma _{i}^{+}\sigma _{i+1}^{-},\qquad i=1,\ldots
,n-1  \label{tildepi}
\end{equation}%
is an algebra homomorphism $\mathbb{H}_{n}^{\prime }\rightarrow \limfunc{End}%
V^{\otimes n}$. Similarly, we have that%
\begin{equation}
\pi _{i}\mapsto \frac{1_{i+1,i}-\hat{R}_{i}^{\prime }}{1-x_{i+1}/x_{i}}%
=\sigma _{i+1}^{+}\sigma _{i+1}^{-}-\sigma _{i}^{+}\sigma _{i+1}^{-},\qquad
i=1,\ldots ,k-1  \label{tildepi'}
\end{equation}%
is an algebra homomorphism $\mathbb{H}_{k}^{\limfunc{fin}}\rightarrow 
\limfunc{End}V^{\otimes k}$.
\end{proposition}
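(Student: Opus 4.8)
The plan is to mirror the proof of Proposition~\ref{prop:nilcoxrep}: first read off the braiding operator $\hat{R}_i$ from the $R$-row of the table (\ref{ybesolns}), and then verify that its normalised difference satisfies the relations (\ref{nilHecke}). Concretely I would substitute $i\to i+1$, $i'\to i$ into the entries of $R_{ii'}$ and compute $\hat{R}_i=p_{i+1,i}R_{i+1,i}$ in the standard basis of $V_{i+1}\otimes V_i$. Exactly as one finds $\hat{r}_j=1-(t_j-t_{j+1})\delta_j^{\vee}$ in Proposition~\ref{prop:nilcoxrep}, this yields $\hat{R}_i=1_{i+1,i}-(1-x_i/x_{i+1})\pi_i$ with $\pi_i=\sigma_i^{+}\sigma_i^{-}-\sigma_i^{+}\sigma_{i+1}^{-}$, confirming the operator displayed in (\ref{tildepi}). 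The essential observation at this stage is that the spectral prefactor $1-x_i/x_{i+1}$ cancels, so the image is a genuine $x$-independent element of $\limfunc{End}V^{\otimes n}$ acting only on the $i$th and $(i+1)$st tensor factors.

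For the quadratic relation I would use that $\hat{R}_i$ is diagonalisable with the two eigenvalues $1$ and $x_i/x_{i+1}$, hence satisfies the Hecke-type identity $(\hat{R}_i-1)(\hat{R}_i-x_i/x_{i+1})=0$. Substituting $\hat{R}_i=1_{i+1,i}-(1-x_i/x_{i+1})\pi_i$ and removing the overall scalar turns this into the quadratic relation for $\pi_i$; a direct check from $\sigma_i^{\pm}\sigma_i^{\pm}=0$ and $\sigma_i^{+}\sigma_i^{-}\sigma_i^{+}=\sigma_i^{+}$ gives the same outcome, namely $\pi_i^{2}=\pi_i$. Thus the images are idempotent, i.e. they realise the generators in the $\bar{\pi}$-presentation of (\ref{nilHecke}) with $\bar{\pi}_i=\pi_i+1$. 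The commutation relation $\pi_i\pi_j=\pi_j\pi_i$ for $|i-j|>1$ is immediate, since then the two operators act on disjoint pairs of tensor factors.

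The braid relation $\pi_i\pi_{i+1}\pi_i=\pi_{i+1}\pi_i\pi_{i+1}$ is the only substantial point and is where I expect the real work to lie. My plan is to prove it by a finite computation on three consecutive factors $V_i\otimes V_{i+1}\otimes V_{i+2}$, reading $\pi_i$ off its explicit action: $\sigma_i^{+}\sigma_i^{-}$ is the projector onto a $1$ in slot $i$, while $\sigma_i^{+}\sigma_{i+1}^{-}$ shifts a single $1$ one step left from slot $i+1$ to slot $i$. Evaluating both triple products on the eight $01$-words of length three, the nonzero outputs come only from words with a single relevant descent, and the diagonal and shift contributions are seen to agree termwise on the two sides. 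Conceptually this braid relation is the spectral degeneration of the braided Yang--Baxter relation obeyed by $\hat{R}_i$ (the analogue of Lemma~\ref{prop:braidaction}(ii)); the subtlety I would have to control is that the latter still carries the spectral parameters $x_i$, so one must confirm that the $x$-independent combination $\pi_i$ inherits a constant braid relation. The explicit three-site evaluation settles this unconditionally, which is why I would carry it out by hand rather than through the spectral identity.

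Finally, for the homomorphism (\ref{tildepi'}) onto $\limfunc{End}V^{\otimes k}$ I would avoid repeating the computation. Repeating the first three steps with the $R'$-row of (\ref{ybesolns}) and $\hat{R}_i'=p_{i+1,i}R'_{i+1,i}$ produces $\pi_i=\sigma_{i+1}^{+}\sigma_{i+1}^{-}-\sigma_i^{+}\sigma_{i+1}^{-}$ and the same three relations; alternatively, one can transport the representation (\ref{tildepi}) through the reversal and inversion involutions using the level-rank identity (\ref{levelrankmom}) that interchanges $L$ and $L'$, together with the compatibility $\mathcal{P}\delta_{N-j}\mathcal{P}=\delta_j^{\vee}$ recorded in Proposition~\ref{prop:nilcoxrep}, since conjugation by $\mathcal{P}$ preserves the nil-Hecke relations. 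Either route yields the claimed algebra homomorphism.
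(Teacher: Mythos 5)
Your proposal is correct and is essentially the paper's own proof: the paper disposes of this proposition with the single line ``A straightforward computation'', and your plan (read off $\hat{R}_i$ from (\ref{ybesolns}), verify $\hat{R}_i=1_{i+1,i}-(1-x_i/x_{i+1})\pi_i$, check idempotency, check the braid relation by direct evaluation on the eight basis vectors of three consecutive factors, and locality for $|i-j|>1$) is exactly that computation carried out; your observation that the images satisfy $\pi_i^{2}=+\pi_i$ rather than $\pi_i^{2}=-\pi_i$, i.e.\ that they realise the $\bar{\pi}$-presentation of (\ref{nilHecke}), is a correct and genuinely needed clarification of the otherwise undefined symbol $\mathbb{H}_n^{\prime}$.

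One caveat on your final paragraph: the claim that one can ``alternatively'' obtain (\ref{tildepi'}) by conjugating (\ref{tildepi}) with $\Theta=\mathcal{PC}$ is not right as stated. A direct computation gives $\Theta\left(\sigma_i^{+}\sigma_i^{-}-\sigma_i^{+}\sigma_{i+1}^{-}\right)\Theta=\sigma_{j+1}^{-}\sigma_{j+1}^{+}-\sigma_j^{+}\sigma_{j+1}^{-}$ with $j=n-i$, whose diagonal part is the projector onto $v_0$, not $v_1$, in slot $j+1$; so plain conjugation produces a perfectly good nil-Hecke representation, but not the operators of (\ref{tildepi'}) --- to pass between the two one needs the extra twist $\pi\mapsto-\bar{\pi}$ that the paper itself builds into its duality formula (\ref{dual_pis}). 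Hence for the second homomorphism you should rely on your first route, repeating the computation with $R^{\prime}$, which is what the paper does.
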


\begin{proof}
A straightforward computation.
\end{proof}

Analogous to the column case considered above, we now express also the
commutation relations of the row Yang-Baxter algebra (\ref{row_yba}) in
terms of divided difference operators.

Instead of $\Lambda $ we now consider the polynomial ring $\mathcal{X}_{n}=%
\mathbb{Z}[x_{1},\ldots ,x_{n}]$ where the $x_{i}$'s are the so-called
spectral variables. As before there is a natural action of the symmetric
group $S_{n}$ on $\mathcal{X}_{n}$ by permuting the $x_{i}$'s and we denote
by $\{\tilde{s}_{i}\}_{i=1}^{n-1}$ the elementary transpositions which
exchange $x_{i}$ and $x_{i+1}$. Let $\tilde{\nabla}_{i}$ be the BGG or
Demazure operators with respect to the $x_{i}$ variables. We now have the
following simple but important lemma.

\begin{lemma}
\label{Lem:ybaDemazure}Let $f\in \mathcal{X}_{n}\otimes V^{\otimes N}$ and
suppose $\tilde{s}_{i}f=f$ for all $i=1,\ldots ,n-1$. Then we have the
commutation relations%
\begin{eqnarray}
A(x_{i+1})B(x_{i})f &=&\tilde{\nabla}_{i}B(x_{i+1})A(x_{i})f  \notag \\
D(x_{i+1})B(x_{i})f &=&-\tilde{\nabla}_{i}B(x_{i+1})A(x_{i})f  \notag \\
C(x_{i+1})B(x_{i})f &=&\tilde{\nabla}_{i}D(x_{i+1})A(x_{i})f\;.
\label{ybaDemazure}
\end{eqnarray}

\begin{proof}
The proof is analogous to the one for the column Yang-Baxter algebra using (%
\ref{tildepi}) instead.
\end{proof}
\end{lemma}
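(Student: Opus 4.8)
The plan is to follow, almost verbatim, the argument used for the column Yang-Baxter algebra in Lemma~\ref{lem:cyba_BGG}, with the row $R$-matrix of (\ref{mom_ybe}) playing the role of the column $r$-matrix and the spectral divided-difference operator $\tilde\nabla_i$ replacing $\partial_j$. First I would braid two auxiliary rows: taking the first Yang-Baxter equation in (\ref{mom_ybe}) with indices $(i+1,i)$ and composing on the left with the flip $p_{i+1,i}$ gives, in exact analogy with the identity $\hat r_j m_{j+1}(t_{j+1})m_j(t_j)=m_{j+1}(t_j)m_j(t_{j+1})\hat r_j$ that opens the proof of Lemma~\ref{lem:cyba_BGG},
\begin{equation*}
\hat R_i\,M_{i+1}(x_{i+1})M_i(x_i)=M_{i+1}(x_i)M_i(x_{i+1})\,\hat R_i,
\end{equation*}
where $\hat R_i=p_{i+1,i}R_{i+1,i}$ and passing the flip through the product interchanges the two spectral parameters $x_i\leftrightarrow x_{i+1}$.

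Next I would insert the nil-Hecke form of the braid matrix. The Proposition establishing (\ref{tildepi}) gives $\hat R_i=1_{i+1,i}-(1-x_i/x_{i+1})(\sigma_i^+\sigma_i^- - \sigma_i^+\sigma_{i+1}^-)$, with the $x$-independent operator $\sigma_i^+\sigma_i^- - \sigma_i^+\sigma_{i+1}^-$ acting only on the auxiliary factors $i,i+1$. Writing $P=M_{i+1}(x_{i+1})M_i(x_i)$, the swapped product is $P^{\mathrm{sw}}=\tilde s_iP\tilde s_i^{-1}$, so substituting $\hat R_i$ and cancelling the leading terms $P,P^{\mathrm{sw}}$ turns the braided relation into
\begin{equation*}
P-P^{\mathrm{sw}}=(1-x_i/x_{i+1})\bigl[(\sigma_i^+\sigma_i^- - \sigma_i^+\sigma_{i+1}^-)P-P^{\mathrm{sw}}(\sigma_i^+\sigma_i^- - \sigma_i^+\sigma_{i+1}^-)\bigr].
\end{equation*}
Applying this to an $f$ with $\tilde s_if=f$ replaces $(P-P^{\mathrm{sw}})f$ by $(1-\tilde s_i)(Pf)$, and dividing by $x_i-x_{i+1}$ produces $\tilde\nabla_i(Pf)$ on the left. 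This is precisely the point at which the hypothesis $\tilde s_if=f$ enters, mirroring the appearance of $\partial_j=(1-s_j)/(t_j-t_{j+1})$ in the column computation.

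Finally I would read off the three identities by evaluating auxiliary matrix elements $\langle\alpha\beta|\,\cdot\,|\gamma\delta\rangle$ in the factors $i,i+1$, using $\langle\alpha\beta|P|\gamma\delta\rangle=M_{\beta\delta}(x_{i+1})M_{\alpha\gamma}(x_i)$ from the decomposition (\ref{row_yba}) together with the rank-one action of $\sigma_i^+\sigma_i^- - \sigma_i^+\sigma_{i+1}^-$ on $\mathrm{span}\{v_0\otimes v_1,\,v_1\otimes v_0\}$. Selecting the components in which $B(x_i)$ occupies the right-hand tensor slot produces $A(x_{i+1})B(x_i)f$, $D(x_{i+1})B(x_i)f$ and $C(x_{i+1})B(x_i)f$ on the left, with $\tilde\nabla_i$ acting on $B(x_{i+1})A(x_i)f$ in the first two cases and on $D(x_{i+1})A(x_i)f$ in the third; equivalently, one may extract the underlying $RTT$ relations directly and then symmetrise using $\tilde s_if=f$.

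The main obstacle will be the normalisation and the signs rather than the structure. Because the row matrix $R_{ii'}$ in (\ref{ybesolns}) depends on the \emph{multiplicative} ratio $x_{i'}/x_i$ — carrying the nontrivial entries $d=e=f=x_{i'}/x_i$, unlike the purely additive column entry $e=t_j-t_{j'}$ — the prefactor $1-x_i/x_{i+1}$ produced by $\hat R_i$ does not combine with $1/(x_i-x_{i+1})$ to yield the additive BGG operator on the nose, and a short computation (e.g. from the component relation $B(x_i)A(x_{i+1})=B(x_{i+1})A(x_i)+(1-x_{i+1}/x_i)A(x_{i+1})B(x_i)$) shows a residual monomial factor in $x_i$. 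I would therefore be careful to pin down the precise normalisation and sign of $\tilde\nabla_i$ in the spectral variables so that exactly $\tilde\nabla_i$ survives, and to track the lone sign reversal distinguishing the $A$- and $D$-relations together with the $C\!\to\!(D,A)$ mixing, both of which originate in these off-diagonal $R$-matrix weights.
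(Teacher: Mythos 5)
Your proposal follows exactly the paper's intended route: the paper's proof is literally ``analogous to the one for the column Yang-Baxter algebra using (\ref{tildepi}) instead'', i.e.\ braid the two auxiliary rows via the Yang-Baxter equation (\ref{mom_ybe}), substitute $\hat R_i = 1-(1-x_i/x_{i+1})\bigl(\sigma_i^+\sigma_i^--\sigma_i^+\sigma_{i+1}^-\bigr)$, apply to an $\tilde s_i$-symmetric $f$, and read off auxiliary-space components, which is precisely what you do. Your closing caution about normalisation is also well placed: the operator that actually survives is the isobaric (Demazure-type) operator $-\tilde\partial_i\, x_{i+1}$ carrying a monomial weight, not the bare divided difference, which is consistent with the paper's loose description of $\tilde\nabla_i$ as a ``BGG or Demazure'' operator in the $x$-variables.
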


\begin{lemma}
The column monodromy matrix (\ref{colmom}) obeys the cross relations%
\begin{equation}
\tilde{\pi}_{i}m_{j}=\left( \tilde{\nabla}_{i}m_{j}\right) +(\tilde{s}%
_{i}m_{j})\tilde{\pi}_{i}  \label{Leibnizx}
\end{equation}%
with $\tilde{\nabla}_{i}=-\tilde{s}_{i}-\tilde{\partial}_{i}x_{i+1}$. The
analogous cross-relation holds for $m_{j}^{\prime }$ but with $\tilde{\nabla}%
_{i}^{\prime }=\tilde{s}_{i}-\tilde{\partial}_{i}x_{i}$.
\end{lemma}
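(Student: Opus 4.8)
The plan is to read this statement as the spectral-variable counterpart of the row Leibniz rules in~(\ref{Leibniz}) and to prove it by the same two-move template: first extract an intertwining identity from the Yang-Baxter equation, then substitute the definition~(\ref{tildepi}) of $\tilde{\pi}_i$ and rearrange. The operator $\tilde{\pi}_i=\frac{1_{i+1,i}-\hat{R}_i}{1-x_i/x_{i+1}}$ is built from the row $R$-matrix $\hat{R}_i=p_{i+1,i}R_{i+1,i}$ braiding the auxiliary factors $i$ and $i+1$ of $W_n$, while the column monodromy $m_j=L_{nj}\cdots L_{1j}$ of~(\ref{colmom}) is an ordered product of $L$-operators sitting in exactly those rows; the first Yang-Baxter equation in~(\ref{ybe}) is therefore precisely the tool needed to move $\hat{R}_i$ through $m_j$.

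First I would establish the intertwining identity
\[
\hat{R}_i\,m_j=(\tilde{s}_i m_j)\,\hat{R}_i,
\]
where $\tilde{s}_i m_j$ means $m_j$ with its spectral coefficients $x_i,x_{i+1}$ interchanged. This is the verbatim column analogue of the computation already performed for $\hat{r}_j$ and the row monodromy $M$ in the proof of~(\ref{SactionM}): since $\hat{R}_i$ is trivial outside the factors $i,i+1$ it commutes past every $L_{i'j}$ with $i'\neq i,i+1$, and on the surviving adjacent pair one applies $R_{i+1,i}L_{i+1,j}L_{ij}=L_{ij}L_{i+1,j}R_{i+1,i}$ and then the flip $p_{i+1,i}$, which turns $L_{ij}L_{i+1,j}$ into the same product with $x_i$ and $x_{i+1}$ exchanged.

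Next I would substitute the rearranged definition $\hat{R}_i=1-(1-x_i/x_{i+1})\tilde{\pi}_i$ into this identity. As $1-x_i/x_{i+1}$ is a scalar function of the $x$'s it passes freely through the constant matrix $\tilde{\pi}_i$ and the coefficients of $m_j$, so solving for $\tilde{\pi}_i m_j$ yields
\[
\tilde{\pi}_i\,m_j=\frac{m_j-\tilde{s}_i m_j}{1-x_i/x_{i+1}}+(\tilde{s}_i m_j)\,\tilde{\pi}_i.
\]
It then remains to recognise the quotient as $\tilde{\nabla}_i m_j$. Writing $1-x_i/x_{i+1}=(x_{i+1}-x_i)/x_{i+1}$ rewrites it as $-x_{i+1}\tilde{\partial}_i m_j$, and the Leibniz rule $\tilde{\partial}_i(x_{i+1}f)=(\tilde{\partial}_i x_{i+1})f+(\tilde{s}_i x_{i+1})\tilde{\partial}_i f$ with $\tilde{\partial}_i x_{i+1}=-1$ and $\tilde{s}_i x_{i+1}=x_i$ gives $-x_{i+1}\tilde{\partial}_i m_j=(-\tilde{s}_i-\tilde{\partial}_i x_{i+1})m_j=\tilde{\nabla}_i m_j$, which is the asserted cross-relation~(\ref{Leibnizx}).

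Finally, the statement for $m_j'$ follows by rerunning the same two moves with the $L'$-operators and the $R'$-matrix in place of $L$ and $R$. The one structural difference is that the corresponding definition~(\ref{tildepi'}) of $\tilde{\pi}_i$ carries the denominator $1-x_{i+1}/x_i$ rather than $1-x_i/x_{i+1}$, and this reversal of the spectral ratio is what converts the divided-difference quotient into the combination recorded as $\tilde{\nabla}_i'=\tilde{s}_i-\tilde{\partial}_i x_i$. I expect the genuine obstacle to lie not in the intertwining step (routine, being an exact analogue of an existing computation) nor in the algebraic rearrangement, but in this last identification: keeping straight the signs and the asymmetric roles of $x_i$ versus $x_{i+1}$ when turning the quotient into $\tilde{\nabla}_i$, and checking that the $R\leftrightarrow R'$ exchange flips every such convention consistently so that $\tilde{\nabla}_i$ and $\tilde{\nabla}_i'$ emerge exactly as stated.
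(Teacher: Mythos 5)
Your strategy is exactly the paper's: its two-line proof cites precisely the intertwining identity $\hat{R}_{i}L_{i+1j}(x_{i+1}|t)L_{ij}(x_{i}|t)=L_{i+1j}(x_{i}|t)L_{ij}(x_{i+1}|t)\hat{R}_{i}$ coming from (\ref{ybe}) together with the explicit form (\ref{ybesolns}) of $R,R'$, and your three moves (extend the intertwining to $\hat{R}_{i}m_{j}=(\tilde{s}_{i}m_{j})\hat{R}_{i}$, substitute $\hat{R}_{i}=1-(1-x_{i}/x_{i+1})\tilde{\pi}_{i}$, identify the quotient) are the natural way to fill in that sketch. Your unprimed case is complete and correct: with $\tilde{\partial}_{i}=(x_{i}-x_{i+1})^{-1}(1-\tilde{s}_{i})$ one has
\[
\frac{m_{j}-\tilde{s}_{i}m_{j}}{1-x_{i}/x_{i+1}}=-x_{i+1}\tilde{\partial}_{i}m_{j}
=\bigl(-\tilde{s}_{i}-\tilde{\partial}_{i}x_{i+1}\bigr)m_{j}=\tilde{\nabla}_{i}m_{j}\;,
\]
exactly as you argue.

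The gap sits in the primed case, i.e.\ in the one step you explicitly defer ("keeping straight the signs \ldots so that $\tilde{\nabla}_{i}'$ emerges exactly as stated"): it does not emerge as stated. Rerunning your two moves with the definition (\ref{tildepi'}), $\hat{R}_{i}'=1-(1-x_{i+1}/x_{i})\tilde{\pi}_{i}'$, and the same-shaped intertwining $\hat{R}_{i}'m_{j}'=(\tilde{s}_{i}m_{j}')\hat{R}_{i}'$ (the derivation from the primed Yang--Baxter equation is identical to the unprimed one, so the swap lands on the same side), the quotient is
\[
\frac{m_{j}'-\tilde{s}_{i}m_{j}'}{1-x_{i+1}/x_{i}}
=x_{i}\,\tilde{\partial}_{i}m_{j}'
=\bigl(\tilde{\partial}_{i}x_{i}-\tilde{s}_{i}\bigr)m_{j}'\;,
\]
whereas the stated $\tilde{\nabla}_{i}'=\tilde{s}_{i}-\tilde{\partial}_{i}x_{i}$ equals $-x_{i}\tilde{\partial}_{i}$: the two differ by an overall sign. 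So either the lemma's $\tilde{\nabla}_{i}'$ carries a sign error and should read $\tilde{\partial}_{i}x_{i}-\tilde{s}_{i}$, or the intended primed generator is the \emph{negative} of the matrix in (\ref{tildepi'}); note that the unprimed and primed statements cannot both hold with (\ref{tildepi}) and (\ref{tildepi'}) taken literally, since the two quotients come out as $-x_{i+1}\tilde{\partial}_{i}$ and $+x_{i}\tilde{\partial}_{i}$ respectively. (That signs are delicate here is visible independently: the matrices on the right-hand sides of (\ref{tildepi}) and (\ref{tildepi'}) square to $+$themselves, whereas the generators satisfy $\pi_{i}^{2}=-\pi_{i}$, so as written they represent $\bar{\pi}_{i}$ rather than $\pi_{i}$.) Your method is the right one, but to close the proof you must actually perform this last identification and either correct the sign of $\tilde{\nabla}_{i}'$ or state the convention for $\tilde{\pi}_{i}'$ that removes the discrepancy; asserting that the $R\leftrightarrow R'$ exchange "flips every convention consistently" is precisely the point that fails.
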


\begin{proof}
The cross-relation (\ref{Leibnizx}) is a consequence of (\ref{mom_ybe}),%
\begin{equation*}
\hat{R}%
_{i}L_{i+1j}(x_{i+1}|t)L_{ij}(x_{i}|t)=L_{i+1j}(x_{i}|t)L_{ij}(x_{i+1}|t)%
\hat{R}_{i}
\end{equation*}%
and the explicit form of the matrices $R,R^{\prime }$ stated earlier in Prop %
\ref{prop:ybe}; see (\ref{ybesolns}).
\end{proof}

\subsection{Representations of the nil-Hecke algebra in quantum space}

We now define a multi-parameter representation of the nil-Hecke algebra in
the extended quantum space $\mathcal{\tilde{V}}:=\tilde{\Lambda}[q]\otimes
V^{\otimes N}$.

\begin{proposition}
The map $\rho _{t}:\mathbb{H}_{N}\rightarrow \limfunc{End}\mathcal{\tilde{V}}
$ given by 
\begin{equation}
\pi _{j}\mapsto \rho _{t}(\pi _{j}):=t_{j}^{-1}\sigma _{j}^{-}\sigma
_{j+1}^{+}-\sigma _{j}^{-}\sigma _{j}^{+}  \label{pi}
\end{equation}%
is an algebra homomorphism. We have the following action on basis vectors,%
\begin{equation}
\rho _{t}(\pi _{j})v_{\varepsilon _{1}}\otimes \cdots \otimes v_{\varepsilon
_{N}}=\left\{ 
\begin{array}{cc}
t_{j}^{-1}v_{\varepsilon _{1}}\otimes \cdots v_{\varepsilon _{j+1}}\otimes
v_{\varepsilon _{j}}\otimes \cdots \otimes v_{\varepsilon _{N}}, & 
\varepsilon _{j}>\varepsilon _{j+1} \\ 
-v_{\varepsilon _{1}}\otimes \cdots \otimes v_{\varepsilon _{N}}, & 
\varepsilon _{j}=0 \\ 
0, & \text{else}%
\end{array}%
\right.  \label{piaction}
\end{equation}%
where all indices are understood modulo $N$.
\end{proposition}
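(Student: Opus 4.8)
The plan is to derive the explicit action (\ref{piaction}) first and then read off the homomorphism property from it, since once the local form of $\rho_t(\pi_j)$ is known the defining relations (\ref{nilHecke}) reduce to finite checks. To establish (\ref{piaction}) I would use the action of the Pauli matrices recorded at the beginning of the section: $\sigma_j^{-}\sigma_j^{+}$ is the projector onto the configurations with $\varepsilon_j=0$ (since $\sigma^{+}v_0=v_1$ and $\sigma^{-}v_1=v_0$, whereas $\sigma^{+}v_1=0$), while $\sigma_j^{-}\sigma_{j+1}^{+}$ sends $v_1\otimes v_0\mapsto v_0\otimes v_1$ and annihilates the three other configurations of the pair $(\varepsilon_j,\varepsilon_{j+1})$. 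Splitting into cases on $(\varepsilon_j,\varepsilon_{j+1})$ then gives exactly the three lines of (\ref{piaction}): the first term of (\ref{pi}) contributes only when $\varepsilon_j>\varepsilon_{j+1}$, producing the swapped vector with coefficient $t_j^{-1}$, and the second term contributes the scalar $-1$ precisely when $\varepsilon_j=0$. Since the coefficient $t_j^{-1}$ lies in $\tilde{\Lambda}$, the result is a well-defined $\tilde{\Lambda}[q]$-linear endomorphism of $\mathcal{\tilde{V}}$, which is the reason for passing to the localised ring.

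For the homomorphism property I would check the three families of relations in (\ref{nilHecke}) in turn. The quadratic relation $\rho_t(\pi_j)^2=-\rho_t(\pi_j)$ is immediate from (\ref{piaction}) by applying the operator twice in each of the four cases $(\varepsilon_j,\varepsilon_{j+1})$: on $(1,0)$ one gets $t_j^{-1}\,v_0\otimes v_1$, which $\rho_t(\pi_j)$ then rescales by $-1$ (its eigenvalue on configurations with $\varepsilon_j=0$), matching $-\rho_t(\pi_j)$; on the two cases with $\varepsilon_j=0$ the eigenvalue $-1$ satisfies $(-1)^2=-(-1)$; and $(1,1)$ is annihilated already at the first step. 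The commutation relation $\rho_t(\pi_i)\rho_t(\pi_j)=\rho_t(\pi_j)\rho_t(\pi_i)$ for $|i-j|>1$ needs no computation: each $\rho_t(\pi_\ell)$ is supported on the tensor factors $\ell$ and $\ell+1$, and for $|i-j|>1$ (indices modulo $N$) the index sets $\{i,i+1\}$ and $\{j,j+1\}$ are disjoint, so the two operators act on disjoint factors and commute.

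The only relation demanding genuine effort is the braid relation $\rho_t(\pi_j)\rho_t(\pi_{j+1})\rho_t(\pi_j)=\rho_t(\pi_{j+1})\rho_t(\pi_j)\rho_t(\pi_{j+1})$, and this is where I expect the main (though still elementary) work to lie. Both triple products act as the identity away from the three factors $j,j+1,j+2$, so it suffices to compare them on the eight basis vectors $v_{\varepsilon_j}\otimes v_{\varepsilon_{j+1}}\otimes v_{\varepsilon_{j+2}}$ of $V^{\otimes 3}$, applying (\ref{piaction}) locally with parameters $t_j$ and $t_{j+1}$ and tracking the scalar coefficients. One finds the two sides agree vector by vector: for example both products send $v_1\otimes v_0\otimes v_0$ to $-t_j^{-1}t_{j+1}^{-1}\,v_0\otimes v_0\otimes v_1$ and both send $v_0\otimes v_1\otimes v_0$ to $t_{j+1}^{-1}\,v_0\otimes v_0\otimes v_1$, while the remaining vectors give matching scalars (several of them zero, e.g. $v_1\otimes v_1\otimes v_0$ and $v_1\otimes v_0\otimes v_1$ are killed by both sides).

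Because the whole construction is cyclic, with all indices read modulo $N$ and every generator sharing the identical local form, this single three-site computation simultaneously verifies the braid relation at every node, including the wrap-around generators $\pi_{N-1},\pi_N,\pi_1$ whose supports $\{N-1,N\}$, $\{N,1\}$, $\{1,2\}$ are three consecutive sites on the cycle; no separate argument is required for the affine node. The same cyclic reasoning also covers the mod-$N$ instances of the quadratic and disjoint-commutation relations, completing the verification that $\rho_t$ respects all of (\ref{nilHecke}) and is therefore an algebra homomorphism.
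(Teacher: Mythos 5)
Your proof is correct: the explicit action (\ref{piaction}) follows from the Pauli conventions exactly as you state, your sample values in the braid check are right (both triple products send $v_{1}\otimes v_{0}\otimes v_{0}\mapsto -t_{j}^{-1}t_{j+1}^{-1}\,v_{0}\otimes v_{0}\otimes v_{1}$ and $v_{0}\otimes v_{1}\otimes v_{0}\mapsto t_{j+1}^{-1}\,v_{0}\otimes v_{0}\otimes v_{1}$, and kill the vectors you say they kill), and the cyclic-symmetry remark legitimately disposes of the affine node. The paper organises the same finite verification differently: it sets $v_{i}=\sigma_{i}^{-}\sigma_{i}^{+}$ and $u_{i}=t_{i}^{-1}\sigma_{i}^{-}\sigma_{i+1}^{+}$, so that $\rho_{t}(\pi_{i})=u_{i}-v_{i}$, and records the multiplication table $v_{i}^{2}=v_{i}$, $u_{i}^{2}=u_{i}u_{i+1}u_{i}=u_{i+1}u_{i}u_{i+1}=0$, $v_{i}u_{i}=u_{i}v_{i+1}=u_{i}$, $u_{i}v_{i}=v_{i+1}u_{i}=0$, together with the commutations at distance greater than one; the nil-Hecke relations (\ref{nilHecke}) then follow by expanding products of $(u_{i}-v_{i})$ purely algebraically, e.g. $\rho_{t}(\pi_{i})^{2}=u_{i}^{2}-u_{i}v_{i}-v_{i}u_{i}+v_{i}^{2}=-u_{i}+v_{i}=-\rho_{t}(\pi_{i})$, without ever touching basis vectors. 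The operator-identity route keeps the check basis-free and the table reusable elsewhere; your basis-vector route is more concrete and has the side benefit of establishing (\ref{piaction}) explicitly along the way, which is the part of the proposition the paper's proof leaves as an unspoken computation. In substance the two arguments are the same finite check packaged in two different ways.
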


\begin{proof}
A straightforward computation using that $v_{i}=\sigma _{i}^{-}\sigma
_{i}^{+}$ and $u_{i}=t_{i}^{-1}\sigma _{i}^{-}\sigma _{i+1}^{+}$ obey the
relations%
\begin{gather*}
v_{i}^{2} =v_{i},\qquad v_{i}v_{j}=v_{j}v_{i} \\
u_{i}^{2} =u_{i}u_{i+1}u_{i}=u_{i+1}u_{i}u_{i+1}=0 \\
v_{i}u_{i} =u_{i}v_{i+1}=u_{i},\qquad u_{i}v_{i}=v_{i+1}u_{i}=0 \\
u_{i}u_{j} =u_{j}u_{i},\qquad v_{i}u_{j}=u_{j}v_{i}\text{\quad for\quad }%
|i-j|>1\;
\end{gather*}%
which can be easily checked.
\end{proof}

We introduce two additional representations $\rho _{T}^{\vee },\rho
_{t}^{\prime }:\mathbb{H}_{N}\rightarrow \limfunc{End}\mathcal{\tilde{V}}$
which will occur because of Poincar\'{e} and level-rank duality:%
\begin{equation}
\rho _{T}^{\vee }(\pi _{j}):=\mathcal{P}\rho _{t}(\pi _{j})\mathcal{P\quad }%
\text{and}\mathcal{\quad }\rho _{t}^{\prime }(\pi _{j}):=-\Theta \rho _{-T}(%
\bar{\pi}_{N-j})\Theta \;,  \label{dual_pis}
\end{equation}%
where we recall that $\bar{\pi}_{j}=\pi _{j}+1,~T_{i}=t_{N+1-i}$ and we have
used the natural extension of the maps (\ref{parity}) and (\ref{charge}) to $%
\mathcal{V}$ with $\Theta =\mathcal{PC}$. By abuse of notation we will often
denote $\rho _{t}(\pi _{j}),\rho _{t}^{\prime }(\pi _{j}),\rho _{T}^{\vee
}(\pi _{j})$ simply by $\pi _{j},\pi _{j}^{\prime },\pi _{j}^{\vee }$ in
what follows if no confusion can arise. Explicitly, we have $\pi _{j}^{\vee
}=T_{j+1}^{-1}\sigma _{j}^{+}\sigma _{j+1}^{-}-\sigma _{j+1}^{-}\sigma
_{j+1}^{+}$ and $\pi _{j}^{\prime }=t_{j+1}^{-1}\sigma _{j}^{-}\sigma
_{j+1}^{+}-\sigma _{j+1}^{-}\sigma _{j+1}^{+}$.

Consider the following decomposition of the quantum space (\ref%
{quantum_space})%
\begin{equation}
\mathcal{\tilde{V}}=\tbigoplus_{n=0}^{N}\mathcal{\tilde{V}}_{n},\qquad 
\mathcal{\tilde{V}}_{n}=\tilde{\Lambda}[q]\otimes V_{n}~,
\end{equation}%
where $V_{n}$ is spanned by the basis vectors $v_{\varepsilon _{1}}\otimes
\cdots \otimes v_{\varepsilon _{N}}$ with exactly $n$ of the $\varepsilon
_{i}$'s equal to one, i.e. $\sum_{i=1}^{N}\varepsilon _{i}=n$. In each
subspace $\mathcal{V}_{n}$ we label the basis vectors $v_{\varepsilon
_{1}}\otimes \cdots \otimes v_{\varepsilon _{N}}$ in terms of partitions $%
\lambda $ in the set 
\begin{equation}
\Pi _{n,k}:=\{\lambda :k\geq \lambda _{1}\geq \cdots \geq \lambda _{n}\geq
0\}
\end{equation}%
by using the same bijection between 01-words and partitions as in \cite%
{KorffStroppel,VicOsc}. Let $\ell _{j}(\lambda )=\lambda _{n+1-j}+j$ be the
position of the one-letters in the 01-word $\varepsilon _{1}\cdots
\varepsilon _{N}$ then we set%
\begin{equation}
|\lambda \rangle =v_{\varepsilon _{1}}\otimes \cdots \otimes v_{\varepsilon
_{N}}\in V^{\otimes N}\,,  \label{basis}
\end{equation}%
where $\varepsilon _{j}=1$ if $j=\lambda _{n+1-j}+j$ for $j=1,\ldots ,n$ and 
$\varepsilon _{j}=0$ else. Note that the Young diagram of $\lambda $ fits
into the bounding box of height $n$ and width $k$. The following lemma is a
direct consequence of these definitions.

\begin{lemma}
The action (\ref{piaction}) of the nil-Hecke algebra leaves each subspace $%
\mathcal{\tilde{V}}_{n}$ invariant. The same applies to $\rho _{t}^{\prime }$
and $\rho _{T}^{\vee }$.
\end{lemma}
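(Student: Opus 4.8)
The plan is to reduce the whole statement to a single observation: the rule (\ref{piaction}) conserves the total number $\sum_{i=1}^{N}\varepsilon_{i}$ of one-letters, and this integer is exactly the label $n$ of the graded pieces $\mathcal{\tilde{V}}_{n}=\tilde{\Lambda}[q]\otimes V_{n}$.

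First I would run through the three cases of (\ref{piaction}) on a basis vector $v_{\varepsilon _{1}}\otimes \cdots \otimes v_{\varepsilon _{N}}$. In the case $\varepsilon _{j}>\varepsilon _{j+1}$, i.e.\ $(\varepsilon _{j},\varepsilon _{j+1})=(1,0)$, the operator transposes the entries in positions $j$ and $j+1$ up to the scalar $t_{j}^{-1}\in \tilde{\Lambda}$, producing $(0,1)$ there, which leaves $\sum_{i}\varepsilon _{i}$ unchanged. In the case $\varepsilon _{j}=0$ the vector is returned with coefficient $-1$, and in the remaining case $\varepsilon _{j}=\varepsilon _{j+1}=1$ the image is $0$; both trivially preserve the one-letter count. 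Since $V_{n}$ is by definition spanned by the basis vectors with exactly $n$ one-letters and $\rho _{t}(\pi _{j})$ is $\tilde{\Lambda}[q]$-linear (it multiplies only by scalars from $\tilde{\Lambda}$ and permutes tensor slots), I conclude $\rho _{t}(\pi _{j})\,\mathcal{\tilde{V}}_{n}\subseteq \mathcal{\tilde{V}}_{n}$ for every $j$, and hence that all of $\mathbb{H}_{N}$ preserves each $\mathcal{\tilde{V}}_{n}$.

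For the dual representations I would invoke their definitions (\ref{dual_pis}) together with the action of the involutions on the grading. The reversal $\mathcal{P}$ of (\ref{parity}) only permutes the positions of the letters and so preserves $\sum_{i}\varepsilon _{i}$, giving $\mathcal{P}\,\mathcal{\tilde{V}}_{n}=\mathcal{\tilde{V}}_{n}$; as $\rho _{T}^{\vee }(\pi _{j})=\mathcal{P}\rho _{t}(\pi _{j})\mathcal{P}$ is a conjugate of a weight-preserving operator by $\mathcal{P}$, it again preserves $\mathcal{\tilde{V}}_{n}$. The only point needing care is the inversion $\mathcal{C}$ of (\ref{charge}), which sends $\varepsilon _{i}\mapsto 1-\varepsilon _{i}$ and therefore maps $V_{n}$ to $V_{N-n}=V_{k}$; consequently $\Theta =\mathcal{PC}$ interchanges $\mathcal{\tilde{V}}_{n}$ and $\mathcal{\tilde{V}}_{k}$ rather than fixing either. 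Because the combinatorial rule in (\ref{piaction}) does not depend on the numerical values of the parameters, the operator $\rho _{-T}(\bar{\pi}_{N-j})=\rho _{-T}(\pi _{N-j})+1$ still preserves every graded piece by the argument of the previous paragraph. Reading $\rho _{t}^{\prime }(\pi _{j})=-\Theta \,\rho _{-T}(\bar{\pi}_{N-j})\,\Theta $ from the right, the first $\Theta $ carries $\mathcal{\tilde{V}}_{n}$ to $\mathcal{\tilde{V}}_{k}$, the middle factor preserves $\mathcal{\tilde{V}}_{k}$, and the outer $\Theta $ returns it to $\mathcal{\tilde{V}}_{n}$, so $\rho _{t}^{\prime }(\pi _{j})$ preserves $\mathcal{\tilde{V}}_{n}$ as well, the overall sign being immaterial. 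I do not anticipate any genuine obstacle: the entire lemma is the weight-conservation of (\ref{piaction}) together with the bookkeeping of how $\mathcal{P}$ and $\mathcal{C}$ shift that weight.
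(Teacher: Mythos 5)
Your proposal is correct and is exactly the verification the paper has in mind: the paper states this lemma without proof, as ``a direct consequence of these definitions,'' and that implicit argument is precisely your observation that each case of (\ref{piaction}) conserves the number of one-letters, together with the bookkeeping that $\mathcal{P}$ fixes the grading while $\Theta=\mathcal{PC}$ exchanges $\mathcal{\tilde{V}}_{n}$ and $\mathcal{\tilde{V}}_{k}$, so that conjugation by $\Theta$ in the definition of $\rho_{t}^{\prime}$ returns each graded piece to itself. Your write-up simply makes explicit the routine check the paper omits, including the only mildly delicate point (the role of $\mathcal{C}$), so there is nothing to add.
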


The following result is contained in \cite[Lemma 32, p.1577]{Lam} where in 
\emph{loc. cit.} cylindric shapes are used which we will discuss in a later
section. Here we simply associate with the cylindric shape $\lambda \lbrack
d]$ the vector $q^{d}\otimes |\lambda \rangle \in \mathbb{C}[q^{\pm
1}]\otimes \mathcal{\tilde{V}}_{n}$.

\begin{lemma}[Lam]
\label{lem:lam_perm}Let $\lambda ,\mu\in\Pi_{n,k}$ and denote by $|\lambda
\rangle ,|\mu \rangle \in V_{n}$ the corresponding basis vectors (\ref{basis}%
). For fixed $d\in \mathbb{Z}$ there exists at most one affine permutation $%
w(\lambda ,\mu ,d)\in \mathbb{\hat{S}}_{N}$ such that $q^{d}|\lambda \rangle
=\delta _{w(\lambda ,\mu ,d)}|\mu \rangle $. If $\mu =\emptyset $ then for
each $\lambda $ there exists a unique (finite) permutation $w_{\lambda }\in 
\mathbb{S}_{N}\subset \mathbb{\hat{S}}_{N}$ such that $|\lambda \rangle
=\delta _{w_{\lambda }}|\emptyset \rangle $.
\end{lemma}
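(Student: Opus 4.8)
The plan is to reconstruct the affine permutation $w$ from the way the operators $\delta_j$ transport particles, working in a Maya-diagram lift to $\mathbb{Z}$. First I would record that, by Proposition~\ref{prop:nilcoxrep}, $a_j\mapsto\delta_j$ is a representation of $\mathbb{A}_N$, so $\delta_w:=\delta_{i_1}\cdots\delta_{i_r}$ is well defined for any reduced word $w=s_{i_1}\cdots s_{i_r}$ and vanishes on non-reduced products. By the explicit action (\ref{delta}) each $\delta_j$ sends a basis vector either to $0$ or to the vector obtained by moving the one-letter at site $j$ into the empty site $j+1$, with an extra factor $q$ when $j=N$; hence $\delta_w|\mu\rangle$ is always $0$ or a single monomial $q^d|\lambda\rangle$, and it suffices to show that $(\mu,\lambda,d)$ determines $w$ when the result is nonzero. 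I would then lift to the periodic occupied set $S_\mu=\{i\in\mathbb{Z}:\varepsilon_i=1\}$, extending $\boldsymbol{\varepsilon}$ by $N$-periodicity, and recall that $\mathbb{\hat{S}}_N$ acts on $\mathbb{Z}$ through the balanced affine bijections $w(i+N)=w(i)+N$, $\sum_{i=1}^N(w(i)-i)=0$. In this picture every $\delta_j$ is the elementary hop $i\mapsto i+1$ of one particle into a neighbouring empty site, and in particular no two particles are ever interchanged.

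For the first claim, assume $\delta_w|\mu\rangle=q^d|\lambda\rangle\neq 0$. By induction on $\ell(w)$, factoring $w=s_jw'$ with $\ell(w')=\ell(w)-1$ and using that $\delta_j$ realises $s_j$ on the occupied set while preserving the relative order of both particles and holes, I would prove $w(S_\mu)=S_\lambda$ together with the statement that $w$ is strictly increasing on the occupied sites of $\mu$ and, separately, on their complement. Since an order-preserving bijection $\mathbb{Z}\to\mathbb{Z}$ is a shift, $w$ must carry the $a$-th occupied site of $\mu$ to the $(a+c)$-th occupied site of $\lambda$ and the $b$-th empty site to the $(b+c')$-th empty site for two integer offsets $c,c'$, and these two shifts rebuild $w$ completely. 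Finally I would pin down $(c,c')$: because the occupied and empty sites together exhaust each length-$N$ window, the balancedness condition $\sum_{i=1}^N(w(i)-i)=0$ collapses to $c+c'=0$, while the winding $d$, being the net number of particles crossing the seam from site $N$ to site $1$, equals the offset $c$. Thus $c=d$ and $c'=-d$, so $w$ is uniquely determined by $(\mu,\lambda,d)$, which is exactly the first assertion.

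For the second claim I would take $\mu=\emptyset$, whose one-letters occupy residues $1,\dots,n$. From (\ref{delta}) one checks $\delta_j|\emptyset\rangle=0$ for every $j$ in $J:=\{1,\dots,n-1\}\cup\{n+1,\dots,N-1\}$, and $\{s_j\}_{j\in J}$ generates the parabolic subgroup $\mathbb{S}_n\times\mathbb{S}_k$. Writing $w=w^{\min}u$ with $u\in\mathbb{S}_n\times\mathbb{S}_k$, $w^{\min}$ the minimal coset representative and $\ell(w)=\ell(w^{\min})+\ell(u)$, I get $\delta_w|\emptyset\rangle=\delta_{w^{\min}}(\delta_u|\emptyset\rangle)$; since any reduced word for $u\neq e$ ends in a generator $s_j$ with $j\in J$, we have $\delta_u|\emptyset\rangle=0$ unless $u=e$, so a nonzero action forces $w$ to be a minimal coset representative. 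Restricting to $w\in\mathbb{S}_N$ only the generators $\delta_1,\dots,\delta_{N-1}$ occur, so no factor $q$ is produced and $d=0$; the finite minimal representatives are precisely the Grassmannian permutations, in bijection with $\Pi_{n,k}$. I would then take $w_\lambda$ to be the Grassmannian permutation attached to $\lambda$ and verify $\delta_{w_\lambda}|\emptyset\rangle=|\lambda\rangle$ by induction on $|\lambda|$, adjoining one box at a time through the unique admissible hop; uniqueness of $w_\lambda$ is the $\mu=\emptyset$, $d=0$ case of the first claim.

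The hard part will be the reconstruction step in the second paragraph: showing cleanly that a nonzero $\delta_w$ forces $w$ to preserve the cyclic order of the particles---hence to be an increasing bijection on occupied and on empty sites---and then matching the offset pair $(c,c')$ to the balancedness relation and to the winding $d$. This order-preservation and bookkeeping is precisely what is organised by the cylindric shapes of \cite[Lemma~32]{Lam}; once it is in place, the remaining assertions are formal.
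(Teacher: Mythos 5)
Your proposal is correct, but there is no proof in the paper to compare it against: the lemma is simply quoted from Lam (Lemma 32 of \emph{Affine Stanley symmetric functions}), where the bookkeeping is organised via cylindric shapes, which this paper only introduces later, in Section 4. Relative to that route, your argument is a direct, self-contained reconstruction, and its key steps are all valid: a nonzero application of $\delta_j$ moves one particle rightwards into a vacancy (equivalently, one hole leftwards), so neither particles nor holes ever cross, and hence $w$ is strictly increasing on the lifted occupied set $S_\mu$ and on its complement; an increasing bijection between $N$-periodic subsets of $\mathbb{Z}$ with $n$ (resp.\ $k$) elements per period is an index shift, so $w$ is determined by the offset pair $(c,c')$; the normalisation $\sum_{i=1}^{N}(w(i)-i)=0$ that cuts out $\mathbb{\hat{S}}_{N}$ inside the extended affine group yields $(c+c')N=0$; and, with occupied and empty sites enumerated consistently from the window $\{1,\ldots,N\}$, the identity $\sum_{a=1}^{n}\lfloor (a+c-1)/n\rfloor=c$ identifies the seam-crossing count (the power of $q$) with $c$, so $w$ is pinned down by $(\lambda,\mu,d)$. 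The parabolic factorisation argument for $\mu=\emptyset$ is likewise sound, since every reduced word of $u\in\mathbb{S}_n\times\mathbb{S}_k$ uses only generators annihilating $|\emptyset\rangle$, and existence follows by your box-by-box induction, which never invokes $\delta_N$. What your approach buys is an elementary proof in which the $q$-grading (the winding number $d$) is transparent; what the paper's citation buys is brevity and direct compatibility with the cylindric/toric-tableau formalism it develops later, which encodes exactly the same order-preservation data. The one place demanding care in a write-up is the indexing convention behind $d=c$: with a window-inconsistent enumeration of the occupied sites that identity fails, which is precisely the bookkeeping that cylindric loops $\lambda[d]$ are designed to automate.
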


\subsection{Transfer matrices as cyclic words in the nil-Hecke algebra}

We now expand the elements of the monodromy matrices of the vicious and
osculating walker models according to $\mathcal{O}(x_{i})=\sum_{r\geq 0}%
\mathcal{O}_{r}x_{i}^{r}$ with $\mathcal{O}=A,B,C,D$ and $\mathcal{O}%
=A^{\prime },B^{\prime },C^{\prime },D^{\prime }$. The coefficients $%
\mathcal{O}_{r}$ have simple expressions in the multi-parameter
representation of the nil-Hecke algebra discussed above.

Define the following ordered sums in terms of the nil-Hecke algebra,%
\begin{eqnarray}
\Sigma _{r}^{N,>} &=&\sum_{0<j_{1}<\cdots <j_{r}<N}(\delta _{j_{r}}-\hat{t}%
_{j_{r}})\cdots (\delta _{j_{1}}-\hat{t}_{j_{1}})  \notag \\
&=&\sum_{0<j_{1}<\cdots <j_{r}<N}t_{j_{1}}t_{j_{2}}\cdots t_{j_{r}}~\pi
_{j_{r}}\cdots \pi _{j_{2}}\pi _{j_{1}}
\end{eqnarray}%
and%
\begin{eqnarray}
\Sigma _{r}^{N,<} &=&\sum_{0<j_{1}<\cdots <j_{r}<N}(\delta _{j_{1}}+\check{t}%
_{j_{1}})\cdots (\delta _{j_{r}}+\check{t}_{j_{r}})  \notag \\
&=&\sum_{0<j_{1}<\cdots <j_{r}<N}t_{j_{1}}t_{j_{2}}\cdots t_{j_{r}}~\bar{\pi}%
_{j_{1}}\bar{\pi}_{j_{2}}\cdots \bar{\pi}_{j_{r}}
\end{eqnarray}%
where we recall that $\delta _{j}=\sigma _{j}^{-}\sigma _{j+1}^{+}$ and we
have set $\hat{t}_{j}=t_{j}\sigma _{j}^{-}\sigma _{j}^{+},\ \check{t}%
_{j}=t_{j}\sigma _{j}^{+}\sigma _{j}^{-}.$

\begin{lemma}
We have the following identities%
\begin{eqnarray*}
\delta _{j}\hat{t}_{j+1} &=&\hat{t}_{j}\delta _{j}+1-\check{r}_{j},\qquad
\delta _{j}\hat{t}_{j}=\delta _{j}\hat{t}_{j+1}=0 \\
\delta _{j}^{\vee }\hat{t}_{j} &=&\hat{t}_{j+1}\delta _{j}^{\vee }+1-\hat{r}%
_{j},\qquad \delta _{j}^{\vee }\hat{t}_{j+1}=\hat{t}_{j}\delta _{j}^{\vee }=0
\end{eqnarray*}%
and%
\begin{eqnarray*}
\delta _{j}^{\vee }\check{t}_{j+1} &=&\check{t}_{j}\delta _{j}^{\vee }+1-%
\check{r}_{j},\qquad \delta _{j}^{\vee }\check{t}_{j}=\check{t}_{j+1}\delta
_{j}^{\vee }=0 \\
\delta _{j}\check{t}_{j} &=&\check{t}_{j+1}\delta _{j}+1-\hat{r}_{j},\qquad
\delta _{j}\check{t}_{j+1}=\check{t}_{j}\delta _{j}=0
\end{eqnarray*}
\end{lemma}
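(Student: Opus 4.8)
The plan is to reduce every one of these identities to a computation on the two tensor slots $j$ and $j+1$ and then check it on the four local basis vectors. Each of the operators $\delta_j=\sigma_j^-\sigma_{j+1}^+$, $\delta_j^\vee=\sigma_j^+\sigma_{j+1}^-$, $\hat t_j=t_j\sigma_j^-\sigma_j^+$, $\check t_j=t_j\sigma_j^+\sigma_j^-$, together with $\hat r_j=1-(t_j-t_{j+1})\delta_j^\vee$ and $\check r_j=1+(t_j-t_{j+1})\delta_j$ coming from (\ref{rhat}), (\ref{rcheck}), acts as the identity on all tensor factors other than $j,j+1$. Hence it suffices to verify the relations inside $\limfunc{End}(V_j\otimes V_{j+1})$ with coefficients in $\mathbb{Z}[t_j,t_{j+1}]$, i.e. as a single $4\times4$ matrix identity in the basis $\{v_0\otimes v_0, v_0\otimes v_1, v_1\otimes v_0, v_1\otimes v_1\}$.

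First I would record the elementary local actions. Using $\sigma^- v_1=v_0$ and $\sigma^+v_0=v_1$ one sees that $\sigma_j^-\sigma_j^+$ is the projector onto $\varepsilon_j=0$ and $\sigma_j^+\sigma_j^-$ the projector onto $\varepsilon_j=1$, so $\hat t_j$ (resp. $\check t_j$) multiplies by $t_j$ exactly on the vectors with $\varepsilon_j=0$ (resp. $\varepsilon_j=1$) and annihilates the others. By (\ref{delta}), (\ref{dual_delta}) the operator $\delta_j^\vee$ sends $v_0\otimes v_1\mapsto v_1\otimes v_0$ and kills the remaining three local basis vectors, while $\delta_j$ sends $v_1\otimes v_0\mapsto v_0\otimes v_1$ and kills the rest.

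The vanishing statements are then immediate: each asserts a composite in which a projector onto $\varepsilon_j=0$ (or $\varepsilon_j=1$) is applied right before or right after a map whose nonzero image or domain forces the opposite value of $\varepsilon_j$, so the composite is $0$. For instance $\delta_j\hat t_j=0$ because $\hat t_j$ lands in the $\varepsilon_j=0$ subspace whereas $\delta_j$ is supported on $\varepsilon_j=1$, and $\hat t_{j+1}\delta_j=0$ because $\delta_j$ lands in the $\varepsilon_{j+1}=1$ subspace which $\hat t_{j+1}$ annihilates; the other six vanishings are of exactly the same shape and need no separate argument.

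For each of the four main identities I would evaluate both orderings on the single local basis vector on which the left-hand side is nonzero. For example, $\delta_j\hat t_{j+1}$ is nonzero only on $v_1\otimes v_0$, where it returns $t_{j+1}\,v_0\otimes v_1$, whereas $\hat t_j\delta_j$ returns $t_j\,v_0\otimes v_1$ there; the difference is therefore $(t_{j+1}-t_j)$ times the operator $v_1\otimes v_0\mapsto v_0\otimes v_1$, namely $(t_{j+1}-t_j)\delta_j$, which by the definition of $\check r_j$ in (\ref{rcheck}) is exactly $1-\check r_j$. The remaining three identities are handled identically: in each case the difference of the two orderings turns out to be $\pm(t_j-t_{j+1})$ times whichever of $\delta_j,\delta_j^\vee$ occurs on the left, and substituting (\ref{rhat}), (\ref{rcheck}) rewrites this multiple as the displayed combination of $1$ with $\hat r_j$ or $\check r_j$. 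There is no conceptual obstacle — the entire lemma is a finite verification — so the only step that needs genuine care is the index/sign bookkeeping that decides which braid matrix ($\hat r_j$ or $\check r_j$) and which overall sign appears in each correction term.
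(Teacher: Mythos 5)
Your reduction to the two tensor slots $j,j+1$ and verification on the four local basis vectors is the right strategy, and it is essentially all the paper offers (its proof reads ``A simple computation''); your explicit check of the first identity is correct, and the second identity does come out as displayed by the same argument. The gap is precisely the step you wave through at the end: ``the remaining three identities are handled identically \ldots\ rewrites this multiple as the displayed combination.'' Carrying out the sign bookkeeping that you yourself flag as the only delicate point shows that the display does \emph{not} come out as printed for the third and fourth identities. With the paper's own conventions from (\ref{rhat}) and (\ref{rcheck}), namely $1-\hat{r}_j=(t_j-t_{j+1})\delta_j^{\vee}$ and $1-\check{r}_j=(t_{j+1}-t_j)\delta_j$, one computes $\delta_j^{\vee}\check{t}_{j+1}=t_{j+1}\delta_j^{\vee}$ and $\check{t}_j\delta_j^{\vee}=t_j\delta_j^{\vee}$, so the difference is $(t_{j+1}-t_j)\delta_j^{\vee}=\hat{r}_j-1$, a multiple of $\delta_j^{\vee}$, whereas the printed correction $1-\check{r}_j$ is a multiple of $\delta_j$; similarly $\delta_j\check{t}_j-\check{t}_{j+1}\delta_j=(t_j-t_{j+1})\delta_j=\check{r}_j-1$, not $1-\hat{r}_j$. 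So the third and fourth lines hold only after replacing $1-\check{r}_j$ by $\hat{r}_j-1$ and $1-\hat{r}_j$ by $\check{r}_j-1$ (the statement as printed contains sign/matrix typos), and a proof asserting the printed versions asserts something false.

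The same problem infects the vanishing claims. The first line as printed asserts $\delta_j\hat{t}_j=\delta_j\hat{t}_{j+1}=0$, but $\delta_j\hat{t}_{j+1}=t_{j+1}\delta_j\neq 0$ --- indeed it is the left-hand side of the main identity on that very line. Your two worked examples, $\delta_j\hat{t}_j=0$ and $\hat{t}_{j+1}\delta_j=0$, silently prove the corrected pair (matching the pattern of the other three lines), yet you then claim the remaining stated vanishings are ``of exactly the same shape,'' which fails for the stated $\delta_j\hat{t}_{j+1}=0$: there is no projector mismatch there, since the image of $\hat{t}_{j+1}$ and the support of $\delta_j$ both lie in the $\varepsilon_{j+1}=0$ subspace. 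In a lemma whose entire content is a finite verification, the verification \emph{is} the proof; deferring it and declaring agreement with the display is exactly where your argument breaks. To complete the proof you must perform all eight checks and either record the corrected identities or note explicitly that the printed statement is internally inconsistent.
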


\begin{proof}
A simple computation.
\end{proof}

The last set of relations should be thought of as a generalisation of the
cross relation or Leibniz rule (\ref{LeibnizBGG}) for the braid group
representation.

\begin{proposition}
We have the following explicit expressions for the monodromy matrix elements
of the vicious and osculating walker models in terms of the nil Hecke
algebra representation (\ref{pi}),%
\begin{equation}
\left\{ 
\begin{array}{l}
A_{r}=\Sigma _{r}^{N,>}-\hat{t}_{N}\Sigma _{r-1}^{N,>} \\ 
B_{r}=A_{r-1}\sigma _{1}^{+} \\ 
C_{r}=\sigma _{N}^{-}A_{r} \\ 
D_{r}=\sigma _{N}^{-}A_{r-1}\sigma _{1}^{+}%
\end{array}%
\right. \qquad \text{and}\qquad \left\{ 
\begin{array}{l}
A_{r}^{\prime }=\Sigma _{r}^{N,<}+\Sigma _{r-1}^{N,<}\check{t}_{N} \\ 
B_{r}^{\prime }=\sigma _{1}^{+}A_{r-1}^{\prime } \\ 
C_{r}^{\prime }=A_{r}^{\prime }\sigma _{N}^{-} \\ 
D_{r}^{\prime }=\sigma _{1}^{+}A_{r-1}^{\prime }\sigma _{N}^{-}%
\end{array}%
\right. \;.  \label{YBA2Hecke}
\end{equation}
\end{proposition}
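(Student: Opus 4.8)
The plan is to induct on the number of columns. Writing $M^{[m]}=L_{im}\cdots L_{i1}$ for the monodromy built from the first $m$ columns, the defining product $M_{i}=L_{iN}\cdots L_{i1}$ gives the one-step recursion $M^{[m]}=L_{im}M^{[m-1]}$, which in $2\times 2$ auxiliary-space block form reads
\begin{equation*}
\begin{pmatrix} A^{[m]} & B^{[m]} \\ C^{[m]} & D^{[m]} \end{pmatrix}
=\begin{pmatrix} 1-x\hat{t}_{m} & x\sigma_{m}^{+} \\ \sigma_{m}^{-} & x\sigma_{m}^{-}\sigma_{m}^{+}\end{pmatrix}
\begin{pmatrix} A^{[m-1]} & B^{[m-1]} \\ C^{[m-1]} & D^{[m-1]}\end{pmatrix}.
\end{equation*}
Extracting the coefficient of $x^{r}$ turns this into four scalar recursions for $A_{r}^{[m]},\dots,D_{r}^{[m]}$ in terms of the data at level $m-1$ (recalling that $M^{[m-1]}$ acts trivially on site $m$, so the site-$m$ operators commute past it). On the combinatorial side I would record the matching recursion $\Sigma_{r}^{N,>}=\Sigma_{r}^{N-1,>}+(\delta_{N-1}-\hat{t}_{N-1})\Sigma_{r-1}^{N-1,>}$, obtained by splitting the ordered sum according to whether the largest index $j_{r}$ equals $N-1$ or not. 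The base case $N=1$ is immediate from the explicit entries of $L_{i1}$ together with $\Sigma_{0}^{1,>}=1$ and $\Sigma_{r}^{1,>}=0$ for $r\geq 1$.

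The heart of the argument is the inductive step for $A$. Substituting the inductive formula $C_{r-1}^{[N-1]}=\sigma_{N-1}^{-}A_{r-1}^{[N-1]}$ into the $x^{r}$-recursion for $A$, and using $\sigma_{N}^{+}\sigma_{N-1}^{-}=\sigma_{N-1}^{-}\sigma_{N}^{+}=\delta_{N-1}$ (distinct sites), one arrives at $A_{r}^{[N]}=A_{r}^{[N-1]}-\hat{t}_{N}A_{r-1}^{[N-1]}+\delta_{N-1}A_{r-1}^{[N-1]}$. Expanding both this and the target $\Sigma_{r}^{N,>}-\hat{t}_{N}\Sigma_{r-1}^{N,>}$ via the $\Sigma$-recursion and cancelling, the discrepancy collapses to $(\delta_{N-1}\hat{t}_{N-1}-\hat{t}_{N}\delta_{N-1})\Sigma_{r-2}^{N-1,>}$, which vanishes because $\delta_{N-1}\hat{t}_{N-1}=0$ and $\hat{t}_{N}\delta_{N-1}=0$; these are exactly the nilpotency relations of the lemma preceding this proposition (and themselves follow from $(\sigma^{\pm})^{2}=0$). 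The three remaining entries are then cheap: once $A$ is known, $B_{r}^{[N]}=A_{r-1}^{[N]}\sigma_{1}^{+}$ drops out by factoring $\sigma_{1}^{+}$ on the right throughout the $B$-recursion, while $C_{r}^{[N]}=\sigma_{N}^{-}A_{r}^{[N]}$ and $D_{r}^{[N]}=\sigma_{N}^{-}A_{r-1}^{[N]}\sigma_{1}^{+}$ follow after reconciling boundary factors via $\sigma_{N}^{-}\hat{t}_{N}=0$ and $\sigma_{N}^{-}\delta_{N-1}=\sigma_{N}^{-}\sigma_{N}^{+}\sigma_{N-1}^{-}$.

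For the osculating ($L'$) monodromy the argument is the mirror image: the block recursion is the same with $1-x\hat{t}_{m}$ replaced by $1+x\check{t}_{m}$ and $\sigma_{m}^{-}\sigma_{m}^{+}$ by $\sigma_{m}^{+}\sigma_{m}^{-}$, and since $\Sigma_{r}^{N,<}$ is ordered with the smallest index on the left one builds it up from the right, $\Sigma_{r}^{N,<}=\Sigma_{r}^{N-1,<}+\Sigma_{r-1}^{N-1,<}(\delta_{N-1}+\check{t}_{N-1})$; the cancellations now invoke $\delta_{j}\check{t}_{j+1}=\check{t}_{j}\delta_{j}=0$ in place of the earlier relations, yielding $A_{r}'=\Sigma_{r}^{N,<}+\Sigma_{r-1}^{N,<}\check{t}_{N}$ together with $B_{r}'=\sigma_{1}^{+}A_{r-1}'$, $C_{r}'=A_{r}'\sigma_{N}^{-}$ and $D_{r}'=\sigma_{1}^{+}A_{r-1}'\sigma_{N}^{-}$. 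I expect the only real friction to be bookkeeping: keeping the left/right placement of $\hat{t}_{N}$ versus $\check{t}_{N}$ and of the boundary operators $\sigma_{1}^{+},\sigma_{N}^{-}$ consistent with the two opposite orderings built into $\Sigma^{N,>}$ and $\Sigma^{N,<}$, since it is precisely that placement which the statement encodes.
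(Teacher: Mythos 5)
Your argument is correct and is essentially the paper's own proof: the one-column recursion $M^{[m]}=L_{im}M^{[m-1]}$ you induct on is exactly the coproduct $\Delta^{\mathrm{col}}$ used there, producing the same four block recursions for $A,B,C,D$ (and their primed analogues). The only difference is one of detail: the paper states the recursions and leaves the inductive cancellations implicit (they surface only in its worked $N=1,2,3$ example), while you make explicit the $\Sigma$-recursion and the nilpotency relations $\delta_{N-1}\hat{t}_{N-1}=\hat{t}_{N}\delta_{N-1}=0$ (resp. $\check{t}_{N-1}\delta_{N-1}=\delta_{N-1}\check{t}_{N}=0$) that make the induction close.
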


\begin{proof}
Proceed by induction in $N$ using the coproduct $\Delta =\Delta ^{\func{col}%
} $ in (\ref{cops}) of the row Yang-Baxter algebra (\ref{row_yba}). The
latter leads to the formulae%
\begin{eqnarray*}
A^{(N)}(x) &=&\Delta (A^{(N-1)}(x))=A^{(N-1)}(x)\otimes (1_{N}-x\hat{t}%
_{N})+C^{(N-1)}(x)\otimes x\sigma _{N}^{+} \\
B^{(N)}(x) &=&\Delta (B^{(N-1)}(x))=B^{(N-1)}(x)\otimes (1_{N}-x\hat{t}%
_{N})+D^{(N-1)}(x)\otimes x\sigma _{N}^{+} \\
C^{(N)}(x) &=&\Delta (C^{(N-1)}(x))=A^{(N-1)}(x)\otimes \sigma
_{N}^{-}+C^{(N-1)}(x)\otimes x\sigma _{N}^{-}\sigma _{N}^{+} \\
D^{(N)}(x) &=&\Delta (D^{(N-1)}(x))=B^{(N-1)}(x)\otimes \sigma
_{N}^{-}+D^{(N-1)}(x)\otimes x\sigma _{N}^{-}\sigma _{N}^{+}
\end{eqnarray*}%
which are a direct consequence of (\ref{L}) and (\ref{rowmom}). The
analogous formulae hold for the Yang-Baxter algebra of the osculating walker
model.
\end{proof}

\begin{example}
For $N=1$ the row monodromy matrix is simply the $L$-operator and we have,%
\begin{equation*}
A^{(1)}=1-\hat{t}_{1}x,\;B^{(1)}=x\sigma _{1}^{+},\;C^{(1)}=\sigma
_{1}^{-},\;D^{(1)}=x\sigma _{1}^{-}\sigma _{1}^{+}\;.
\end{equation*}%
Thus, for $N=2$ we find from the coproduct%
\begin{eqnarray*}
A^{(2)} &=&\left( 1-\hat{t}_{1}x\right) \left( 1-\hat{t}_{2}x\right)
+x\delta _{1} \\
B^{(2)} &=&x\left( 1-\hat{t}_{2}x\right) \sigma _{1}^{+}+x^{2}\delta
_{1}\sigma _{1}^{+} \\
C^{(2)} &=&\sigma _{2}^{-}\left( 1-\hat{t}_{1}x\right) +x\sigma
_{2}^{-}\delta _{1} \\
D^{(2)} &=&x\sigma _{2}^{-}\sigma _{1}^{+}+x^{2}\sigma _{2}^{-}\delta
_{1}\sigma _{1}^{+}\;.
\end{eqnarray*}%
Thus, for $N=3$ we end up with the following formula for the $A,D$ operators%
\begin{eqnarray*}
A^{(3)} &=&\left( 1-\hat{t}_{3}x\right) \left( 1-\hat{t}_{2}x\right) \left(
1-\hat{t}_{1}x\right) \\
&&+x\delta _{2}\left( 1-\hat{t}_{1}x\right) +x\delta _{1}\left( 1-\hat{t}%
_{3}x\right) +x^{2}\delta _{2}\delta _{1}
\end{eqnarray*}%
and%
\begin{equation*}
D^{(3)}=x\sigma _{3}^{-}\left( 1-\hat{t}_{2}x\right) \sigma
_{1}^{+}+x^{2}\sigma _{3}^{-}(\delta _{1}+\delta _{2})\sigma
_{1}^{+}+x^{3}\sigma _{3}^{-}\delta _{2}\delta _{1}\sigma _{1}^{+}\;.
\end{equation*}%
This is of the stated form (\ref{YBA2Hecke}) when taking into account the
various cancellation of terms due to the relations $\hat{t}_{j+1}\sigma
_{j+1}^{+}=\hat{t}_{j+1}\delta _{j}=\delta _{j}\hat{t}_{j}=\sigma _{j}^{-}%
\hat{t}_{j}=0$ etc.
\end{example}

We will now deduce from these results that after taking partial traces of
the respective monodromy matrices leads to simple expressions of the
transfer matrices in terms of the affine nil-Hecke algebra.

Given an affine permutation $w\in \mathbb{\hat{S}}_{N},$ a \emph{reduced word%
} of $w$ is a minimal length decomposition of $w$ as a product $%
s_{j_{1}}\cdots s_{j_{r}}$ of simple reflections. Identify $w$ with the word 
$j_{1}\cdots j_{r}\in \lbrack N]^{r}$ given by its reduced expression and
set $\pi _{w}=\pi _{j_{1}}\cdots \pi _{j_{r}}$, $t_{w}=t_{j_{1}}t_{j_{2}}%
\cdots t_{j_{r}}$.

\begin{definition}
A word $w=j_{1}\cdots j_{r}$ with letters in $[N]$ is called
(anti-)clockwise ordered if each letter occurs at most once and the letter $%
(j+1)\func{mod}N$ succeeds (precedes) the letter $j$ in case both are
present.
\end{definition}

In the literature clockwise ordered words are also called \emph{cyclically
increasing} and anticlockwise ordered ones \emph{cyclically decreasing}. An
affine permutation is called cyclically decreasing or increasing if its
associated reduced word expression has the respective property.

\begin{corollary}
We have the following expressions of the row-to-row transfer matrices 
\begin{equation}
H_{r}=A_{r}+qD_{r}=\sum_{|w|=r}^{\circlearrowleft }(\delta _{j_{1}}-\hat{t}%
_{j_{1}})\cdots (\delta _{j_{r}}-\hat{t}_{j_{r}})=\sum_{|w|=r}^{%
\circlearrowleft }t_{w}\rho _{t}(\pi _{w})~  \label{HHecke}
\end{equation}%
and 
\begin{equation}
E_{r}=A_{r}^{\prime }+qD_{r}^{\prime }=\sum_{|w|=r}^{\circlearrowright
}(\delta _{j_{1}}+\check{t}_{j_{1}})\cdots (\delta _{j_{r}}+\check{t}%
_{j_{r}})=\sum_{|w|=r}^{\circlearrowright }t_{w}\rho _{t}(\bar{\pi}_{w})~,
\label{EHecke}
\end{equation}%
were the sums run respectively over all anti-clockwise (cyclically
decreasing) and clockwise (cyclically increasing) ordered words $%
w=j_{1}\ldots j_{r}$ of length $r$ with $1\leq j_{1},\ldots ,j_{r}\leq N$.
\end{corollary}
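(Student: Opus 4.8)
The plan is to combine the explicit monodromy-matrix formulas (\ref{YBA2Hecke}) with the definition $H_r = A_r + qD_r$ coming from the partial trace $H = \limfunc{Tr}_V M$, and then organize the resulting sum of ordered products of $\delta_j$ and $\hat t_j$ into a single sum over cyclically decreasing words. First I would recall from the previous proposition that $A_r = \Sigma_r^{N,>} - \hat t_N \Sigma_{r-1}^{N,>}$ and $D_r = \sigma_N^- A_{r-1}\sigma_1^+$. Tracing over the auxiliary space $V$ amounts to adding the $(0,0)$ and $(1,1)$ matrix elements; with the quantum deformation $q_1=q$, $q_2=\cdots=q_N=1$ this is precisely what produces the combination $A_r + qD_r$, the factor $q$ being the quantum parameter attached to the wrap-around of the cyclic boundary. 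So the first step is to identify $H_r$ concretely as this sum and substitute the explicit forms.

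Next I would interpret the two pieces geometrically as the ``open'' and ``closed'' contributions on the cylinder. The term $\Sigma_r^{N,>}$ already runs over strictly increasing index sequences $0<j_1<\cdots<j_r<N$, giving all reduced products of the $(\delta_{j}-\hat t_{j})$ that do not cross the seam between column $N$ and column $1$; these correspond exactly to the cyclically decreasing words that avoid the pair $\{N,1\}$ being ``wrapped.'' The remaining terms, namely $-\hat t_N\Sigma_{r-1}^{N,>}$ together with $qD_r = q\,\sigma_N^- A_{r-1}\sigma_1^+$, supply the words that do wrap around the cylinder, the operator $\sigma_N^-\cdots\sigma_1^+$ being the combinatorial signature of a path crossing the seam and picking up the factor $q$. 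The key computational step is to check that assembling both pieces reproduces precisely one summand $(\delta_{j_1}-\hat t_{j_1})\cdots(\delta_{j_r}-\hat t_{j_r})$ for each cyclically decreasing (anti-clockwise) word $w=j_1\cdots j_r$ of length $r$, with no over- or under-counting. This is where I would use the annihilation relations $\hat t_{j+1}\sigma_{j+1}^+=\hat t_{j+1}\delta_j=\delta_j\hat t_j=\sigma_j^-\hat t_j=0$ highlighted in the worked $N=3$ example, which force the spurious cross terms to vanish and guarantee that each ordered product appears exactly once.

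Finally, having established the middle equality
\[
H_r=\sum_{|w|=r}^{\circlearrowleft}(\delta_{j_1}-\hat t_{j_1})\cdots(\delta_{j_r}-\hat t_{j_r}),
\]
the last equality $=\sum_{|w|=r}^{\circlearrowleft} t_w\,\rho_t(\pi_w)$ is immediate from the identity $\delta_j-\hat t_j = t_j\,\rho_t(\pi_j)$, which one reads off directly from the definition (\ref{pi}): indeed $\rho_t(\pi_j)=t_j^{-1}\sigma_j^-\sigma_{j+1}^+ - \sigma_j^-\sigma_j^+$, so $t_j\rho_t(\pi_j)=\sigma_j^-\sigma_{j+1}^+ - t_j\sigma_j^-\sigma_j^+ = \delta_j-\hat t_j$. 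Multiplying these along a reduced word and using $t_w=t_{j_1}\cdots t_{j_r}$, $\pi_w=\pi_{j_1}\cdots\pi_{j_r}$ converts the sum of $\delta$-$\hat t$ products into the stated cyclic word in the nil-Hecke generators. The case of $E_r=A_r'+qD_r'$ is entirely parallel: one uses the companion formulas for the osculating walker, the relation $\delta_j+\check t_j = t_j\,\rho_t(\bar\pi_j)$ (with $\bar\pi_j=\pi_j+1$), and the clockwise (cyclically increasing) ordering forced by the reversed index order in $\Sigma_r^{N,<}$.

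\textbf{The main obstacle} I anticipate is the bookkeeping in the middle step, namely verifying that the seam-crossing contributions $-\hat t_N\Sigma_{r-1}^{N,>}$ and $qD_r$ merge cleanly with $\Sigma_r^{N,>}$ to yield exactly the wrapped cyclically decreasing words without duplication. The algebra itself is routine given the vanishing relations, but the indexing-modulo-$N$ and the precise matching of the $q$-weighted wrap-around term to the single extra ``letter $N$ before letter $1$'' in a cyclic word requires care; this is the only place where genuine combinatorial attention, rather than a mechanical substitution, is needed.
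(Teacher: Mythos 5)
Your proposal is correct and takes essentially the same route the paper intends: the corollary is deduced directly from the explicit formulas (\ref{YBA2Hecke}) together with $H_{r}=A_{r}+qD_{r}$ (resp. $E_{r}=A_{r}^{\prime }+qD_{r}^{\prime }$), by recognising $\Sigma _{r}^{N,>}$ as the sum over cyclically decreasing words not containing the letter $N$, matching the $-\hat{t}_{N}$ and $q\delta _{N}$ pieces of the words containing $N$ with $-\hat{t}_{N}\Sigma _{r-1}^{N,>}$ and $qD_{r}=q\sigma _{N}^{-}\Sigma _{r-1}^{N,>}\sigma _{1}^{+}$ (using $\sigma _{N}^{-}\hat{t}_{N}=0$ and the commutation of far-apart factors), and finally converting via $\delta _{j}-\hat{t}_{j}=t_{j}\rho _{t}(\pi _{j})$ and $\delta _{j}+\check{t}_{j}=t_{j}\rho _{t}(\bar{\pi}_{j})$. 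The paper leaves exactly this bookkeeping implicit, stating the result as an immediate corollary of the preceding proposition, so your write-up supplies the same argument with the details filled in.
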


\begin{remark}
Note that while the representation $\rho _{t}$ is defined only over the
Laurent polynomials $\tilde{\Lambda}[q]$ the transfer matrices are already
defined over the original quantum space (\ref{quantum_space}), i.e. the
transfer matrices are polynomial in the equivariant parameters. In fact, if
we define $\upsilon _{j}:=t_{j}\pi _{j}=\delta _{j}+\hat{t}_{j}$ for $%
j=1,2,\ldots ,N$ the operators $\upsilon _{j}:\mathcal{V}_{n}\rightarrow 
\mathcal{V}_{n}$ obey the deformed nil-Hecke algebra relations%
\begin{equation*}
\left\{ 
\begin{array}{c}
\upsilon _{j}^{2}=-t_{j}\upsilon _{j} \\ 
\upsilon _{j}\upsilon _{j+1}\upsilon _{j}=t^{\alpha _{j}}\upsilon
_{j+1}\upsilon _{j}\upsilon _{j+1}%
\end{array}%
\right. ,
\end{equation*}%
where $\alpha _{j}=e_{j}-e_{j+1}$ is the $j$th simple root of type $A_{N}$.
Setting $t_{1}=\cdots =t_{N}=0$ we recover the nil-Coxeter algebra
relations, while setting $t_{1}=\cdots =t_{N}=1$ we obtain the nil-Hecke
algebra relations. Similar algebras have been considered before in the
context of the Fomin-Stanley algebra \cite{FominKirillov}.
\end{remark}

\begin{corollary}
Under the involution $\Theta =\mathcal{PC}:\mathcal{V}\rightarrow \mathcal{V}
$ we have the transformations%
\begin{eqnarray}
\Theta E_{r}(t)\Theta &=&H_{r}(-T)=\sum_{w}\varpi ^{-1}(T_{w})~\rho
_{-T}^{\prime }(\pi _{w}), \\
\Theta H_{r}(t)\Theta &=&E_{r}(-T)=\sum_{w}\varpi ^{-1}(T_{w})~\rho
_{-T}^{\prime }(\bar{\pi}_{w}^{\prime })\;,
\end{eqnarray}%
which result in the alternative expressions%
\begin{equation}
H_{r}=\sum_{w,~\circlearrowleft }\varpi ^{-1}(t_{w})\rho _{t}^{\prime }(\pi
_{w})\qquad \text{and}\qquad E_{r}=\sum_{w,~\circlearrowright }\varpi
^{-1}(t_{w})\rho _{t}^{\prime }(\bar{\pi}_{w})\;.
\end{equation}
\end{corollary}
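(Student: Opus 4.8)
The plan is to read the two transformation laws off the level-rank duality already established and then to rewrite each side via the cyclic-word formulas (\ref{HHecke}), (\ref{EHecke}) together with the definition (\ref{dual_pis}) of the dual representation $\rho_t^{\prime}$.

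First I would expand the identity $\Theta H(x|t)\Theta = E(x|-T)$ from the level-rank duality corollary in powers of $x$ and compare coefficients, obtaining $\Theta H_r(t)\Theta = E_r(-T)$. The companion law $\Theta E_r(t)\Theta = H_r(-T)$ is not independent: the substitution $t\mapsto -T$, with $T_i = t_{N+1-i}$, is an involution (reversing and negating twice returns $t$), so applying the displayed identity with $t$ replaced by $-T$ and conjugating by $\Theta$ (recall $\Theta^2 = 1_{\mathcal{V}}$) gives $\Theta E(x|t)\Theta = H(x|-T)$; comparing coefficients of $x^r$ yields the second law. This disposes of the left-hand equalities in both displayed lines.

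For the right-hand equalities I would begin from $H_r(t) = \Theta E_r(-T)\Theta$, insert (\ref{EHecke}) for $E_r(-T)$, and push $\Theta$ through the product one factor at a time. Rearranging (\ref{dual_pis}) gives $\Theta\rho_{-T}(\bar{\pi}_j)\Theta = -\rho_t^{\prime}(\pi_{N-j})$ with indices mod $N$; since $\Theta^2 = 1_{\mathcal{V}}$ the conjugation distributes over the word without reordering its letters, so a clockwise word $w = j_1\cdots j_r$ gives $\Theta\rho_{-T}(\bar{\pi}_w)\Theta = (-1)^r\rho_t^{\prime}(\pi_{N-j_1}\cdots\pi_{N-j_r})$. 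The accompanying scalar is $(-T)_w = (-1)^r T_w$, so the two factors of $(-1)^r$ cancel and I am left with $H_r(t) = \sum_{w}^{\circlearrowright}T_w\,\rho_t^{\prime}(\pi_{N-j_1}\cdots\pi_{N-j_r})$.

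The crux is the relabelling $j_a\mapsto j_a' = N - j_a$. I would check that sending every letter $j\mapsto N-j$ carries a clockwise (cyclically increasing) word to an anti-clockwise (cyclically decreasing) one, reindexing the sum over $\circlearrowleft$-words, and that the weight transforms correctly: from $j_a = N - j_a'$ one has $T_{j_a} = t_{N+1-j_a} = t_{j_a'+1} = \varpi^{-1}(t_{j_a'})$ by the level-$0$ action (\ref{affsymgroup}), hence $T_w = \varpi^{-1}(t_{w'})$. This produces $H_r = \sum_{w}^{\circlearrowleft}\varpi^{-1}(t_w)\,\rho_t^{\prime}(\pi_w)$, and pairing it with $\Theta E_r(t)\Theta = H_r(-T)$ records the $-T$ form in the first display. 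The same computation applied to (\ref{HHecke}) yields the $E_r$ formula and the second display. I expect the sign-tracking and the clockwise/anti-clockwise interchange under $j\mapsto N-j$ to be the only delicate points; once (\ref{dual_pis}) is unwound, the rest is substitution.
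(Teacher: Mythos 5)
Your proposal is correct and follows essentially the same route as the paper: both rest on the generator-level conjugation identities $\Theta \rho _{t}(\bar{\pi}_{j})\Theta =-\rho _{-T}^{\prime }(\pi _{N-j})$ (you obtain them formally by rearranging the definition (\ref{dual_pis}), using $\Theta ^{2}=1_{\mathcal{V}}$ and the involutivity of $t\mapsto -T$, while the paper verifies the same identities by a direct Pauli-matrix computation), followed by conjugating the cyclic-word expansions (\ref{HHecke}), (\ref{EHecke}) letter by letter, with the relabelling $j\mapsto N-j$ exchanging clockwise and anticlockwise words and the weights transforming via $\varpi ^{-1}$. Your write-up simply makes explicit the sign cancellation and reindexing bookkeeping that the paper compresses into the phrase \textquotedblleft replacing $t\rightarrow -T$ we obtain the desired formulae\textquotedblright .
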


\begin{proof}
First we compute that%
\begin{eqnarray}
\Theta \rho _{t}(\pi _{j})\Theta &=&t_{j}^{-1}\sigma _{N+1-j}^{+}\sigma
_{N-j}^{-}-\sigma _{N+1-j}^{+}\sigma _{N+1-j}^{-}=-\rho _{-T}^{\prime }(\bar{%
\pi}_{N-1})  \notag \\
\Theta \rho _{t}(\bar{\pi}_{j})\Theta &=&t_{j}^{-1}\sigma _{N+1-j}^{+}\sigma
_{N-j}^{-}+\sigma _{N+1-j}^{-}\sigma _{N+1-j}^{+}=-\rho _{-T}^{\prime }(\pi
_{N-1})
\end{eqnarray}%
Replacing $t\rightarrow -T$ we obtain the desired formulae.
\end{proof}

\subsection{Affine stable Grothendieck polynomials}

We first recall the definition of these polynomials which were introduced in 
\cite{Lam}. Let $(\cdot ,\cdot ):\mathbb{H}_{N}\times \mathbb{H}%
_{N}\rightarrow \mathbb{Z}$ be the bilinear form defined on the standard
basis $\{\pi _{w}\}$ by setting $(\pi _{w},\pi _{w^{\prime }})=\delta
_{w,w^{\prime }}$ for any $w,w^{\prime }\in \mathbb{\hat{S}}_{N}$. Set $%
\boldsymbol{h}_{r}=\sum_{\omega }^{\circlearrowleft }\pi _{\omega }$ where
the sum is over all cyclically anticlockwise ordered (decreasing) words $%
\omega =i_{1}\cdots i_{r}$ of length $r$. Then the \emph{affine stable
Grothendieck polynomial} is defined as 
\begin{equation}
\tilde{G}_{w}(x_{1},\ldots ,x_{n})=\sum_{\alpha }(\boldsymbol{h}_{\alpha
},\pi _{w})x^{\alpha },
\end{equation}%
where the sum is over all compositions $\alpha =(\alpha _{1},\ldots ,\alpha
_{n})$ of $\ell (w)$ and $\boldsymbol{h}_{\alpha }=\boldsymbol{h}_{\alpha
_{n}}\cdots \boldsymbol{h}_{\alpha _{1}}$. Because of the definition of the
bilinear form $(\cdot ,\cdot )$ the above sum over compositions $\alpha $
effectively restricts to a sum over all decompositions of $w$ into
cyclically decreasing subwords \cite{Lam}. The following corollary states
that we if replace the $\boldsymbol{h}_{r}$'s with our $H_{r}$'s given in (%
\ref{HHecke}) then we can identify for $t_{1}=\cdots =t_{N}=1$ the partition
function of our integrable model with an affine stable Grothendieck
polynomial in the representation (\ref{pi}).

\begin{corollary}
For $\lambda ,\mu \in \Pi _{n,k}$ and $d\in \mathbb{Z}$ fixed let $%
w=w(\lambda ,\mu ,d)$ be the unique affine permutation from Lemma \ref%
{lem:lam_perm} such that $\delta _{w}|\mu \rangle =q^{d}|\lambda \rangle $.
Then%
\begin{equation}
\langle \lambda |Z_{n}(x|t)|\mu \rangle =\sum_{\alpha }\langle \lambda
|H_{\alpha }|\mu \rangle x^{\alpha }=\sum_{a}t_{w}\langle \lambda |\rho
_{t}(\pi _{w})|\mu \rangle x^{a},
\end{equation}%
where the second sum runs over all decompositions $a=(a_{1},\ldots ,a_{n})$
of $w$ into cyclically anti-clockwise ordered words.
\end{corollary}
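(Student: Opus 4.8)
The plan is to prove the two equalities in turn, the first being a purely formal consequence of the factorisation of the transfer operator and the second resting on the combinatorics of cyclically decreasing words together with the uniqueness clause of Lemma~\ref{lem:lam_perm}. For the first equality I would start from the factorisation $Z_{n}(x|t)=H(x_{n}|t)\cdots H(x_{1}|t)$ in (\ref{Z2H}) and substitute the series expansion $H(x_{i}|t)=\sum_{r\geq 0}H_{r}x_{i}^{r}$. Multiplying out the $n$ factors and collecting the coefficient of the monomial $x^{\alpha}=x_{1}^{\alpha_{1}}\cdots x_{n}^{\alpha_{n}}$ for each composition $\alpha=(\alpha_{1},\dots,\alpha_{n})$ gives $Z_{n}(x|t)=\sum_{\alpha}H_{\alpha}x^{\alpha}$ with $H_{\alpha}=H_{\alpha_{n}}\cdots H_{\alpha_{1}}$; taking the matrix element $\langle\lambda|\,\cdot\,|\mu\rangle$ yields the first identity.

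For the second equality I would expand each factor $H_{\alpha_{i}}$ by means of (\ref{HHecke}), writing $H_{\alpha_{i}}=\sum^{\circlearrowleft}_{|w^{(i)}|=\alpha_{i}}t_{w^{(i)}}\rho_{t}(\pi_{w^{(i)}})$, where each $t_{w^{(i)}}\rho_{t}(\pi_{w^{(i)}})$ is a string of operators $\delta_{j}-\hat t_{j}$ running over a cyclically anti-clockwise ordered word. Thus $\langle\lambda|H_{\alpha}|\mu\rangle$ becomes an a priori double sum, over $\alpha$ and over $n$-tuples $(w^{(n)},\dots,w^{(1)})$ of cyclically decreasing words, of matrix elements of products of the $\delta_{j}-\hat t_{j}$. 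The task is to decide which of these survive. Since $\hat t_{j}=t_{j}\sigma^{-}_{j}\sigma^{+}_{j}$ acts diagonally, as $t_{j}$ times the projector onto $\varepsilon_{j}=0$, while $\delta_{j}$ is the only part that transports a one-letter, a summand containing $p$ diagonal factors $\hat t$ moves one-letters through only the remaining $\delta$'s. By Lemma~\ref{lem:lam_perm} the matrix element $\langle\lambda|\delta_{j_{1}}\cdots\delta_{j_{m}}|\mu\rangle$ is non-zero, and then equal to $q^{d}$, only when $j_{1}\cdots j_{m}$ is a reduced word for the unique affine permutation $w=w(\lambda,\mu,d)$ attached to the chosen power $q^{d}$; in particular the $\delta$-string must have length $\ell(w)$, which pins down the permutation, while the nil--Temperley--Lieb relations $\delta_{j}^{2}=0$ and $\delta_{j}\delta_{j+1}\delta_{j}=0$ recorded after Proposition~\ref{prop:nilcoxrep} annihilate every non-reduced $\delta$-string.

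Extracting the coefficient of $q^{d}$ therefore leaves exactly the pure-$\delta$ contributions in which the concatenation $w^{(n)}\cdots w^{(1)}$ is a reduced word for $w$; by definition these are precisely the decompositions $a=(a_{1},\dots,a_{n})$ of $w$ into cyclically anti-clockwise ordered subwords. For each such decomposition the operator $\rho_{t}(\pi_{w^{(n)}}\cdots\pi_{w^{(1)}})$ collapses to $\rho_{t}(\pi_{w})$ and the transported weight to $t_{w}$, while the exponent $\alpha$ of the surviving monomial coincides with the length vector of $a$, so $x^{\alpha}=x^{a}$. Summing over all decompositions reproduces $\sum_{a}t_{w}\langle\lambda|\rho_{t}(\pi_{w})|\mu\rangle x^{a}$, which is the asserted second equality.

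I expect the main obstacle to be the careful bookkeeping of the diagonal $\hat t$-insertions and of the non-reduced $\delta$-strings: one must verify that, after fixing the power $q^{d}$ and accounting for the reduced-word independence of $t_{w}\rho_{t}(\pi_{w})$ (which is transparent precisely when $t_{1}=\dots=t_{N}=1$, as in the surrounding discussion of affine stable Grothendieck polynomials), these extra terms either vanish by the nil--Temperley--Lieb relations or contribute only to strictly higher $x$-degree, and hence do not interfere with the identification with the anti-clockwise decompositions of the single permutation $w$. The uniqueness statement in Lemma~\ref{lem:lam_perm} is the device that collapses the double sum over compositions $\alpha$ and over tuples of cyclically decreasing words into a single sum over decompositions of $w$.
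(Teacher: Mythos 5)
Your overall strategy is the one the paper itself intends for this corollary (which it states without proof): expand $Z_{n}=H(x_{n})\cdots H(x_{1})$ via (\ref{Z2H}), insert (\ref{HHecke}) for each factor, and decide which words survive between $\langle \lambda |$ and $|\mu \rangle $. The first equality and your analysis of the pure-$\delta $ strings (non-reduced words vanish in the nil-Coxeter representation, and Lemma \ref{lem:lam_perm} then pins down the permutation) are fine. The genuine gap is your treatment of the diagonal insertions. You claim that extracting the coefficient of $q^{d}$ "leaves exactly the pure-$\delta $ contributions", with the fallback that mixed terms "contribute only to strictly higher $x$-degree and hence do not interfere". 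Neither holds: a factor $\hat{t}_{j}$ carries no power of $q$ and no hop, so a string containing both $\delta $'s and $\hat{t}$'s lies in exactly the same $q^{d}$-sector as the pure-$\delta $ string obtained by deleting its diagonal letters; and since the asserted identity is an equality of polynomials in $x_{1},\ldots ,x_{n}$, not of their lowest-degree homogeneous parts, higher-degree terms cannot be discarded. Equivalently, by the second case of (\ref{piaction}) ($\pi _{j}$ acts by $-1$ whenever $\varepsilon _{j}=0$), the matrix element $\langle \lambda |\rho _{t}(\pi _{v})|\mu \rangle $ is nonzero for many $v$ that are \emph{not} of the form $w(\lambda ,\mu ,d)$, so the double sum over tuples of cyclically decreasing words does not collapse onto factorisations of a single permutation.

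The smallest case already exhibits the failure. Take $N=2$, $n=1$, $\lambda =\mu =\emptyset $, so $|\mu \rangle =v_{1}\otimes v_{0}$ and $w(\emptyset ,\emptyset ,0)=\mathrm{id}$, whose only decomposition is the empty one; your bookkeeping then produces the value $1$. But $H_{1}=t_{1}\rho _{t}(\pi _{1})+t_{2}\rho _{t}(\pi _{2})=(\delta _{1}-\hat{t}_{1})+(\delta _{2}-\hat{t}_{2})$ gives
\begin{equation*}
\langle \emptyset |Z_{1}(x_{1}|t)|\emptyset \rangle =1+x_{1}\langle
\emptyset |H_{1}|\emptyset \rangle =1-t_{2}x_{1}\;,
\end{equation*}
in agreement with the direct lattice computation (the path runs straight up column $1$ with weight $1$, while the empty column $2$ contributes the vertex weight $1-x_{1}t_{2}$ from (\ref{L})). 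The missing term $-t_{2}x_{1}$ sits in the $q^{0}$-sector and comes from the single-letter word $(2)$, whose Demazure product $s_{2}$ satisfies $\delta _{s_{2}}|\mu \rangle =0$, i.e.\ $s_{2}\neq w(\emptyset ,\emptyset ,d)$ for any $d$. So the step reducing the sum to reduced factorisations of $w(\lambda ,\mu ,d)$ is false as stated, and no argument confined to the nil-Temperley-Lieb relations (\ref{delta}) can repair it: a correct account must keep the full $0$-Hecke combinatorics of the $\pi $'s --- Demazure products, the signs from $\pi _{j}^{2}=-\pi _{j}$, and the diagonal contributions --- which is precisely the inhomogeneous, Grothendieck-type (rather than Stanley-type) structure that the surrounding discussion of affine stable Grothendieck polynomials is pointing at.
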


\begin{remark}
The non-affine versions, \emph{stable} Grothendieck polynomials, have been
considered by Buch in the context of the $K$-theory of Grassmannians \cite%
{BuchKtheory}. The affine polynomials (for $w$ being an affine Grassmannian
permutation) have been identified with the Schubert basis in the $K$%
-cohomology of the affine Grassmannian \cite{LSS} and for 321-avoiding $w$'s
they have been conjectured \cite{Lam} to be related to the quantum $K$%
-theory of Grassmannians \cite{BM}.
\end{remark}

\section{Inhomogeneous Vicious and Osculating Walkers}


\begin{figure}[tbp]
\begin{equation*}
\includegraphics[scale=0.32]{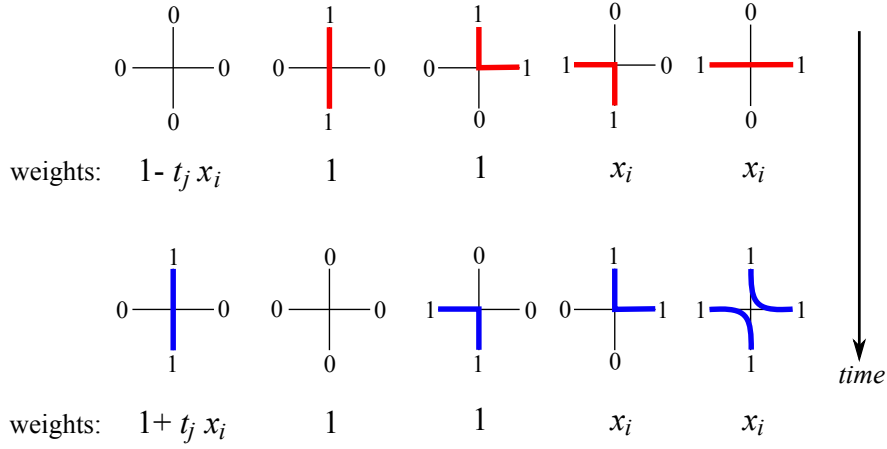}
\end{equation*}%
\caption{The weights of the five allowed vertex configurations for the
vicious (top) and osculating walker (bottom) model. Here $x_{i}$ and $t_{j}$
are commuting indeterminates with $i$ being the row index and $j$ the column
index of the square lattice.}
\label{fig:5vmodels}
\end{figure}

Consider an $n\times N$ square lattice ($n$ rows and $N$ columns) where we
assign statistical variables $\varepsilon =0,1$ to the internal lattice
edges $\mathbb{E}$. be We call a map $\mathbb{E}\rightarrow \{0,1\}$ a
lattice configuration $\mathcal{C}$. Fix two partitions $\lambda ,\mu $
whose Young diagrams fit into the bounding box of height $n$ and width $k$.
Let $\varepsilon (\lambda )$ and $\varepsilon (\mu )$ be the corresponding
01-words of length $N$ with $n$ 1-letters at positions $\ell _{i}(\lambda
)=\lambda _{n+1-i}+i$ and $\ell _{i}(\mu )=\mu _{n+1-i}+i$. We shall only
consider configurations $\mathcal{C}=\mathcal{C}(\lambda ,\mu )$ where the
values on the outer edges on the top and bottom are fixed by the 01-words $%
\varepsilon (\mu )$ and $\varepsilon (\lambda )$, while imposing periodic
boundary conditions in the horizontal direction; this setup is the same as
in \cite{VicOsc}.


\begin{figure}[tbp]
\begin{equation*}
\includegraphics[scale=0.24]{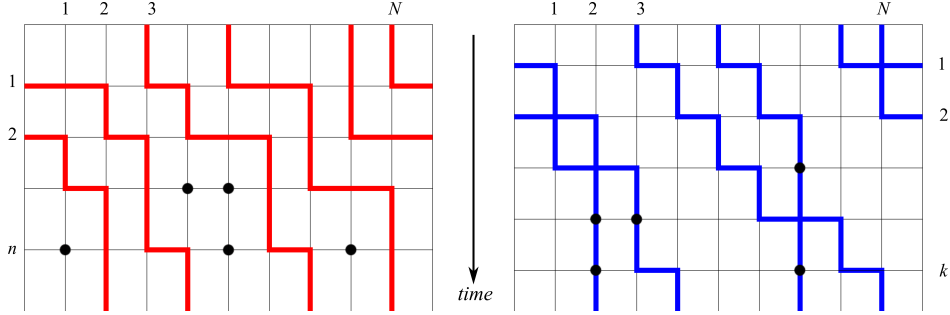}
\end{equation*}%
\caption{Examples of a vicious (left) and osculating (right) walker
configuration.}
\label{fig:walks}
\end{figure}

For each such lattice configuration $\mathcal{C}$ define a weight in $%
\mathbb{C}[x_{1},\ldots ,x_{n}]\otimes \Lambda \lbrack q]$ through the
product over the vertex weights depicted at the top in Figure \ref%
{fig:5vmodels},%
\begin{equation}
\limfunc{wt}(\mathcal{C}):=q^{d(\mathcal{C})}\prod_{\text{vertices }v\in 
\mathcal{C}}\limfunc{wt}(v)\;.  \label{wtC}
\end{equation}%
If a given configuration $\mathcal{C}$ contains vertices which are not shown
in Figure \ref{fig:5vmodels} then we assign those vertices weight zero,
meaning that $\limfunc{wt}(\mathcal{C})=0$ and we call such configurations 
\emph{forbidden}. Finally, $d(\mathcal{C})$ in (\ref{wtC}) denotes the
number of paths transgressing the boundary where the lattice is glued
together to give the cylinder. Namely, each configuration can be identified
with $n$ non-intersecting paths on the cylinder with start positions $\ell
_{i}(\mu )$ and end positions $\ell _{i}(\lambda )$ by connecting lattice
edges which have value 1; see Figure \ref{fig:walks}. Following Fisher \cite%
{Fisher} we call this statistical model the (inhomogeneous) vicious walker
model, although our choice of weights constitutes a non-trivial extension of
his model.

Similarly, we define an inhomogeneous extension of Brak's osculating walker
model \cite{Brak} on a $k\times N$ square lattice ($k$ rows and $N$ columns)
using the weights depicted at the bottom in Figure \ref{fig:5vmodels}. Again
our choice of weights is special and differs from the one in \emph{loc. cit.}

\begin{lemma}
The partition functions of the inhomogeneous vicious and osculating walker
models%
\begin{equation}
Z_{\lambda ,\mu }(x|t)=\sum_{\mathcal{C}}\limfunc{wt}(\mathcal{C})=\sum_{%
\mathcal{C}}q^{d(\mathcal{C})}\prod_{v\in \mathcal{C}}\limfunc{wt}(v)
\label{partition_function}
\end{equation}%
is given in terms of the matrix elements of the operator (\ref{Zdef}). In
particular, the partition functions of a single lattice row for the vicious
walker model are given by the matrix elements of the operator,%
\begin{equation}
H(x|t)=A(x)+qD(x)=\sum_{r\geq 0}x^{r}H_{r},  \label{H}
\end{equation}%
and for the osculating model by the matrix elements of the operator,%
\begin{equation}
E(u|t)=A^{\prime }(x)+qD^{\prime }(x)=\sum_{r\geq 0}x^{r}E_{r}\;.  \label{E}
\end{equation}
\end{lemma}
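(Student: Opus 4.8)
The plan is to recognize the partition function (\ref{partition_function}) as the standard transfer-matrix contraction of the vertex model, and then to match this contraction term by term with the operator (\ref{Zdef}). The starting observation, recorded earlier when the $L$-matrix elements were identified with the vertex weights of Figure~\ref{fig:5vmodels}, is that $(L_{ij})_{\varepsilon _1\varepsilon _2}^{\varepsilon _1'\varepsilon _2'}$ is precisely the Boltzmann weight of the vertex in row $i$ and column $j$ whose W, N, E, S edges carry the values $\varepsilon _1,\varepsilon _2,\varepsilon _1',\varepsilon _2'$, the five admissible weights being those in Figure~\ref{fig:5vmodels} and every other configuration carrying weight zero. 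Hence the \emph{forbidden} configurations of (\ref{wtC}) are exactly those in which some factor $(L_{ij})_{\cdots}^{\cdots}$ vanishes, so they contribute nothing on either side of the claimed identity.

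First I would expand a single row-monodromy matrix $M_i=L_{iN}\cdots L_{i1}$ from (\ref{rowmom}) in the auxiliary space $V[x_i]$. By the definition of the ordered operator product, the matrix element of $M_i$ between auxiliary basis vectors $v_a,v_b$ and between prescribed South and North vertical edges is the sum, over all assignments of the internal horizontal edges of row $i$, of the product of the $N$ vertex weights in that row; the summation over the intermediate horizontal indices is exactly the matrix multiplication, with $a$ the edge entering column $1$ and $b$ the edge leaving column $N$. Stacking the rows via $M_n\cdots M_1$ identifies the North edges of row $i$ with the South edges of row $i+1$, and the summation over these shared vertical edges is automatic, since they are the internal indices of the quantum space $\mathcal{V}$. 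Consequently $\langle \lambda |M_n\cdots M_1|\mu \rangle$, with the auxiliary boundaries still free, is the weighted sum over all configurations of the $n\times N$ lattice whose outer vertical edges are fixed by $\varepsilon (\mu )$ and $\varepsilon (\lambda )$ and whose horizontal boundary edges are prescribed.

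Next I would take the partial trace $\limfunc{Tr}_{V^{\otimes n}}$ defining $Z_n$ in (\ref{Zdef}). In each row the trace sets the W edge of column $1$ equal to the E edge of column $N$, which is precisely the cylindric (periodic) horizontal boundary condition imposed on the $\mathcal{C}$. To recover the factor $q^{d(\mathcal{C})}$ I would use the $q$-deformed monodromy matrix (\ref{qMom}) with $q_1=q$ and $q_2=\cdots =q_N=1$: the inserted factor $\left(\begin{smallmatrix}1&0\\0&q\end{smallmatrix}\right)_i$ at column $1$ multiplies the contribution by $q$ exactly when the horizontal edge crossing the seam carries the value $1$, i.e.\ when a path winds around the cylinder in row $i$; summing over rows, the total power of $q$ equals the number $d(\mathcal{C})$ of paths transgressing the boundary. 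This gives $\langle \lambda |Z_n(x|t)|\mu \rangle =\sum_{\mathcal{C}}\limfunc{wt}(\mathcal{C})=Z_{\lambda ,\mu }(x|t)$, the first assertion. For the single-row statements I specialise to $n=1$, so that $V^{\otimes n}=V$; decomposing $M_1(q)$ over the auxiliary space as in (\ref{row_yba}) and using the deformation factor yields $\limfunc{Tr}_V M_1(q)=A(x)+qD(x)=H(x|t)$, which is (\ref{H}), and expanding $A(x)=\sum_r A_r x^r$, $D(x)=\sum_r D_r x^r$ gives $H(x|t)=\sum_{r\geq 0}x^r H_r$ with $H_r=A_r+qD_r$. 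The osculating case is identical after replacing $L$ by $L'$ and $n$ by $k$, giving $E(x|t)=A'(x)+qD'(x)=\sum_{r\geq 0}x^r E_r$ as in (\ref{E}); the whole computation runs along the same lines as in \cite{VicOsc}.

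The main obstacle is not any single estimate but the careful bookkeeping of conventions: one must check that the matrix-element contraction assigns edges to tensor indices consistently with the W/N/E/S labelling of Figure~\ref{fig:5vmodels}, that the trace genuinely implements the cylindric gluing in every row, and, above all, that the deformation factor tracks the winding number so that the power of $q$ reproduces $d(\mathcal{C})$. Once these identifications are pinned down, the equality is a direct unwinding of the definitions (\ref{L}), (\ref{rowmom}), (\ref{qMom}) and (\ref{Zdef}).
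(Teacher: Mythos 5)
Your proposal is correct and follows essentially the same route as the paper's own (much terser) proof: both rest on the identification of the matrix elements $(L_{ij})_{a(v)b(v)}^{c(v)d(v)}$ with the vertex weights of Figure~\ref{fig:5vmodels}, so that matrix elements of (\ref{Zdef}) unwind into the weighted sums (\ref{partition_function}). Your additional bookkeeping — the trace implementing the cylindric gluing and the deformation factor $q_{1}=q$, $q_{2}=\cdots =q_{N}=1$ in (\ref{qMom}) producing $q^{d(\mathcal{C})}$ via the seam edge — is exactly what the paper leaves implicit, and it is carried out correctly.
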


\begin{proof}
Set $Z(x|t)|\mu \rangle =\sum_{\lambda \in \Pi _{n,k}}Z_{\lambda ,\mu
}(x|t)|\lambda \rangle $. Then the partition function with fixed start and
end positions is given by 
\begin{equation*}
Z_{\lambda ,\mu }(x|t)=\sum_{\mathcal{C}}q^{d(\mathcal{C})}\prod_{v\in 
\mathcal{C}}(L_{ij})_{a(v)b(v)}^{c(v)d(v)}=\sum_{\mathcal{C}}q^{d(\mathcal{C}%
)}\prod_{v\in \mathcal{C}}\limfunc{wt}(v)\;,
\end{equation*}%
where $a,b,c,d=0,1$ are the values of the W, N, E, and S edge of the vertex $%
v$ for the configuration $\mathcal{C}$.
\end{proof}

\subsection{Combinatorial formulae}

Given a partition $\lambda $ inside the $n\times k$ bounding box we recall
the definition of its cylindric loop $\lambda \lbrack r]$; see \cite%
{GesselKrattenthaler}, \cite{Postnikov}.

\begin{definition}[cylindric loops]
Let $\lambda =(\lambda _{1},\ldots ,\lambda _{n})\in (n,k)$ and define the
following associated \emph{cylindric loops} $\lambda \lbrack r]$ for any $%
r\in \mathbb{Z}$,%
\begin{equation}
\lambda \lbrack r]:=(\ldots ,\underset{r}{\lambda _{n}+r+k},\underset{r+1}{%
\lambda _{1}+r},\ldots ,\underset{r+n}{\lambda _{n}+r},\underset{r+n+1}{%
\lambda _{1}+r-k},\ldots )\;.  \notag
\end{equation}
\end{definition}

For $r=0$ the cylindric loop can be visualized as a path in $\mathbb{Z}%
\times \mathbb{Z}$ determined by the outline of the Young diagram of $%
\lambda $ which is periodically continued with respect to the vector $(n,-k)$%
. For $r\neq 0$ this line is shifted $r$ times in the direction of the
lattice vector $(1,1)$.

\begin{definition}[cylindric \& toric skew diagrams]
Given two partitions $\lambda ,\mu \in (n,k)$ denote by $\lambda /d/\mu $
the set of squares between the two lines $\lambda \lbrack d]$ and $\mu
\lbrack 0]$ modulo integer shifts by $(n,-k)$, 
\begin{equation*}
\lambda /d/\mu :=\{\langle i,j\rangle \in \mathbb{Z}\times \mathbb{Z}/(n,-k)%
\mathbb{Z}~|~\lambda \lbrack d]_{i}\geq j>\mu \lbrack 0]_{i}\}\;.
\end{equation*}%
We shall refer to $\lambda /d/\mu $ as a \emph{cylindric skew-diagram} of
degree $d$. If the number of boxes in each row does not exceed $k$ then $%
\lambda /d/\mu $ is called \emph{toric}.
\end{definition}

A cylindric skew diagram $\nu /d/\mu $ which has at most one box in each
column will be called a (cylindric/toric) \emph{horizontal strip} and one
which has at most one box in each row a (cylindric/toric) \emph{vertical
strip}. The \emph{length} of such strips will be the number of boxes within
the skew diagram.

\begin{proposition}[vicious walkers]
Let $\mu \in \Pi _{n,k}$ and $\tilde{H}(x)=x^{k}H(x^{-1})|_{\mathcal{V}_{n}}$%
. We have the following combinatorial action of the vicious walker transfer
matrix,%
\begin{equation}
\tilde{H}(x)|\mu \rangle =\sum_{d=0,1}q^{d}\sum_{\lambda /d/\mu \text{ hor
strip}}\prod_{j\in J_{\lambda /d/\mu }}(x-t_{j})|\lambda \rangle \;,
\label{combH}
\end{equation}%
where the second sum runs over all partitions $\lambda $ (inside the same
bounding box as $\mu $) such that $\lambda /d/\mu $ is a cylindric
horizontal strip and the set $J_{\lambda /d/\mu }$ consists of the diagonals 
$j-i+n$ of the bottom squares $s=(i,j)$ in each column which does not
intersect $\lambda /d/\mu $. If the column does not contain any boxes add $%
j+n$ to $J_{\lambda /d/\mu }$.
\end{proposition}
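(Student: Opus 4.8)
The plan is to evaluate the matrix element $\langle\lambda|H(x|t)|\mu\rangle$ directly as the single-row partition function supplied by the Lemma around~(\ref{partition_function}), and then to read off~(\ref{combH}) by applying the reciprocal-polynomial substitution $\tilde H(x)=x^{k}H(x^{-1})$. Concretely, by that Lemma $\langle\lambda|H(x|t)|\mu\rangle=\sum_{\mathcal{C}}q^{d(\mathcal{C})}\prod_{v}\limfunc{wt}(v)$, where $\mathcal{C}$ ranges over the admissible configurations of a single lattice row whose bottom (S) edges carry the $01$-word $\varepsilon(\mu)$ and whose top (N) edges carry $\varepsilon(\lambda)$, the vertex weights being those of the top of Figure~\ref{fig:5vmodels} (equivalently, the entries of the $L$-operator~(\ref{L})). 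Thus the whole statement reduces to a finite combinatorial bookkeeping over these single-row configurations, with no algebraic input beyond the vertex weights.

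First I would classify the admissible rows. The five allowed vertices constrain the horizontal (auxiliary-space) edge values so that the number of $1$'s is conserved and two occupied vertical edges never share a column; reconstructing the occupied edges as lattice paths, an admissible row is exactly a family of non-intersecting rightward steps carrying the bottom positions $\ell_i(\mu)=\mu_{n+1-i}+i$ to the top positions $\ell_i(\lambda)=\lambda_{n+1-i}+i$. Encoding the swept-out cells as the cylindric skew diagram $\lambda/d/\mu$, the non-intersection condition (the single forbidden, weight-$0$ vertex) forces at most one box per column, i.e. $\lambda/d/\mu$ is precisely a cylindric horizontal strip. The degree $d$ counts how often a path wraps across the seam; since $H=A+qD$ by~(\ref{H}) and only the $qD$ summand carries a boundary crossing (its $\sigma_{1}^{+}\cdots\sigma_{N}^{-}$ structure, visible in~(\ref{YBA2Hecke})), at most one wrap is possible in a single row, giving $d\in\{0,1\}$, with the two values contributing the $A$- and $qD$-parts and hence the prefactor $q^{d}$.

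It remains to compute the weight of each admissible configuration. Running through the five vertices, a column disjoint from the strip (both vertical edges empty and no horizontal walker passing) contributes the factor $(1-x t_{j})$, while the turning vertices and the columns occupied by a box contribute only powers of $x$ and $1$. Multiplying along the row, the net weight is $x^{b}\prod_{j}(1-x t_{j})$ for a suitable exponent $b$, the product running over the empty–empty columns. Substituting $\tilde H(x)=x^{k}H(x^{-1})$ turns each $(1-x^{-1}t_{j})$ into $x^{-1}(x-t_{j})$, and a count of boxes shows the surviving power of $x$ is cancelled by the overall factor $x^{k}$, so that $\tilde H(x)|\mu\rangle$ expands with coefficients $\prod_{j\in J_{\lambda/d/\mu}}(x-t_{j})$.

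The main obstacle is this last bookkeeping step: identifying $J_{\lambda/d/\mu}$ exactly as the stated set of diagonals $j-i+n$ attached to the bottom squares of the columns disjoint from $\lambda/d/\mu$, together with the convention $j+n$ for a wholly empty column, and verifying that the leftover powers of $x$ cancel against $x^{k}$. This requires tracking, column by column, which equivariant parameter $t_{j}$ is produced by the $(1-x t_{j})$ vertex sitting at the relevant diagonal; it is careful but entirely finite. As an independent check, one may instead expand $H(x)|\mu\rangle$ via the cyclic-word formula~(\ref{HHecke}), using the explicit actions~(\ref{delta}) of $\delta_{j}$ and of $\hat t_{j}$ on $01$-words, which reproduces the same product over empty columns and offers a second derivation of $J_{\lambda/d/\mu}$.
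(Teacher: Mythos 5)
Your proposal is correct and takes essentially the same route as the paper: reduce to the single-row partition-function Lemma, identify admissible row configurations with cylindric horizontal strips $\lambda/d/\mu$ (with $d\in\{0,1\}$ coming from the $A$- and $qD$-parts), and track the vertex weights so that the factor $x^{k}$ cancels the powers of $x$ and leaves $\prod_{j\in J_{\lambda/d/\mu}}(x-t_{j})$. In fact the paper gives no formal proof of this proposition at all---it only illustrates the bijection and the reading of $J_{\lambda/d/\mu}$ on the worked example of Figure \ref{fig:ADopVic}---so your sketch, with its explicit count showing that the number of empty columns is $k-|\lambda/d/\mu|$ and hence that the $x$-powers cancel, is if anything more complete, the only remaining step being the column-by-column identification of the diagonals, which you correctly flag.
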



\begin{figure}[tbp]
\begin{equation*}
\includegraphics[scale=0.5]{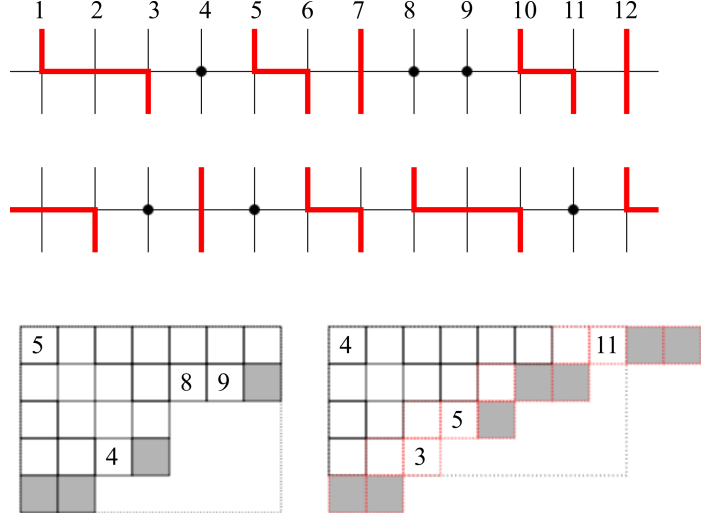}
\end{equation*}%
\caption{A row configuration corresponding to the $A$ (top) and $D$-operator
(bottom) of the vicious walker model. Displayed below are the corresponding
skew Young diagrams, left and right, respectively.}
\label{fig:ADopVic}
\end{figure}

\begin{example}
We explain (\ref{combH}) on an example; the reader should refer to Figure %
\ref{fig:ADopVic} and recall that $H=A+qD$. Set $N=12$ and $n=6$. Consider
the row configuration shown on top in Figure \ref{fig:ADopVic}. Draw the
Young diagram of $\mu =(7,6,4,3,0)$ which corresponds to the 01-word fixing
the values of the vertical lattice edges on top. Starting with the leftmost
path add a box for each horizontal path edge in the bottom row of the Young
diagram. Then do the same for the next path in the row above, etc. If there
are no horizontal path edges do not add any boxes. The number in the boxes
of the resulting skew diagram give the diagonals $+~n$ for each bottom
square in those columns which do not intersect the horizontal strip.

To obtain the horizontal strip for the $D$-operator (bottom row
configuration in Figure \ref{fig:ADopVic}) one has to add a boundary ribbon
of $N$ boxes to $\lambda =(6,4,2,1)$ first whose boxes are indicated by a
red dotted line. Note that $n=4$ in this case.
\end{example}

We now state the analogous combinatorial action for the transfer matrix of
the osculating walker model.

\begin{proposition}[osculating walkers]
Let $\mu \in \Pi _{n,k}$ and $\tilde{E}(x)=x^{n}E(x^{-1})|_{\mathcal{V}_{n}}$%
. We have the following combinatorial action of the osculating walker
transfer matrix,%
\begin{equation}
\tilde{E}(x)|\mu \rangle =\sum_{d=0,1}q^{d}\sum_{\lambda ^{\prime }/d/\mu
^{\prime }\text{ hor strip}}\prod_{j\in J_{\lambda ^{\prime }/d/\mu ^{\prime
}}^{\prime }}(x+T_{j})|\lambda \rangle \;,  \label{combE}
\end{equation}%
where $\lambda ^{\prime },\mu ^{\prime }$ denote the conjugate partitions of 
$\lambda ,\mu $, $T_{j}=t_{N+1-j}$ and $J_{\lambda ^{\prime }/d/\mu ^{\prime
}}^{\prime }$ is defined analogously to $J_{\lambda ^{\prime }/d/\mu
^{\prime }}$ but with $n$ replaced by $k$.
\end{proposition}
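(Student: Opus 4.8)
The plan is to deduce this statement from the already-proven vicious-walker proposition via level-rank duality, interchanging the roles of $n$ and $k$. The decisive input is the Corollary (level-rank duality), which gives the operator identity $\Theta H(x|t)\Theta = E(x|-T)$ with $\Theta=\mathcal{PC}$ and $T_j=t_{N+1-j}$; solving for $E$ yields $E(x|t)=\Theta H(x|-T)\Theta$, where the $j$-th parameter slot of $H$ now carries $-T_j$. Since $\mathcal{C}$ flips the number of $1$-letters from $n$ to $k$ while $\mathcal{P}$ preserves it, the involution $\Theta$ restricts to an isomorphism $\mathcal{V}_n\to\mathcal{V}_k$, which is exactly the mechanism that will trade $n$ for $k$.

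First I would pass to the normalized transfer matrices. Because the exponent in $\tilde{E}(x)=x^nE(x^{-1})|_{\mathcal{V}_n}$ is $n$, which is precisely the codimension $N-k$ governing the vicious normalization on $\mathcal{V}_k$, conjugation by $\Theta$ converts $\tilde{E}$ on $\mathcal{V}_n$ into $\tilde{H}(x|-T)=x^nH(x^{-1}|-T)|_{\mathcal{V}_k}$; explicitly $\tilde{E}(x)|_{\mathcal{V}_n}=\Theta\,\tilde{H}(x|-T)\,\Theta$. Writing $\Theta|\mu\rangle=|\mu^\ast\rangle$ with $\mu^\ast=(\mu')^\vee\in\Pi_{k,n}$ (the Remark after the level-rank Corollary identifies $\Theta$ with $\lambda\mapsto\lambda^\ast$), I then apply the vicious-walker proposition on $\mathcal{V}_k$, i.e. inside the $k\times n$ bounding box and with parameter vector $-T$. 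This gives $\tilde{H}(x|-T)|\mu^\ast\rangle=\sum_{d=0,1}q^d\sum_{\kappa/d/\mu^\ast\text{ hor strip}}\prod_{j}(x+T_j)|\kappa\rangle$, each factor $x-(-T_j)$ becoming $x+T_j$, and the index set being the set $J$ computed in the $k\times n$ box, which is exactly the $J'$ of the statement (the offset $n$ replaced by $k$). Applying $\Theta$ once more and renaming $\lambda:=\kappa^\ast$ produces the asserted expansion, provided the combinatorial data on both sides agree.

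The remaining and only genuinely delicate step is this combinatorial translation under $\Theta$. Since $\kappa=\lambda^\ast=(\lambda')^\vee$ and $\mu^\ast=(\mu')^\vee$, the cylindric skew diagram $\kappa/d/\mu^\ast$ is literally $(\lambda')^\vee/d/(\mu')^\vee$, namely the image of $\lambda'/d/\mu'$ under complementation in the $k\times n$ box. I would show that this complementation is the $180^\circ$ rotation of cylindric diagrams induced by $\mathcal{P}$ (reversal of $01$-words), that it preserves the degree $d$, carries horizontal strips to horizontal strips, and sends the set of bottom-square diagonals of $\lambda'/d/\mu'$ onto that of $(\lambda')^\vee/d/(\mu')^\vee$. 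I want to stress that the transpose $\mathcal{C}$ creates no horizontal/vertical mismatch: it is absorbed by phrasing the answer in the \emph{conjugate} partitions $\lambda',\mu'$, so that only the complementation $\vee$ survives in the comparison $(\lambda')^\vee/d/(\mu')^\vee$ versus $\lambda'/d/\mu'$. The main obstacle is the bookkeeping for the diagonal index set, since under the rotation the bottom squares of one diagram become the top squares of the other, so matching $J'$ requires tracking the $j-i$ labels and the empty-column convention through the complementation. I expect this to reduce to exactly the case analysis on $01$-words used for the vicious model, now read in reverse, with the invariance of the power $q^d$ forced directly by the operator identity $\tilde{E}(x)|_{\mathcal{V}_n}=\Theta\,\tilde{H}(x|-T)\,\Theta$ itself, which already pins down that the degree is unchanged.
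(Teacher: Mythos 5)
Your overall strategy---reducing the osculating formula to the vicious one via level-rank duality, with the normalisation $x^{n}$ matching $x^{N-k}$ on $\mathcal{V}_{k}$---is exactly the right one, and it is what the paper implicitly does (compare the proof of the lemma giving (\ref{E2facE}), (\ref{facE2E}): ``a direct consequence of level-rank duality (\ref{levelrankmom}) and our previous result for the $H_{r}$'s''). The proof breaks, however, at the identification $\Theta|\mu\rangle=|\mu^{\ast}\rangle$. You cite the paper's Remark for this, but that Remark is inconsistent with the paper's own conventions: with the word convention $\ell_{j}(\lambda)=\lambda_{n+1-j}+j$ one checks directly that $\mathcal{P}|\lambda\rangle=|\lambda^{\vee}\rangle$, $\mathcal{C}|\lambda\rangle=|\lambda^{\ast}\rangle$ and $\Theta|\lambda\rangle=|\lambda^{\prime}\rangle$ (conjugation, not star). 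For instance, for $n=k=2$ the partition $(2,1)$ has word $0101$, which is fixed by $\Theta=\mathcal{PC}$, and indeed $(2,1)^{\prime}=(2,1)$ while $(2,1)^{\ast}=(1,0)$. The paper itself uses $\Theta|\lambda\rangle=|\lambda^{\prime}\rangle$ in all its actual computations, e.g.\ in $\tilde{Z}_{\lambda,\mu}^{\prime}(x|t)=\tilde{Z}_{\lambda^{\prime},\mu^{\prime}}(x|-T)$ and in the proof that $\Theta|y_{1},\ldots,y_{n}\rangle$ is a $z$-Bethe vector.

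The consequence is that the step you flag as ``delicate'' is not delicate but false: complementing \emph{both} partitions of a cylindric skew shape swaps the inner and outer boundaries, so $(\lambda^{\prime})^{\vee}/d/(\mu^{\prime})^{\vee}$ being a horizontal strip is \emph{not} equivalent to $\lambda^{\prime}/d/\mu^{\prime}$ being one with the same $d$ (already for $d=0$: the former needs $\lambda^{\prime}\subseteq\mu^{\prime}$, the latter $\mu^{\prime}\subseteq\lambda^{\prime}$), and the diagonal sets disagree even when both shapes are empty (for $N=2$, $\lambda=\mu=(1)$ one has $J_{\emptyset/0/\emptyset}=\{2\}$ but $J^{\prime}_{(1)/0/(1)}=\{1\}$, i.e.\ weight $x+T_{2}$ versus the correct $x+T_{1}$). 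Equivalently, your intermediate expression $\Theta\tilde{H}(x|-T)|\mu^{\ast}\rangle$ simply does not equal $\tilde{E}(x)|\mu\rangle$ unless $\mu=\mu^{\vee}$, so no amount of bookkeeping can close the gap. The repair is a one-line change: use $\Theta|\mu\rangle=|\mu^{\prime}\rangle$. Then $\tilde{E}(x)|\mu\rangle=\Theta\,\tilde{H}(x|-T)|\mu^{\prime}\rangle$, the vicious-walker proposition (\ref{combH}) applied on $\mathcal{V}_{k}$ (bounding box $k\times n$, offset $k$, parameters $-T$, so each factor $x-t_{j}$ becomes $x+T_{j}$) can be quoted verbatim, and renaming the outer partition $\kappa=\lambda^{\prime}$ and applying $\Theta$ once more yields (\ref{combE}) exactly, with no residual combinatorial translation at all.
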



\begin{figure}[tbp]
\begin{equation*}
\includegraphics[scale=0.5]{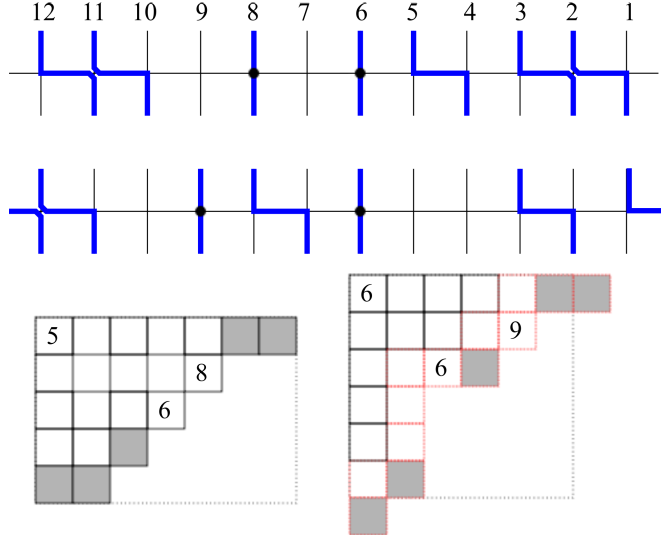}
\end{equation*}%
\caption{Examples of row configurations for the osculating walker model. The
(transposed) Young diagrams on the bottom (top $\rightarrow $ left and
bottom $\rightarrow $\ right) show the corresponding horizontal strips.}
\label{fig:ADopOsc}
\end{figure}

\begin{example}
We now explain (\ref{combE}) on a concrete example. Set again $N=12$ and
refer to Figure \ref{fig:ADopOsc} for two sample row configurations with $%
n=7 $ (top) and $n=6$ (bottom). One uses the same bijection as in the
vicious walker case, i.e. adding boxes in the Young diagrams for each
horizontal path edge, but now for the transposed Young diagrams. The
numbering is now from right to left to facilitate the reading of the
equivariant parameters $T_{j}$ occurring in each row configuration: for each
straight path one collects a factor $(1+uT_{j})$; compare with the vertex
weights in Figure \ref{fig:5vmodels}.
\end{example}

\begin{definition}
A cylindric/toric tableau of shape $\lambda /d/\mu $ is a map $\mathcal{T}%
:\lambda \lbrack d]/\mu \lbrack 0]\rightarrow \mathbb{N}$ such that $%
\mathcal{T}_{i,j}\leq \mathcal{T}_{i,j+1}$ and $\mathcal{T}_{i,j}<\mathcal{T}%
_{i+1,j}$ where $(i,j)$ are the coordinates of the squares between the two
cylindric loops $\lambda \lbrack d]$ and $\mu \lbrack 0]$ in the $\mathbb{Z}%
_{2}$-plane.
\end{definition}

\begin{lemma}
Each toric tableau $\mathcal{T}:\lambda \lbrack d]/\mu \lbrack 0]\rightarrow
\{1,\ldots ,\ell \}$ determines uniquely a sequence of cylindric loops 
\begin{equation}
\Lambda \lbrack d_{1},\ldots ,d_{\ell }=d]=(\lambda ^{(0)}[0]=\mu \lbrack
0],\lambda ^{(1)}[d_{1}],\ldots ,\lambda ^{(\ell )}[d_{\ell }])
\label{torictab2seq}
\end{equation}%
where $d_{i}=d_{1}+d_{2}+\cdots +d_{i-1}$ and $\lambda
^{(i+1)}/(d_{i+1}-d_{i})/\lambda ^{(i)}$ is a horizontal strip. Conversely,
each such sequence of cylindric loops defines uniquely a toric tableau.
\end{lemma}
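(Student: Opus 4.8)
The plan is to reduce the statement to the cylindric analogue of the classical correspondence between semistandard tableaux and chains of shapes that grow by horizontal strips; the row- and column-monotonicity of $\mathcal{T}$ will translate step by step into the horizontal-strip condition, and the only genuinely new ingredient over the straight (non-cylindric) case is the periodicity bookkeeping needed to recover the degrees $d_i$.

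For the forward direction, given a toric tableau $\mathcal{T}\colon\lambda[d]/\mu[0]\to\{1,\ldots,\ell\}$, I would set, for each $0\le i\le\ell$, $R_i=\{(a,b):\mathcal{T}_{a,b}\le i\}$, so that $R_0=\varnothing$ and $R_\ell=\lambda[d]/\mu[0]$. The weak increase along rows, $\mathcal{T}_{a,b}\le\mathcal{T}_{a,b+1}$, forces the cells of $R_i$ in each row to form an initial segment adjacent to the inner loop $\mu[0]$, so the outer boundary of $R_i$ is a lattice path lying weakly between $\mu[0]$ and $\lambda[d]$. Since a toric tableau is by definition a function on the quotient $\mathbb{Z}\times\mathbb{Z}/(n,-k)\mathbb{Z}$, the set $R_i$ is invariant under the shift $(n,-k)$; hence its outer boundary is periodic and therefore equals $\lambda^{(i)}[d_i]$ for a unique pair $(\lambda^{(i)},d_i)$, exactly as in the definition of cylindric loops. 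Nestedness of the $R_i$ gives $0=d_0\le d_1\le\cdots\le d_\ell=d$, and since each $\lambda^{(i)}[d_i]$ is squeezed between $\mu[0]$ and $\lambda[d]$, every $\lambda^{(i)}$ automatically remains inside the $n\times k$ bounding box.

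Next I would check the horizontal-strip property. The cells carrying the value exactly $i$ are precisely $R_i\setminus R_{i-1}$, i.e.\ the cylindric skew diagram $\lambda^{(i)}/(d_i-d_{i-1})/\lambda^{(i-1)}$. The strict increase down columns, $\mathcal{T}_{a,b}<\mathcal{T}_{a+1,b}$, forbids two equal entries in a single column, so this diagram has at most one box per column and is thus a horizontal strip in the sense defined above; it is automatically toric because it sits inside the toric diagram $\lambda[d]/\mu[0]$. This yields the asserted sequence $\Lambda[d_1,\ldots,d_\ell=d]$.

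For the converse I would, given a chain of cylindric loops with each $\lambda^{(i+1)}/(d_{i+1}-d_i)/\lambda^{(i)}$ a horizontal strip, define $\mathcal{T}$ by placing the value $i$ in every cell of the $i$-th strip; strict column increase holds since each strip meets a column at most once (so two cells in one column lie in strips of distinct index, the one nearer $\mu[0]$ being smaller), and weak row increase holds because the loops are nested. The two constructions are visibly mutually inverse, so the correspondence is a bijection. I expect the main obstacle to be purely the periodicity bookkeeping in the forward step: verifying that the outer boundary of each $R_i$ closes up into a bona fide cylindric loop $\lambda^{(i)}[d_i]$ with a well-defined cumulative degree $d_i$ (rather than an aperiodic or merely skew path), after which everything else reduces to the standard tableau/chain correspondence.
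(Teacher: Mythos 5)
The paper itself states this lemma \emph{without proof} -- it is presented as an immediate consequence of the definitions of cylindric loops, skew diagrams and toric tableaux -- so your write-up supplies an argument the authors omitted. Your strategy (the filtration $R_i$ of cells with entry at most $i$, i.e.\ the cylindric version of the classical bijection between semistandard tableaux and chains of shapes growing by horizontal strips) is exactly the natural one, and your converse direction is sound: nestedness of the loops gives weak row increase, and the one-box-per-column property of each strip gives strict column increase.

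There is, however, one step in the forward direction where your written inference is not valid as stated, even though the conclusion is true. You deduce that the outer boundary of $R_i$ is a lattice path lying weakly between $\mu[0]$ and $\lambda[d]$, note that it is periodic under the shift $(n,-k)$, and conclude that it ``therefore equals $\lambda^{(i)}[d_i]$ for a unique pair.'' Periodicity plus being sandwiched between the two loops does \emph{not} make a path a cylindric loop: by definition $\lambda^{(i)}[d_i]$ is the outline of a partition shifted along $(1,1)$, so its row coordinates must be weakly decreasing, whereas a periodic path trapped between $\mu[0]$ and $\lambda[d]$ can perfectly well zigzag, with row $a+1$ of $R_i$ reaching strictly further right than row $a$. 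What rules this out is the strict column increase, which you invoke only later and only for the horizontal-strip property. The repair is one line: let $b$ be the rightmost column of $R_i$ in row $a+1$ with $b>\mu[0]_{a+1}$; then either $b\leq \mu[0]_a$, in which case row $a$ of $R_i\cup\mu[0]$ trivially reaches at least to column $b$, or else $(a,b)$ lies in $\lambda[d]/\mu[0]$ (using $b\leq \lambda[d]_{a+1}\leq \lambda[d]_a$), in which case $\mathcal{T}_{a,b}<\mathcal{T}_{a+1,b}\leq i$ forces $(a,b)\in R_i$; in both cases the boundary rows are weakly decreasing, and being periodic the path is a genuine cylindric loop. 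Note also that your closing paragraph misidentifies where the work lies: the ``periodicity bookkeeping'' you single out is the automatic part (it comes for free from $\mathcal{T}$ living on the quotient $\mathbb{Z}\times\mathbb{Z}/(n,-k)\mathbb{Z}$), while the monotonicity just described is the only point where something must actually be checked.
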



\begin{figure}[tbp]
\begin{equation*}
\includegraphics[scale=0.7]{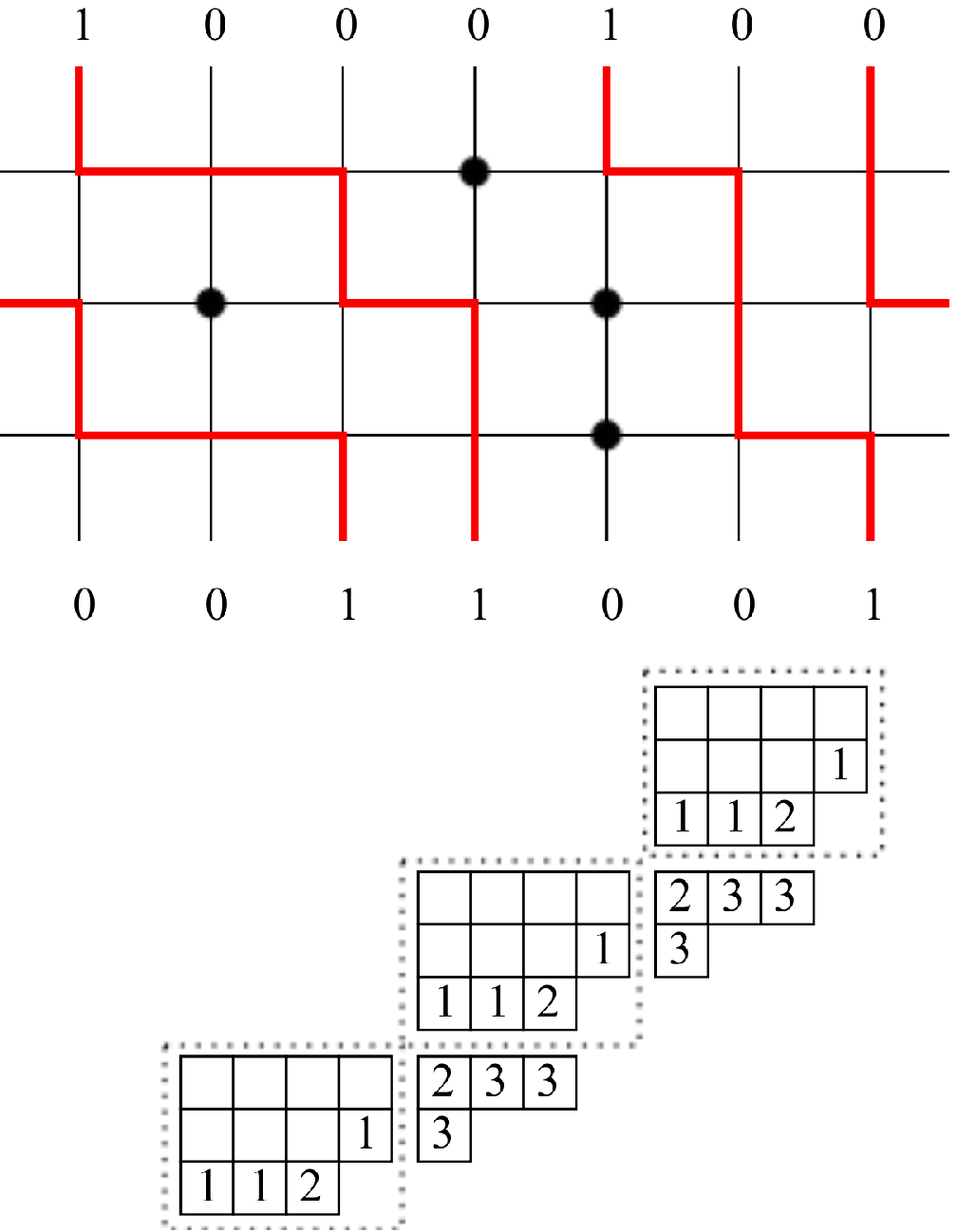}
\end{equation*}%
\caption{The top of the figure shows a vicious walker lattice configuration
for $N=7$ and $n=3$. Black bullets mark the vertex configurations which
contribute factors $(x_{i}-t_{j})$ where $i$ is the row index and $j$ the
column index. Below is the corresponding toric tableau from which the same
factors can be obtained as detailed in formula (\protect\ref{combZ}).}
\label{fig:lattice_config}
\end{figure}

Our two previous combinatorial formulae for the row-to-row transfer matrices
allow us to state the following explicit combinatorial expression for the
partition functions of the vicious and osculating walker models in terms of
toric tableaux.

Given two partitions $\lambda ,\mu $ fix the boundary conditions on the top
and bottom of the square lattice in terms of their corresponding 01-words.
Denote by $Z_{\lambda ,\mu }(x|t)$ the corresponding partition function,
i.e. the weighted sum over lattice path configurations which start at
position $\ell _{i}(\mu )$ and end at position $\ell _{i}(\lambda )$ and let 
$\tilde{Z}_{\lambda ,\mu }(x_{1},\ldots ,x_{n}|t):=(x_{1}\cdots
x_{n})^{n}Z_{\lambda ,\mu }(x_{1}^{-1},\ldots ,x_{n}^{-1}|t)$.

\begin{corollary}[partition functions]
We have the following explicit expression for the partition functions of the
vicious walker model,%
\begin{equation}
\tilde{Z}_{\lambda ,\mu }(x|t)=\sum_{d=0}^{n}q^{d}\sum_{|\mathcal{T}%
|=\lambda /d/\mu }\prod_{1\leq i,j\leq n}\prod_{\ell =\lambda
_{i+1}^{(j)}[d_{j}]+\rho _{i+1}+1}^{\lambda _{i}^{(j-1)}[d_{j-1}]+\rho
_{i}-1}(x_{j}-t_{\ell }),  \label{combZ}
\end{equation}%
where the sum runs over all toric tableaux $\mathcal{T}=(\lambda
^{(0)}[0],\lambda ^{(1)}[d_{1}],\ldots ,\lambda ^{(n)}[d_{n}])$ and $\rho
_{i}=n+1-i$. Here the second product is understood to give one if $\lambda
_{i}^{(j-1)}[d_{j-1}]+\rho _{i}-1<\lambda _{i+1}^{(j)}[d_{j}]+\rho _{i+1}+1$%
. An analogous formulae holds for the osculating walker model exploiting
level-rank duality (\ref{levelrankmom}), $\tilde{Z}_{\lambda ,\mu }^{\prime
}(x|t)=\tilde{Z}_{\lambda ^{\prime },\mu ^{\prime }}(x|-T)$ where $\lambda
^{\prime },\mu ^{\prime }$ are the conjugate partitions of $\lambda ,\mu $.
\end{corollary}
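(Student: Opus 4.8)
The plan is to derive (\ref{combZ}) by iterating the single-row formula (\ref{combH}) and then repackaging the result in terms of toric tableaux. First I would recall from the Lemma relating partition functions to matrix elements that $Z_{\lambda,\mu}(x|t)=\langle\lambda|Z_{n}(x|t)|\mu\rangle$, and from (\ref{Z2H}) that $Z_{n}(x|t)=H(x_{n}|t)\cdots H(x_{1}|t)$. Passing to the inverted spectral variables by means of $\tilde{H}(x)=x^{k}H(x^{-1})|_{\mathcal{V}_{n}}$ and the definition of $\tilde{Z}_{\lambda,\mu}$, the partition function becomes the single matrix element
\begin{equation*}
\tilde{Z}_{\lambda,\mu}(x|t)=\langle\lambda|\tilde{H}(x_{n})\cdots\tilde{H}(x_{1})|\mu\rangle .
\end{equation*}

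Next I would insert a resolution of the identity $\sum_{\lambda^{(j)}}|\lambda^{(j)}\rangle\langle\lambda^{(j)}|$ on $\mathcal{V}_{n}$ between each pair of consecutive transfer matrices, so that this matrix element factorises as a sum over chains $\mu=\lambda^{(0)},\lambda^{(1)},\ldots,\lambda^{(n)}=\lambda$ of products $\prod_{j=1}^{n}\langle\lambda^{(j)}|\tilde{H}(x_{j})|\lambda^{(j-1)}\rangle$. To each factor I would apply (\ref{combH}): it is nonzero only when $\lambda^{(j)}/d_{j}/\lambda^{(j-1)}$ is a cylindric horizontal strip for some $d_{j}\in\{0,1\}$, and it then contributes $q^{d_{j}}\prod_{\ell\in J_{\lambda^{(j)}/d_{j}/\lambda^{(j-1)}}}(x_{j}-t_{\ell})$. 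Collecting powers of $q$ gives $q^{d}$ with $d=\sum_{j=1}^{n}d_{j}$, while the partial sums $d_{1}+\cdots+d_{j-1}$ record the successive shifts of the loops. By the preceding Lemma identifying sequences of cylindric loops with toric tableaux, the admissible chains of horizontal strips are exactly the toric tableaux $\mathcal{T}=(\lambda^{(0)}[0],\ldots,\lambda^{(n)}[d_{n}])$ of shape $\lambda/d/\mu$ with entries in $\{1,\ldots,n\}$, so the two outer sums over $d$ and over $\mathcal{T}$ in (\ref{combZ}) are precisely this sum over chains.

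The remaining and most delicate step is to match the weights. I would show that the index set $J_{\lambda^{(j)}/d_{j}/\lambda^{(j-1)}}$ attached to the $j$-th strip coincides, as a set of indices $\ell$, with the disjoint union over $i$ of the integer intervals $\lambda^{(j)}_{i+1}[d_{j}]+\rho_{i+1}+1\le\ell\le\lambda^{(j-1)}_{i}[d_{j-1}]+\rho_{i}-1$ occurring in the inner product of (\ref{combZ}). This is where the explicit coordinates of the cylindric loops $\lambda[d]$ from the Definition of cylindric loops, together with the bijection between 01-words and partitions (the one-letters of $|\lambda\rangle$ sitting at positions $\ell_{i}(\lambda)=\lambda_{n+1-i}+i$), have to be inserted: the lattice columns \emph{not} crossed by the $j$-th horizontal strip are exactly those lying strictly between consecutive occupied diagonals of the two bounding loops $\lambda^{(j-1)}[d_{j-1}]$ and $\lambda^{(j)}[d_{j}]$, and each such column contributes one factor $(x_{j}-t_{\ell})$ according to the vertex weights in Figure~\ref{fig:5vmodels}. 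Once this indexing identity is verified for each $j$, taking the product over $j=1,\ldots,n$ reproduces the double product in (\ref{combZ}), with the stated convention that an empty interval contributes $1$.

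Finally, the osculating-walker identity $\tilde{Z}'_{\lambda,\mu}(x|t)=\tilde{Z}_{\lambda',\mu'}(x|-T)$ follows from level-rank duality: applying the involution $\Theta=\mathcal{PC}$ and the relation $\Theta H(x|t)\Theta=E(x|-T)$ from the level-rank Corollary, together with (\ref{levelrankmom}), interchanges $H$ and $E$ while replacing each partition by its conjugate and $t$ by $-T$, so I would simply transport the vicious-walker formula through this symmetry. I expect the bookkeeping of the third paragraph — reconciling the column-indexed description of $J$ with the row-indexed double product — to be the main obstacle; everything else is a direct iteration together with results already established.
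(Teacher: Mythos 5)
Your proposal is correct and follows essentially the same route as the paper: iterate the single-row combinatorial formula (\ref{combH}) by inserting complete sets of intermediate states, identify the resulting chains of cylindric horizontal strips with toric tableaux via the preceding lemma, match the index sets $J_{\lambda^{(j)}/d_j/\lambda^{(j-1)}}$ with the integer intervals appearing in the double product of (\ref{combZ}), and transport the result to the osculating model by level-rank duality. The step you flag as delicate is exactly the one the paper's proof settles in its single substantive sentence, namely that the interval $[\lambda_{i+1}^{(j)}[d_j]+\rho_{i+1}+1,\,\lambda_i^{(j-1)}[d_{j-1}]+\rho_i-1]$ records the contents of the bottom squares of columns not meeting the $j$th strip.
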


\begin{proof}
A direct consequence of the previous results (\ref{combH}), (\ref{combE})
for the row-to-row transfer matrices and the preceding lemma which allows
one to decompose each toric tableau into a sequence of toric horizontal
strips. The interval $[\lambda _{i}^{(j-1)}[d_{i}]+\rho _{i}-1,\lambda
_{i+1}^{(j)}[d_{i+1}]+\rho _{i+1}+1]$ gives precisely the content of those
squares in the tableau $\mathcal{T}$ which are at the bottom of a column
which does not intersect the $j$th horizontal strip.
\end{proof}

\begin{example}
We explain (\ref{combZ}) on a simple example. Let $n=3$, $k=4$ and take the
01-words $\varepsilon (\mu )=1000101,~\varepsilon (\lambda )=0011001$ as
start and end configurations for vicious walkers with $\mu =(4,3,0)$ and $%
\lambda =(4,2,2)$. Consider the lattice configuration and the corresponding
toric tableau shown in Figure \ref{fig:lattice_config}. As we can see from
the picture there is a horizontal path edge in the second row only, so we
have $d_{0}=d_{1}=0$ and $d_{2}=d_{3}=1$. The sequence of cylindric loops $%
\lambda ^{(0)}[d_{0}]=\mu \lbrack 0],\lambda ^{(1)}[d_{1}],\lambda
^{(2)}[d_{2}],\lambda ^{(3)}[d_{3}]=\lambda \lbrack 1],$ corresponding to
the toric tableau and the associated integers $\ell $ appearing in the
second product of (\ref{combZ}) are detailed in the table below,%
\begin{equation*}
\begin{tabular}{c||c|c|c|c}
$j$ & $0$ & $1$ & $2$ & $3$ \\ \hline
$d_{j}$ & $0$ & $0$ & $1$ & $1$ \\ \hline
$\lambda ^{(j)}[d_{j}]$ & $\ldots ,4,3,0,\ldots $ & $\ldots ,4,4,2,\ldots $
& $\ldots ,5,4,3,\ldots $ & $\ldots ,7,6,3,\ldots $ \\ \hline
$\ell $ & - & $4$ & $5,2$ & $5$%
\end{tabular}%
\end{equation*}%
For instance, we find for $i=1,2,3$ and $j=2$ that $\lambda
^{(2)}[d_{2}]+\rho +1=(\ldots ,5+4,4+3,3+2,1+1,\ldots )$ and $\lambda
^{(1)}[d_{1}]+\rho -1=(\ldots ,4+2,4+1,2+0,\ldots )$. According to formula (%
\ref{combZ}) the second product therefore runs over $\ell =5,2$ for $i=2,3$.
There is no contribution for $i=1$, since $\lambda _{2}^{(2)}[d_{2}]+\rho
_{2}+1=7$ and $\lambda _{1}^{(1)}[d_{1}]+\rho _{1}-1=6$. Similarly, one
computes the other contributions and values of $\ell $. The resulting factor
is%
\begin{equation*}
(x_{1}-t_{4})(x_{2}-t_{5})(x_{2}-t_{2})(x_{3}-t_{5})
\end{equation*}%
which coincides with the weight assigned to the vicious walker lattice
configuration according to (\ref{partition_function}) using the weights in
Fig \ref{fig:5vmodels}.
\end{example}

\subsection{Factorial powers and quantum Pieri rules}

In this section we relate the vicious and osculating walker models to the
equivariant quantum cohomology ring. The row partition functions or transfer
matrices play the central role and should be thought of as noncommutative
analogues of the generating functions for the complete symmetric and
elementary symmetric functions.

Prompted by the previous combinatorial formulae (\ref{combH}), (\ref{combE})
for the row-to-row transfer matrices we now expand them into so-called \emph{%
factorial powers} with respect to the equivariant parameters. On each
subspace $\mathcal{V}_{n}\subset \mathcal{V}$ define a family of operators $%
\{\tilde{H}_{r,n}\}_{r=0}^{k}$ via the following expansion of the transfer
matrix in terms of the factorial powers $(x|T)^{p}:=(x-T_{1})(x-T_{2})\cdots
(x-T_{p})$,%
\begin{equation}
x^{k}H(x^{-1})|_{\mathcal{V}_{n}}=\sum_{r=0}^{k}x^{k-r}H_{r}|_{\mathcal{V}%
_{n}}=\sum_{r=0}^{k}(x|T)^{k-r}\tilde{H}_{r,n}\;.  \label{facH}
\end{equation}%
Note that it follows from the definition of $H(x)$ that $H(x)|_{\mathcal{V}%
_{n}}$ is at most of degree $k$ in $u$, since each allowed row configuration
in $\mathcal{V}_{n}$\ contains at most $k$ vertices with weights $(1-t_{j}u)$
or $u$; see Figure \ref{fig:5vmodels}. Hence the above expansion is
well-defined.

\begin{lemma}
We have the following identities%
\begin{equation}
H_{r}|_{\mathcal{V}_{n}}=\sum_{j=0}^{r}(-1)^{j}e_{j}(T_{1},\ldots ,T_{k-r+j})%
\tilde{H}_{r-j,n}  \label{H2facH}
\end{equation}%
and%
\begin{equation}
\tilde{H}_{r,n}=\sum_{j=0}^{r}\det (e_{1-a+b}(T_{1},\ldots
,T_{k-r+b}))_{1\leq a,b\leq j}~H_{r-j}|_{\mathcal{V}_{n}}  \label{facH2H}
\end{equation}%
where we set $\tilde{H}_{0,n}=1|_{\mathcal{V}_{n}}$ and $e_{r}$ are the
elementary symmetric polynomials. Acting with $\tilde{H}_{r,n}$ on the empty
partition $\emptyset $ in $\mathcal{V}_{n}$, i.e. the vector 
\begin{equation}
|\emptyset \rangle =~\underset{n}{\underbrace{v_{1}\otimes \cdots \otimes
v_{1}}}\otimes \underset{k}{\underbrace{v_{0}\otimes \cdots \otimes v_{0}}}%
\in V^{\otimes N}~,
\end{equation}%
we find that%
\begin{equation}
\tilde{H}_{r,n}|\emptyset \rangle =|(r,0,\ldots ,0)\rangle \;.
\label{PieriId}
\end{equation}
\end{lemma}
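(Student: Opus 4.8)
The plan is to treat the two inversion formulas (\ref{H2facH}) and (\ref{facH2H}) purely as a change of basis between the monomial basis $\{x^{k-r}\}$ and the factorial basis $\{(x|T)^{k-s}\}$ inside the defining generating-function identity (\ref{facH}), and to deduce the Pieri identity (\ref{PieriId}) from the already-established combinatorial action (\ref{combH}) specialised to $\mu=\emptyset$. Throughout I write $H(x)=\sum_r x^r H_r$ and $\tilde H(x)=x^{k}H(x^{-1})|_{\mathcal V_n}=\sum_{r=0}^{k}x^{k-r}H_r|_{\mathcal V_n}=\sum_{s=0}^{k}(x|T)^{k-s}\tilde H_{s,n}$, suppressing the subscript $n$.

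First I would prove (\ref{H2facH}) by expanding each factorial power into monomials. The elementary identity $(x|T)^{p}=\prod_{i=1}^{p}(x-T_i)=\sum_{i=0}^{p}(-1)^i e_i(T_1,\ldots,T_p)\,x^{p-i}$ gives
\begin{equation*}
\sum_{s=0}^k (x|T)^{k-s}\tilde H_s=\sum_{s=0}^k\sum_{i=0}^{k-s}(-1)^i e_i(T_1,\ldots,T_{k-s})\,x^{k-s-i}\tilde H_s .
\end{equation*}
Collecting the coefficient of $x^{k-r}$ forces $i=r-s$, and setting $j=r-s$ yields $H_r=\sum_{j=0}^{r}(-1)^j e_j(T_1,\ldots,T_{k-r+j})\tilde H_{r-j}$, which is exactly (\ref{H2facH}).

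For (\ref{facH2H}) I would expand in the opposite direction, writing each monomial in the factorial basis. From $(x|T)^{m+1}=(x|T)^m(x-T_{m+1})$ one has the one-step recursion $x\,(x|T)^m=(x|T)^{m+1}+T_{m+1}(x|T)^m$; iterating it shows $x^{u}=\sum_{m}h_{u-m}(T_1,\ldots,T_{m+1})\,(x|T)^m$, where $h$ is the complete homogeneous symmetric polynomial, since the coefficients obey $a_{u+1,m}=a_{u,m-1}+T_{m+1}a_{u,m}$, the defining recursion of $h_{u-m}(T_1,\ldots,T_{m+1})$. Extracting the coefficient of $(x|T)^{k-r}$ in $\sum_s x^{k-s}H_s$ then gives $\tilde H_r=\sum_{j=0}^{r}h_j(T_1,\ldots,T_{k-r+1})H_{r-j}$. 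It remains to identify the scalar $h_j(T_1,\ldots,T_{k-r+1})$ with the stated determinant; this is the flagged dual Jacobi--Trudi identity $h_j(T_1,\ldots,T_{M+1})=\det(e_{1-a+b}(T_1,\ldots,T_{M+b}))_{1\le a,b\le j}$ with $M=k-r$, which I would prove by induction on $j$ via cofactor expansion along the last column together with $e_i(T_1,\ldots,T_{M+b})=e_i(T_1,\ldots,T_{M+b-1})+T_{M+b}\,e_{i-1}(T_1,\ldots,T_{M+b-1})$. Equivalently, one may simply check that (\ref{facH2H}) is the two-sided inverse of (\ref{H2facH}), which reduces to the flagged orthogonality $\sum_i(-1)^i e_i\,h_{j-i}=\delta_{j,0}$ between the two triangular transition matrices.

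Finally, for (\ref{PieriId}) I would apply (\ref{combH}) with $\mu=\emptyset$. Since $\emptyset$ has no boxes, a cylindric horizontal strip $\lambda/0/\emptyset$ has at most one box per column, so $\lambda$ must be a single row $(\ell)$ with $0\le\ell\le k$; I would also verify that the quantum ($d=1$) boundary-crossing configurations are incompatible with the all-start state $|\emptyset\rangle=v_1^{\otimes n}\otimes v_0^{\otimes k}$ and hence do not contribute. For $\lambda=(\ell)$ the columns meeting the strip are $1,\ldots,\ell$, so the empty columns are $\ell+1,\ldots,k$, and the recipe after (\ref{combH}) gives $J_{(\ell)/0/\emptyset}=\{n+\ell+1,\ldots,n+k=N\}$; using $t_j=T_{N+1-j}$ this produces $\prod_{j\in J}(x-t_j)=\prod_{m=1}^{k-\ell}(x-T_m)=(x|T)^{k-\ell}$. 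Thus (\ref{combH}) reads $\tilde H(x)|\emptyset\rangle=\sum_{\ell=0}^{k}(x|T)^{k-\ell}|(\ell)\rangle$, and comparing with $\tilde H(x)|\emptyset\rangle=\sum_{r=0}^{k}(x|T)^{k-r}\tilde H_{r,n}|\emptyset\rangle$ — the factorial powers $(x|T)^{k-r}$ being of distinct degrees, hence linearly independent — gives $\tilde H_{r,n}|\emptyset\rangle=|(r,0,\ldots,0)\rangle$. The main obstacle is this last step: the bookkeeping of the index set $J$ under the bijection between $01$-words and partitions, and confirming that the $d=1$ terms genuinely drop out, so that the product of vertex weights collapses exactly to $(x|T)^{k-\ell}$. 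The two inversion formulas, by contrast, are routine triangular linear algebra.
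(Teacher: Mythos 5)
Your proposal is correct, and for the first identity (\ref{H2facH}) it is exactly the paper's argument: expand $(x|T)^{k-s}$ into monomials and compare coefficients of $x^{k-r}$. For the remaining two claims you take a genuinely different route. For (\ref{facH2H}) the paper simply remarks that (\ref{H2facH}) is a unitriangular linear system and quotes its solution in determinant form; you instead invert the basis change directly, using $x\,(x|T)^m=(x|T)^{m+1}+T_{m+1}(x|T)^m$ to obtain the closed-form coefficients $h_j(T_1,\ldots,T_{k-r+1})$, and then need one extra (true) polynomial identity, the flagged dual Jacobi--Trudi evaluation $\det\bigl(e_{1-a+b}(T_1,\ldots,T_{k-r+b})\bigr)_{1\le a,b\le j}=h_j(T_1,\ldots,T_{k-r+1})$, which indeed follows by your cofactor induction with $e_i(T_1,\ldots,T_{m+1})=e_i(T_1,\ldots,T_m)+T_{m+1}e_{i-1}(T_1,\ldots,T_m)$, or more cheaply from uniqueness of the solution of the triangular system. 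Your version buys an interpretation of the paper's determinants as flagged complete homogeneous symmetric polynomials, which the paper never makes explicit. For (\ref{PieriId}) the paper rearranges (\ref{facH2H}) into the recursion $\tilde H_{r,n}=H_r|_{\mathcal{V}_n}-\sum_{j\ge 1}(-1)^je_j(T_1,\ldots,T_{k-r+j})\tilde H_{r-j,n}$ and inducts on $r$ using the action of $H_r$ on $|\emptyset\rangle$; you instead specialise (\ref{combH}) at $\mu=\emptyset$ and compare factorial expansions, which is arguably more transparent since it exhibits the coefficient of $|(\ell)\rangle$ as the single factorial power $(x|T)^{k-\ell}$ rather than reassembling it term by term. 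Your two outstanding checks do go through: the index-set bookkeeping gives $J_{(\ell)/0/\emptyset}=\{n+\ell+1,\ldots,N\}$, whence $\prod_{j\in J}(x-t_j)=\prod_{m=1}^{k-\ell}(x-T_m)=(x|T)^{k-\ell}$ under $t_j=T_{N+1-j}$; and the $d=1$ terms vanish, which is seen most easily operator-theoretically from (\ref{YBA2Hecke}): $D_r=\sigma_N^-A_{r-1}\sigma_1^+$ annihilates $|\emptyset\rangle$ because its first tensor factor is $v_1$ and $\sigma^+v_1=0$ (combinatorially, no cylindric horizontal strip $\lambda/1/\emptyset$ exists, as its first two rows would both meet the first column).
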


\begin{proof}
A straightforward computation. Using that%
\begin{equation*}
(u|T)^{k-r}=\sum_{j=0}^{k-r}(-1)^{j}e_{j}(T_{1},\ldots ,T_{k-r})u^{k-r-j}
\end{equation*}%
we infer that the coefficient of $u^{k-r}$ in $u^{k}H(u^{-1})$ gives the
first identity. This is a linear system of equations with a lower triangular
matrix and its solutions give the second identity. Employing%
\begin{eqnarray}
\tilde{H}_{r,n} &=&H_{r}|_{\mathcal{V}_{n}}-%
\sum_{j=1}^{r}(-1)^{j}e_{j}(T_{1},\ldots ,T_{k-r+j})\tilde{H}_{r-j,n} \\
&=&H_{r}|_{\mathcal{V}_{n}}-\sum_{j=1}^{r}(-1)^{j}\sum_{n+r+1-j\leq
i_{1}<\cdots <i_{j}\leq N}t_{i_{1}}\cdots t_{i_{j}}\tilde{H}_{r-j,n}
\end{eqnarray}%
one then easily deduces the action on the empty partition.
\end{proof}

Explicitly, we have for the first few elements,%
\begin{eqnarray*}
\tilde{H}_{1,n} &=&H_{1}|_{\mathcal{V}_{n}}+T_{1}+\cdots +T_{k} \\
\tilde{H}_{2,n} &=&H_{2}|_{\mathcal{V}_{n}}+\sum_{j=1}^{k-1}T_{j}~H_{1}|_{%
\mathcal{V}_{n}}+\sum_{i=1}^{k-1}T_{i}\sum_{j=1}^{k}T_{j}-\sum_{1\leq
i<j\leq k}T_{i}T_{j} \\
&&\vdots
\end{eqnarray*}%
The result (\ref{PieriId}) is a needed compatibility condition as we wish to
realise the expansion of the product $s_{\lambda }\ast s_{\mu }$ of two
Schubert classes in $QH_{T}^{\ast }(\limfunc{Gr}_{n,N})$ by acting with an
appropriate operator $\tilde{S}_{\lambda ,n}$ on the vector $|\mu \rangle $.
Starting with $\lambda =(r)$ and setting $\tilde{S}_{\lambda ,n}=\tilde{H}%
_{r,n}$ we need to ensure that $\tilde{S}_{\lambda ,n}|\emptyset \rangle
=|\lambda \rangle $ as $s_{\emptyset }$ is the unit in $QH_{T}^{\ast }(%
\limfunc{Gr}_{n,N})$.

\begin{corollary}[equivariant quantum Pieri-Chevalley rule]
The action of $\tilde{H}_{1,n}$ on a basis vector $|\lambda \rangle
=v_{\varepsilon _{1}}\otimes \cdots \otimes v_{\varepsilon _{N}}\in \mathcal{%
V}_{n},$ where $\boldsymbol{\varepsilon }(\lambda )=\varepsilon _{1}\cdots
\varepsilon _{N}$ with $\varepsilon _{i}=0,1$, yields 
\begin{equation}
\tilde{H}_{1,n}|\lambda \rangle =\sum_{\mu -\lambda =(1)}|\mu \rangle
+\left( \sum_{i=1}^{n}T_{k+i-\lambda _{i}}-\sum_{j=k+1}^{N}T_{j}\right)
|\lambda \rangle +q|\lambda ^{-}\rangle \;,  \label{qPieri}
\end{equation}%
where $\lambda ^{-}$ is the partition obtained from $\lambda $ by removing a
boundary rim hook of length $N-1$. If such a rim hook cannot be removed then
this term is missing from the formula.
\end{corollary}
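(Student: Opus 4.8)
The plan is to bypass the cylindric combinatorics behind (\ref{combH}) and instead compute $\tilde H_{1,n}|\lambda\rangle$ directly from the cyclic nil-Hecke expression for the transfer matrix. First I would record, by specialising the linear relation (\ref{H2facH}) to $r=1$, that $\tilde H_{1,n}=H_1|_{\mathcal V_n}+(T_1+\cdots+T_k)$, so that the entire statement reduces to an explicit evaluation of $H_1|\lambda\rangle$. Setting $r=1$ in the cyclic-word formula (\ref{HHecke}), the only cyclically (anti)ordered words of length one are the single letters $j\in\{1,\dots,N\}$, and $t_j\rho_t(\pi_j)=\delta_j-\hat t_j$; hence $H_1|_{\mathcal V_n}=\sum_{j=1}^N(\delta_j-\hat t_j)$, where the $q$-deformation convention of Proposition \ref{prop:nilcoxrep} attaches the factor $q$ to the single wrap-around generator $\delta_N$ and to none of the $\hat t_j$.

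Next I would read off the three kinds of contributions from the explicit actions (\ref{dual_delta}). The operator $\delta_j=\sigma_j^-\sigma_{j+1}^+$ moves a one-letter from site $j$ to site $j+1$, i.e.\ adds a single box to the Young diagram, acting nontrivially exactly when $\varepsilon_j=1,\ \varepsilon_{j+1}=0$. Summing over the interior generators $j=1,\dots,N-1$ therefore produces precisely $\sum_{\mu-\lambda=(1)}|\mu\rangle$, since the admissible sites are the outer corners of $\lambda$ and distinct corners give distinct partitions $\mu\supset\lambda$ with $|\mu/\lambda|=1$. The remaining wrap-around term $q\,\delta_N|\lambda\rangle$ transports the one-letter at site $N$ around the cylinder to site $1$; under the bijection between $01$-words on a circle of circumference $N$ and partitions this is exactly the removal of a boundary rim hook of length $N-1$, so it contributes $q|\lambda^-\rangle$, with the term absent precisely when $\delta_N|\lambda\rangle=0$ (equivalently, unless $\lambda_1=k$ and $\lambda_n\ge 1$), matching the stated proviso.

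For the diagonal part, $\hat t_j=t_j\,\sigma_j^-\sigma_j^+$ is $t_j$ times the projector onto $\varepsilon_j=0$, so $\sum_{j=1}^N\hat t_j|\lambda\rangle=\big(\sum_{j:\,\varepsilon_j(\lambda)=0}t_j\big)|\lambda\rangle$. I would evaluate this by complementation together with the substitution $t_i=T_{N+1-i}$: using that the one-positions are $\{\lambda_{n+1-i}+i\}_{i=1}^{n}$ one gets $\sum_{\varepsilon_j=1}t_j=\sum_{i=1}^{n}T_{k+i-\lambda_i}$ and $\sum_{m=1}^{N}t_m=\sum_{m=1}^{N}T_m$, whence $\sum_{\varepsilon_j=0}t_j=\sum_{m=1}^{N}T_m-\sum_{i=1}^{n}T_{k+i-\lambda_i}$. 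Combining with the factorial correction $T_1+\cdots+T_k$ gives the diagonal coefficient
\[
(T_1+\cdots+T_k)-\Big(\sum_{m=1}^{N}T_m-\sum_{i=1}^{n}T_{k+i-\lambda_i}\Big)=\sum_{i=1}^{n}T_{k+i-\lambda_i}-\sum_{m=k+1}^{N}T_m ,
\]
which is exactly the coefficient of $|\lambda\rangle$ in (\ref{qPieri}). (Equivalently, one can prove the set identity $\{k+i-\lambda_i\}_{i=1}^n\sqcup\{k+1-j+\lambda'_j\}_{j=1}^k=\{1,\dots,N\}$ by monotonicity and the observation that $\lambda_i+\lambda'_j\neq i+j-1$ for every box, since membership of $(i,j)$ forces $\lambda_i+\lambda'_j\ge i+j$ and non-membership forces $\lambda_i+\lambda'_j\le i+j-2$.)

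The hard part will be the wrap-around term: one must verify carefully that exactly one factor of $q$ sits on $\delta_N$ (and none on the $\hat t_j$) under the chosen deformation $q_1=q,\ q_2=\cdots=q_N=1$, and that the single cyclic bit-shift $\delta_N$ corresponds cleanly to excising a boundary rim hook of length $N-1$, including the precise nonvanishing criterion. This is where I would lean on the $01$-word/cylinder dictionary and, if a uniqueness statement is needed, on Lemma \ref{lem:lam_perm}, which guarantees at most one target partition for each fixed degree $d$. Everything else is bookkeeping with the local operators $\delta_j$ and $\hat t_j$.
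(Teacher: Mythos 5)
Your proposal is correct and follows essentially the same route as the paper: the paper's proof also reduces everything to the relation $\tilde{H}_{1,n}=H_{1}|_{\mathcal{V}_{n}}+T_{1}+\cdots +T_{k}$ from (\ref{H2facH}) and then quotes the action $H_{1}|\lambda\rangle =\sum_{\mu-\lambda=(1)}|\mu\rangle +\bigl(\sum_{i=1}^{n}t_{n+1-i+\lambda_{i}}-\sum_{j=1}^{N}t_{j}\bigr)|\lambda\rangle +q|\lambda^{-}\rangle$ as ``immediate.'' Your derivation of that $H_{1}$-action from the cyclic nil-Hecke expression (\ref{HHecke})/(\ref{YBA2Hecke}) --- with $H_{1}=A_{1}+qD_{1}=\sum_{j<N}\delta_{j}-\sum_{j}\hat{t}_{j}+q\delta_{N}$, which also settles your worry about where the factor $q$ sits --- simply fills in the step the paper leaves implicit, and your index bookkeeping for the diagonal coefficient and the rim-hook criterion ($\lambda_{1}=k$, $\lambda_{n}\geq 1$) is accurate.
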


\begin{remark}
Using a result from Mihalcea \cite[Cor 7.1]{MihalceaAIM} the last formula
together with the introduction of an appropriate grading suffices to show
that the commutative ring generated by the endomorphisms $\{\tilde{H}%
_{r,n}\}_{r=1}^{k}$ is canonically isomorphic to $QH_{T}^{\ast }(\limfunc{Gr}%
_{n,N})$. However, here we shall not make use of this fact, but instead will
identify both rings in the last section via their idempotents and, thus,
give an alternative derivation of Mihalcea's result.
\end{remark}

\begin{proof}
This is immediate as 
\begin{equation*}
H_{1}|\lambda \rangle =\sum_{\mu -\lambda =(1)}|\mu \rangle +\left(
\sum_{i=1}^{n}t_{n+1-i+\lambda _{i}}-\sum_{j=1}^{N}t_{j}\right) |\lambda
\rangle +q|\lambda ^{-}\rangle \;,
\end{equation*}%
and $\tilde{H}_{1,n}=H_{1}|\mathcal{V}_{n}+t_{n+1}+\cdots +t_{N}$.
\end{proof}

\begin{example}
We explain the action of the operators (\ref{facH2H}) on a simple example,
setting $N=4$ and $n=2$. Let us for simplicity momentarily drop the $n$%
-dependence in the notation of $\tilde{H}$. Then%
\begin{equation*}
\tilde{H}%
_{2}=H_{2}+t_{4}H_{1}+t_{4}^{2}+t_{3}t_{4}-t_{3}t_{4}=H_{2}+T_{1}H_{1}+T_{1}^{2}
\end{equation*}%
Figure \ref{fig:qPieri} shows the various lattice row configurations of the
vicious walker model when acting with $H_{2}$ and $H_{1}$ on the basis
vector $|2,1\rangle $. Collecting terms we find that%
\begin{equation*}
\tilde{H}_{2}|2,1\rangle =(T_{1}-T_{4})|2,2\rangle
+(T_{1}-T_{2})(T_{1}-T_{4})|2,1\rangle +q|1,0\rangle
+q(T_{1}-T_{2})|0,0\rangle \;.
\end{equation*}%
In a similar manner one can compute the action of $\tilde{H}_{2}$ on any
other basis vector in $\{|0,0\rangle ,|1,0\rangle ,|2,0\rangle ,|1,1\rangle
,|2,1\rangle ,|2,2\rangle \}$. Identifying each basis vector $|\lambda
\rangle $ with a Schubert class $\sigma _{\lambda }$, the coefficients in
the above expansion formula for $\tilde{H}_{2}|2,1\rangle $ match the
coefficients in the product expansion of $\sigma _{(2)}\ast \sigma _{(2,1)}$
in the quantum cohomology ring $QH_{T}^{\ast }(\limfunc{Gr}_{2,4})$; see the
multiplication table in \cite[8.2]{Mihalcea06}. We state the precise ring
isomorphism in the last Section.
\end{example}


\begin{figure}[tbp]
\begin{equation*}
\includegraphics[scale=0.4]{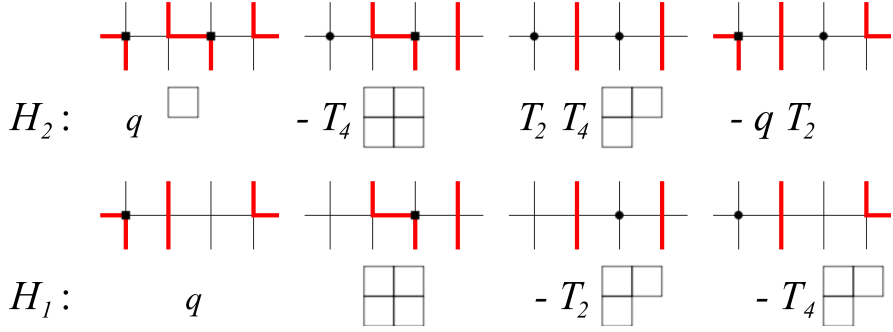}
\end{equation*}%
\caption{The action of the transfer matrix coefficients $H_{2},H_{1}$ on the
partition $(2,1)$ in terms of lattice row configurations for $N=4$ and $n=2$%
. }
\label{fig:qPieri}
\end{figure}

As in the case of the vicious walker model we also introduce for osculating
walkers a different set of operators in terms of the transfer matrix by
expanding the latter in factorial powers,%
\begin{equation}
x^{n}E(x^{-1})|_{\mathcal{V}_{n}}=\sum_{r=0}^{n}x^{n-r}E_{r}|_{\mathcal{V}%
_{n}}=\sum_{r=0}^{n}(x|-t)^{n-r}\tilde{E}_{r,n}\;.  \label{facE}
\end{equation}%
Again one checks by consulting the allowed vertex configurations in Figure %
\ref{fig:5vmodels} that $E(x)|_{\mathcal{V}_{n}}$ is at most of degree $n$,
whence the above expansion is well-defined.

\begin{lemma}
We now have the identities,%
\begin{gather}
E_{r}|_{\mathcal{V}_{n}}=\sum_{j=0}^{r}e_{j}(t_{1},\ldots ,t_{n-r+j})\tilde{E%
}_{r-j,n}  \label{E2facE} \\
\tilde{E}_{r,n}=\sum_{j=0}^{r}(-1)^{j}\det (e_{1-a+b}(t_{1},\ldots
,t_{n-r+b}))_{1\leq a,b\leq j}~E_{r-j}|_{\mathcal{V}_{n}}\;.  \label{facE2E}
\end{gather}%
And we find when acting on the empty partition that%
\begin{equation}
\tilde{E}_{r,n}|\emptyset \rangle =|1^{r}\rangle \;.
\end{equation}
\end{lemma}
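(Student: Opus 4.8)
The plan is to mirror the proof of the corresponding vicious-walker lemma (equations (\ref{H2facH})--(\ref{PieriId})), reading off both identities from the factorial-power expansion (\ref{facE}) and then treating the empty partition by a triangular recursion.

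First I would expand the factorial power in (\ref{facE}). Since $(x|-t)^{p}=\prod_{i=1}^{p}(x+t_{i})$, one has
\begin{equation*}
(x|-t)^{n-r}=\sum_{j=0}^{n-r}e_{j}(t_{1},\ldots ,t_{n-r})\,x^{n-r-j}.
\end{equation*}
Substituting this into the right-hand side of (\ref{facE}) and collecting the coefficient of $x^{n-r}$ on both sides gives, after reindexing, $E_{r}|_{\mathcal{V}_{n}}=\sum_{j=0}^{r}e_{j}(t_{1},\ldots ,t_{n-r+j})\tilde{E}_{r-j,n}$, which is (\ref{E2facE}); the signs are all positive here (in contrast to (\ref{H2facH})) precisely because the osculating factorial power carries $+t_{i}$ rather than $-T_{i}$.

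Next I would invert this relation. Since the $j=0$ term is $\tilde{E}_{r,n}$ (as $e_{0}=1$), the system (\ref{E2facE}) is lower-triangular and unipotent in the unknowns $\{\tilde{E}_{s,n}\}_{s\le r}$, so it can be solved uniquely for $\tilde{E}_{r,n}$ in terms of the $E_{s}|_{\mathcal{V}_{n}}$. The entries of the inverse of a unipotent lower-triangular matrix are signed minors of the original, and a direct check identifies these with the flagged Jacobi--Trudi determinants, yielding (\ref{facE2E}). The overall factor $(-1)^{j}$ here (absent in (\ref{facH2H})) is the expected consequence of inverting a generating function with all-positive $e_{j}$ coefficients, mirroring the formal relation $\bigl(\sum_{i}e_{i}z^{i}\bigr)\bigl(\sum_{j}(-1)^{j}h_{j}z^{j}\bigr)=1$ with $h_{j}=\det(e_{1-a+b})_{1\le a,b\le j}$.

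Finally, for the action on the empty partition I would rewrite (\ref{E2facE}) as the recursion
\begin{equation*}
\tilde{E}_{r,n}=E_{r}|_{\mathcal{V}_{n}}-\sum_{j=1}^{r}e_{j}(t_{1},\ldots ,t_{n-r+j})\tilde{E}_{r-j,n}
\end{equation*}
and induct on $r$, the base case $\tilde{E}_{0,n}|\emptyset\rangle=|\emptyset\rangle=|1^{0}\rangle$ being immediate. For the inductive step I would compute $E_{r}|\emptyset\rangle$ from the combinatorial action (\ref{combE}) (equivalently the cyclic-word formula (\ref{EHecke})): acting on $\mu=\emptyset$, only single-column shapes $\lambda=(1^{s})$ can occur, each weighted by an elementary symmetric polynomial in the first few $t_{i}$'s, while the inductive hypothesis supplies $\tilde{E}_{r-j,n}|\emptyset\rangle=|1^{r-j}\rangle$. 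The hard part will be the bookkeeping showing that the elementary-symmetric coefficients produced by $E_{r}|\emptyset\rangle$ telescope exactly against the correction terms $e_{j}(t_{1},\ldots,t_{n-r+j})|1^{r-j}\rangle$, leaving the single term $|1^{r}\rangle$; this is the osculating counterpart of the cancellation left implicit in the vicious case. An alternative I would keep in reserve is to deduce the result from level-rank duality $\Theta H(x|t)\Theta=E(x|-T)$ together with $\Theta|\emptyset\rangle=|\emptyset\rangle$ and the already-established identity (\ref{PieriId}), at the cost of tracking the conjugation $\lambda\mapsto\lambda^{*}$ on shapes.
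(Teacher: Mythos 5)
Your proposal is correct, but your primary route differs from the paper's, whose entire proof is one sentence: the lemma is declared a direct consequence of level-rank duality (\ref{levelrankmom}) and the already-proven vicious-walker lemma (\ref{H2facH}), (\ref{facH2H}), (\ref{PieriId}). Concretely, the paper conjugates the $H$-identities, taken on $\mathcal{V}_{k}$ with parameters $t$ replaced by $-T$, by the involution $\Theta$: one has $\Theta H(x|-T)\Theta =E(x|t)$, the map $\Theta :\mathcal{V}_{k}\rightarrow \mathcal{V}_{n}$ sends $|\emptyset \rangle \mapsto |\emptyset \rangle $ and $|(r)\rangle \mapsto |1^{r}\rangle $, the factorial powers $(x|T)^{p}$ turn into $(x|-t)^{p}$, and $(-1)^{j}e_{j}(-t_{1},\ldots ,-t_{n-r+j})=e_{j}(t_{1},\ldots ,t_{n-r+j})$, which is exactly the sign bookkeeping you observed between (\ref{H2facH})--(\ref{facH2H}) and (\ref{E2facE})--(\ref{facE2E}). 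This is precisely the alternative you keep in reserve. Your primary route instead replays for $E$ what the paper did for $H$: the coefficient extraction for (\ref{E2facE}) and the unipotent triangular inversion for (\ref{facE2E}) are correct and at the same level of detail as the paper's own $H$-argument, and your induction for $\tilde{E}_{r,n}|\emptyset \rangle =|1^{r}\rangle $ does close. The bookkeeping you defer is exactly the claim $E_{r}|\emptyset \rangle =\sum_{j=0}^{r}e_{j}(t_{1},\ldots ,t_{n-r+j})\,|1^{r-j}\rangle $, which holds because $D_{r}^{\prime }$ and $\check{t}_{N}$ both annihilate $|\emptyset \rangle $ (the $N$th letter of the word $1^{n}0^{k}$ is $0$, so no osculating path can wrap the cylinder), leaving $E_{r}|\emptyset \rangle =\Sigma _{r}^{N,<}|\emptyset \rangle $, which produces only single-column shapes $(1^{s})$ weighted by precisely these elementary symmetric polynomials. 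The trade-off: your argument is self-contained and makes the osculating combinatorics explicit, but duplicates the vicious-walker computation; the paper's argument is essentially free once the $H$-lemma and the duality dictionary under $\Theta$ are in place.
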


\begin{proof}
A direct consequence of level-rank duality (\ref{levelrankmom}) and our
previous result for the $H_{r}$'s.
\end{proof}

\section{The Bethe ansatz}

We now relate the eigenvalue problem of the transfer matrices to the
equivariant quantum cohomology ring. Starting point is a particular guess or
ansatz for the algebraic form of the eigenvectors which we relate to
factorial Schur functions. The eigenvectors will be shown to be identical
with the idempotents of the quantum cohomology ring; see \cite{VicOsc} for
the non-equivariant case.

\subsection{Factorial Schur functions}

To keep this article self-contained we collect several known facts about
factorial Schur functions which we will use repeatedly in what follows.
Given a partition $\lambda =(\lambda _{1},\ldots ,\lambda _{n})$ and some
commuting indeterminates $x=(x_{1},\ldots ,x_{n})$ recall the definition of
the factorial Schur function (see e.g. \cite[Chap I.3, Ex. 20]{Macdonald}
and references therein),%
\begin{equation}
s_{\lambda }(x|a):=\frac{\det [(x_{j}|a)^{\lambda _{i}+n-i}]_{1\leq i,j\leq
n}}{\det [(x_{j}|a)^{n-i}]_{1\leq i,j\leq n}}~,\qquad
(x_{j}|a)^{r}:=\prod_{i=1}^{r}(x_{j}-a_{i}),  \label{facSchur}
\end{equation}%
where $a=(a_{i})_{i\in \mathbb{Z}}$ is an infinite sequence which we choose
to be $a_{i}=t_{i}=T_{N+1-i}$ for $1\leq i\leq N$ and $a_{i}=0$ else. For
convenience we abuse notation and write $s_{\lambda }(x|t)$ for $s_{\lambda
}(x|a)$ under this specialisation. Define $e_{r}(x|a)=s_{(1^{r})}(x|a)$ and $%
h_{r}(x|a)=s_{(r)}(x|a)$ to be the factorial elementary and complete
factorial Schur functions. Note that for $t_{1}=\cdots =t_{N}=0$ one
recovers the ordinary Schur functions $s_{\lambda }(x)=s_{\lambda }(x|0)$.
One can express (\ref{facSchur}) in terms of the latter according to the
expansion \cite[Eqn (6.18)]{Macdonald}%
\begin{equation}
s_{\lambda }(x|a)=s_{\lambda }(x)+\sum_{\mu \varsubsetneq \lambda
}(-1)^{|\lambda /\mu |}\det (e_{\lambda _{i}-\mu _{j}-i+j}(a_{1},\ldots
,a_{n+\lambda _{i}-i}))_{1\leq i,j\leq n}~s_{\mu }(x)\;.  \label{facs2s}
\end{equation}%
We also adopt the notation from \emph{loc. cit.} for the shift operator $%
\tau $ acting on factorial Schur functions via $\tau s_{\lambda
}(x|a):=s_{\lambda }(x|\tau a)$ with $(\tau a)_{i}=a_{i+1}$. Using the
latter one has the following generalisation of the Jacobi-Trudi and N\"{a}%
gelsbach-Kostka determinant formulae,%
\begin{equation}
s_{\lambda }(x|a)=\det (h_{\lambda _{i}-i+j}(x|\tau ^{1-j}a))_{1\leq i,j\leq
n}=\det (e_{\lambda _{i}^{\prime }-i+j}(x|\tau ^{j-1}a))_{1\leq i,j\leq n}
\label{facs2det}
\end{equation}

We also need the tableau-definition of the factorial Schur function. Fix $%
\lambda $, then a semi-standard tableau $\mathcal{T}:Y(\lambda )\rightarrow 
\mathbb{N}$ is a filling of the Young diagram $Y$ of $\lambda $ with
positive integers such that the numbers are weakly increasing in each row
and strictly increasing in each column. Then one has the following
definition as a sum over semi-standard tableaux, 
\begin{equation}
s_{\lambda }(x|a)=\sum_{\mathcal{T}}\prod_{(i,j)\in Y(\lambda )}(x_{\mathcal{%
T}(i,j)}-a_{\mathcal{T}(i,j)+j-i})\;.  \label{facSchurtab}
\end{equation}%
Finally there exists the following generalisation of Cauchy's identity (see
e.g. \cite[Eqn (6.17)]{Macdonald}),%
\begin{equation}
\prod_{i=1}^{n}\prod_{j=1}^{k}(x_{i}+y_{j})=\sum_{\lambda }s_{\lambda
}(x|t)s_{(\lambda ^{\vee })^{\prime }}(y|-t)\;.  \label{Cauchy}
\end{equation}%
Below we will generalise all these formulae to a noncommutative setting by
replacing the elementary and complete factorial Schur functions with the
operator coefficients in the expansions (\ref{facE}), (\ref{facH}).

\subsection{Bethe vectors}

For $y=(y_{1},\ldots ,y_{n})$ some indeterminates introduce the so-called
off-shell Bethe vector in $\mathcal{V}_{n}$%
\begin{equation}
|y_{1},\ldots ,y_{n}\rangle =(y_{1}\cdots y_{n})^{N}\hat{B}%
(y_{n}^{-1}|t)\cdots \hat{B}(y_{1}^{-1}|t)v_{0}\otimes \cdots \otimes v_{0},
\label{Bethev}
\end{equation}%
where $\hat{B}(y_{i}|t)=B(y_{i}|t)y_{i}^{\hat{n}}$ with $\hat{n}%
=\sum_{j=1}^{N}\sigma _{j}^{+}\sigma _{j}^{-}$. Similarly, we define for a $%
k $-tuple $z=(z_{1},\ldots ,z_{k})$ its counterpart under level-rank duality
in $\mathcal{V}_{n}$ as%
\begin{equation}
|z_{1},\ldots ,z_{k}\rangle =\hat{C}^{\prime }(z_{k}^{-1}|t)\cdots \hat{C}%
^{\prime }(z_{1}^{-1}|t)v_{1}\otimes \cdots \otimes v_{1},
\label{dualBethev}
\end{equation}%
where $\tilde{C}^{\prime }(z_{i}|t)=C^{\prime }(z_{i}|t)z_{i}^{-\hat{n}}$.
The reason for introducing the operators $\hat{B}$ and $\hat{C}^{\prime }$
is that the latter commute $\hat{B}(y_{i})\hat{B}(y_{j})=\hat{B}(y_{j})\hat{B%
}(y_{i})$, $\hat{C}^{\prime }(z_{i})\hat{C}^{\prime }(z_{j})=\hat{C}^{\prime
}(z_{j})\hat{C}^{\prime }(z_{i})$ while the $B,C^{\prime }$-operators do
not. Hence, we can conclude that the vectors (\ref{Bethev}) and (\ref%
{dualBethev}) are symmetric in the $y$'s and $z$'s.

We now identify the coefficients of the Bethe vectors with factorial Schur
functions. This identification is a special case of the discussion in \cite%
{Bumpetal}; here we state explicit bijections between our vicious and
osculating walker configurations and semi-standard tableaux which are not
contained in \emph{loc. cit. }Moreover, the formulation in terms of the
Yang-Baxter algebra operators is new.


\begin{figure}[tbp]
\begin{equation*}
\includegraphics[scale=0.5]{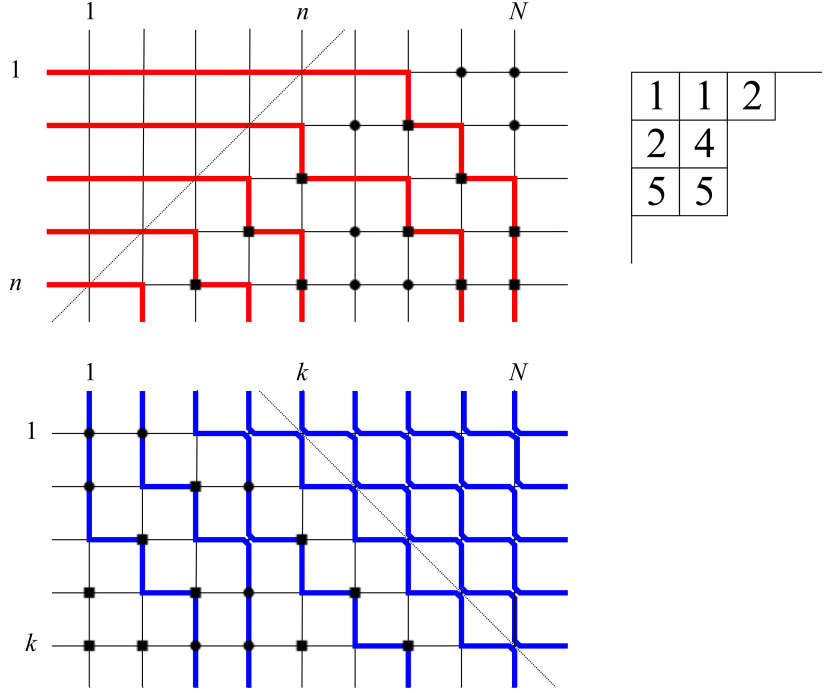}
\end{equation*}%
\caption{Two sample lattice configurations for the $B$ (top) and $C$%
-operator (bottom) of the vicious and osculating model, respectively. Shown
on the right is the tableau obtained under the bijection explained in the
text.}
\label{fig:BCop}
\end{figure}

\begin{proposition}[\protect\cite{Bumpetal}]
Let $\lambda ^{\vee }=(k-\lambda _{n},\ldots ,k-\lambda _{1})$. Then we have
the identities%
\begin{eqnarray}
|y_{1},\ldots ,y_{n}\rangle &=&\sum_{\lambda \in (n,k)}s_{\lambda ^{\vee
}}(y_{1},\ldots ,y_{n}|T)|\lambda \rangle ,  \label{BethefacSchur} \\
|z_{1},\ldots ,z_{k}\rangle &=&\sum_{\lambda \in (n,k)}s_{(\lambda ^{\vee
})^{\prime }}(z_{1},\ldots ,z_{k}|-t)|\lambda \rangle ,
\label{dualBethefacSchur}
\end{eqnarray}%
where we have set $T_{i}=t_{N+1-i}$ as before.
\end{proposition}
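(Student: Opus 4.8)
The plan is to read the matrix coefficient $\langle\lambda|y_1,\dots,y_n\rangle$ as a partition function of the inhomogeneous vicious walker model and to match it, term by term, with the tableau formula (\ref{facSchurtab}) for $s_{\lambda^\vee}(y|T)$. First I would unfold the definition (\ref{Bethev}): iterating the coproduct recursion for the monodromy entries $B^{(N)}=B^{(N-1)}\otimes(1_N-x\hat t_N)+D^{(N-1)}\otimes x\sigma_N^+$ (together with its companions for $A,C,D$) that follows from (\ref{L}) and (\ref{rowmom}), the vector $\hat B(y_n^{-1})\cdots\hat B(y_1^{-1})\,v_0\otimes\cdots\otimes v_0$ has as its $|\lambda\rangle$-coefficient a weighted sum over lattice configurations with all-zero bottom boundary, top boundary the $01$-word of $\lambda$, and $i$th row carrying spectral parameter $y_i^{-1}$. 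The prefactor $(y_1\cdots y_n)^N$ together with the $\hat n$-twists in the definition of $\hat B$ are exactly what is needed to clear the reciprocal spectral parameters and turn each admissible vertex weight into an affine factor of the form $(y_i-t_j)$, read off from Figure~\ref{fig:5vmodels}.

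Next I would organise the sum over configurations as a sum over chains of partitions. Reading off the horizontal edges row by row, the $r$th lattice row adjoins a horizontal strip to the complementary diagram, giving a chain $\emptyset=\nu^{(0)}\subset\nu^{(1)}\subset\cdots\subset\nu^{(n)}=\lambda^\vee$ in which each step $\nu^{(r-1)}\to\nu^{(r)}$ is a horizontal strip; this single-row branching rule is proved exactly as (\ref{combH}), by inspecting the five admissible vertices. Recording the value $r$ in the boxes added at the $r$th step identifies such a chain with a semistandard tableau $\mathcal T$ of shape $\lambda^\vee$ and entries in $\{1,\dots,n\}$, the non-intersection of the walkers being equivalent to the column-strictness of $\mathcal T$. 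Collecting the row weights along the chain then produces $\prod_{(i,j)\in Y(\lambda^\vee)}(y_{\mathcal T(i,j)}-t_{\mathcal T(i,j)+j-i})$, which is precisely the summand of (\ref{facSchurtab}) with $a_i=t_i$; summing over all chains gives $s_{\lambda^\vee}(y|T)$ and hence (\ref{BethefacSchur}). That the $\hat B$'s commute provides an independent check, since it forces the coefficient to be symmetric in the $y$'s, as the factorial Schur function is.

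The dual identity (\ref{dualBethefacSchur}) I would then deduce from the first by level-rank duality rather than recount. By the corollary (\ref{levelrankmom}) one has $\Theta M(x|t)\Theta=M'(x|-T)^{t\otimes 1}$; since transposition in the auxiliary space interchanges the off-diagonal entries, this gives $\Theta B(x|t)\Theta=C'(x|-T)$, while $\Theta=\mathcal{PC}$ sends $v_0\otimes\cdots\otimes v_0\mapsto v_1\otimes\cdots\otimes v_1$ and $|\lambda\rangle\mapsto|\lambda^*\rangle$ with $\lambda^*=(\lambda^\vee)'$. Conjugating the $k$-particle case of (\ref{BethefacSchur}) by $\Theta$ then identifies the dual Bethe vector (\ref{dualBethev}) with the image of a vicious-walker Bethe vector at parameters $-T$, and carries the coefficient $s_{\lambda^\vee}(\cdot|T)$ over to $s_{(\lambda^\vee)'}(z|-t)$, which is (\ref{dualBethefacSchur}).

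The hard part will be the content bookkeeping in the second step: verifying that the diagonal index of the box filled at step $r$ is exactly $\mathcal T(i,j)+j-i$, so that the equivariant parameter collected in a given column is $t_{\mathcal T(i,j)+j-i}$ and not a neighbouring one. This demands tracking simultaneously the complementation $\lambda\mapsto\lambda^\vee$, the $n\times k$ bounding box that changes as the particle number grows, and the reversal $T_i=t_{N+1-i}$, and confirming that the normalisation in (\ref{Bethev}) leaves no residual power of any $y_i$. Once this indexing is pinned down on the bijection, the remaining weight comparison against the admissible vertices of Figure~\ref{fig:5vmodels} is routine, and the level-rank step of the third paragraph is then purely formal.
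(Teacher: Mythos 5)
Your treatment of (\ref{BethefacSchur}) is essentially the paper's own proof: the authors likewise read $\langle \lambda |B(y_{n}^{-1})\cdots B(y_{1}^{-1})|0\rangle $ as a vicious-walker partition function, biject configurations with semistandard tableaux of shape $\lambda ^{\vee }$ (their prescription -- a box labelled $i$ placed in tableau row $j$, where $j$ is the number of paths crossing to the left -- is your row-by-row horizontal-strip chain in different words), compare weights with the tableau formula (\ref{facSchurtab}), and absorb the normalisation through $\hat{B}(y_{n}^{-1})\cdots \hat{B}(y_{1}^{-1})=w_{0}(y)^{-\rho }B(y_{n}^{-1})\cdots B(y_{1}^{-1})(y_{1}\cdots y_{n})^{-\hat{n}}$. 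Where you genuinely depart from the paper is (\ref{dualBethefacSchur}): the authors prove it by a second, independent bijection for the osculating ($C^{\prime }$-operator) configurations, whereas you deduce it from the first identity by conjugating with $\Theta $ via (\ref{levelrankmom}). That route is legitimate and more economical: one has $\Theta \hat{B}(y^{-1}|t)\Theta =y^{-N}\hat{C}^{\prime }(y^{-1}|-T)$ (note $\Theta \hat{n}\Theta =N-\hat{n}$, so the stray factors $y_{i}^{-N}$ cancel exactly against the prefactor $(y_{1}\cdots y_{k})^{N}$ in (\ref{Bethev})), hence $\Theta $ maps the $k$-particle $B$-type Bethe vector at parameters $-T$ precisely onto (\ref{dualBethev}) at parameters $t$, with no leftover powers.

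There is, however, a concrete error in your bookkeeping of $\Theta $ on the basis, and as written the indices do not come out. You claim $\Theta |\lambda \rangle =|\lambda ^{\ast }\rangle $. In fact, under the paper's bijection it is the charge flip $\mathcal{C}$ of (\ref{charge}) alone that realises $\lambda \mapsto \lambda ^{\ast }=(\lambda ^{\vee })^{\prime }$, while $\mathcal{P}$ realises $\lambda \mapsto \lambda ^{\vee }$; consequently $\Theta =\mathcal{PC}$ realises conjugation, $\Theta |\lambda \rangle =|(\lambda ^{\ast })^{\vee }\rangle =|\lambda ^{\prime }\rangle $. (Check $N=3$, $n=1$, $\lambda =(2)$: its word is $001$, and $\Theta (001)=011$, which is $\lambda ^{\prime }=(1,1)$, not $\lambda ^{\ast }=(0,0)$. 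The paper's own remark asserting $\Theta \leftrightarrow \ast $ is imprecise, which is presumably where you picked this up.) The slip is not harmless: carrying your claimed action through, the coefficient of $|\lambda \rangle $ comes out as $s_{\lambda ^{\prime }}(z|-t)$ rather than $s_{(\lambda ^{\vee })^{\prime }}(z|-t)$. With the corrected action everything closes: writing $\lambda =\mu ^{\prime }$ for $\mu $ in the $k\times n$ box, the coefficient $s_{\mu ^{\vee }}(z|-t)$ of $|\mu \rangle $ is carried to $s_{(\lambda ^{\prime })^{\vee }}(z|-t)=s_{(\lambda ^{\vee })^{\prime }}(z|-t)$, which is (\ref{dualBethefacSchur}). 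So your strategy is sound, but the step you declared ``purely formal'' is exactly where the error sits, and it needs the corrected $\Theta $-action to go through.
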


\begin{proof}
We only sketch the proof; compare with \cite{Bumpetal}. Consider the matrix
elements 
\begin{eqnarray*}
&&(y_{1}\cdots y_{n})^{N}\langle \lambda |B(y_{n}^{-1})\cdots
B(y_{1}^{-1})|0\rangle \\
&&(z_{1}\cdots z_{k})^{N}\langle \lambda |C^{\prime }(z_{k}^{-1})\cdots
C^{\prime }(z_{1}^{-1})|N\rangle
\end{eqnarray*}%
with $|0\rangle :=v_{0}\otimes \cdots \otimes v_{0}$ and $|N\rangle
:=v_{1}\otimes \cdots \otimes v_{1}$. Each can be identified with a weighted
sum over path configurations with certain fixed boundary conditions; see
Figure \ref{fig:BCop} for examples with $N=9$, $n=5$ (top) and $N=9,$ $k=5$
(bottom). Each vertex with a bullet contributes a factor $(x_{i}-T_{N+1-j})$
for the vicious walker model and a factor $(x_{i}+t_{j})$ for the osculating
model. Here$\ i,j$ are the lattice row and column numbers where the vertex
occurs. Each vertex with a square contributes a factor $x_{i}$. To obtain
the tableau on the right we make use of the following bijections which hold
true in general, but it is instructive to verify them on the example shown
in Figure \ref{fig:BCop}.

\begin{itemize}
\item \emph{Vicious walkers}. Starting from the top, place for each vertex
labelled with a bullet in lattice row $i$ a box labelled with $i$ in the $j$%
th row of the Young tableau where $j$ is the total number of paths crossing
the row to the left of the vertex. The resulting tableau has shape $\lambda
^{\vee }$.

\item \emph{Osculating walkers}. Consider the leftmost path and write in the
first column (counting from left to right) of the Young diagram of $(\lambda
^{\vee })^{\prime }$ the lattice row numbers where a vertex with a bullet
occurs. Then do the same for the next path writing the lattice row numbers
now in the second column etc. If there are no vertices with a bullet leave
the column empty.
\end{itemize}

Set $\rho =(n-1,\ldots ,1,0)$, $\rho ^{\prime }=(k-1,\ldots ,1,0)$ and
denote by $w_{0},w_{0}^{\prime }$ are the longest elements in the symmetric
groups $\mathbb{S}_{n},\mathbb{S}_{k}$. Then using the above bijections we
arrive at the identities%
\begin{eqnarray*}
&&\langle \lambda |B(y_{n}^{-1})\cdots B(y_{1}^{-1})|0\rangle =\frac{%
w_{0}(y)^{\rho }}{(y_{1}\cdots y_{n})^{N}}\sum_{\mathcal{T}}\prod_{\langle
i,j\rangle \in \lambda ^{\vee }}(x_{\mathcal{T}(i,j)}-T_{\mathcal{T}%
(i,j)+j-i}), \\
&&\langle \lambda |C^{\prime }(z_{k}^{-1})\cdots C^{\prime
}(z_{1}^{-1})|N\rangle =\frac{w_{0}^{\prime }(z)^{\rho ^{\prime }}}{%
(z_{1}\cdots z_{k})^{N}}\sum_{\mathcal{T}}\prod_{\langle i,j\rangle \in
(\lambda ^{\vee })^{\prime }}(x_{\mathcal{T}(i,j)}+t_{\mathcal{T}(i,j)+j-i}),
\end{eqnarray*}%
where the sum in the first identity runs over all semistandard tableau $%
\mathcal{T}$ of shape $\lambda ^{\vee }$ and in the second over all
semistandard tableau $\mathcal{T}$ of shape $(\lambda ^{\vee })^{\prime }$.
The assertion now follows from the identities%
\begin{eqnarray*}
\hat{B}(y_{n}^{-1})\cdots \hat{B}(y_{1}^{-1}) &=&w_{0}(y)^{-\rho
}B(y_{n}^{-1})\cdots B(y_{1}^{-1})(y_{1}\cdots y_{n})^{-\hat{n}} \\
\hat{C}^{\prime }(z_{k}^{-1})\cdots \hat{C}^{\prime }(z_{1}^{-1})
&=&w_{0}^{\prime }(z)^{-\rho ^{\prime }}C^{\prime }(z_{k}^{-1})\cdots
C^{\prime }(z_{1}^{-1})(z_{1}\cdots z_{n})^{\hat{n}}\;.
\end{eqnarray*}
\end{proof}

\begin{remark}
Note that the bijection between lattice configurations for the $B,C^{\prime
} $-operators and tableaux used in \cite{Bumpetal} is different from the one
used in the case of the transfer matrices (\ref{combH}), (\ref{combE});
compare also with \cite{VicOsc}. These different bijections arise because in
the non-equivariant case, $t_{j}=0$ for all $j=1,\ldots ,N$, one can employ
the (generally valid) Schur function identity%
\begin{equation*}
s_{\lambda ^{\vee }}(x_{1}^{-1},\ldots ,x_{n}^{-1})=\frac{s_{\lambda
}(x_{1},\ldots ,x_{n})}{s_{(k^{n})}(x_{1},\ldots ,x_{n})}
\end{equation*}%
which is linked with the so-called curious duality in the non-equivariant
quantum cohomology ring \cite{Postnikov}. This relation is no longer valid
for factorial Schur functions and, hence, does not apply to the equivariant
case.
\end{remark}

\begin{corollary}
We have the following identity for the factorial Schur functions under a
permutation of the equivariant parameters,%
\begin{equation}
s_{\lambda }(x|T)=s_{\lambda }(x|\ldots ,T_{N+1-j},T_{N-j},\ldots
)+(T_{N-j}-T_{N+1-j})s_{\delta _{j}^{\vee }\lambda }(x|T)
\label{braidfacschur}
\end{equation}%
for all $j=1,\ldots ,N-1$. Here we set $s_{\delta _{j}^{\vee }\lambda
}(x|T)=0$ if $\delta _{j}^{\vee }|\lambda \rangle =0$.
\end{corollary}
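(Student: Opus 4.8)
The plan is to prove the identity directly from the bialternant definition (\ref{facSchur}), which is self-contained and exposes the correction term at once; the Bethe-vector expansion (\ref{BethefacSchur}) then serves as an independent check. The key elementary observation is that the factorial power \((x_j|a)^r=\prod_{i=1}^r(x_j-a_i)\) is symmetric in \(a_1,\dots,a_r\). Writing \(a_i=T_i\) and letting \(s\) be the transposition swapping the neighbouring parameters \(a_{N-j}=T_{N-j}\) and \(a_{N+1-j}=T_{N+1-j}\) — this is precisely the swap appearing in the statement, corresponding to \(t_j\leftrightarrow t_{j+1}\) under \(T_i=t_{N+1-i}\) — this symmetry shows that \((x_j|a)^r\) is fixed by \(s\) for every \(r\neq N-j\): if \(r\ge N+1-j\) both swapped parameters occur symmetrically, while if \(r\le N-1-j\) neither occurs. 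Only the exponent \(r=N-j\) is sensitive, where \((x_j|a)^{N-j}-(x_j|s a)^{N-j}=(x_j|a)^{N-1-j}(a_{N+1-j}-a_{N-j})\).

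First I would note that the denominator \(\det[(x_j|a)^{n-i}]\) in (\ref{facSchur}) is independent of \(a\): each \((x_j|a)^{n-i}\) is monic of degree \(n-i\) in \(x_j\), so row reduction turns the matrix into \([x_j^{n-i}]\), the ordinary Vandermonde. Hence \(s\) acts only on the numerator \(\det[(x_j|a)^{\lambda_i+n-i}]\). Because the exponents \(\lambda_1+n-1>\dots>\lambda_n\) are strictly decreasing, at most one row \(i_0\) satisfies \(\lambda_{i_0}+n-i_0=N-j\), and all other rows are fixed by \(s\). Expanding by multilinearity in row \(i_0\) and using the displayed one-entry difference, the difference of numerators equals \((a_{N+1-j}-a_{N-j})\) times the determinant with the exponent of row \(i_0\) lowered from \(N-j\) to \(N-1-j\). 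Dividing by the common (Vandermonde) denominator gives
\[
s_\lambda(x|T)-s_\lambda(x|s T)=(T_{N+1-j}-T_{N-j})\,s_{\mu}(x|T),
\]
where \(\mu\) is \(\lambda\) with one box removed from row \(i_0\) (so \(\lambda_{i_0}\mapsto\lambda_{i_0}-1\)), and \(s_\mu=0\) when the resulting exponents are no longer strictly decreasing, i.e. when \(\mu\) is not a partition.

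It then remains to identify \(\mu\) with \(\delta_j^\vee\lambda\). In the 01-word (\ref{basis}) the part \(\lambda_{i_0}\) is recorded by a one-letter at position \(\ell=\lambda_{i_0}+n-i_0+1=N+1-j\); removing the box moves this letter to position \(N-j\), i.e. it sends \((\varepsilon_{N-j},\varepsilon_{N+1-j})=(0,1)\) to \((1,0)\), which by (\ref{delta}) is exactly the \(\delta^\vee\)-action on the pair of sites \((N-j,N+1-j)\); moreover \(\mu\) fails to be a partition precisely when that operator annihilates \(|\lambda\rangle\), matching the convention \(s_{\delta^\vee\lambda}=0\). The only genuinely fiddly point — and the main obstacle — is reconciling this site pair \((N-j,N+1-j)\) and the sign of the coefficient with the printed \(\delta_j^\vee\) and \((T_{N-j}-T_{N+1-j})\); this is bookkeeping once one tracks the reversal \(T_i=t_{N+1-i}\) together with the conjugation \(\mathcal{P}\delta_{N-j}\mathcal{P}=\delta_j^\vee\) of Proposition \ref{prop:nilcoxrep}.

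As an independent check I would rerun the computation in the quantum-inverse-scattering picture: from \(\hat{r}_j=1-(t_j-t_{j+1})\delta_j^\vee\) and the braiding relation (\ref{braidmom}), \(\hat{r}_j M(x|t)=M(x|\dots,t_{j+1},t_j,\dots)\hat{r}_j\), one obtains \(\hat{r}_j|y_1,\dots,y_n\rangle_t=|y_1,\dots,y_n\rangle_{s_j t}\), since \(\hat{r}_j\) fixes \(v_0\otimes\cdots\otimes v_0\) and commutes with \(y^{\hat{n}}\). Expanding both sides with (\ref{BethefacSchur}) and comparing coefficients of a fixed basis vector reproduces the same relation, providing a consistency test for the signs and indices obtained above.
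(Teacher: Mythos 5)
Your main derivation is correct, and it takes a genuinely different route from the paper. The paper proves the corollary by inserting $1=\hat{r}_{j}\hat{r}_{j}^{-1}$ into the matrix element $\langle \lambda |B(y_{n}^{-1}|t)\cdots B(y_{1}^{-1}|t)|0\rangle $, using the braiding of the monodromy matrix together with the expansion (\ref{BethefacSchur}) and the explicit action (\ref{braidaction}) -- i.e.\ essentially the computation you relegate to an ``independent check''. Your primary argument instead works directly with the bialternant (\ref{facSchur}): the observation that $(x|a)^{r}$ is symmetric in $a_{1},\ldots ,a_{r}$, so the swap $T_{N-j}\leftrightarrow T_{N+1-j}$ affects at most the single row with exponent $N-j$, plus multilinearity in that row and the $a$-independence of the denominator. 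This is more elementary and self-contained (no Yang--Baxter input), and it makes the vanishing convention transparent: the exceptional row is absent, or the lowered exponent collides with another row, exactly when the $\delta ^{\vee }$-operator kills $|\lambda \rangle $.

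However, your closing claim -- that the mismatch between what you derived (site pair $(N-j,N+1-j)$, coefficient $(T_{N+1-j}-T_{N-j})$) and the printed statement ($\delta _{j}^{\vee }$, coefficient $(T_{N-j}-T_{N+1-j})$) is mere ``bookkeeping'' -- does not hold up, and you should not paper over it. The identity you derived,
\begin{equation*}
s_{\lambda }(x|T)=s_{\lambda }(x|\ldots ,T_{N+1-j},T_{N-j},\ldots
)+(T_{N+1-j}-T_{N-j})\,s_{\delta _{N-j}^{\vee }\lambda }(x|T),
\end{equation*}
is the correct one; the printed corollary contains a sign and an index slip. No convention reconciles the two: the conjugation $\mathcal{P}\delta _{N-j}\mathcal{P}=\delta _{j}^{\vee }$ of Proposition \ref{prop:nilcoxrep} relates $\delta $ (not $\delta ^{\vee }$) at site $N-j$ to $\delta _{j}^{\vee }$, and invoking it forces the replacement $\lambda \mapsto \lambda ^{\vee }$ in \emph{all three} Schur functions, not only in the correction term. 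A two-line check: take $N=3$, $n=1$, $\lambda =(2)$, $j=1$, so the swap is $T_{2}\leftrightarrow T_{3}$. Then $s_{(2)}(x|T)-s_{(2)}(x|T_{1},T_{3},T_{2})=(x-T_{1})(T_{3}-T_{2})\neq 0$, while $\delta _{1}^{\vee }|\lambda \rangle =0$ (the 01-word of $(2)$ is $001$), so the printed right-hand side has vanishing correction; with $\delta _{2}^{\vee }\lambda =(1)$ and coefficient $(T_{3}-T_{2})$ your formula checks out. So state your version as the result -- it is also what the paper's own Yang--Baxter argument produces once the braiding $\hat{r}_{j}B(\cdot |t)=B(\cdot |s_{j}t)\hat{r}_{j}$ is applied with the inverses placed consistently -- and flag the discrepancy as a typo in the statement rather than claim an equivalence that is not there.
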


\begin{proof}
Inserting the identity $1=\hat{r}_{j}\hat{r}_{j}^{-1}$ into the matrix
element $\langle \lambda |B(y_{n}^{-1}|t)\cdots B(y_{1}^{-1}|t)|0\rangle $
we obtain from (\ref{mom_ybe}), 
\begin{equation*}
\langle \lambda |B(y_{n}^{-1}|t)\cdots B(y_{1}^{-1}|t)|0\rangle =\langle
\lambda |\hat{r}_{j}B(y_{n}^{-1}|s_{j}t)\cdots B(y_{1}^{-1}|s_{j}t)\hat{r}%
_{j}^{-1}|0\rangle \;.
\end{equation*}%
Exploiting the previous result (\ref{BethefacSchur}) and the explicit action
of the braid matrix (\ref{braidaction}) the assertion follows.
\end{proof}

\subsection{The Bethe ansatz equations}

We call the Bethe vectors (\ref{Bethev}) \textquotedblleft
on-shell\textquotedblright\ if the indeterminates $\{y_{i}\}_{i=1}^{n}$ --
called \emph{Bethe roots} -- are solutions to the following set of Bethe
ansatz equations,%
\begin{equation}
\prod_{j=1}^{N}(y_{i}-t_{j})+(-1)^{n}q=0,\qquad i=1,\ldots ,n\;.  \label{BAE}
\end{equation}%
Similarly, the dual Bethe vectors (\ref{dualBethev}) are on-shell provided
that%
\begin{equation}
\prod_{j=1}^{N}(z_{i}+T_{j})+(-1)^{k}q=0,\qquad i=1,\ldots ,k\;.
\label{dualBAE}
\end{equation}%
We now wish to discuss properties of the solutions of the above equations.
Let $\mathbb{F}:=\mathbb{C}\{\!\{t_{1},\ldots ,t_{N}\}\!\}$ be the
algebraically closed field of Puiseux series in the equivariant parameters
and -- assuming that $q^{\pm 1/N}$ exists -- set $\mathbb{F}_{q}=\mathbb{C}%
[q^{\pm 1/N}]\widehat{\otimes }\mathbb{F}$ to be the completed tensor
product. Note that by a simple rescaling $y_{i}\rightarrow q^{\frac{1}{N}%
}y_{i},$ $z_{i}\rightarrow q^{\frac{1}{N}}z_{i}$ and $t_{j}\rightarrow q^{%
\frac{1}{N}}t_{j}$ we can eliminate $q$ from the above equations and we
therefore often will set $q=1$ temporarily to work with $\mathbb{F}$ only.
Denote by $Y=Y(t_{1},\ldots ,t_{N})\subset \mathbb{F}^{N}$ the set of $N$
solutions to (\ref{BAE}) with $q=1$ and by $Y_q$ the solutions in $\mathbb{F}%
_q^{N}$. The following observations are immediate.

\begin{lemma}
\ \newline

\begin{itemize}
\item[(i)] Permutation invariance: for any $w \in \mathbb{S}_{N}$ we have $%
Y_q(w t)=Y_q(t)$.

\item[(ii)] Level-rank duality: if $y_{i}\in Y_{q}(w_{0}t)$ then $%
z_{i}=-y_{i}$ is a solution of (\ref{dualBAE}).

\item[(iii)] $\mathbb{Z}_{N}$-covariance: let $\eta ^{N}=1$ then $\eta
Y(t)=Y(\eta ^{-1}t)$. In particular, if we set $t_{j}=\eta ^{j}$ then $\eta
Y(t_{1},\ldots ,t_{N})=Y(t_{N},t_{1},t_{2},\ldots ,t_{N-1})$.
\end{itemize}
\end{lemma}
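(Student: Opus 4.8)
The plan is to observe first that the $n$ equations (\ref{BAE}) decouple completely: each Bethe root $y_i$ is only required to satisfy the \emph{single} univariate equation $P_{t}^{(q)}(y):=\prod_{j=1}^{N}(y-t_{j})+(-1)^{n}q=0$, a polynomial of degree $N$. Hence $Y_{q}(t)$ is precisely the zero locus in $\mathbb{F}_{q}$ of $P_{t}^{(q)}$ (and $Y(t)$ the zero locus of $P_{t}:=P_{t}^{(1)}$ in $\mathbb{F}$), while the dual set is the zero locus of $z\mapsto\prod_{j=1}^{N}(z+T_{j})+(-1)^{k}q$. All three assertions then reduce to elementary statements about how these polynomials transform. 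For (i), the product $\prod_{j=1}^{N}(y-t_{j})$ is a \emph{symmetric} function of $(t_{1},\ldots,t_{N})$, so $P_{wt}^{(q)}=P_{t}^{(q)}$ for every $w\in\mathbb{S}_{N}$; equal polynomials have equal zero loci, which gives $Y_{q}(wt)=Y_{q}(t)$.

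For (ii), I would first record that $w_{0}t=(t_{N},\ldots,t_{1})=(T_{1},\ldots,T_{N})=T$, so membership $y_{i}\in Y_{q}(w_{0}t)$ means exactly $\prod_{j=1}^{N}(y_{i}-T_{j})=-(-1)^{n}q$. Substituting $z_{i}=-y_{i}$ into the left-hand side of (\ref{dualBAE}) and extracting the global sign,
\begin{equation*}
\prod_{j=1}^{N}(z_{i}+T_{j})+(-1)^{k}q=(-1)^{N}\prod_{j=1}^{N}(y_{i}-T_{j})+(-1)^{k}q,
\end{equation*}
and then using $(-1)^{N}=(-1)^{n+k}$ together with $\prod_{j}(y_{i}-T_{j})=-(-1)^{n}q$ collapses the right-hand side to $-(-1)^{2n+k}q+(-1)^{k}q=0$. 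Thus $z_{i}=-y_{i}$ solves the dual equation.

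The substance of (iii) is the single identity, valid because $\eta^{N}=1$,
\begin{equation*}
\prod_{j=1}^{N}(\eta y-\eta t_{j})=\eta^{N}\prod_{j=1}^{N}(y-t_{j})=\prod_{j=1}^{N}(y-t_{j}),
\end{equation*}
which shows $P_{\eta t}(\eta y)=P_{t}(y)$, so that $y\mapsto\eta y$ is a bijection from $Y(t)$ onto $Y(\eta t)$; this is the asserted covariance. The only point demanding care is the bookkeeping of the shift direction: scaling the roots by $\eta$ scales the equivariant parameters by $\eta$, equivalently scaling the roots by $\eta^{-1}$ produces $Y(\eta^{-1}t)$, and for the distinguished choice $t_{j}=\eta^{j}$ the multiplicative shift $t_{j}\mapsto\eta^{-1}t_{j}$ is exactly the cyclic rotation $(t_{1},\ldots,t_{N})\mapsto(t_{N},t_{1},\ldots,t_{N-1})$ displayed in the statement (indeed $\{\eta^{j}\}$ is then the full group of $N$-th roots of unity, so $P_{t}(y)=y^{N}-(1-(-1)^{n})$ is manifestly rotation invariant and the claim is immediate). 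I do not expect any genuine obstacle once the decoupling into one degree-$N$ polynomial is in place; the two places that merely require attention are matching the signs $(-1)^{N}=(-1)^{n}(-1)^{k}$ in (ii) and fixing the direction of the parameter shift in (iii).
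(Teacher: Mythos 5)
The paper offers no proof of this lemma at all --- it is prefaced by ``The following observations are immediate'' --- so there is nothing to compare against except the intended reading, and your argument is exactly that reading: the $n$ equations in (\ref{BAE}) decouple into a single univariate polynomial of degree $N$, and all three parts are elementary statements about its root set. Parts (i) and (ii) of your proof are correct as written, including the sign bookkeeping $(-1)^{N}=(-1)^{n}(-1)^{k}$ in (ii).

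In (iii), however, note what your own computation actually proves: $P_{\eta t}(\eta y)=P_{t}(y)$ gives $\eta Y(t)=Y(\eta t)$, equivalently $\eta^{-1}Y(t)=Y(\eta^{-1}t)$, whereas the lemma as printed asserts $\eta Y(t)=Y(\eta^{-1}t)$. These two statements agree only if $Y(\eta t)=Y(\eta^{-1}t)$, which would force $\eta^{2}Y(t)=Y(t)$ and fails for generic $t$ once $N>2$. So the printed statement carries an inversion typo, and your version is the correct general covariance; you were right to be suspicious about ``the bookkeeping of the shift direction'', but you should say explicitly that the literal statement needs $\eta^{-1}$ replaced by $\eta$ (or its left-hand side replaced by $\eta^{-1}Y(t)$), rather than presenting the two as interchangeable. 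The ``in particular'' clause survives under either reading, and the clean reason combines your scaling identity with part (i): for $t_{j}=\eta^{j}$ both $\eta t$ and $\eta^{-1}t$ are cyclic permutations of $t$, so $\eta Y(t)=Y(\eta t)=Y(t)=Y(\eta^{-1}t)$. This argument does not require $\eta$ to be a primitive $N$-th root of unity, whereas your parenthetical appeal to $\{\eta^{j}\}$ being the full group of $N$-th roots of unity (and to $P_{t}(y)=y^{N}-(1-(-1)^{n})$) does.
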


Note that property (i) does not mean that each individual solution $y_{i}$
stays invariant under permutations, especially we emphasise that the
factorial Schur functions in the expansions (\ref{BethefacSchur}), (\ref%
{dualBethefacSchur}) of the Bethe vectors are \emph{not }symmetric in the
equivariant parameters $t$. (ii) reflects that there are two alternative
ways of writing the same eigenvector using (\ref{Bethev}) and (\ref%
{dualBethev}). (iii) is related to the transformation property (\ref{RotHE})
under the map (\ref{Omega}).

For our purposes, it will be convenient to parametrise $n$-tuples $%
y=(y_{1},\ldots ,y_{n})$ of solutions in $Y_{q}$ in terms of 01-words or
partitions $\alpha \in \Pi _{n,k}$. For this purpose we fix a numbering of
the $N$ solutions in $Y_{q}$ by exploiting that the Bethe roots are
explicitly known for $q=0$.

Let $y_{j}=y_{j}(q)$ be the solution which maps on $y_{j}(0)=t_{j}$ 
when setting $q=0$ with $j=1,\ldots ,N$. We then simply write $y_{\lambda }$
for the $n$-tuple $(y_{\ell _{1}},\ldots ,y_{\ell _{n}})$ where $\ell
_{i}(\lambda )=\lambda _{n+1-i}+i$ and $k\geq \lambda _{1}\geq \cdots
\lambda _{n}\geq 0$ as before. Property (ii) in the above Lemma states that
we only need to consider (\ref{BAE}) and fixes the convention for numbering
the solutions of (\ref{dualBAE}): given a particular solution $y_{i}(q)$ of (%
\ref{BAE}) act with the longest permutation $w_{0}$ and then multiply it
with minus one.

\subsection{Spectral decomposition of the transfer matrices}

\begin{theorem}
The on-shell Bethe vectors and their dual vectors both form orthogonal
eigenbases of the transfer matrix $H$ in each subspace $\mathcal{V}%
_{n}\otimes \mathbb{F}$ with eigenvalue equations%
\begin{gather}
H(x_{i}|t)|y_{1},\ldots ,y_{n}\rangle =\left(
\prod_{j=1}^{N}(1-x_{i}t_{j})+(-1)^{n}qx_{i}^{N}\right) \prod_{l=1}^{n}\frac{%
1}{1-x_{i}~y_{l}}~|y_{1},\ldots ,y_{n}\rangle ,  \label{specH} \\
H(x_{i}|t)|z_{1},\ldots ,z_{k}\rangle =\prod_{l=1}^{k}\left(
1+x_{i}~z_{l}\right) ~|z_{1},\ldots ,z_{k}\rangle \;.  \label{dualspecH}
\end{gather}%
In contrast, the operator $E$ satisfies the identities%
\begin{gather}
E(x_{i}|t)|y_{1},\ldots ,y_{n}\rangle =\prod_{l=1}^{n}\left(
1+x_{i}~y_{l}\right) ~|y_{1},\ldots ,y_{n}\rangle  \label{specE} \\
E(x_{i}|t)|z_{1},\ldots ,z_{k}\rangle =\left(
\prod_{j=1}^{N}(1+x_{i}T_{j})+(-1)^{k}qx_{i}^{N}\right) \prod_{l=1}^{k}\frac{%
1}{1-x_{i}~z_{l}}~|z_{1},\ldots ,z_{k}\rangle \;.  \label{dualspecE}
\end{gather}
\end{theorem}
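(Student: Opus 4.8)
The plan is to prove the four eigenvalue equations by the algebraic Bethe ansatz (quantum inverse scattering method), treating two of them as the primary computations and deducing the remaining two from the functional identity $(\ref{QQ})$ together with level-rank duality. Throughout I regard $B(y^{-1}|t)$ as a creation operator acting on the pseudo-vacuum $|0\rangle=v_{0}\otimes\cdots\otimes v_{0}\in V_{0}$, and dually $C'(z^{-1}|t)$ as a hole-creation operator on $|N\rangle=v_{1}\otimes\cdots\otimes v_{1}$, lowering the particle number; applying $\hat B$ (resp. $\hat C'$) $n$ (resp. $k$) times lands in $\mathcal{V}_{n}$, in agreement with $(\ref{Bethev})$ and $(\ref{dualBethev})$. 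First I would record the action of the diagonal entries of the monodromy matrix on the vacuum. Using $(\ref{L})$ and the fact that $\sigma^{-}\sigma^{+}$ and $\sigma^{+}\sigma^{-}$ are the projectors onto $v_{0}$ and $v_{1}$, one reads off $C(x)|0\rangle=0$, $A(x)|0\rangle=\prod_{j=1}^{N}(1-xt_{j})\,|0\rangle$ and $D(x)|0\rangle=x^{N}|0\rangle$, the latter carrying the $q$-weight supplied by the deformed monodromy matrix $(\ref{qMom})$ with $q_{1}=q$; the analogous reference eigenvalues for $A',D',C'$ on $|N\rangle$ follow from $(\ref{L'})$.

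The central step is to extract from the Yang-Baxter relation $(\ref{mom_ybe})$, using the explicit entries $(\ref{ybesolns})$, the commutation relations that move $A(x)$ and $D(x)$ past a string of $B$'s. Because the relevant $R$-matrix is of five-vertex type (the entry $b$ vanishes), these have the schematic wanted/unwanted form $A(x)B(y)=f(x,y)\,B(y)A(x)+g(x,y)\,B(x)A(y)$ and similarly for $D$, where $f,g$ are the ratios of $R$-entries read off from $(\ref{ybesolns})$. Applying $H(x)=A(x)+qD(x)$ to $\hat B(y_{n}^{-1})\cdots\hat B(y_{1}^{-1})|0\rangle$ and commuting $A,D$ to the right, the diagonal (wanted) contributions reproduce the Bethe vector with coefficient $\prod_{j}(1-xt_{j})\prod_{l}f+(-1)^{n}qx^{N}\prod_{l}\bar f$, which I expect to collapse to the eigenvalue displayed in $(\ref{specH})$, while the off-diagonal (unwanted) terms proportional to $\hat B(x^{-1})\prod_{l\neq m}\hat B(y_{l}^{-1})|0\rangle$ cancel precisely when the roots satisfy the Bethe ansatz equations $(\ref{BAE})$. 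I expect this cancellation to be the main obstacle: it is the one genuinely computational part, and the bookkeeping of the $q$-deformation weights of $(\ref{qMom})$, the $y_{i}^{\hat n}$ dressing inside $\hat B$, and the sign $(-1)^{n}$ must be done carefully so that the residue condition on the unwanted terms is exactly $(\ref{BAE})$. Running the identical argument with $E=A'+qD'$, the $C'$-creation operators and the reference state $|N\rangle$ yields $(\ref{dualspecE})$, the unwanted terms vanishing under $(\ref{dualBAE})$.

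Finally I would deduce $(\ref{specE})$ and $(\ref{dualspecH})$ with no further Bethe-ansatz work, using $(\ref{QQ})$. Restricted to $\mathcal{V}_{n}$ one has $\prod_{j}\sigma_{j}^{z}=(-1)^{n}$, so $H(x|t)E(-x|t)=\prod_{j=1}^{N}(1-xt_{j})+(-1)^{n}qx^{N}$ acts as a scalar. Feeding in the eigenvalue of $H$ on $|y\rangle$ from $(\ref{specH})$, the factor $\prod_{j}(1-xt_{j})+(-1)^{n}qx^{N}$ cancels and leaves $E(-x)|y\rangle=\prod_{l}(1-xy_{l})|y\rangle$; replacing $x\mapsto -x$ gives $(\ref{specE})$. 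Symmetrically, substituting into $(\ref{QQ})$ the value of $E(-x)|z\rangle$ obtained from $(\ref{dualspecE})$, and using that $\{T_{j}\}=\{t_{j}\}$ as multisets (so $\prod_{j}(1-xT_{j})=\prod_{j}(1-xt_{j})$) together with $(-1)^{k}(-1)^{N}=(-1)^{n}$ to match the $qx^{N}$ term, yields $H(x)|z\rangle=\prod_{l=1}^{k}(1+xz_{l})|z\rangle$, which is $(\ref{dualspecH})$. Alternatively these two equations follow from the level-rank transformation $(\ref{levelrankmom})$, providing an independent check.

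The orthogonality and basis assertions then follow from simplicity of the joint spectrum of the commuting family of transfer matrices. Distinct $n$-tuples of Bethe roots produce eigenvalue functions of $x$ that differ, and by the transpose relation between the transfer matrices and their duals established earlier (the dual transfer matrices are the transposes of $H,E$) the families $\{|y\rangle\}$ and $\{|z\rangle\}$ are biorthogonal with respect to the standard bilinear form on quantum space. Since the number of on-shell solutions parametrised by $\lambda\in\Pi_{n,k}$ equals $\binom{N}{n}=\dim\mathcal{V}_{n}$ and the expansion $(\ref{BethefacSchur})$ exhibits the transition matrix to the Schubert basis as triangular with nonzero diagonal (hence invertible over $\mathbb{F}$), each family is in fact a basis of $\mathcal{V}_{n}\otimes\mathbb{F}$.
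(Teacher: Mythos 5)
Your derivation of the four eigenvalue equations is essentially the paper's own proof with more of the bookkeeping made explicit. The paper likewise proceeds by the algebraic Bethe ansatz: it quotes the commutation relations
$A(x)B(y)=\tfrac{x}{x-y}B(x)A(y)-\tfrac{y}{x-y}B(y)A(x)$ and
$D(y)B(x)=\tfrac{y}{x-y}B(y)D(x)-\tfrac{x}{x-y}B(x)D(y)$
extracted from (\ref{mom_ybe}), obtains (\ref{BAE}) as the condition for cancellation of the unwanted terms, and uses (\ref{QQ}) together with level-rank duality (\ref{levelrankmom}) to reach the remaining eigenvalue expressions --- exactly the division of labour you propose. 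Your step of dividing out the $H$-eigenvalue in (\ref{QQ}) to obtain (\ref{specE}) is legitimate because $H$ and $E$ commute (Proposition \ref{integrability}) and the scalar $\prod_{j}(1-xt_{j})+(-1)^{n}qx^{N}$ is not identically zero in $x$; your sign bookkeeping $(-1)^{k}(-1)^{N}=(-1)^{n}$ for (\ref{dualspecH}) is also correct.

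The one genuine flaw is in your final paragraph. The transition matrix $(s_{\lambda ^{\vee }}(y_{\alpha }|T))_{\lambda ,\alpha }$ appearing in (\ref{BethefacSchur}) is \emph{not} triangular for $q\neq 0$: the vanishing pattern you are implicitly invoking is the Molev--Sagan theorem (\ref{vanish}), which applies only to the specialisation $y_{\alpha }=t_{\alpha }$, i.e.\ at $q=0$; for $q\neq 0$ the Bethe roots are genuine Puiseux series and no entry vanishes in general, so invertibility does not follow ``by triangularity''. The claim is repairable: since each root $y_{j}(q)$ specialises to $t_{j}$ at $q=0$ (the paper's numbering convention for solutions of (\ref{BAE})), the determinant of the transition matrix is an element of $\mathbb{F}_{q}$ whose value at $q=0$ is a product of nonzero diagonal entries by (\ref{vanish}), hence a unit in $\mathbb{F}_{q}$. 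Alternatively one can argue as the paper does, via normality of $H,E$ and the fact that the eigenvalues separate points (the eigenvalue $\prod_{l}(1+x\,y_{l})$ in (\ref{specE}) determines the multiset of roots), so that the $\binom{N}{n}$ on-shell vectors are pairwise orthogonal with nonzero self-pairings and therefore a basis. Note also that your biorthogonality-plus-counting argument genuinely needs those nonzero self-pairings (pairwise orthogonality alone does not exclude isotropic, linearly dependent vectors); their nonvanishing, $\mathfrak{e}(y_{\alpha })=\prod (y_{i}-y_{j})\neq 0$, is only established later in the paper's Poincar\'{e}-duality proposition, so strictly speaking both your sketch and the paper's defer this point.
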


\begin{proof}
The proof is analogous to the one used in the non-equivariant case \cite%
{VicOsc}. One employs the following commutation relations of the row
Yang-Baxter algebra (\ref{row_yba}) encoded in the Yang-Baxter equation (\ref%
{mom_ybe}),%
\begin{equation*}
\left\{ 
\begin{array}{c}
A(x)B(y)=\frac{x}{x-y}B(x)A(y)-\frac{y}{x-y}B(y)A(x) \\ 
D(y)B(x)=\frac{y}{x-y}B(y)D(x)-\frac{x}{x-y}B(x)D(y)%
\end{array}%
\right.
\end{equation*}%
and the analogous relations for the osculating walker model which are easily
obtained from level-rank duality (\ref{levelrankmom}). The latter in
conjunction with (\ref{QQ}) then allow one to derive all four eigenvalue
expressions as well as the equations (\ref{BAE}) as necessary conditions for
(\ref{Bethev}), (\ref{dualBethev}) to be eigenvectors.

There are $\dim \mathcal{V}_{n}=\binom{N}{n}=\binom{N}{k}$ solutions to (\ref%
{BAE}) since each of the equations is of degree $N$ and $\mathbb{F}$ is
algebraically closed.

One can show that both matrices $H,E$ are normal and that the eigenvalues
separate points, whence the eigenvectors (\ref{Bethev}), (\ref{dualBethev})
have to be orthogonal and, thus, form each an eigenbasis.
\end{proof}

We employ the involution $\Theta $ to obtain transformation properties for
the Bethe vectors.

\begin{lemma}
Let $\lambda ^{\ast }=(\lambda ^{\vee })^{\prime }=(\lambda ^{\prime
})^{\vee }$ where $\lambda ^{\prime }$ is the conjugate partition of $%
\lambda $ and $\lambda ^{\vee }$ the complement of $\lambda $ in the $%
n\times k$ bounding box. Given a solution $y(t)=(y_{1},\ldots ,y_{n})$ of (%
\ref{BAE}) there exists a solution $z(-T)=(z_{1},\ldots ,z_{n})$ of (\ref%
{dualBAE}) with $k\rightarrow n$ such that%
\begin{equation}
\Theta |y_{1},\ldots ,y_{n}\rangle =|z_{1},\ldots ,z_{n}\rangle \in \mathcal{%
V}_{k}\;.  \label{LR_duality_Bethev}
\end{equation}%
That is, under level-rank duality the $y$-Bethe vectors in $\mathcal{V}_{n}$
are mapped on to the $z$-Bethe vectors in $\mathcal{V}_{k}$.
\end{lemma}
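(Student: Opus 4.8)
The plan is to prove the identity directly, by conjugating the creation operators in the Bethe vector (\ref{Bethev}) with $\Theta$ and showing that they turn into the creation operators of the dual Bethe vector (\ref{dualBethev}). The key input is the level-rank duality of the monodromy matrices established above: decomposing $\Theta M(x|t)\Theta=M^{\prime}(x|-T)^{t\otimes1}$ over the auxiliary space as in (\ref{row_yba}), and noting that transposition in the auxiliary space exchanges the off-diagonal entries, one reads off the entrywise identities $\Theta A(x|t)\Theta=A^{\prime}(x|-T)$, $\Theta B(x|t)\Theta=C^{\prime}(x|-T)$, and so on. The relation $\Theta B(x|t)\Theta=C^{\prime}(x|-T)$ is exactly what is needed, since (\ref{Bethev}) is built from $B$-operators while (\ref{dualBethev}) is built from $C^{\prime}$-operators.

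Next I would record how $\Theta$ acts on the reference state and on the number operator $\hat{n}=\sum_{j}\sigma_{j}^{+}\sigma_{j}^{-}$. Inversion $\mathcal{C}$ flips every letter and $\mathcal{P}$ reverses the word, so $\Theta(v_{0}\otimes\cdots\otimes v_{0})=v_{1}\otimes\cdots\otimes v_{1}$; thus $\Theta$ sends the pseudo-vacuum of the vicious model in $\mathcal{V}_{n}$ to the pseudo-vacuum of the osculating model in $\mathcal{V}_{k}$. Since $\hat{n}$ counts the $1$-letters, inversion gives $\Theta\hat{n}\Theta=N-\hat{n}$, and hence $\Theta\,y^{-\hat{n}}\Theta=y^{-N}y^{\hat{n}}$ as operators on $\mathcal{V}$.

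Combining these, and using $\hat{B}(y^{-1}|t)=B(y^{-1}|t)\,y^{-\hat{n}}$ together with $\hat{C}^{\prime}(y^{-1}|-T)=C^{\prime}(y^{-1}|-T)\,y^{\hat{n}}$, I obtain
\[
\Theta\,\hat{B}(y_{i}^{-1}|t)\,\Theta=C^{\prime}(y_{i}^{-1}|-T)\,y_{i}^{-N}y_{i}^{\hat{n}}=y_{i}^{-N}\,\hat{C}^{\prime}(y_{i}^{-1}|-T).
\]
Inserting $\Theta^{2}=1$ between consecutive factors of (\ref{Bethev}) and applying this to each, the scalars $y_{i}^{-N}$ collect to $(y_{1}\cdots y_{n})^{-N}$ and cancel the prefactor $(y_{1}\cdots y_{n})^{N}$. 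Since the $\hat{C}^{\prime}$ commute, what survives is exactly $\hat{C}^{\prime}(y_{n}^{-1}|-T)\cdots\hat{C}^{\prime}(y_{1}^{-1}|-T)\,v_{1}\otimes\cdots\otimes v_{1}$, i.e. the dual Bethe vector (\ref{dualBethev}) in $\mathcal{V}_{k}$ with equivariant parameters $-T$ and Bethe roots $z_{i}=y_{i}$. To finish I would verify that this choice is on-shell: for parameters $-T$ the dual equations (\ref{dualBAE}) with $k\to n$ become $\prod_{j=1}^{N}(z_{i}-t_{j})+(-1)^{n}q=0$, which is precisely (\ref{BAE}) for $z_{i}=y_{i}$, so $z(-T)$ indeed solves the dual Bethe ansatz equations with $k\to n$.

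The one genuinely delicate step is the bookkeeping of the normalisation factors $y^{\pm\hat{n}}$ together with the parameter flip $t\mapsto-T$: one must check that the power of $\hat{n}$ produced by conjugating $y^{-\hat{n}}$ with $\Theta$ is exactly the one absorbed into the hat of $\hat{C}^{\prime}$, and that the accompanying scalar cancels the prefactor precisely, leaving no residual power of $(y_{1}\cdots y_{n})$. As an independent check I would confirm the statement at the level of spectra: by (\ref{specH}) the vector $\Theta|y\rangle$ is an eigenvector of $\Theta H(x|t)\Theta=E(x|-T)$ with the eigenvalue appearing there, and matching this against the $E$-spectrum (\ref{dualspecE}) (with $n\leftrightarrow k$ and $t\mapsto-T$) forces the Bethe roots to be the $y_{i}$, because the eigenvalues separate points.
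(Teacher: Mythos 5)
Your proposal is correct and follows essentially the same route as the paper: the paper's proof likewise invokes the level-rank duality (\ref{levelrankmom}) of the monodromy matrices to rewrite $\Theta|y_{1},\ldots ,y_{n}\rangle$ as a product of $C^{\prime}(\,\cdot\,|-T)$-operators acting on $|N\rangle$, and then uses $\Theta H(u|t)\Theta =E(u|-T)$ together with the fact that the eigenvalues separate points to identify the result with a dual Bethe vector. If anything, your bookkeeping of the factors $y^{\hat{n}}$ and $(y_{1}\cdots y_{n})^{N}$ is tighter than the paper's argument, since it yields the equality (\ref{LR_duality_Bethev}) with normalisation exactly $1$ and pins down $z_{i}=y_{i}$ explicitly, whereas the paper's spectral reasoning by itself only gives proportionality to some on-shell vector.
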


\begin{proof}
Employing level-rank duality (\ref{levelrankmom}) of the monodromy matrices
we have and 
\begin{eqnarray*}
\Theta |y_{1},\ldots ,y_{n}\rangle &=&C^{\prime }(y_{1}|-T)\cdots C^{\prime
}(y_{n}|-T)|N\rangle =\sum_{\lambda \in (k,n)}s_{\lambda ^{\ast
}}(y|T)|\lambda \rangle \\
\Theta |z_{1},\ldots ,z_{k}\rangle &=&B(z_{1}|-T)\cdots B(z_{k}|-T)|N\rangle
=\sum_{\lambda \in (n,k)}s_{\lambda ^{\ast }}(z|-t)|\lambda \rangle \;.
\end{eqnarray*}%
According to our previous results $\Theta |y_{1},\ldots ,y_{n}\rangle $ must
be an eigenvector of both $E(u|-T)$ and $H(u|-T)$ in $\mathcal{V}_{k}$ and
therefore proportional to $|z_{1},\ldots ,z_{n}\rangle $ for some solution $%
z=z(y)$ of (\ref{dualBAE}) since $\Theta H(u|t)\Theta =E(u|-T)$ and the
eigenvalues separate points.
\end{proof}

\begin{corollary}
We have the additional eigenvalue equations 
\begin{eqnarray}
&&\left\{ 
\begin{array}{c}
\tilde{E}_{r,n}|y_{1},\ldots ,y_{n}\rangle =e_{r}(y|t)|y_{1},\ldots
,y_{n}\rangle \\ 
\tilde{E}_{r,n}|z_{1},\ldots ,z_{k}\rangle =h_{r}(z|-T)|z_{1},\ldots
,z_{k}\rangle%
\end{array}%
\right. ,\qquad  \label{facEspec} \\
&&\left\{ 
\begin{array}{c}
\tilde{H}_{r,n}|z_{1},\ldots ,z_{k}\rangle =e_{r}(z|-T)|z_{1},\ldots
,z_{k}\rangle \\ 
\tilde{H}_{r,n}|y_{1},\ldots ,y_{n}\rangle =h_{r}(y|t)|y_{1},\ldots
,y_{n}\rangle%
\end{array}%
\right. ,\qquad  \label{facHspec}
\end{eqnarray}%
for the operators (\ref{facH}), (\ref{facE}) defined in terms of factorial
powers.
\end{corollary}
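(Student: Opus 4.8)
The plan is to apply the generating-function operators $\tilde{H}(x)=x^{k}H(x^{-1})|_{\mathcal{V}_{n}}$ and $\tilde{E}(x)=x^{n}E(x^{-1})|_{\mathcal{V}_{n}}$ directly to the on-shell Bethe vectors and to read off the coefficients in the factorial-power expansions (\ref{facH}), (\ref{facE}). Since $(x|T)^{k-r}$ (resp. $(x|-t)^{n-r}$) is monic of degree $k-r$ (resp. $n-r$), the families $\{(x|T)^{k-r}\}_{r=0}^{k}$ and $\{(x|-t)^{n-r}\}_{r=0}^{n}$ are bases of the spaces of polynomials in $x$ of degree $\leq k$ and $\leq n$ respectively; hence, once $\tilde{H}(x)|v\rangle$ or $\tilde{E}(x)|v\rangle$ has been written as a polynomial in $x$ times the eigenvector, the coefficient of the relevant factorial power is unambiguously the eigenvalue of $\tilde{H}_{r,n}$ or $\tilde{E}_{r,n}$ on $|v\rangle$.

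For the two ``elementary'' identities I would start from the spectral decomposition. Equation (\ref{specE}) gives $\tilde{E}(x)|y_{1},\ldots,y_{n}\rangle=\prod_{l=1}^{n}(x+y_{l})\,|y_{1},\ldots,y_{n}\rangle$, which is already polynomial of degree $n$, and (\ref{dualspecH}) gives $\tilde{H}(x)|z_{1},\ldots,z_{k}\rangle=\prod_{l=1}^{k}(x+z_{l})\,|z_{1},\ldots,z_{k}\rangle$. Expanding these products in the factorial-power bases is exactly the generating-function identity for the factorial elementary symmetric functions, $\prod_{l=1}^{n}(x+y_{l})=\sum_{r=0}^{n}(x|-t)^{n-r}e_{r}(y|t)$ and $\prod_{l=1}^{k}(x+z_{l})=\sum_{r=0}^{k}(x|T)^{k-r}e_{r}(z|-T)$, which follows from the tableau description (\ref{facSchurtab}) of $e_{r}=s_{(1^{r})}$. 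Matching coefficients then yields $\tilde{E}_{r,n}|y\rangle=e_{r}(y|t)|y\rangle$ and $\tilde{H}_{r,n}|z\rangle=e_{r}(z|-T)|z\rangle$, with no use of the Bethe ansatz equations.

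For the two ``complete'' identities the eigenvalue is no longer manifestly polynomial. Feeding (\ref{specH}) into $\tilde{H}(x)$, simplifying, and using $\prod_{j}(x-t_{j})=(x|t)^{N}$ gives $\tilde{H}(x)|y\rangle=\bigl((x|t)^{N}+(-1)^{n}q\bigr)\prod_{l=1}^{n}(x-y_{l})^{-1}\,|y\rangle$, and likewise (\ref{dualspecE}) with $\prod_{j}(x+T_{j})=(x|-t)^{N}$ gives $\tilde{E}(x)|z\rangle=\bigl((x|-t)^{N}+(-1)^{k}q\bigr)\prod_{l=1}^{k}(x-z_{l})^{-1}\,|z\rangle$. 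Here the Bethe ansatz equations (\ref{BAE}), (\ref{dualBAE}) enter decisively: they state precisely that each $y_{l}$ (resp. $z_{l}$) is a zero of the numerator, so the quotient is a genuine polynomial, of degree $k$ (resp. $n$). I would then expand this polynomial in the basis $\{(x|T)^{k-r}\}$ (resp. $\{(x|-t)^{n-r}\}$) and identify the coefficients with $h_{r}(y|t)$ (resp. $h_{r}(z|-T)$), the deformed quotient being the factorial, $q$-deformed analogue of the classical $\prod_{l}(1-y_{l}u)^{-1}=\sum_{r}h_{r}u^{r}$.

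The main obstacle is this last identification: establishing the factorial-complete generating function with the correct conventions and, crucially, with the $q$-deformed numerator $(x|t)^{N}+(-1)^{n}q$ replacing the undeformed $x^{N}$. I expect the cleanest route is to prove it as a polynomial identity via the factorial Jacobi--Trudi/N\"{a}gelsbach--Kostka formula (\ref{facs2det}) together with the Cauchy-type identity (\ref{Cauchy}); alternatively, one can deduce the two complete cases from the two elementary cases already proved by transporting them through the level-rank involution $\Theta=\mathcal{PC}$ of (\ref{levelrankmom}) and the Bethe-vector correspondence (\ref{LR_duality_Bethev}), which interchanges $H\leftrightarrow E$, $t\leftrightarrow -T$ and the $y$- and $z$-families (at the cost of swapping $n\leftrightarrow k$). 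Throughout, care must be taken with the reversal $t_{i}=T_{N+1-i}$ and with the $q=0$ normalisation $y_{j}(0)=t_{j}$ that fixes the numbering of the Bethe roots.
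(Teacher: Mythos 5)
Your treatment of the two ``elementary'' identities is correct and in fact more direct than the paper's: from (\ref{specE}) and (\ref{dualspecH}) the eigenvalues of $\tilde{E}(x)$ on $|y_{1},\ldots ,y_{n}\rangle$ and of $\tilde{H}(x)$ on $|z_{1},\ldots ,z_{k}\rangle$ are the genuine polynomials $\prod_{l}(x+y_{l})$ and $\prod_{l}(x+z_{l})$, and the generating identity $\prod_{l}(u+x_{l})=\sum_{r}e_{r}(x|t)(u|-t)^{n-r}$ (and its $t\to -T$ version) together with uniqueness of factorial-power expansions yields $\tilde{E}_{r,n}|y\rangle =e_{r}(y|t)|y\rangle$ and $\tilde{H}_{r,n}|z\rangle =e_{r}(z|-T)|z\rangle$; the paper obtains the second of these via level-rank duality instead. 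Your setup for the ``complete'' identities is also sound as far as it goes: $\tilde{H}(x)|y\rangle =\bigl((x|t)^{N}+(-1)^{n}q\bigr)\prod_{l}(x-y_{l})^{-1}|y\rangle$, and (\ref{BAE}) makes the quotient a monic polynomial of degree $k$ (likewise for $\tilde{E}$ on $|z\rangle$ via (\ref{dualBAE})).

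However, the identification of the factorial coefficients of that quotient with $h_{r}(y|t)$ --- which you correctly flag as ``the main obstacle'' --- is the entire nontrivial content of the complete cases, and neither of your two proposed routes as stated closes it; one of them cannot work at all. The level-rank involution $\Theta$ does not transport elementary statements into complete ones: it conjugates $\tilde{E}_{r,n}(t)$ into $\tilde{H}_{r,k}(-T)$ \emph{and simultaneously} maps $y$-Bethe vectors to $z$-Bethe vectors by (\ref{LR_duality_Bethev}), so the statement ``$\tilde{E}$ on $y$-vectors has eigenvalue $e_{r}$'' is carried to ``$\tilde{H}$ on $z$-vectors has eigenvalue $e_{r}$'' and nothing more; elementary maps to elementary and complete to complete, which is exactly how the paper uses duality (to deduce the $z$-vector statements from the $y$-vector ones, never $h$ from $e$). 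What actually completes the argument is the convolution identity for factorial symmetric functions, $\sum_{s=0}^{r}(-1)^{s}e_{s}(x|\tau ^{r-1}a)h_{r-s}(x|a)=0$, not the Cauchy identity (\ref{Cauchy}). You can either follow the paper: expand the operator relation (\ref{QQ}) restricted to $\mathcal{V}_{n}$ into factorial powers, whereupon this identity plus the already-established $\tilde{E}$-eigenvalues $e_{s}(y|t)$ determine the $\tilde{H}$-eigenvalues as $h_{r}(y|t)$ through a triangular system of coefficient equations; or stay within your setup: multiply your claimed expansion by $\prod_{l}(x-y_{l})=\sum_{s}(-1)^{s}e_{s}(y|t)(x|t)^{n-s}$, use the convolution identity to conclude the product equals $(x|t)^{N}+c$ for some constant $c$, and evaluate at $x=y_{1}$ using (\ref{BAE}) to fix $c=(-1)^{n}q$. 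Without one of these arguments the proof of (\ref{facHspec}) and of the second line of (\ref{facEspec}) is incomplete.
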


\begin{proof}
The first eigenvalue equation in (\ref{facEspec}) is a direct consequence of
the definition (\ref{facE}) and the identity 
\begin{equation*}
\prod_{i=1}^{n}(u-x_{i})=\sum_{r=0}^{n}(-1)^{r}e_{r}(x|t)(u|t)^{n-r}
\end{equation*}%
or equivalently%
\begin{equation*}
\prod_{i=1}^{n}(u+x_{i})=\sum_{r=0}^{n}e_{r}(x|t)(u|-t)^{n-r}
\end{equation*}%
where $e_{r}(x|t)$ is the factorial elementary symmetric function.

The functional equation (\ref{QQ}) in terms of the expansions (\ref{facH})
and (\ref{facE}) reads%
\begin{equation*}
\left( \sum_{r=0}^{n}(-1)^{r}\tilde{E}_{r,n}(u|t)^{n-r}\right) \left(
\sum_{r=0}^{k}\tilde{H}_{r,n}(u|T)^{k-r}\right) =(u|t)^{N}+(-1)^{n}q
\end{equation*}%
Noting the trivial identities%
\begin{equation*}
(u|T)^{r}=(u|\tau ^{N-r}t)^{r},\qquad (u|T)^{k-r}=(u|\tau
^{n+r}t)^{k-r},\qquad (u|t)^{n}(u|T)^{k}=(u|t)^{N}
\end{equation*}%
the latter becomes%
\begin{equation}
\sum_{r=0}^{n+k}\sum_{s=0}^{r}(-1)^{s}\tilde{E}_{s,n}\tilde{H}%
_{r-s,n}(u|t)^{n-s}(u|\tau ^{n-s+r}t)^{k-r+s}=(u|t)^{N}+(-1)^{n}q\;.
\end{equation}%
We have $(u|t)^{a+b}=(u|t)^{a}(u|\tau ^{a}t)^{b}$ and the generally valid
identity \cite{Macdonald}%
\begin{equation}
\sum_{s=0}^{r}(-1)^{s}e_{s}(x|\tau
^{r-1}a)h_{r-s}(x|a)=\sum_{s=0}^{r}(-1)^{s}e_{s}(x|a)h_{r-s}(x|\tau
^{1-r}a)=0
\end{equation}%
from which we deduce the first eigenvalue equation in (\ref{facHspec}) by
comparing coefficients. The remaining identities then follow from level rank
duality, $\Theta H(u|t)\Theta =E(u|-T)$ and $\Theta |y_{1},\ldots
,y_{n}\rangle =|z_{1},\ldots ,z_{n}\rangle $.
\end{proof}

The alternative descriptions of the spectrum of the same operators in terms
of Bethe roots and dual Bethe roots implies the following identities.

\begin{lemma}
Given a solution $y_{\alpha }$ of (\ref{BAE}) there exists a solution $%
z_{\alpha }=-y_{\alpha ^{\ast }}$ of (\ref{dualBAE}) such that 
\begin{eqnarray}
e_{r}(y_{1},\ldots ,y_{n}) &=&\sum_{s=0}^{r}e_{s}(T_{1},\ldots
,T_{N})h_{r-s}(z_{1},\ldots ,z_{k})  \label{baeid1} \\
e_{r}(y_{1},\ldots ,y_{n}|t) &=&h_{r}(z_{1},\ldots ,z_{k}|-T),\qquad
r=1,\ldots ,n  \label{baeid2}
\end{eqnarray}%
and, hence, one deduces the identities%
\begin{eqnarray}
h_{r}(z_{1},\ldots ,z_{k}) &=&\sum_{s=0}^{r}(-1)^{s}h_{s}(T_{1},\ldots
,T_{N})e_{r-s}(y_{1},\ldots ,y_{n})  \label{LRdualityspec} \\
s_{\lambda ^{\prime }}(y_{1},\ldots ,y_{n}|t) &=&s_{\lambda }(z_{1},\ldots
,z_{k}|-T)  \label{LRdualityFacSchur}
\end{eqnarray}
\end{lemma}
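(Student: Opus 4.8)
The plan is to exploit that, in each subspace $\mathcal{V}_{n}$, both the on-shell $y$-Bethe vectors and the dual $z$-Bethe vectors are eigenbases of the commuting pair $\{H,E\}$ whose eigenvalues separate points (the spectral decomposition Theorem). Hence every joint eigenline is one-dimensional, and the numbering convention fixed above — act with $w_{0}$ and negate, i.e. $\alpha\mapsto\alpha^{\ast}$ — identifies $|y_{\alpha}\rangle$ and $|z_{\alpha}\rangle$ with $z_{\alpha}=-y_{\alpha^{\ast}}$ as one and the same eigenvector up to scalar. Every asserted identity will then be read off by equating the eigenvalues of that single eigenvector computed in the two parametrisations; the proportionality scalar cancels because it multiplies both sides of each eigenvalue equation.

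First I would prove (\ref{baeid2}). Applying the factorial-power operators of the Corollary to the common eigenvector gives $\tilde{E}_{r,n}|y_{\alpha}\rangle=e_{r}(y|t)|y_{\alpha}\rangle$ and $\tilde{E}_{r,n}|z_{\alpha}\rangle=h_{r}(z|-T)|z_{\alpha}\rangle$ from (\ref{facEspec}); equating eigenvalues yields $e_{r}(y|t)=h_{r}(z|-T)$ for $r=1,\dots,n$. The companion relation $h_{r}(y|t)=e_{r}(z|-T)$ follows identically from $\tilde{H}_{r,n}$ via (\ref{facHspec}).

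Next, for (\ref{baeid1}) I would equate the eigenvalues of the ordinary transfer matrix $E(x|t)$ given in (\ref{specE}) and (\ref{dualspecE}), obtaining the identity
\begin{equation*}
\prod_{l=1}^{n}(1+xy_{l})=\left(\prod_{j=1}^{N}(1+xT_{j})+(-1)^{k}qx^{N}\right)\prod_{l=1}^{k}\frac{1}{1-xz_{l}}\;,
\end{equation*}
whose right-hand side is genuinely polynomial of degree $n$ because the dual Bethe equations (\ref{dualBAE}) make each $x=1/z_{l}$ a root of the bracket, cancelling the poles. Reading off the coefficient of $x^{r}$ for $0\le r\le n$, the quantum term contributes only from order $x^{N}$ onwards and so drops out, leaving $e_{r}(y)=\sum_{s}e_{s}(T)h_{r-s}(z)$, which is (\ref{baeid1}). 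Identity (\ref{LRdualityspec}) is then purely formal: multiplying the generating-function form of (\ref{baeid1}) by $\prod_{j}(1+xT_{j})^{-1}=\sum_{s}(-1)^{s}h_{s}(T)x^{s}$ turns the right side into $\prod_{l}(1-xz_{l})^{-1}=\sum_{r}h_{r}(z)x^{r}$ up to a quantum term of order $x^{N}$, and comparing coefficients in degrees below $N$ gives $h_{r}(z)=\sum_{s}(-1)^{s}h_{s}(T)e_{r-s}(y)$.

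Finally, for the factorial Schur identity (\ref{LRdualityFacSchur}) I would feed (\ref{baeid2}) into the determinant formulas (\ref{facs2det}): expanding $s_{\lambda'}(y|t)$ by the N\"agelsbach--Kostka formula gives an $n\times n$ determinant in the entries $e_{\lambda_{i}-i+j}(y|\tau^{j-1}t)$ (using $(\lambda')'=\lambda$), while expanding $s_{\lambda}(z|-T)$ by Jacobi--Trudi gives the determinant in $h_{\lambda_{i}-i+j}(z|\tau^{1-j}(-T))$. Matching the two determinants entrywise requires the shifted version $e_{m}(y|\tau^{j-1}t)=h_{m}(z|\tau^{1-j}(-T))$ of (\ref{baeid2}). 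I expect this shift bookkeeping to be the main obstacle: one must verify that the forward shift $\tau^{j-1}$ on the $t$-sequence corresponds, under the reversal $T_{i}=t_{N+1-i}$ together with the $\mathbb{Z}_{N}$-covariance of the Bethe roots and the rotation law (\ref{RotHE}), to the backward shift $\tau^{1-j}$ on the $(-T)$-sequence, so that the $j=1$ identity (\ref{baeid2}) propagates to every column of the determinant. Once this compatibility is established the two determinants coincide and (\ref{LRdualityFacSchur}) follows at once.
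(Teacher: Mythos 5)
Your handling of (\ref{baeid1}), (\ref{baeid2}) and (\ref{LRdualityspec}) is correct and is essentially the paper's own (very terse) argument: read the identities off by computing the eigenvalue of the same eigenvector in the two parametrisations, using (\ref{specE}), (\ref{dualspecE}) and (\ref{facEspec}), (\ref{facHspec}), then invert the triangular system. One point you must make explicit rather than attribute to the ``numbering convention'': the proportionality of $|y_{\alpha}\rangle$ and $|z_{\alpha}\rangle$ with $z_{\alpha}=-y_{\alpha^{\ast}}$ is itself proved by the factorisation $\prod_{j}(1-xt_{j})+(-1)^{n}qx^{N}=\prod_{j=1}^{N}(1-xy_{j})$ over \emph{all} $N$ Bethe roots, which turns the eigenvalue in (\ref{specH}) into $\prod_{l\in I(\alpha^{\ast})}(1-xy_{l})=\prod_{l}(1+xz_{l})$, i.e.\ the eigenvalue in (\ref{dualspecH}); separation of points then gives the proportionality. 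Your pole-cancellation computation is exactly this identity, but logically it has to come before, not after, the claim that the two vectors span the same line.

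Your step for (\ref{LRdualityFacSchur}) has a genuine gap: the two determinants in (\ref{facs2det}) are equal, but they are \emph{not} equal entry by entry, so no amount of shift bookkeeping can close your argument. Concretely take $N=4$, $n=k=2$, $q=0$, $t=(1,2,3,4)$, $I(\alpha)=\{3,4\}$, so $y=(t_{3},t_{4})$, $z=(-t_{1},-t_{2})$, and $\lambda=(2,2)$. The N\"agelsbach--Kostka matrix for $s_{\lambda^{\prime}}(y|t)$ is $\left(\begin{smallmatrix}6 & 0\\ 4 & 2\end{smallmatrix}\right)$: its $(1,2)$ entry is $e_{3}(y_{1},y_{2}|\tau t)$, which vanishes identically since there are only $n=2$ variables. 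The Jacobi--Trudi matrix for $s_{\lambda}(z|-T)$ is $\left(\begin{smallmatrix}6 & -6\\ 4 & -2\end{smallmatrix}\right)$, whose $(1,2)$ entry $h_{3}(z|\tau^{-1}(-T))$ is nonzero. Both determinants equal $12$, but only the first column matches --- and that matching \emph{is} (\ref{baeid2}). The shifted identities $e_{m}(y|\tau^{j-1}t)=h_{m}(z|\tau^{1-j}(-T))$ you need are simply false for $j\geq 2$, and for $m>n$ they fail even in spirit (the left side vanishes, the right side does not). Moreover the mechanism you propose cannot produce them: $\tau$ truncates the parameter sequence and pads it with zeros, it is not a cyclic rotation, so the $\mathbb{Z}_{N}$-covariance of the Bethe roots and (\ref{RotHE}) say nothing about it. (In fairness, the paper's one-line appeal to the same formulae, if read as entrywise matching, runs into the identical problem.)

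A correct and economical proof, using only material already available in this section, is coefficient comparison rather than determinants. Since $|y_{\alpha}\rangle$ and $|z_{\alpha}\rangle$ are proportional and, by (\ref{BethefacSchur}) and (\ref{dualBethefacSchur}), both have coefficient $s_{\emptyset}=1$ on the basis vector $|(k^{n})\rangle$, they are \emph{equal}; equating all coefficients gives $s_{\lambda^{\vee}}(y_{\alpha}|T)=s_{(\lambda^{\vee})^{\prime}}(z_{\alpha}|-t)$, i.e.\ $s_{\mu}(y_{\alpha}|T)=s_{\mu^{\prime}}(z_{\alpha}|-t)$ for every $\mu$. This is precisely the form in which (\ref{LRdualityFacSchur}) is applied later (e.g.\ in the proof of Poincar\'e duality); the form stated in the lemma, with $t$ on the $y$-side and $-T$ on the $z$-side, follows by rerunning the identical argument with the reversed parameter ordering $t\leftrightarrow T$, under which the construction is covariant, the root labels transform as $y_{j}\mapsto y_{N+1-j}$, and hence $\alpha\mapsto w_{0}\alpha$.
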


\begin{proof}
The first two statements are a direct consequence of the Bethe ansatz
computations and the alternative eigenvalue formulae for (\ref{Bethev}) and (%
\ref{dualBethev}). The identities (\ref{baeid1}), (\ref{baeid2}) form a
linear system of equations which can be easily solved to yield (\ref%
{LRdualityspec}). To obtain (\ref{LRdualityFacSchur}) from (\ref{baeid2})
one uses the N\"{a}gelsbach-Kostka and Jacobi-Trudi formula (\ref{facs2det})
for factorial Schur functions.
\end{proof}

\begin{lemma}
The Bethe ansatz equations (\ref{BAE})\ are equivalent to the identities%
\begin{gather}
\sum_{r=0}^{j}(-1)^{r}e_{r}(t_{1},\ldots ,t_{N})h_{j-r}(y_{1},\ldots
,y_{n})=0,\quad j=k+1,\ldots ,N-1  \notag \\
\sum_{r=0}^{N}(-1)^{r}e_{r}(t_{1},\ldots ,t_{N})h_{N-r}(y_{1},\ldots
,y_{n})=(-1)^{n-1}q  \label{M2'}
\end{gather}
\end{lemma}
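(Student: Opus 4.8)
The plan is to recognise the left-hand sides of both displayed identities as Taylor coefficients of a single rational function and then to extract those coefficients by a partial-fraction expansion governed by the Bethe roots. First I would introduce the generating functions $P(u)=\prod_{j=1}^{N}(1-ut_{j})=\sum_{r\geq 0}(-1)^{r}e_{r}(t)u^{r}$ and $Q(u)=\prod_{i=1}^{n}(1-uy_{i})$, so that the complete symmetric functions enter through $1/Q(u)=\sum_{s\geq 0}h_{s}(y)u^{s}$. Then the coefficient of $u^{m}$ in $P(u)/Q(u)$ is exactly $c_{m}:=\sum_{r=0}^{m}(-1)^{r}e_{r}(t)h_{m-r}(y)$, which is the common shape of the sums appearing in both identities; the lemma is thus the assertion that $c_{m}=0$ for $k<m<N$ and $c_{N}=(-1)^{n-1}q$.

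Next I would extract these coefficients via partial fractions. Writing $P(u)/Q(u)=S(u)+\sum_{i=1}^{n}A_{i}/(1-uy_{i})$ with $\deg S=N-n=k$ and $A_{i}=P(1/y_{i})/\prod_{i'\neq i}(1-y_{i'}/y_{i})$, one gets $c_{m}=\sum_{i}A_{i}y_{i}^{m}$ for every $m>k$, since the polynomial part $S$ no longer contributes. The Bethe ansatz equations enter precisely here: (\ref{BAE}) says $\prod_{j}(y_{i}-t_{j})=(-1)^{n+1}q$, hence $P(1/y_{i})=y_{i}^{-N}\prod_{j}(y_{i}-t_{j})=(-1)^{n+1}q\,y_{i}^{-N}$, and after simplifying the denominator this gives $A_{i}=(-1)^{n+1}q\,y_{i}^{-(k+1)}/\prod_{i'\neq i}(y_{i}-y_{i'})$. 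Substituting and invoking the Lagrange-interpolation identity $\sum_{i}y_{i}^{p}/\prod_{i'\neq i}(y_{i}-y_{i'})=h_{p-n+1}(y)$ (valid for $p\geq 0$, with $h_{<0}=0$) yields $c_{m}=(-1)^{n+1}q\,h_{m-N}(y)$ for all $m>k$. This vanishes for $k<m<N$ and equals $(-1)^{n+1}q=(-1)^{n-1}q$ at $m=N$, which is the forward implication.

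For the converse I would read the $n$ identities for $j=k+1,\dots,N$ as the linear system $\sum_{i}A_{i}y_{i}^{m}=\delta_{m,N}(-1)^{n-1}q$ in the unknowns $A_{i}$, equivalently in the quantities $b_{i}:=P(1/y_{i})=\prod_{j}(1-t_{j}/y_{i})$. Because the Bethe roots $y_{1},\dots,y_{n}$ are distinct and nonzero, the coefficient matrix $(y_{i}^{m})_{k<m\leq N}$ factors as a diagonal matrix times a Vandermonde matrix and is therefore invertible, so the solution is unique; since the values $b_{i}=(-1)^{n+1}q\,y_{i}^{-N}$ coming from (\ref{BAE}) already solve it, the identities force $P(1/y_{i})=(-1)^{n+1}q\,y_{i}^{-N}$ for each $i$, which is exactly (\ref{BAE}). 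This establishes the asserted equivalence.

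The main obstacle I anticipate is bookkeeping rather than conceptual: pinning down the index in the interpolation identity so that the threshold falls exactly at $m=N$ (so that $h_{m-N}(y)$ switches from $0$ to $h_{0}=1$), and justifying the partial-fraction and Vandermonde steps when some Bethe roots coincide. For the latter I would either restrict to the generic distinct-root case, which is the one that actually occurs for solutions in the algebraically closed field $\mathbb{F}$, or pass to confluent partial fractions and a Hermite-type interpolation argument obtained as a limit from the distinct-root case.
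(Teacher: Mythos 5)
Your proof is correct, and in the forward direction it runs on a genuinely different engine than the paper's. The paper stays inside the integrable-model framework: it observes, from the allowed vertex configurations, that any eigenvalue of $H(x|t)$ on $\mathcal{V}_{n}$ is a polynomial in $x$ of degree at most $k$, and combines this with the Bethe-ansatz eigenvalue formula (\ref{specH}); the identities (\ref{M2'}) are then read off as the vanishing of the Taylor coefficients of that eigenvalue in degrees $k+1,\ldots ,N$. Your forward direction never invokes the transfer matrix or the Bethe vector: you compute the coefficients $c_{m}$ of $P(u)/Q(u)$ directly by partial fractions and the interpolation identity $\sum_{i}y_{i}^{p}/\prod_{i^{\prime }\neq i}(y_{i}-y_{i^{\prime }})=h_{p-n+1}(y)$, obtaining $c_{m}=(-1)^{n+1}q\,h_{m-N}(y)$ for \emph{all} $m>k$. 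This is more elementary and self-contained -- it needs neither the spectral theory of Section 5 nor the (implicit) fact that the on-shell Bethe vector is nonzero -- and it gives slightly more, namely the coefficient values beyond degree $N$ as well. For the converse the paper only says ``compute the residue of the eigenvalue in (\ref{specH}) at $x=y_{i}^{-1}$''; your linear-system/Vandermonde argument is precisely the fleshed-out version of that remark, so there the two routes coincide.

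One caution about your closing paragraph: the fallback of handling coincident roots ``as a limit from the distinct-root case'' cannot work, because for fixed $t$ and $q$ the solutions of (\ref{BAE}) form a finite set -- each $y_{i}$ is a root of the single polynomial $f(z)=\prod_{j}(z-t_{j})+(-1)^{n}q$ -- so a tuple with a repeated entry is not a limit of distinct-root solutions. In fact the forward implication genuinely fails for repeated entries: the identities (\ref{M2'}) are equivalent to divisibility of $\prod_{j}(1-ut_{j})+(-1)^{n}qu^{N}$ by $\prod_{i}(1-uy_{i})$ counted with multiplicity, so they force each value $y_{i}$ to be a root of $f$ of multiplicity at least its multiplicity in the tuple, whereas (\ref{BAE}) only forces it to be a root. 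For instance, with $n=2$, $N=3$, the tuple $y_{1}=y_{2}=z$ with $f(z)=0$ satisfies (\ref{BAE}), but the $j=2$ identity reads $3z^{2}-2e_{1}(t)z+e_{2}(t)=f^{\prime }(z)=0$, which fails for generic $t,q$. So distinctness and nonvanishing of the $y_{i}$ must be assumed outright -- exactly as the paper does implicitly, since its eigenvalue formula and residue computation presuppose simple poles and a nonzero Bethe vector. Your primary resolution, restricting to distinct nonzero roots, is the right one; state it as a standing hypothesis rather than as a degenerate case to be recovered by a limit.
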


\begin{proof}
It follows from the allowed vertex configurations in Figure \ref%
{fig:5vmodels} that if $|y_{1},\ldots ,y_{n}\rangle $ is a joint eigenvector
of $H(x|t)$ and $E(x|t)$ then the corresponding eigenvalues%
\begin{eqnarray}
H(x|t)|y_{1},\ldots ,y_{n}\rangle &=&\left( 1+b_{1}x+\cdots
+b_{k}x^{k}\right) |y_{1},\ldots ,y_{n}\rangle  \notag \\
E(-x|t)|y_{1},\ldots ,y_{n}\rangle &=&\left( 1+a_{1}x+\cdots
+a_{n}x^{n}\right) |y_{1},\ldots ,y_{n}\rangle  \label{spec}
\end{eqnarray}%
are at most of degree $k$ and $n$ in $x$, respectively. Together with (\ref%
{specH}) this proves that the equations in (\ref{BAE}) imply (\ref{M2'}).

For the converse implication, assume that (\ref{M2'}) hold true and compute
the residue of the eigenvalue in (\ref{specH}) at $x=y_{i}^{-1}$.
\end{proof}

\subsection{Left eigenvectors \& Poincar\'{e} duality}

For convenience we use the Dirac notation and denote the dual basis of the
ket-vectors $\{|\lambda \rangle \}_{\lambda \in (n,k)}$ by the bra-vectors $%
\{\langle \lambda |\}_{\lambda \in (n,k)}\subset V_{n}^{\ast }$.

\begin{proposition}[Poincar\'{e} duality]
The left eigenvectors of the transfer matrices (dual eigenbasis) are given by%
\begin{equation}
\langle y_{\alpha }|~=\sum_{\lambda \in (n,k)}\frac{s_{\lambda }(y_{\alpha
}|t)}{\mathfrak{e}(y_{\alpha })}\langle \lambda |,\qquad \alpha \in \Pi
_{n,k},  \label{leftBethev}
\end{equation}%
where 
\begin{equation}
\mathfrak{e}(y_{\alpha })=\prod_{\substack{ i\in I(\alpha )  \\ j\in
I(\alpha ^{\ast })}}(y_{i}-y_{j})\;.  \label{BetheEuler}
\end{equation}%
We shall refer to the isomorphism $\mathcal{V}_{n}^{\mathbb{F}}\rightarrow
\left( \mathcal{V}_{n}^{\mathbb{F}}\right) ^{\vee }:=\mathbb{F}_{q}\otimes
V_{n}^{\ast }$ given by the mapping $|y_{\alpha }\rangle \mapsto \langle
y_{\alpha }|$ for all $\alpha \in \Pi _{n,k}$ as \emph{Poincar\'{e} duality}.
\end{proposition}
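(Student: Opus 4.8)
The plan is to reduce the entire proposition to a single biorthogonality relation between the right Bethe vectors (\ref{BethefacSchur}) and the candidate left vectors (\ref{leftBethev}), and then to let the simplicity of the joint spectrum do the rest. Since the on-shell Bethe vectors $\{|y_{\beta}\rangle\}_{\beta\in\Pi_{n,k}}$ form a basis of $\mathcal{V}_{n}\otimes\mathbb{F}$ by the spectral decomposition theorem, the family $\{\langle y_{\alpha}|\}$ defined in (\ref{leftBethev}) will automatically be the dual basis, and hence a left eigenbasis, as soon as one verifies $\langle y_{\alpha}|y_{\beta}\rangle=\delta_{\alpha\beta}$. Indeed, granting biorthogonality, for each generator $\mathcal{O}\in\{H(x|t),E(x|t)\}$ with eigenvalue $\Lambda_{\beta}(\mathcal{O})$ read off from (\ref{specH}), (\ref{specE}), one gets $\langle y_{\alpha}|\mathcal{O}|y_{\beta}\rangle=\Lambda_{\beta}(\mathcal{O})\delta_{\alpha\beta}=\Lambda_{\alpha}(\mathcal{O})\delta_{\alpha\beta}$, so that $\langle y_{\alpha}|\mathcal{O}=\Lambda_{\alpha}(\mathcal{O})\langle y_{\alpha}|$. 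Because the eigenvalues separate points, this characterises $\langle y_{\alpha}|$ as the unique left eigenvector, and the assignment $|y_{\alpha}\rangle\mapsto\langle y_{\alpha}|$ is then the claimed isomorphism onto $(\mathcal{V}_{n}^{\mathbb{F}})^{\vee}$.

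Substituting (\ref{BethefacSchur}) and (\ref{leftBethev}) into the standard pairing $\langle\lambda|\mu\rangle=\delta_{\lambda\mu}$ reduces biorthogonality to the factorial Schur identity
\[
\sum_{\lambda\in(n,k)}s_{\lambda}(y_{\alpha}|t)\,s_{\lambda^{\vee}}(y_{\beta}|T)=\mathfrak{e}(y_{\alpha})\,\delta_{\alpha\beta}.
\]
To evaluate the left-hand side I would pass to the dual roots $z_{\beta}=-y_{\beta^{\ast}}$ supplied by level-rank duality (the lemma that contains (\ref{LRdualityFacSchur})) and rewrite $s_{\lambda^{\vee}}(y_{\beta}|T)$ as a factorial Schur function in the $k$-tuple $z_{\beta}$ via (\ref{LRdualityFacSchur}), which turns the index $\lambda^{\vee}$ into its conjugate $(\lambda^{\vee})'$ and the parameter set $T$ into $-t$. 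The resulting sum over $\lambda\subseteq(k^{n})$ then matches the generalised Cauchy identity (\ref{Cauchy}) verbatim, collapsing to the product $\prod_{i\in I(\alpha)}\prod_{j\in I(\beta^{\ast})}(y_{i}-y_{j})$, where $I(\alpha)$ records the occupied Bethe roots and $I(\beta^{\ast})$ the complementary $k$ roots, exactly as in the definition (\ref{BetheEuler}) of $\mathfrak{e}$.

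From this product form both cases are immediate. If $\alpha\neq\beta$ then $I(\alpha)\neq I(\beta)$, so $I(\alpha)$ meets the complement $I(\beta^{\ast})$ in some common index $i$; the pair $(i,i)$ contributes a vanishing factor $(y_{i}-y_{i})$, killing the off-diagonal entries. If $\alpha=\beta$ the product is $\prod_{i\in I(\alpha)}\prod_{j\in I(\alpha^{\ast})}(y_{i}-y_{j})=\mathfrak{e}(y_{\alpha})$ by (\ref{BetheEuler}), so the normalising denominator cancels and the diagonal equals $1$. I expect the main obstacle to lie entirely in the duality bookkeeping of the middle step: aligning the complement/conjugate operations $\lambda\mapsto\lambda^{\vee}\mapsto(\lambda^{\vee})'$, the exchange of parameter sets $t\leftrightarrow T$ with $T_{i}=t_{N+1-i}$, the swap $n\leftrightarrow k$ of variable counts, and the precise identification $z_{\beta}=-y_{\beta^{\ast}}$, so that (\ref{Cauchy}) applies with the correct signs and produces the product over $I(\alpha)\times I(\beta^{\ast})$. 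Once that alignment is secured, the vanishing of the off-diagonal terms and the emergence of the Euler factor $\mathfrak{e}(y_{\alpha})$ on the diagonal follow without further computation, completing the proof.
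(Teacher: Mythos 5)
Your proof is correct, and it shares the paper's computational core --- the level-rank identity (\ref{LRdualityFacSchur}) followed by the factorial Cauchy identity (\ref{Cauchy}) --- but it organises the logic in the reverse order, and the difference is worth recording. The paper first shows that $\langle y_{\alpha}|$ is a left eigenvector by a transpose argument: it verifies that the matrix of the transfer matrices in the Schubert basis satisfies $\boldsymbol{H}(x|t)^{T}=(\langle \lambda |\mathcal{P}H(x|T)\mathcal{P}|\mu \rangle )_{\lambda ,\mu }$ with $\mathcal{P}|\lambda \rangle =|\lambda ^{\vee }\rangle$, so that right eigenvectors of $H(x|T)$ conjugated by $\mathcal{P}$ yield left eigenvectors of $H(x|t)$; it then obtains $\langle y_{\alpha}|y_{\beta}\rangle \propto \delta _{\alpha \beta }$ abstractly, from the fact that the eigenvalues separate points; and it runs the duality-plus-Cauchy computation only for the single diagonal entry, to identify the proportionality constant with $\mathfrak{e}(y_{\alpha})$. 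You instead compute every pairing $\langle y_{\alpha}|y_{\beta}\rangle$ at once: since (\ref{Cauchy}) is a polynomial identity in two independent sets of indeterminates it specialises equally well to the mixed pair $(y_{\alpha},z_{\beta})$, the off-diagonal vanishing is then read off from an explicit factor $(y_{i}-y_{i})$, and the left-eigenvector property follows formally because the dual basis of an eigenbasis automatically consists of left eigenvectors. What each approach buys: yours is more elementary and self-contained, needing neither the parity/transpose identity nor spectral separation (you invoke separation only for a uniqueness remark the statement does not require), and it exhibits the orthogonality constructively; the paper's route confines the Schur-function bookkeeping to the unambiguous diagonal case, which is exactly the point where your argument carries an extra burden --- namely the identification of $I(\beta ^{\ast })$ with the indices of the $k$ complementary roots, i.e. that the components of $z_{\beta}=-y_{\beta ^{\ast }}$ are the negatives of precisely those roots not selected by $\beta$, whence $I(\alpha )\cap I(\beta ^{\ast })=I(\alpha )\setminus I(\beta )\neq \emptyset$ whenever $\alpha \neq \beta$. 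You correctly flag this bookkeeping as the crux; it does hold under the paper's numbering convention for the solutions of (\ref{dualBAE}), so your argument goes through.
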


\begin{proof}
Recall the involution $\mathcal{P}:\mathcal{V}_{n}^{\mathbb{F}}\rightarrow 
\mathcal{V}_{n}^{\mathbb{F}}$ defined by $\mathcal{P}|\lambda \rangle
=|\lambda ^{\vee }\rangle $. Then one verifies that the transpose of the
matrices 
\begin{equation*}
\boldsymbol{H}(x|t)=(\langle \lambda |H(x|t)|\mu \rangle )_{\lambda ,\mu \in
(n,k)},\qquad \boldsymbol{E}(x|t)=(\langle \lambda |E(x|t)|\mu \rangle
)_{\lambda ,\mu \in (n,k)}
\end{equation*}%
are given by 
\begin{equation*}
\boldsymbol{H}(x|t)^{T}=(\langle \lambda |\mathcal{P}H(x|T)\mathcal{P}|\mu
\rangle )_{\lambda ,\mu }\quad \text{and\quad }\boldsymbol{E}%
(x|t)^{T}=(\langle \lambda |\mathcal{P}E(x|T)\mathcal{P}|\mu \rangle
)_{\lambda ,\mu }~.
\end{equation*}%
Thus, the first assertion, that $\langle y_{\alpha }|$ is a left
eigenvector, follows from the explicit expansion (\ref{Bethev}) and the
previous result (\ref{specH}), (\ref{specE}) that the Bethe vectors are
right eigenvectors. Since the eigenvalues of the transfer matrices separate
points we can conclude that $\langle y_{\alpha }|y_{\beta }\rangle $ must be
proportional to the Kronecker function $\delta _{\alpha \beta }$. Let $%
\mathfrak{e}(y_{\alpha })$ denote the proportionality factor which is
computed as follows:%
\begin{eqnarray*}
\mathfrak{e}(y_{\alpha }) &=&\mathfrak{e}(y_{\alpha })\langle y_{\alpha
}|y_{\alpha }\rangle =\sum_{\lambda \in (n,k)}s_{\lambda }(y_{\alpha
}|t)s_{\lambda ^{\vee }}(y_{\alpha }|T) \\
&=&\sum_{\lambda \in (n,k)}s_{\lambda }(y_{\alpha }|t)s_{(\lambda ^{\vee
})^{\prime }}(z_{\alpha }|-t)=\sum_{\lambda \in (n,k)}s_{\lambda }(y_{\alpha
}|t)s_{\lambda ^{\ast }}(-y_{\alpha ^{\ast }}|-t) \\
&=&\prod_{\substack{ i\in I(\alpha )  \\ j\in I(\alpha ^{\ast })}}%
(y_{i}-y_{j}),
\end{eqnarray*}%
where in the second line we have made use of (\ref{LRdualityFacSchur}) and
in the last step we have used the Cauchy identity for factorial Schur
functions (\ref{Cauchy}).
\end{proof}

\begin{remark}
To motivate our definition of Poincar\'{e} duality recall that this mapping
reduces to the known Poincar\'{e} duality in the non-equivariant setting, $%
t_{j}=0$, and in the classical equivariant setting, $q=0$.
\end{remark}

Since the Bethe vectors (\ref{Bethev}) and (\ref{leftBethev}) form each an
eigenbasis they give rise to a resolution of the identity $\boldsymbol{1}%
=\sum_{\alpha \in (n,k)}|y_{\alpha }\rangle \langle y_{\alpha }|$ which
translates into the following identity for factorial Schur functions when
evaluated at solutions of the Bethe ansatz equations.

\begin{corollary}[orthogonality]
For all $\lambda ,\mu \in (n,k)$ we have the identity%
\begin{equation}
\sum_{\alpha \in (n,k)}\frac{s_{\lambda ^{\vee }}(y_{\alpha }|T)s_{\mu
}(y_{\alpha }|t)}{\mathfrak{e}(y_{\alpha })}=\delta _{\lambda \mu }\;.
\label{res of 1}
\end{equation}
\end{corollary}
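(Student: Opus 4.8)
The plan is to read the identity off the resolution of the identity on $\mathcal{V}_{n}\otimes \mathbb{F}_{q}$ provided by the two biorthogonal Bethe bases, simply by expanding everything in the Schubert basis $\{|\lambda \rangle \}_{\lambda \in (n,k)}$ and comparing coefficients. First I would recall that the right Bethe vectors $\{|y_{\alpha }\rangle \}_{\alpha \in (n,k)}$ form a basis of $\mathcal{V}_{n}\otimes \mathbb{F}$: by the spectral theorem above there are $\binom{N}{n}=\dim \mathcal{V}_{n}$ of them, they are joint eigenvectors of the transfer matrices, and their eigenvalues separate points. The left eigenvectors $\langle y_{\alpha }|$ of (\ref{leftBethev}) are, by construction, the corresponding dual basis: the normalisation by $\mathfrak{e}(y_{\alpha })$ from (\ref{BetheEuler}) is exactly what was shown in the proof of the Poincar\'{e}-duality proposition to give biorthogonality $\langle y_{\alpha }|y_{\beta }\rangle =\delta _{\alpha \beta }$.

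Biorthogonality together with completeness yields the resolution of the identity $\boldsymbol{1}=\sum_{\alpha \in (n,k)}|y_{\alpha }\rangle \langle y_{\alpha }|$ as operators on $\mathcal{V}_{n}\otimes \mathbb{F}_{q}$. Into the right-hand side I would substitute the explicit Schubert-basis expansions: (\ref{BethefacSchur}) gives $|y_{\alpha }\rangle =\sum_{\lambda \in (n,k)}s_{\lambda ^{\vee }}(y_{\alpha }|T)|\lambda \rangle $, while (\ref{leftBethev}) gives $\langle y_{\alpha }|=\sum_{\mu \in (n,k)}\mathfrak{e}(y_{\alpha })^{-1}s_{\mu }(y_{\alpha }|t)\langle \mu |$. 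Writing the left-hand side as $\boldsymbol{1}=\sum_{\nu }|\nu \rangle \langle \nu |$ and comparing the coefficient of the rank-one operator $|\lambda \rangle \langle \mu |$ on both sides produces precisely
\begin{equation*}
\sum_{\alpha \in (n,k)}\frac{s_{\lambda ^{\vee }}(y_{\alpha }|T)\,s_{\mu }(y_{\alpha }|t)}{\mathfrak{e}(y_{\alpha })}=\delta _{\lambda \mu },
\end{equation*}
which is the claim.

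The point I would flag as the genuine content, rather than routine manipulation, is that (\ref{res of 1}) is an identity among factorial Schur functions \emph{evaluated at the Bethe roots}, not an identity of polynomials: the $n$-tuples $y_{\alpha }$ are the specific collections of solutions of the Bethe ansatz equations (\ref{BAE}), numbered as fixed above through the $q=0$ specialisation, and the whole computation lives over $\mathbb{F}_{q}$. Every step relies on the $y_{\alpha }$ being on-shell, so that $|y_{\alpha }\rangle $ is genuinely an eigenvector and biorthogonality holds; off the Bethe locus neither the resolution of the identity nor (\ref{res of 1}) need hold. Once biorthogonality is granted, the argument is pure linear algebra and requires no further computation with factorial Schur functions, the only nontrivial input---the normalisation $\langle y_{\alpha }|y_{\alpha }\rangle =1$, obtained by summing $s_{\lambda }(y_{\alpha }|t)\,s_{\lambda ^{\vee }}(y_{\alpha }|T)$ to $\mathfrak{e}(y_{\alpha })$ via the factorial Cauchy identity (\ref{Cauchy})---having already been carried out in the Poincar\'{e}-duality proposition.
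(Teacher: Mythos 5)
Your proposal is correct and follows essentially the same route as the paper: the text preceding the corollary derives it exactly as you do, from the biorthogonality $\langle y_{\alpha }|y_{\beta }\rangle =\delta _{\alpha \beta }$ established in the Poincar\'{e}-duality proposition (via the factorial Cauchy identity) together with the resolution of the identity $\boldsymbol{1}=\sum_{\alpha \in (n,k)}|y_{\alpha }\rangle \langle y_{\alpha }|$, expanded in the Schubert basis using (\ref{BethefacSchur}) and (\ref{leftBethev}). Your added remark that the identity only holds on-shell, over $\mathbb{F}_{q}$, is also exactly the caveat the paper makes.
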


\subsection{Bethe vectors and GKM theory}

Consider the extension of the $\mathbb{A}_{N}$ and $\mathbb{\hat{S}}_{N}$%
-actions (\ref{affsymgroup}) on $\Lambda $ to $\mathbb{F}$ and set $\mathcal{%
V}_{n}^{\mathbb{F}}:=\mathcal{V}_{n}\otimes \mathbb{F}$ as before.

\begin{proposition}[GKM conditions]
The $\mathbb{S}_{N}$-action on $\mathcal{V}_{n}$ given by $\{\boldsymbol{s}%
_{j}\}_{j=1}^{N-1}$ in Prop \ref{prop:symmgroupaction} permutes the Bethe
vectors $|y_{\alpha }\rangle $ according to the natural $\mathbb{S}_{N}$%
-action on the 01-words $\alpha $, i.e. $\boldsymbol{s}_{j}|y_{\alpha
}\rangle =|y_{p_{j}\alpha }\rangle $ where $p_{j}$ permutes the $j$th and $%
(j+1)$th letter in $\alpha $. In particular, we have that 
\begin{equation}
s_{\lambda }(y_{\alpha }|t)-s_{j}\cdot s_{\lambda }(y_{p_{j}\alpha
}|t)=(t_{j}-t_{j+1})s_{\delta _{j}\lambda }(y_{\alpha }|t)\,,\quad
j=1,\ldots ,N-1,  \label{GKM}
\end{equation}%
where we set $s_{\delta _{j}\lambda }(y|T)\equiv 0$ if $\delta _{j}|\lambda
\rangle =0$.
\end{proposition}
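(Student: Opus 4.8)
The plan is to establish the vector identity $\boldsymbol{s}_j|y_\alpha\rangle=|y_{p_j\alpha}\rangle$ first and then read off (\ref{GKM}) by expanding it in the Schubert basis. The starting point is that the automorphism $s_j$ (swapping $t_j\leftrightarrow t_{j+1}$) relabels the Bethe roots by $s_jy_m=y_{s_j(m)}$. Indeed the Bethe ansatz equations (\ref{BAE}) are symmetric in the $t$'s, so their solution set is $s_j$-invariant (this is part (i) of the Lemma on solutions); since the branches are pinned down by the normalisation $y_m|_{q=0}=t_m$, applying $s_j$ to $y_m$ produces a solution whose $q\to0$ limit is $t_{s_j(m)}$, forcing $s_jy_m=y_{s_j(m)}$.

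For the vector identity I would use that $\boldsymbol{s}_j$ commutes with every entry of the monodromy matrix, in particular with all coefficients of $B(u|t)$ by (\ref{SactionM}), and with the number operator $\hat n$. Because $\boldsymbol{s}_j=s_j\hat r_j$ is $s_j$-semilinear, this upgrades to the conjugation rule $\boldsymbol{s}_j\hat B(u(t)|t)=\hat B(u(s_jt)|t)\,\boldsymbol{s}_j$, valid even when the spectral argument $u(t)$ depends on $t$. Applying this to the defining product (\ref{Bethev}) and pushing $\boldsymbol{s}_j$ to the right past the mutually commuting factors $\hat B$, each argument $y_{\ell_i}^{-1}$ is replaced by $y_{s_j(\ell_i)}^{-1}$ and the scalar prefactor is permuted accordingly, while $\boldsymbol{s}_j|0\rangle=|0\rangle$ since $\delta_j^\vee$ annihilates the vacuum. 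As $\{s_j(\ell_i(\alpha))\}$ is exactly the set of $1$-positions of $p_j\alpha$ and the $\hat B$'s commute, the product reassembles into $|y_{p_j\alpha}\rangle$ with normalisation exactly $1$.

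To obtain (\ref{GKM}) I would take the $|\lambda\rangle$-coefficient of $\boldsymbol{s}_j|y_\alpha\rangle=|y_{p_j\alpha}\rangle$. On the right the coefficient is a factorial Schur function via (\ref{leftBethev}) and (\ref{BethefacSchur}); on the left one uses $\boldsymbol{s}_j|\mu\rangle=|\mu\rangle+(t_j-t_{j+1})|\delta_j^\vee\mu\rangle$, which is immediate from $\hat r_j=1-(t_j-t_{j+1})\delta_j^\vee$ together with $s_j$ acting trivially on $V^{\otimes N}$ and flipping the sign of $t_j-t_{j+1}$. Using $s_j\!\cdot\! s_\lambda(y_{p_j\alpha}|t)=s_\lambda(y_\alpha|s_jt)$ (from the root relabeling and the symmetry of $s_\lambda$ in its arguments), the statement collapses to the elementary identity describing an adjacent transposition of the parameters in a factorial Schur function, which follows in one line from $(x|s_ja)^{j}=(x|a)^{j}+(a_j-a_{j+1})(x|a)^{j-1}$ and a cofactor expansion of the determinant in (\ref{facSchur}). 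This is the parameter-side counterpart of (\ref{braidfacschur}).

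The hard part will be the bookkeeping of the reversal conventions. The right-eigenvector coefficients are naturally $s_{\lambda^\vee}(\cdot|T)$ with $T_i=t_{N+1-i}$, whereas (\ref{GKM}) is phrased through the left (Poincar\'e-dual) coefficients $s_\lambda(\cdot|t)$; keeping track of the complement $\lambda\mapsto\lambda^\vee$ and the reversal of the parameter index is exactly what converts the operator $\delta_j^\vee$ and the index $j$ produced by the naive determinant computation into the $\delta_j$ and the sign displayed in (\ref{GKM}). A secondary point needing care is the well-definedness and uniqueness of the Bethe-root branches over $\mathbb{F}_q$ underpinning the relabeling $s_jy_m=y_{s_j(m)}$ in the first paragraph.
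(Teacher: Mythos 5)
Your proposal is correct in substance and follows essentially the same route as the paper's own proof: the paper likewise (i) fixes the relabelling of the Bethe roots through the normalisation $y_{m}|_{q=0}=t_{m}$ plus a continuity-in-$q$ argument, (ii) obtains $\boldsymbol{s}_{j}|y_{\alpha }\rangle =|y_{p_{j}\alpha }\rangle $ by pushing $s_{j}\hat{r}_{j}$ through the product of $\hat{B}$-operators via the braiding relation (\ref{mom_ybe}) together with $\hat{r}_{j}|0\rangle =|0\rangle $, and (iii) extracts (\ref{GKM}) by a coefficient comparison. The only differences are organisational: the paper prefaces (ii) with an eigenvalue argument (using (\ref{SactionHE}) and separation of eigenvalues) to know beforehand that $\boldsymbol{s}_{j}|y_{\alpha }\rangle $ is proportional to some on-shell Bethe vector, which your conjugation rule $\boldsymbol{s}_{j}\hat{B}(u(t)|t)=\hat{B}(u(s_{j}t)|t)\boldsymbol{s}_{j}$ (which is valid) makes superfluous; and the paper performs (iii) by acting with $\boldsymbol{s}_{j}$ on the expansion (\ref{basischange}) and comparing coefficients in the Bethe basis, which requires $s_{j}\mathfrak{e}(y_{\alpha })=\mathfrak{e}(y_{p_{j}\alpha })$, whereas you compare Schubert-basis coefficients of the vector identity; the two are equivalent.

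One concrete warning about your final step. You assert that the $\lambda \mapsto \lambda ^{\vee }$, $t\leftrightarrow T$ bookkeeping will convert the $\delta _{j}^{\vee }$ produced by your determinant computation into ``the $\delta _{j}$ and the sign displayed in (\ref{GKM})''. It will not, and you should not force it to: with the conventions (\ref{delta}), (\ref{dual_delta}), both your route and the paper's yield
\begin{equation*}
s_{\lambda }(y_{\alpha }|t)-s_{j}\cdot s_{\lambda }(y_{p_{j}\alpha
}|t)=(t_{j+1}-t_{j})\,s_{\delta _{j}^{\vee }\lambda }(y_{\alpha }|t),
\end{equation*}
i.e.\ with $\delta _{j}^{\vee }$ (the one-letter moves from place $j+1$ to place $j$) and the opposite sign; the display (\ref{GKM}) is misprinted. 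A two-line check: for $N=2$, $n=1$, $\lambda =(1)$, $\alpha =(0)$ one has $y_{p_{1}\alpha }=y_{2}$ and, using the relabelling $s_{1}y_{2}=y_{1}$, the left-hand side equals $(y_{1}-t_{1})-(y_{1}-t_{2})=t_{2}-t_{1}$, whereas $\delta _{1}|\lambda \rangle =0$ so the right-hand side of (\ref{GKM}) as printed vanishes; with $\delta _{1}^{\vee }|\lambda \rangle =|(0)\rangle $, $s_{(0)}=1$ and coefficient $t_{2}-t_{1}$ the identity holds. So your derivation is sound and proves the corrected statement; only your closing ``bookkeeping'' paragraph, which aims at reproducing the literal display, would fail, and for the paper's typo rather than for any fault of your argument.
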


\begin{remark}
The last proposition states that we can identify in our setting the
coefficients of the vector $|\lambda \rangle $ in the basis of on-shell
Bethe vectors with a localised Schubert class. From (\ref{Bethev}), (\ref%
{leftBethev}) and (\ref{res of 1}) it follows that 
\begin{equation}
|\lambda \rangle =\sum_{\alpha }\frac{s_{\lambda }(y_{\alpha }|t)}{\mathfrak{%
e}(y_{\alpha })}|y_{\alpha }\rangle \;.  \label{basischange}
\end{equation}%
Define $\xi _{\lambda }:\mathbb{S}_{N}/\mathbb{S}_{n}\times \mathbb{S}%
_{k}\rightarrow \mathbb{F}_{q}$ by setting $\alpha \mapsto \xi _{\lambda
}(\alpha ):=s_{\lambda }(y_{\alpha }|t)$ where $\alpha $ fixes uniquely a
minimal length representative of a coset in $\mathbb{S}_{N}/\mathbb{S}%
_{n}\times \mathbb{S}_{k}$. For $q=0$ we have that $y_{\alpha }=t_{\alpha }$
and the equalities (\ref{GKM}) become the familiar GKM conditions \cite{GKM}
which fix a localised Schubert class with values in $\Lambda $. If $q\neq 0$
the solutions of (\ref{BAE}) cease to be polynomial in the equivariant
parameter in general and one therefore has to work over the algebraically
closed field $\mathbb{F}_{q}$ instead.
\end{remark}

\begin{remark}
Note that it suffices to consider the $\mathbb{S}_{N}$-action, since the $%
\mathbb{\hat{S}}_{N}$-actions from Prop \ref{prop:symmgroupaction} and (\ref%
{affsymgroup}) are both of level-0 for $q=1$. That is, the affine simple
Weyl reflection acts via the Weyl reflection associated with the negative
highest root. The case $q\neq 1$ can be recovered by rescaling Bethe roots
and equivariant parameters by the same common factor $q^{-1/N}$ and, hence,
the relations (\ref{GKM}) remain unchanged.
\end{remark}

\begin{proof}
Let $|y_{\alpha }\rangle =|y_{1},\ldots ,y_{n}\rangle $ be a Bethe vector
with $H(x|t)|y_{\alpha }\rangle =h_{\alpha }(x|t)|y_{\alpha }\rangle $. We
infer from (\ref{SactionHE}) that%
\begin{equation*}
s_{j}\hat{r}_{j}H(x|t)|y_{\alpha }\rangle =s_{j}H(x|s_{j}t)\hat{r}%
_{j}|y_{\alpha }\rangle =(s_{j}h_{\alpha }(x|t))s_{j}\hat{r}_{j}|y_{\alpha
}\rangle
\end{equation*}%
and, hence, that $s_{j}\hat{r}_{j}|y\rangle $ is an eigenvector of $H(x|t)$
with eigenvalue $s_{j}h_{\alpha }(x|t)$. Thus, there must exist another
Bethe vector $|y_{\alpha ^{\prime }}\rangle $ with eigenvalue $h_{\alpha
^{\prime }}(x|t)=s_{j}h_{y}(x|t)$. Since the Bethe vectors form an
eigenbasis and the eigenvalues of $H$ separate points we must have that the
braided eigenvector $s_{j}\hat{r}_{j}|y_{\alpha }\rangle $ is proportional
to this second Bethe vector,%
\begin{equation*}
s_{j}\hat{r}_{j}|y_{\alpha }\rangle =\eta _{j}\cdot |y_{\alpha ^{\prime
}}\rangle ,\qquad \eta _{j}\in \mathbb{F}\;.
\end{equation*}%
It follows from (\ref{mom_ybe}) for $j=1,2,\ldots ,N-1$ 
\begin{eqnarray*}
s_{j}\hat{r}_{j}|y_{\alpha }\rangle &=&s_{j}\hat{r}_{j}(y_{1}\cdots
y_{n})^{N}\hat{B}(y_{n}|t)\cdots \hat{B}(y_{1}|t)|0\rangle \\
&=&s_{j}(y_{1}\cdots y_{n})^{N}\hat{B}(y_{n}|s_{j}t)\cdots \hat{B}%
(y_{1}|s_{j}t)|0\rangle =\eta _{j}~|y_{\alpha ^{\prime }}\rangle
\end{eqnarray*}%
Since $s_{j}y_{\alpha }$ is also a solution of (\ref{BAE}) the left hand
side is of the form (\ref{Bethev}) and we can conclude that $\eta _{j}=1$.
To show that $\alpha ^{\prime }=p_{j}\alpha $ we first note that the claim
is correct for $q=0,$ since then $y_{\alpha }=t_{\alpha }$ is a solution as
discussed earlier. Expanding $y_{\alpha }$ at $q=0$ it follows that this
stays true in the vicinity of $q=0$. By similar arguments one shows that $%
s_{j}\mathfrak{e}(y_{\alpha })=\mathfrak{e}(y_{p_{j}\alpha })$.

To derive (\ref{GKM}) we start from the expansion (\ref{basischange}) and
act with $\boldsymbol{s}_{j}$ on both sides. The identities (\ref{GKM}) then
follow from (\ref{rhat}) and (\ref{BethefacSchur}) by comparing coefficients
in the basis of Bethe vectors.
\end{proof}

\begin{corollary}
Each Bethe vector $|y_{\alpha }\rangle $ obeys the qKZ equations (\ref{qKZ})
with respect to the subgroup $\mathbb{S}_{n}\times \mathbb{S}_{k}\subset 
\mathbb{S}_{N}$ where the latter embedding depends on $\alpha $.
\end{corollary}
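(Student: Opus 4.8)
The plan is to read the qKZ equation off directly from the symmetric-group action on Bethe vectors, i.e.\ from the preceding GKM proposition (part (iii) of Theorem~\ref{SWduality}). Recall from Proposition~\ref{prop:symmgroupaction} that $\boldsymbol{s}_j=s_j\hat{r}_j$, where $\hat{r}_j$ is the braid matrix (\ref{braidaction}) acting on $V^{\otimes N}$ and $s_j$ permutes the equivariant parameters $t_j,t_{j+1}$ in the coefficient field, acting trivially on the basis vectors $|\lambda\rangle$. The decisive input is the formula $\boldsymbol{s}_j|y_\alpha\rangle=|y_{p_j\alpha}\rangle$, where $p_j$ transposes the $j$th and $(j+1)$th letters of the 01-word $\alpha$.

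First I would identify the embedding. The stabiliser of $\alpha$ inside $\mathbb{S}_N$ is the subgroup permuting the $1$-positions among themselves and the $0$-positions among themselves; this is a copy of $\mathbb{S}_n\times\mathbb{S}_k$ whose realisation inside $\mathbb{S}_N$ genuinely depends on $\alpha$. A simple reflection $s_j$ lies in this stabiliser precisely when $\alpha_j=\alpha_{j+1}$, equivalently $p_j\alpha=\alpha$, equivalently $[s_jw^\alpha]=[w^\alpha]$ for the minimal coset representative $w^\alpha$.

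For such a $j$ the GKM proposition gives $\boldsymbol{s}_j|y_\alpha\rangle=|y_\alpha\rangle$, that is $s_j\hat{r}_j|y_\alpha\rangle=|y_\alpha\rangle$. Applying the involution $s_j$ to both sides — it acts coefficientwise on $\mathbb{F}_q\otimes V_n$ and fixes the $|\lambda\rangle$, so $s_j^2=1$ — yields
\begin{equation*}
\hat{r}_j|y_\alpha\rangle=s_j|y_\alpha\rangle .
\end{equation*}
This is exactly (\ref{qKZ}) for the index $j$, and by Lemma~\ref{lem:qKZ} it is equivalent to $\delta_j^\vee|y_\alpha\rangle=\partial_j|y_\alpha\rangle$. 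Thus $|y_\alpha\rangle$ solves the qKZ equation at every simple reflection belonging to the $\alpha$-dependent copy of $\mathbb{S}_n\times\mathbb{S}_k$. The statement for an arbitrary element $\sigma$ of the stabiliser then follows by composing: since $\boldsymbol{s}$ is a representation of $\mathbb{\hat{S}}_N$ (Proposition~\ref{prop:symmgroupaction}) and $\boldsymbol{s}_\sigma|y_\alpha\rangle=|y_{\sigma\alpha}\rangle=|y_\alpha\rangle$, unwinding $\boldsymbol{s}_\sigma$ along a reduced word reproduces the braided qKZ relation for $\sigma$.

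The main obstacle I anticipate is bookkeeping rather than conceptual. Because $s_j$ and $\hat{r}_j$ do not commute (Lemma~\ref{prop:braidaction}(iii)), passing from the single-reflection identity to a general $\sigma\in\mathbb{S}_n\times\mathbb{S}_k$ requires tracking how the equivariant parameters are permuted successively along a reduced expression, so that $\boldsymbol{s}_\sigma$ factors correctly as the coefficient permutation $\sigma$ composed with the appropriate product of braid matrices. One should also verify that the two equivalent forms in Lemma~\ref{lem:qKZ} transport correctly under this composition; the single simple-reflection case, which is the heart of the corollary, is however immediate from the display above together with the basis change (\ref{basischange}).
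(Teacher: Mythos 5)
Your proof is correct and takes essentially the same route as the paper: for each $j$ with $p_j\alpha=\alpha$ (exactly the simple reflections lying in the $\alpha$-dependent copy of $\mathbb{S}_n\times\mathbb{S}_k$), combine part (iii) of Theorem \ref{SWduality}, i.e. $\boldsymbol{s}_j|y_\alpha\rangle=|y_{p_j\alpha}\rangle=|y_\alpha\rangle$, hence $\hat{r}_j|y_\alpha\rangle=s_j|y_\alpha\rangle$, with Lemma \ref{lem:qKZ} to pass to the equivalent divided-difference form $\delta_j^\vee|y_\alpha\rangle=\partial_j|y_\alpha\rangle$; the paper does precisely this, adding only the explicit expansion $\delta_j^\vee|y_\alpha\rangle=\sum_{\lambda}s_{(\delta_j\lambda)^\vee}(y_\alpha|T)|\lambda\rangle$ via (\ref{basischange}). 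One caveat on your final paragraph: the extension to a general element $\sigma$ of the stabiliser is both unnecessary (the qKZ equations (\ref{qKZ}) are indexed by simple reflections only, which is all the corollary asserts) and, as described, would not go through, because the stabiliser of $\alpha$ is in general \emph{not} generated by the simple reflections it contains --- e.g. for $\alpha=101$ the stabiliser is $\{e,(13)\}$ with $(13)=s_1s_2s_1$, and neither $s_1$ nor $s_2$ fixes $\alpha$, so unwinding $\boldsymbol{s}_\sigma$ along a reduced word passes through other Bethe vectors $|y_{p_j\alpha}\rangle$ rather than yielding an identity for $|y_\alpha\rangle$ alone. Since you correctly identify the single-reflection case as the heart of the statement, this does not affect the validity of your argument.
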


\begin{proof}
If $j$ is such that $p_{j}\alpha =\alpha $ it follows from Lemma \ref%
{lem:qKZ} that $\partial _{j}|y_{\alpha }\rangle =\delta _{j}^{\vee
}|y_{\alpha }\rangle $ with 
\begin{equation*}
\delta _{j}^{\vee }|y\rangle =\sum_{\lambda \in (n,k)}s_{\lambda ^{\vee
}}(y|T)\delta _{j}^{\vee }|\lambda \rangle =\sum_{\lambda \in
(n,k)}s_{(\delta _{j}\lambda )^{\vee }}(y|T)|\lambda \rangle \;.
\end{equation*}
\end{proof}

\section{Combinatorial construction of $QH_{T}^{\ast }(\limfunc{Gr}_{n,N})$}

Using the results from the Bethe ansatz we now prove that the row-to-row
transfer matrices generate a symmetric Frobenius algebra. We will identify
the latter with the presentation of the equivariant quantum cohomology ring
as Jacobi algebra put forward in \cite{Gepner} and \cite{GP}.

\subsection{Noncommutative factorial Schur functions}

We employ the identity (\ref{facs2s}) for factorial Schur functions to
define for any $\lambda \in \Pi _{n,k}$ the following operator $\tilde{S}%
_{\lambda }^{(n)}:\mathcal{V}_{n}\rightarrow \mathcal{V}_{n}$%
\begin{equation}
\tilde{S}_{\lambda }^{(n)}=\sum_{\mu \subseteq \lambda }(-1)^{|\lambda /\mu
|}\det (e_{\lambda _{i}-\mu _{j}-i+j}(t_{1},\ldots ,t_{n+\lambda
_{i}-i}))_{1\leq i,j\leq n}~S_{\mu }^{(n)},  \label{facS}
\end{equation}%
where $S_{\lambda }^{(n)}:\mathcal{V}_{n}\rightarrow \mathcal{V}_{n}$ is
given by the analogue of the N\"{a}gelsbach-Kostka determinant formula,%
\begin{equation}
S_{\lambda }^{(n)}=\det (E_{\lambda _{i}^{\prime }-i+j})_{1\leq i,j\leq k}\;.
\label{S}
\end{equation}%
Note that both operators are well-defined since the $E_{r}$'s commute
pairwise. We define $\tilde{S}_{\lambda },S_{\lambda }:\mathcal{V}%
\rightarrow \mathcal{V}$ to be the unique operators which restrict to $%
\tilde{S}_{\lambda }^{(n)},S_{\lambda }^{(n)}$ on each $\mathcal{V}_{n}$
with $0<n<N$ and to the identity for $n=0,N$. We will often omit the
superscript if it is clear on which subspace $\tilde{S}_{\lambda
}^{(n)},S_{\lambda }^{(n)}$ are acting.

It might be helpful to the reader to illustrate the definition on a concrete
example.

\begin{example}
We find from formula (\ref{facS}) that the operator $\tilde{S}_{\lambda }$
with $\lambda =(2,1)$ reads explicitly%
\begin{multline*}
\tilde{S}_{2,1}=S_{2,1}-T_{4}S_{2,0}-(T_{2}+T_{3}+T_{4})S_{1,1} \\
+(T_{2}+T_{3}+T_{4})T_{4}S_{1,0}-(T_{2}+T_{3})T_{4}^{2}S_{0,0}\;.
\end{multline*}%
We now demonstrate on a simple example that we can compute the product
expansions $\sigma _{\lambda }\ast \sigma _{\mu }$ in the quantum cohomology
ring by identifying the basis vectors in the expansion of $\tilde{S}%
_{\lambda }|\mu \rangle $ with the respective Schubert classes in $%
QH_{T}^{\ast }(\limfunc{Gr}{}_{2,4})$. For instance, let us compute the
matrix elements $\langle 2,1|\tilde{S}_{\lambda }|\lambda \rangle ,\langle
2,2|\tilde{S}_{\lambda }|\lambda \rangle $ with $\lambda =(2,1)$ as before.
Using that $S_{2,1}=E_{2}E_{1}-E_{3}$, $S_{2,0}=E_{1}^{2}-E_{2}$, $%
S_{1,1}=E_{2}$, $S_{1,0}=E_{1}$ and $S_{0,0}=1$ we find from (\ref{combE})
the matrix elements%
\begin{equation*}
\begin{tabular}{c||c|c|c|c}
$\mu $ & $2,1$ & $2,0$ & $1,1$ & $1,0$ \\ \hline
$\langle 2,1|S_{\mu }|\lambda \rangle $ & $T_{1}T_{3}(T_{1}+T_{3})$ & $%
2T_{1}T_{3}+T_{1}^{2}+T_{3}^{2}-T_{1}T_{3}$ & $T_{1}T_{3}$ & $T_{1}+T_{3}$
\\ \hline
$\langle 2,2|S_{\mu }|\lambda \rangle $ & $T_{1}(T_{1}+T_{2})+T_{1}T_{3}$ & $%
T_{1}+T_{2}+T_{3}$ & $T_{1}$ & $1$%
\end{tabular}%
\ .
\end{equation*}%
N.B. that $E_{3}$ does not give any contribution, since we are acting on a
01-word with two 1-letters. Inserting the results into the above expansion
formula for $\tilde{S}_{\lambda }$ and cancelling terms we obtain%
\begin{equation}
\langle 2,1|\tilde{S}_{\lambda }|\lambda \rangle =T_{12}T_{14}T_{34}\quad \;%
\text{and\ }\quad \langle 2,2|S_{\mu }|\lambda \rangle =T_{14}^{2}
\end{equation}%
which are the equivariant Gromov-Witten invariants $C_{\lambda \lambda
}^{(2,1),d=0}(T)$ and $C_{\lambda \lambda }^{(2,2),d=0}(T)$ for $%
QH_{T}^{\ast }(\limfunc{Gr}{}_{2,4})$. The latter can - for example - be
computed using Knutson-Tao puzzles; see Figure \ref{fig:KTpuzzles} and \cite%
{KnutsonTao} for their definition. In general, $d\neq 0$, Knutson-Tao
puzzles cannot be used to compute the product in $QH_{T}^{\ast }(\limfunc{Gr}%
_{n,N})$. To see a genuine quantum product we observe that $\tilde{S}_{(1)}=%
\tilde{E}_{1}=\tilde{H}_{1}$ in which case we derived previously the
equivariant quantum Pieri rule (\ref{qPieri}).
\end{example}


\begin{figure}[tbp]
\begin{equation*}
\includegraphics[scale=1]{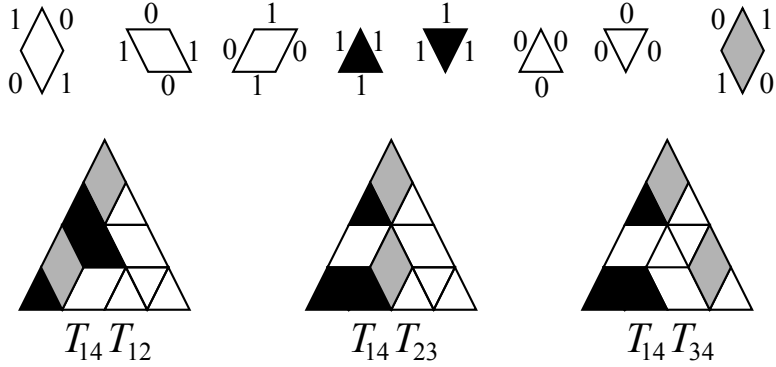}
\end{equation*}%
\caption{Equivariant Knutson-Tao puzzles \protect\cite{KnutsonTao} can be
used to compute Gromov-Witten invariants for $q=0$, i.e. the product in $%
H_{T}^{\ast }(\limfunc{Gr}_{n,N})$. On top the allowed puzzle tiles are
shown, on the bottom three puzzles whose weighted sum gives the
Gromov-Witten invariant $%
C_{(2,1),(2,1)}^{(2,2),d=0}(T)=T_{14}(T_{12}+T_{23}+T_{34})=T_{14}^{2}$.}
\label{fig:KTpuzzles}
\end{figure}

The following lemma will be instrumental in proving that the matrix elements
of (\ref{facS}) are Gromov-Witten invariants.

\begin{lemma}
\label{lem:facS}Consider the (unique) extension of $\tilde{S}_{\lambda }$ to 
$\mathcal{V}_{n}^{\mathbb{F}}:=\mathcal{V}_{n}\otimes \mathbb{F}$. (i) The
Bethe vectors (\ref{Bethev}) are eigenvectors of $\tilde{S}_{\lambda }$ and
on each $\mathcal{V}_{n}^{\mathbb{F}}$ we have the eigenvalue equation $%
\tilde{S}_{\lambda }|y_{\alpha }\rangle =s_{\lambda }(y_{\alpha
}|t)|y_{\alpha }\rangle $ where $s_{\lambda }(x|t)$ is the factorial Schur
function. (ii) Let $|\emptyset \rangle =v_{1}\otimes \cdots \otimes
v_{1}\otimes v_{0}\otimes \cdots \otimes v_{0}\in \mathcal{V}_{n}$ be the
unique basis vector which corresponds to the empty partition. Then $\tilde{S}%
_{\lambda }|\emptyset \rangle =|\lambda \rangle $ for all $\lambda \in \Pi
_{n,k}$.
\end{lemma}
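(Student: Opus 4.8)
The plan is to prove both parts by exploiting the spectral decomposition of the transfer matrices established in the Bethe ansatz section, together with the definition (\ref{facS}) of $\tilde{S}_\lambda$ and the eigenvalue equations (\ref{specE}), (\ref{facEspec}).

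For part (i), I would start from the definition (\ref{S}), $S_\lambda^{(n)}=\det(E_{\lambda'_i-i+j})_{1\le i,j\le k}$, and observe that since the $E_r$ all commute (Proposition \ref{integrability}) and each Bethe vector $|y_\alpha\rangle$ is a simultaneous eigenvector of all the $E_r$, it is automatically an eigenvector of every polynomial expression in the $E_r$, in particular of the determinant $S_\lambda^{(n)}$ and hence of $\tilde{S}_\lambda$. The only remaining task is to identify the eigenvalue. From (\ref{specE}) the generating function $E(x|t)$ has eigenvalue $\prod_{l=1}^n(1+x y_l)$ on $|y_\alpha\rangle$, so $E_r$ has eigenvalue $e_r(y_1,\ldots,y_n)$; more usefully, the factorial-power operators $\tilde{E}_{r,n}$ have eigenvalue $e_r(y|t)$ by (\ref{facEspec}). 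The cleanest route is then to note that the noncommutative Nägelsbach–Kostka determinant (\ref{S}) followed by the factorial correction (\ref{facS}) is precisely the noncommutative analogue of the expansion (\ref{facs2s}) of the factorial Schur function $s_\lambda(x|a)$ in terms of ordinary Schur functions. Replacing the commuting $E_r$ by their eigenvalues $e_r(y_\alpha)$ turns $S_\lambda^{(n)}$ into the ordinary Schur function $s_\lambda(y_\alpha)$ (via the classical Nägelsbach–Kostka formula), and the correction terms in (\ref{facS}) assemble exactly into the right-hand side of (\ref{facs2s}), yielding the eigenvalue $s_\lambda(y_\alpha|t)$. So the eigenvalue of $\tilde{S}_\lambda$ on $|y_\alpha\rangle$ is $s_\lambda(y_\alpha|t)$, as claimed.

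For part (ii), I would work directly in the basis of Bethe vectors. Using the resolution of identity (equivalently the basis change (\ref{basischange})) we have $|\emptyset\rangle=\sum_\alpha \frac{s_\emptyset(y_\alpha|t)}{\mathfrak{e}(y_\alpha)}|y_\alpha\rangle=\sum_\alpha \frac{1}{\mathfrak{e}(y_\alpha)}|y_\alpha\rangle$, since $s_\emptyset=1$. Applying $\tilde{S}_\lambda$ and invoking part (i) gives
\begin{equation*}
\tilde{S}_\lambda|\emptyset\rangle=\sum_\alpha \frac{s_\lambda(y_\alpha|t)}{\mathfrak{e}(y_\alpha)}|y_\alpha\rangle.
\end{equation*}
Comparing with the expansion (\ref{basischange}) of $|\lambda\rangle$ in the Bethe basis, namely $|\lambda\rangle=\sum_\alpha \frac{s_\lambda(y_\alpha|t)}{\mathfrak{e}(y_\alpha)}|y_\alpha\rangle$, the two right-hand sides coincide term by term, so $\tilde{S}_\lambda|\emptyset\rangle=|\lambda\rangle$. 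This argument is valid over $\mathbb{F}$ (where the Bethe vectors form a genuine eigenbasis), and since both sides are polynomial in the equivariant parameters the identity descends to $\mathcal{V}_n$.

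The main obstacle I anticipate is the bookkeeping in part (i): verifying carefully that substituting the eigenvalues $e_r\mapsto e_r(y_\alpha)$ into the determinantal definition (\ref{facS})–(\ref{S}) reproduces exactly the factorial Schur expansion (\ref{facs2s}), with the correct indices on the elementary symmetric polynomials $e_{\lambda_i-\mu_j-i+j}(t_1,\ldots,t_{n+\lambda_i-i})$ and the correct shift structure in the $a_i=t_i$. This is the one place where the match is not purely formal and must be checked against Macdonald's formula; everything else (commutativity, simultaneous diagonalisability, polynomiality) follows from results already established earlier in the paper.
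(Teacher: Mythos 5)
Your proposal is correct and follows essentially the same route as the paper: part (i) by combining the eigenvalue equation (\ref{specE}) for the commuting $E_r$'s with the fact that the expansion (\ref{facS}) mirrors Macdonald's formula (\ref{facs2s}), and part (ii) by expanding $|\emptyset\rangle$ in the Bethe basis via (\ref{basischange}) and applying part (i). The only difference is expository: you spell out the Nägelsbach--Kostka substitution and the polynomiality argument for descending from $\mathcal{V}_n^{\mathbb{F}}$ to $\mathcal{V}_n$, which the paper leaves implicit.
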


\begin{proof}
Statement (i) is a direct consequence of (\ref{specE}) and that the
expansion (\ref{facS}) applies to the factorial Schur function; see \cite[%
Eqn (6.18)]{Macdonald}. To prove (ii) recall the expansion of $|\emptyset
\rangle $ and $|\lambda \rangle $ into Bethe vectors,%
\begin{equation*}
\tilde{S}_{\lambda }|\emptyset \rangle =\sum_{\alpha \in \Pi _{n,k}}%
\mathfrak{e}(y_{\alpha })^{-1}\tilde{S}_{\lambda }|y_{\alpha }\rangle
=\sum_{\alpha \in \Pi _{n,k}}\mathfrak{e}(y_{\alpha })^{-1}s_{\lambda
}(y_{\alpha }|t)|y_{\alpha }\rangle =|\lambda \rangle \;.
\end{equation*}
\end{proof}

Using this last result we can derive two alternative determinant formulae
for (\ref{facS}). Recall the expansions (\ref{facH}) and (\ref{facE}) of the
transfer matrices on the subspace $\mathcal{V}_{n}$. We now introduce
so-called \emph{shifted factorial powers}: let $\tau $ denote the
shift-operator and set $(u|\tau ^{j}T)^{p}:=(u-T_{j+1})(u-T_{j+2})\cdots
(u-T_{j+p})$ for $j=1,2,\ldots ,n$. Then we define $\tau ^{j}\tilde{H}_{r,n}:%
\mathcal{V}_{n}\rightarrow \mathcal{V}_{n}$ to be the coefficient of $%
(u|\tau ^{j}T)^{k-r}$ in the expansion of $u^{k}H(u^{-1})|_{\mathcal{V}_{n}}$
into shifted factorial powers analogous to (\ref{facH}). Similarly, let $%
\tau ^{j}\tilde{E}_{r,n}:\mathcal{V}_{n}\rightarrow \mathcal{V}_{n}$ be the
coefficient of $(u|-\tau ^{j}t)^{n-r}$ when expanding $u^{n}E(u^{-1})|_{%
\mathcal{V}_{n}}$ into the shifted factorial powers $(u|-\tau
^{j}t)^{p}:=(u+t_{j+1})(u+t_{j+2})\cdots (u+t_{j+p})$ for $j=1,2,\ldots ,k$;
compare with (\ref{facE}).

\begin{lemma}[Jacobi-Trudi and N\"{a}gelsbach-Kostka formulae]
We have the following identities for the operator (\ref{facS}) on each
subspace $\mathcal{V}_{n}$,%
\begin{equation}
\tilde{S}_{\lambda }^{(n)}=\det \left( \tau ^{j-1}\tilde{H}_{\lambda
_{i}-i+j,n}\right) _{1\leq i,j\leq n}=\det \left( \tau ^{j-1}\tilde{E}%
_{\lambda _{i}^{\prime }-i+j,n}\right) _{1\leq i,j\leq k}~,
\label{det_formulae}
\end{equation}%
where $\lambda ^{\prime }$ denotes the conjugate partition of $\lambda $.
\end{lemma}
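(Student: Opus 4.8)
The plan is to diagonalise both sides simultaneously in the Bethe basis and then reduce each determinant to the classical factorial Jacobi--Trudi and N\"{a}gelsbach--Kostka identities \eqref{facs2det}. First I would extend scalars and work on $\mathcal{V}_{n}^{\mathbb{F}}=\mathcal{V}_{n}\otimes \mathbb{F}$, on which the on-shell Bethe vectors $\{|y_{\alpha }\rangle \}_{\alpha \in \Pi _{n,k}}$ form an eigenbasis by the spectral decomposition theorem. All the operators appearing in \eqref{det_formulae} have matrix entries in $\Lambda \lbrack q]$, which embeds into $\mathbb{F}_{q}$, so it is enough to establish the two operator identities after this extension; and since $\{|y_{\alpha }\rangle \}$ is a basis, two such operators coincide precisely when they share the same eigenvalue on every $|y_{\alpha }\rangle $.

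Next I would record the eigenvalues. By Lemma \ref{lem:facS}(i) the left-hand side acts by $\tilde{S}_{\lambda }|y_{\alpha }\rangle =s_{\lambda }(y_{\alpha }|t)|y_{\alpha }\rangle $. For the right-hand side, note that the coefficients $\tilde{H}_{r,n}$ and $\tilde{E}_{r,n}$ are polynomials in the pairwise commuting transfer-matrix coefficients, hence so are all their shifts $\tau ^{j-1}\tilde{H}_{r,n}$ and $\tau ^{j-1}\tilde{E}_{r,n}$; the determinants are therefore unambiguous and act on a Bethe vector through the ordinary determinant of the scalar eigenvalues. The key computational step is to identify the eigenvalue of the shifted operator. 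Starting from \eqref{facHspec}, namely $\tilde{H}_{r,n}|y_{\alpha }\rangle =h_{r}(y_{\alpha }|t)|y_{\alpha }\rangle $, and comparing the expansion of $u^{k}H(u^{-1})|_{\mathcal{V}_{n}}$ in the factorial powers $(u|T)^{k-r}$ from \eqref{facH} with its expansion in the shifted powers $(u|\tau ^{j-1}T)^{k-r}$ that define $\tau ^{j-1}\tilde{H}_{r,n}$, I would conclude that $\tau ^{j-1}\tilde{H}_{r,n}$ has eigenvalue $h_{r}(y_{\alpha }|\tau ^{1-j}t)$ on $|y_{\alpha }\rangle $. The parallel argument using \eqref{facEspec} and the powers $(u|-\tau ^{j-1}t)^{n-r}$ in \eqref{facE} yields eigenvalue $e_{r}(y_{\alpha }|\tau ^{j-1}t)$ for $\tau ^{j-1}\tilde{E}_{r,n}$.

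With the eigenvalues in hand I would apply \eqref{facs2det} with $x=y_{\alpha }$ and $a=t$:
\begin{equation*}
\det \bigl( h_{\lambda _{i}-i+j}(y_{\alpha }|\tau ^{1-j}t)\bigr) _{1\leq i,j\leq n}=s_{\lambda }(y_{\alpha }|t)=\det \bigl( e_{\lambda _{i}^{\prime }-i+j}(y_{\alpha }|\tau ^{j-1}t)\bigr) _{1\leq i,j\leq k}.
\end{equation*}
This shows that each determinant in \eqref{det_formulae} has eigenvalue $s_{\lambda }(y_{\alpha }|t)$ on $|y_{\alpha }\rangle $, matching the eigenvalue of $\tilde{S}_{\lambda }^{(n)}$ recorded above. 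As this holds for every $\alpha \in \Pi _{n,k}$ and the Bethe vectors form a basis of $\mathcal{V}_{n}^{\mathbb{F}}$, both determinant operators agree with $\tilde{S}_{\lambda }^{(n)}$ over $\mathbb{F}$, and the identity then descends to $\mathcal{V}_{n}$.

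The hard part will be the bookkeeping in the middle step: matching a shift $\tau ^{j-1}$ of the factorial-power \emph{basis} with the corresponding shift of the parameter \emph{sequence} inside $h_{r}(\,\cdot\,|\,\cdot\,)$ and $e_{r}(\,\cdot\,|\,\cdot\,)$. The asymmetry between the two halves --- a parameter shift $\tau ^{1-j}$ for the complete functions but $\tau ^{j-1}$ for the elementary ones --- comes precisely from $\tilde{H}_{r,n}$ being built from the reversed parameters $T_{i}=t_{N+1-i}$ (where a forward shift on $T$ is a backward shift on $t$) whereas $\tilde{E}_{r,n}$ is expanded directly in $-t$. Once this dictionary between operator shifts and parameter shifts is pinned down compatibly with the two determinantal expressions in \eqref{facs2det}, the remainder is the routine ``equal eigenvalues on a basis'' argument.
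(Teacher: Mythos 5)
Your proposal is correct and follows essentially the same route as the paper's own proof: extend scalars to $\mathbb{F}$, use that the Bethe vectors form an eigenbasis on which $\tilde{S}_{\lambda }$ acts by $s_{\lambda }(y_{\alpha }|t)$ (Lemma \ref{lem:facS}(i)), reduce the determinants to the classical factorial identities (\ref{facs2det}), and then descend to $\mathcal{V}_{n}$ since the shifted operators restrict there. The only difference is one of exposition: you make explicit the eigenvalue bookkeeping for the shifted operators $\tau ^{j-1}\tilde{H}_{r,n}$, $\tau ^{j-1}\tilde{E}_{r,n}$ and the sign of the shift caused by $T_{i}=t_{N+1-i}$, which the paper compresses into its closing ``N.B.'' remark.
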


\begin{proof}
Consider first the extension of $\tilde{S}_{\lambda }$ to $\mathcal{V}_{n}^{%
\mathbb{F}}$. Because the Bethe vectors form an eigenbasis we can use the
previous lemma and the known analogous determinant formulae (\ref{facs2det})%
\begin{equation*}
s_{\lambda }(x|t)=\det \left( h_{\lambda _{i}-i+j}(x|\tau ^{1-j}t)\right)
_{1\leq i,j\leq n}=\det \left( e_{\lambda _{i}^{\prime }-i+j}(x|\tau
^{j-1}t)\right) _{1\leq i,j\leq k}
\end{equation*}%
for factorial Schur functions to deduce the asserted identities on $\mathcal{%
V}_{n}^{\mathbb{F}}$. But since the operators $\tau ^{j}\tilde{H}_{r,n},\tau
^{j}\tilde{E}_{r,n}$ in both identities restrict to maps $\mathcal{V}%
_{n}\rightarrow \mathcal{V}_{n}$ according to their definition the assertion
follows. N.B. that the shifted powers $\tau ^{j}\tilde{H}_{r,n}$ have been
defined with respect to the $T_{i}=t_{N+1-i}$ parameters, hence the shift in
the first identity in (\ref{det_formulae}) is positive.
\end{proof}

We demonstrate the alternative formulae by computing the Gromov-Witten
invariants from the last example using (\ref{det_formulae}).

\begin{example}
We return to our previous example of $QH_{T}^{\ast }(\limfunc{Gr}_{2,4})$
with $k=n=2$. To calculate the matrix elements $\langle 2,1|\tilde{S}%
_{\lambda }|\lambda \rangle ,~\langle 2,2|\tilde{S}_{\lambda }|\lambda
\rangle $ with $\lambda =(2,1)$ via (\ref{det_formulae}), we first observe
that $\tilde{S}_{\lambda }=\tilde{H}_{2}~\tau \tilde{H}_{1}=\tilde{E}%
_{2}~\tau \tilde{E}_{1}$, where we have neglected the contribution from $%
\tilde{H}_{3},\tilde{E}_{3}$ as both identically vanish on $\mathcal{V}_{2}$
for $N=4$. Compute from (\ref{combH}), (\ref{combE}) the matrix elements $%
\langle 2,2|\tilde{H}(x_{1})\tilde{H}(x_{2})|\lambda \rangle ,~\langle 2,2|%
\tilde{E}(x_{1})\tilde{E}(x_{2})|\lambda \rangle $ of the transfer matrices
- see Figure \ref{fig:gr24ex} - and then expand the resulting polynomials
into (shifted) factorial powers $(x_{1}|\tau T)^{\alpha
_{1}}(x_{2}|T)^{\alpha _{2}}$ and $(x_{1}|-\tau t)^{\alpha
_{1}}(x_{2}|-t)^{\alpha _{2}}$. One finds for the vicious walker model that%
\begin{eqnarray*}
\langle 2,2|\tilde{H}(x_{1})\tilde{H}(x_{2})|\lambda \rangle
&=&(x_{1}-T_{4})(x_{2}-T_{4})(x_{2}-T_{3})+(x_{1}-T_{4})(x_{1}-T_{2})(x_{2}-T_{4})
\\
&=&(T_{13}T_{12}+T_{13}T_{24})(x_{1}-T_{2})+~\ldots
~+T_{14}T_{34}(x_{1}-T_{2})+~\ldots
\end{eqnarray*}%
where we have only listed the terms corresponding to the shifted factorial
power $\alpha =(k-1,k-2)=(1,0)$. Hence, we have $\langle 2,2|\tilde{H}%
_{2}~\tau \tilde{H}_{1}|\lambda \rangle =T_{14}^{2}$ as required. Similarly
one finds for the osculating walker model that%
\begin{equation*}
\langle 2,2|\tilde{E}(x_{1})\tilde{E}(x_{2})|\lambda \rangle
=(x_{1}+t_{4})(x_{2}+t_{3})(x_{2}+t_{4})+(x_{1}+t_{4})(x_{1}+t_{2})(x_{2}+t_{4})
\end{equation*}%
and, thus, by a simple substitution, we recover the previous result, $%
\langle 2,2|\tilde{E}_{2}~\tau \tilde{E}_{1}|\lambda \rangle
=t_{41}^{2}=T_{14}^{2}$. Note that in general both computations will be
different if $n\neq k$ and $\lambda \neq \lambda ^{\prime }$.
\end{example}


\begin{figure}[tbp]
\begin{equation*}
\includegraphics[scale=0.7]{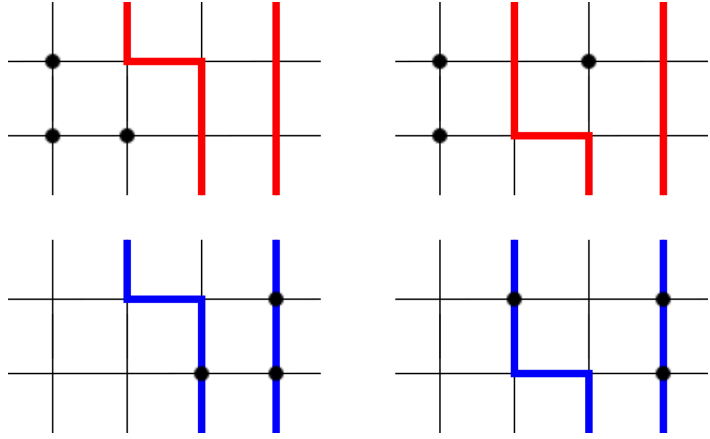}
\end{equation*}%
\caption{Shown on top are the two possible vicious paths which connect the
01-words for $\protect\mu =(2,1)$ and $\protect\lambda =(2,2)$. On the
bottom we see that osculating paths are the same. However, both pairs yield
different weighted sums.}
\label{fig:gr24ex}
\end{figure}

Motivated by the previous examples we now define a purely combinatorial
product on $\mathcal{V}_{n}$. The resulting ring will be identified in the
next section with the equivariant quantum cohomology of the Grassmannian.

\begin{theorem}
Define a product on $\mathcal{V}_{n}$ by setting 
\begin{equation}
|\lambda \rangle \circledast |\mu \rangle :=\tilde{S}_{\lambda }|\mu \rangle
\;.  \label{comb_product}
\end{equation}%
Then $(\mathcal{V}_{n},\circledast )$ is a commutative ring over $\Lambda
\lbrack q]$.
\end{theorem}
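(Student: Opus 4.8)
The plan is to identify $(\mathcal{V}_{n},\circledast)$ with the commutative algebra of operators generated by the $\tilde{S}_{\lambda}$, using the spectral picture from Lemma~\ref{lem:facS}. Concretely, I introduce the $\Lambda\lbrack q]$-linear map $\Phi:\mathcal{V}_{n}\rightarrow \limfunc{End}_{\Lambda\lbrack q]}(\mathcal{V}_{n})$ determined by $\Phi(|\lambda \rangle )=\tilde{S}_{\lambda }$, so that after bilinear extension the product reads $|a\rangle \circledast |b\rangle =\Phi (|a\rangle )|b\rangle$. The two facts I will lean on throughout are: (a) by Lemma~\ref{lem:facS}(i), after extending scalars to $\mathbb{F}_{q}$ every $\tilde{S}_{\lambda }$ is diagonalised by the Bethe eigenbasis $\{|y_{\alpha }\rangle \}$ with eigenvalue the scalar $s_{\lambda }(y_{\alpha }|t)$; and (b) by Lemma~\ref{lem:facS}(ii), $\Phi (v)|\emptyset \rangle =v$ for every $v\in \mathcal{V}_{n}$, i.e. $\mathrm{ev}_{\emptyset }\circ \Phi =\mathrm{id}_{\mathcal{V}_{n}}$, so $\Phi$ is injective with a canonical left inverse.

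First I establish commutativity and the unit. Any two operators $\tilde{S}_{\lambda },\tilde{S}_{\mu }$ act diagonally in the common Bethe basis over $\mathbb{F}_{q}$, hence commute on $\mathcal{V}_{n}\otimes _{\Lambda \lbrack q]}\mathbb{F}_{q}$; since the commutator $[\tilde{S}_{\lambda },\tilde{S}_{\mu }]$ is already defined over $\Lambda \lbrack q]$, the descent argument in the next paragraph forces it to vanish over $\Lambda \lbrack q]$ itself. Commutativity of $\circledast$ is then immediate from $|\lambda \rangle \circledast |\mu \rangle =\tilde{S}_{\lambda }\tilde{S}_{\mu }|\emptyset \rangle =\tilde{S}_{\mu }\tilde{S}_{\lambda }|\emptyset \rangle =|\mu \rangle \circledast |\lambda \rangle$, using (b). Moreover $\tilde{S}_{\emptyset }$ has eigenvalue $s_{\emptyset }(y_{\alpha }|t)=1$ on every Bethe vector, so $\tilde{S}_{\emptyset }=\mathrm{id}$, and $|\emptyset \rangle \circledast |\mu \rangle =\tilde{S}_{\emptyset }|\mu \rangle =|\mu \rangle$ shows $|\emptyset \rangle$ is a two-sided unit.

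The substantive point is associativity, which I reduce to multiplicativity of $\Phi$. The key observation is that for any $\Lambda \lbrack q]$-endomorphism $T$ that is diagonal in the Bethe basis (e.g. any product of $\tilde{S}$'s), one has $\Phi (T|\emptyset \rangle )=T$. Indeed, writing $T|y_{\alpha }\rangle =t_{\alpha }|y_{\alpha }\rangle$ and expanding $T|\emptyset \rangle =\sum_{\kappa }c_{\kappa }|\kappa \rangle$, the eigenvalue of $\Phi (T|\emptyset \rangle )=\sum_{\kappa }c_{\kappa }\tilde{S}_{\kappa }$ on $|y_{\alpha }\rangle$ equals $\sum_{\kappa }c_{\kappa }s_{\kappa }(y_{\alpha }|t)$, and pairing with the left eigenvector from~(\ref{leftBethev}) together with the basis change~(\ref{basischange}) gives $\sum_{\kappa }c_{\kappa }s_{\kappa }(y_{\alpha }|t)=\mathfrak{e}(y_{\alpha })\langle y_{\alpha }|T|\emptyset \rangle =t_{\alpha }$, so $\Phi (T|\emptyset \rangle )$ and $T$ have the same spectrum and hence coincide over $\mathbb{F}_{q}$, therefore over $\Lambda \lbrack q]$. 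Applying this with $T=\tilde{S}_{\lambda }\tilde{S}_{\mu }$ and using $|\lambda \rangle \circledast |\mu \rangle =\tilde{S}_{\lambda }\tilde{S}_{\mu }|\emptyset \rangle$ yields $\Phi (|\lambda \rangle \circledast |\mu \rangle )=\tilde{S}_{\lambda }\tilde{S}_{\mu }=\Phi (|\lambda \rangle )\Phi (|\mu \rangle )$. Thus $\Phi$ is an injective $\Lambda \lbrack q]$-module map intertwining $\circledast$ with composition of operators; associativity of $\circledast$ is inherited from associativity of composition, completing the proof that $(\mathcal{V}_{n},\circledast )$ is a commutative, associative, unital $\Lambda \lbrack q]$-algebra.

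I expect the main obstacle to be the descent step: the diagonalisation, and hence all the operator identities above, live a priori only after extending scalars to the large field $\mathbb{F}_{q}$, whereas the theorem asserts statements over $\Lambda \lbrack q]$. The justification is that $\Lambda \lbrack q]$ is an integral domain whose fraction field embeds into $\mathbb{F}_{q}$ and $\mathcal{V}_{n}$ is free over $\Lambda \lbrack q]$, so $\limfunc{End}_{\Lambda \lbrack q]}(\mathcal{V}_{n})\hookrightarrow \limfunc{End}_{\mathbb{F}_{q}}(\mathcal{V}_{n}\otimes _{\Lambda \lbrack q]}\mathbb{F}_{q})$ is injective; an operator identity verified over $\mathbb{F}_{q}$ therefore already holds over $\Lambda \lbrack q]$. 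Everything else is formal once Lemma~\ref{lem:facS}, the left-eigenvector formula~(\ref{leftBethev}) and the expansion~(\ref{basischange}) are in hand.
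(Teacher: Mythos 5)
Your proposal is correct; all the facts you lean on (Lemma \ref{lem:facS}, the left eigenvectors (\ref{leftBethev}), the expansion (\ref{basischange}), and completeness of the Bethe eigenbasis over $\mathbb{F}_{q}$) are established in the paper before this theorem, and your reconstruction identity $\Phi (T|\emptyset \rangle )=T$ for Bethe-diagonal $T$ checks out. The route differs from the paper's in two places. First, the paper obtains the operator commutativity $\tilde{S}_{\lambda }\tilde{S}_{\mu }=\tilde{S}_{\mu }\tilde{S}_{\lambda }$ not by simultaneous diagonalisation and descent, but directly over $\Lambda \lbrack q]$ from Proposition \ref{integrability}: by (\ref{facS}) each $\tilde{S}_{\lambda }$ is a $\Lambda$-linear combination of products of the $E_{r}$'s, and these commute by the Yang--Baxter argument; so your first descent step can be short-circuited, and descent is genuinely needed only in your associativity step. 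Second, and more substantively, the paper proves commutativity by expanding $\tilde{S}_{\lambda }|\mu \rangle $ in the Bethe basis so as to exhibit the structure constants in the manifestly $\lambda \leftrightarrow \mu$ symmetric form (\ref{residue_formula}), and then gets associativity from the chain $\tilde{S}_{\lambda }\tilde{S}_{\mu }|\nu \rangle =\tilde{S}_{\lambda }\tilde{S}_{\nu }|\mu \rangle =\tilde{S}_{\nu }\tilde{S}_{\lambda }|\mu \rangle $, mixing product commutativity with operator commutativity; you instead reduce associativity to multiplicativity of the injective map $\Phi $, i.e. you identify $(\mathcal{V}_{n},\circledast )$ with the commutative operator algebra via evaluation at $|\emptyset \rangle $ (the regular-representation picture). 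What each approach buys: the paper's computation yields the residue formula for the equivariant Gromov--Witten invariants as an immediate by-product (it is stated as a corollary right after the theorem), whereas your argument never writes the structure constants down but is more structural, verifies the unit $\tilde{S}_{\emptyset }=1$ explicitly (the paper leaves this implicit), and isolates the integrality/descent issue as a clean separate step, which the paper instead disposes of by remarking that the coefficients of $\tilde{S}_{\lambda }|\mu \rangle $ lie in $\Lambda \lbrack q]$ by the definition (\ref{facS}) together with (\ref{EHecke}) and (\ref{piaction}).
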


\begin{proof}
Consider once more the extension of $\tilde{S}_{\lambda }$ to $\mathcal{V}%
_{n}^{\mathbb{F}}:=\mathcal{V}_{n}\otimes \mathbb{F}$ which is defined over
the field of Puiseux series in the equivariant parameters. Computing the
action of $\tilde{S}_{\lambda }$ by employing the Bethe vectors we find that%
\begin{eqnarray*}
\tilde{S}_{\lambda }|\mu \rangle &=&\sum_{\alpha \in (n,k)}\frac{s_{\mu
}(y_{\alpha }|t)}{\mathfrak{e}(y_{\alpha })}\tilde{S}_{\lambda }|y_{\alpha
}\rangle =\sum_{\alpha \in (n,k)}\frac{s_{\lambda }(y_{\alpha }|t)s_{\mu
}(y_{\alpha }|t)}{\mathfrak{e}(y_{\alpha })}|y_{\alpha }\rangle \\
&=&\sum_{\nu \in (n,k)}\sum_{\alpha \in (n,k)}\frac{s_{\lambda }(y_{\alpha
}|t)s_{\mu }(y_{\alpha }|t)s_{\nu ^{\vee }}(y_{\alpha }|T)}{\mathfrak{e}%
(y_{\alpha })}~|\nu \rangle \;.
\end{eqnarray*}%
Since the coefficients in the last line are symmetric in $\lambda ,\mu $ the
product is clearly commutative. Recall that it follows from the definition (%
\ref{facS}) and the explicit formula (\ref{EHecke}) together with (\ref%
{piaction}) that the expansion coefficients of $\tilde{S}_{\lambda }|\mu
\rangle $ in the basis $\{|\nu \rangle \}_{\nu \in \Pi _{n,k}}$ must be in $%
\Lambda \lbrack q]$, although this is not obvious from the last equality.

Associativity now follows from $\tilde{S}_{\lambda }\tilde{S}_{\mu }=\tilde{S%
}_{\mu }\tilde{S}_{\lambda }$ which in turn is a consequence of (\ref{facS})
and Cor \ref{integrability}, 
\begin{multline*}
|\lambda \rangle \circledast \left( |\mu \rangle \circledast |\nu \rangle
\right) =\tilde{S}_{\lambda }\tilde{S}_{\mu }|\nu \rangle =\tilde{S}%
_{\lambda }\tilde{S}_{\nu }|\mu \rangle = \\
\tilde{S}_{\nu }\tilde{S}_{\lambda }|\mu \rangle =|\nu \rangle \circledast
\left( |\lambda \rangle \circledast |\mu \rangle \right) =\left( |\lambda
\rangle \circledast |\mu \rangle \right) \circledast |\nu \rangle \;.
\end{multline*}
\end{proof}

An immediate consequence of our proof of the last theorem is the following
expression of the structure constants in terms of the Bethe roots.

\begin{corollary}[residue formula]
The structure constants of the ring $(\mathcal{V}_{n},\circledast )$ are
given in terms of the following residue formula%
\begin{equation}
C_{\lambda \mu }^{\nu ,d}(T):=\langle \nu |\tilde{S}_{\lambda }|\mu \rangle
=\sum_{\alpha \in (n,k)}\frac{s_{\lambda }(y_{\alpha }|t)s_{\mu }(y_{\alpha
}|t)s_{\nu ^{\vee }}(y_{\alpha }|T)}{\mathfrak{e}(y_{\alpha })}\;,
\label{residue_formula}
\end{equation}%
where the sum ranges over all solutions $y_{\alpha }\in \mathbb{F}_{q}^{n}$
of the Bethe ansatz equations (\ref{BAE}).
\end{corollary}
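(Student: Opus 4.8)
The plan is to observe that the entire content of the corollary is already contained in the chain of equalities displayed in the proof of the preceding theorem, so that extracting the structure constant amounts to reading off a single coefficient. By definition $C_{\lambda \mu }^{\nu ,d}(T)=\langle \nu |\tilde{S}_{\lambda }|\mu \rangle$ is the coefficient of $|\nu \rangle$ when $\tilde{S}_{\lambda }|\mu \rangle$ is expanded in the Schubert basis $\{|\nu \rangle \}_{\nu \in (n,k)}$. I would therefore compute that expansion over the extension $\mathcal{V}_{n}^{\mathbb{F}}$ and then pair with the dual basis vector $\langle \nu |$, using the orthonormality $\langle \nu |\lambda \rangle =\delta _{\nu \lambda }$.

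First I would expand the input vector $|\mu \rangle$ into the on-shell Bethe eigenbasis via the basis-change formula (\ref{basischange}), writing $|\mu \rangle =\sum_{\alpha \in (n,k)}\mathfrak{e}(y_{\alpha })^{-1}s_{\mu }(y_{\alpha }|t)\,|y_{\alpha }\rangle$. Next, since Lemma \ref{lem:facS}(i) shows that each Bethe vector is an eigenvector of $\tilde{S}_{\lambda }$ with eigenvalue $s_{\lambda }(y_{\alpha }|t)$, applying $\tilde{S}_{\lambda }$ merely multiplies the $\alpha$th summand by $s_{\lambda }(y_{\alpha }|t)$. Finally I would re-expand each $|y_{\alpha }\rangle$ back into the Schubert basis using (\ref{BethefacSchur}), which contributes the factor $s_{\nu ^{\vee }}(y_{\alpha }|T)$; collecting the coefficient of $|\nu \rangle$ then yields exactly the single sum $\sum_{\alpha \in (n,k)}\mathfrak{e}(y_{\alpha })^{-1}s_{\lambda }(y_{\alpha }|t)s_{\mu }(y_{\alpha }|t)s_{\nu ^{\vee }}(y_{\alpha }|T)$ asserted in (\ref{residue_formula}). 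The sum ranges over all $\binom{N}{n}$ solutions precisely because, by the spectral decomposition established earlier, the $\alpha \in (n,k)$ index a complete eigenbasis.

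There is really no hard step here: the substance was already spent in proving the integrability and commutativity of the transfer matrices (Prop.\ \ref{integrability}) and the spectral decomposition that makes the Bethe vectors an orthogonal eigenbasis, which is what licenses both (\ref{basischange}) and the resolution of identity (\ref{res of 1}). The only point demanding a word of care is the apparent tension that the left-hand side lies in the polynomial ring $\Lambda \lbrack q]$, whereas the right-hand side is a priori an element of the Puiseux-series field $\mathbb{F}_{q}$. This is not a genuine obstacle: the proof of the previous theorem already established, via the formula (\ref{facS}) together with the explicit nil-Hecke expansion (\ref{EHecke}) and (\ref{piaction}), that the expansion coefficients of $\tilde{S}_{\lambda }|\mu \rangle$ in the Schubert basis belong to $\Lambda \lbrack q]$, so the residue formula simply furnishes an alternative $\mathbb{F}_{q}$-valued expression for a quantity already known to be polynomial. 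The remaining bookkeeping---tracking the two conventions $t_{i}$ and the reversed parameters $T_{i}=t_{N+1-i}$ that enter $s_{\lambda }(\cdot |t)$ versus $s_{\nu ^{\vee }}(\cdot |T)$---is routine and carries no real difficulty.
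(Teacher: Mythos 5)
Your proposal is correct and follows essentially the same route as the paper: the authors likewise expand $|\mu\rangle$ in the on-shell Bethe eigenbasis via (\ref{basischange}), apply Lemma \ref{lem:facS}(i) to replace $\tilde{S}_{\lambda}$ by its eigenvalue $s_{\lambda}(y_{\alpha}|t)$, and re-expand each $|y_{\alpha}\rangle$ in the Schubert basis through (\ref{BethefacSchur}), reading off the coefficient of $|\nu\rangle$. Even your closing remark on why the manifestly $\mathbb{F}_{q}$-valued right-hand side is in fact polynomial mirrors the paper's own justification via (\ref{facS}), (\ref{EHecke}) and (\ref{piaction}).
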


\begin{remark}
Our residue formula is a generalisation of the Bertram-Vafa-Intriligator
formula for Gromov-Witten invariants to the equivariant setting. It holds
also true for $q=0$, where the Bethe roots are explicitly known, $%
y_{i}=T_{i} $.
\end{remark}

Another generalisation from the non-equivariant case is the following
duality; compare with the known duality in $H_{T}^{\ast }(\limfunc{Gr}%
_{n,N}) $.

\begin{corollary}[level-rank duality]
We have the following identity%
\begin{equation}
C_{\lambda \mu }^{\nu ,d}(T)=C_{\lambda ^{\prime }\mu ^{\prime }}^{\nu
^{\prime },d}(-t)  \label{GW_level-rank}
\end{equation}%
where $t=w_{0}T$ as before. In particular, the mapping $\Theta :\mathcal{V}%
_{n}\rightarrow \mathcal{V}_{k}$ with the product in $(\mathcal{V}%
_{k},\circledast ^{\prime })$ defined by replacing $t_{j}\rightarrow
-T_{N+1-j}$ in the definition of $\tilde{S}_{\lambda }$ is a ring
isomorphism.

\begin{proof}
A straightforward computation making use of the identity (\ref%
{LRdualityFacSchur}),%
\begin{eqnarray*}
C_{\lambda \mu }^{\nu ,d}(T) &=&\sum_{\alpha \in (n,k)}\frac{s_{\lambda
}(y_{\alpha }|t)s_{\mu }(y_{\alpha }|t)s_{\nu ^{\vee }}(y_{\alpha }|T)}{%
\mathfrak{e}(y_{\alpha })} \\
&=&\sum_{\alpha \in (n,k)}\frac{s_{\lambda ^{\prime }}(-y_{\alpha ^{\ast
}}|-T)s_{\mu ^{\prime }}(-y_{\alpha ^{\ast }}|-T)s_{(\nu ^{\vee })^{\prime
}}(-y_{\alpha ^{\ast }}|-t)}{\mathfrak{e}(-y_{\alpha ^{\ast }})}=C_{\lambda
^{\prime }\mu ^{\prime }}^{\nu ^{\prime },d}(-t)\;.
\end{eqnarray*}
\end{proof}
\end{corollary}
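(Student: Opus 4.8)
The plan is to start from the residue formula (\ref{residue_formula}) for the structure constants and to push the level-rank involution $\Theta =\mathcal{PC}$ through it term by term, turning the Bethe-root sum for $\limfunc{Gr}_{n,N}$ into the one for $\limfunc{Gr}_{k,N}$. Explicitly, I would begin with
\[
C_{\lambda \mu }^{\nu ,d}(T)=\sum_{\alpha \in (n,k)}\frac{s_{\lambda }(y_{\alpha }|t)\,s_{\mu }(y_{\alpha }|t)\,s_{\nu ^{\vee }}(y_{\alpha }|T)}{\mathfrak{e}(y_{\alpha })},
\]
and recall that to each solution $y_{\alpha }$ of (\ref{BAE}) the level-rank correspondence attaches the solution $z_{\alpha }=-y_{\alpha ^{\ast }}$ of the dual equations (\ref{dualBAE}), together with the factorial-Schur identity (\ref{LRdualityFacSchur}).

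First I would rewrite each factor in the numerator. The factors in $\lambda $ and $\mu $ transform by (\ref{LRdualityFacSchur}), giving $s_{\lambda }(y_{\alpha }|t)=s_{\lambda ^{\prime }}(-y_{\alpha ^{\ast }}|-T)$ and likewise for $\mu $, while the factor carrying the $T$-shift transforms by the companion relation $s_{\nu ^{\vee }}(y_{\alpha }|T)=s_{(\nu ^{\prime })^{\vee }}(-y_{\alpha ^{\ast }}|-t)$, obtained from (\ref{LR_duality_Bethev}) by comparing the Bethe-vector expansions (\ref{BethefacSchur}) and (\ref{dualBethefacSchur}). Next I would check, from the product form (\ref{BetheEuler}) and the fact that $\alpha \mapsto \alpha ^{\ast }$ interchanges the index sets $I(\alpha )$ and $I(\alpha ^{\ast })$, that the normalisation is invariant, $\mathfrak{e}(y_{\alpha })=\mathfrak{e}(-y_{\alpha ^{\ast }})$. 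After these substitutions every summand is precisely the summand appearing in the residue formula (\ref{residue_formula}) for the Grassmannian $\limfunc{Gr}_{k,N}$ with equivariant parameters $-t$ and with $\lambda ,\mu ,\nu $ replaced by their conjugates; relabelling the sum by the bijection $\alpha \mapsto \alpha ^{\ast }$ of $(n,k)$ onto $(k,n)$ then identifies the total with $C_{\lambda ^{\prime }\mu ^{\prime }}^{\nu ^{\prime },d}(-t)$, which is (\ref{GW_level-rank}).

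For the ``in particular'' clause I would argue structurally rather than through coefficients. By (\ref{LR_duality_Bethev}) the map $\Theta $ sends the Bethe eigenbasis of $\mathcal{V}_{n}$ to that of $\mathcal{V}_{k}$, and by the transfer-matrix form of (\ref{levelrankmom}), $\Theta H(x|t)\Theta =E(x|-T)$, it intertwines the two families of commuting transfer matrices. Since the operators $\tilde{S}_{\lambda }$ of (\ref{facS}) are polynomials in these transfer matrices (expanded in factorial powers of the equivariant parameters), conjugation by $\Theta $ carries $\tilde{S}_{\lambda }$ to the corresponding operator $\tilde{S}^{\prime }$ of the dual model obtained by the substitution $t_{j}\mapsto -T_{N+1-j}$. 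Combined with $\Theta |\mu \rangle =|\mu ^{\ast }\rangle $ this yields $\Theta (|\lambda \rangle \circledast |\mu \rangle )=\Theta |\lambda \rangle \circledast ^{\prime }\Theta |\mu \rangle $, so $\Theta $ is a ring homomorphism, and being an involution it is an isomorphism.

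The main obstacle is the index bookkeeping in the first two steps: one must keep straight the three commuting involutions on partitions --- complement $\vee $, conjugation $\prime $, and their composite $\ast $ --- together with the simultaneous sign flips $t\to -T$ and $y_{\alpha }\to -y_{\alpha ^{\ast }}$, and above all confirm that $\mathfrak{e}$ is invariant (not merely invariant up to sign) under $\alpha \mapsto \alpha ^{\ast }$, so that no stray sign survives in the final comparison. Everything else reduces to the already established identities (\ref{LRdualityFacSchur}) and (\ref{BetheEuler}) and to the bijectivity of $\alpha \mapsto \alpha ^{\ast }$ between $(n,k)$ and $(k,n)$.
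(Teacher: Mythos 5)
Your proposal is correct and takes essentially the same route as the paper's proof: both start from the residue formula (\ref{residue_formula}), rewrite each factor using (\ref{LRdualityFacSchur}) together with its $t\leftrightarrow T$-swapped companion $s_{\nu^{\vee}}(y_{\alpha}|T)=s_{(\nu^{\vee})^{\prime}}(-y_{\alpha^{\ast}}|-t)$, use the invariance $\mathfrak{e}(y_{\alpha})=\mathfrak{e}(-y_{\alpha^{\ast}})$, and recognise the result as the residue formula for $C_{\lambda^{\prime}\mu^{\prime}}^{\nu^{\prime},d}(-t)$. Your extra bookkeeping --- deriving the companion identity from (\ref{LR_duality_Bethev}), checking that no sign survives in $\mathfrak{e}$, making the re-indexing $\alpha\mapsto\alpha^{\ast}$ explicit, and supplying the intertwining argument $\Theta H(x|t)\Theta=E(x|-T)$ for the ring-isomorphism clause --- merely spells out what the paper's terse computation leaves implicit.
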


It is obvious from our discussion that the Bethe ansatz is central to our
discussion and the following result clarifies the role of the Bethe vectors
with regard to the product in quantum cohomology.

\begin{corollary}[idempotents]
The algebra $\mathcal{V}_{n}^{\mathbb{F}}=\mathcal{V}_{n}\otimes \mathbb{F}$
is semisimple and canonically isomorphic to the generalised matrix algebra
defined via $|y_{\alpha }\rangle \circledast |y_{\beta }\rangle =\delta
_{\alpha \beta }\mathfrak{e}(y_{\alpha })|y_{\alpha }\rangle $, i.e. the
on-shell Bethe vectors (\ref{Bethev}) yield a complete set of idempotents
for $\mathcal{V}_{n}^{\mathbb{F}}$.
\end{corollary}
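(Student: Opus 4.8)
The plan is to diagonalise the combinatorial product $\circledast$ in the basis of on-shell Bethe vectors and read off that, up to a scalar, they are already idempotents. First I would note that the product (\ref{comb_product}) attaches to each vector $v=\sum_\lambda c_\lambda|\lambda\rangle$ the left-multiplication operator $\hat v=\sum_\lambda c_\lambda\tilde S_\lambda$, so that $v\circledast w=\hat v\,w$ depends linearly on $v$. Expanding a Bethe vector in the Schubert basis by (\ref{BethefacSchur}), $|y_\alpha\rangle=\sum_\lambda s_{\lambda^\vee}(y_\alpha|T)\,|\lambda\rangle$, and invoking Lemma \ref{lem:facS}(i), which gives $\tilde S_\lambda|y_\beta\rangle=s_\lambda(y_\beta|t)|y_\beta\rangle$, I would compute
\[
|y_\alpha\rangle\circledast|y_\beta\rangle=\Big(\sum_\lambda s_{\lambda^\vee}(y_\alpha|T)\,s_\lambda(y_\beta|t)\Big)|y_\beta\rangle .
\]

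The next step is to identify the scalar in parentheses. It is precisely the pairing $\langle y_\beta|y_\alpha\rangle$ of the left eigenvectors (\ref{leftBethev}) with the right Bethe vectors (\ref{Bethev}), multiplied by the normalisation $\mathfrak e(y_\beta)$ appearing in (\ref{leftBethev}). The orthonormality $\langle y_\beta|y_\alpha\rangle=\delta_{\alpha\beta}$ was established inside the Poincaré-duality proposition (and is the ``$\lambda$-sum'' companion of the resolution-of-identity relation (\ref{res of 1})), so the scalar equals $\delta_{\alpha\beta}\,\mathfrak e(y_\alpha)$ and
\[
|y_\alpha\rangle\circledast|y_\beta\rangle=\delta_{\alpha\beta}\,\mathfrak e(y_\alpha)\,|y_\alpha\rangle ,
\]
which is exactly the asserted generalised-matrix-algebra structure.

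To pass from this to genuine idempotents I would divide out $\mathfrak e$. Here one needs $\mathfrak e(y_\alpha)$ to be invertible: since the spectral decomposition theorem guarantees that the eigenvalues of $H$ separate points, the Bethe roots indexed by $\alpha$ and by $\alpha^\ast$ are pairwise distinct, whence $\mathfrak e(y_\alpha)=\prod_{i\in I(\alpha),\,j\in I(\alpha^\ast)}(y_i-y_j)\neq0$ in (\ref{BetheEuler}) and is therefore a unit in the field $\mathbb F_q$. Setting $Y_\alpha:=\mathfrak e(y_\alpha)^{-1}|y_\alpha\rangle$, the displayed relation yields $Y_\alpha\circledast Y_\beta=\delta_{\alpha\beta}Y_\alpha$, so the $Y_\alpha$ are mutually orthogonal idempotents. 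They are complete because $\sum_\alpha Y_\alpha=\sum_\alpha\mathfrak e(y_\alpha)^{-1}|y_\alpha\rangle=|\emptyset\rangle$ by (\ref{basischange}) specialised to $\lambda=\emptyset$, and $|\emptyset\rangle$ is the unit of $\circledast$ since $\tilde S_\emptyset=1$. As the $\binom{N}{n}$ Bethe vectors form a basis of $\mathcal V_n^{\mathbb F}$, this gives the algebra decomposition $\mathcal V_n^{\mathbb F}\cong\bigoplus_\alpha\mathbb F_q\,Y_\alpha$ into one-dimensional ideals, i.e. a product of copies of the field $\mathbb F_q$, which is manifestly semisimple.

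The only genuinely load-bearing inputs are the two dual orthogonality relations for factorial Schur functions evaluated at Bethe roots, both already in hand from Section~5, and the eigenvalue property of Lemma \ref{lem:facS}; the assembly is then immediate. The single point that requires a word of care is the non-vanishing of $\mathfrak e(y_\alpha)$, which I would justify by the separation of the transfer-matrix eigenvalues, as this is what makes the rescaling $Y_\alpha=\mathfrak e(y_\alpha)^{-1}|y_\alpha\rangle$ legitimate and hence the whole semisimple decomposition well-defined.
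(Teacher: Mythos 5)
Your proposal is correct and follows essentially the same route as the paper's proof: expand $|y_\alpha\rangle$ in the Schubert basis, apply Lemma \ref{lem:facS}(i) to pull out the eigenvalue $s_\lambda(y_\beta|t)$, and recognise the resulting scalar as $\mathfrak{e}(y_\beta)\langle y_\beta|y_\alpha\rangle$ via (\ref{leftBethev}) and orthogonality. The only difference is that you spell out what the paper leaves implicit in ``straightforward'' --- the non-vanishing of $\mathfrak{e}(y_\alpha)$, the identification of $|\emptyset\rangle$ as the unit, and the resulting decomposition $\mathcal{V}_n^{\mathbb{F}}\cong\tbigoplus_\alpha\mathbb{F}_q Y_\alpha$ --- which is sound and harmless extra care.
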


\begin{proof}
A straightforward computation using the fact that the Bethe vectors
diagonalise (\ref{facS}),%
\begin{eqnarray*}
|y_{\alpha }\rangle \circledast |y_{\beta }\rangle &=&\sum_{\lambda \in
(n,k)}s_{\lambda ^{\vee }}(y_{\alpha }|T)\tilde{S}_{\lambda }|y_{\beta
}\rangle =\sum_{\lambda \in (n,k)}s_{\lambda ^{\vee }}(y_{\alpha
}|T)s_{\lambda }(y_{\beta }|t)|y_{\beta }\rangle \\
&=&\mathfrak{e}(y_{\beta })|y_{\beta }\rangle \langle y_{\beta }|y_{\alpha
}\rangle =\delta _{\alpha \beta }\mathfrak{e}(y_{\alpha })|y_{\alpha }\rangle
\end{eqnarray*}%
where we have used (\ref{leftBethev}) to arrive at the second line.
\end{proof}

\subsection{Canonical isomorphisms}

We now identify the combinatorial ring from the previous section with $%
QH_{T}^{\ast }(\limfunc{Gr}_{n,N})$. Starting point is the following
presentation of $QH_{T}^{\ast }(\limfunc{Gr}_{n,N})$ as Jacobi algebra. For
type $A$ this result is originally due to Gepner \cite{Gepner} and for
general flag varieties it is contained in \cite{GP}.

Denote by $p_{r}=\sum_{i=1}^{n}x_{i}^{r}$ the power sums in some commuting
indeterminates $x_{1},\ldots ,x_{n}$. Define the so-called fusion potential
by setting 
\begin{equation}
F(x_{1},\ldots ,x_{n};q)=(-1)^{k}qp_{1}+\sum_{r=0}^{N}\frac{%
(-1)^{N-r}p_{N+1-r}}{N+1-r}e_{r}(t_{1},\ldots ,t_{N})\;,  \label{F}
\end{equation}%
where $e_{r}(t_{1},\ldots ,t_{N})$ is once more the elementary symmetric
polynomial of degree $r$ in the equivariant parameters. Note that this
fusion potential $F=F_{n,k}$ for $n>1$ can be written as the sum of fusion
potentials for the case of projective space $n=1$, that is%
\begin{equation*}
F_{n,k}(x_{1},\ldots ,x_{n};q)=\sum_{i=1}^{n}F_{1,N-1}(x_{i};(-1)^{n-1}q)\;.
\end{equation*}%
We have the following known result; c.f. \cite{Gepner}, \cite{GP}.

\begin{theorem}[Jacobi algebra]
The algebra $QH_{T}^{\ast }(\limfunc{Gr}_{n,N})\otimes \mathbb{F}_{q}$ is
canonically isomorphic to the Jacobi algebra $\mathfrak{J}=\mathbb{F}%
_{q}[e_{1},\ldots ,e_{n}]/\langle \partial F/\partial e_{1},\ldots ,\partial
F/\partial e_{n}\rangle $.
\end{theorem}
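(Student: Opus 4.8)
The plan is to realise both sides as quotients of the polynomial ring in the $n$ variables $e_1,\dots,e_n$, the elementary symmetric functions of $x_1,\dots,x_n$, and to prove that the two defining ideals coincide; the tensoring with $\mathbb{F}_q$ then only enters through semisimplicity and the spectral cross-check below. First I would eliminate the auxiliary generators on the Givental--Kim side: the relations $\sum_{i+j=r}a_ib_j=e_r(T)$ in (\ref{Idef}) form a triangular system expressing each $b_j$ as a polynomial in $a_1,\dots,a_n$, so that $\mathbb{F}_q[a,b]/I_n\cong\mathbb{F}_q[a_1,\dots,a_n]/\bar I_n$, where $\bar I_n$ is generated by the $n$ truncation relations coming from (\ref{Ipoly}), i.e. the vanishing of the coefficients of $u^{k+1},\dots,u^{N}$ in $B(u)=(\prod_{r}(1-uT_r)+(-1)^nqu^{N})/A(u)$ with $A(u)=\sum_{i=0}^nu^ia_i$. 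Under $a_i\mapsto(-1)^ie_i$ one has $A(u)=\prod_{l=1}^n(1-ux_l)$, so that $A(u)H(u)=1$ where $H(u)=\sum_{j\ge0}h_j(x)u^j=\prod_{l=1}^n(1-ux_l)^{-1}$, and these relations become $[u^m](P(u)H(u)+(-1)^nqu^NH(u))=0$ for $m=k+1,\dots,N$, with $P(u)=\prod_{l=1}^N(1-ut_l)$ and $e_r(T)=e_r(t)$.

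The heart of the argument is to identify these with the Jacobi relations $\partial F/\partial e_i=0$. Starting from $\sum_{m\ge1}p_mu^m/m=-\log\prod_l(1-ux_l)$ and differentiating in $e_i$ via $\partial(\sum_l(-u)^le_l)/\partial e_i=(-u)^i$ yields $\partial p_m/\partial e_i=(-1)^{i+1}m\,h_{m-i}(x)$ for $m\ge i$ and $0$ otherwise. Substituting this into the potential and cancelling the factor $N+1-r$ gives
\begin{equation*}
\frac{\partial F}{\partial e_i}=(-1)^kq\,\delta_{i,1}+(-1)^{i+1}(-1)^N[u^{N+1-i}](P(u)H(u)),\qquad i=1,\dots,n.
\end{equation*}
Setting $m=N+1-i$, so that $m$ runs over $k+1,\dots,N$, and noting that $[u^m](u^NH(u))=h_{m-N}(x)$ is nonzero only at $m=N$, a check of signs using $N=n+k$ shows that $\partial F/\partial e_i=0$ is equivalent term by term to $[u^{m}]B(u)=0$. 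Therefore $\bar I_n$ and the Jacobi ideal coincide, and $a_i\mapsto(-1)^ie_i$ induces the asserted canonical isomorphism $QH_{T}^{\ast}(\limfunc{Gr}_{n,N})\otimes\mathbb{F}_q\;\widetilde{\rightarrow}\;\mathfrak{J}$.

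I expect the sign-and-index bookkeeping in this last matching to be the only genuine difficulty; the rest is formal. As an independent check, and to tie in with the Bethe ansatz, I would verify the spectra directly. Since the Jacobian $(\partial e_i/\partial x_j)$ equals the Vandermonde $\prod_{i<j}(x_i-x_j)$ up to sign, on the locus of distinct $x_l$ the system $\partial F/\partial e_i=0$ is equivalent to $\partial F/\partial x_j=0$, and a one-line computation gives $\partial F/\partial x_j=(-1)^kq+(-1)^N\prod_{l=1}^N(x_j-t_l)$, whose vanishing is exactly the Bethe ansatz equation (\ref{BAE}). Hence the critical points of $F$ are the unordered $n$-subsets of the $N$ Bethe roots, labelled by $\Pi_{n,k}$ and $\binom{N}{n}$ in number; the $\mathbb{F}_q$-points of $\limfunc{Spec}(\mathbb{F}_q[a,b]/I_n)$ are the same, because $A(u)$ must divide $\prod_r(1-uT_r)+(-1)^nqu^N$, whose roots are the reciprocal Bethe roots. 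Over $\mathbb{F}_q$ these roots are distinct, so both algebras are reduced, hence semisimple, and the isomorphism above is thereby matched with the idempotent bijection of the spectral decomposition.
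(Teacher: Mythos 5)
Your proposal is correct, and its core computation is sound: I checked the sign bookkeeping you flagged as the only delicate point, and it works, e.g.\ for $i=1$ one gets $\partial F/\partial e_{1}=(-1)^{k}q+(-1)^{N}[u^{N}](P(u)H(u))=(-1)^{N}\bigl([u^{N}](P(u)H(u))+(-1)^{n}q\bigr)$, using $(-1)^{N+k}=(-1)^{n}$, so the $i=1$ relation is exactly the degree-$N$ truncation relation including the quantum term.

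Where you differ from the paper: the paper does not prove this theorem at all --- it is quoted as known, with references to \cite{Gepner} and \cite{GP}, and the only supporting computation is the remark immediately following it, which uses $\tfrac{1}{r}\,\partial p_{r}/\partial e_{j}=(-1)^{j-1}h_{r-j}$ to show that the Jacobi relations $\partial F/\partial e_{j}=0$ coincide with the relations (\ref{M2'}), equivalently the Bethe ansatz equations (\ref{BAE}). That remark is precisely the ``heart'' of your argument, so on the Jacobi side you and the paper agree. What you add, and what the paper never does by elementary means, is the identification of these relations with the Givental--Kim presentation (\ref{GK}): you eliminate the $b_{j}$'s through the triangular part of (\ref{Idef}) and recognise the residual ideal as the truncation ideal of $(P(u)+(-1)^{n}qu^{N})/A(u)$ --- a purely commutative-algebraic step, valid over $\Lambda[q]$ before any base change. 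The paper instead makes this connection through the integrable system: in the corollaries of Section 6 establishing the Givental--Kim and Laksov presentations, the equivalence of coordinate rings rests on the functional relation (\ref{QQ}) for the transfer matrices, on the Bethe vectors forming an eigenbasis, and on the eigenvalues separating points. Your route buys a short, self-contained derivation (modulo the Givental--Kim theorem, which the paper also imports) that makes the canonical map $a_{i}\mapsto(-1)^{i}e_{i}$ completely explicit; the paper's route buys exactly the structures it needs downstream --- idempotents, semisimplicity, the residue formula (\ref{residue_formula}) for Gromov--Witten invariants --- at the price of resting on the Bethe ansatz machinery.

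Two small points about your final ``cross-check'' paragraph, which, as you say, is not needed for the proof. First, the distinctness of the roots of $\prod_{r}(y-t_{r})+(-1)^{n}q$ over $\mathbb{F}_{q}$ is asserted rather than argued; it holds because the roots equal the pairwise distinct $t_{1},\ldots,t_{N}$ at $q=0$ and admit Puiseux expansions in $q^{1/N}$ --- this is exactly why the paper passes from $\Lambda[q]$ to $\mathbb{F}_{q}$ in the first place. Second, the inference ``reduced, hence semisimple, with $\binom{N}{n}$ points accounting for everything'' implicitly uses that the algebra has dimension $\binom{N}{n}$, which comes from the Givental--Kim theorem (or the paper's count $\dim\mathcal{V}_{n}=\binom{N}{n}$), so this check is not independent of the inputs you have already used; it is a consistency check, not extra evidence.
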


\begin{remark}
It might be instructive to remind the reader about the equivalence of the
equations $\partial F/\partial e_{j}=0$ and the Bethe ansatz equations (\ref%
{BAE}). Recall the following identities%
\begin{equation}
\sum_{r=0}^{j}(-1)^{r}e_{r}h_{j-r}=0\qquad \text{and}\qquad \frac{1}{r}\frac{%
\partial p_{r}}{\partial e_{j}}=(-1)^{j-1}h_{r-j}\;,
\end{equation}%
where $e_{r},h_{r}$ are the elementary and complete symmetric functions. For 
$j=1,2,\ldots ,n$ we then have%
\begin{equation*}
0=\frac{\partial F}{\partial e_{j}}=(-1)^{N+j-1}%
\sum_{r=0}^{N}(-1)^{r}e_{r}(t_{1},\ldots ,t_{N})h_{k+1-r+n-j}+\delta
_{j1}(-1)^{k}q
\end{equation*}%
These are the relations (\ref{M2'}) which are equivalent to the Bethe ansatz
equations (\ref{BAE}).
\end{remark}

\begin{proposition}
The Jacobi algebra $\mathfrak{J}$ has a complete set of idempotents 
\begin{equation}
P_{\alpha }(x)=\frac{1}{\mathfrak{e}(y_{\alpha })}\prod_{i=1}^{n}\prod_{j\in
I(\alpha ^{\ast })}(x_{i}-y_{j}),  \label{idempotent}
\end{equation}%
where $\alpha $ ranges over all partitions in the $n\times k$ bounding box.
\end{proposition}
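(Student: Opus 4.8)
The plan is to exploit the semisimplicity of $\mathfrak{J}$ together with the dictionary, recorded in the Remark preceding the statement, between the critical point equations $\partial F/\partial e_{j}=0$ and the Bethe ansatz equations (\ref{BAE}). First I would observe that $\mathfrak{J}$ is a finite-dimensional commutative $\mathbb{F}_{q}$-algebra of dimension $\binom{N}{n}$, the rank of $QH_{T}^{\ast }(\limfunc{Gr}_{n,N})$, whose $\mathbb{F}_{q}$-points, i.e. the algebra homomorphisms $\chi :\mathfrak{J}\to \mathbb{F}_{q}$, are exactly the critical points of the fusion potential $F$. By the computation in the Remark these are precisely the $n$-tuples $y_{\alpha }=(y_{j})_{j\in I(\alpha )}$ of Bethe roots solving (\ref{BAE}), indexed by the partitions $\alpha $ in the $n\times k$ bounding box; there are $\binom{N}{n}$ of them, matching $\dim _{\mathbb{F}_{q}}\mathfrak{J}$. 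The corresponding character $\chi _{\alpha }$ sends the class of a symmetric polynomial $P(x)$ to its value $P(y_{\alpha })$ obtained by letting the multiset $\{x_{i}\}$ take the chosen root values $\{y_{j}:j\in I(\alpha )\}$.

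Next I would invoke semisimplicity: because the eigenvalues of the transfer matrices separate points (spectral decomposition theorem), the $\binom{N}{n}$ characters $\chi _{\alpha }$ are pairwise distinct, so $\mathfrak{J}$ is reduced and the product map $P\mapsto (\chi _{\alpha }(P))_{\alpha }$ is an isomorphism $\mathfrak{J}\,\widetilde{\to }\,\prod_{\alpha }\mathbb{F}_{q}$ of $\mathbb{F}_{q}$-algebras. In this model a complete set of primitive orthogonal idempotents is simply the standard basis of $\prod_{\alpha }\mathbb{F}_{q}$, characterised by the Lagrange interpolation property $\chi _{\beta }(\text{idempotent}_{\alpha })=\delta _{\alpha \beta }$. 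It therefore suffices to check that each $P_{\alpha }(x)$ in (\ref{idempotent}) lies in $\mathfrak{J}$ and satisfies this property.

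That $P_{\alpha }$ defines an element of $\mathfrak{J}$ is clear, since it equals $\mathfrak{e}(y_{\alpha })^{-1}\prod_{i=1}^{n}g(x_{i})$ with $g(x)=\prod_{j\in I(\alpha ^{\ast })}(x-y_{j})\in \mathbb{F}_{q}[x]$, hence is symmetric in $x_{1},\ldots ,x_{n}$ and lies in $\mathbb{F}_{q}[e_{1},\ldots ,e_{n}]$. For the interpolation property I evaluate at the $\beta $-critical point, where $\{x_{i}\}$ becomes $\{y_{i}:i\in I(\beta )\}$:
\begin{equation*}
\chi _{\beta }(P_{\alpha })=\frac{1}{\mathfrak{e}(y_{\alpha })}\prod_{i\in I(\beta )}\prod_{j\in I(\alpha ^{\ast })}(y_{i}-y_{j}).
\end{equation*}
If $\beta =\alpha $ then $I(\beta )=I(\alpha )$ and, $I(\alpha ^{\ast })$ being the complementary index set, the double product is exactly $\mathfrak{e}(y_{\alpha })$ by (\ref{BetheEuler}), giving $\chi _{\alpha }(P_{\alpha })=1$. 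If $\beta \neq \alpha $ then the $n$-subsets differ, so $I(\beta )\cap I(\alpha ^{\ast })\neq \emptyset $, and any index $i$ in this intersection contributes a vanishing factor $(y_{i}-y_{i})=0$, whence $\chi _{\beta }(P_{\alpha })=0$. Thus $\chi _{\beta }(P_{\alpha })=\delta _{\alpha \beta }$, which identifies $\{P_{\alpha }\}$ with the standard idempotents under the isomorphism and proves the claim.

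The main obstacle, largely discharged already by the preceding Remark and the spectral theorem, is the identification of the spectrum of $\mathfrak{J}$ with the Bethe roots and the verification that these roots are simple, so that the characters $\chi _{\alpha }$ are distinct and $\mathfrak{J}$ is reduced; once this is in hand the remainder is the routine interpolation computation above. One should also note that the nonvanishing of $\mathfrak{e}(y_{\alpha })$, which by (\ref{BetheEuler}) is a product of differences $y_{i}-y_{j}$ of distinct Bethe roots, is exactly what legitimises the normalisation in (\ref{idempotent}).
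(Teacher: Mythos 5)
Your proposal is correct, but it takes a genuinely different route from the paper's. The paper never proves this proposition in isolation: its substantiation is distributed over the surrounding text. First, the earlier Corollary on idempotents shows, inside $(\mathcal{V}_{n}^{\mathbb{F}},\circledast )$, that the on-shell Bethe vectors (\ref{Bethev}) satisfy $|y_{\alpha }\rangle \circledast |y_{\beta }\rangle =\delta _{\alpha \beta }\,\mathfrak{e}(y_{\alpha })\,|y_{\alpha }\rangle $, which rests on the spectral decomposition of the transfer matrices and the Poincar\'{e} pairing. Second, the proof of the theorem immediately following the proposition computes the image of $|y_{\alpha }\rangle $ under $\Phi :|\lambda \rangle \mapsto s_{\lambda }(x|t)$, namely $\sum_{\lambda }\mathfrak{e}(y_{\alpha })^{-1}s_{\lambda ^{\vee }}(y_{\alpha }|T)\,s_{\lambda }(x|t)$, and converts it into the product formula (\ref{idempotent}) via level-rank duality (\ref{LRdualityFacSchur}) and the factorial Cauchy identity (\ref{Cauchy}); the explicit polynomial $P_{\alpha }$ thus \emph{emerges} from a symmetric-function identity rather than being verified. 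You instead argue entirely inside $\mathfrak{J}$, Stickelberger-style: the characters of the zero-dimensional Jacobi ring are the critical points of $F$, hence (via the Remark, i.e.\ (\ref{M2'}) and (\ref{BAE})) the $n$-subsets of the $N$ Bethe roots; there are $\binom{N}{n}$ distinct such characters against $\dim _{\mathbb{F}_{q}}\mathfrak{J}=\binom{N}{n}$ (from the Gepner/Gorbounov--Petrov presentation), so $\mathfrak{J}\cong \prod_{\alpha }\mathbb{F}_{q}$; and the Lagrange interpolation property $\chi _{\beta }(P_{\alpha })=\delta _{\alpha \beta }$, which over a product of fields characterises the complete set of primitive orthogonal idempotents, is then a two-line evaluation using (\ref{BetheEuler}). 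Your route buys elementarity and self-containment -- no Bethe vectors, no Cauchy identity, only commutative algebra plus the location of the critical points -- while the paper's route buys exactly what it needs next: the identification of these idempotents with the Bethe vectors under $\Phi $, which is the content of the main theorem on Gromov--Witten invariants.

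One step you should make explicit: a character of $\mathfrak{J}$ a priori determines only a \emph{multiset} $\{x_{1},\ldots ,x_{n}\}$, so your bijection with $n$-subsets must exclude repeated entries. This does follow, because the critical equations say precisely that $\prod_{i}(u-x_{i})$ divides $\prod_{j}(u-t_{j})+(-1)^{n}q$, and the latter polynomial is squarefree over $\mathbb{F}_{q}$ (its discriminant is nonzero at $q=0$ for independent $t_{j}$, hence nonzero), so a repeated linear factor is impossible. This same simplicity of the roots underwrites the distinctness of your characters and the nonvanishing of $\mathfrak{e}(y_{\alpha })$ in (\ref{BetheEuler}); you flag it correctly, and it is assumed implicitly throughout the paper as well, so your argument is at the same level of rigour on this point.
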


N.B. the idempotents (\ref{idempotent}) are symmetric in the indeterminates $%
x_{i}$ and, hence, can be expressed as a polynomial in the $e_{i}$%
's\thinspace\ although it is not practical to do so. In contrast the
idempotents are not symmetric in the Bethe roots $(y_{1},\ldots ,y_{N})\in 
\mathbb{F}_{q}$ as a particular subset of $k$ roots is chosen. In fact, a
relabelling of the Bethe roots $y=(y_{1},\ldots ,y_{N})$ yields a
relabelling of the idempotents.

\begin{theorem}
The map $\Phi :\mathcal{V}_{n}^{\mathbb{F}}\rightarrow \mathfrak{J}$ given
by $|\lambda \rangle \mapsto s_{\lambda }(x|t)$ for all $\lambda \in (n,k)$
is an algebra isomorphism. In particular, the renormalised Bethe vectors $%
Y_{\alpha }=\mathfrak{e}(y_{\alpha })^{-1}|y_{\alpha }\rangle $ are mapped
onto the idempotents (\ref{idempotent}) and the matrix elements $%
q^{d}C_{\lambda \mu }^{\nu ,d}(T)=\langle \nu |\tilde{S}_{\lambda }|\mu
\rangle $ with $dN=|\lambda |+|\mu |-|\nu |$ are the equivariant
Gromov-Witten invariants.
\end{theorem}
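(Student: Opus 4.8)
The plan is to exploit that both $\mathcal{V}_{n}^{\mathbb{F}}$ and $\mathfrak{J}$ are split semisimple commutative $\mathbb{F}_{q}$-algebras carrying a \emph{canonical} complete system of orthogonal idempotents indexed by the same set $\Pi_{n,k}$, and to define $\Phi$ first on those idempotents. The enabling dimension count is that both algebras have dimension $\binom{N}{n}$: the Bethe ansatz equations (\ref{BAE}) have exactly $\binom{N}{n}$ solutions $y_{\alpha}$ over the algebraically closed field $\mathbb{F}$, and by the remark following the Jacobi-algebra theorem the critical points $\partial F/\partial e_{j}=0$ of the fusion potential (\ref{F}) are precisely these same Bethe roots. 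Hence the two idempotent systems, the renormalised Bethe vectors $Y_{\alpha}=\mathfrak{e}(y_{\alpha})^{-1}|y_{\alpha}\rangle$ (the preceding idempotents corollary) and the $P_{\alpha}(x)$ of (\ref{idempotent}), are labelled by one and the same $\alpha\in\Pi_{n,k}$.

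Second, I would define $\Phi$ abstractly by $Y_{\alpha}\mapsto P_{\alpha}$. Any bijection between complete systems of orthogonal idempotents of two split semisimple commutative algebras extends uniquely to an algebra isomorphism, so this $\Phi$ is automatically an $\mathbb{F}_{q}$-algebra isomorphism; the only real content is to verify that it agrees with the stated map $|\lambda\rangle\mapsto s_{\lambda}(x|t)$. On the source side, equation (\ref{basischange}) and $Y_{\alpha}=\mathfrak{e}(y_{\alpha})^{-1}|y_{\alpha}\rangle$ give $|\lambda\rangle=\sum_{\alpha}s_{\lambda}(y_{\alpha}|t)\,Y_{\alpha}$. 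On the target side the key identity to establish is
\[
s_{\lambda}(x|t)=\sum_{\alpha}s_{\lambda}(y_{\alpha}|t)\,P_{\alpha}(x)\qquad\text{in }\mathfrak{J}.
\]
Applying $\Phi$ to the first expansion then yields $\Phi(|\lambda\rangle)=\sum_{\alpha}s_{\lambda}(y_{\alpha}|t)P_{\alpha}=s_{\lambda}(x|t)$, as desired, and simultaneously shows bijectivity, since the $s_{\lambda}(x|t)$ with $\lambda\in(n,k)$ form an $\mathbb{F}_{q}$-basis of $\mathfrak{J}$: by the triangular relation (\ref{facs2s}) they differ from the ordinary Schur basis by a unitriangular change, and the ordinary Schur classes are a basis of $\mathfrak{J}$.

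Third, the displayed equation is the standard interpolation identity in a split semisimple algebra: every element is the sum of its values at the points of $\operatorname{Spec}\mathfrak{J}$ weighted by the corresponding primitive idempotents. Here one must check that the $\alpha$-th point of $\operatorname{Spec}\mathfrak{J}$ is the critical point at which the symmetric variables $x_{i}$ assume the subset of Bethe-root values $\{y_{j}:j\in I(\alpha)\}$ — so that the value of $s_{\lambda}(x|t)$ there is $s_{\lambda}(y_{\alpha}|t)$ — and that $P_{\alpha}(x)$ of (\ref{idempotent}) is exactly the Lagrange idempotent supported at that point. This spectral matching is the one genuinely non-formal step, and I expect it to be the main obstacle: it is where the explicit eigenvalues of the spectral-decomposition theorem, (\ref{facEspec})–(\ref{facHspec}), under which $\tilde{E}_{r,n},\tilde{H}_{r,n}$ act on $Y_{\alpha}$ with eigenvalues $e_{r}(y_{\alpha}|t),h_{r}(y_{\alpha}|t)$, must be compared with the multiplication operators $e_{r}(x)$ on $\mathfrak{J}$ and confirmed to identify the two spectra compatibly with the labelling by $\alpha$.

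Finally, the \emph{in particular} assertions are immediate. That $Y_{\alpha}\mapsto P_{\alpha}$ is the definition of $\Phi$ just verified. For the Gromov-Witten invariants, the residue formula (\ref{residue_formula}) identifies the structure constants of $(\mathcal{V}_{n}^{\mathbb{F}},\circledast)$ in the Schubert basis as $C_{\lambda\mu}^{\nu,d}(T)=\langle\nu|\tilde{S}_{\lambda}|\mu\rangle$; transporting these through $\Phi$ and the Jacobi-algebra identification $\mathfrak{J}\cong QH_{T}^{\ast}(\limfunc{Gr}_{n,N})\otimes\mathbb{F}_{q}$, in which $s_{\lambda}(x|t)$ represents the Schubert class $\sigma_{\lambda}$, turns them into the structure constants of equivariant quantum multiplication, i.e.\ the equivariant Gromov-Witten invariants. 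The degree bookkeeping $dN=|\lambda|+|\mu|-|\nu|$ follows from the cohomological grading together with $\deg q=N$. All steps other than the spectral matching are either formal idempotent calculus or already supplied by the earlier spectral-decomposition, residue-formula and Jacobi-algebra results.
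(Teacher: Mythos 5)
Your strategy is sound, and it is genuinely different in arrangement from the paper's proof. The paper works \emph{forwards} from the stated map: using the expansion (\ref{BethefacSchur}) it computes the image of a renormalised Bethe vector, $\Phi(Y_{\alpha})=\sum_{\lambda}\mathfrak{e}(y_{\alpha})^{-1}s_{\lambda^{\vee}}(y_{\alpha}|T)\,s_{\lambda}(x|t)$, and identifies this with $P_{\alpha}(x)$ by combining level-rank duality (\ref{LRdualityFacSchur}) (with $z=-y_{\alpha^{\ast}}$) and the Cauchy identity (\ref{Cauchy}); completeness of the two idempotent systems then forces $\Phi$ to be an algebra isomorphism. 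You work \emph{backwards}: define the isomorphism on idempotents by $Y_{\alpha}\mapsto P_{\alpha}$ (an isomorphism by abstract semisimplicity) and reduce agreement with the stated map to the interpolation identity $s_{\lambda}(x|t)=\sum_{\alpha}s_{\lambda}(y_{\alpha}|t)P_{\alpha}(x)$ in $\mathfrak{J}$, using (\ref{basischange}) on the source side. The two computational cores are inverse basis-change statements of one another (they are interchangeable via the orthogonality relation (\ref{res of 1})), so your route is not circular; what it buys is that the matching of the label $\alpha$ on the two sides becomes conceptually transparent and the Cauchy identity is avoided entirely, at the price of some bookkeeping on $\operatorname{Spec}\mathfrak{J}$.

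The one thing you did not do is the ``spectral matching'' you flag as the main obstacle — but in your formulation it is routine rather than hard, and you should simply carry it out. That each evaluation $x\mapsto y_{\beta}$ is a point of $\operatorname{Spec}\mathfrak{J}$ is exactly the Remark identifying $\partial F/\partial e_{j}=0$ with (\ref{M2'}), i.e.\ with (\ref{BAE}); that these $\binom{N}{n}$ points exhaust the spectrum and that $\mathfrak{J}$ is reduced follows from the completeness of the system (\ref{idempotent}) together with the dimension count; and $P_{\alpha}(y_{\beta})=\delta_{\alpha\beta}$ is immediate from (\ref{idempotent}) and (\ref{BetheEuler}): for $\beta=\alpha$ the product equals $\mathfrak{e}(y_{\alpha})$ by definition, while for $\beta\neq\alpha$ some index $i\in I(\beta)\cap I(\alpha^{\ast})$ contributes a factor $y_{i}-y_{i}=0$ (distinctness of the Bethe roots, needed anyway for $\mathfrak{e}(y_{\alpha})\neq 0$, enters here). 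Evaluating your interpolation identity at every point of the spectrum then closes the argument. Two minor remarks: your appeal to the $s_{\lambda}(x|t)$ forming a basis of $\mathfrak{J}$ is not needed for bijectivity — it is a consequence, since your $\Phi$ is an isomorphism by construction; and for the final Gromov-Witten clause both you and the paper rely on the same external input, namely that under the canonical Jacobi-algebra identification the Schubert class $\sigma_{\lambda}$ is represented by $s_{\lambda}(x|t)$, which is where Mihalcea's equivariant quantum Giambelli formula (equivalently, the precise content of the Gepner/Gorbounov--Petrov isomorphism) is invoked.
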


\begin{proof}
Upon mapping $|\lambda \rangle \mapsto s_{\lambda }(x|t)$ the image of the
Bethe vectors is given by%
\begin{equation*}
|y_{\alpha }\rangle \mapsto \sum_{\lambda \in (n,k)}\frac{s_{\lambda ^{\vee
}}(y_{\alpha }|T)}{\mathfrak{e}(y_{\alpha })}s_{\lambda }(x|t)\;.
\end{equation*}

To see that these are the idempotents given by the expression (\ref%
{idempotent}) we use (\ref{LRdualityFacSchur}) with $z=-y_{\alpha ^{\ast }}$
and the Cauchy identity (\ref{Cauchy}). As the set of idempotents is
complete, $\mathcal{V}_{n}^{\mathbb{F}}=\tbigoplus_{\alpha }\mathbb{F}%
|y_{\alpha }\rangle ,$ this fixes the isomorphism uniquely.
\end{proof}

The following identifies the coefficients (\ref{HHecke}) and (\ref{EHecke})
of the transfer matrices as the Givental-Kim generators in the presentation (%
\ref{GK}) of $QH_{T}^{\ast }(\limfunc{Gr}_{n,N})$.

\begin{corollary}
Let $\mathbb{P}_{n}\subset \limfunc{End}\mathcal{V}_{n}$ be the commutative $%
\Lambda \lbrack q]$-algebra generated by the restriction of the operators $%
\{H_{r}\}_{r=1}^{k}$ and $\{E_{r}\}_{r=1}^{n}$ to the subspace $\mathcal{V}%
_{n}$. Then the map $\varphi :\mathbb{P}_{n}\rightarrow QH_{T}^{\ast }(%
\limfunc{Gr}_{n,N})$ defined by%
\begin{equation}
\varphi (E_{i})=(-1)^{i}a_{i}\qquad \text{and}\qquad \varphi (H_{j})=b_{j}
\end{equation}%
with $i=1,\ldots ,n$, $j=1,\ldots ,k$ is an algebra isomorphism. In
particular, $\tilde{S}_{\lambda }$ as defined in (\ref{facS}) is mapped to
the Schubert class $\sigma _{\lambda }$.
\end{corollary}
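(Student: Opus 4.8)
The plan is to construct the inverse of $\varphi$ directly out of the Givental--Kim presentation (\ref{GK}) and then prove it is bijective by a dimension count over $\mathbb{F}_{q}$ using the Bethe ansatz. First I would record that $\mathbb{P}_{n}$ is genuinely commutative by Prop \ref{integrability}, so that the assignment $a_{i}\mapsto(-1)^{i}E_{i}|_{\mathcal{V}_{n}}$, $b_{j}\mapsto H_{j}|_{\mathcal{V}_{n}}$ extends to a $\Lambda[q]$-algebra homomorphism $\Psi\colon\Lambda[q][a_{1},\dots,a_{n},b_{1},\dots,b_{k}]\to\limfunc{End}\mathcal{V}_{n}$ whose image is exactly $\mathbb{P}_{n}$. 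The key point is that $\Psi$ annihilates the ideal $I_{n}$. Writing $A(u)=\sum_{i}a_{i}u^{i}$ and $B(u)=\sum_{j}b_{j}u^{j}$, one has $\Psi(A(u))=E(-u)|_{\mathcal{V}_{n}}$ and $\Psi(B(u))=H(u)|_{\mathcal{V}_{n}}$ (note $E_{0}=a_{0}=1$, $H_{0}=b_{0}=1$), so by commutativity $\Psi(A(u)B(u))=H(u)E(-u)|_{\mathcal{V}_{n}}$. By the functional identity (\ref{QQ}) together with $\prod_{j}\sigma_{j}^{z}|_{\mathcal{V}_{n}}=(-1)^{n}$ this equals $\prod_{j=1}^{N}(1-ut_{j})+(-1)^{n}qu^{N}=\prod_{r=1}^{N}(1-uT_{r})+(-1)^{n}qu^{N}$, which is precisely the right-hand side of (\ref{Ipoly}). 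Comparing coefficients of $u$ shows $\Psi$ kills all generators of $I_{n}$, hence descends via (\ref{GK}) to a surjective $\Lambda[q]$-algebra map $\bar{\Psi}\colon QH_{T}^{\ast}(\limfunc{Gr}_{n,N})\to\mathbb{P}_{n}$ with $\bar{\Psi}(a_{i})=(-1)^{i}E_{i}$ and $\bar{\Psi}(b_{j})=H_{j}$.

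It remains to prove $\bar{\Psi}$ is injective; this is the main step. I would base-change to $\mathbb{F}_{q}$ and work inside $\limfunc{End}\mathcal{V}_{n}^{\mathbb{F}}$. The image of $\bar{\Psi}\otimes\mathbb{F}_{q}$ is the $\mathbb{F}_{q}$-subalgebra generated by the operators $H_{r},E_{r}$; by the spectral theorem (\ref{specH})--(\ref{dualspecE}) these are simultaneously diagonal in the Bethe eigenbasis $\{|y_{\alpha}\rangle\}_{\alpha\in\Pi_{n,k}}$, and since the eigenvalues of $H$ already separate the $\binom{N}{n}$ Bethe vectors, the algebra they generate is the full algebra of operators diagonal in that basis, of dimension $\binom{N}{n}$ over $\mathbb{F}_{q}$. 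On the other hand $QH_{T}^{\ast}(\limfunc{Gr}_{n,N})\otimes\mathbb{F}_{q}\cong\mathfrak{J}$ has a complete set of idempotents indexed by $\Pi_{n,k}$ (the idempotents corollary), hence also dimension $\binom{N}{n}$. A surjective map of $\mathbb{F}_{q}$-vector spaces of equal finite dimension is bijective, so $\bar{\Psi}\otimes\mathbb{F}_{q}$ is injective.

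Finally I would descend this back to $\Lambda[q]$. Since $\mathcal{V}_{n}$ is a free $\Lambda[q]$-module, so is $\limfunc{End}\mathcal{V}_{n}$, and since $\Lambda[q]\hookrightarrow\mathbb{F}_{q}$ is injective (the $T_{i}$ and $q$ map to algebraically independent elements of $\mathbb{F}_{q}$), the base-change map $J\colon\limfunc{End}\mathcal{V}_{n}\to\limfunc{End}\mathcal{V}_{n}^{\mathbb{F}}$ is injective and likewise $QH_{T}^{\ast}(\limfunc{Gr}_{n,N})$ embeds into its base change. In the commuting square $J\circ\bar{\Psi}=(\bar{\Psi}\otimes\mathbb{F}_{q})\circ(c\mapsto c\otimes 1)$, if $\bar{\Psi}(c)=0$ then $(\bar{\Psi}\otimes\mathbb{F}_{q})(c\otimes 1)=0$, so $c\otimes 1=0$ by injectivity of $\bar{\Psi}\otimes\mathbb{F}_{q}$, whence $c=0$ by freeness. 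Thus $\bar{\Psi}$ is an isomorphism, and $\varphi:=\bar{\Psi}^{-1}$ is the asserted $\Lambda[q]$-algebra isomorphism with $\varphi(E_{i})=(-1)^{i}a_{i}$, $\varphi(H_{j})=b_{j}$.

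For the last assertion that $\tilde{S}_{\lambda}\mapsto\sigma_{\lambda}$, I would first note $\tilde{S}_{\lambda}\in\mathbb{P}_{n}$: by the Jacobi--Trudi/N\"agelsbach--Kostka formulae (\ref{det_formulae}) it is a determinant in the shifted coefficients $\tau^{j-1}\tilde{H}_{\lambda_{i}-i+j,n}$, which are $\Lambda$-linear combinations of the $H_{r}$ via (\ref{H2facH})--(\ref{facH2H}). Applying the $\Lambda[q]$-linear map $\varphi$ sends this determinant to the corresponding factorial Jacobi--Trudi determinant in the Givental--Kim generators, i.e. to the factorial Schur function $s_{\lambda}$, which is the equivariant Schubert class $\sigma_{\lambda}$; equivalently, the regular representation of $(\mathcal{V}_{n},\circledast)$ realises $\tilde{S}_{\lambda}$ as multiplication by $|\lambda\rangle$ (Lemma \ref{lem:facS}), and $|\lambda\rangle\mapsto s_{\lambda}(x|t)=\sigma_{\lambda}$ under $\Phi$ of the previous Theorem, so the two isomorphisms $\mathbb{P}_{n}\to QH_{T}^{\ast}(\limfunc{Gr}_{n,N})$ coincide since they agree on the generators $\tilde{H}_{r,n}$. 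The only genuine obstacle in the whole argument is the injectivity of $\bar{\Psi}$; everything else is bookkeeping built on the functional identity (\ref{QQ}) and the Bethe-ansatz spectral decomposition already established.
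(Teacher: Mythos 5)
Your proposal is correct and takes essentially the same route as the paper: the functional identity (\ref{QQ}) restricted to $\mathcal{V}_{n}$ yields the defining relations (\ref{Idef}) of the Givental--Kim ideal and hence a surjection, the Bethe eigenbasis together with separation of eigenvalues forces this surjection to be an isomorphism, and the identification $\tilde{S}_{\lambda}\mapsto\sigma_{\lambda}$ rests on the previously established Gromov--Witten structure constants via Lemma \ref{lem:facS}. Your write-up simply makes explicit the steps the paper's terse argument leaves implicit (the base change to $\mathbb{F}_{q}$, the dimension count against $\mathfrak{J}$, and the descent to $\Lambda[q]$ by freeness).
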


\begin{proof}
Recall the identity (\ref{QQ}) on the subspace $\mathcal{V}_{n}$,%
\begin{equation}
\left( \tsum_{i=0}^{n}x^{i}E_{i}\right) \left(
\tsum_{j=0}^{k}x^{j}H_{j}\right) =(-1)^{n}qx^{N}+\prod_{r=1}^{N}(1-xT_{r})~.
\end{equation}%
But expanding this polynomial identity with respect to the variable $x$ we
deduce the defining relations (\ref{Idef}) of $I$ in the presentation (\ref%
{GK}). This result combined with the fact that the eigenvalues (\ref{specH}%
), (\ref{specE}) separate points and that the Bethe vectors form an
eigenbasis then gives the canonical algebra isomorphism, since it shows that
the coordinate ring defined via (\ref{QQ}) is just (\ref{GK}).

Because $\{|\lambda \rangle \}_{\lambda \in \Pi _{n,k}}$ form a basis of $%
\mathcal{V}_{n}$ we can conclude that - according to Lemma \ref{lem:facS}
(ii) - the operators $\{\tilde{S}_{\lambda }\}_{\lambda \in \Pi _{n,k}}$ are
linearly independent and, thus, span $\mathbb{P}_{n}$. Since we have
previously identified the matrix elements $\langle \nu |\tilde{S}_{\lambda
}|\mu \rangle =\langle \nu |\tilde{S}_{\lambda }\tilde{S}_{\mu }|\emptyset
\rangle =q^{d}C_{\lambda \mu }^{\nu ,d}(T)$ with the Gromov-Witten
invariants for all $\lambda ,\mu ,\nu \in \Pi _{n,k}$, the image $\varphi (%
\tilde{S}_{\lambda })$ must be the Schubert class $\sigma _{\lambda }$.
\end{proof}

The following presentations are due to Laksov \cite[Examples 7.4 -6]{Laksov}.

\begin{corollary}[Laksov]
There exist isomorphisms such that%
\begin{equation}
QH_{T}^{\ast }(\limfunc{Gr}\nolimits_{n,N})\cong \Lambda \lbrack
q][h_{1},\ldots ,h_{k}]/\langle E_{n+1},\ldots
,E_{N-1},E_{N}+(-1)^{k}q\rangle  \label{L1}
\end{equation}%
with $E_{r}=\sum_{s=0}^{r}(-1)^{s}h_{r-s}(T_{1},\ldots ,T_{N})\det
(h_{1+a-b})_{1\leq a,b\leq r}$ and%
\begin{equation}
QH_{T}^{\ast }(\limfunc{Gr}\nolimits_{n,N})\cong \Lambda \lbrack
q][e_{1},\ldots ,e_{n}]/\langle H_{k+1},\ldots
,H_{N-1},H_{N}+(-1)^{n}q\rangle  \label{L2}
\end{equation}%
with $H_{j}=\sum_{r=0}^{j}(-1)^{s}e_{s}(T_{1},\ldots ,T_{N})\det
(e_{1+a-b})_{1\leq a,b\leq j-r}$.
\end{corollary}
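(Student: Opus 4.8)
The plan is to obtain both presentations by a purely ring-theoretic \emph{elimination of variables} inside the Givental--Kim presentation (\ref{GK}), using only the defining relations (\ref{Idef})--(\ref{Ipoly}). Nothing from the Bethe ansatz is needed, since both target presentations are stated over $\Lambda[q]$. Under the isomorphism of the preceding corollary the generators are matched as $a_{i}=(-1)^{i}E_{i}=(-1)^{i}e_{i}$ and $b_{j}=H_{j}=h_{j}$, so the master identity (\ref{Ipoly}) becomes $A(x)B(x)=\prod_{s=1}^{N}(1-xT_{s})+(-1)^{n}qx^{N}$ with $A(x)=\sum_{i=0}^{n}a_{i}x^{i}$ and $B(x)=\sum_{j=0}^{k}b_{j}x^{j}$; expanding in powers of $x$ yields the coefficient relations $\sum_{i+j=r}a_{i}b_{j}=c_{r}$ (with $c_{r}$ the coefficient of $x^{r}$ in $\prod_{s}(1-xT_{s})$, these being (\ref{Idef}) up to the sign convention there) for $0\le r\le N-1$, together with the top relation $a_{n}b_{k}=T_{1}\cdots T_{N}+(-1)^{n}q$.

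First I would establish (\ref{L2}), keeping the $e_{i}$ and eliminating the $b_{j}$. Reading the relation for $0\le r\le k$ and isolating the term $a_{0}b_{r}=b_{r}$ gives the triangular recursion $b_{r}=c_{r}-\sum_{i\ge 1}a_{i}b_{r-i}$, which expresses each generator $b_{1},\ldots,b_{k}$ as a polynomial in $a_{1},\ldots,a_{n}$ and the $e_{s}(T)$; substituting $a_{i}=(-1)^{i}e_{i}$ realises $QH_{T}^{\ast}(\limfunc{Gr}_{n,N})$ as a quotient of $\Lambda[q][e_{1},\ldots,e_{n}]$, and this elimination is an isomorphism of presented algebras. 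Extending the \emph{same} recursion $\sum_{i}a_{i}H_{j-i}=c_{j}$ to all $j\ge 0$ (with $e_{s}=0$ for $s>n$) defines polynomials $H_{j}$ in the $e_{i}$; solving the lower-triangular system by Cramer's rule identifies $H_{j}$ with the stated expression, because $1/A(x)=\sum_{m}h_{m}x^{m}$ and $h_{m}=\det(e_{1+a-b})_{1\le a,b\le m}$ is the dual Jacobi--Trudi (N\"agelsbach--Kostka) determinant. By construction $H_{j}=b_{j}$ for $1\le j\le k$, while for $k<r\le N-1$ the Givental--Kim relation $\sum_{i+j=r,\,j\le k}a_{i}H_{j}=c_{r}$ and the defining identity $\sum_{i+j=r}a_{i}H_{j}=c_{r}$ differ by $\sum_{i+j=r,\,j>k}a_{i}H_{j}=0$; a short downward induction on $r$ (whose $i=0$ term is $H_{r}$) converts these into $H_{k+1}=\cdots=H_{N-1}=0$. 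Finally the top relation, together with the $r=N$ case $a_{n}H_{k}+H_{N}=c_{N}$ of the identity (all intermediate $H_{j}$ vanishing), forces $H_{N}+(-1)^{n}q=0$. These are exactly the relations of (\ref{L2}).

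Presentation (\ref{L1}) follows by the symmetric argument with $n\leftrightarrow k$ and $a\leftrightarrow b$ (equivalently $E\leftrightarrow H$) interchanged: now retain $h_{1},\ldots,h_{k}=b_{j}$ as generators, solve the relations for $0\le r\le n$ for the $a_{i}=(-1)^{i}E_{i}$, extend the recursion to define the $E_{r}$ as the primal Jacobi--Trudi determinants $\det(h_{1+a-b})$ weighted by $h_{s}(T)$, and read off $E_{n+1}=\cdots=E_{N-1}=0$ and $E_{N}+(-1)^{k}q=0$. Alternatively one deduces (\ref{L1}) from (\ref{L2}) through the level-rank duality ring isomorphism $\Theta:\mathcal{V}_{n}\to\mathcal{V}_{k}$ of (\ref{GW_level-rank}), which swaps the two families of generators and implements $t_{j}\mapsto -T_{N+1-j}$.

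The triangular inversions and the two inductions are routine. The only place demanding genuine care is the bookkeeping of signs: the factor $(-1)^{i}$ in $a_{i}=(-1)^{i}E_{i}$, the substitution $t_{i}=T_{N+1-i}$, and the $(-1)^{n}q$ in the top relation, together with the (mildly inconsistent) sign conventions in (\ref{Idef})--(\ref{Ipoly}). Consequently the main, if modest, obstacle I anticipate is verifying uniformly in $j$ that Cramer's rule applied to the inhomogeneous triangular system reproduces \emph{precisely} the signed N\"agelsbach--Kostka determinant $\det(e_{1+a-b})$ weighted by $e_{r}(T)$ as displayed, rather than a sibling differing by an overall $(-1)^{r}$.
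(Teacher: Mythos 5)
Your proposal is correct, but it follows a genuinely different route from the paper. The paper proves \eqref{L2} by recycling its Bethe-ansatz machinery: it invokes the lemma that the Bethe ansatz equations \eqref{BAE} are equivalent to the relations \eqref{M2'} (which are exactly the Laksov relations evaluated at Bethe roots), and then argues ``as before'' -- i.e.\ as in the preceding corollary identifying the Givental--Kim presentation -- that since the eigenvalues of the transfer matrices separate points and the Bethe vectors form an eigenbasis over $\mathbb{F}_{q}$, the coordinate ring cut out by \eqref{M2'} coincides with the one cut out by \eqref{GK}; the presentation \eqref{L1} is then obtained from level-rank duality \eqref{levelrankmom}. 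You instead perform a purely algebraic elimination of the generators $b_{1},\ldots ,b_{k}$ (resp.\ $a_{1},\ldots ,a_{n}$) inside the Givental--Kim presentation, using the triangular recursion coming from \eqref{Ipoly}, Newton's identity $\sum_{i+j=m}(-1)^{i}e_{i}h_{j}=\delta _{m,0}$, and the N\"{a}gelsbach--Kostka determinant to recognise the surviving relations as $H_{k+1}=\cdots =H_{N-1}=0$ and $H_{N}+(-1)^{n}q=0$. Your argument buys something the paper's does not: it works directly over $\Lambda \lbrack q]$, with no base change to the Puiseux field $\mathbb{F}_{q}$, no semisimplicity, and no appeal to the spectral theory at all, so it is self-contained modulo Givental--Kim (and is in fact closer in spirit to Laksov's original derivation). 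What the paper's route buys is economy and thematic coherence: given that the equivalence \eqref{BAE} $\Leftrightarrow $ \eqref{M2'} and the spectral identification of \eqref{GK} were already established, the corollary is a two-line consequence, and it makes transparent that the Laksov relations \emph{are} the Bethe ansatz equations -- the conceptual point of that section. Two small points of care that you correctly anticipated: the sign discrepancy between \eqref{Idef} and \eqref{Ipoly} (they differ by $T\mapsto -T$) is real and must be fixed by adopting the \eqref{Ipoly} convention throughout, and the displayed formula for $H_{j}$ in the corollary contains an index slip ($(-1)^{s}e_{s}$ should read $(-1)^{r}e_{r}$, matching the determinant of size $j-r$), which your generating-function computation reproduces in the corrected form.
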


\begin{proof}
Using the alternative form (\ref{M2'}) of the Bethe ansatz equations (\ref%
{BAE}) one obtains by the same arguments as before that the coordinate ring
defined by (\ref{M2'}) is equivalent to the presentation (\ref{GK}). This
yields the presentation (\ref{L2}). The presentation (\ref{L1}) then follows
from level-rank duality.
\end{proof}

Laksov showed that his presentations are equivalent to the following
alternative descriptions of $QH_{T}^{\ast }(\limfunc{Gr}_{n,N})$ due to
Mihalcea \cite{Mihalcea}.

\begin{corollary}[Mihalcea]
There exist canonical isomorphisms such that%
\begin{equation}
QH_{T}^{\ast }(\limfunc{Gr}\nolimits_{n,N})\cong \Lambda \lbrack q][\tilde{h}%
_{1},\ldots ,\tilde{h}_{k}]/\langle \tilde{e}_{n+1},\ldots ,\tilde{e}_{N-1},%
\tilde{e}_{N}+(-1)^{k}q\rangle  \label{M1}
\end{equation}%
and%
\begin{equation}
QH_{T}^{\ast }(\limfunc{Gr}\nolimits_{n,N})\cong \Lambda \lbrack q][\tilde{e}%
_{1},\ldots ,\tilde{e}_{n}]/\langle \tilde{h}_{k+1},\ldots ,\tilde{h}_{N-1},%
\tilde{h}_{N}+(-1)^{n}q\rangle  \label{M2}
\end{equation}%
where $\tilde{e}_{r}=\det (h_{1+j-i}(x|\tau ^{1-j}t))_{1\leq i,j\leq r}$ and 
$\tilde{h}_{r}=\det (e_{1+j-i}(x|\tau ^{j-1}t))_{1\leq i,j\leq r}$ .
\end{corollary}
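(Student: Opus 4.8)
The plan is to transport the Laksov presentations (\ref{L1}), (\ref{L2}) onto the factorial generators by identifying the operators $\tilde{H}_{r,n},\tilde{E}_{r,n}$ with factorial complete and elementary symmetric functions, and then to read off the relations from the functional identity (\ref{QQ}). First I would observe that the determinants defining Mihalcea's generators are themselves factorial Schur functions: by the Jacobi--Trudi and N\"{a}gelsbach--Kostka formulae (\ref{facs2det}) one has $\tilde{h}_{r}=s_{(r)}(x|t)=h_{r}(x|t)$ and $\tilde{e}_{r}=s_{(1^{r})}(x|t)=e_{r}(x|t)$. Combining Lemma \ref{lem:facS} with the spectral equations (\ref{facHspec}), (\ref{facEspec}) shows that $\tilde{H}_{r,n},\tilde{E}_{r,n}$ are diagonalised by the Bethe vectors with eigenvalues $h_{r}(y_{\alpha}|t)$ and $e_{r}(y_{\alpha}|t)$; since the isomorphism $\Phi$ sends each $|y_{\alpha}\rangle$ to the idempotent (\ref{idempotent}) and is multiplicative, it intertwines $\tilde{H}_{r,n}$ with multiplication by $\tilde{h}_{r}$ and $\tilde{E}_{r,n}$ with multiplication by $\tilde{e}_{r}$. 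This is consistent with the determinant identities (\ref{det_formulae}), which give $\tilde{S}_{(r)}=\tilde{H}_{r,n}$ and $\tilde{S}_{(1^{r})}=\tilde{E}_{r,n}$ (the defining determinants being upper triangular in the single-row and single-column cases).

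Next I would check that these factorial coefficients still generate the whole ring. The transformations (\ref{E2facE}), (\ref{facE2E}) between $\{E_{r}\}$ and $\{\tilde{E}_{r,n}\}$ are unitriangular over $\Lambda$, so $\{\tilde{e}_{r}\}_{r=1}^{n}$ generate the same commutative algebra $\mathbb{P}_{n}\cong QH_{T}^{\ast}(\limfunc{Gr}_{n,N})$ as the Givental--Kim generators; likewise $\{\tilde{h}_{r}\}_{r=1}^{k}$. Hence there is a surjection $\Lambda\lbrack q][\tilde{e}_{1},\ldots,\tilde{e}_{n}]\twoheadrightarrow QH_{T}^{\ast}(\limfunc{Gr}_{n,N})$, and it remains only to determine its kernel.

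The relations come from the factorial form of the functional identity (\ref{QQ}) derived in the proof of (\ref{facHspec}), which under $\Phi$ reads
\begin{equation*}
\left(\sum_{r=0}^{n}(-1)^{r}\tilde{e}_{r}\,(u|t)^{n-r}\right)\left(\sum_{r=0}^{k}\tilde{h}_{r}\,(u|T)^{k-r}\right)=(u|t)^{N}+(-1)^{n}q.
\end{equation*}
Extending the family $\{\tilde{h}_{r}\}$ to all $r$ by the factorial N\"{a}gelsbach--Kostka determinant (equivalently by the factorial Wronski relation quoted before (\ref{M2'})), I would expand the left-hand side using the shift identities $(u|t)^{a+b}=(u|t)^{a}(u|\tau^{a}t)^{b}$ and $(u|T)^{k-r}=(u|\tau^{n+r}t)^{k-r}$ and compare coefficients of the factorial powers $(u|t)^{m}$. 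For $m$ in the appropriate range this yields the vanishing $\tilde{h}_{k+1}=\cdots=\tilde{h}_{N-1}=0$, and the remaining coefficient gives $\tilde{h}_{N}+(-1)^{n}q=0$; these are exactly the generators of the ideal in (\ref{M2}). This step is the factorial-basis analog of the passage from (\ref{QQ}) to (\ref{M2'}) used in Laksov's corollary. The surjection is then an isomorphism: over $\mathbb{F}_{q}$ the relations cut out precisely the $\binom{N}{n}$ Bethe solutions of (\ref{BAE}), so the quotient is semisimple of dimension $\binom{N}{n}=\dim_{\mathbb{F}_{q}}QH_{T}^{\ast}\otimes\mathbb{F}_{q}$ (the functions $s_{\lambda}(x|t)$ separating points, as in the proof of the Jacobi-algebra theorem), whence the surjection between algebras of equal dimension is injective, and one descends to $\Lambda\lbrack q]$ by freeness. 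Finally (\ref{M1}) follows from (\ref{M2}) by level-rank duality: the involution $\Theta=\mathcal{PC}$ together with (\ref{GW_level-rank}) exchanges $n\leftrightarrow k$, $\tilde{e}_{r}\leftrightarrow\tilde{h}_{r}$ and $t\leftrightarrow-T$, turning (\ref{M2}) for $\limfunc{Gr}_{n,N}$ into (\ref{M1}).

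I expect the main obstacle to be the index bookkeeping in the third step: one must verify that extracting the coefficient of a factorial power from the product really produces the factorial function $\tilde{h}_{m}$ with its correct $\tau$-shift, rather than an ordinary $h_{m}$ or a mis-shifted variant. Controlling these shifts through the identities $(u|t)^{a+b}=(u|t)^{a}(u|\tau^{a}t)^{b}$ and the factorial Jacobi--Trudi formula is the delicate computation, and it is precisely what distinguishes Mihalcea's factorial presentation from Laksov's.
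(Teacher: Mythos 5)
Your proposal is correct in substance but takes a genuinely different route from the paper: the paper's entire proof of this corollary is a citation, namely, having established the Laksov presentations (\ref{L1}), (\ref{L2}) from the Bethe ansatz, it invokes Laksov's result \cite{Laksov} that those presentations are equivalent to Mihalcea's, and stops. You instead re-derive that equivalence inside the paper's own formalism: you recognise $\tilde{h}_{r}=h_{r}(x|t)$ and $\tilde{e}_{r}=e_{r}(x|t)$ via (\ref{facs2det}), match them with the transfer-matrix coefficients through (\ref{facHspec}), (\ref{facEspec}) and the isomorphism $\Phi$, obtain generation from the unitriangular transformations (\ref{E2facE}), (\ref{facE2E}), extract the ideal from the factorial form of (\ref{QQ}), and close with the dimension count and level-rank duality the paper uses for its other presentations. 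Your route buys self-containedness --- in particular the paper's subsequent corollary $\tilde{H}_{j,n}\mapsto \tilde{h}_{j}$, $\tilde{E}_{i,n}\mapsto \tilde{e}_{i}$ becomes immediate --- while the paper's buys brevity. Two refinements would tighten your argument. First, the delicate step you flag has a shortcut that bypasses the factorial-power bookkeeping entirely: the single-row case of (\ref{facs2s}) gives, for $r\geq k+1$ (so that the alphabet $(a_{1},\ldots ,a_{n+r-1})$ appearing there contains all of $t_{1},\ldots ,t_{N}$ and otherwise zeros),
\begin{equation*}
\tilde{h}_{r}=h_{r}(x|t)=\sum_{s=0}^{r}(-1)^{s}e_{s}(t_{1},\ldots ,t_{N})\,h_{r-s}(x)\;,
\end{equation*}
so after the unitriangular identification of the $\tilde{e}_{i}$'s with the ordinary $e_{i}(x)$'s, Mihalcea's ideal generators are \emph{equal} to Laksov's generators in (\ref{L2}), i.e.\ to the expressions in (\ref{M2'}); this identity is precisely the content of the equivalence that the paper outsources to \cite{Laksov}. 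Second, your closing claim that the relations ``cut out precisely the $\binom{N}{n}$ Bethe solutions, so the quotient is semisimple of dimension $\binom{N}{n}$'' tacitly assumes the cut-out scheme is reduced; the standard repair is the upper bound $\dim_{\mathbb{F}_{q}}\leq \binom{N}{n}$ from the associated graded quotient (whose relations are the ordinary $h_{j}$'s, giving classical $H^{\ast }(\limfunc{Gr}_{n,N})$), combined with your surjection onto the $\binom{N}{n}$-dimensional algebra $QH_{T}^{\ast }(\limfunc{Gr}_{n,N})\otimes \mathbb{F}_{q}$. The paper glosses the same point in its proofs of the Givental--Kim and Laksov presentations, so this is a patch rather than a flaw peculiar to your approach.
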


Thus, we arrive at the following identification of the coefficients (\ref%
{facH}), (\ref{facE}) of the transfer matrices with respect to factorial
powers.

\begin{corollary}
The maps $\mathbb{P}_{n}\rightarrow QH_{T}^{\ast }(\limfunc{Gr}_{n,N})$
defined by $\tilde{H}_{j,n}\mapsto \tilde{h}_{j}$ and $\tilde{E}%
_{i,n}\mapsto \tilde{e}_{i}$ with respect to the presentations (\ref{M1}), (%
\ref{M2}) are both algebra isomorphisms. In particular, $\tilde{H}_{1,n}$
can be identified with the equivariant first Chern class of the $n$th
exterior power of the tautological $n$-plane bundle of the Grassmannian.
\end{corollary}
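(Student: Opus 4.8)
The plan is to reduce the statement to the already-established Givental--Kim identification of $\mathbb{P}_{n}$ with $QH_{T}^{\ast }(\limfunc{Gr}_{n,N})$ together with the determinant formulae (\ref{det_formulae}). First I would observe that the factorial coefficients $\tilde{H}_{j,n}$ and $\tilde{E}_{i,n}$ are nothing but the noncommutative factorial Schur operators (\ref{facS}) attached to a single row and a single column. Specialising the first identity in (\ref{det_formulae}) to $\lambda =(j)$, the $n\times n$ matrix is upper unitriangular below its first entry, so the determinant collapses to its $(1,1)$ cofactor and gives $\tilde{S}_{(j)}^{(n)}=\tilde{H}_{j,n}$; specialising the second identity to $\lambda =(1^{i})$ (whose conjugate is $(i)$) gives $\tilde{S}_{(1^{i})}^{(n)}=\tilde{E}_{i,n}$. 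Hence $\tilde{H}_{j,n}$ and $\tilde{E}_{i,n}$ already lie among the operators $\{\tilde{S}_{\lambda }\}$, and by Lemma \ref{lem:facS}(ii) (equivalently by (\ref{PieriId})) they act as $\tilde{H}_{j,n}|\emptyset \rangle =|(j)\rangle$ and $\tilde{E}_{i,n}|\emptyset \rangle =|1^{i}\rangle$.

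Next I would transport these generators through the algebra isomorphism $\Phi $ of the preceding theorem, $|\lambda \rangle \mapsto s_{\lambda }(x|t)$, under which $\mathcal{V}_{n}^{\mathbb{F}}$ becomes the Jacobi algebra $\mathfrak{J}\cong QH_{T}^{\ast }(\limfunc{Gr}_{n,N})\otimes \mathbb{F}_{q}$ and the renormalised Bethe vectors become the idempotents (\ref{idempotent}). Since $\mathbb{P}_{n}$ is commutative and acts on the cyclic module $\mathcal{V}_{n}$ generated by $|\emptyset \rangle $ (with $\Phi (|\emptyset \rangle )=s_{\emptyset }=1$), the first step shows $\Phi $ carries $\tilde{H}_{j,n}$ to $s_{(j)}(x|t)=h_{j}(x|t)$ and $\tilde{E}_{i,n}$ to $s_{(1^{i})}(x|t)=e_{i}(x|t)$. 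By the Jacobi--Trudi and N\"{a}gelsbach--Kostka identities (\ref{facs2det}) these are precisely Mihalcea's generators $\tilde{h}_{j}$ and $\tilde{e}_{i}$ of (\ref{M1}), (\ref{M2}). Equivalently, and more robustly, one may check the match on the common eigenbasis of idempotents: by (\ref{facHspec}), (\ref{facEspec}) the operators $\tilde{H}_{j,n},\tilde{E}_{i,n}$ act on a Bethe vector $|y_{\alpha }\rangle $ with eigenvalues $h_{j}(y_{\alpha }|t),e_{i}(y_{\alpha }|t)$, which are exactly the values of $\tilde{h}_{j},\tilde{e}_{i}$ at the corresponding critical point, so the two coincide as elements of $\mathfrak{J}$.

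With the generators matched I would conclude as follows. The families $\{\tilde{H}_{r,n}\}_{r=1}^{k}$ and $\{H_{r}|_{\mathcal{V}_{n}}\}$ generate the same subalgebra $\mathbb{P}_{n}$, since (\ref{H2facH}), (\ref{facH2H}) display an invertible (unitriangular) change of basis between them, and similarly for $\tilde{E}$ via (\ref{E2facE}), (\ref{facE2E}); thus $\tilde{H}_{j,n}\mapsto \tilde{h}_{j}$ and $\tilde{E}_{i,n}\mapsto \tilde{e}_{i}$ are defined on all of $\mathbb{P}_{n}$ and agree with the isomorphism $\varphi :\mathbb{P}_{n}\to QH_{T}^{\ast }(\limfunc{Gr}_{n,N})$ of the previous corollary (which sends $\tilde{S}_{\lambda }$ to $\sigma _{\lambda }$) read in Mihalcea's presentation. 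As the defining relations of (\ref{M1}), (\ref{M2}) are exactly the functional identity (\ref{QQ}), i.e. the form (\ref{M2'}) already established on each $\mathcal{V}_{n}$, both maps are algebra isomorphisms. I expect the only real obstacle to be the shift-operator bookkeeping between the $\tau $'s in (\ref{det_formulae}) and those in the definitions of $\tilde{h}_{r},\tilde{e}_{r}$; this is routine but must be tracked, and it is guaranteed to line up because both sets of quantities are read off the same factorial-power expansions (\ref{facH}), (\ref{facE}).

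Finally, for the concluding geometric assertion I would use that $\tilde{H}_{1,n}=\tilde{S}_{(1)}$ maps under $\varphi $ to the degree-one special Schubert class $\sigma _{(1)}$. Reading the equivariant quantum Pieri--Chevalley rule (\ref{qPieri}) at the classical equivariant level $q=0$, multiplication by $\tilde{H}_{1,n}$ adds a box together with the equivariant weight $\sum_{i=1}^{n}T_{k+i-\lambda _{i}}-\sum_{j=k+1}^{N}T_{j}$, which is exactly the fixed-point restriction of the equivariant first Chern class of the determinant line bundle $\wedge ^{n}S$ of the tautological $n$-plane bundle $S$. Hence $\tilde{H}_{1,n}$ is identified with $c_{1}^{T}(\wedge ^{n}S)$, as claimed.
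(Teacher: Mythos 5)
Your proposal is correct, and it rests on the same machinery the paper uses (the Bethe eigenbasis, the functional relation (\ref{QQ}) in the form (\ref{M2'}), and the Jacobi-algebra isomorphism $\Phi$), but it takes a genuinely more self-contained route at the decisive step. The paper obtains this corollary by composition: the isomorphism $\varphi$ built from (\ref{QQ}), then Laksov's presentations (\ref{L1}), (\ref{L2}) derived from (\ref{M2'}), and finally a \emph{citation} of Laksov's result that these are equivalent to Mihalcea's presentations (\ref{M1}), (\ref{M2}); the identification $\tilde{H}_{j,n}\mapsto \tilde{h}_{j}$, $\tilde{E}_{i,n}\mapsto \tilde{e}_{i}$ is then implicit in matching the two changes of generators. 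You instead verify the generator correspondence directly inside the paper's formalism: specialising (\ref{det_formulae}) to $\lambda =(j)$ and $\lambda =(1^{i})$ — where the determinant indeed collapses, since the first column below the top entry vanishes and the complementary minor is upper triangular with $\tau ^{m}\tilde{H}_{0,n}=1$ on the diagonal — gives $\tilde{H}_{j,n}=\tilde{S}_{(j)}$ and $\tilde{E}_{i,n}=\tilde{S}_{(1^{i})}$; transporting through $\Phi $ and using (\ref{facs2det}) identifies their images with $h_{j}(x|t)$ and $e_{i}(x|t)$, which are literally Mihalcea's $\tilde{h}_{j}$, $\tilde{e}_{i}$; and the eigenvalue check via (\ref{facHspec}), (\ref{facEspec}) confirms the match on the idempotent basis. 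What this buys is independence from Laksov's equivalence theorem, at the cost of redoing bookkeeping the paper delegates to a citation; your descent from $\mathbb{F}_{q}$ back to $\Lambda \lbrack q]$ via the unitriangular changes of basis (\ref{H2facH}), (\ref{facH2H}), (\ref{E2facE}), (\ref{facE2E}) and agreement with $\varphi $ is also sound. One small caveat on the final assertion: the diagonal Chevalley coefficient in (\ref{qPieri}) is indeed the fixed-point restriction of $\sigma _{(1)}$, but it agrees with the restriction of $c_{1}^{T}(\wedge ^{n}S)$ only up to a sign convention and the constant $\sum_{j=k+1}^{N}T_{j}$ reflecting the choice of linearisation of the line bundle; this looseness is already present in the paper's own statement, so your argument is at the same level of precision, but the phrase \textquotedblleft up to normalisation\textquotedblright\ should be made explicit.
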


\subsection{The nil-Coxeter algebra and the quantum product}

We first focus on the simplest case $\tilde{H}_{1}=H_{1}+t_{n+1}+\cdots
+t_{N}$.

\begin{corollary}
We have the following modified Leibniz rule for the first Chern class,%
\begin{equation}
\delta _{j}^{\vee }\tilde{H}_{1}=(s_{j}\tilde{H}_{1})\delta _{j}^{\vee
}+(\partial _{j}\tilde{H}_{1})+(\delta _{jn}-\delta _{jN})\hat{r}_{j}
\end{equation}%
where the $\delta $'s in the last term denote the Kronecker delta. So, in
particular we obtain for the equivariant quantum Pieri formula,%
\begin{equation*}
\delta _{j}^{\vee }(\square \circledast \mu )=s_{j}(\square \circledast
\delta _{j}^{\vee }\mu )+\partial _{j}(\square \circledast \mu )+(\delta
_{jn}-\delta _{jN})\hat{r}_{j}\mu \;.
\end{equation*}
\end{corollary}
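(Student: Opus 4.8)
The plan is to reduce everything to the Leibniz rule (\ref{Leibniz}) already established in the Corollary after (\ref{dellHE}), together with the explicit form $\hat{r}_{j}=1-(t_{j}-t_{j+1})\delta_{j}^{\vee}$ recorded in the proof of Proposition \ref{prop:symmgroupaction}. First I would write $\tilde{H}_{1}=H_{1}+c$, where $c=t_{n+1}+\cdots +t_{N}$ is the scalar shift appearing in the proof of the equivariant quantum Pieri--Chevalley rule (\ref{qPieri}); on $\mathcal{V}_{n}$ the element $c$ acts purely by multiplication in the polynomial factor $\Lambda$. Since the Leibniz rule $\delta_{j}^{\vee}H=(s_{j}H)\delta_{j}^{\vee}+(\partial_{j}H)$ holds for the generating function $H=H(x)$ for all $j=1,\ldots,N$, and none of $\delta_{j}^{\vee},s_{j},\partial_{j}$ touches $x$, comparison of the coefficient of $x^{1}$ gives $\delta_{j}^{\vee}H_{1}=(s_{j}H_{1})\delta_{j}^{\vee}+(\partial_{j}H_{1})$ for every $j$. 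Thus the $H_{1}$-part already produces the first two terms on the right-hand side, and the whole problem collapses to controlling the scalar correction $\delta_{j}^{\vee}c$.

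The key point is that $\delta_{j}^{\vee}=\sigma_{j}^{+}\sigma_{j+1}^{-}$ acts only on the tensor factor $V^{\otimes N}$, hence commutes with multiplication by any polynomial in the $t$'s; in particular $\delta_{j}^{\vee}c=c\,\delta_{j}^{\vee}$. Combining this with the formula for $\delta_{j}^{\vee}H_{1}$ and subtracting the candidate right-hand side $(s_{j}\tilde{H}_{1})\delta_{j}^{\vee}+(\partial_{j}\tilde{H}_{1})$, the desired identity reduces to the purely scalar statement
\begin{equation*}
(c-s_{j}c)\,\delta_{j}^{\vee}-\partial_{j}c=(\delta_{jn}-\delta_{jN})\,\hat{r}_{j},\qquad j=1,\ldots,N.
\end{equation*}
Since $c$ is the sum over the contiguous block of indices $\{n+1,\ldots,N\}$ (read modulo $N$), both $c-s_{j}c$ and $\partial_{j}c$ vanish unless $j$ is a boundary index, i.e. $j=n$ or $j=N$, so the verification is a short case analysis.

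For $j\neq n,N$ both sides vanish. For $j=n$ one computes $\partial_{n}c=-1$ and $c-s_{n}c=t_{n+1}-t_{n}=-(t_{n}-t_{n+1})$, so the left-hand side equals $1-(t_{n}-t_{n+1})\delta_{n}^{\vee}=\hat{r}_{n}$, matching $(\delta_{nn}-\delta_{nN})\hat{r}_{n}=\hat{r}_{n}$. For $j=N$ one has $\partial_{N}c=1$ and $c-s_{N}c=t_{N}-t_{1}$, so using $t_{N+1}=t_{1}$ the left-hand side becomes $(t_{N}-t_{1})\delta_{N}^{\vee}-1=-\hat{r}_{N}$, matching $(\delta_{Nn}-\delta_{NN})\hat{r}_{N}=-\hat{r}_{N}$. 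This establishes the operator identity.

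The ``in particular'' statement then follows by evaluating both sides on the basis vector $|\mu\rangle$ and using that $\square\circledast\mu=\tilde{S}_{(1)}|\mu\rangle=\tilde{H}_{1}|\mu\rangle$. Because $|\mu\rangle$ and $\delta_{j}^{\vee}|\mu\rangle=|\delta_{j}^{\vee}\mu\rangle$ carry no $t$-dependence, the operator $(s_{j}\tilde{H}_{1})\delta_{j}^{\vee}$ applied to $|\mu\rangle$ equals $s_{j}(\tilde{H}_{1}|\delta_{j}^{\vee}\mu\rangle)=s_{j}(\square\circledast\delta_{j}^{\vee}\mu)$, and $(\partial_{j}\tilde{H}_{1})|\mu\rangle=\partial_{j}(\tilde{H}_{1}|\mu\rangle)=\partial_{j}(\square\circledast\mu)$, yielding the stated Pieri formula. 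I expect the only real obstacle to be the modular bookkeeping at the two boundary indices $j=n$ and $j=N$ and the attendant signs; once $\hat{r}_{j}=1-(t_{j}-t_{j+1})\delta_{j}^{\vee}$ is invoked, every remaining step is forced by the already-proven Leibniz rule.
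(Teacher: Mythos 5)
Your proof is correct and follows exactly the route the paper intends: the corollary is stated immediately after the decomposition $\tilde{H}_{1}=H_{1}+t_{n+1}+\cdots +t_{N}$ precisely so that it follows from the Leibniz rule (\ref{Leibniz}) for $H$ (valid for all $j=1,\ldots ,N$) plus the boundary case analysis at $j=n$ and $j=N$, which is what you carry out, including the correct signs $\hat{r}_{n}$ and $-\hat{r}_{N}$ via $\hat{r}_{j}=1-(t_{j}-t_{j+1})\delta _{j}^{\vee }$. Your deduction of the Pieri formula from the operator identity, using that $|\mu \rangle $ is $t$-independent so that $\partial _{j}|\mu \rangle =0$ and $(s_{j}\tilde{H}_{1})|\delta _{j}^{\vee }\mu \rangle =s_{j}(\tilde{H}_{1}|\delta _{j}^{\vee }\mu \rangle )$, is likewise the intended argument.
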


\begin{remark}
In contrast, Peterson states the following relation between quantum product
and the action of the affine nil-Coxeter algebra for any two Schubert
classes $\sigma ,\sigma ^{\prime }$ \cite[Prop 14.4]{Peterson}%
\begin{equation*}
(\partial _{j}\sigma )\ast \sigma ^{\prime }=\partial _{j}\left( \sigma \ast
(s_{j}\sigma ^{\prime })\right) +\sigma \ast (\partial _{j}\sigma ^{\prime })
\end{equation*}
\end{remark}

\begin{proposition}
Fix $\lambda \in \Pi _{n,k}$. If there exists a $1\leq j\leq $ $N$ such that
the coefficients in (\ref{facS}) are invariant under exchanging $%
t_{j},t_{j+1},$ then we have the following Leibniz rule for quantum
multiplication,%
\begin{equation}
\delta _{j}^{\vee }(\lambda \circledast \mu )=s_{j}(\lambda \circledast
\delta _{j}^{\vee }\mu )+\partial _{j}(\lambda \circledast \mu ),
\label{quantumLeibniz}
\end{equation}%
which directly relates Peterson's nil-Coxeter algebra action on Schubert
classes to the product in $QH_{T}^{\ast }(\limfunc{Gr}_{n,N})$.
\end{proposition}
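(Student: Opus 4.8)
The plan is to reduce the identity (\ref{quantumLeibniz}) for the $\circledast $-product to a single \emph{clean} Leibniz rule for the operator $\tilde{S}_{\lambda }$ of (\ref{facS}),
\begin{equation*}
\delta _{j}^{\vee }\tilde{S}_{\lambda }=(s_{j}\tilde{S}_{\lambda })\delta _{j}^{\vee }+(\partial _{j}\tilde{S}_{\lambda }),
\end{equation*}
\emph{without} the correction term $(\delta _{jn}-\delta _{jN})\hat{r}_{j}$ that appears for $\tilde{H}_{1}$ in the preceding corollary. Granting this, (\ref{quantumLeibniz}) is immediate from the definition (\ref{comb_product}): since $|\mu \rangle $ and $\delta _{j}^{\vee }|\mu \rangle $ are basis vectors of $V^{\otimes N}$ (see (\ref{delta})) carrying no dependence on the equivariant parameters, the operators $s_{j}$ and $\partial _{j}$, which act only on the polynomial factor $\Lambda $, commute past them. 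Hence $s_{j}(\lambda \circledast \delta _{j}^{\vee }\mu )=(s_{j}\tilde{S}_{\lambda })\delta _{j}^{\vee }|\mu \rangle $ and $\partial _{j}(\lambda \circledast \mu )=(\partial _{j}\tilde{S}_{\lambda })|\mu \rangle $, and summing the two and applying the clean Leibniz rule reproduces $\delta _{j}^{\vee }(\lambda \circledast \mu )=\delta _{j}^{\vee }\tilde{S}_{\lambda }|\mu \rangle $.

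First I would establish the clean Leibniz rule on the building blocks. By the corollary containing (\ref{Leibniz}) and (\ref{dellHE}), the transfer matrix $E$, being $\limfunc{Tr}_{V}M^{\prime }$, satisfies $\delta _{j}^{\vee }E=(s_{j}E)\delta _{j}^{\vee }+(\partial _{j}E)$ for \emph{all} $j=1,\ldots ,N$, the affine index $j=N$ being admissible precisely because the auxiliary trace has already been taken; reading off powers of the spectral variable gives the same rule for each coefficient $E_{r}$. This clean rule is preserved under the operations used to assemble $\tilde{S}_{\lambda }$. It is linear in the operator, and for a product a short computation gives $\delta _{j}^{\vee }(AB)=(s_{j}(AB))\delta _{j}^{\vee }+(s_{j}A)(\partial _{j}B)+(\partial _{j}A)B$, so the ordinary Leibniz rule (\ref{LeibnizBGG}) for $\partial _{j}$ collapses the last two terms to $\partial _{j}(AB)$. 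Consequently the N\"{a}gelsbach--Kostka determinant $S_{\mu }$ of (\ref{S}), a polynomial in the $E_{r}$, also obeys the clean Leibniz rule for every $j$.

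The hypothesis enters at the final assembly. Writing $\tilde{S}_{\lambda }=\sum_{\mu \subseteq \lambda }c_{\mu }^{\lambda }(t)\,S_{\mu }$ as in (\ref{facS}), multiplication by a scalar $c_{\mu }^{\lambda }(t)$ acts on $\Lambda $ and hence commutes with $\delta _{j}^{\vee }$. If $c_{\mu }^{\lambda }$ is invariant under $t_{j}\leftrightarrow t_{j+1}$, then $\partial _{j}c_{\mu }^{\lambda }=0$, so $s_{j}(c_{\mu }^{\lambda }S_{\mu })=c_{\mu }^{\lambda }(s_{j}S_{\mu })$ and $\partial _{j}(c_{\mu }^{\lambda }S_{\mu })=c_{\mu }^{\lambda }(\partial _{j}S_{\mu })$, and the clean Leibniz rule for $S_{\mu }$ descends to $c_{\mu }^{\lambda }S_{\mu }$. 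Summing over $\mu $ and using the assumed $s_{j}$-invariance of every coefficient in (\ref{facS}) yields the clean Leibniz rule for $\tilde{S}_{\lambda }$, which completes the proof.

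The main obstacle is exactly this last invariance requirement. For a scalar $c$ with $\partial _{j}c\neq 0$ one computes that $\delta _{j}^{\vee }(cS_{\mu })$ differs from its clean value by a term proportional to $\partial _{j}c$; this is the origin of the correction $(\delta _{jn}-\delta _{jN})\hat{r}_{j}$ in the $\tilde{H}_{1}$ case, where the single offending coefficient is the shift $t_{n+1}+\cdots +t_{N}$, which fails to be symmetric in $t_{j},t_{j+1}$ exactly at $j=n$ and $j=N$. Thus the content of the proposition is that imposing $s_{j}$-invariance of all coefficients in (\ref{facS}) is precisely what kills every such correction, so that Peterson's nil-Coxeter action and the $\circledast $-product intertwine through the unmodified rule.
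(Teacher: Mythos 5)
Your proof is correct and follows essentially the same route as the paper's: both arguments reduce (\ref{quantumLeibniz}) to the clean rule $\delta _{j}^{\vee }\tilde{S}_{\lambda }=(s_{j}\tilde{S}_{\lambda })\delta _{j}^{\vee }+(\partial _{j}\tilde{S}_{\lambda })$, establish it first for the building blocks $S_{\mu }$ of (\ref{S}) using the Yang--Baxter structure of $E$, use the hypothesis $\partial _{j}c_{\mu }=0$ to kill exactly the correction term $-\sum_{\mu \subseteq \lambda }(\partial _{j}c_{\mu })(s_{j}S_{\mu })\hat{r}_{j}$ coming from the coefficients in (\ref{facS}), and finish by applying the operator identity to the basis vector $|\mu \rangle $ via (\ref{comb_product}). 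The only cosmetic difference is that the paper propagates the equivalent braid form $\hat{r}_{j}S_{\lambda }=(s_{j}S_{\lambda })\hat{r}_{j}$ through the determinant (where closure under products is automatic since the intertwining relation is multiplicative) and converts to divided-difference form only when handling the coefficients, whereas you work in the Leibniz form throughout and verify product-stability by a direct computation; the two formulations are interchangeable via $\hat{r}_{j}=1-(t_{j}-t_{j+1})\delta _{j}^{\vee }$, and your explicit appeal to the corollary containing (\ref{Leibniz}) even treats the affine case $j=N$ more carefully than the paper's citation of (\ref{ybe}) alone.
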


\begin{proof}
Since it follows from (\ref{ybe}) that $\hat{r}_{j}E=(s_{j}E)\hat{r}_{j}$ we
must have $\hat{r}_{j}E_{\mu }=(s_{j}E_{\mu })\hat{r}_{j}$ with $E_{\mu
}=E_{\mu _{1}}\cdots E_{\mu _{k}}$ for all $\mu _{i}=1,\ldots ,n$. Hence,
exploiting the definition (\ref{S}) it follows that $\hat{r}_{j}S_{\lambda
}=(s_{j}S_{\lambda })\hat{r}_{j}$. This implies via (\ref{facS}) 
\begin{eqnarray}
\delta _{j}^{\vee }\tilde{S}_{\lambda } &=&(s_{j}\tilde{S}_{\lambda })\delta
_{j}^{\vee }+\sum_{\mu \subseteq \lambda }(c_{\mu }-(s_{j}c_{\mu
}))(s_{j}S_{\mu })\delta _{j}^{\vee }+\sum_{\mu \subseteq \lambda }c_{\mu
}(\partial _{j}S_{\mu })  \notag \\
&=&(s_{j}\tilde{S}_{\lambda })\delta _{j}^{\vee }+(\partial _{j}\tilde{S}%
_{\lambda })+\sum_{\mu \subseteq \lambda }(c_{\mu }-(s_{j}c_{\mu
}))(s_{j}S_{\mu })\delta _{j}^{\vee }-\sum_{\mu \subseteq \lambda }(\partial
_{j}c_{\mu })(s_{j}S_{\mu })  \notag \\
&=&(s_{j}\tilde{S}_{\lambda })\delta _{j}^{\vee }+(\partial _{j}\tilde{S}%
_{\lambda })-\sum_{\mu \subseteq \lambda }(\partial _{j}c_{\mu
})(s_{j}S_{\mu })\hat{r}_{j}
\end{eqnarray}%
where $c_{\mu }$ are the coefficients in (\ref{facS}) and we used once more
that $\hat{r}_{j}=1-(t_{j}-t_{j+1})\delta _{j}^{\vee }$. Under the stated
assumptions we have that $\partial _{j}c_{\mu }=0$ and the relation
simplifies to%
\begin{equation*}
\delta _{j}^{\vee }\tilde{S}_{\lambda }=(s_{j}\tilde{S}_{\lambda })\delta
_{j}^{\vee }+(\partial _{j}\tilde{S}_{\lambda })\;.
\end{equation*}%
Applying both sides of this identity to a basis vector $|\mu \rangle $ and
using the definition (\ref{comb_product}), the assertion follows.
\end{proof}

\begin{example}
Consider once more $QH_{T}^{\ast }(\limfunc{Gr}_{2,4})$ and set $\lambda
=(2,1)$ and $\mu =(2,2)$. Employing $s_{j}\partial _{j}=\partial _{j}$ we
can rewrite (\ref{quantumLeibniz}) as%
\begin{equation*}
(\lambda \circledast \delta _{j}^{\vee }\mu )=s_{j}\delta _{j}^{\vee
}(\lambda \circledast \mu )-\partial _{j}(\lambda \circledast \mu )
\end{equation*}%
which allows us to compute the product $\lambda \circledast \delta
_{j}^{\vee }\mu $ in terms of $\lambda \circledast \mu $ by using the
actions (\ref{DBGG}) and (\ref{delta}) of the nil-Coxeter algebra. For the
case at hand, we have $\delta _{2}^{\vee }\mu =(2,1)$, so we can calculate
the product of the Schubert classes $(2,1)\circledast (2,1)$ in terms of the
product expansion 
\begin{equation*}
(2,1)\circledast
(2,2)=T_{14}T_{13}T_{24}~(2,2)+q~(2,1)+qT_{13}~(2,0)+qT_{24}~(1,1)+qT_{13}T_{24}~(1,0)\ ,
\end{equation*}%
where $T_{ij}:=T_{i}-T_{j}$. The latter can be obtained by alternative means
such as the known recursion relations for Gromov-Witten invariants.
Converting each partition occurring in the product expansion into the
associated 01-word we now easily find%
\begin{equation*}
s_{2}\delta _{2}^{\vee }(\lambda \circledast \mu
)=T_{14}T_{12}T_{34}~(2,1)+qT_{12}T_{34}~(0,0)
\end{equation*}%
all other terms vanish. Noting that $-\partial _{2}=T_{23}^{-1}(1-s_{2})$ we
have in addition the terms%
\begin{eqnarray*}
-\partial _{2}(\lambda \circledast \mu ) &=&T_{14}\tfrac{%
T_{13}T_{24}-T_{12}T_{34}}{T_{23}}~(2,2)+q~(2,0)+q~(1,1)+q\tfrac{%
T_{13}T_{24}-T_{12}T_{34}}{T_{23}}~(1,0) \\
&=&T_{14}^{2}~(2,2)+q~(2,0)+q~(1,1)+qT_{14}~(1,0)\;.
\end{eqnarray*}%
Therefore, after collecting terms we find the product expansion%
\begin{equation*}
(2,1)\circledast
(2,1)=T_{14}^{2}~(2,2)+T_{14}T_{12}T_{34}~(2,1)+q~(2,0)+q~(1,1)+qT_{14}~(1,0)+qT_{12}T_{34}~(0,0)
\end{equation*}%
which can be verified via the recursion relations.
\end{example}

\subsection{Frobenius structures \& partition functions}

We are now ready to prove that the partition functions of the vicious and
osculating walker models are generating functions for equivariant
Gromov-Witten invariants. In fact, the partition functions have a natural
interpretation when looking at the quantum cohomology ring as a Frobenius
algebra.

\begin{proposition}
The Jacobi algebra $\mathfrak{J}\cong QH_{T}^{\ast }(\limfunc{Gr}%
_{n,N})\otimes \mathbb{F}$ with bilinear form%
\begin{equation}
\langle s_{\lambda }|s_{\mu }\rangle =\sum_{\alpha \in (n,k)}\frac{%
s_{\lambda }(y_{\alpha }|t)s_{\mu }(y_{\alpha }|t)}{\mathfrak{e}(y_{\alpha })%
},\qquad y_{\alpha }=\{y_{i}\}_{i\in I(\alpha )}  \label{bilinear_form}
\end{equation}%
is a commutative Frobenius algebra.
\end{proposition}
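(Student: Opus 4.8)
The plan is to diagonalise everything in the basis of on-shell Bethe vectors, where the product $\circledast$ is already known to act diagonally, and then to exhibit the form (\ref{bilinear_form}) in the shape $\langle a\mid b\rangle=\epsilon(a\circledast b)$ for a suitable trace functional $\epsilon$; once this is done the three Frobenius axioms become near-tautologies. First I would transport the form across the isomorphism $\Phi$ of the preceding theorem, so that it may be read on $\mathcal{V}_n^{\mathbb{F}}$ equipped with the product $\circledast$ of (\ref{comb_product}); commutativity of this product has already been established. Set $Y_\alpha=\mathfrak{e}(y_\alpha)^{-1}|y_\alpha\rangle$. From the corollary identifying the Bethe vectors as idempotents, $|y_\alpha\rangle\circledast|y_\beta\rangle=\delta_{\alpha\beta}\mathfrak{e}(y_\alpha)|y_\alpha\rangle$, these satisfy $Y_\alpha\circledast Y_\beta=\delta_{\alpha\beta}Y_\alpha$, and by (\ref{basischange}) the Schubert basis expands as $|\lambda\rangle=\sum_\alpha s_\lambda(y_\alpha|t)\,Y_\alpha$. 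Since $\tilde{S}_\lambda$ acts on $Y_\alpha$ with eigenvalue $s_\lambda(y_\alpha|t)$ by Lemma \ref{lem:facS}(i), the product is diagonal in this basis, namely $|\lambda\rangle\circledast|\mu\rangle=\sum_\alpha s_\lambda(y_\alpha|t)s_\mu(y_\alpha|t)\,Y_\alpha$.

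Next I would introduce the linear functional $\epsilon:\mathcal{V}_n^{\mathbb{F}}\to\mathbb{F}$ defined by $\epsilon(Y_\alpha)=\mathfrak{e}(y_\alpha)^{-1}$ and check, directly from the diagonal product above, that $\epsilon(|\lambda\rangle\circledast|\mu\rangle)=\sum_\alpha s_\lambda(y_\alpha|t)s_\mu(y_\alpha|t)\,\mathfrak{e}(y_\alpha)^{-1}$, which is precisely the right-hand side of (\ref{bilinear_form}). With the form thus written as $\langle a\mid b\rangle=\epsilon(a\circledast b)$, symmetry is immediate from commutativity of $\circledast$, and the defining Frobenius (associativity) identity follows at once from associativity of $\circledast$, since $\langle a\circledast b\mid c\rangle=\epsilon(a\circledast b\circledast c)=\langle a\mid b\circledast c\rangle$. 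Equivalently, one checks that in the idempotent basis the Gram matrix is diagonal, $\langle Y_\alpha\mid Y_\beta\rangle=\delta_{\alpha\beta}\mathfrak{e}(y_\alpha)^{-1}$, and that $\langle a\circledast b\mid c\rangle$ is the totally symmetric triple sum $\sum_\alpha s_\lambda(y_\alpha|t)s_\mu(y_\alpha|t)s_\nu(y_\alpha|t)\,\mathfrak{e}(y_\alpha)^{-1}$, which is visibly invariant under permuting $\lambda,\mu,\nu$ and hence yields the Frobenius property.

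It remains to verify nondegeneracy. Because the Gram matrix is diagonal in the idempotent basis, it suffices that every diagonal entry $\mathfrak{e}(y_\alpha)^{-1}$ be nonzero. This is the one step requiring genuine input rather than bookkeeping: by (\ref{BetheEuler}) we have $\mathfrak{e}(y_\alpha)=\prod_{i\in I(\alpha),\,j\in I(\alpha^{*})}(y_i-y_j)$, a product of differences of distinct Bethe roots, and the roots $y_1,\dots,y_N$ are pairwise distinct because the transfer-matrix eigenvalues separate points, as used in the spectral-decomposition theorem. Hence $\mathfrak{e}(y_\alpha)\neq0$, the form is nondegenerate, and $(\mathfrak{J},\circledast,\langle\cdot\mid\cdot\rangle)$ is a commutative Frobenius algebra.

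The expected main obstacle is therefore not any hard computation but precisely this nondegeneracy input, i.e. ensuring $\mathfrak{e}(y_\alpha)\neq0$ over $\mathbb{F}$ (respectively $\mathbb{F}_q$); everything else collapses the moment the idempotent decomposition of the Bethe-ansatz section is invoked, which is why I would organise the whole argument around passing to that basis first.
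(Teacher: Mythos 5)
Your proof is correct, and it runs on the same engine as the paper's, namely the diagonalisation of $\circledast$ in the basis of on-shell Bethe vectors; but the organisation is genuinely different and buys you something. The paper verifies the Frobenius identity by direct computation: it expands $s_{\mu}s_{\nu}$ through the structure constants of (\ref{residue_formula}), then collapses the resulting double sum over Bethe roots using the orthogonality of left and right Bethe eigenvectors, $\sum_{\rho}s_{\rho^{\vee}}(y_{\beta}|T)s_{\rho}(y_{\alpha}|t)=\delta_{\alpha \beta}\mathfrak{e}(y_{\alpha})$, arriving at the totally symmetric triple sum $\sum_{\alpha}s_{\lambda}(y_{\alpha}|t)s_{\mu}(y_{\alpha}|t)s_{\nu}(y_{\alpha}|t)/\mathfrak{e}(y_{\alpha})$ and reading off invariance under permutations of $(\lambda ,\mu ,\nu )$. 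You instead introduce the counit $\epsilon (Y_{\alpha})=\mathfrak{e}(y_{\alpha})^{-1}$ and exhibit the pairing as $\langle a|b\rangle =\epsilon (a\circledast b)$, after which symmetry and the Frobenius identity are formal consequences of the commutativity and associativity of $\circledast$ already proved in the paper; the same triple sum appears, but as a corollary of the trace formulation rather than as the computation itself. More substantively, you supply the one ingredient the paper omits: nondegeneracy is merely asserted there (\textquotedblleft The bilinear form is non-degenerate and obeys\textquotedblright ), whereas you derive it from the diagonal Gram matrix $\langle Y_{\alpha}|Y_{\beta}\rangle =\delta _{\alpha \beta }\mathfrak{e}(y_{\alpha})^{-1}$ together with $\mathfrak{e}(y_{\alpha})\neq 0$. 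Your justification of the latter is legitimate given what the paper itself asserts: by (\ref{BetheEuler}) the factor $\mathfrak{e}(y_{\alpha})$ is a product of differences $y_{i}-y_{j}$ with $i\in I(\alpha )$, $j\in I(\alpha ^{\ast })$, and if two such roots coincided then the two Bethe vectors obtained by exchanging $i$ and $j$ would share all transfer-matrix eigenvalues, contradicting the separation of points invoked in the spectral decomposition theorem (equivalently, distinctness follows from the nonvanishing of the discriminant of $\prod_{j}(y-t_{j})+(-1)^{n}q$, which is manifest at $q=0$ and persists over $\mathbb{F}_{q}$). So your argument is not only valid but marginally more complete than the one printed in the paper.
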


\begin{proof}
The bilinear form is non-degenerate and obeys%
\begin{eqnarray*}
\langle s_{\lambda }|s_{\mu }s_{\nu }\rangle &=&\sum_{\rho \in (n,k)}C_{\mu
\nu }^{\rho ,d}(T)\sum_{\alpha \in (n,k)}\frac{s_{\lambda }(y_{\alpha
}|t)s_{\rho }(y_{\alpha }|t)}{\mathfrak{e}(y_{\alpha })} \\
&=&\sum_{\alpha ,\beta \in (n,k)}\frac{s_{\lambda }(y_{\alpha }|t)s_{\mu
}(y_{\beta }|t)s_{\nu }(y_{\beta }|t)}{\mathfrak{e}(y_{\beta })\mathfrak{e}%
(y_{\alpha })}\sum_{\rho \in (n,k)}s_{\rho ^{\vee }}(y_{\beta }|T)s_{\rho
}(y_{\alpha }|t) \\
&=&\sum_{\alpha \in (n,k)}\frac{s_{\lambda }(y_{\alpha }|t)s_{\mu
}(y_{\alpha }|t)s_{\nu }(y_{\alpha }|t)}{\mathfrak{e}(y_{\alpha })}
\end{eqnarray*}%
The last result is clearly invariant under permutations of $(\lambda ,\mu
,\nu )$, so we can conclude that $\langle s_{\lambda }|s_{\mu }s_{\nu
}\rangle =\langle s_{\lambda }s_{\mu }|s_{\nu }\rangle $ as required.
\end{proof}

\begin{lemma}
The image of a Schubert class $s_{\lambda }$ under the Frobenius coproduct $%
\Delta _{n,k}:\mathfrak{J}\rightarrow \mathfrak{J}\otimes \mathfrak{J}$ is
given by%
\begin{equation}
\Delta _{n,k}s_{\lambda }=\sum_{\substack{ \mu \in (n,k)  \\ d\geq 0}}%
q^{d}s_{\lambda /d/\mu }\otimes s_{\mu },  \label{Frob_cop}
\end{equation}%
where $s_{\lambda /d/\mu }$ is a generalised factorial skew Schur function,%
\begin{equation}
s_{\lambda /d/\mu }(x|t)=\sum_{\nu \in (n,k)}C_{\mu ^{\vee }\nu ^{\vee
}}^{\lambda ^{\vee },d}(t)s_{\nu }(x|t)\;  \label{equitoricschur}
\end{equation}%
with the coefficients $C_{\mu ^{\vee }\nu ^{\vee }}^{\lambda ^{\vee },d}(t)$
given via (\ref{residue_formula}).
\end{lemma}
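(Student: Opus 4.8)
The plan is to compute the Frobenius coproduct directly in the eigenbasis of idempotents and then match the result against the claimed formula, the matching being an orthogonality computation for factorial Schur functions. First I would record the general fact that a commutative Frobenius algebra with a complete system of orthogonal primitive idempotents has a coproduct which is diagonal on those idempotents. Concretely, the renormalised Bethe vectors $Y_{\alpha}=\mathfrak{e}(y_{\alpha})^{-1}|y_{\alpha}\rangle$ satisfy $Y_{\alpha}\circledast Y_{\beta}=\delta_{\alpha\beta}Y_{\alpha}$, and from the bilinear form \eqref{bilinear_form} together with the expansion \eqref{basischange} one reads off $\langle Y_{\alpha}|Y_{\beta}\rangle=\delta_{\alpha\beta}\,\mathfrak{e}(y_{\alpha})^{-1}$ (the transition matrix $s_{\lambda}(y_{\alpha}|t)$ being invertible since the eigenvalues separate points). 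Because the Frobenius coproduct is uniquely fixed by $\langle \Delta_{n,k}a,\,b\otimes c\rangle=\langle a\,|\,b\,c\rangle$, diagonality forces $\Delta_{n,k}Y_{\alpha}=\mathfrak{e}(y_{\alpha})\,Y_{\alpha}\otimes Y_{\alpha}$. Since $\tilde{S}_{\lambda}$ is diagonalised by the $Y_{\alpha}$ with eigenvalue $s_{\lambda}(y_{\alpha}|t)$ (Lemma \ref{lem:facS}(i)), expanding $s_{\lambda}=\sum_{\alpha}s_{\lambda}(y_{\alpha}|t)Y_{\alpha}$ yields the closed form
\[
\Delta_{n,k}s_{\lambda}=\sum_{\alpha\in(n,k)}\mathfrak{e}(y_{\alpha})\,s_{\lambda}(y_{\alpha}|t)\,Y_{\alpha}\otimes Y_{\alpha}.
\]

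Next I would expand the proposed right-hand side of \eqref{Frob_cop} in the same eigenbasis and verify it reproduces this diagonal expression. Writing $s_{\mu}=\sum_{\beta}s_{\mu}(y_{\beta}|t)Y_{\beta}$ and inserting the definition \eqref{equitoricschur} of the generalised skew Schur function together with $s_{\nu}=\sum_{\alpha}s_{\nu}(y_{\alpha}|t)Y_{\alpha}$, the whole statement reduces to the scalar identity, for all $\alpha,\beta\in(n,k)$,
\[
\sum_{\mu,\nu}\Big(\textstyle\sum_{d}q^{d}C_{\mu^{\vee}\nu^{\vee}}^{\lambda^{\vee},d}(t)\Big)\,s_{\nu}(y_{\alpha}|t)\,s_{\mu}(y_{\beta}|t)=\delta_{\alpha\beta}\,\mathfrak{e}(y_{\alpha})\,s_{\lambda}(y_{\alpha}|t).
\]
The bracketed coefficient is the reversed form of the residue formula: applying $w_{0}$ (which by permutation invariance preserves the solution set $Y_{q}$ while exchanging the roles of $t$ and $T$) to \eqref{residue_formula} gives
\[
\sum_{d}q^{d}C_{\mu^{\vee}\nu^{\vee}}^{\lambda^{\vee},d}(t)=\sum_{\gamma}\frac{s_{\mu^{\vee}}(y_{\gamma}|T)\,s_{\nu^{\vee}}(y_{\gamma}|T)\,s_{\lambda}(y_{\gamma}|t)}{\mathfrak{e}(y_{\gamma})}.
\]
Substituting this and summing first over $\nu$ and then over $\mu$, I would invoke the orthonormality of the Bethe eigenbasis, $\sum_{\lambda}s_{\lambda}(y_{\alpha}|t)\,s_{\lambda^{\vee}}(y_{\gamma}|T)=\mathfrak{e}(y_{\alpha})\,\delta_{\alpha\gamma}$, which is nothing but the pairing $\langle y_{\alpha}|y_{\gamma}\rangle=\delta_{\alpha\gamma}$ underlying \eqref{leftBethev} and \eqref{res of 1}. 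The $\nu$-sum collapses $\gamma$ to $\alpha$ and produces a factor $\mathfrak{e}(y_{\alpha})$; the subsequent $\mu$-sum forces $\beta=\alpha$ and produces $\mathfrak{e}(y_{\beta})$, leaving precisely $\delta_{\alpha\beta}\,\mathfrak{e}(y_{\alpha})\,s_{\lambda}(y_{\alpha}|t)$. This agrees with the diagonal expression above and hence establishes \eqref{Frob_cop}.

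The main obstacle here is bookkeeping rather than conceptual difficulty. One must keep the three dualities straight — Poincar\'e duality ($\lambda\mapsto\lambda^{\vee}$, the involution $\mathcal{P}$), level-rank duality ($\lambda\mapsto\lambda^{\ast}$, the involution $\Theta$), and the reversal $t\leftrightarrow T$ — and in particular justify carefully the reversed form of the residue formula and the placement of the complements $\mu^{\vee},\nu^{\vee}$ and of the $|T$ versus $|t$ arguments. Once the orthonormality relation $\sum_{\lambda}s_{\lambda}(y_{\alpha}|t)s_{\lambda^{\vee}}(y_{\gamma}|T)=\mathfrak{e}(y_{\alpha})\delta_{\alpha\gamma}$ is in hand, the two Schur-function sums telescope immediately and no genuine calculation remains. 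Finally I would note the grading: for fixed $\lambda,\mu,\nu$ only the single degree $d$ selected by the $q$-grading of the ring contributes, so the sum over $d\geq0$ in \eqref{Frob_cop} is finite, consistent with the appearance of $s_{\lambda/d/\mu}$ as a genuine element of $\mathfrak{J}$.
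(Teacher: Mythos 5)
Your proposal is correct and takes essentially the same approach as the paper: both arguments rest on the Frobenius-duality characterisation of $\Delta _{n,k}$, the residue (triple-product) formula and the orthogonality of factorial Schur functions evaluated at the Bethe roots. The only difference is organisational: the paper matches the pairings $\langle s_{\lambda }|s_{\mu }s_{\nu }\rangle =[(\Phi \otimes \Phi )\Delta s_{\lambda }](s_{\mu }\otimes s_{\nu })$ in the Schubert basis and leaves the coefficient extraction implicit, whereas you first make the diagonal form $\Delta _{n,k}Y_{\alpha }=\mathfrak{e}(y_{\alpha })\,Y_{\alpha }\otimes Y_{\alpha }$ explicit and then verify the coefficient identity in the idempotent basis, supplying the orthogonality computation the paper glosses over.
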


\begin{proof}
Let $\Phi :\mathfrak{J}\rightarrow \mathfrak{J}^{\ast }$ denote the
Frobenius isomorphism given by $s_{\lambda }\mapsto \langle s_{\lambda
}|~\cdot ~\rangle $ and $m:\mathfrak{J}\times \mathfrak{J}\rightarrow 
\mathfrak{J}$ the multiplication map. Then%
\begin{equation*}
m^{\ast }\circ \Phi (s_{\lambda })(s_{\mu }\otimes s_{\nu })=\langle
s_{\lambda }|s_{\mu }s_{\nu }\rangle =\sum_{\alpha \in (n,k)}\frac{%
s_{\lambda }(y_{\alpha }|t)s_{\mu }(y_{\alpha }|t)s_{\nu }(y_{\alpha }|t)}{%
\mathfrak{e}(y_{\alpha })}
\end{equation*}%
as we saw earlier. Since this result must match%
\begin{equation*}
\lbrack (\Phi \otimes \Phi )\Delta s_{\lambda }](s_{\mu }\otimes s_{\nu
})=\sum_{\substack{ \rho \in (n,k)  \\ d\geq 0}}q^{d}\langle s_{\lambda
/d/\rho }|s_{\mu }\rangle \langle s_{\rho }|s_{\nu }\rangle
\end{equation*}%
one arrives at the stated definition of $s_{\lambda /d/\rho }$ and the
claimed identity for the coproduct.
\end{proof}

\begin{remark}
For $T_{i}=0$ the function (\ref{equitoricschur}) specialises to Postnikov's
toric Schur function. For $T_{i}\neq 0$ it is a nontrivial equivariant
generalisation.
\end{remark}

\begin{proposition}
The partition functions of the osculating and vicious walker models are
related to the coproduct of $QH_{T}^{\ast }(\limfunc{Gr}_{n,N})$ seen as
Frobenius algebra,%
\begin{eqnarray}
\tilde{Z}_{\lambda ,\mu }(x|t) &=&\sum_{d\geq 0}q^{d}s_{\lambda ^{\vee
}/d/\mu ^{\vee }}(x|T),  \label{Z2toric} \\
\tilde{Z}_{\lambda ,\mu }^{\prime }(x|t) &=&\sum_{d\geq 0}q^{d}s_{\lambda
^{\ast }/d/\mu ^{\ast }}(x|-t),  \label{Z'2toric}
\end{eqnarray}%
where the right hand side of the above identities are the generalised
factorial skew Schur functions defined in (\ref{equitoricschur}).
\end{proposition}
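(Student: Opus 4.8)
The plan is to evaluate the rescaled vicious partition function in the Bethe eigenbasis and match the result, term by term in $s_{\nu}(x|T)$, against the right-hand side via the residue formula (\ref{residue_formula}). First I would write, using (\ref{Z2H}) and the definition $\tilde{H}(x)=x^{k}H(x^{-1})|_{\mathcal{V}_{n}}$ preceding (\ref{combH}),
\begin{equation*}
\tilde{Z}_{\lambda,\mu}(x|t)=\langle\lambda|\tilde{H}(x_{1})\cdots\tilde{H}(x_{n})|\mu\rangle,
\end{equation*}
and insert the resolution of the identity $\boldsymbol{1}=\sum_{\alpha}|y_{\alpha}\rangle\langle y_{\alpha}|$ from the discussion before (\ref{res of 1}). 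Since the Bethe vectors are joint eigenvectors, this factorises each $\tilde{H}(x_{i})$ into its eigenvalue, while the expansions (\ref{BethefacSchur}) and (\ref{leftBethev}) supply $\langle\lambda|y_{\alpha}\rangle=s_{\lambda^{\vee}}(y_{\alpha}|T)$ and $\langle y_{\alpha}|\mu\rangle=s_{\mu}(y_{\alpha}|t)/\mathfrak{e}(y_{\alpha})$, reducing the partition function to a single sum over $\alpha\in(n,k)$.

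The key step — and the one I expect to be the main obstacle — is the evaluation of the transfer-matrix eigenvalue. Substituting $x\mapsto x^{-1}$ in (\ref{specH}), multiplying by $x^{k}$, and using $n+k=N$, one finds
\begin{equation*}
\tilde{H}(x)|y_{\alpha}\rangle=\frac{\prod_{j=1}^{N}(x-t_{j})+(-1)^{n}q}{\prod_{l\in I(\alpha)}(x-y_{l})}\,|y_{\alpha}\rangle.
\end{equation*}
The crucial observation is that, by the Bethe ansatz equations (\ref{BAE}) together with (\ref{QQ}), the numerator is exactly the degree-$N$ polynomial whose roots are the full solution set $\{y_{1},\dots,y_{N}\}$, hence it factors as $\prod_{m=1}^{N}(x-y_{m})$; cancelling the $n$ chosen roots indexed by $I(\alpha)$ leaves the genuine degree-$k$ polynomial $\tilde{h}_{\alpha}(x)=\prod_{j\in I(\alpha^{\ast})}(x-y_{j})$, with $I(\alpha^{\ast})$ the complementary index set of (\ref{BetheEuler}). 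This pole cancellation is the only place where (\ref{BAE}) and (\ref{QQ}) are genuinely needed, and getting the index bookkeeping right — that exactly the complement $I(\alpha^{\ast})$ survives — is the delicate point.

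It then remains to turn $\prod_{i=1}^{n}\tilde{h}_{\alpha}(x_{i})=\prod_{i=1}^{n}\prod_{j\in I(\alpha^{\ast})}(x_{i}-y_{j})$ into a Schur generating series. Writing the dual roots $z_{\alpha}=-y_{\alpha^{\ast}}$ and applying the Cauchy identity (\ref{Cauchy}) in the $T$-parameters, followed by the level-rank identity (\ref{LRdualityFacSchur}), gives
\begin{equation*}
\prod_{i=1}^{n}\prod_{j\in I(\alpha^{\ast})}(x_{i}-y_{j})=\sum_{\nu\in(n,k)}s_{\nu}(x|T)\,s_{\nu^{\vee}}(y_{\alpha}|t).
\end{equation*}
Substituting back and collecting the coefficient of each $s_{\nu}(x|T)$ produces the inner sum
\begin{equation*}
\sum_{\alpha\in(n,k)}\frac{s_{\mu}(y_{\alpha}|t)\,s_{\nu^{\vee}}(y_{\alpha}|t)\,s_{\lambda^{\vee}}(y_{\alpha}|T)}{\mathfrak{e}(y_{\alpha})}=\langle\lambda|\tilde{S}_{\mu}|\nu^{\vee}\rangle=\sum_{d\ge0}q^{d}C^{\lambda,d}_{\mu\,\nu^{\vee}}(T),
\end{equation*}
which is precisely (\ref{residue_formula}); by the definition (\ref{equitoricschur}) (and $(\lambda^{\vee})^{\vee}=\lambda$, $(\mu^{\vee})^{\vee}=\mu$) this is the coefficient of $s_{\nu}(x|T)$ in $\sum_{d}q^{d}s_{\lambda^{\vee}/d/\mu^{\vee}}(x|T)$, establishing (\ref{Z2toric}). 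Finally I would obtain (\ref{Z'2toric}) at no extra cost from the level-rank duality $\tilde{Z}'_{\lambda,\mu}(x|t)=\tilde{Z}_{\lambda',\mu'}(x|-T)$ of the preceding partition-function corollary: applying the identity just proved with the equivariant parameters sent to $-T$ (so that the reversed parameters become $-t$) and using $(\lambda')^{\vee}=\lambda^{\ast}$, $(\mu')^{\vee}=\mu^{\ast}$ turns the right-hand side into $\sum_{d}q^{d}s_{\lambda^{\ast}/d/\mu^{\ast}}(x|-t)$.
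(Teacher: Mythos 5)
Your proof is correct, and it is essentially the mirror image of the paper's own argument: the same skeleton (resolution of the identity in the Bethe eigenbasis, transfer-matrix eigenvalues, the Cauchy identity for factorial Schur functions, the residue formula (\ref{residue_formula}), then level-rank duality for the remaining case), but with the roles of the two models exchanged. The paper proves the osculating expansion (\ref{Z'2facs}) first, computing $(x_1\cdots x_k)^n\langle\lambda|E(x_1^{-1})\cdots E(x_k^{-1})|\mu\rangle$ directly: there the eigenvalue (\ref{specE}) is already the polynomial $\prod_l(1+x_iy_l)$, so (\ref{Cauchy}) applies at once in the $t$-parameters and no cancellation is needed; the vicious case then follows from (\ref{levelrankmom}) and (\ref{GW_level-rank}). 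You instead attack the vicious side directly, which routes you through the rational eigenvalue (\ref{specH}) and the factorisation $\prod_j(x-t_j)+(-1)^nq=\prod_{m=1}^N(x-y_m)$ over the full solution set of (\ref{BAE}). That step is sound — the polynomial is monic of degree $N$, and distinctness of the $N$ roots over $\mathbb{F}_q$ is already presupposed by the paper's $\binom{N}{n}$-dimensional eigenbasis — and it is in fact exactly the content of the paper's lemma identifying the dual roots $z_\alpha=-y_{\alpha^\ast}$ as solutions of (\ref{dualBAE}), combined with (\ref{dualspecH}); citing that lemma would let you skip the cancellation argument entirely. The other point where you should be explicit is the use of the Cauchy identity ``in the $T$-parameters'' followed by (\ref{LRdualityFacSchur}): this is legitimate because the Macdonald identity behind (\ref{Cauchy}) holds for an arbitrary parameter sequence, but the paper states (\ref{Cauchy}) only for the sequence $t$, so a remark to this effect is needed. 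What the paper's ordering buys is economy (polynomial eigenvalue from the start); what yours buys is a direct derivation of (\ref{Z2toric}) — the formula actually used later in the determinant formula (\ref{detGW}) — with the role of the Bethe equations made completely explicit.
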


\begin{proof}
Note that proving the two assertions amounts to proving the expansions%
\begin{equation}
\tilde{Z}_{\lambda ,\mu }(x|t)=\sum_{\substack{ \nu \in (n,k)  \\ d\geq 0}}%
q^{d}C_{\mu \nu }^{\lambda ,d}(T)s_{\nu ^{\vee }}(x|T)  \label{Z2facs}
\end{equation}%
and%
\begin{equation}
\tilde{Z}_{\lambda ,\mu }^{\prime }(x|t)=\sum_{\substack{ \nu \in (n,k)  \\ %
d\geq 0}}q^{d}C_{\mu \nu }^{\lambda ,d}(T)s_{\nu ^{\ast }}(x|-t)\;.
\label{Z'2facs}
\end{equation}%
Let us prove the second identity. First we recall from (\ref%
{partition_function}), (\ref{combZ}) that $\tilde{Z}_{\lambda ,\mu }^{\prime
}(x|t)=(x_{1}\cdots x_{k})^{n}\langle \lambda |E(x_{1}^{-1})\cdots
E(x_{k}^{-1})|\mu \rangle $. Employing the result (\ref{specE}) from the
Bethe ansatz and the Cauchy identity (\ref{Cauchy}) for factorial Schur
functions we find,%
\begin{multline*}
\tilde{Z}_{\lambda ,\mu }^{\prime }(x|t)=\sum_{\alpha \in (n,k)}(x_{1}\cdots
x_{k})^{n}\langle \lambda |E(x_{1}^{-1})\cdots E(x_{k}^{-1})|y_{\alpha
}\rangle \langle y_{\alpha }|\mu \rangle \\
=\sum_{\alpha \in (n,k)}\frac{s_{\mu }(y_{\alpha }|t)s_{\lambda ^{\vee
}}(y_{\alpha }|T)}{\mathfrak{e}(y_{\alpha })}\prod_{i=1}^{k}%
\prod_{j=1}^{n}(x_{i}+y_{j}(\alpha )) \\
=\sum_{\nu \in (n,k)}\sum_{\alpha \in (n,k)}\frac{s_{\mu }(y_{\alpha
}|t)s_{\lambda ^{\vee }}(y_{\alpha }|T)}{\mathfrak{e}(y_{\alpha })}s_{\nu
}(y_{\alpha }|t)s_{\nu ^{\ast }}(x|-t) \\
=\sum_{\nu \in (n,k)}C_{\mu \nu }^{\lambda ,d}(T)s_{\nu ^{\ast }}(x|-t)
\end{multline*}%
The identity for the vicious walker model now follows from level-rank
duality (\ref{levelrankmom}) and (\ref{GW_level-rank}),%
\begin{eqnarray*}
\tilde{Z}_{\lambda ,\mu }^{\prime }(x|t) &=&(x_{1}\cdots x_{k})^{n}\langle
\lambda |E(x_{1}^{-1}|t)\cdots E(x_{k}^{-1}|t)|\mu \rangle \\
&=&(x_{1}\cdots x_{k})^{n}\langle \lambda ^{\prime }|H(x_{1}^{-1}|-T)\cdots
H(x_{k}^{-1}|-T)|\mu ^{\prime }\rangle =\tilde{Z}_{\lambda ^{\prime },\mu
^{\prime }}^{\prime }(x|-T)\;.
\end{eqnarray*}
\end{proof}

We can restate the last result in operator form; compare with (\ref{Cauchy}).

\begin{corollary}[noncommutative Cauchy identities]
We have the identities%
\begin{equation}
\tilde{Z}_{n}=\sum_{\alpha }(x|T)^{\alpha }\otimes \tilde{H}_{\alpha ^{\vee
}}=\sum_{\lambda \in \Pi _{n,k}}s_{\lambda ^{\vee }}(x|T)\otimes \tilde{S}%
_{\lambda }\;.  \label{ncCauchy}
\end{equation}%
Here the first sum runs over all compositions $\alpha =(\alpha _{1},\ldots
,\alpha _{n})$ with $\alpha _{i}\leq k$ and $\alpha ^{\vee }=(k-\alpha
_{1},\ldots ,k-\alpha _{n})$. The analogous identities are true for $\tilde{Z%
}_{k}^{\prime }$.
\end{corollary}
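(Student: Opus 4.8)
The final statement to prove is the noncommutative Cauchy identity
\begin{equation*}
\tilde{Z}_{n}=\sum_{\alpha }(x|T)^{\alpha }\otimes \tilde{H}_{\alpha ^{\vee
}}=\sum_{\lambda \in \Pi _{n,k}}s_{\lambda ^{\vee }}(x|T)\otimes \tilde{S}
_{\lambda }\;.
\end{equation*}

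The plan is to reduce everything to the spectral decomposition of the transfer matrices that was already established via the Bethe ansatz. First I would recall that $\tilde{Z}_{n}=(x_{1}\cdots x_{n})^{n}Z_{n}(x^{-1}|t)=\tilde{H}(x_{n})\cdots\tilde{H}(x_{1})$ by the factorisation $Z_{n}=H(x_{n})\cdots H(x_{1})$ from Lemma~\ref{Z2H}, so $\tilde{Z}_{n}$ is a product of single-row transfer matrices. Since the on-shell Bethe vectors $|y_{\alpha}\rangle$ form an eigenbasis of each $\tilde{H}(x_{i})$ on $\mathcal{V}_{n}^{\mathbb{F}}$, the operator $\tilde{Z}_{n}$ is diagonalised simultaneously, with eigenvalue $\prod_{i=1}^{n}\prod_{l=1}^{n}(x_{i}+y_{l}(\alpha))$ on $|y_{\alpha}\rangle$ by the eigenvalue equation~(\ref{specE}) for $E$ rewritten through the factorial expansion~(\ref{facH})—more directly, $\tilde{H}_{r,n}|y_{\alpha}\rangle=h_{r}(y_{\alpha}|t)|y_{\alpha}\rangle$ from~(\ref{facHspec}), so $\tilde{H}(x_{i})|y_{\alpha}\rangle=\big(\sum_{r}h_{r}(y_{\alpha}|t)(x_{i}|T)^{k-r}\big)|y_{\alpha}\rangle=\prod_{l}(x_{i}+y_{l})|y_{\alpha}\rangle$.

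The key step is then to expand $\tilde{Z}_{n}$ in the idempotent basis. Using the resolution of the identity $\boldsymbol{1}=\sum_{\alpha}|y_{\alpha}\rangle\langle y_{\alpha}|$ together with the eigenvalue just computed, I would write
\begin{equation*}
\tilde{Z}_{n}=\sum_{\alpha\in(n,k)}\Bigg(\prod_{i=1}^{n}\prod_{l=1}^{n}(x_{i}+y_{l}(\alpha))\Bigg)\,\mathfrak{e}(y_{\alpha})^{-1}|y_{\alpha}\rangle\langle y_{\alpha}|\;.
\end{equation*}
Now I apply the Cauchy identity~(\ref{Cauchy}) for factorial Schur functions, with the two alphabets being the spectral variables $x$ and the Bethe roots $y_{\alpha}$: this turns $\prod_{i,l}(x_{i}+y_{l}(\alpha))$ into $\sum_{\lambda\in(n,k)}s_{\lambda}(y_{\alpha}|t)\,s_{(\lambda^{\vee})^{\prime}}(x|-t)$. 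Substituting and interchanging the sums, the $\alpha$-sum reassembles, via Lemma~\ref{lem:facS}(i) and the expansion $\tilde{S}_{\lambda}|y_{\alpha}\rangle=s_{\lambda}(y_{\alpha}|t)|y_{\alpha}\rangle$, into the operator $\tilde{S}_{\lambda}=\sum_{\alpha}s_{\lambda}(y_{\alpha}|t)\mathfrak{e}(y_{\alpha})^{-1}|y_{\alpha}\rangle\langle y_{\alpha}|$, yielding the right-hand expression $\sum_{\lambda}s_{\lambda^{\vee}}(x|T)\otimes\tilde{S}_{\lambda}$ after matching $s_{(\lambda^{\vee})^{\prime}}(x|-t)$ with $s_{\lambda^{\vee}}(x|T)$ through the level-rank relation~(\ref{LRdualityFacSchur}). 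The first equality, with factorial powers $(x|T)^{\alpha}$ and the operators $\tilde{H}_{\alpha^{\vee}}$, then follows by re-expanding each factor $\tilde{H}(x_{i})=\sum_{r}(x_{i}|T)^{k-r}\tilde{H}_{r,n}$ directly in the product $\tilde{H}(x_{n})\cdots\tilde{H}(x_{1})$ and using commutativity of the $\tilde{H}_{r,n}$ from Prop~\ref{integrability}.

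The main obstacle I anticipate is the careful bookkeeping of the duality transformations: one must confirm that the Cauchy identity is being applied with exactly the right sign conventions and parameter shifts, so that $s_{(\lambda^{\vee})^{\prime}}(x|-t)$ genuinely equals $s_{\lambda^{\vee}}(x|T)$ under $T_{i}=t_{N+1-i}$ and the involution $\Theta=\mathcal{PC}$. This is where the identity~(\ref{LRdualityFacSchur}) and the definitions of $\lambda^{\vee},\lambda^{\ast}=(\lambda^{\vee})^{\prime}$ must be threaded together precisely; an off-by-one in the shift operator $\tau$ or a dropped sign would break the match. The analogous statement for $\tilde{Z}_{k}^{\prime}$ is then immediate by level-rank duality~(\ref{levelrankmom}), replacing $t_{j}\mapsto -T_{N+1-j}$ and interchanging the roles of $H$ and $E$, so no separate argument is needed.
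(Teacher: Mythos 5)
Your overall mechanism (resolution of the identity in the Bethe basis followed by the Cauchy identity (\ref{Cauchy})) is the same one the paper uses to prove the expansion (\ref{Z2facs}), which, together with (\ref{residue_formula}), is exactly what the paper's own one-line proof of this corollary cites. However, your execution has a genuine error at the central step: the eigenvalue you assign to the vicious-walker transfer matrix is wrong. From (\ref{facHspec}) one indeed has $\tilde{H}_{r,n}|y_{\alpha}\rangle=h_{r}(y_{\alpha}|t)|y_{\alpha}\rangle$, but $\sum_{r}h_{r}(y_{\alpha}|t)(x|T)^{k-r}$ is \emph{not} equal to $\prod_{l=1}^{n}(x+y_{l}(\alpha))$; that product is the eigenvalue of the \emph{osculating} transfer matrix $\tilde{E}(x)$, cf.\ (\ref{specE}). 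Already on degree grounds your identity cannot hold: the left-hand side has degree $k$ in $x$, the right-hand side degree $n$. The correct statement, which follows from (\ref{dualspecH}) and the second line of (\ref{facHspec}) (equivalently from (\ref{specH}) combined with the Bethe equations (\ref{BAE})), is
\begin{equation*}
\tilde{H}(x)\,|y_{\alpha}\rangle \;=\;\prod_{l=1}^{k}\bigl(x+z_{l}(\alpha)\bigr)\,|y_{\alpha}\rangle\;=\;\prod_{j\in I(\alpha^{\ast})}(x-y_{j})\,|y_{\alpha}\rangle\,,
\end{equation*}
where $z_{\alpha}=-y_{\alpha^{\ast}}$ are the \emph{dual} Bethe roots, i.e.\ the $k$ roots complementary to the $n$ roots entering $|y_{\alpha}\rangle$.

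This error propagates through the rest of your argument: your $n\times n$ product $\prod_{i,l}(x_{i}+y_{l}(\alpha))$ does not have the $n\times k$ rectangular shape that (\ref{Cauchy}) requires, so the Cauchy step as written is inapplicable; and your final ``matching'' step reads (\ref{LRdualityFacSchur}) as the generic identity $s_{(\lambda^{\vee})^{\prime}}(x|-t)=s_{\lambda^{\vee}}(x|T)$, which is false for indeterminates $x$ (and even has the wrong number of variables) --- (\ref{LRdualityFacSchur}) holds only when the arguments are a Bethe tuple $y_{\alpha}$ and its dual $z_{\alpha}$. The repair is to run your computation with the dual roots: the eigenvalue of $\tilde{Z}_{n}$ on $|y_{\alpha}\rangle$ is $\prod_{i=1}^{n}\prod_{l=1}^{k}(x_{i}+z_{l}(\alpha))$; applying (\ref{Cauchy}) with the parameter sequence $t$ replaced by $T$ gives $\sum_{\mu}s_{\mu}(x|T)\,s_{(\mu^{\vee})^{\prime}}(z_{\alpha}|-T)$; then (\ref{LRdualityFacSchur}), used legitimately at Bethe roots, converts $s_{(\mu^{\vee})^{\prime}}(z_{\alpha}|-T)$ into $s_{\mu^{\vee}}(y_{\alpha}|t)$, and relabelling $\mu=\lambda^{\vee}$ and reassembling $\tilde{S}_{\lambda}=\sum_{\alpha}s_{\lambda}(y_{\alpha}|t)\,\mathfrak{e}(y_{\alpha})^{-1}|y_{\alpha}\rangle\langle y_{\alpha}|$ yields the claim. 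Alternatively --- and this is the paper's route --- prove the expansion first for $\tilde{Z}_{k}^{\prime}$, where the eigenvalue genuinely is the $k\times n$ product $\prod_{i}\prod_{l}(x_{i}+y_{l}(\alpha))$ and (\ref{Cauchy}) applies verbatim, and then transport the result to $\tilde{Z}_{n}$ by level-rank duality (\ref{levelrankmom}). Your first identity (the expansion into factorial powers $(x|T)^{\alpha}$ via (\ref{facH})) and your closing remark reducing $\tilde{Z}_{k}^{\prime}$ to duality are fine.
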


\begin{proof}
The first identity is a direct consequence of the definition of the
partition function and (\ref{facH}), the second follows from the last
proposition - see (\ref{Z2facs}) - and (\ref{residue_formula}).
\end{proof}

So far we have concentrated on the expansions of the partition functions
into factorial Schur functions. The first expansion in (\ref{ncCauchy}) is
also related to products in the quantum cohomology ring.

\begin{corollary}[equivariant quantum Kostka numbers]
Let $\tilde{h}_{r}$ and $\tilde{e}_{r}$ be the generators in Mihalcea's
presentation (\ref{M1}) and (\ref{M2}). The coefficients in the product
expansions%
\begin{eqnarray}
\tilde{h}_{\alpha }\ast s_{\mu } &=&\sum_{\lambda \in \Pi
_{n,k}}q^{d}K_{\lambda /d/\mu ,\alpha }(T)~s_{\lambda } \\
\tilde{e}_{\alpha }\ast s_{\mu } &=&\sum_{\lambda \in \Pi
_{n,k}}q^{d}K_{\lambda ^{\prime }/d/\mu ^{\prime },\alpha }(T)~s_{\lambda
^{\prime }}
\end{eqnarray}%
are given by the coefficients of the following polynomials in $q$%
\begin{equation}
\langle \lambda |\tilde{H}_{\alpha }|\mu \rangle =\langle \lambda ^{\prime }|%
\tilde{E}_{\alpha }|\mu ^{\prime }\rangle =\sum_{d\geq 0}q^{d}K_{\lambda
/d/\mu ,\alpha }(T)\;.
\end{equation}
\end{corollary}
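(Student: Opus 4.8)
The plan is to derive the statement entirely from the two identification results already available: the module isomorphism $\Phi:\mathcal V_n^{\mathbb F}\to\mathfrak J$, $|\lambda\rangle\mapsto s_\lambda(x|t)$ (Theorem on canonical isomorphisms), under which $\tilde S_\lambda$ is $\circledast$-multiplication by the Schubert class $\sigma_\lambda$, and the ring isomorphism $\varphi:\mathbb P_n\to QH_T^\ast(\limfunc{Gr}_{n,N})$ of the Mihalcea corollary sending $\tilde H_{j,n}\mapsto\tilde h_j$ and $\tilde E_{i,n}\mapsto\tilde e_i$. \emph{Preliminary observation.} Since $\varphi$ is an algebra map and the $\tilde H_{r,n}$ commute, $\tilde H_\alpha=\tilde H_{\alpha_n,n}\cdots\tilde H_{\alpha_1,n}$ corresponds to $\tilde h_\alpha=\prod_i\tilde h_{\alpha_i}$; moreover by (\ref{PieriId}) one has $\tilde H_{r,n}|\emptyset\rangle=|(r)\rangle$, so under the $\circledast$-module identification $\tilde H_{r,n}$ is exactly multiplication by $\sigma_{(r)}=\tilde h_r$ and hence $\tilde H_\alpha$ is multiplication by $\tilde h_\alpha$.

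\emph{First expansion.} I would apply $\Phi$ to $\tilde H_\alpha|\mu\rangle=\sum_{\lambda}\langle\lambda|\tilde H_\alpha|\mu\rangle\,|\lambda\rangle$ and use the preliminary observation to obtain $\tilde h_\alpha\ast s_\mu=\sum_{\lambda\in\Pi_{n,k}}\langle\lambda|\tilde H_\alpha|\mu\rangle\,s_\lambda$. As recorded after the formula (\ref{EHecke}), each matrix element lies in $\Lambda[q]$, so I simply \emph{define} $K_{\lambda/d/\mu,\alpha}(T)$ as the coefficient of $q^d$ in $\langle\lambda|\tilde H_\alpha|\mu\rangle$; this yields both the first product expansion and the first equality of the boxed identity. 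The grading transported by $\varphi$ (with $\deg q=N$) forces $K_{\lambda/d/\mu,\alpha}$ to be homogeneous in $T$ of degree $|\alpha|+|\mu|-|\lambda|-dN$, which is precisely the datum recorded by the toric skew shape $\lambda/d/\mu$ of content $\alpha$ through (\ref{equitoricschur}); this justifies the notation.

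\emph{Second expansion.} For the $\tilde E$-side I would invoke level--rank duality. Starting from the corollary $\Theta H(x|t)\Theta=E(x|-T)$ and matching coefficients of the factorial-power expansions (\ref{facH}), (\ref{facE}) gives the operator identity $\Theta\,\tilde H_{r,n}(t)\,\Theta=\tilde E_{r,k}(-T)$ on $\mathcal V_k$; multiplying over the parts of $\alpha$ and using $\Theta^2=1$ yields $\Theta\,\tilde H_\alpha(t)\,\Theta=\tilde E_\alpha(-T)$. Because $\Theta$ permutes the standard basis by $|\mu\rangle\mapsto|\mu^\ast\rangle$, so that $\langle\lambda|\Theta=\langle\lambda^\ast|$, inserting $\Theta^2$ on either side of $\langle\lambda|\tilde H_\alpha(t)|\mu\rangle$ produces the matrix-element identity; equivalently it is the image of the first expansion under the level--rank ring isomorphism $\Theta:QH_T^\ast(\limfunc{Gr}_{n,N})\to QH_T^\ast(\limfunc{Gr}_{k,N})$, which sends $\tilde h_\alpha\mapsto\tilde e_\alpha$, $s_\mu\mapsto s_{\mu^\ast}$ and $t\mapsto-T$, in agreement with $C_{\lambda\mu}^{\nu,d}(T)=C_{\lambda'\mu'}^{\nu',d}(-t)$ of (\ref{GW_level-rank}).

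\emph{Main obstacle.} The one genuinely delicate point is the bookkeeping of the label involutions $\lambda\mapsto\lambda'$, $\lambda\mapsto\lambda^\vee$ and their composite $\lambda\mapsto\lambda^\ast$, together with the parameter flip $t\mapsto-T$, so that the compressed right-hand side $\langle\lambda'|\tilde E_\alpha|\mu'\rangle$ is matched exactly with the $\Theta$-conjugate computed above. I would discharge this either by combining the Poincar\'e involution $\mathcal P$ (with $|\lambda\rangle\mapsto|\lambda^\vee\rangle$) to rewrite $\lambda^\ast=(\lambda')^\vee$ in the stated form, or by evaluating both sides termwise in the Bethe eigenbasis: using $\langle\lambda|y_\beta\rangle=s_{\lambda^\vee}(y_\beta|T)$ (\ref{BethefacSchur}), $\langle y_\beta|\mu\rangle=s_\mu(y_\beta|t)/\mathfrak e(y_\beta)$ (\ref{leftBethev}) and the eigenvalue $\prod_i h_{\alpha_i}(y_\beta|t)$ of $\tilde H_\alpha$ from (\ref{facHspec}), one gets $\langle\lambda|\tilde H_\alpha|\mu\rangle=\sum_\beta s_{\lambda^\vee}(y_\beta|T)\,s_\mu(y_\beta|t)\prod_i h_{\alpha_i}(y_\beta|t)/\mathfrak e(y_\beta)$, and the factorial Schur level--rank identity (\ref{LRdualityFacSchur}) converts this sum term by term into the corresponding $\tilde E_\alpha$-sum with conjugated labels. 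No new analytic input is required beyond these dualities and the already-established isomorphisms.
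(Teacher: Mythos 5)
Your proposal is correct and follows essentially the route the paper intends: the corollary is stated there without proof, as an immediate consequence of the isomorphisms $\varphi(\tilde{H}_{j,n})=\tilde{h}_{j}$, $\varphi(\tilde{E}_{i,n})=\tilde{e}_{i}$, $\varphi(\tilde{S}_{\lambda })=\sigma _{\lambda }$, the commutativity of the transfer-matrix coefficients, and level-rank duality, which are exactly the ingredients you assemble. Your explicit tracking of the parameter flip $t\mapsto -T$ in the $\tilde{E}$-side identity (equivalently, the termwise conversion in the Bethe eigenbasis via (\ref{LRdualityFacSchur})) is in fact more careful than the paper's own compressed statement $\langle \lambda |\tilde{H}_{\alpha }|\mu \rangle =\langle \lambda ^{\prime }|\tilde{E}_{\alpha }|\mu ^{\prime }\rangle $, where this level-rank change of equivariant parameters is left implicit.
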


Using the determinant formulae (\ref{det_formulae}) the last result provides
a method to compute Gromov-Witten invariants. However, we believe the
following algorithm to be simpler.

\subsection{A determinant formula for Gromov-Witten invariants}

We can use the expansions (\ref{Z2facs}), (\ref{Z'2facs}) to state a
determinant formula for equivariant Gromov-Witten invariants in terms of the
partition function of vicious and osculating walkers. We recall the
following theorem due to Molev and Sagan \cite[Thm 2.1]{MolevSagan}.

\begin{theorem}[Vanishing Theorem]
Let $\lambda ,\mu $ be partitions with $\ell (\lambda ),\ell (\mu )\leq n$
and set $a_{\mu }=(a_{\mu _{1}+n},\ldots ,a_{\mu _{n}+1})$. Then%
\begin{equation}
s_{\lambda }(a_{\mu }|a)=\left\{ 
\begin{array}{cc}
\prod_{(i,j)\in \lambda }(a_{\lambda _{i}+n+1-i}-a_{n-\lambda _{j}^{\prime
}+j}), & \lambda =\mu \\ 
0, & \lambda \nsubseteq \mu%
\end{array}%
\right. \;,  \label{vanish}
\end{equation}%
where $\lambda ^{\prime }$ is the conjugate partition of $\lambda $.
\end{theorem}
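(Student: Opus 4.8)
The plan is to work directly from the bialternant definition \eqref{facSchur} rather than the tableau sum, since the specialization $x_j = a_{\mu_j+n+1-j}$ interacts transparently with the determinant. Write $\ell_i = \lambda_i + n - i$ (strictly decreasing) and $m_j = \mu_j + n + 1 - j$ (strictly decreasing, with $m_j \ge 1$). Substituting $x_j = a_{m_j}$ turns the numerator into $\det N$ with $N_{ij} = (a_{m_j}|a)^{\ell_i} = \prod_{p=1}^{\ell_i}(a_{m_j}-a_p)$, and the denominator into $\det[(a_{m_j}|a)^{n-i}]$. After the triangular change of basis from $(x|a)^{n-i}$ to $x^{n-i}$, the denominator is the Vandermonde $\prod_{i<j}(a_{m_i}-a_{m_j})$, a nonzero polynomial in the $a$'s since the $m_j$ are distinct; hence $s_\lambda(a_\mu|a)$ is this ratio and it suffices to analyze $\det N$ as a polynomial. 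The single observation driving everything is that $N_{ij}$ vanishes identically exactly when $m_j \le \ell_i$, because then the index $p=m_j$ lies in $\{1,\dots,\ell_i\}$ and contributes a factor $a_{m_j}-a_{m_j}=0$.

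For the vanishing statement, suppose $\lambda \nsubseteq \mu$ and choose $r$ with $\lambda_r > \mu_r$. I would check that the $r \times (n-r+1)$ block of rows $\{1,\dots,r\}$ and columns $\{r,\dots,n\}$ is identically zero: for $i \le r \le j$, monotonicity gives $\ell_i \ge \ell_r = \lambda_r + n - r \ge (\mu_r+1)+n-r = m_r \ge m_j$, so $m_j \le \ell_i$ and $N_{ij}=0$. Since the complementary columns $\{1,\dots,r-1\}$ number only $r-1 < r$, no permutation can place all of rows $1,\dots,r$ outside the block, so every term of the Leibniz expansion of $\det N$ vanishes; thus $s_\lambda(a_\mu|a)=0$.

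For the diagonal value $\lambda=\mu$ one has $m_j = \ell_j+1$, so $N_{ij}=0$ exactly when $m_j<m_i$, i.e. $j>i$, making $N$ lower triangular; hence $\det N = \prod_{i=1}^n (a_{m_i}|a)^{\ell_i} = \prod_i \prod_{p=1}^{m_i-1}(a_{m_i}-a_p)$. I would then split $\{1,\dots,m_i-1\}$ into the particle positions $\{m_j : j>i\}$ and their complement, the holes below $m_i$. The particle factors, multiplied over $i$, reproduce exactly the Vandermonde denominator and cancel against it, leaving $\prod_i \prod_{h<m_i\text{ hole}}(a_{m_i}-a_h)$.

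The last step, and the main obstacle, is the purely combinatorial identification of the holes below $m_i$ with the content set $\{\,n+j-\lambda'_j : 1 \le j \le \lambda_i\,\}$ of the claimed product, i.e. particle--hole (conjugate-partition) duality. I would set $h_j := n+j-\lambda'_j$ and prove: (a) $h_j$ is strictly increasing in $j$ (since $j-\lambda'_j$ strictly increases); (b) $h_j < m_i \iff j \le \lambda_i$, both directions coming from the two-sided estimate on $\lambda'_j$ according as $j\le\lambda_i$ (then $\lambda'_j\ge i$) or $j>\lambda_i$ (then $\lambda'_j\le i-1$); and (c) each $h_j$ is genuinely a hole, $h_j \ne m_r$ for all $r$, because $h_j=m_r$ rearranges to $\lambda_r+\lambda'_j=j+r-1$, contradicting $\lambda_r+\lambda'_j\ge j+r$ when $(r,j)\in\lambda$ and $\lambda_r+\lambda'_j\le j+r-2$ when $(r,j)\notin\lambda$. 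Feeding (a)--(c) back gives $\prod_{(i,j)\in\lambda}(a_{\lambda_i+n+1-i}-a_{n-\lambda'_j+j})$, matching \eqref{vanish}. Throughout I would keep the $a_i$ as indeterminates, so that all identities hold at the level of polynomials, which is what legitimizes the cancellation against the Vandermonde and the use of the identically-zero block.
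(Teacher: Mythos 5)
The paper itself contains no proof of this statement: it is recalled verbatim from Molev and Sagan \cite[Thm 2.1]{MolevSagan}, so there is nothing internal to compare your argument against, and it must be judged on its own merits. It holds up. Choosing the bialternant definition (\ref{facSchur}) over the tableau sum (\ref{facSchurtab}) is what makes the argument short: under $x_j=a_{m_j}$ with $m_j=\mu_j+n+1-j$, the entry $N_{ij}=(a_{m_j}|a)^{\ell_i}$ vanishes identically exactly when $m_j\le \ell_i$, and both halves of the theorem become bookkeeping with this single criterion. Your zero-block argument for $\lambda\nsubseteq\mu$ is correct (rows $1,\dots,r$ are supported on the $r-1$ columns $1,\dots,r-1$, so every Leibniz term dies), the passage from the factorial basis to the monomial basis is indeed unitriangular so the denominator is the Vandermonde $\prod_{i<j}(a_{m_i}-a_{m_j})$, and for $\lambda=\mu$ the matrix is lower triangular with $N_{ii}=\prod_{p<m_i}(a_{m_i}-a_p)$, whose particle factors $\prod_{i<j}(a_{m_i}-a_{m_j})$ cancel the denominator exactly. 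Steps (a)--(c) of the particle--hole identification are also correct: (b) rests on $\lambda_j'\ge i \iff j\le\lambda_i$, and (c) on the two-sided bound $\lambda_r+\lambda_j'\ge j+r$ on boxes of $\lambda$ versus $\le j+r-2$ off them. The one step you should write out rather than leave implicit in ``feeding (a)--(c) back'': those three facts only give the inclusion $\{h_j: 1\le j\le\lambda_i\}\subseteq\{\text{holes below }m_i\}$. Equality needs the count $\#\{\text{holes below }m_i\}=(m_i-1)-(n-i)=\lambda_i$, which matches the number of distinct $h_j$'s supplied by (a) and (b). That is a one-line addition, not a flaw in the method; with it, the proof is complete and entirely self-contained, working at the level of polynomial identities in the $a_i$ as you note, which is what justifies both the cancellation against the Vandermonde and the conclusion in the vanishing case.
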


\begin{corollary}
Given $\lambda ,\mu ,\nu \in \Pi _{n,k}$ with $|\lambda |+|\mu |-|\nu |=dN$
let $\gamma (\nu )=(s_{\beta }(T_{\alpha }|T))_{\emptyset \leq \alpha ,\beta
\leq \nu }$ with respect to the dominance order of partitions. Denote by $%
\Gamma (\nu )$ the matrix which is obtained by replacing the first column
vector in $\gamma (\nu )$ with $(\tilde{Z}_{\lambda ,\mu }(T_{\nu
}|t),\ldots ,\tilde{Z}_{\lambda ,\mu }(T_{\emptyset }|t))^{t}$. Then we have
the identity%
\begin{equation}
q^{d}C_{\mu \nu ^{\vee }}^{\lambda ,d}(T)=\frac{\det \Gamma (\nu )}{%
\prod_{\rho \subseteq \nu }s_{\rho }(T_{\rho }|T)}\;.  \label{detGW}
\end{equation}%
In particular, setting $\nu =\emptyset $ this simplifies to%
\begin{equation}
q^{d}C_{\mu ~k^{n}}^{\lambda ,d}(T)=\tilde{Z}_{\lambda ,\mu }(t_{k^{n}}|t)\;.
\label{Z=GW}
\end{equation}
\end{corollary}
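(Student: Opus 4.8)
The plan is to establish the determinant formula \eqref{detGW} by using the expansion \eqref{Z2facs} of the vicious walker partition function into factorial Schur functions together with the vanishing property \eqref{vanish} of Molev and Sagan. The essential idea is that specialising the spectral variables $x$ to equivariant parameters $T_\rho$ turns the system $\tilde Z_{\lambda,\mu}(T_\rho|t) = \sum_{\nu}q^d C_{\mu\nu}^{\lambda,d}(T)\,s_{\nu^\vee}(T_\rho|T)$ into a triangular linear system whose coefficient matrix is $\gamma(\nu)$, and Cramer's rule then isolates the desired Gromov--Witten invariant.

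First I would record the fundamental expansion: from \eqref{Z2facs} we have, for each partition $\rho$,
\begin{equation*}
\tilde Z_{\lambda,\mu}(T_\rho|t)=\sum_{\beta\subseteq(k^n)}q^{d_\beta}C_{\mu\beta}^{\lambda,d_\beta}(T)\,s_{\beta^\vee}(T_\rho|T).
\end{equation*}
Next I would reindex by the complement $\beta\mapsto\beta^\vee$ so that the factorial Schur functions appear as $s_\beta(T_\rho|T)$, matching the entries of $\gamma(\nu)=(s_\beta(T_\alpha|T))_{\emptyset\le\alpha,\beta\le\nu}$. The crucial input is the Molev--Sagan vanishing theorem \eqref{vanish}: with the chosen specialisation $a=T$ and $a_\mu=T_\mu$ one has $s_\beta(T_\alpha|T)=0$ whenever $\beta\nsubseteq\alpha$, and $s_\beta(T_\beta|T)=\prod_{(i,j)\in\beta}(T_{\ldots}-T_{\ldots})\ne 0$ on the diagonal. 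Consequently $\gamma(\nu)$ is triangular with respect to the dominance (or containment) order on partitions, its determinant equals the product of diagonal entries $\prod_{\rho\subseteq\nu}s_\rho(T_\rho|T)$, and in particular $\gamma(\nu)$ is invertible over $\mathbb{F}$.

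Having set this up, I would view the evaluations $\tilde Z_{\lambda,\mu}(T_\alpha|t)$ for $\emptyset\le\alpha\le\nu$ as the components of a vector obtained by applying the matrix $\gamma(\nu)$ to the unknown vector whose $\beta$-entry is $q^{d_\beta}C_{\mu\beta^\vee}^{\lambda,d_\beta}(T)$. Solving for the entry indexed by $\beta=\nu$ via Cramer's rule gives precisely $q^d C_{\mu\nu^\vee}^{\lambda,d}(T)=\det\Gamma(\nu)/\det\gamma(\nu)$, where $\Gamma(\nu)$ replaces the relevant column of $\gamma(\nu)$ by the partition-function data, exactly as in \eqref{detGW}; here one must check that the triangular structure means only partitions $\beta\subseteq\nu$ contribute, so the finite system genuinely closes off. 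The special case $\nu=\emptyset$ is then immediate: the matrix is $1\times1$ with entry $s_\emptyset(T_\emptyset|T)=1$, yielding \eqref{Z=GW} directly, $q^d C_{\mu\,k^n}^{\lambda,d}(T)=\tilde Z_{\lambda,\mu}(t_{k^n}|t)$, upon noting that $\emptyset^\vee=(k^n)$ and $T_\emptyset$ corresponds to the appropriate specialisation $t_{k^n}$.

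The main obstacle I anticipate is bookkeeping rather than conceptual: one must carefully align three conventions that all appear in the statement—the complement map $\nu\mapsto\nu^\vee$ relating the Schur indices in \eqref{Z2facs} to the matrix $\gamma(\nu)$, the precise specialisation of the infinite sequence $a_i$ to the reversed parameters $T_i=t_{N+1-i}$ used in \eqref{vanish}, and the identification of the degree bookkeeping $|\lambda|+|\mu|-|\nu|=dN$ so that a single power $q^d$ is extracted. Ensuring that the triangularity in \eqref{vanish} is with respect to the same partial order used to index $\gamma(\nu)$, and that the factorial Schur functions are evaluated at the correct shifted arguments, is where the argument could go wrong; once these identifications are pinned down, the vanishing theorem does all the real work and the determinant formula follows from linear algebra.
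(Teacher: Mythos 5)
Your proposal is correct and takes essentially the same route as the paper's own proof: both specialise the expansion (\ref{Z2facs}) at the points $T_{\alpha}$, invoke the Molev--Sagan vanishing theorem (\ref{vanish}) to restrict the resulting linear system to $\rho \subseteq \nu$ and to make the coefficient matrix $\gamma (\nu )$ triangular with $\det \gamma (\nu )=\prod_{\rho \subseteq \nu }s_{\rho }(T_{\rho }|T)$, and then solve by Cramer's rule. The bookkeeping points you flag (the complement re-indexing $\beta \mapsto \beta ^{\vee }$ and the compatibility of the containment-based triangularity with the ordering of $\gamma (\nu )$) are exactly the identifications the paper makes implicitly, so there is no gap.
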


\begin{proof}
It follows from the expansion (\ref{Z2toric}) that%
\begin{equation}
\tilde{Z}_{\lambda ,\mu }(T_{\nu }|t)=\sum_{\rho \subseteq \nu }q^{d}C_{\mu
\rho ^{\vee }}^{\lambda ,d}(T)~s_{\rho }(T_{\nu }|T)\;.  \label{Zspecial}
\end{equation}%
This defines a linear system of inhomogeneous equations for the
Gromov-Witten invariants $C_{\mu \rho ^{\vee }}^{\lambda ,d}(T)$ where $%
\emptyset \leq \rho \leq \nu $ in the dominance order. Formula (\ref{detGW})
is then simply Cramer's rule, $q^{d}C_{\mu \rho ^{\vee }}^{\lambda
,d}(T)=\det \Gamma (\rho )/\det \gamma (\rho ),$ upon noting that due to (%
\ref{vanish}) the determinant in denominator simplifies as $\gamma (\rho )$
is triangular,%
\begin{equation*}
\det \gamma (\rho )=\det (s_{\beta }(T_{\alpha }|T))_{\emptyset \leq \alpha
,\beta \leq \rho }=\prod_{\alpha \subseteq \rho }s_{\alpha }(T_{\alpha
}|T)\;.
\end{equation*}
\end{proof}

\begin{figure}[tbp]
\begin{equation*}
\includegraphics[scale=0.7]{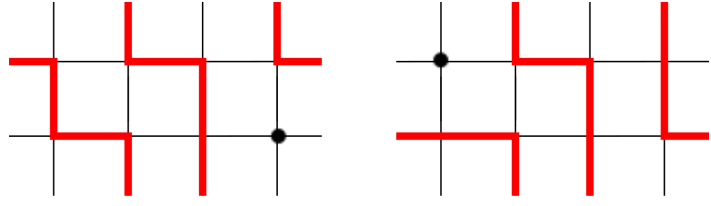}
\end{equation*}%
\caption{Shown are the two possible vicious paths which connect the 01-words
for $\protect\mu =(2,1)$ and $\protect\lambda =(1,1)$.}
\label{fig:quantumex}
\end{figure}

\begin{example}
We use once more $QH_{T}^{\ast }(\limfunc{Gr}_{2,4})$ as a simple example to
demonstrate how to compute Gromov-Witten invariants using (\ref{Zspecial}).
Recall from Figure \ref{fig:gr24ex} that the partition function of the
vicious walker model for $\lambda =(2,2)$ and $\mu =(2,1)$ is%
\begin{equation*}
\tilde{Z}_{\lambda ,\mu
}(x_{1},x_{2}|t)=(x_{1}-T_{4})(x_{2}-T_{3})(x_{2}-T_{4})+(x_{1}-T_{2})(x_{1}-T_{4})(x_{2}-T_{4})\;.
\end{equation*}%
Thus, starting from $\nu =\emptyset $ in (\ref{Zspecial}) we set $%
x=T_{\emptyset }$ to obtain%
\begin{equation*}
C_{\mu ~(2,2)}^{\lambda ,0}(T)=T_{13}T_{14}T_{24}\;.
\end{equation*}%
For the next step we choose $\nu =(1,0)$ and obtain the equation%
\begin{equation*}
\frac{\left\vert 
\begin{array}{cc}
\tilde{Z}_{\lambda ,\mu }(T_{1,0}|t) & 1 \\ 
\tilde{Z}_{\lambda ,\mu }(T_{0,0}|t) & 1%
\end{array}%
\right\vert }{s_{1,0}(T_{1,0}|T)}=-\frac{%
T_{13}T_{14}T_{34}-T_{23}T_{14}T_{34}-C_{\mu ~(2,2)}^{\lambda ,0}(T)}{T_{23}}%
=T_{14}^{2}
\end{equation*}%
Hence, we have recovered our previous result from Figure \ref{fig:KTpuzzles}%
. Continuing in the same manner with $\nu =(1,1),(2,0),(2,1),(2,2)$ one
successively finds the remaining invariants as%
\begin{equation*}
C_{\mu ~(1,1)}^{\lambda ,0}(T)=T_{14},\quad C_{\mu ~(2,0)}^{\lambda
,0}(T)=T_{14},\quad C_{\mu ~(1,0)}^{\lambda ,0}(T)=1,\quad C_{\mu
~(0,0)}^{\lambda ,0}(T)=0\;.
\end{equation*}%
We have deliberately chosen a simple example with $d=0$ so that all results
can be easily checked by the reader using Knutson-Tao puzzles. However, our
combinatorial algorithm works also for $q\neq 0$; see Figure \ref%
{fig:quantumex} which shows the path configurations for $\tilde{Z}%
_{(1,1),(2,1)}(x_{1},x_{2}|t)$ $=q(x_{1}-T_{4})+q(x_{2}-T_{1})$. Thus, we
find $qC_{\mu ~(2,2)}^{(1,1),d=1}(T)=\tilde{Z}%
_{(1,1),(2,1)}(T_{2},T_{1}|t)=q(T_{2}-T_{4})$.
\end{example}

\begin{remark}
During the writing up of this manuscript two works appeared on different
combinatorial approaches to compute Gromov-Witten invariants. The work \cite%
{BKPT} proves a conjecture of Knutson which states that puzzles for two-step
flag varieties describe the product of non-equivariant quantum cohomology
for Grassmannians, while the work \cite{BBT} describes a generalised
rim-hook formula to compute equivariant Gromov-Witten invariants for
Grassmannians. We hope to address how these latter combinatorial approaches
are related to formula (\ref{detGW}) in future work.
\end{remark}

\subsection{Relation with Peterson's basis}

Based on Kostant and Kumar's earlier work \cite{KostantKumar} on the
nil-Hecke ring, Dale Peterson constructed a special commutative subalgebra
to describe the equivariant Schubert calculus of the \emph{homology} of the
affine Grassmannian of an algebraic group $G$ and the \emph{quantum
cohomology} of the partial flag variety $G/P$ where $P\subseteq B$ is a
parabolic subgroup for a given Borel subgroup $B\subset G$. For completeness
we briefly recall its construction; see \cite{Peterson} and \cite{LamShim}
for details.

Let $W$ be the finite Weyl group associated with $G$ and $\hat{W}\cong
W\rtimes \mathcal{Q}^{\vee }$ the affine Weyl group where $\mathcal{Q}^{\vee
}$ is the finite coroot lattice. Let $\mathcal{P}$ be the corresponding
finite weight lattice and set $\boldsymbol{S}=\limfunc{Sym}(\mathcal{P})$ to
be the symmetric algebra and $\limfunc{Frac}(\boldsymbol{S})$ the fraction
field. $\hat{W}$ acts on $\mathcal{P}$ via the level-0 action, i.e. the
affine Weyl reflection acts by reflecting on the hyperplane defined by the
negative highest root. Given $w\in \hat{W}$ and a reduced decomposition $%
w=s_{j_{1}}\cdots s_{j_{r}}$ into simple Weyl reflections define $%
A_{w}=A_{j_{1}}\ldots A_{j_{r}}$ where $A_{j}=\alpha _{j}^{-1}(1-s_{j})$
with $\alpha _{j}$ the $j$th simple root. The $A_{j}$ obey the nil-Coxeter
relations. The level zero graded affine nil-Hecke ring $\mathbb{H}$ is then
given by $\mathbb{H=}\tbigoplus_{w\in \hat{W}}\boldsymbol{S}A_{w}$ with
commutation relations%
\begin{equation}
A_{j}\lambda =(s_{j}\lambda )A_{j}+(\lambda ,\alpha _{j}^{\vee })\;.
\label{Pet_Leibniz}
\end{equation}%
Denote by $\mathbb{P=}\mathcal{Z}_{\mathbb{H}}(\boldsymbol{S})$ the
centralizer of $\boldsymbol{S}$ in $\mathbb{H}$. Following the literature on
this subject we shall refer to this subalgebra as \textquotedblleft Peterson
algebra\textquotedblright . We will be using the following results which are
originally due to Peterson and have been proved in \cite[Lem 3.3 and Thm 4.4]%
{Lam08} \cite[Thm 6.2 and Thm 10.16]{LamShim}

The Peterson algebra $\mathbb{P}$ has a special basis $\{\jmath _{x}:x\in 
\hat{W}/W\}$ which are the images of equivariant Schubert classes under the
isomorphism $H_{T}(\limfunc{Gr}_{G})\cong \mathbb{P}$. Each coset can be
labelled in terms of a unique minimal length representative $\hat{w}_{x}$
and the latter form the set of \emph{Grassmannian affine permutations }$\hat{%
W}^{-}$.

\begin{theorem}[Peterson's basis]
There is an $\boldsymbol{S}$-algebra isomorphism $\jmath :H_{T}(\limfunc{Gr}%
_{G})\rightarrow \mathcal{Z}_{\mathbb{H}}(\boldsymbol{S})$ such that%
\begin{equation}
\jmath (\xi _{x})=A_{x}\func{mod}J\qquad \text{and\qquad }\jmath (\xi )\cdot
\xi ^{\prime }=\xi \xi ^{\prime }  \label{Pet_basis}
\end{equation}%
where $J\subset \mathbb{H}$ is the left ideal $J=\sum_{w\in W\backslash
\{id\}}\mathbb{H}A_{w}$ and $\{\xi _{x}|x\in \hat{W}/W\}$ are the $T$%
-equivariant Schubert classes. For each $x$ the basis element $\jmath
_{x}=\jmath (\xi _{x})$ is uniquely fixed by the two properties in (\ref%
{Pet_basis}).
\end{theorem}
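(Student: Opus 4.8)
The plan is to prove the theorem by realising the nil-Hecke ring $\mathbb{H}$ inside the smash product $Q=\limfunc{Frac}(\boldsymbol{S})\rtimes \hat{W}$, in which the generators appear as $A_{j}=\alpha_{j}^{-1}(1-s_{j})$, and then to match the two sides through fixed-point localisation of $H_{T}(\limfunc{Gr}_{G})$. First I would recall from Kostant and Kumar \cite{KostantKumar} the two structural facts I need: that $\mathbb{H}=\bigoplus_{w\in\hat{W}}\boldsymbol{S}A_{w}$ is free over $\boldsymbol{S}$ on $\{A_{w}\}$, and that expanding $A_{w}=\sum_{v}c_{v}(A_{w})\,v$ in $Q$ yields coefficients $c_{v}(A_{w})\in\limfunc{Frac}(\boldsymbol{S})$ encoding the localised Schubert classes. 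On the geometric side I would use GKM theory \cite{GKM}: the $T$-fixed points of $\limfunc{Gr}_{G}$ are indexed by the coroot lattice $\mathcal{Q}^{\vee}\cong\hat{W}/W$, and $H_{T}(\limfunc{Gr}_{G})$ embeds into $\prod_{v\in\mathcal{Q}^{\vee}}\boldsymbol{S}$ as the tuples obeying the edge divisibility conditions, with the Schubert basis $\{\xi_{x}\}_{x\in\hat{W}^{-}}$ cut out by triangularity in the Bruhat order.

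The decisive input is that $\hat{W}$ acts on $\mathcal{P}$ at level $0$, so the action factors through the finite group $W$ and the translation subgroup $\mathcal{Q}^{\vee}=\ker(\hat{W}\to W)$ acts trivially. Computing in $Q$, an element $h=\sum_{v}c_{v}(h)\,v$ satisfies $h\lambda=\lambda h$ for every $\lambda\in\boldsymbol{S}$ precisely when $c_{v}(h)(v\cdot\lambda-\lambda)=0$ for all $v,\lambda$; since $\limfunc{Frac}(\boldsymbol{S})$ is a domain and $v\cdot\lambda=\lambda$ for all $\lambda$ holds exactly when $v\in\mathcal{Q}^{\vee}$, this forces $c_{v}(h)=0$ for $v\notin\mathcal{Q}^{\vee}$. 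Hence $\mathcal{Z}_{Q}(\boldsymbol{S})=\limfunc{Frac}(\boldsymbol{S})[\mathcal{Q}^{\vee}]$ is commutative, and $\mathbb{P}=\mathbb{H}\cap\limfunc{Frac}(\boldsymbol{S})[\mathcal{Q}^{\vee}]$ inherits commutativity. This both explains why $\mathbb{P}$ is indexed by $\mathcal{Q}^{\vee}\cong\hat{W}/W$ and identifies the coefficient functions $v\mapsto c_{v}(h)$, $v\in\mathcal{Q}^{\vee}$, of a central element with a tuple in $\prod_{v}\limfunc{Frac}(\boldsymbol{S})$.

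Next I would establish the two defining properties. For the quotient property, because minimal coset representatives satisfy $A_{xw}=A_{x}A_{w}$ for $x\in\hat{W}^{-}$ and $w\in W$, the left ideal $J=\sum_{w\in W\setminus\{e\}}\mathbb{H}A_{w}$ contains every $A_{y}$ with $y\notin\hat{W}^{-}$, so by freeness $\mathbb{H}/J\cong\bigoplus_{x\in\hat{W}^{-}}\boldsymbol{S}\bar{A}_{x}$; the heart of the matter is to show that the composite $\mathbb{P}\hookrightarrow\mathbb{H}\twoheadrightarrow\mathbb{H}/J$ is an isomorphism of $\boldsymbol{S}$-modules. This produces, for each $x$, a unique central lift $\jmath_{x}$ with $\jmath_{x}\equiv A_{x}\func{mod}J$, which is the first property in (\ref{Pet_basis}). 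For the multiplication property I would identify $c_{v}(\jmath_{x})$ with the localised class $\xi_{x}|_{v}$ and check that the $\mathbb{H}$-module action on localised classes restricts on $\mathbb{P}$ to pointwise multiplication, which is the statement $\jmath(\xi)\cdot\xi'=\xi\xi'$; granting this, $\jmath$ transports the pointwise product of localised classes to the product of $\mathbb{P}$ and is therefore an $\boldsymbol{S}$-algebra isomorphism. Uniqueness is then formal: any element satisfying both properties acts as multiplication by $\xi_{x}$ on the faithful module $\mathbb{P}\cong H_{T}(\limfunc{Gr}_{G})$ and agrees with $\jmath_{x}$ on the unit $\xi_{e}$, hence equals $\jmath_{x}$.

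The hard part will be the surjectivity of $\mathbb{P}\to\mathbb{H}/J$ together with the multiplicativity of the correspondence: that every leading term $\bar{A}_{x}$ admits a genuinely central lift, and that the product of two such lifts, computed in the noncommutative ring $\mathbb{H}$, is carried by $h\mapsto(c_{v}(h))_{v\in\mathcal{Q}^{\vee}}$ to the pointwise product of localised classes. Proving this requires the coproduct on $\mathbb{H}$ dual to the pointwise structure on localised classes, and an essential use of the level-$0$ hypothesis so that the affine root data close up correctly over the coset space $\hat{W}^{-}$. The remaining ingredients — freeness of $\mathbb{H}$ over $\boldsymbol{S}$, the triangular description of $\mathbb{H}/J$, and the uniqueness argument — are then routine.
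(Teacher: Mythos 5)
The paper does not prove this theorem, so there is no in-house argument to compare against: it is imported verbatim as a result of Peterson, with the proofs cited from \cite[Lem 3.3, Thm 4.4]{Lam08} and \cite[Thm 6.2, Thm 10.16]{LamShim}. Judged against those references, the skeleton of your proposal is the right one, and the steps you actually carry out are correct and standard: the realisation of $\mathbb{H}$ inside the smash product $Q=\limfunc{Frac}(\boldsymbol{S})\rtimes \hat{W}$; the level-zero computation showing that $h=\sum_{v}c_{v}(h)\,v$ centralises $\boldsymbol{S}$ iff $c_{v}(h)$ is supported on the kernel of the action, whence $\mathcal{Z}_{Q}(\boldsymbol{S})=\limfunc{Frac}(\boldsymbol{S})[\mathcal{Q}^{\vee }]$ and $\mathbb{P}=\mathbb{H}\cap \limfunc{Frac}(\boldsymbol{S})[\mathcal{Q}^{\vee }]$ is commutative; the identification $J=\bigoplus_{y\notin \hat{W}^{-}}\boldsymbol{S}A_{y}$ via length additivity $A_{xw}=A_{x}A_{w}$; and the formal uniqueness argument once the rest is granted.

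The genuine gap is that what you label \textquotedblleft the hard part\textquotedblright\ is not a residual verification left over after your reductions --- it \emph{is} the theorem. Surjectivity of $\mathbb{P}\hookrightarrow \mathbb{H}\twoheadrightarrow \mathbb{H}/J$ (the existence, for every $x\in \hat{W}^{-}$, of a central lift $\jmath _{x}\equiv A_{x}\func{mod}J$), injectivity ($\mathbb{P}\cap J=0$, which you also do not establish), and the identification of the coefficient functions $v\mapsto c_{v}(\jmath _{x})$, $v\in \mathcal{Q}^{\vee }$, with the localised Schubert classes $\xi _{x}$ --- so that $\mathbb{P}$ acts on GKM functions by pointwise multiplication --- constitute Peterson's $\jmath$-isomorphism. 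Nothing in your outline forces a central element with prescribed leading term $A_{x}$ to exist: freeness of $\mathbb{H}$ over $\boldsymbol{S}$ and the description of $\mathbb{H}/J$ only bound $\mathbb{P}$ from above. In the cited proofs this existence comes from genuine external input: in Peterson's and Lam--Shimozono's treatment, from the geometry of $\limfunc{Gr}_{G}$ as a based loop group, whose $T$-equivariant homology with Pontryagin product localises into $\limfunc{Frac}(\boldsymbol{S})[\mathcal{Q}^{\vee }]$ with image shown to be exactly $\mathbb{H}\cap \limfunc{Frac}(\boldsymbol{S})[\mathcal{Q}^{\vee }]$; in Lam's algebraic treatment, from a Hopf-duality argument using the Kostant--Kumar coproduct \cite{KostantKumar}. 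You correctly name these tools (\textquotedblleft the coproduct on $\mathbb{H}$ dual to the pointwise structure\textquotedblright ) but never deploy them, so the proposal is a sound plan whose central step --- the one carrying all the content --- remains unproved.
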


Let $\limfunc{Fun}(\hat{W},\limfunc{Frac}(\boldsymbol{S}))$ be the $\limfunc{%
Frac}(\boldsymbol{S})$-algebra of functions $\hat{W}\rightarrow \limfunc{Frac%
}(\boldsymbol{S})$ with pointwise product. Recall that the torus $T$ acts on 
$\limfunc{Gr}_{G}$ by pointwise conjugation. Restricting a class to the $T$%
-fixed points gives an injective $\boldsymbol{S}$-algebra homomorphism $\phi
:H_{T}(\limfunc{Gr}_{G})\rightarrow \limfunc{Fun}(\hat{W},\boldsymbol{S})$
where the image under $\phi $ are the functions which satisfy the GKM
conditions \cite{GKM} \cite{KostantKumar}, $\xi (w)-\xi (r_{\alpha }w)\in
\alpha \boldsymbol{S}$ for affine real roots $\alpha $. Henceforth, we will
be identifying Schubert classes with their images under $\phi $.

We have the following projection from the homology of the affine
Grassmannian to the cohomology of the partial flag variety \cite[Thm 10.16]%
{LamShim}; for further details see \emph{loc. cit}. Let $W_{P}$ be the Weyl
group of the parabolic subgroup and $\hat{W}_{P}=W_{P}\rtimes \mathcal{Q}%
_{P}^{\vee }$ its affinisation with $\mathcal{Q}_{P}^{\vee }$ being the
parabolic coroot lattice. Each affine Weyl group element factorises as $\hat{%
w}=\hat{w}_{P}\hat{w}^{P}$, where $\hat{w}_{P}\in \hat{W}_{P}$ and $\hat{w}%
^{P}$ is a coset representative in $\hat{W}^{P}=\hat{W}_{P}/W_{P}$ such that 
$\hat{w}^{P}=w^{P}w_{P}\tau _{\lambda }$ with $w_{P}\in W_{P}$, $w^{P}$ a
minimal length coset representative in $W/W_{P}$ and $\tau _{\lambda }$ a
translation; compare with \cite[Lem 10.1-5]{LamShim}. Let $\pi _{P}:\hat{W}%
\rightarrow \hat{W}^{P}$ be the quotient map with $\pi _{P}(\hat{w})=\hat{w}%
^{P}$. The quotient map has the properties that $\pi _{P}(W)=W^{P}$, $\pi
_{P}(w\tau _{\lambda })=\pi _{P}(w)\pi _{P}(\tau _{\lambda })$ and $\pi
_{P}(\tau _{\lambda +\alpha })=\pi _{P}(\tau _{\lambda })$ for $\alpha \in 
\mathcal{Q}_{P}^{\vee }$; see \cite[Prop 10.8]{LamShim}. The following
theorem then states that the quantum cohomology of the partial flag variety $%
G/P$ can be described as a quotient of the homology of the affine
Grassmannian.

\begin{theorem}[{\protect\cite[Thm 10.16]{Peterson,LamShim}}]
Let $J_{P}$ be the ideal $J_{P}=\sum_{x\in \hat{W}^{-}\backslash \hat{W}%
^{P}}S\xi _{x}$. Then the map $\psi _{P}:H_{T}(\limfunc{Gr}%
_{G})/J_{P}\rightarrow QH_{T}^{\ast }(G/P)$ defined by $\xi _{w\pi _{P}(\tau
_{\lambda })}\mapsto q_{\eta _{P}(\lambda )}\sigma ^{w}$ becomes an $%
\boldsymbol{S}$-algebra isomorphism after the appropriate localisation,
where $\eta _{P}$ is the natural projection $\mathcal{Q}^{\vee }\rightarrow 
\mathcal{Q}^{\vee }/\mathcal{Q}_{P}^{\vee }$ and $w\in W^{P}$ is a minimal
length representative of coset in $W/W_{P}$.
\end{theorem}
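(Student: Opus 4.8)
The plan is to prove $\psi_P$ is an isomorphism in two stages: first that it is a bijection of $\boldsymbol{S}$-modules after localization, and then that it is multiplicative, deducing the latter from Mihalcea's uniqueness characterization of the equivariant quantum product together with a Chevalley computation inside the nil-Hecke ring $\mathbb{H}$.

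First I would establish the module-level bijection. Using the factorization $\hat{w}=\hat{w}_P\hat{w}^P$ with $\hat{w}^P=w^Pw_P\tau_\lambda$ and the recorded properties of the quotient map $\pi_P$ --- namely $\pi_P(W)=W^P$, $\pi_P(w\tau_\lambda)=\pi_P(w)\pi_P(\tau_\lambda)$, and $\pi_P(\tau_{\lambda+\alpha})=\pi_P(\tau_\lambda)$ for $\alpha\in\mathcal{Q}_P^\vee$ --- one checks that the quotient by $J_P$ kills exactly those Schubert classes $\xi_x$ with $x\notin\hat{W}^P$, so the surviving basis $\{\xi_x:x\in\hat{W}^P\}$ is indexed by pairs $(w,\eta_P(\lambda))$ with $w\in W^P$. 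The assignment $\xi_{w\pi_P(\tau_\lambda)}\mapsto q_{\eta_P(\lambda)}\sigma^w$ then sends this basis bijectively onto the $\boldsymbol{S}$-basis $\{q_d\,\sigma^w\}$ of $QH_T^*(G/P)$; the relation $\pi_P(\tau_{\lambda+\alpha})=\pi_P(\tau_\lambda)$ for $\alpha\in\mathcal{Q}_P^\vee$ is precisely what ensures the $q$-degrees are recorded by $\mathcal{Q}^\vee/\mathcal{Q}_P^\vee$, matching the quantum parameters of $G/P$. Thus $\psi_P$ is a well-defined $\boldsymbol{S}$-linear bijection carrying the unit to the unit.

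To obtain multiplicativity I would push the (commutative, associative) product of $H_T(\limfunc{Gr}_G)/J_P$ forward through $\psi_P$ to a commutative associative $\boldsymbol{S}$-bilinear product $\bullet$ on $QH_T^*(G/P)$, and then invoke Mihalcea's theorem: the genuine equivariant quantum product is the unique such product that reduces to the classical equivariant product at $q=0$ and obeys the equivariant quantum Chevalley formula for multiplication by Schubert divisors. Since $\bullet$ is automatically commutative and associative, it suffices to verify these two conditions. The classical limit is the parabolic specialization of the Peterson isomorphism $\jmath:H_T(\limfunc{Gr}_G)\to\mathcal{Z}_{\mathbb{H}}(\boldsymbol{S})$ at $q=0$, identifying the quotient with $H_T^*(G/P)$. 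The Chevalley condition is computed in $\mathbb{H}$ using the commutation relation $A_j\lambda=(s_j\lambda)A_j+(\lambda,\alpha_j^\vee)$, which controls how the divisor classes act on the localized Schubert functions $\xi(w)$ through the GKM embedding $\phi$.

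The main obstacle will be this Chevalley compatibility. One must match the affine Schubert structure constants produced when a divisor multiplies $\jmath_x$ in the Peterson algebra --- encoded by covering relations in the quantum Bruhat graph on $\hat{W}$ --- against the Gromov-Witten terms of the equivariant quantum Chevalley formula for $G/P$, while tracking which affine covers carry a translation $\tau_\lambda$ and hence contribute the factor $q_{\eta_P(\lambda)}$ after applying $\pi_P$. Establishing that $\pi_P:\hat{W}\to\hat{W}^P$ transports the affine quantum Bruhat combinatorics to the parabolic quantum Bruhat combinatorics compatibly with these $q$-weights is the technical heart; once it is in place, Mihalcea's uniqueness forces $\bullet$ to coincide with the quantum product, so that $\psi_P$ is an $\boldsymbol{S}$-algebra isomorphism after the appropriate localisation.
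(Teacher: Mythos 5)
The first thing to say is that the paper contains no proof of this statement for you to be compared against: as the citation in the theorem header indicates, it is imported verbatim from Peterson's notes and from Lam--Shimozono, and the surrounding text of the paper only \emph{uses} it (to interpret the operators $\tilde{S}_{\lambda }$ and the Bethe basis). Measured instead against the proof in the cited source, your plan is essentially the published one: Lam and Shimozono also transport the product through the module-level map and pin it down by Mihalcea's uniqueness characterisation of the equivariant quantum product (commutativity, associativity, classical limit at $q=0$, equivariant quantum Chevalley rule), with the Chevalley verification carried out inside the affine nil-Hecke ring and matched to the parabolic quantum Chevalley formula through the combinatorics of $\pi _{P}$ and the quantum Bruhat graph. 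So both your strategy and your identification of the technical heart are accurate.

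There are, however, two concrete gaps in your first stage. (a) That $J_{P}$ is an ideal is a theorem, not bookkeeping: it amounts to a vanishing property of the affine Schubert structure constants (the expansion of $\xi _{y}\xi _{x}$ has no component along $\xi _{z}$ with $z\in \hat{W}^{P}$ whenever $x\notin \hat{W}^{P}$), which in the cited source is established via Peterson's $\jmath $-basis; your sketch nowhere addresses it, and without it the source $H_{T}(\limfunc{Gr}_{G})/J_{P}$ of $\psi _{P}$ is not even a ring. (b) Your claim that the surviving basis is sent \emph{bijectively} onto the $\boldsymbol{S}$-basis $\{q_{d}\sigma ^{w}\}$ is false before localisation. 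Take $w\in W^{P}$, $w\neq e$, and $\lambda \in \mathcal{Q}_{P}^{\vee }$ (so $\eta _{P}(\lambda )=0$): then $w\pi _{P}(\tau _{\lambda })=w$ is a nonidentity element of the finite Weyl group, hence never lies in $\hat{W}^{-}$, so the quantum-degree-zero class $\sigma ^{w}$ is not in the image of the basis at all. The image is only the span of those $q_{d}\sigma ^{w}$ with $d$ lying in a $w$-dependent cone coming from (anti)dominant translations, and this is exactly why the statement says \textquotedblleft after the appropriate localisation\textquotedblright : the quantum parameters must be inverted \emph{before} one has a module isomorphism, and the transported product and the Mihalcea uniqueness argument must then be run in that localised setting, with polynomiality of the resulting structure constants checked separately. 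As written, your stage one proves less than it claims, and your stage two correctly names, but does not close, the Chevalley comparison that constitutes the bulk of the actual proof.
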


We now specialise to $G=SL_{N}$, $\hat{W}=\mathbb{\hat{S}}_{N}$, $W=\mathbb{S%
}_{N}$ and let $P\subset SL_{N}$ be the subgroup which maps the subspace $%
\mathbb{C}^{n}\subset \mathbb{C}^{N}$ spanned by the first $n$-basis vectors
into itself. Then Peterson's representation becomes the representation in
terms of divided difference operators (\ref{DBGG}), $A_{j}=\partial _{j}$,
and $\boldsymbol{S}$ can be identified with the polynomial ring $\Lambda $
of the equivariant parameters.

There is a bijection between partitions $\lambda \in \Pi _{n,k}$ and minimal
length representatives of the cosets in $\mathbb{S}_{N}/\mathbb{S}_{n}\times 
\mathbb{S}_{k}$: let $w^{\lambda }\in \mathbb{S}_{N}$ be the permutation
defined by $w^{\lambda }(i)=\lambda _{n+1-i}+i$ for $i=1,\ldots ,n$ and $%
w^{\lambda }(i+n)=\lambda _{k+1-i}^{\prime }+i$ for $i=1,\ldots ,k$. After
the projection onto $QH_{T}^{\ast }(G/P)$ the Peterson basis elements can be
labelled by minimal length representatives and powers in the quantum
parameter $q$. Given $w^{\lambda }$ let $\hat{w}^{\lambda }$ be the affine
Grassmannian permutation with $\pi _{P}(\hat{w}^{\lambda })=w^{\lambda }$.%
\textbf{\ }

\begin{proposition}
Let $Y_{\alpha }=\mathfrak{e}(y_{\alpha })^{-1}|y_{\alpha }\rangle $ be the
renormalised Bethe vectors. Given any $\lambda ,\mu \in \Pi _{n,k}$ we have
the following relation between Peterson's basis and the non-commutative
Schur polynomials (\ref{facS}), 
\begin{equation}
|\lambda \rangle \circledast |\mu \rangle =\sum_{\alpha \in \Pi
_{n,k}}s_{\mu }(y_{\alpha }|t)~(\tilde{S}_{\lambda }Y_{\alpha
})=\sum_{\alpha \in \Pi _{n,k}}(j_{\hat{w}^{\lambda }}s_{\mu }(y_{\alpha
}|t))~Y_{\alpha }\;.  \label{facS2petbasis}
\end{equation}

\begin{proof}
From (\ref{GKM}) it follows that we can identify the coefficients $\xi _{\mu
}=\{s_{\mu }(y_{\alpha }|t)\}_{\alpha \in \Pi _{n,k}}$ of the Bethe vectors
with a localised Schubert class. As we saw earlier, Lemma \ref{lem:facS}, $%
\tilde{S}_{\lambda }$ acts on a Bethe vector by multiplying it with $%
s_{\lambda }(y_{\alpha }|t)$. From the above theorems we know that the
Peterson basis element $j_{\hat{w}^{\lambda }}$ also acts by multiplication
with the GKM class $\xi _{\lambda }(y_{\alpha })=s_{\lambda }(y_{\alpha }|t)$%
. So, starting from (\ref{basischange}) we obtain%
\begin{equation*}
\tilde{S}_{\lambda }|\mu \rangle =\sum_{\alpha }s_{\mu }(y_{\alpha }|t)%
\tilde{S}_{\lambda }Y_{\alpha }=\sum_{\alpha }s_{\lambda }(y_{\alpha
}|t)s_{\mu }(y_{\alpha }|t)Y_{\alpha }=\sum_{\alpha }(j_{\hat{w}^{\lambda
}}s_{\mu }(y_{\alpha }|t))Y_{\alpha }\;.
\end{equation*}
\end{proof}
\end{proposition}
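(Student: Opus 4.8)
The plan is to chain together three ingredients established earlier: the resolution of a basis vector into renormalised Bethe vectors, the eigenvalue property of the noncommutative factorial Schur operators $\tilde{S}_{\lambda}$, and the characterisation of Peterson's basis through pointwise multiplication by a localised Schubert class. Everything reduces to a single substitution once these are in place, so the work is really in lining up the two descriptions of the scalar coefficients.

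First I would start from the definition (\ref{comb_product}) of the combinatorial product, so that $|\lambda\rangle\circledast|\mu\rangle=\tilde{S}_{\lambda}|\mu\rangle$, and insert the expansion (\ref{basischange}) of $|\mu\rangle$ into the eigenbasis of renormalised Bethe vectors $Y_{\alpha}=\mathfrak{e}(y_{\alpha})^{-1}|y_{\alpha}\rangle$, which reads $|\mu\rangle=\sum_{\alpha\in\Pi_{n,k}}s_{\mu}(y_{\alpha}|t)\,Y_{\alpha}$. By Lemma \ref{lem:facS}(i) the Bethe vectors diagonalise $\tilde{S}_{\lambda}$ with eigenvalue $s_{\lambda}(y_{\alpha}|t)$, and since $Y_{\alpha}$ differs from $|y_{\alpha}\rangle$ only by the scalar $\mathfrak{e}(y_{\alpha})^{-1}$ we also have $\tilde{S}_{\lambda}Y_{\alpha}=s_{\lambda}(y_{\alpha}|t)\,Y_{\alpha}$. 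Applying $\tilde{S}_{\lambda}$ termwise then yields at once the first equality $\tilde{S}_{\lambda}|\mu\rangle=\sum_{\alpha}s_{\mu}(y_{\alpha}|t)\,(\tilde{S}_{\lambda}Y_{\alpha})$ and, after substituting the eigenvalue, the intermediate form $\sum_{\alpha}s_{\lambda}(y_{\alpha}|t)\,s_{\mu}(y_{\alpha}|t)\,Y_{\alpha}$.

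The remaining step is to reinterpret the scalar coefficients as an action of Peterson's basis. By the Proposition on the GKM conditions and its accompanying remark, the family $\xi_{\mu}=\{s_{\mu}(y_{\alpha}|t)\}_{\alpha\in\Pi_{n,k}}$ obeys the relations (\ref{GKM}) and therefore defines a localised Schubert class on the cosets $\mathbb{S}_{N}/\mathbb{S}_{n}\times\mathbb{S}_{k}$ valued in $\mathbb{F}_{q}$. The defining property (\ref{Pet_basis}) of Peterson's basis asserts that $\jmath_{\hat{w}^{\lambda}}$ acts on such localised classes by pointwise multiplication with $\xi_{\lambda}$; using the bijection $\lambda\leftrightarrow w^{\lambda}$ between $\Pi_{n,k}$ and minimal-length coset representatives together with the lift $\hat{w}^{\lambda}$ satisfying $\pi_{P}(\hat{w}^{\lambda})=w^{\lambda}$, this gives $\jmath_{\hat{w}^{\lambda}}s_{\mu}(y_{\alpha}|t)=\xi_{\lambda}(y_{\alpha})\,s_{\mu}(y_{\alpha}|t)=s_{\lambda}(y_{\alpha}|t)\,s_{\mu}(y_{\alpha}|t)$. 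Matching this against the intermediate form produces the second equality and closes the argument.

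The main obstacle I anticipate is not the algebra, which is a one-line substitution, but justifying the dictionary between the operator side and Peterson's localisation picture: one must verify that the eigenvalue $s_{\lambda}(y_{\alpha}|t)$ of $\tilde{S}_{\lambda}$ genuinely coincides with the value $\xi_{\lambda}(y_{\alpha})$ of the localised Schubert class that $\jmath_{\hat{w}^{\lambda}}$ multiplies by. This depends on correctly matching the GKM data (\ref{GKM}) to the level-$0$ GKM conditions of \cite{LamShim} for the affine Grassmannian, and on checking that the quantum parameter enters consistently on both sides; this is precisely why the identification must be performed over the Puiseux field $\mathbb{F}_{q}$ rather than over $\Lambda$, since for $q\neq 0$ the Bethe roots $y_{\alpha}$ cease to be polynomial in the equivariant parameters.
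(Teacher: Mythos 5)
Your proposal is correct and follows essentially the same route as the paper's own proof: expand $|\mu\rangle$ via (\ref{basischange}) into the renormalised Bethe vectors, apply the eigenvalue property of $\tilde{S}_{\lambda}$ from Lemma \ref{lem:facS}, and then reinterpret the scalar $s_{\lambda}(y_{\alpha}|t)\,s_{\mu}(y_{\alpha}|t)$ as the pointwise action $\jmath_{\hat{w}^{\lambda}}\xi_{\mu}$ using the GKM identification (\ref{GKM}) and the defining property (\ref{Pet_basis}) of Peterson's basis. Your closing remark about working over $\mathbb{F}_{q}$ and matching the level-$0$ GKM data is exactly the point the paper addresses implicitly through its earlier GKM proposition and remark.
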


\begin{remark}
Our construction has the following natural generalisation. Let $\mathcal{V}%
_{P}=\limfunc{Fun}(W^{P},\limfunc{Frac}(\boldsymbol{S}))\otimes \mathbb{Z}%
W^{P}$. For any partial flag variety $G/P$ we define a set of commutative
elements $\{S_{w}|w\in W^{P}\}\subset \limfunc{End}\mathcal{V}_{P}$ by
demanding that for any $u,v\in W^{P}$ we have%
\begin{equation*}
\sum_{u\in W^{P}}\xi _{v}(u)\otimes S_{w}Y_{u}=\sum_{u\in W^{P}}(\jmath
_{w}\xi _{v}(u))\otimes Y_{u}
\end{equation*}%
where $\{Y_{u}|u\in W^{P}\}$ are the idempotents of a suitable extension of $%
QH_{T}^{\ast }(G/P)$ over an algebraically closed field, $\xi _{v}$ the GKM
class corresponding to $v$ and $\jmath _{w}$ the projected Peterson basis
element. Then the map%
\begin{equation*}
|v\rangle :=\sum_{u\in W^{P}}\xi _{v}(u)\otimes Y_{u}\mapsto \sigma _{v}
\end{equation*}%
provides an algebra isomorphism $\mathcal{V}_{P}\rightarrow QH_{T}^{\ast
}(G/P)$ where the product in $\mathcal{V}_{P}$ is given by $|w\rangle
\circledast |v\rangle =S_{w}|v\rangle $. The open problem is to find an
explicit description of the operators $\{S_{w}|w\in W^{P}\}$ for partial
flag varieties other than type $A.$
\end{remark}

\end{document}